\newtheorem{theorem}{Theorem}[subsection]
\newtheorem{lemma}[theorem]{Lemma}
\newtheorem{proposition}[theorem]{Proposition}
\newtheorem{corollary}[theorem]{Corollary}
\theoremstyle{definition}
\newtheorem{definition}[theorem]{Definition}
\newtheorem{construction}[theorem]{Construction}
\newtheorem{example}[theorem]{Example}
\newtheorem{notation}[theorem]{Notation}
\newtheorem{warning}[theorem]{Warning}
\newtheorem{remark}[theorem]{Remark}
\newtheorem{variant}[theorem]{Variant}
\newcommand{\Etale}{\'{E}tale}
\newcommand{\etale}{\'{e}tale}
\def\Z{\mathbf{Z}}
\def\C{\mathbf{C}}
\def\F{\mathbf{F}}
\DeclareMathOperator{\sheafE}{{\mathscr E}}
\DeclareMathOperator{\sheafF}{{\mathscr F}}
\DeclareMathOperator{\sheafG}{{\mathscr G}}
\DeclareMathOperator{\sheafH}{{\mathscr H}}
\DeclareMathOperator{\calO}{{\mathcal O}}
\DeclareMathOperator{\calC}{{\mathcal C} }
\DeclareMathOperator{\calD}{{\mathcal D}}
\DeclareMathOperator{\calE}{{\mathcal E}}
\DeclareMathOperator{\RH}{RH}
\DeclareMathOperator{\op}{op}
\DeclareMathOperator{\alg}{alg}
\DeclareMathOperator{\Sol}{Sol}
\DeclareMathOperator{\RSol}{RSol}
\DeclareMathOperator{\Ext}{Ext}
\DeclareMathOperator{\id}{id}
\DeclareMathOperator{\im}{im}
\DeclareMathOperator{\Gal}{Gal}
\DeclareMathOperator{\Shv}{Shv}
\DeclareMathOperator{\Spec}{Spec}
\DeclareMathOperator{\Set}{Set}
\DeclareMathOperator{\Frob}{Fr}
\DeclareMathOperator{\QCoh}{QCoh}
\DeclareMathOperator{\Mod}{Mod}
\DeclareMathOperator{\Tor}{Tor}
\DeclareMathOperator{\Hom}{Hom}
\DeclareMathOperator{\perf}{perf}
\DeclareMathOperator{\perfection}{1/p^{\infty}} 
\DeclareMathOperator{\EK}{EK}
\DeclareMathOperator{\supp}{supp}
\DeclareMathOperator{\CAlg}{CAlg}
\DeclareMathOperator{\Ind}{Ind}
\DeclareMathOperator{\coker}{coker}
\DeclareMathOperator{\cn}{cn}
\DeclareMathOperator{\hol}{hol} 
\newcommand{\et}{\'{e}t}
\newcommand{\mathet}{\text{\et}}
\DeclareMathOperator{\fgu}{fgu}
\DeclareMathOperator{\qE}{ \mathcal{E} } 
\DeclareMathOperator{\qF}{ \mathcal{F} } 
\newcommand{\Adjoint}[4]{\xymatrix@1{#2 \ar@<.4ex>[r]^-{#1} & #3 \ar@<.4ex>[l]^-{#4}}}
\title{A Riemann-Hilbert Correspondence in Positive Characteristic}
\author{Bhargav Bhatt and Jacob Lurie}
\begin{document}

\maketitle

\setcounter{tocdepth}{2}

\tableofcontents

\newpage \section{Introduction}
\setcounter{subsection}{0}
\setcounter{theorem}{0}

Let $p$ be a prime number, which we regard as fixed throughout this paper. Our starting point is the following theorem of Katz (see \cite[Proposition 4.1.1]{KatzPadic}):

\begin{theorem}[Katz]\label{theorem.folk}
Let $k$ be a perfect field of characteristic $p$ and let $\overline{k}$ be an algebraic closure of $k$. Then the construction
$V \mapsto ( V \otimes_{ \F_p } \overline{k} )^{ \Gal( \overline{k} / k)}$ induces an equivalence from the
category of finite-dimensional $\F_p$-vector spaces $V$ with a continuous action of $\Gal( \overline{k} / k )$ to
the category of finite-dimensional $k$-vector spaces $M$ equipped with a Frobenius-semilinear automorphism $\varphi_{M}$.
\end{theorem}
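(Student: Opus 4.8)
The plan is to exhibit an explicit quasi-inverse and to reduce the verification to two descent statements: one for the Galois action (routine) and one for the Frobenius (the crux).

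Write $G=\Gal(\overline k/k)$; since $k$ is perfect, $\overline k$ is also a separable closure of $k$, so $G$ is the full absolute Galois group and classical Galois descent applies. Call the functor in the statement $\mathsf{RH}$: it sends $V$ to $M=(V\otimes_{\F_p}\overline k)^G$ with $\varphi_M$ induced by $\mathrm{id}_V\otimes\Frob_{\overline k}$; this is well defined because $\Frob_{\overline k}$ commutes with every field automorphism of $\overline k$ (from $\sigma(x^p)=\sigma(x)^p$), hence preserves the $G$-invariants, and it is bijective because $\mathrm{id}_V\otimes\Frob_{\overline k}^{-1}$ likewise preserves them. In the reverse direction I use $\mathsf{Sol}$, sending $(M,\varphi_M)$ to $(M\otimes_k\overline k)^{\varphi=1}$ with $\varphi=\varphi_M\otimes\Frob_{\overline k}$, equipped with the $G$-action on the second tensor factor. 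Once this solution space is known to be finite-dimensional over $\F_p$, the $G$-action is automatically continuous: a finite $\F_p$-basis lies in $M\otimes_k k'$ for a single finite Galois $k'/k$, and $\Gal(\overline k/k')$ then acts trivially on it.

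Unwinding the composites $\mathsf{Sol}\circ\mathsf{RH}$ and $\mathsf{RH}\circ\mathsf{Sol}$, the problem reduces to showing two canonical maps are isomorphisms. (1) If $W$ is a finite-dimensional $\overline k$-vector space with a continuous semilinear $G$-action, then $W^G\otimes_k\overline k\to W$ is an isomorphism --- this is the usual Speiser/Hilbert~90 descent along finite Galois extensions (triviality of $H^1(\Gal(k'/k),GL_n(k'))$), which I invoke without proof. (2) If $(N,\varphi)$ is a $\varphi$-module over an algebraically closed field $K$ of characteristic $p$ (that is, $N$ is finite-dimensional over $K$ and $\varphi$ is a bijective $\Frob_K$-semilinear endomorphism), then $\alpha\colon N^{\varphi=1}\otimes_{\F_p}K\to N$ is an isomorphism; in particular $\dim_{\F_p}N^{\varphi=1}=\dim_K N$. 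Granting (1) and (2), the rest is mechanical: for an $\F_p$-vector space $W$ one has $(W\otimes_{\F_p}\overline k)^{\varphi=1}=W$ (the coefficients of a fixed vector satisfy $a^p=a$), so (1) applied to $W=V\otimes_{\F_p}\overline k$ identifies $\mathsf{Sol}(\mathsf{RH}(V))$ with $V$; dually $(M\otimes_k\overline k)^G=M$, so (2) applied to $N=M\otimes_k\overline k$ identifies $\mathsf{RH}(\mathsf{Sol}(M,\varphi_M))$ with $(M,\varphi_M)$; the requisite $G$- and $\varphi$-equivariance of these comparison maps is immediate from the formula $m\otimes a\mapsto am$.

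The essential content, and the main obstacle, is (2) --- a characteristic-$p$ incarnation of the triviality of torsors, which one could also deduce from Lang's theorem on torsors under connected algebraic groups; here a direct argument suffices. Injectivity of $\alpha$ (hence $\dim_{\F_p}N^{\varphi=1}\le\dim_K N$) is the ``shortest relation'' trick: take a shortest $K$-linear dependence among $\F_p$-independent $\varphi$-fixed vectors, normalized so one coefficient is $1$; applying $\varphi$ and subtracting gives a strictly shorter relation, forcing all coefficients into $\F_p$ --- a contradiction. Surjectivity is proved by induction on $\dim_K N$. For the base step one shows a nonzero $\varphi$-module has a nonzero $\varphi$-fixed vector: replacing $N$ by the $\varphi$-stable subspace spanned by $e,\varphi e,\varphi^2 e,\dots$ for some $e\ne 0$, we may assume $N$ has basis $e,\varphi e,\dots,\varphi^{m-1}e$ with $\varphi^m e=\sum_{i=0}^{m-1}c_i\varphi^i e$ and $c_0\ne 0$ (the latter because $\varphi$ is injective); writing a putative fixed vector in this basis, the equation $\varphi(x)=x$ expresses each coordinate as a polynomial in the last one $T$ and reduces to a single equation $P(T)=0$, where $P(T)=Q(T)^p+c_{m-1}T^p-T$ for some polynomial $Q$ with $Q(0)=0$; thus $P'=-1$, so $P$ is separable of degree $p^m\ge 2$, has $p^m$ distinct roots over the (separably) closed field $K$, and as $T=0$ is one of them there is a nonzero solution. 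For the inductive step, a nonzero $\varphi$-fixed $v$ spans a $\varphi$-submodule $Kv$, the quotient $N/Kv$ is again a $\varphi$-module (bijectivity of the induced operator uses that $K$ is perfect) of smaller dimension, and the sequence $0\to\F_p v\to N^{\varphi=1}\to(N/Kv)^{\varphi=1}\to 0$ is exact --- surjectivity on the right because correcting a lift of a $\varphi$-fixed class by an element of $Kv$ amounts to solving an equation $X^p-X=d$, which is separable and hence solvable over $K$. With the inductive hypothesis this gives $\dim_{\F_p}N^{\varphi=1}=\dim_K N$, and combined with injectivity of $\alpha$ it proves (2), completing the proof.
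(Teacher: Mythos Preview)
Your proof is correct. The heart of both your argument and the paper's is the same: over an algebraically closed field $K$, every nonzero finite-dimensional $\varphi$-module has a nonzero $\varphi$-fixed vector, found by solving a separable additive polynomial of degree $p^m$. Your version sets up the fixed-vector equations in a cyclic basis and eliminates to a single equation $P(T)=0$ with $P'(T)=-1$; the paper (in its Lemma~\ref{lem81}) instead writes down an explicit candidate $y$ in terms of a root of such a polynomial and checks $\varphi(y)=y$ by direct computation. These are two presentations of one idea.

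Where the approaches genuinely diverge is in the passage from $\overline{k}$ back to $k$, and in how full faithfulness is obtained. You build the quasi-inverse $\mathsf{Sol}$ by hand, prove the injectivity of $N^{\varphi=1}\otimes_{\F_p}K\to N$ via the shortest-relation trick, run a clean induction on $\dim_K N$ using the Artin--Schreier solvability of $X^p-X=d$, and then invoke classical Galois descent (Hilbert~90) to descend from $\overline{k}$ to $k$. The paper instead embeds the statement in its general Riemann--Hilbert machinery: full faithfulness of $\RH^c$ is already known from the exactness of $\RH$ and the Artin--Schreier sequence (Corollary~\ref{corX80}), so only essential surjectivity needs work; over $\overline{k}$ this is the fixed-vector lemma, and the descent to arbitrary $k$ (Proposition~\ref{proposition.lemX51}) is done not via Hilbert~90 but by pushing forward along a finite extension $k'/k$ and using that the essential image is an abelian subcategory closed under extensions and kernels. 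Your route is more elementary and entirely self-contained; the paper's route is less direct in isolation but reuses structural results it has already established for arbitrary $\F_p$-algebras.
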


The equivalence of Theorem \ref{theorem.folk} can be extended to infinite-dimensional vector spaces; in this case, we must add the requirement that
$M$ is {\em locally finite} in the sense that every element $x \in M$ belongs to a finite-dimensional $\varphi_{M}$-stable subspace. Our primary goal in this paper is to prove the following more general result:

\begin{theorem}\label{maintheoXXX}
Let $R$ be a commutative $\F_p$-algebra. Then there is a fully faithful embedding of abelian categories
$$ \xymatrix{ \{ \text{$p$-torsion {\etale} sheaves on $\Spec(R)$}\} \ar[d]^{ \RH } \\
\{ \text{$R$-modules with a Frobenius-semilinear automorphism $\varphi_M$} \}. }$$
Moreover, the essential image of $\RH$ consists of those $R$-modules $M$ equipped with
a Frobenius-semilinear automorphism $\varphi_M: M \rightarrow M$ which satisfies the following condition:
every element $x \in M$ satisfies an equation of the form $$\varphi^{n}_M x + a_{1} \varphi^{n-1}_M x + \cdots + a_n x = 0$$
for some coefficients $a_1, \ldots, a_n \in R$.
\end{theorem}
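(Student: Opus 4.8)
The plan is to construct $\RH$ from the Artin--Schreier sequence, to exhibit a quasi-inverse ``solution'' functor $\Sol$, and to pin down the essential image via a relative Artin--Schreier/Lang trivialization together with \'etale devissage. One first reduces to the case that $R$ is \emph{perfect}: the map $\Spec(R^{1/p^{\infty}}) \to \Spec(R)$ is a universal homeomorphism, hence induces an equivalence of small \'etale sites (and of categories of $p$-torsion \'etale sheaves), while restriction of scalars along $R \to R^{1/p^{\infty}}$ identifies $R$-modules with a Frobenius-semilinear automorphism with $R^{1/p^{\infty}}$-modules of the same kind; the functor $\RH$ defined below is compatible with these equivalences. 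With $R$ perfect, the \'etale structure sheaf $\mathbf{G}_a$ on $\Spec(R)$ carries $\Frob$ as an automorphism, and $\Gamma(\Spec(R)_{\mathet}, \mathbf{G}_a) = R$.

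\textbf{The functors.} Using $0 \to \underline{\F_p} \to \mathbf{G}_a \xrightarrow{\Frob - \id} \mathbf{G}_a \to 0$, set $\RH(\sheafF) := \Gamma(\Spec(R)_{\mathet}, \sheafF \otimes_{\F_p} \mathbf{G}_a)$ with its Frobenius-semilinear automorphism $\id_{\sheafF} \otimes \Frob$. Now $- \otimes_{\F_p} \mathbf{G}_a$ is exact ($\F_p$ is a field), and for $\sheafF$ finite locally constant $\sheafF \otimes_{\F_p} \mathbf{G}_a$ is the \'etale realization of a vector bundle (transition matrices in $\mathrm{GL}_n(\F_p)$), hence acyclic on the affine scheme $\Spec(R)$; a devissage (every $p$-torsion sheaf is a filtered colimit of constructible sheaves, built by extensions and $j_{!}$, $i_{*}$ from finite locally constant sheaves on affine locally closed pieces, and \'etale cohomology on the qcqs scheme $\Spec(R)$ commutes with filtered colimits) extends the acyclicity to all $\sheafF$, so $\RH$ is exact. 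As $\RH$ is exact and commutes with filtered colimits it preserves all colimits, hence has a right adjoint $\Sol$, computed as the sheafification of $R' \mapsto \{x \in M \otimes_R R' : (\varphi_M \otimes \Frob)(x) = x\}$.

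\textbf{Full faithfulness} amounts to showing the unit $\sheafF \to \Sol(\RH(\sheafF))$ is an isomorphism. Both sides commute with filtered colimits, and by exactness of $\RH$, left-exactness of $\Sol$, and the devissage above one reduces to $\sheafF$ finite locally constant. Then base change along the flat map $R \to A := \calO_{\Spec(R),\overline{x}}$ (a strict henselization) commutes with the cohomology of the vector bundle $\sheafF \otimes_{\F_p} \mathbf{G}_a$, so $\RH(\sheafF) \otimes_R A \cong \sheafF_{\overline{x}} \otimes_{\F_p} A$ (as $\sheafF|_{\Spec(A)}$ is constant), whence $\Sol(\RH(\sheafF))_{\overline{x}} \cong (\sheafF_{\overline{x}} \otimes_{\F_p} A)^{\Frob=\id} = \sheafF_{\overline{x}} \otimes_{\F_p} A^{\Frob=\id} = \sheafF_{\overline{x}}$, using $A^{\Frob=\id}=\F_p$ (solutions of $T^p=T$ in a local ring); one checks the unit realizes this identification on every stalk. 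For $R$ a perfect field this recovers Theorem \ref{theorem.folk}.

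\textbf{The essential image, and the main obstacle.} For ``$\subseteq$'': each atom $\RH(j_{!}\mathcal{L})$, $\RH(i_{*}\mathcal{L})$ of the devissage embeds $\varphi$-equivariantly into a finitely generated projective module over a perfect localization/quotient of $R$, and every element of such a module satisfies a monic $\varphi$-equation over the base by a Cayley--Hamilton identity for Frobenius-semilinear endomorphisms of finitely generated modules; since this property descends to sub- and quotient-$\varphi$-modules, to extensions, and to filtered colimits, it holds for all $\RH(\sheafF)$. For ``$\supseteq$'': given $M$ with $\varphi_M$ a Frobenius-semilinear automorphism in which every element satisfies a monic equation, write $M = \varinjlim_{\alpha} M_\alpha[1/\varphi_M]$ over the finitely generated $\varphi_M$-stable submodules $M_\alpha$; it then suffices to realize each $N[1/\varphi_N]$ ($N$ finitely generated, $\varphi_N$ Frobenius-semilinear) as $\RH$ of a constructible sheaf, and one takes $\RH(\Sol(N[1/\varphi_N]))$. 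Here is the crux: the rank of the linearization $\Frob^{*}N \to N$ stratifies $\Spec(R)$, and over a stratum of constant rank $r$ (with reduced structure) $(N[1/\varphi_N], \varphi_N)$ becomes isomorphic to $(\mathbf{G}_a^{\oplus r}, \Frob)$ after a surjective \'etale cover, by a relative Lang's theorem: the Lang map $\mathrm{GL}_r \to \mathrm{GL}_r$, $h \mapsto h\,\Frob(h)^{-1}$, is \'etale (its differential is invertible, as $d\Frob = 0$) and surjective (Lang's theorem on geometric fibers). Consequently $\Sol(N[1/\varphi_N])$ is constructible and the counit $\RH(\Sol(N[1/\varphi_N])) \to N[1/\varphi_N]$ is an isomorphism, being so after the trivializing covers (where it is the computation $\RH(\underline{\F_p}^{\,r}) = \mathbf{G}_a^{\oplus r}$) with both sides satisfying \'etale descent. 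I expect the main obstacle to be precisely this ``$\supseteq$'' step: making the rank-stratification, the requisite finiteness, and the Lang trivialization fit together over a base ring that is neither Noetherian nor coherent (with the Cayley--Hamilton lemma, and the verification that unions of finitely generated $\varphi$-stable submodules are exactly the modules satisfying monic equations, as further points demanding care rather than new ideas).
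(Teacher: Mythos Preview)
Your definition $\RH(\sheafF) = \Gamma(\Spec(R)_{\mathet}, \sheafF \otimes_{\F_p} \mathbf{G}_a)$ is not the correct functor, and the acyclicity claim that underlies its exactness fails. The d\'evissage argument is fine for locally constant $\sheafF$ (where $\sheafF \otimes_{\F_p} \mathbf{G}_a$ is genuinely a vector bundle), but it breaks at the very first step beyond that. Take $R$ a perfect domain, $t \in R$ a nonzero nonunit, and $j: U = \Spec(R[t^{-1}]) \hookrightarrow \Spec(R)$. Then by the projection formula
\[
j_! \underline{\F_p} \otimes_{\F_p} \mathbf{G}_a \;\simeq\; j_!\,j^*\mathbf{G}_a \;=\; j_! \calO_U,
\]
the \emph{\'etale} extension by zero of the structure sheaf of $U$. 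This is not quasi-coherent on $\Spec(R)$, and its global sections are $\{s \in R[t^{-1}] : \overline{\supp(s)} \subseteq U\}$, which is zero since $R$ is a domain and $U$ is not closed. Thus your $\RH(j_!\underline{\F_p}) = 0$ even though $j_!\underline{\F_p} \neq 0$, so the functor is not faithful. Dually, from the triangle $j_!\calO_U \to \calO_X \to i_* i^*\calO_X$ one finds $H^1(\Spec(R), j_!\calO_U) \neq 0$, so the acyclicity claim is false and your $\RH$ is not exact. The underlying issue is that $i^*\mathbf{G}_a$ on $Z = V(t)$ is \emph{not} $\mathbf{G}_{a,Z}$ (its stalks are strict henselizations of $R$, not of $R/tR$), so neither $i_*\mathcal{L} \otimes \mathbf{G}_a$ nor $j_!\mathcal{L} \otimes \mathbf{G}_a$ is the \'etale realization of a quasi-coherent sheaf.

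The paper's construction of $\RH$ is genuinely different and avoids this problem. Rather than writing down a formula, the paper defines $\RH$ as the left adjoint of the solution functor $\Sol$ restricted to \emph{perfect} Frobenius modules. The existence of this adjoint is not formal: it requires first building a theory of compactly supported direct images $f_!: \Mod_B^{\alg} \to \Mod_A^{\alg}$ along \'etale maps $f: A \to B$ (so that $\RH(j_!\underline{\F_p}) = j_!(B^{\perfection})$ can be \emph{defined}). In the open-immersion case $B = A[t^{-1}]$, the paper shows that $j_! M$ is the unique holonomic $\overline{M}$ with $\overline{M}[t^{-1}] \simeq M$ and $(\overline{M}/t\overline{M})^{\perfection} = 0$; for $M = A[t^{-1}]^{\perfection}$ this gives $(tA)^{\perfection}$, not zero. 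Exactness of $\RH$ is then proved not by an acyclicity statement but by checking on stalks, reducing to the algebraically closed field case where every short exact sequence of $\F_p$-sheaves splits.

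Your sketch of the essential image (``$\supseteq$'') via rank stratification and the Lang isogeny on $\mathrm{GL}_r$ is morally correct and close in spirit to the paper's argument, which also reduces to the field case and invokes Theorem~\ref{theorem.folk} there; the paper organizes this d\'evissage through its notion of \emph{holonomic} Frobenius module (the perfection of a finitely presented one) and the analogue of Kashiwara's theorem. But that part of your plan cannot be carried out until $\RH$ itself is correctly constructed.
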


We also establish various extensions of Theorem \ref{maintheoXXX}, where we replace $p$-torsion sheaves by $p^{n}$-torsion sheaves (Theorem \ref{theorem.generalizedRH}),
the affine scheme $\Spec(R)$ by an arbitrary $\F_p$-scheme (Theorem \ref{globalRH}), and the abelian category of {\etale} sheaves by its derived category (Theorem \ref{RHDerived}).

\subsection{Outline}

The first half of this paper is devoted to the proof of Theorem \ref{maintheoXXX}. Note that Theorem \ref{maintheoXXX} supplies a description of the category of ($p$-torsion) {\etale} sheaves on $\Spec(R)$ as
quasi-coherent sheaves on $\Spec(R)$ with additional structure, and can therefore be viewed as a positive-characteristic analogue of the Riemann-Hilbert correspondence. We will emphasize this perspective
by referring to the functor $\RH$ appearing in Theorem \ref{maintheoXXX} as the {\it Riemann-Hilbert functor}. It is not so easy to describe this functor directly. Instead, we begin in \S \ref{section.overview}
by constructing a functor in the opposite direction. Let $\Mod_{R}^{\Frob}$ denote the category whose objects are pairs $(M, \varphi_M)$, where $M$ is an $R$-module and
$\varphi_{M}$ is a Frobenius semilinear endomorphism of $M$; we will refer to such pairs as {\it Frobenius modules over $R$}. If $M$ is an $R$-module and $\widetilde{M}$ denotes the associated quasi-coherent sheaf on the {\etale} site of $\Spec(R)$, then every Frobenius-semilinear automorphism $\varphi_{M}$ of $M$ determines an automorphism of $\widetilde{M}$, which we will also denote by $\varphi_{M}$. We let $\Sol(M)$ denote subsheaf of $\varphi_{M}$-fixed points in $\widetilde{M}$: that is, the kernel of the map $\id - \varphi_{M}: \widetilde{M} \rightarrow \widetilde{M}$, formed in the category $\Shv_{\mathet}( \Spec(R), \F_p)$ of $p$-torsion {\etale} sheaves on $\Spec(R)$. The construction $(M, \varphi_M) \mapsto \Sol(M)$ determines a functor
$$ \Sol: \Mod_{R}^{\Frob} \rightarrow \Shv_{\mathet}( \Spec(R), \F_p ),$$
which we will refer to as the {\it solution functor} (Construction \ref{construction.solsheaf}). 

We will say that a Frobenius module $(M, \varphi_M)$ is {\it perfect} if the map $\varphi_M$ is invertible. In \S \ref{section.RH}, we will show that, when restricted to category
$\Mod_{R}^{\perf}$ of perfect Frobenius modules, the solution functor $\Sol$ has a left adjoint (Theorem \ref{theorem.RHexist}). This left adjoint is the Riemann-Hilbert functor
$\RH: \Shv_{\mathet}( \Spec(R), \F_p ) \rightarrow \Mod_{R}^{\perf}$ appearing in the statement Theorem \ref{maintheoXXX}. The existence of the functor $\RH$ is not evident: to construct it,
we will need to develop a theory of {\em compactly supported direct images} in the setting of (perfect) Frobenius modules; this is the subject of \S \ref{section.compactimage}. 
We also prove in \S \ref{section.RH} that the functor $\RH$ is exact (Proposition \ref{proposition.corX33}). This is easy to see in the case where $R$ is an algebraically closed field: in this case,
the category $\Shv_{\mathet}( \Spec(R), \F_p)$ is equivalent to the category of vector spaces over $\F_p$, where every exact sequence is split. We handle the general case by
reducing to the case of an algebraically closed field, using a theory of base change for perfect Frobenius modules (which we study in \S \ref{section.frobenius-module}) and its compatibiity
with the Riemann-Hilbert correspondence (which we prove as Proposition \ref{prop70}).

We will complete the proof of Theorem \ref{maintheoXXX} in \S \ref{section.mainproof} by showing that the Riemann-Hilbert functor $\RH$ is fully faithful and characterizing its essential image. 
The full-faithfulness is actually fairly easy, once we know that the functor $\RH$ is exact: it essentially follows from the exactness of the Artin-Schreier sequence
$0 \rightarrow \underline{\F_p} \rightarrow \widetilde{R} \rightarrow \widetilde{R} \rightarrow 0$ in the category of {\etale} sheaves $\Shv_{\mathet}( \Spec(R), \F_p )$ (see Proposition \ref{prop75}).
To understand the essential image of the Riemann-Hilbert functor, it will be convenient to consider first the functor $\RH^{c}$ obtained by restricting
$\RH$ to the subcategory $\Shv_{\mathet}^{c}( \Spec(R), \F_p) \subseteq \Shv_{\mathet}( \Spec(R), \F_p)$ of {\em constructible} {\etale} sheaves on $\Spec(R)$.
The functor $\RH^{c}$ takes values in the subcategory $\Mod_{R}^{\hol} \subseteq \Mod_{R}^{\perf}$ of {\it holonomic} Frobenius modules (Definition \ref{defhol2}), which we study in \S \ref{section.holonomic}.
Theorem \ref{maintheoXXX} will then follow by combining the following two assertions:
\begin{itemize}
\item The functor $\RH^{c}: \Shv_{\mathet}^{c}( \Spec(R), \F_p ) \rightarrow \Mod_{R}^{\hol}$ is an equivalence of categories (Theorem \ref{companion}): that is, every holonomic Frobenius module $M$
the form $\RH(\sheafF)$, for some constructible {\etale} sheaf $\sheafF$ on $\Spec(R)$. We will prove this using formal arguments to reduce to the case where $R$ is a field,
in which case the desired result follows from Theorem \ref{theorem.folk} (which we reprove here as Proposition \ref{proposition.lemX51}). 

\item A perfect Frobenius module $M$ can be written as a filtered colimit of holonomic Frobenius modules if and only if every element $x \in M$
satisfies an equation $\varphi^{n}_M x + a_{1} \varphi^{n-1}_M x + \cdots + a_n x = 0$ for some coefficients $a_1, \ldots, a_n \in R$
(Theorem \ref{theoX54}). We will refer to such Frobenius modules as {\it algebraic}.
\end{itemize}

In the second half of this paper, we consider several refinements of Theorem \ref{maintheoXXX}:

\begin{itemize}
\item Let $(M, \varphi_M)$ and $(N, \varphi_N)$ be Frobenius modules over $R$. If $M$ and $N$ are perfect, then they can also
be regarded as Frobenius modules over the perfection $R^{\perfection}$ (Proposition \ref{prop5}). In this case, we can
regard the tensor product $M \otimes_{ R^{\perfection} } N$ as a (perfect) Frobenius module over $R$, with Frobenius endomorphism given by
$x \otimes y \mapsto \varphi_M(x) \otimes \varphi_N(y)$. In \S \ref{section.tensor}, we show that the Riemann-Hilbert functor $\RH$ of Theorem \ref{maintheoXXX}
is compatible with tensor products, in the sense that there are canonical isomorphisms
$$ \RH( \sheafF \otimes_{\F_p} \sheafG ) \simeq \RH(\sheafF) \otimes_{ R^{\perfection} } \RH(\sheafG)$$
(Corollary \ref{tensoraffine}). Our proof relies on the vanishing of the $\Tor$-groups $\Tor_{n}^{R^{\perfection}}(M, N)$ for $n > 0$ when
$M$ and $N$ are algebraic Frobenius modules, which we establish as Theorem \ref{tenso10}.

\item In \S \ref{section.moretorsion}, we prove a generalization of Theorem \ref{maintheoXXX} where the category 
$\Shv_{\mathet}( \Spec(R), \F_p)$ of $p$-torsion {\etale} sheaves is replaced by the larger category
$\Shv_{\mathet}( \Spec(R), \Z / p^{n} \Z)$ of $p^{n}$-torsion {\etale} sheaves, for some integer $n \geq 0$. In this case,
we must also replace the category $\Mod_{R}^{\Frob}$ of Frobenius modules over $R$ by the larger category
$\Mod_{ W_n(R)}^{\Frob}$ of {\it Frobenius modules over $W_n(R)$}; here $W_n(R)$ denotes the ring of $n$-truncated Witt vectors of $R$
(see Theorem \ref{theorem.generalizedRH}).

\item In \S \ref{section.global}, we prove a generalization of Theorem \ref{maintheoXXX} where the affine scheme $\Spec(R)$ is replaced by
an arbitrary $\F_p$-scheme $X$ (Theorem \ref{globalRH}). We also show that the Riemann-Hilbert correspondence is compatible with the formation of
(higher) direct images along proper morphisms $f: X \rightarrow Y$ of finite presentation (Theorem \ref{properRH1}). As an application,
we reprove a special case of the proper base change theorem in {\etale} cohomology (namely, the case of $p$-torsion sheaves on $\F_p$-schemes;
see Corollary \ref{corollary.basechange}).

\item In \S \ref{section.duality}, we study the derived category $D( R[F] )$ of Frobenius modules over $R$. The equivalence
of abelian categories $\RH^{c}: \Shv_{\mathet}^{c}( \Spec(R), \F_p) \simeq \Mod_{R}^{\hol}$ extends to an equivalence of triangulated categories
$$D^{b}_{c}( \Spec(R), \F_p) \simeq D^{b}_{\hol}( R[F] ),$$ where $D^{b}_{\hol}( R[F] ) \subseteq D(R[F])$ denotes the full subcategory spanned by the
cohomologically bounded chain complexes with holonomic cohomology and $D^{b}_{c}( \Spec(R), \F_p)$ is the constructible derived category of $\Spec(R)$;
see Corollary \ref{corollary.RHDerived}. We also construct a duality functor $\mathbb{D}: D^{b}_{\hol}( R[F] ) \rightarrow D(R[F] )^{\op}$, and show that it is a fully
faithful embedding (Theorem \ref{theorem.dual-image}). Combining this duality functor with our Riemann-Hilbert correspondence, we obtain a second
embedding from the constructible derived category $D^{b}_{c}( \Spec(R), \F_p)$ to the derived category $D(R[F] )$. Using this construction, we recover the contravariant Riemann-Hilbert correspondence of
\cite{EK} (in a strong form, which does not require the $F_p$-algebra $R$ to be regular or even Noetherian; see Theorem \ref{strongEK}), whose statement we review in \S \ref{sec:ContravariantRH} (see also \S \ref{EKR}).
\end{itemize}

These sections are more or less independent of one another, and can be read in any order (except that \S \ref{section.duality} depends on \S \ref{sec:ContravariantRH}). One can also develop a theory which incorporates several of these refinements simultaneously (for example, one can compare derived categories of $\Z / p^{n} \Z$-sheaves on an arbitrary $\F_p$-scheme $X$ with derived categories of
quasi-coherent Frobenius modules over the Witt sheaf $W_n( \calO_X)$); we leave such extensions to the reader.

\subsection{The Work of B\"{o}ckle-Pink}\label{rmk:BP}

In the case where the $R$ is Noetherian, Theorem \ref{maintheoXXX} is essentially due to B\"{o}ckle and Pink.
Let us briefly summarize some of their work. Assume that $R$ is Noetherian, and let $\Mod_{R, \mathrm{fg}}^{\Frob}$
denote the full category of Frobenius modules $(M, \varphi_M)$ which are {\em finitely generated} as modules over $R$. 
In \cite{BP}, B\"{o}ckle and Pink construct an equivalence of abelian categories
$$\Shv^c_{\mathet}(\Spec(R),\F_p) \simeq \Mod_{R, \mathrm{fg}}^{\Frob} / \mathrm{Nil},$$
where $\mathrm{Nil}$ is the full subcategory of $\Mod_{R, \mathrm{fg}}^{\Frob}$ spanned by those Frobenius modules $(M, \varphi_M)$ where
$\varphi_M$ is nilpotent, and $\Mod_{R, \mathrm{fg}}^{\Frob} / \mathrm{Nil}$ denotes the Serre quotient (B\"{o}ckle and Pink denote this Serre quotient by
$\mathrm{Crys}(R)$ and refer to it as the category of {\it crystals} on $\Spec(R)$). From the perspective of \cite{BP}, the main innovation of this paper
is to realize the category $\mathrm{Crys}(R)$ concretely as a full subcategory $\Mod_{R}^{\Frob}$, rather than abstractly as a Serre quotient. To achieve this,
we note that every Frobenius module $(M, \varphi_M)$ admits a {\em perfection} $M^{\perfection}$, given as an abelian group by the direct limit
$$ \varinjlim( M \xrightarrow{ \varphi_M} M \xrightarrow{ \varphi_M } M \xrightarrow{ \varphi_M} M \rightarrow \cdots );$$
we will study this construction in detail in \S \ref{sec3sub2}. This construction annihilates every Frobenius module $(M, \varphi_M)$ for which $\varphi_M$ is nilpotent, and
therefore determines a functor
$$ \mathrm{Crys}(R) = \Mod_{R, \mathrm{fg}}^{\Frob} / \mathrm{Nil} \rightarrow \Mod_{R}^{\Frob}.$$
This functor is fully faithful, and its essential image is the subcategory $\Mod_{R}^{\hol} \subseteq \Mod_{R}^{\Frob}$ of {\it holonomic} Frobenius modules that
we study in \S \ref{section.holonomic}. The resulting identification of $\mathrm{Crys}(R)$ with $\Mod_{R}^{\hol}$ carries the equivalence
$\Shv^c_{\mathet}(\Spec(R),\F_p) \simeq \mathrm{Crys}(R)$ to the Riemann-Hilbert equivalence $\RH^{c}: \Shv^c_{\mathet}(\Spec(R),\F_p) \simeq \Mod_{R}^{\hol}$
of Theorem \ref{companion}.

One advantage of our approach is that it does not require the ring $R$ to be Noetherian. Beware that if $R$ is not Noetherian, then the subcategory
$\Mod_{R, \mathrm{fg}}^{\Frob} \subseteq \Mod_{R}^{\Frob}$ is not abelian, and the formalism of Serre quotients is not available. Nevertheless,
we will see that the category $\Mod_{R}^{\hol}$ of holonomic Frobenius modules is still a well-behaved abelian subcategory of $\Mod_{R}^{\Frob}$
(Corollary \ref{corX30}). Note that the extra generality afforded by allowing non-Noetherian rings can quite useful in practice: one of the main themes of the present paper is
that the theory is often clarified by replacing an $\F_p$-algebra $R$ by its perfection $R^{\perfection}$, which is almost never Noetherian.

Our realization of crystals as holonomic Frobenius modules also has the advantage of essentially trivializing the passage to derived categories in \S \ref{section.derivedRH}
(see Theorem \ref{RHDerived}). The corresponding statement in \cite{BP} requires more categorical preliminaries (largely to deal with the derived category of the Serre quotient category $\mathrm{Crys}(R)$ in a useful fashion), and does not describe the constructible derived category $D^b_c(\Spec(R), \F_p)$ as explicitly as Corollary~\ref{corollary.RHDerived}.

\begin{remark}
When $R$ is not Noetherian, we cannot realize the category $\Mod_{R}^{\hol}$ as a Serre quotient of the category $\Mod_{R, \mathrm{fg}}^{\Frob}$. Nevertheless,
it is still possible to realize the holonomic derived category $D^{b}_{\hol}( R[F] )$ as a {\em Verdier} quotient of the triangulated category of complexes
of Frobenius modules which are finitely generated and projective over $R$; see Remark \ref{remark.Verdier}.
\end{remark}

\subsection{The Work of Emerton-Kisin}\label{EKR}

The Riemann-Hilbert correspondence of Theorem \ref{maintheoXXX} is also closely related to the work of Emerton and Kisin (see \cite{EK}). In the case where $R$ is a smooth algebra over a field $k$ of characteristic $p$,
Emerton and Kisin construct an equivalence of triangulated categories
$$ \RSol_{\EK}: D^{b}_{\fgu}( R[F] )^{\op} \simeq D^{b}_{c}( \Spec(R), \F_p),$$
where $D^{b}_{\fgu}( R[F] )$ denotes the full subcategory of $D( R[F] )$ spanned by the cohomologically bounded chain complexes whose cohomology groups
{\it finitely generated unit} Frobenius modules (see Definition \ref{definition.fgu}). This differs from our Riemann-Hilbert equivalence
$\Sol: \Mod_{R}^{\hol} \simeq \Shv_{\mathet}^{c}( \Spec(R), \F_p)$ in two important respects: 
\begin{itemize}
\item The functor $\Sol: \Mod_{R}^{\hol} \simeq \Shv_{\mathet}^{c}( \Spec(R), \F_p)$ is an equivalence of abelian categories (though it can be extended to an equivalence of suitable derived categories,
see Corollary \ref{corollary.RHDerived}). However, the functor $\RSol_{\EK}$ is well-defined only at the level of derived categories: in other words, it is not t-exact. Gabber has identified the image of the abelian category of finitely generated unit Frobenius modules under the equivalence $\RSol_{\EK}$ with a certain category of {\em perverse} $\mathbf{F}_p$-sheaves inside $D^b_c(\Spec(R),\F_p)$ (see \cite{Gabber}).

\item The equivalence $\Sol: \Mod_{R}^{\hol} \simeq \Shv_{\mathet}^{c}( \Spec(R), \F_p)$ is a covariant functor, while $\RSol_{\EK}$ is contravariant.
\end{itemize}

\begin{example}
To illustrate the contrast between the Riemann-Hilbert correspondence of Theorem \ref{maintheoXXX} and the Riemann-Hilbert correspondence of \cite{EK} in more concrete terms,
let us consider an arbitrary $\F_p$-algebra $R$. A choice of non-zero divisor $t \in R$ determines closed and open immersions
$$ i: \Spec( R/tR) \hookrightarrow \Spec(R) \quad \quad j: \Spec( R[t^{-1}] ) \hookrightarrow \Spec(R),$$
so that the constant sheaf $\underline{\F_p}$ on $\Spec(R)$ fits into an exact sequence
\begin{equation}
\label{eq:Sol1}
 0 \rightarrow j_{!} \underline{\F_p} \rightarrow \underline{\F_p} \rightarrow i_{\ast} \underline{\F_p} \rightarrow 0.
 \end{equation}
Then the functor $\RH$ of Theorem \ref{maintheoXXX} carries the {\etale} sheaves $\underline{\F_p}$, $j_{!} \underline{\F_p}$, and $i_{\ast} \underline{\F_p}$
to the Frobenius modules $R^{\perfection}$, $(tR)^{\perfection}$, and $(R/tR)^{\perfection}$, respectively, and the exact sequence \eqref{eq:Sol1} to the short exact sequence 
$$ 0 \rightarrow (tR)^{\perfection} \rightarrow R^{\perfection} \rightarrow (R/tR)^{\perfection} \rightarrow 0.$$
of Frobenius modules.

On the other hand, if $R$ is a smooth algebra over a field $k$ of characteristic $p$, then the contravariant functor $\RSol_{\EK}$ from \cite{EK} carries the {\etale} sheaves $\underline{\F_p}$, $j_{!} \underline{\F_p}$, and $i_{\ast} \underline{\F_p}$ to the chain complexes of (finitely generated unit) Frobenius modules $R$, $R[t^{-1}]$, and $(R[t^{-1}] / R)[-1]$, respectively (up to a cohomological shift of $\dim(R)$: see Remark \ref{remark.normalization}). In other words, $\RSol_{\EK}$ carries exact sequence \eqref{eq:Sol1} into the the distinguished triangle
$(R[t^{-1}]/R)[-1] \to R \to R[t^{-1}]$ in the derived category $D^b_{\fgu}( \Mod_{R}^{\Frob} )$, obtained by ``rotating'' the exact sequence
$$0 \rightarrow R \rightarrow R[t^{-1}] \rightarrow R[t^{-1}] / R \rightarrow 0$$
in the category of Frobenius modules.
\end{example}

%

In \S \ref{section.duality}, we will show that the functor $\RSol_{\EK}$ fits into a commutative diagram of triangulated categories
$$ \xymatrix{ & D^{b}_{\hol}( R[F] ) \ar[dl]_{\mathbb{D}} \ar[dr]^{ \RSol} & \\
D^{b}_{\fgu}( R[F] )^{\op} \ar[rr]^{ \RSol_{\EK} } & & D^{b}_{c}( \Spec(R); \F_p ), }$$
where $\RSol$ is a derived version of our solution functor $\Sol$, and $\mathbb{D}$ denotes a certain duality functor on the derived category of Frobenius modules.
Using the fact that $\RSol$ is an equivalence of categories (which follows easily from Theorem \ref{maintheoXXX} and its proof) and that $\mathbb{D}$ is
an equivalence of categories (which we prove as Theorem \ref{theorem.dual-image}), we give a new proof of the assertion that $\RSol_{\EK}$ is an equivalence of categories. Moreover, our argument
does not require the assumption that $R$ is a smooth algebra over a field: we allow $R$ to be an arbitrary $\F_p$-algebra, with the caveat that the triangulated category $D^{b}_{\fgu}( R[F] )$
must be suitably defined (if $R$ is not a regular Noetherian ring, then the criteria for membership in the subcategory $D^{b}_{\fgu}( R[F] ) \subseteq D( R[F] )$ must be imposed at the derived level,
rather than at the level of individual cohomology groups; see Definition \ref{definition.derived-fgu}).

\begin{remark}
Let $R$ be a commutative $\F_p$-algebra, let $X = \Spec(R)$ denote the associated affine scheme, and let 
$\varphi_X: X \rightarrow X$ denote the absolute Frobenius map.
A Frobenius module over $R$ can be identified with a quasi-coherent sheaf $\calE$ on $X$ equipped with a map $\varphi_{\calE}: \calE \rightarrow \varphi_{X \ast} \calE$,
or equivalently with a map $\psi_{\calE}: \varphi_{X}^{\ast} \calE \rightarrow \calE$. In this paper, we will be primarily concerned with the class of {\em perfect} Frobenius modules, characterized by the requirement that the map $\varphi_{\calE}$ is an isomorphism. By contrast, the book \cite{EK} is mainly concerned with the class of {\it unit} Frobenius modules, characterized by the requirement that the map $\psi_{\calE}$ is an isomorphism.
Note that the direct image functor $\varphi_{X \ast}$ is always exact (since $\varphi_X$ is affine morphism), but the exactness of the pullback functor $\varphi_{X}^{\ast}$ requires some strong hypotheses on $R$
(for example, that $R$ is a regular Noetherian ring). Consequently, the category $\Mod_{R}^{\perf}$ of perfect Frobenius modules is always abelian, but the category of finitely generated unit
Frobenius modules is well-behaved only in special cases.
\end{remark}

\subsection{Other Related Works}

Extensions of the contravariant Riemann-Hilbert correspondence of \cite{EK} to singular schemes have also been explored in the papers of Blickle-B\"{o}ckle \cite{BB1,BB2}, Schedlmeier \cite{Sched}, and Ohkawa \cite{Ohkawa}.
Under mild finiteness conditions on $R$, the papers \cite{BB1,BB2} develop a theory of ``Cartier" modules: these are coherent sheaves $\calE$ on $X = \Spec(R)$ equipped with a map
$C_{\calE}: \varphi_{X \ast} \calE \rightarrow \calE$. Passing to a quotient by a naturally defined subcategory of nilpotent objects yields the category $\mathrm{Crys}_{\mathrm{Cart}}(R)$ of {\em Cartier crystals} on $R$. For $X$ smooth over a perfect field, the category $\mathrm{Crys}_{\mathrm{Cart}}(R)$ is identified in \cite{BB1} with the category of finitely generated unit Frobenius modules (and thus with the category of perverse $\F_p$-sheaves on $\Spec(R)_{\mathet}$ via \cite{EK} and \cite{Gabber}). Roughly speaking, the smoothness of $R$ ensures that $\varphi^! \calE \simeq \varphi^* \calE \otimes \omega_{X}^{1-p}$, so a Cartier module $C_{\calE}: \varphi_{X \ast} \calE \to \calE$ gives by adjunction a map $\alpha: \calE \otimes \omega_X^{-1} \to \varphi_{X}^*(\calE \otimes \omega_X^{-1})$ whose unitalization (see Construction~\ref{unitalize}) yields the desired finitely generated unit Frobenius module. For $R$ not necessarily regular, even though finitely generated unit Frobenius modules may be badly behaved, the abelian category $\mathrm{Crys}_{\mathrm{Cart}}(R)$ is shown to have good behaviour in \cite{BB1,BB2}; see also \cite{Gabber}. Using this, the paper \cite{Sched} shows that, given an embedding $X \hookrightarrow \Spec(P)$ with $P$ smooth over a perfect field, a suitably defined derived category of $\mathrm{Crys}_{\mathrm{Cart}}(R)$ is equivalent to the full subcategory of $D^b_{\fgu}(P[F])$ spanned by complexes supported on $X$. Combining this with the Riemann-Hilbert correspondence from \cite{EK} for $P$ and a suitable analogue of Kashiwara's theorem, one obtains a description of
the constructible derived category $D^b_c(\Spec(R),\F_p)$ in terms of Cartier crystals on $R$ (\cite{Ohkawa,Sched}). 

\begin{remark}
The most important example of a Cartier module is the coherent dualizing sheaf $\omega_{X}$ for $X = \Spec(R)$, where we take $C_{ \omega_{X} }: \varphi_{X \ast} \omega_{X} \rightarrow \omega_{X}$
to be the Grothendieck dual of the unit map $\calO_X \rightarrow \varphi_{X \ast} \calO_X$. In the case where $X$ is a smooth scheme over a perfect field $k$, this map can also be described in terms of
the Cartier operator on the de Rham complex of $X$, which motivates the terminology.
\end{remark}

In summary, the papers \cite{BB1,BB2,Ohkawa,Sched} extend the Riemann-Hilbert correspondence of \cite{EK} to algebras of finite type over a field by developing the theory of Cartier modules
and reducing to the smooth case. In contrast, the discussion in \S \ref{sec:ContravariantRH} gives an intrinsic extension of the Riemann-Hilbert correspondence of \cite{EK} to all $\F_p$-algebras $R$, which
is described using Frobenius modules (that is, quasi-coherent sheaves with a map $\calE \rightarrow \varphi_{X \ast} \calE$). The presentation via Cartier crystals gives a module-theoretic description of perverse $\F_p$-sheaves on $\Spec(R)_{\mathet}$ (which has an important precursor in \cite{Gabber}), while the presentation in \S \ref{sec:ContravariantRH} works entirely in the derived category and thus avoids discussion of the abelian category of perverse sheaves. 

\subsection{Acknowledgements}

During the period in which this work was carried out, the first author was supported by National Science Foundation under grant number 1501461 and a Packard fellowship, and the second author was supported by the National Science Foundation under grant number 1510417. We thank Manuel Blickle and Johan de Jong for useful conversations.

\newpage



\newpage \section{Overview}\label{section.overview}
\setcounter{subsection}{0}
\setcounter{theorem}{0}

Our goal in this section is to give a precise formulation of Theorem \ref{maintheoXXX}. We begin by introducing two of the principal objects of interest in this paper:
the category $\Mod_{R}^{\Frob}$ of {\it Frobenius modules} over a commutative $\F_p$-algebra $R$ (Definition \ref{definition.frobenius-module}) and the category $\Shv_{\mathet}( \Spec(R), \F_p )$ of {\it $p$-torsion {\etale} sheaves}
on $\Spec(R)$ (Definition \ref{definition.etalesheaf}). The Riemann-Hilbert correspondence of Theorem \ref{maintheoXXX} is a fully faithful embedding of categories
$$ \RH: \Shv_{\mathet}( \Spec(R), \F_p) \rightarrow \Mod_{R}^{\Frob}.$$
It will take a bit of work to construct this functor (this is the main objective of \S \ref{section.RH}). In this section, we consider instead the {\it solution sheaf} functor
$\Sol: \Mod_{R}^{\Frob} \rightarrow \Shv_{\mathet}( \Spec(R), \F_p)$, which is left inverse to the Riemann-Hilbert functor $\RH$ and admits a very simple description
(Definition \ref{defSol}). We then formulate a variant of Theorem \ref{maintheoXXX}, which asserts that the functor $\Sol$ becomes an equivalence
when restricted to a certain full subcategory $\Mod_{R}^{\alg} \subseteq \Mod_{R}^{\Frob}$ (Theorem \ref{theoX50}).

\subsection{Frobenius Modules}
\label{DefFmod}

We begin by introducing some definitions.

\begin{definition}\label{definition.frobenius-module}
Let $R$ be a commutative $\F_p$-algebra. A {\it Frobenius module over $R$} is an $R$-module $M$ equipped with an additive map
$\varphi_M: M \rightarrow M$ satisfying the identity $\varphi_M( \lambda x) = \lambda^{p} \varphi_M(x)$ for $x \in M$, $\lambda \in R$.

Let $(M, \varphi_M)$ and $(N, \varphi_N)$ be Frobenius modules over $R$. A {\it morphism of Frobenius modules} from $(M, \varphi_M)$ to $(N, \varphi_N)$ is an
$R$-module homomorphism $\rho: M \rightarrow N$ for which the diagram
$$ \xymatrix{ M \ar[r]^{\rho} \ar[d]^{\varphi_M} & N \ar[d]^{\varphi_N} \\
M \ar[r]^{\rho} & M' }$$
commutes. We let $\Mod_{R}^{\Frob}$ denote the category whose objects are Frobenius modules $(M, \varphi_M)$ and whose morphisms are morphisms of Frobenius modules.
We will refer to $\Mod_{R}^{\Frob}$ as the {\it category of Frobenius modules over $R$}. 
\end{definition}

\begin{notation}
Let $M$ and $N$ be Frobenius modules over a commutative $\F_p$-algebra $R$. We let $\Hom_{R}^{\Frob}(M, N)$ denote the set of Frobenius module morphisms from $(M, \varphi_M)$ to $(N, \varphi_N)$.
\end{notation}

\begin{remark}
Let $(M, \varphi_M)$ be a Frobenius module over a commutative $\F_p$-algebra $R$. We will often abuse terminology by simply referring to $M$ as a Frobenius module over $R$: in this case, we are implicitly asserting that $M$ is equipped with a Frobenius-semilinear map $\varphi_M: M \rightarrow M$. 
\end{remark}

\begin{example}\label{exX3}
Let $R$ be a commutative $\F_p$-algebra. Then we can regard $R$ as a Frobenius module over itself, via the Frobenius map
$$\varphi_{R}: R \rightarrow R \quad \quad \varphi_{R}(\lambda) = \lambda^{p}.$$
More generally, the same comment applies to an ideal $I \subseteq R$.
\end{example}

It will sometimes be helpful to identify Frobenius modules over a commutative $\F_p$-algebra $R$ with modules over a certain (noncommutative) enlargement of $R$.

\begin{notation}\label{not2}
Let $R$ be an $\F_p$-algebra. We let $R[F]$ denote the noncommutative ring whose elements are finite sums
$\sum_{n \geq 0} c_n F^{n}$, with multiplication given by
$$ ( \sum_{m \geq 0} c_m F^{m} ) ( \sum_{ n \geq 0 } c'_{n} F^{n} ) = \sum_{ k \geq 0 } ( \sum_{ i + j = k } c_{i} c_{j}^{ p^{i} } ) F^{k}.$$
We will identify $R$ with the subring of $R[F]$ consisting of those sums $\sum_{n \geq 0} c_n F^{n}$ for which the coefficients $c_i$ vanish for $i > 0$.
Unwinding the definitions, we see that the category $\Mod_{R}^{\Frob}$ is equivalent to the category of left $R[F]$-modules. In particular, $\Mod_{R}^{\Frob}$ is an abelian category
which has enough projective objects and enough injective objects. Given objects $M, N \in \Mod_{R}^{\Frob}$, we let
$\Ext^{n}_{R[F]}( M, N)$ denote the $n$th $\Ext$-group of $M$ and $N$, computed in the abelian category $\Mod_{R}^{\Frob}$.
\end{notation}

We now consider the behavior of the category $\Mod_{R}^{\Frob}$ as the commutative $\F_p$-algebra $R$ varies. 

\begin{construction}[Extension of Scalars]\label{construction.extension}
Let $f: A \rightarrow B$ be a homomorphism of commutative $\F_p$-algebras. If $M$ is a Frobenius module over $A$,
then we can regard the tensor product $B \otimes_{A} M$ as a Frobenius module over $B$, with Frobenius map given by
$$ \varphi_{B \otimes_{A} M}( b \otimes x) = b^{p} \otimes \varphi_{M}(x).$$
The construction $M \mapsto B \otimes_{A} M$ determines a functor from $\Mod_{A}^{\Frob}$ to $\Mod_{B}^{\Frob}$, which
we will denote by $f^{\ast}_{ \Frob}$ and refer to as {\it extension of scalars along $f$}.
\end{construction}

\begin{remark}[Restriction of Scalars]\label{remark.restriction}
In the situation of Construction \ref{construction.extension}, the extension of scalars $f^{\ast}_{\Frob}(M) = B \otimes_{A} M$
is characterized by the following universal property: for every Frobenius module $N$ over $B$, composition with
the map $M \rightarrow B \otimes_{A} M$ induces a bijection
$$ \Hom_{ B[F] }( B \otimes_{A} M, N) \rightarrow \Hom_{ A[F] }( M, N ).$$
In other words, we can regard the functor $f^{\ast}_{\Frob}$ as a left adjoint to the forgetful functor
$\Mod_{B}^{\Frob} \rightarrow \Mod_{A}^{\Frob}$. We will denote this forgetful functor by $f_{\ast}$ and
refer to it as {\it restriction of scalars along $f$}.
\end{remark}

\begin{warning}
Let $f: A \rightarrow B$ be a homomorphism of commutative $\F_p$-algebras. Then $f$ induces extends to homomorphism of noncommutative
rings $f^{+}: A[F] \rightarrow B[F]$, where $A[F]$ and $B[F]$ are defined as in Notation \ref{not2}. The content of Construction \ref{construction.extension} and
Remark \ref{remark.restriction} is that, for every Frobenius module $M$ over $A$, the canonical map 
$$B \otimes_{A} M \rightarrow B[F] \otimes_{A[F] } M$$
is an isomorphism. This relies on the fact that $A[F]$ is freely generated as a {\em left} $A$-module by the elements
$\{ F^n \}_{n \geq 0}$. Beware that $A[F]$ is usually not free when regarded as a {\em right} $A$-module (so the analogous
compatibility would fail if we were to study right modules over $A[F]$, rather than left modules). In fact, following Notation~\ref{not:FrobRestriction}, the ring $A[F]$ identifies with $\oplus_{n \geq 0} A \cdot F^i$ as a left $A$-module, and with $\oplus_{n \geq 0} F^i \cdot A^{1/p^n}$ as a right $A$-module. In particular, the latter is free over $A$ only under strong conditions (such as regularity). 
\end{warning}

\begin{remark}\label{remark.cor20}
Let $f: A \rightarrow B$ be a homomorphism of commutative $\F_p$-algebras, and suppose that the multiplication map $m: B \otimes_{A} B \rightarrow B$
is an isomorphism (this condition is satisfied, for example, if $f$ is surjective, or if $f$ exhibits $B$ as a localization of $A$). Then, for any Frobenius module $M$
over $B$, the counit map
$v: f^{\ast}_{\Frob} f_{\ast}(M) \rightarrow M$ is also an isomorphism: note that the domain of $v$ can be identified with the tensor product
$B \otimes_{A} M \simeq (B \otimes_{A} B) \otimes_{B} M$. It follows that the restriction of scalars functor 
$f_{\ast}: \Mod_{B}^{\Frob} \rightarrow \Mod_{A}^{\Frob}$ is fully faithful.
\end{remark}

\subsection{{\Etale} Sheaves}

For the reader's convenience, we briefly review the theory of {\etale} sheaves. We consider here only the case of {\em affine} schemes (we will discuss sheaves on more general schemes in
\S \ref{section.global}).

\begin{notation}
Let $R$ be a commutative ring. We let $\CAlg_{R}^{\mathet}$ denote the category whose objects are {\etale} $R$-algebras, and whose morphisms are $R$-algebra homomorphisms.
\end{notation}

\begin{definition}\label{definition.etalesheaf}
Let $R$ and $\Lambda$ be commutative rings, and let $\Mod_{\Lambda}$ denote the category of $\Lambda$-modules. An 
{\it {\etale} sheaf of $\Lambda$-modules on $\Spec(R)$} is a functor
$$ \sheafF: \CAlg_{R}^{\mathet} \rightarrow \Mod_{\Lambda}$$
which satisfies the following pair of conditions:
\begin{itemize}
\item For every faithfully flat map $u: A \rightarrow B$ in $\CAlg_{R}^{\mathet}$, the sequence
$$ 0 \rightarrow \sheafF(A) \xrightarrow{ \sheafF(u)} \sheafF(B) \xrightarrow{ \sheafF(u \otimes \id) - \sheafF(\id \otimes u)} \sheafF( B \otimes_{A} B )$$
is exact.
\item For every finite collection of {\etale} $R$-algebras $\{ A_i \}_{i \in I}$, the map $$\sheafF( \prod_{i \in I} A_i ) \rightarrow \prod_{i \in I} \sheafF(A_i)$$ is an isomorphism.
\end{itemize}
We let $\Shv_{\mathet}( \Spec(R), \Lambda)$ denote the category whose objects are {\etale} sheaves of $\Lambda$-modules on $\Spec(R)$ (where morphisms are given by natural transformations).
\end{definition}

\begin{remark}
In this paper, we will be concerned almost exclusively with the case where $\Lambda$ is the finite field $\F_p$. The only exception is in \S \ref{section.moretorsion}, where we take
$\Lambda = \Z / p^{n} \Z$ for some integer $n \geq 0$.
\end{remark}

\begin{example}[Constant Sheaves]
Let $R$ be a commutative ring and let $M$ be a module over a commutative ring $\Lambda$. We let $\underline{M} \in \Shv_{\mathet}( \Spec(R), \Lambda)$
denote the functor which associates to each {\etale} $R$-algebra $A$ the set $\underline{M}(A)$ of locally constant $M$-valued functions on $\Spec(A)$.
We will refer to $\underline{M}$ as the {\it constant sheaf with value $M$}.
\end{example}

\begin{example}[Quasi-Coherent Sheaves]\label{exX70}
Let $R$ be an $\F_p$-algebra. For every $R$-module $M$, the construction $(A \in \CAlg_{R}^{\mathet}) \mapsto A \otimes_{R} M$
determines an {\etale} sheaf of $\F_p$-modules on $M$, which we denote by $\widetilde{M}$ (see \cite[Tag 03DX]{Stacks}). Note that if $M$ is a Frobenius module over $R$, then $\varphi_{M}$
determines a map of {\etale} sheaves $\widetilde{\varphi}_{M}: \widetilde{M} \rightarrow \widetilde{M}$; this map is $\mathbf{F}_p$-linear, but not $R$-linear in general. 
\end{example}

\begin{notation}
Let $R$ and $\Lambda$ be commutative rings. If $\sheafF$ and $\sheafG$ are {\etale} sheaves of $\Lambda$-modules on $\Spec(R)$, we let
$\Hom_{ \underline{\Lambda} }( \sheafF, \sheafG )$ denote the abelian group of morphisms from $\sheafF$ to $\sheafG$ in the
category $\Shv_{\mathet}( \Spec(R), \Lambda)$ (emphasizing the idea that $\sheafF$ and $\sheafG$ can be regarded as modules over the constant sheaf $\underline{\Lambda}$). 

Note that $\Shv_{\mathet}( \Spec(R), \Lambda)$ is an abelian category with enough injective objects, so that we can consider
$\Ext$-groups in $\Shv_{\mathet}( \Spec(R), \Lambda)$. We denote these $\Ext$-groups by $\Ext^{n}_{ \underline{\Lambda} }( \sheafF, \sheafG)$
for $n \geq 0$.
\end{notation}

\begin{remark}[Functoriality]
Let $f: A \rightarrow B$ be a homomorphism of commutative rings. Then $f$ induces a base change functor
$$\CAlg_{A}^{\mathet} \rightarrow \CAlg_{B}^{\mathet} \quad \quad A' \mapsto A' \otimes_{A} B.$$
Precomposition with this functor determines a pushforward functor $$f_{\ast}: \Shv_{\mathet}( \Spec(B), \Lambda) \rightarrow \Shv_{\mathet}( \Spec(A), \Lambda),$$
given concretely by the formula $( f_{\ast} \sheafF)(A') = \sheafF(A' \times_{A} B)$.
The functor $f_{\ast}$ admits a left adjoint $f^{\ast}: \Shv_{\mathet}( \Spec(A), \Lambda) \rightarrow \Shv_{\mathet}( \Spec(B), \Lambda)$, which we
refer to as {\it pullback along $f$}. 
\end{remark}

\subsection{The Solution Functor}

\begin{construction}\label{construction.solsheaf}
Let $R$ be a commutative $\F_p$-algebra and let $M$ be a Frobenius module over $R$.
For every {\etale} $R$-algebra $A$, we regard the tensor product $A \otimes_{R} M$ as a Frobenius module over $A$ (see Construction \ref{construction.extension}).
We define
$$ \Sol(M)(A) = \{ x \in (A \otimes_{R} M): \varphi_{A \otimes_{R} M}(x) = x \}.$$
The construction $A \mapsto \Sol(M)$ determines a functor $\CAlg_{R}^{\mathet} \rightarrow \Mod_{\F_p}$, which we will
refer to as {\it the solution sheaf} of $M$.
\end{construction}

\begin{proposition}\label{propX74weak}
Let $R$ be a commutative $\F_p$-algebra and let $M$ be a Frobenius module over $R$. Then $\Sol(M)$ is an {\etale} sheaf of $\F_p$-modules on $\Spec(R)$.
\end{proposition}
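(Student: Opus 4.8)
The plan is to verify the two sheaf conditions of Definition~\ref{definition.etalesheaf} directly for the functor $\Sol(M)\colon \CAlg_R^{\mathet}\to\Mod_{\F_p}$. The key observation is that for each {\etale} $R$-algebra $A$ we have defined $\Sol(M)(A)$ as the equalizer (in $\F_p$-modules) of the pair of maps $\id,\widetilde\varphi_M\colon \widetilde M(A)\rightrightarrows\widetilde M(A)$, where $\widetilde M$ is the quasi-coherent {\etale} sheaf of Example~\ref{exX70}. Since $\widetilde M$ is already known to be an {\etale} sheaf (see \cite[Tag 03DX]{Stacks}), the strategy is to deduce both sheaf axioms for $\Sol(M)$ from the corresponding axioms for $\widetilde M$, using that the functor sending a $\Lambda$-module (presheaf) to a kernel/equalizer is left exact and commutes with products.

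First I would check the product condition: for a finite family $\{A_i\}_{i\in I}$ of {\etale} $R$-algebras, one must show $\Sol(M)(\prod_i A_i)\to\prod_i\Sol(M)(A_i)$ is an isomorphism. Here I use that $\widetilde M(\prod_i A_i)=(\prod_i A_i)\otimes_R M\cong\prod_i(A_i\otimes_R M)=\prod_i\widetilde M(A_i)$ (this is exactly the product axiom for $\widetilde M$, and one checks the isomorphism is compatible with the respective Frobenius maps $\varphi_{A\otimes_R M}$, since the Frobenius on a product ring is the product of the Frobenii). Taking fixed points of $\widetilde\varphi_M$ commutes with the finite product, giving the claim. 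Second I would check the equalizer/descent condition: given a faithfully flat map $u\colon A\to B$ in $\CAlg_R^{\mathet}$, I need exactness of
$$0\to\Sol(M)(A)\to\Sol(M)(B)\rightrightarrows\Sol(M)(B\otimes_A B).$$
To get this, consider the commutative diagram with exact rows (the rows being the sheaf-exactness sequences for $\widetilde M$ applied to $A$, $B$, $B\otimes_A B$) in which the vertical maps are $\id-\widetilde\varphi_M$ on each of $\widetilde M(A)$, $\widetilde M(B)$, $\widetilde M(B\otimes_A B)$; here one uses that $\widetilde\varphi_M$, being a map of {\etale} sheaves, is compatible with the $\sheafF(u)$ and the two maps to $\sheafF(B\otimes_A B)$. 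A diagram chase (or the snake lemma, noting that $\Sol(M)(-)$ is the kernel of the vertical map on each term) then yields the desired left-exactness of the cosimplicial sequence for $\Sol(M)$.

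The only mild subtlety — and the place I would be most careful — is making sure the Frobenius-semilinear map $\widetilde\varphi_M$ genuinely induces the structure maps on the tensor products $A\otimes_R B$, $B\otimes_A B$, etc., that appear in the descent sequence, i.e.\ that it is a morphism of {\etale} \emph{sheaves} and hence natural in the {\etale} $R$-algebra; this is already asserted in Example~\ref{exX70}, so the verification amounts to unwinding the formula $\varphi_{A\otimes_R M}(a\otimes x)=a^p\otimes\varphi_M(x)$ and checking naturality, which is routine. Apart from that, no genuine obstacle arises: the proposition is essentially the formal statement that a kernel of an endomorphism of an {\etale} sheaf, computed presheaf-wise, is again an {\etale} sheaf.
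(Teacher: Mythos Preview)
Your proposal is correct and takes essentially the same approach as the paper: both observe that $\Sol(M)$ is the kernel of the endomorphism $\id-\widetilde{\varphi}_M$ of the {\'e}tale sheaf $\widetilde{M}$, and that a kernel of a morphism of sheaves is a sheaf. The paper's proof simply invokes this abstract fact in one line, while you unwind it by verifying the product and descent conditions directly; the content is the same.
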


\begin{proof}
Let $\widetilde{M}$ denote the quasi-coherent sheaf associated to $M$ (Example \ref{exX70}). It follows immediately from the definition that
$\Sol(M)$ can be described as the kernel of the map
$$ (\id - \widetilde{\varphi}_M): \widetilde{M} \rightarrow \widetilde{M}.$$
Since $\widetilde{M}$ is an {\etale} sheaf of $\F_p$-modules on $\Spec(R)$, the functor $\Sol(M)$ has the same property.
\end{proof}

\begin{definition}\label{defSol}
Let $R$ be a commutative $\F_p$-algebra. We will regard the construction
$$ (M \in \Mod_{R}^{\Frob}) \mapsto ( \Sol(M) \in \Shv_{\mathet}( \Spec(R), \F_p) )$$
as a functor $\Sol: \Mod_{R}^{\Frob} \rightarrow \Shv_{\mathet}( \Spec(R), \F_p )$. We will refer to
$\Sol$ as {\it the solution functor}.
\end{definition}

\begin{remark}\label{remark.leftexact}
The solution functor $\Sol: \Mod_{R}^{\Frob} \rightarrow \Shv_{\mathet}( \Spec(R), \F_p )$ is left exact. However, it is usually not right exact.
\end{remark}

\subsection{The Riemann-Hilbert Correspondence}

We now introduce a class of Frobenius modules on which the solution functor of Definition \ref{defSol} is particularly well-behaved.

\begin{definition}\label{defhol1}
Let $R$ be a commutative $\F_p$-algebra and let $M$ be a Frobenius module over $R$. We will say that $M$ is
{\it algebraic} if it satisfies the following conditions:
\begin{itemize}
\item[$(a)$] The map $\varphi_{M}: M \rightarrow M$ is an isomorphism of abelian groups.
\item[$(b)$] Every element $x \in M$ satisfies an equation of the form $$\varphi_{M}^{n}(x) + a_1 \varphi_{M}^{n-1}(x) + \cdots + a_n x = 0$$ for some coefficients $a_i \in R$.
\end{itemize}
We let $\Mod_{R}^{\alg}$ denote the full subcategory of $\Mod_{R}^{\Frob}$ spanned by the algebraic Frobenius modules over $R$.
\end{definition}

\begin{remark}
\label{rmk:IndHolCriterion}
Let $R$ be a commutative $\F_p$-algebra and let $M$ be a Frobenius module over $R$. Then condition $(b)$ of Definition \ref{defhol1} is equivalent to the following:
\begin{itemize}
\item[$(b')$] For every finitely generated $R$-submodule $M_0 \subseteq M$, the Frobenius submodule of $M$ generated by $M_0$ is also finitely generated as an $R$-module.
\end{itemize}
To see that $(b)$ implies $(b')$, suppose that $M_0 \subseteq M$ is the $R$-submodule generated by finitely many elements $\{ x_i \}_{i \in I}$. Then condition $(b)$ guarantees that
the $R$-submodule generated by $\{ \varphi_{M}^{k}( x_i) \}_{i \in I, k \leq n}$ is stable under the action of $\varphi_{M}$ for some $n \gg 0$; this is clearly the smallest $\varphi_{M}$-stable
submodule of $M$ which contains $M_0$.

Conversely, suppose that $(b')$ is satisfied and let $x$ be an element of $M$. Applying condition $(b')$ to the submodule $M_0 = Rx \subseteq M$, we see that the sum
$\sum_{k \geq 0} R \varphi_{M}^{k}(x) \subseteq M$ is generated by finitely many elements, and is therefore contained in $\sum_{0 \leq k < n} R \varphi^{k}_{M}(x)$
for some integer $n$. It follows that $\varphi^{n}_{M}(x)$ can be written as a linear combination $a_1 \varphi^{n-1}_M(x) + \cdots + a_n x$ for some coefficients
$a_1, a_2, \ldots, a_n \in R$.
\end{remark}

We can now formulate the main result of this paper:

\begin{theorem}\label{theoX50}
Let $R$ be a commutative $\F_p$-algebra. Then the solution sheaf functor
$\Sol: \Mod_{R}^{\alg} \rightarrow \Shv_{\mathet}( \Spec(R), \F_p )$
is an equivalence of categories.
\end{theorem}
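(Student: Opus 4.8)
The plan is to prove Theorem \ref{theoX50} by exhibiting an explicit inverse to $\Sol$, namely the Riemann-Hilbert functor $\RH$ restricted to $\Mod_R^{\alg}$, and verifying the two adjunction-unit/counit isomorphisms. Since constructing $\RH$ directly is hard, I would proceed in stages, mirroring the outline in the introduction. First, I would restrict attention to the category $\Mod_R^{\perf}$ of perfect Frobenius modules (recall $\Mod_R^{\alg} \subseteq \Mod_R^{\perf}$ by condition $(a)$ of Definition \ref{defhol1}), and construct a left adjoint $\RH \colon \Shv_{\mathet}(\Spec(R),\F_p) \to \Mod_R^{\perf}$ to the solution functor $\Sol|_{\Mod_R^{\perf}}$. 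The key input here is a theory of compactly supported direct images for perfect Frobenius modules: for a finitely presented (or locally closed) immersion, one wants a left adjoint $j_!$ to restriction of scalars, and one builds $\RH$ by writing an arbitrary \'etale sheaf as a colimit of sheaves of the form $j_! \underline{\F_p}$ and using that $\Sol$ sends $R^{\perfection}$ (and more generally ideals, suitably perfected) to the corresponding constant or extension-by-zero sheaves. This is where I expect the bulk of the technical work to live.

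Next, I would establish that $\RH$ is exact. As noted in the introduction, this is transparent when $R$ is an algebraically closed field, since then $\Shv_{\mathet}(\Spec(R),\F_p) \simeq \Vect_{\F_p}$ and $\RH$ is computed by $\RH(\sheafF) = (\sheafF \otimes_{\F_p} \overline{R})$-type formulas with all exact sequences split; the general case follows by base change along $R \to \overline{\kappa(\mathfrak p)}$ for all points $\mathfrak p \in \Spec(R)$, using compatibility of $\RH$ with such base change and the fact that a complex of $R$-modules is acyclic iff it is acyclic after all such base changes. With exactness in hand, full faithfulness of $\RH$ on all of $\Shv_{\mathet}(\Spec(R),\F_p)$ reduces to the computation $\Sol \circ \RH \simeq \id$, i.e.\ that the unit $\sheafF \to \Sol(\RH(\sheafF))$ is an isomorphism; this in turn follows from the exactness of the Artin–Schreier sequence $0 \to \underline{\F_p} \to \widetilde{R} \xrightarrow{\mathrm{id}-\varphi} \widetilde{R} \to 0$ on the \'etale site, applied after perfection, which identifies $\Sol(R^{\perfection})$ with $\underline{\F_p}$ and propagates through colimits.

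To finish, I would identify the essential image of $\RH$ (equivalently, the fixed points of the adjunction on the $\Mod_R^{\Frob}$ side) with $\Mod_R^{\alg}$. One direction is that $\RH(\sheafF)$ is always algebraic: reduce to $\sheafF$ constructible, where $\RH(\sheafF)$ is holonomic (finitely generated perfection of a finitely generated Frobenius module), hence algebraic by Remark \ref{rmk:IndHolCriterion}, and then pass to filtered colimits, noting that a filtered colimit of algebraic modules is algebraic (condition $(b)$ is checked element by element). The converse—that every algebraic $M$ lies in the image, i.e.\ the counit $\RH(\Sol(M)) \to M$ is an isomorphism—is the crux: one writes $M$ as a filtered colimit of its finitely generated Frobenius submodules, which are holonomic by the algebraicity hypothesis, reduces to the holonomic case, and there invokes the equivalence $\RH^c \colon \Shv_{\mathet}^c(\Spec(R),\F_p) \simeq \Mod_R^{\hol}$, which itself is proved by d\'evissage to the case where $R$ is a field and appealing to Katz's Theorem \ref{theorem.folk}. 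Assembling these pieces, $\RH|_{\Mod_R^{\alg}}$ is a two-sided inverse to $\Sol|_{\Mod_R^{\alg}}$, giving the desired equivalence. The main obstacle throughout is the construction of $\RH$ and its compactly supported direct images; everything after that is d\'evissage plus Katz's theorem.
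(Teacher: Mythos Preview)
Your outline follows essentially the same route as the paper: construct $\RH$ as a left adjoint to $\Sol|_{\Mod_R^{\perf}}$ via compactly supported direct images, prove exactness by reduction to algebraically closed fields, prove full faithfulness via Artin--Schreier, identify the essential image with $\Mod_R^{\alg}$ by first handling the constructible/holonomic case via d\'evissage to Katz's theorem and then passing to ind-objects. Two points of imprecision are worth flagging. First, the compactly supported direct image $f_!$ is needed for general \emph{\'etale} morphisms $f$, not merely immersions, since an arbitrary \'etale sheaf is resolved by sheaves of the form $j_! \underline{\F_p}$ with $j$ \'etale (this is what the paper's \S\ref{section.compactimage} builds). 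Second, and more substantively, the claim that ``a complex of $R$-modules is acyclic iff it is acyclic after all base changes $R \to \overline{\kappa(\mathfrak p)}$'' is false in general (consider $\mathbf{Q}$ over $\mathbf{Z}$); the paper's exactness argument (Proposition~\ref{proposition.corX33}) works because one first reduces to constructible $\sheafF$, so the kernel in question is \emph{holonomic}, and then invokes the nontrivial support criterion for holonomic modules (Proposition~\ref{theo78}: a holonomic module with empty support vanishes). Your outline implicitly uses this but states it in a form that would not survive as written.
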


Note that Theorem \ref{theoX50} immediately implies Theorem \ref{maintheoXXX}: assuming Theorem \ref{theoX50}, the Riemann-Hilbert functor
$\RH$ can be defined as the composition
$$ \Shv_{\mathet}( \Spec(R), \F_p) \xrightarrow{ \Sol^{-1} } \Mod_{R}^{\alg} \hookrightarrow \Mod_{R}^{\Frob}.$$
We will give a different (but ultimately equivalent) definition of the functor $\RH$ in \S \ref{section.RH}: the construction of this functor
is one of the key ingredients in our proof of Theorem \ref{theoX50}, which we present in \S \ref{section.mainproof}.

\newpage \section{The Category of Frobenius Modules}\label{section.frobenius-module}
\setcounter{subsection}{0}
\setcounter{theorem}{0}

Let $R$ be a commutative $\F_p$-algebra. Our goal in this section is to establish some elementary properties of the abelian category
$\Mod_{R}^{\Frob}$ of Frobenius modules over $R$. We begin in \S \ref{sec3sub1} by studying the forgetful functor from $\Mod_{R}^{\Frob}$ to the category of $R$-modules.
The main observation is that the ring $R$ admits a very simple resolution 
$$0 \rightarrow R[F] \xrightarrow{F-1} R[F] \rightarrow R \rightarrow 0$$
in the category of left modules over the noncommutative ring $R[F]$ appearing in Notation \ref{not2}. This allows us to reduce calculations
of $\Ext$-groups in the category $\Mod_{R}^{\Frob}$ to calculations of $\Ext$-groups in
the category $\Mod_{R}$: see Construction \ref{conX7}.

In \S \ref{sec3sub2}, we restrict our attention to the class of {\it perfect} Frobenius modules: that is, Frobenius modules $M$ for which
the map $\varphi_{M}: M \rightarrow M$ is bijective (Definition \ref{definition.perfect}). The collection of Frobenius modules with this property form a category which
we denote by $\Mod_{R}^{\perf}$. Our main result is that the inclusion functor $\Mod_{R}^{\perf} \hookrightarrow \Mod_{R}^{\Frob}$ admits an exact left adjoint
$M \rightarrow M^{\perfection}$, given informally by ``inverting the Frobenius'' (Proposition \ref{proposition.perfection}).

Let $f: A \rightarrow B$ be a homomorphism of commutative $\F_p$-algebras. In \S \ref{DefFmod}, we observed that extension of scalars
along $f$ determines a functor $f^{\ast}_{\Frob}: \Mod_{A}^{\Frob} \rightarrow \Mod_{B}^{\Frob}$. Beware that this construction does {\em not}
carry perfect Frobenius modules to perfect Frobenius modules. To remedy this, we introduce in \S \ref{sec3sub3} another functor
$f^{\diamond}: \Mod_{A}^{\perf} \rightarrow \Mod_{B}^{\perf}$, given concretely by the formula $f^{\diamond}(M) = (f^{\ast}_{\Frob} M)^{\perfection}$.
The functors $f^{\diamond}$ and $f^{\ast}_{\Frob}$ are generally different, but they agree when the ring homomorphism $f$ is {\etale} (Corollary \ref{corX14}).
The proof of this fact will require some elementary facts about perfect rings of characteristic $p$, which we review in \S \ref{sec3sub4}.


\subsection{Comparison of $R[F]$-Modules with $R$-Modules}\label{sec3sub1}

Throughout this section, we fix a commutative $\F_p$-algebra $R$. 

\begin{notation}
\label{not:FrobRestriction}
Let $\Mod_{R}$ denote the abelian category of $R$-modules. For each $n \geq 0$, restriction of scalars along the $n$th power of the Frobenius map $\varphi_{R}: R \rightarrow R$ determines a forgetful functor
from $\Mod_{R}$ to itself, which we will denote by $M \mapsto M^{ 1/ p^{n} }$. 

Let $M$ be an $R$-module. Then there is a canonical isomorphism of abelian groups $M \simeq M^{1/p^{n} }$.
For each element $x \in M$, we will denote the image of $x$ under this isomorphism by $x^{1/p^{n} }$. The action of $R$ on $M^{1/p^{n}}$ can then be described by the formula $\lambda(x^{1/p^{n}}) = ( \lambda^{p^{n}} x)^{1/p^{n}}$, for $\lambda \in R$ and $x \in M$.
\end{notation}

\begin{construction}\label{conX72}
Let $N$ be an $R$-module. We let $N^{\dagger}$ denote the $R$-module given by the product $\prod_{n \geq 0} N^{1/p^{n} }$.
We identify elements of $N^{\dagger}$ with the collection of all sequences $(x_0, x_1, x_2, \ldots )$ in $N$, where the action of $R$ is given by
$$ \lambda ( x_0, x_1, x_2, \ldots ) = ( \lambda x_0, \lambda^{p} x_1, \lambda^{p^{2}} x_2, \ldots ).$$
We regard $N^{\dagger}$ as an Frobenius module over $R$, with endomorphism
$\varphi_{N^{\dagger} }: N^{\dagger} \rightarrow N^{\dagger}$ given by
$$ \varphi_{N^{\dagger}}( x_0, x_1, x_2, \cdots ) = ( x_1, x_2, x_3, \cdots).$$ 
\end{construction}

\begin{lemma}\label{lemX1}
Let $M$ be a Frobenius module over $R$, let $N$ be an arbitrary $R$-module, and let $v: N^{\dagger} \rightarrow N$ be the $R$-module homomorphism
given by $$v(x_0, x_1, x_2, \ldots) = x_0.$$ Then composition with $v$ induces a bijection
$\Hom_{R}^{\Frob}( M, N^{\dagger} ) \rightarrow \Hom_{R}( M, N )$.
\end{lemma}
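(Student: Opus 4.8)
The plan is to exhibit $N^\dagger$ as a right adjoint to the forgetful functor $\Mod_R^{\Frob} \to \Mod_R$, at least when restricted to the object $N$; equivalently, to check the stated bijection directly by unwinding what a Frobenius module map into $N^\dagger$ amounts to. First I would fix a Frobenius module $M$ over $R$ and an $R$-module homomorphism $g\colon M \to N$. I want to produce the unique Frobenius module homomorphism $\widetilde g\colon M \to N^\dagger$ with $v\circ \widetilde g = g$, and then observe that every Frobenius homomorphism $M\to N^\dagger$ arises this way.

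The key computation is the following. Suppose $h\colon M\to N^\dagger$ is any Frobenius module homomorphism, and write $h(x) = (h_0(x), h_1(x), h_2(x),\ldots)$ for $x\in M$, so each $h_n\colon M \to N$ is an additive map. The $R$-linearity of $h$, combined with the description of the $R$-action on $N^\dagger$ from Construction~\ref{conX72}, forces $h_n(\lambda x) = \lambda^{p^n} h_n(x)$; i.e.\ $h_n$ is $p^n$-semilinear, which is exactly the statement that $h_n$ is an $R$-module map $M \to N^{1/p^n}$ (using Notation~\ref{not:FrobRestriction}), or equivalently $x\mapsto h_n(x)^{1/p^n}$ lands $R$-linearly in $M^{1/p^n}$. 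Next, compatibility with the Frobenius endomorphisms — $h\circ \varphi_M = \varphi_{N^\dagger}\circ h$ — together with the shift formula $\varphi_{N^\dagger}(x_0,x_1,\ldots) = (x_1,x_2,\ldots)$, gives $h_n(\varphi_M(x)) = h_{n+1}(x)$ for all $n\ge 0$ and all $x\in M$. By induction this yields $h_n = h_0 \circ \varphi_M^n$ for every $n$, so $h$ is completely determined by its first component $h_0 = v\circ h$. Conversely, given an arbitrary $R$-module map $g\colon M\to N$, I would define $\widetilde g(x) = (g(x), g(\varphi_M x), g(\varphi_M^2 x),\ldots)$ and check the two required properties: $R$-linearity of $\widetilde g$ follows because $g(\varphi_M^n(\lambda x)) = g(\lambda^{p^n}\varphi_M^n x) = \lambda^{p^n} g(\varphi_M^n x)$, matching the $R$-action on $N^\dagger$; and $\widetilde g \circ \varphi_M = \varphi_{N^\dagger}\circ \widetilde g$ is immediate from the definition of the shift. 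Thus $\widetilde g$ is a Frobenius module homomorphism with $v\circ\widetilde g = g$, and the two constructions $h\mapsto v\circ h$ and $g\mapsto \widetilde g$ are mutually inverse.

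There is no real obstacle here; the only thing to be careful about is bookkeeping the semilinearity exponents correctly — in particular that the Frobenius-semilinearity identity $\varphi_M(\lambda x) = \lambda^p\varphi_M(x)$ iterates to $\varphi_M^n(\lambda x) = \lambda^{p^n}\varphi_M^n(x)$, and that this is precisely compatible with the $R$-action $\lambda(x_0,x_1,\ldots) = (\lambda x_0, \lambda^p x_1,\lambda^{p^2}x_2,\ldots)$ on $N^\dagger$. One should also note that naturality in $M$ is automatic from the explicit formula for $\widetilde g$, so the bijection is natural and $N\mapsto N^\dagger$ really is right adjoint to the forgetful functor $\Mod_R^{\Frob}\to\Mod_R$; this observation is what makes the lemma useful for the subsequent $\Ext$-group computations announced in the section introduction.
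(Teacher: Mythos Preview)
Your proof is correct and follows essentially the same approach as the paper: the paper's proof simply defines the inverse map $f \mapsto f^+$ by $f^+(x) = (f(x), f(\varphi_M(x)), f(\varphi_M^2(x)), \ldots)$ and declares the verification an ``elementary calculation,'' which is exactly the computation you spell out in detail.
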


\begin{proof}
For every $R$-module homomorphism $f: M \rightarrow N$, define $f^{+}: M \rightarrow N^{\dagger}$ by the formula
$f^{+}( x ) = ( f(x), f( \varphi_M(x) ), f( \varphi_M^2(x) ), \ldots )$. An elementary calculation shows that the construction $f \mapsto f^{+}$
determines an inverse to the map $\Hom_{R}^{\Frob}( M, N^{\dagger} ) \rightarrow \Hom_{R}( M, N )$ given by composition with $v$.
\end{proof}

\begin{remark}\label{remX2}
It follows from Lemma \ref{lemX1} that we can regard the construction $N \mapsto N^{\dagger}$ as a right adjoint to the forgetful functor
$\Mod_{R}^{\Frob} \rightarrow \Mod_{R}$. 
\end{remark}

\begin{remark}
Using the equivalence of Notation \ref{not2}, we can identify the forgetful map $\Mod_{R}^{\Frob} \rightarrow \Mod_{R}$ with the functor given by
restriction of scalars along the ring homomorphism $R \rightarrow R[F]$. The right adjoint to this restriction of scalars functor is given by
$M \mapsto \Hom_{ R}( R[F], M )$, which we can identify with $M^{\dagger}$ using the canonical left $R$-module basis of $R[F]$ given by
$\{ F^{n} \}_{n \geq 0}$. 
\end{remark}

\begin{remark}\label{remX3}
Let $M$ be a Frobenius module over $R$ and let $N$ be an arbitrary $R$-module. Then the identification
$\Hom_{R[F]}(M, N^{\dagger}) \simeq \Hom_{R}(M, N)$ of Lemma \ref{lemX1} extends to an isomorphism of graded abelian groups
$$\Ext^{\ast}_{R[F]}( M, N^{\dagger} ) \simeq \Ext^{\ast}_{R}(M, N).$$ This isomorphism can be described explicitly by choosing a projective resolution
$$ \cdots \rightarrow P_2 \rightarrow P_1 \rightarrow P_0 \rightarrow M$$
in the abelian category $\Mod_{R}^{\Frob}$. Note that each $P_{n}$ is also projective when regarded as an $R$-module (this follows from the observation
that the algebra $R[F]$ of Notation \ref{not2} is free as a left $R$-module), so that both
$\Ext^{\ast}_{R[F]}( M, N^{\dagger} )$ and $\Ext^{\ast}_{R}(M, N)$ can be computed as the cohomology of the cochain complex of abelian groups
$\Hom_{R[F]}( P_{\ast}, N^{\dagger}) \simeq \Hom_{R}( P_{\ast}, N)$.
\end{remark}

\begin{construction}\label{conX7}
Let $N$ be a Frobenius module over $R$. Then the construction $(x \in N) \mapsto ( x, \varphi_N(x), \varphi_{N}^2(x), \cdots)$
determines a morphism of Frobenius modules $u: N \rightarrow N^{\dagger}$ (which is a unit map for the adjunction of Remark \ref{remX2}). 
Note that $u$ is a monomorphism which fits into a short exact sequence of Frobenius modules
$$ 0 \rightarrow N \xrightarrow{u} N^{\dagger} \xrightarrow{\alpha} (N^{1/p})^{\dagger} \rightarrow 0,$$
where $\alpha$ is given by the formula $$\alpha(x_0, x_1, x_2, \ldots) = ( \varphi_N(x_0) - x_1, \varphi_{N}(x_1) - x_2, \varphi_{N}(x_2) - x_3, \ldots ).$$ 
It follows that for any other Frobenius module $M$ over $R$, we have a short exact sequence of abelian groups
$$ 0 \rightarrow \Hom_{R}^{\Frob}(M, N) \rightarrow \Hom_{R}( M, N) \xrightarrow{\beta} \Hom_{R}(M, N^{1/p}),$$
where $\beta$ is given by the formula $\beta(f)(x) = \varphi_{N}(f(x))^{1/p} - f( \varphi_M(x))^{1/p}$. Moreover, if $M$ is a projective object of
$\Mod_{R}^{\Frob}$, then $\beta$ is surjective. More generally, Remark \ref{remX3} supplies a long exact sequence of abelian groups
$$ \cdots \rightarrow \Ext^{\ast-1}_{R}( M, N^{1/p}) \rightarrow \Ext^{\ast}_{R[F]}( M, N) \rightarrow \Ext^{\ast}_{R}(M, N) \rightarrow
\Ext^{\ast}_{R}(M, N^{1/p} ) \rightarrow \cdots.$$
\end{construction}

\begin{remark}\label{remark.findimension}
Let $R$ be a commutative $\F_p$-algebra and let $M$ be a Frobenius module over $R$. It follows from Construction \ref{conX7} that if
$M$ has projective (injective) dimension $\leq n$ as a module over $R$, then it has projective (injective) dimension $\leq n+1$ as a module over $R[F]$.
\end{remark}

\subsection{Perfect Frobenius Modules}\label{sec3sub2}

Let $R$ be a commutative $\F_p$-algebra, which we regard as fixed throughout this section.

\begin{definition}\label{definition.perfect}
Let $M$ be a Frobenius module over $R$. We will say that $M$ is {\it perfect} if
the map $\varphi_{M}: M \rightarrow M$ is an isomorphism of abelian groups. We let
$\Mod_{R}^{\perf}$ denote the full subcategory of $\Mod_{R}^{\Frob}$ spanned by the {perfect} Frobenius modules over $R$.
\end{definition}

\begin{remark}\label{remark.perfectextension}
The full subcategory $\Mod_{R}^{\perf} \subseteq \Mod_{R}^{\Frob}$ is closed under limits, colimits, and extensions. In particular,
$\Mod_{R}^{\perf}$ is an abelian category, and the inclusion functor $\Mod_{R}^{\perf} \hookrightarrow \Mod_{R}^{\Frob}$ is exact.
\end{remark}

\begin{notation}\label{biskar}
Let $M$ be a Frobenius module over $R$. We let $M^{\perfection}$ denote the colimit of the sequence
$$ M \xrightarrow{ \varphi_{M} } M^{1/p} \xrightarrow{ \varphi_{M} } 
M^{1/p^2} \rightarrow \cdots$$
We will refer to $M^{\perfection}$ as the {\it perfection} of $M$.
\end{notation}

\begin{example}\label{exX6}
Let us regard $R$ as a Frobenius module over itself as in Example \ref{exX3}. Then $R^{\perfection}$ is the perfection of $R$ in the sense of commutative algebra: that is, it is an initial object in the category of $R$-algebras in which every element admits a unique $p$th root.
\end{example}

\begin{example}\label{skriff}
Let $R[F]$ be as in Notation \ref{not2}, which we regard as a Frobenius module over $R$. We will denote the perfection of $R[F]$ by $R^{\perfection}[ F^{\pm 1} ]$.
Unwinding the definitions, we can identify elements of $R^{\perfection}[ F^{\pm 1} ]$ with expressions of the form $\sum_{n \in \Z} c_n F^{n}$
where the coefficients $c_{n} \in R^{\perfection}$ vanish for all but finitely many integers $n$. 
\end{example}

\begin{remark}\label{remX4}
The set $R^{\perfection}[ F^{\pm 1} ]$ has the structure of an associative ring,
with multiplication given by the formula
$$ ( \sum_{m \in \Z} c_m F^{m} ) ( \sum_{ n \in \Z } c'_{n} F^{n} ) = \sum_{ k \in \Z } ( \sum_{ i + j = k } c_{i} c_{j}^{ p^{i} } ) F^{k}.$$
This ring can be obtained from the associative ring $R[F]$ by formally adjoining an inverse of the element $F$. It follows that
the equivalence of $\Mod_{R}^{\Frob}$ with the category of left $R[F]$-modules restricts to an equivalence of
$\Mod_{R}^{\perf}$ with the category of left $R^{\perfection}[ F^{\pm 1} ]$-modules. In particular, $\Mod_{R}^{\perf}$ is an abelian category which
has enough projective objects and enough injective objects.
\end{remark}

In the situation of Notation \ref{biskar}, the perfection $M^{\perfection}$ inherits the structure of a Frobenius module. Moreover, it enjoys the following universal property:

\begin{proposition}\label{proposition.perfection}
The inclusion functor $\iota: \Mod_{R}^{\perf} \hookrightarrow \Mod_{R}^{\Frob}$ admits a left adjoint, which carries
a Frobenius module $M$ to its perfection $M^{\perfection}$.
\end{proposition}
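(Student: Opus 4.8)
The plan is to verify the asserted adjunction directly: exhibit a unit map $\eta_M\colon M\to M^{\perfection}$ and check the universal property by hand. The one point that needs genuine (if minor) care is confirming that the colimit defining $M^{\perfection}$ actually lands in $\Mod_{R}^{\perf}$; everything else is bookkeeping with the Frobenius twists $M^{1/p^n}$.

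First I would observe that, for each $n\geq 0$, the underlying additive map $\varphi_M$ makes $M^{1/p^n}$ into a Frobenius module over $R$ (semilinearity with respect to the twisted action of Notation~\ref{not:FrobRestriction} is a one-line check), and that the transition maps $\varphi_M\colon M^{1/p^n}\to M^{1/p^{n+1}}$ appearing in Notation~\ref{biskar} are morphisms of Frobenius modules. Hence $M^{\perfection}=\varinjlim_n M^{1/p^n}$ may be formed in the abelian category $\Mod_{R}^{\Frob}$; since the forgetful functor $\Mod_{R}^{\Frob}\to\Mod_{R}$ has a right adjoint (Remark~\ref{remX2}) it preserves colimits, so this agrees on underlying modules with the colimit of Notation~\ref{biskar}. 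Writing $t_n\colon M^{1/p^n}\to M^{\perfection}$ for the structure maps (so $t_{n+1}\circ\varphi_M=t_n$), the Frobenius of $M^{\perfection}$ is determined by $\varphi_{M^{\perfection}}(t_n(x))=t_n(\varphi_M(x))$, and I would then check — using that the colimit is filtered — that the assignment $t_n(x)\mapsto t_{n+1}(x)$ is well defined and supplies a two-sided inverse. Thus $M^{\perfection}\in\Mod_{R}^{\perf}$, and every element of $M^{\perfection}$ has the form $\varphi_{M^{\perfection}}^{-n}(t_0(x))$ for some $n\geq 0$ and $x\in M$. Note one cannot shortcut this by invoking closure of $\Mod_{R}^{\perf}$ under colimits (Remark~\ref{remark.perfectextension}), since the $M^{1/p^n}$ are themselves not perfect; this step is the main thing to get right.

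Next, taking $\eta_M=t_0\colon M\to M^{\perfection}$ as the candidate unit (a morphism of Frobenius modules), I would show that for every perfect Frobenius module $N$, composition with $\eta_M$ induces a bijection $\Hom_{R}^{\Frob}(M^{\perfection},N)\to\Hom_{R}^{\Frob}(M,N)$. Since $M^{\perfection}$ is the colimit of the $M^{1/p^n}$, the source is $\varprojlim_n\Hom_{R}^{\Frob}(M^{1/p^n},N)$, with composition with $\eta_M$ identified with projection onto the $n=0$ factor; it therefore suffices to see that each transition map, namely precomposition with $\varphi_M\colon M^{1/p^n}\to M^{1/p^{n+1}}$, is a bijection. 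Here I would use that $\varphi_N$ is invertible (as $N$ is perfect) to write down the inverse $h\mapsto \varphi_N^{-1}\circ h$, under the canonical identification of underlying abelian groups $M^{1/p^{n+1}}=M=M^{1/p^n}$; the two verifications — that $\varphi_N^{-1}\circ h$ is $R$-linear on $M^{1/p^{n+1}}$ and that it intertwines the two Frobenius maps — are immediate from the semilinearity of $\varphi_N^{-1}$ together with the intertwining relation satisfied by the morphism $h$. Naturality in $M$ is clear from the construction, which completes the argument. (An alternative packaging would be to invoke Remark~\ref{remX4}: identify $\Mod_{R}^{\Frob}$ and $\Mod_{R}^{\perf}$ with left modules over $R[F]$ and over its localization $R^{\perfection}[F^{\pm 1}]$, and take the left adjoint to be extension of scalars $M\mapsto R^{\perfection}[F^{\pm 1}]\otimes_{R[F]}M$; the content then reduces to identifying this tensor product with $M^{\perfection}$, i.e.\ to the same colimit computation as above.)
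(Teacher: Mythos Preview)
Your proof is correct. The paper, however, takes precisely the ``alternative packaging'' you mention at the end: it invokes Remark~\ref{remX4} to identify $\Mod_R^{\Frob}$ and $\Mod_R^{\perf}$ with left modules over $R[F]$ and $R^{\perfection}[F^{\pm 1}]$ respectively, and then observes that the left adjoint to restriction of scalars is extension of scalars along $R[F]\to R^{\perfection}[F^{\pm 1}]$, which is exactly $M\mapsto M^{\perfection}$. That is the entire proof in the paper---one sentence.

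Your main argument is instead a direct hands-on verification: you build $M^{\perfection}$ as a colimit in $\Mod_R^{\Frob}$, check perfectness by exhibiting the inverse Frobenius $t_n(x)\mapsto t_{n+1}(x)$, and then prove the universal property by showing that each transition map in $\varprojlim_n \Hom_R^{\Frob}(M^{1/p^n},N)$ is a bijection when $N$ is perfect. This is longer but entirely self-contained: it does not rely on having already set up the ring $R^{\perfection}[F^{\pm 1}]$ or the equivalence of Remark~\ref{remX4}, and it makes the structure of $M^{\perfection}$ (in particular the formula for $\varphi_{M^{\perfection}}^{-1}$) explicit. The paper's route is quicker precisely because that infrastructure has been established, and it makes the adjunction a formal consequence of a ring map.
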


\begin{proof}
Under the equivalence of Remark \ref{remX4}, a left adjoint to $\iota$ corresponds to the functor of extension of scalars along
the map $R[F] \rightarrow R^{\perfection}[ F^{\pm 1} ]$, which is given by $M \mapsto M^{\perfection}$.
\end{proof}

\begin{remark}\label{prebb}
The perfection functor $M \mapsto M^{\perfection}$ is exact. It follows that the inclusion functor $\Mod_{R}^{\perf} \hookrightarrow \Mod_{R}^{\Frob}$ carries injective objects to injective objects.
In particular, if $M$ and $N$ are perfect Frobenius module over $R$, then the canonical map $\Ext^{\ast}_{ \Mod_{R}^{\perf} }( M, N) \rightarrow
\Ext^{\ast}_{ \Mod_{R}^{\Frob} }( M, N)$ is an isomorphism. We will denote either of these $\Ext$-groups by $\Ext^{\ast}_{R[F]}(M, N)$.
\end{remark}

For the purpose of comparing Frobenius modules with {\etale} sheaves, there is no harm in restricting our considerations to perfect Frobenius modules:

\begin{proposition}\label{proposition.solperfect}
Let $f: M \rightarrow N$ be a morphism of Frobenius modules over $R$. If the induced map $M^{\perfection} \rightarrow N^{\perfection}$ is an isomorphism of perfect Frobenius modules,
then the induced map $\Sol(M) \rightarrow \Sol(N)$ is an isomorphism of {\etale} sheaves. 
\end{proposition}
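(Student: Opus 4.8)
The plan is to deduce the proposition from two facts: that the solution functor is left-exact (Remark~\ref{remark.leftexact}), and that it annihilates every Frobenius module whose Frobenius acts locally nilpotently. Call a Frobenius module $Q$ over $R$ \emph{locally nilpotent} if every element of $Q$ is killed by some power of $\varphi_Q$. By naturality of the unit of the perfection adjunction (Proposition~\ref{proposition.perfection}), the square whose rows are $f\colon M\to N$ and $f^{\perfection}\colon M^{\perfection}\to N^{\perfection}$ and whose columns are the perfection unit maps $u_M\colon M\to M^{\perfection}$ and $u_N\colon N\to N^{\perfection}$ commutes; since $f^{\perfection}$ is an isomorphism, so is $\Sol(f^{\perfection})$, and therefore it suffices to prove that $\Sol(u_P)\colon\Sol(P)\to\Sol(P^{\perfection})$ is an isomorphism for every Frobenius module $P$ over $R$.

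First I would record the key vanishing. If $Q$ is locally nilpotent, then for every {\etale} $R$-algebra $A$ the Frobenius module $A\otimes_R Q$ is again locally nilpotent, since $\varphi^{n}(a\otimes x)=a^{p^{n}}\otimes\varphi_Q^{n}(x)$ and any given element of $A\otimes_R Q$ is a finite sum to which a single sufficiently large $n$ applies. Hence $\id-\varphi$ is a bijection on $A\otimes_R Q$, with inverse $\sum_{j\ge 0}\varphi^{j}$ (a finite sum when applied to any element). As this holds for all $A$, the map $\id-\widetilde{\varphi}_Q\colon\widetilde{Q}\to\widetilde{Q}$ is an isomorphism of {\etale} sheaves; in particular, using the description $\Sol(Q)=\ker(\id-\widetilde{\varphi}_Q)$ from Proposition~\ref{propX74weak}, both $\Sol(Q)$ and $\coker(\id-\widetilde{\varphi}_Q)$ vanish.

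Next I would identify the kernel and cokernel of the unit map. Writing $P^{\perfection}=\varinjlim\bigl(P\xrightarrow{\varphi_P}P^{1/p}\xrightarrow{\varphi_P}\cdots\bigr)$ with structure maps $c_j\colon P^{1/p^{j}}\to P^{\perfection}$, we have $u_P=c_0$. An element $z\in P$ lies in $\ker(u_P)$ exactly when it dies at a finite stage of the colimit, i.e.\ when $\varphi_P^{k}(z)=0$ for some $k$; thus $\ker(u_P)$ is a locally nilpotent Frobenius submodule of $P$. For the cokernel, every element of $P^{\perfection}$ has the form $c_j(z)$, and $\varphi_{P^{\perfection}}^{j}(c_j(z))=c_j(\varphi_P^{j}(z))=c_0(z)$ lies in $\im(u_P)$; hence $\varphi$ acts locally nilpotently on $C:=P^{\perfection}/\im(u_P)$.

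Finally I would assemble the argument. Factor $u_P$ as $P\twoheadrightarrow P'\hookrightarrow P^{\perfection}$ with $P'=\im(u_P)$, yielding short exact sequences of Frobenius modules $0\to K\to P\to P'\to 0$ and $0\to P'\to P^{\perfection}\to C\to 0$ with $K=\ker(u_P)$ and $C$ both locally nilpotent. Applying the exact functor $M\mapsto\widetilde{M}$ and then the snake lemma for $\id-\widetilde{\varphi}$ to the first sequence, the vanishing of $\Sol(K)$ and of $\coker(\id-\widetilde{\varphi}_K)$ forces $\Sol(P)\xrightarrow{\ \sim\ }\Sol(P')$; left-exactness of $\Sol$ applied to the second sequence, together with $\Sol(C)=0$, forces $\Sol(P')\xrightarrow{\ \sim\ }\Sol(P^{\perfection})$. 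The composite of these isomorphisms is $\Sol(u_P)$, completing the reduction and hence the proof. The only point requiring genuine care is the identification in the previous paragraph of $\ker(u_P)$ and $\coker(u_P)$ as locally nilpotent Frobenius modules — in particular making the Frobenius structure on the cokernel explicit and verifying $\varphi_{P^{\perfection}}^{j}\circ c_j=c_0$; everything else is formal manipulation with the colimit defining the perfection, the snake lemma, and the exactness properties of $\widetilde{(-)}$ and $\Sol$.
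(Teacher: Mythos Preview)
Your proof is correct and follows essentially the same approach as the paper: factor into a surjection followed by an injection, then use that a locally nilpotent Frobenius module $Q$ has $\id-\varphi_Q$ bijective (so both $\Sol(Q)$ and $\coker(\id-\widetilde{\varphi}_Q)$ vanish), applying left-exactness for the injective piece and the snake lemma for the surjective piece. The only organizational difference is that you first reduce to the universal case of the unit map $u_P\colon P\to P^{\perfection}$ and then factor $u_P$ through its image, whereas the paper factors the given map $f$ directly; your route has the mild advantage that the kernel and cokernel of $u_P$ are visibly locally nilpotent from the colimit description, and it avoids a forward reference the paper makes when checking that the hypothesis is preserved under base change.
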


\begin{proof}
Factoring $f$ as a composition $M \rightarrow \im(f) \rightarrow N$, we can reduce to proving Proposition \ref{proposition.solperfect} in the special case where $f$ is assumed
to be either surjective or injective. Suppose first that $f$ is injective. Our hypothesis that $f$ induces an
isomorphism $M^{\perfection} \rightarrow N^{\perfection}$ guarantees that the perfection $(N/M)^{\perfection}$ vanishes: that is, the Frobenius map $\varphi_{N/M}$ is locally nilpotent.
It follows that for any {\etale} $R$-algebra $A$, the Frobenius map $\varphi_{ A \otimes_{R} (N/M) }$ is also locally nilpotent, and therefore has no nonzero fixed points. 
It follows that the {\etale} sheaf $\Sol(N/M)$ vanishes. Since the solution functor is left exact (Remark \ref{remark.leftexact}), we have an exact sequence of {\etale} sheaves
$0 \rightarrow \Sol(M) \rightarrow \Sol(N) \rightarrow \Sol(N/M)$, which proves that $\Sol(f)$ is an isomorphism.

We now treat the case where $f$ is surjective. We wish to prove that $f$ induces an isomorphism $\Sol(M)(A) \rightarrow \Sol(N)(A)$ for every {\etale} $R$-algebra $A$.
Replacing $M$ and $N$ by $A \otimes_{R} M$ and $A \otimes_{R} N$ (which does not injure our assumption that $f$ induces an equivalence of perfections: see Proposition \ref{mallow}),
we can reduce to the case $A = R$. We have a commutative diagram of short exact sequence
$$ \xymatrix{ 0 \ar[r] & \ker(f) \ar[d]^{\id - \varphi_{\ker(f) }} \ar[r] & M \ar[r]^-{f} \ar[d]^{\id - \varphi_M} & N \ar[d]^{\id - \varphi_{N}}  \ar[r] & 0 \\
0 \ar[r] & \ker(f) \ar[r] & M \ar[r]^-{f} & N \ar[r] & 0 }$$
which gives a short exact sequence
$$ \ker(  \id - \varphi_{\ker(f)} ) \rightarrow \Sol(M)(R) \rightarrow \Sol(N)(R) \rightarrow \coker( \id - \varphi_{\ker(f)} ).$$ 
It will therefore suffice to show that the map $\id - \varphi_{\ker(f) }$ is an isomorphism. This is clear: our assumption that $f$ induces an equivalence of
perfections guarantees that $\varphi_{\ker(f)}$ is locally nilpotent, so that $\id - \varphi_{\ker(f) }$ has an inverse given by the infinite sum
$\sum_{n \geq 0} \varphi_{\ker(f)}^{n}$.
\end{proof}

\subsection{Restriction and Extension of Scalars}\label{sec3sub3}

Let $f: A \rightarrow B$ be a homomorphism of commutative $\F_p$-algebras and let $M$ be a Frobenius module over $B$. Then $M$ is perfect as a Frobenius module over $B$ if and only if it is perfect when regarded as a Frobenius module over $A$. In particular, the restriction of scalars functor $f_{\ast}: \Mod_{B}^{\Frob} \rightarrow \Mod_{A}^{\Frob}$ carries $\Mod_{B}^{\perf}$ into $\Mod_{A}^{\perf}$.
We will abuse notation by denoting the induced map $\Mod_{B}^{\perf} \rightarrow \Mod_{A}^{\perf}$ also by $f_{\ast}$, so that we have a commutative diagram $\sigma:$
$$ \xymatrix{ \Mod_{B}^{\perf} \ar[r] \ar[d]^{f_{\ast}} & \Mod_{B}^{\Frob} \ar[d]^{f_{\ast}} \\
\Mod_{A}^{\perf} \ar[r] & \Mod_{A}^{\Frob}. }$$

\begin{remark}\label{remX5}
Let $f: A \rightarrow B$ be a homomorphism of commutative $\F_p$-algebras and let $M$ be a Frobenius module over $B$. Then the canonical map
$u: M \rightarrow M^{\perfection}$ induces a map $f_{\ast}(u): f_{\ast}(M) \rightarrow f_{\ast}( M^{\perfection} )$ of Frobenius modules over $A$ whose target
is perfect. It follows that $f_{\ast}(u)$ extends uniquely to a map $v: f_{\ast}(M)^{\perfection} \rightarrow f_{\ast}( M^{\perfection} )$. Moreover, the map $v$
is an isomorphism: this follows by inspecting the construction of the perfection given in \S \ref{sec3sub2}. Put more informally, the formation of the perfection
$M^{\perfection}$ does not depend on whether we regard $M$ as a Frobenius module over $B$ or over $A$ (or over $\F_p$).
\end{remark}

In the situation of Remark \ref{remX5}, the extension of scalars functor
$f^{\ast}_{\Frob}: \Mod_{A}^{\Frob} \rightarrow \Mod_{B}^{\Frob}$ usually does not carry perfect Frobenius modules
to perfect Frobenius modules. However, we do have the following:

\begin{proposition}\label{mallow}
Let $f: A \rightarrow B$ be a homomorphism of commutative $\F_p$-algebras. Then the forgetful functor
$\Mod_{B}^{\perf} \rightarrow \Mod_{A}^{\perf}$ admits a left adjoint $f^{\diamond}$. Moreover, the diagram of categories
$$ \xymatrix@C=50pt{ \Mod_{A}^{\Frob} \ar[r]^-{(-)^{\perfection}} \ar[d]^{ f^{\ast}_{\Frob} } & \Mod_{A}^{\perf} \ar[d]^{ f^{\diamond} } \\
\Mod_{B}^{\Frob} \ar[r]^-{(-)^{\perfection}} & \Mod_{B}^{\perf} }$$
commutes up to canonical isomorphism. More precisely, for every object $M \in \Mod_{A}^{\Frob}$, the canonical map
$f^{\diamond}(M^{\perfection}) \rightarrow ( f^{\ast}_{\Frob} M)^{\perfection}$ is an equivalence.
\end{proposition}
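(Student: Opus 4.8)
The plan is to build $f^{\diamond}$ by stringing together adjunctions already established, and then to obtain the commutative square as a formal consequence of the uniqueness of adjoints.

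First I would define $f^{\diamond}(M) = (f^{\ast}_{\Frob} M)^{\perfection}$ for $M \in \Mod_{A}^{\perf}$, where $M$ is regarded as an object of $\Mod_{A}^{\Frob}$, and then verify the adjunction $f^{\diamond} \dashv f_{\ast}$ directly. For $N \in \Mod_{B}^{\perf}$ one has natural bijections
$$\Hom_{\Mod_{B}^{\perf}}\big( (f^{\ast}_{\Frob} M)^{\perfection},\, N \big) \cong \Hom_{\Mod_{B}^{\Frob}}\big( f^{\ast}_{\Frob} M,\, N \big) \cong \Hom_{\Mod_{A}^{\Frob}}\big( M,\, f_{\ast} N \big) \cong \Hom_{\Mod_{A}^{\perf}}\big( M,\, f_{\ast} N \big),$$
where the first bijection comes from the adjunction $(-)^{\perfection} \dashv \iota_{B}$ of Proposition \ref{proposition.perfection} (using that $N$ is already perfect), the second from the adjunction $f^{\ast}_{\Frob} \dashv f_{\ast}$ of Remark \ref{remark.restriction}, and the third from the fact that $f_{\ast}$ carries perfect Frobenius modules to perfect Frobenius modules (noted at the start of \S \ref{sec3sub3}) together with the full faithfulness of $\Mod_{A}^{\perf} \hookrightarrow \Mod_{A}^{\Frob}$. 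This realizes $f^{\diamond}$ as a left adjoint to the forgetful functor $f_{\ast} \colon \Mod_{B}^{\perf} \to \Mod_{A}^{\perf}$, in agreement with the informal description $f^{\diamond}(M) = (f^{\ast}_{\Frob} M)^{\perfection}$.

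For the commutative square I would argue as follows. As functors $\Mod_{B}^{\perf} \to \Mod_{A}^{\Frob}$ we have $\iota_{A} \circ f_{\ast} = f_{\ast} \circ \iota_{B}$ literally (both send a perfect $B$-Frobenius module to its restriction to $A$, which is again perfect). Passing to left adjoints, and using that the left adjoint of a composite $R_{1} \circ R_{2}$ is $L_{2} \circ L_{1}$, the left adjoint of $\iota_{A} \circ f_{\ast}$ is $f^{\diamond} \circ (-)^{\perfection}$ while the left adjoint of $f_{\ast} \circ \iota_{B}$ is $(-)^{\perfection} \circ f^{\ast}_{\Frob}$. Since left adjoints of a fixed functor are unique up to a unique natural isomorphism compatible with the unit/counit data, this produces a canonical natural isomorphism $f^{\diamond} \circ (-)^{\perfection} \simeq (-)^{\perfection} \circ f^{\ast}_{\Frob}$; evaluated at $M \in \Mod_{A}^{\Frob}$ it is the canonical comparison map $f^{\diamond}(M^{\perfection}) \to (f^{\ast}_{\Frob} M)^{\perfection}$ of the statement, which is therefore an equivalence. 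Alternatively, one can verify this concretely: by Remark \ref{remX4} and the proof of Proposition \ref{proposition.perfection}, both $(-)^{\perfection}$ and $f^{\ast}_{\Frob}$ are extension-of-scalars functors (along $R[F] \to R^{\perfection}[F^{\pm 1}]$ and along $A[F] \to B[F]$ respectively), $f$ induces a commuting square of ring homomorphisms with a map $A^{\perfection}[F^{\pm 1}] \to B^{\perfection}[F^{\pm 1}]$ along the bottom, and both relevant composites $\Mod_{A}^{\Frob} \to \Mod_{B}^{\perf}$ become extension of scalars along the common composite $A[F] \to B^{\perfection}[F^{\pm 1}]$, with the comparison map identified with the associativity isomorphism for the tensor product.

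The only thing in this argument that is not purely formal is the bookkeeping needed to confirm that the comparison map appearing in the statement coincides with the canonical isomorphism between the two left adjoints — equivalently, with the tensor-associativity isomorphism under the identifications of Remark \ref{remX4}. I expect this compatibility check to be the main (and essentially only) obstacle; it is a routine diagram chase rather than a genuine difficulty, since all the substantive input is already contained in Proposition \ref{proposition.perfection} and Remark \ref{remX4}.
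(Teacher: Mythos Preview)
Your proposal is correct and follows essentially the same approach as the paper: define $f^{\diamond}(M) = (f^{\ast}_{\Frob} M)^{\perfection}$, verify it is left adjoint to $f_{\ast}$ by composing the adjunctions, and deduce the square commutes by passing to left adjoints in the equality $\iota_{A} \circ f_{\ast} = f_{\ast} \circ \iota_{B}$. The paper's proof is terser (it simply says the adjunction ``follows immediately from the definitions'' and invokes the commutative diagram $\sigma$ of right adjoints), while you have spelled out the chain of bijections and the uniqueness-of-left-adjoints argument explicitly; your alternative verification via extension of scalars along the ring maps of Remark~\ref{remX4} is a fine concrete complement but not needed.
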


\begin{proof}
Defining $f^{\diamond}$ by the formula $f^{\diamond}(M) = (f^{\ast}_{\Frob} M)^{\perfection}$, it follows immediately
from the definitions that $f^{\diamond}$ is left adjoint to the forgetful functor $f_{\ast}: \Mod_{B}^{\perf} \rightarrow \Mod_{A}^{\perf}$.
The desired commutativity follows from the commutativity of the diagram $\sigma$ above (by passing to left adjoints). 
\end{proof}

In the situation of Proposition \ref{mallow}, the functors
$$ f^{\ast}_{\Frob}: \Mod_{A}^{\Frob} \rightarrow \Mod_{B}^{\Frob} \quad \quad f^{\diamond}: \Mod_{A}^{\perf} \rightarrow \Mod_{B}^{\perf}$$
are right exact, but generally not left exact (unless $B$ is flat over $A$). We can therefore consider their left derived functors.

\begin{construction}\label{sevos}
Let $f: A \rightarrow B$ be a homomorphism of $\F_p$-algebras. The abelian category $\Mod_{A}^{\Frob}$ has enough projective objects,
so that the extension of scalars functor $f^{\ast}_{\Frob}: \Mod_{A}^{\Frob} \rightarrow \Mod_{B}^{\Frob}$ has left derived functors
$L_{n} f^{\ast}_{\Frob}: \Mod_{A}^{\Frob} \rightarrow \Mod_{B}^{\Frob}$ for $n \geq 0$. More concretely, for $M \in \Mod_{A}^{\Frob}$, we can describe
$L_n f^{\ast}_{\Frob} M$ as the $n$th homology of the chain complex $f^{\ast}_{\Frob}( P_{\ast} )$, where $P_{\ast}$ is a projective resolution of
$M$ in the abelian category $\Mod_{A}^{\Frob}$. Note that $P_{\ast}$ is then also a projective resolution of $M$ in the category $\Mod_{A}$ of
$A$-modules, and that the chain complex $f^{\ast}_{\Frob}( P_{\ast} )$ can be identified with $B \otimes_{A} P_{\ast}$.
It follows that for $n \geq 0$, we have canonical $B$-module isomorphisms $L_n f^{\ast}_{\Frob} M = \Tor_{n}^{A}( M, B)$. 
We can summarize the situation as follows:
\begin{itemize}
\item[$(\ast)$] If $f: A \rightarrow B$ is a homomorphism of $\F_p$-algebras and $M$ is a Frobenius module over $A$, then
the $\Tor$-groups $\Tor_{n}^{A}(M, B)$ can be regarded as Frobenius modules over $B$. Moreover, the construction
$$ \Mod_{A}^{\Frob} \rightarrow \Mod_{B}^{\Frob} \quad \quad M \mapsto \Tor_{n}^{A}(M, B)$$
can be identified with the $n$th left derived functor of the construction $M \mapsto f^{\ast}_{\Frob} B$.
\end{itemize}
\end{construction}

\begin{variant}\label{variant.derived}
In the situation of Construction, the abelian category $\Mod_{A}^{\perf}$ also has enough projective objects, so we can consider
the left derived functors $L_{n} f^{\diamond}$ of the functor $f^{\diamond}: \Mod_{A}^{\perf} \rightarrow \Mod_{B}^{\perf}$.
Note that if $M$ is a perfect Frobenius module over $A$ and $P_{\ast}$ is a projective resolution of $M$ in the category
$\Mod_{A}^{\Frob}$ of {\em all} Frobenius modules over $A$, then $P_{\ast}^{\perfection}$ is a projective resolution of
$M^{\perfection} \simeq M$ in the category $\Mod_{A}^{\perf}$ of perfect Frobenius modules over $A$.
We can therefore identify $(L_{n} f^{\diamond})(M)$ with the $n$th homology group of the chain complex
$f^{\diamond} P_{\ast}^{\perfection} \simeq ( f^{\ast}_{\Frob} P_{\ast})^{\perfection}$.
Using the exactness of the functor $N \mapsto N^{\perfection}$, we obtain isomorphisms
$$ (L_n f^{\diamond})(M) \simeq (L_n f^{\ast}_{\Frob} )(M)^{\perfection} \simeq \Tor_{n}^{A}(M, B)^{\perfection}.$$
\end{variant}

\subsection{Perfect Rings}\label{sec3sub4}

Let $R$ be a commutative $\F_p$-algebra. Recall that $R$ is said to be {\it perfect} if the Frobenius homomorphism $\varphi_{R}: R \rightarrow R$ is an isomorphism.

\begin{remark}
A commutative $\F_p$-algebra $R$ is perfect if and only if it is perfect when regarded as a Frobenius module over itself, in the sense of
Definition \ref{definition.perfect}.
\end{remark}

\begin{example}
Let $R$ be any commutative $\F_p$-algebra. Then the perfection $R^{\perfection}$ of Example \ref{exX6} is a perfect $\F_p$-algebra. 
\end{example}

Let $R$ be an $\F_p$-algebra. If $M$ is a {perfect} Frobenius module over $R$, then $M$ admits the structure of a module over $R^{\perfection}$. More precisely, we have the following result, whose
proof is left to the reader:

\begin{proposition}\label{prop5}
Let $R$ be an algebra over $\F_p$. Then the restriction of scalars functor
$\Mod_{R^{\perfection}}^{\perf} \rightarrow \Mod_{R}^{\perf}$ is an equivalence of categories.
\end{proposition}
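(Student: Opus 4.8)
The plan is to exhibit mutually inverse functors between $\Mod_{R^{\perfection}}^{\perf}$ and $\Mod_{R}^{\perf}$. In one direction, restriction of scalars along the structure map $\iota: R \rightarrow R^{\perfection}$ sends a perfect Frobenius module over $R^{\perfection}$ to a Frobenius module over $R$ which is still perfect (the Frobenius endomorphism is unchanged, so bijectivity is preserved); this is the functor $\iota_{\ast}$ appearing in the statement. In the other direction, I claim that a perfect Frobenius module $M$ over $R$ carries a \emph{canonical} $R^{\perfection}$-module structure extending its $R$-module structure. Indeed, since $\varphi_M$ is bijective, for $\lambda \in R$ and $x \in M$ the element $\varphi_M^{-1}(\lambda \varphi_M(x))$ behaves like ``$\lambda^{1/p} \cdot x$'', and iterating gives a well-defined action of $p$-power roots of elements of $R$; by the universal property of $R^{\perfection}$ (Example \ref{exX6}) this assembles into an $R^{\perfection}$-module structure on $M$. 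One then checks that $\varphi_M$ remains Frobenius-semilinear over $R^{\perfection}$: for $\mu \in R^{\perfection}$ one has $\varphi_M(\mu x) = \mu^{p}\varphi_M(x)$, which follows from the defining compatibility $\varphi_M(\varphi_M^{-1}(\lambda \varphi_M(x))) = \lambda\varphi_M(x)$ after unwinding. This produces a functor $\Phi: \Mod_{R}^{\perf} \rightarrow \Mod_{R^{\perfection}}^{\perf}$, and any morphism of perfect Frobenius modules over $R$ is automatically $R^{\perfection}$-linear (a morphism commutes with $\varphi$, hence with $\varphi^{-1}$, hence with the induced action of roots), so $\Phi$ is well-defined on morphisms.

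It then remains to verify that $\Phi$ and $\iota_{\ast}$ are mutually inverse. In one composite, $\iota_{\ast} \circ \Phi$ applied to an $R$-module $M$ returns $M$ with its original $R$-module structure (the $R^{\perfection}$-action restricts to the given $R$-action by construction), so $\iota_{\ast} \circ \Phi = \id$. In the other composite, starting from a perfect Frobenius module $M$ over $R^{\perfection}$, the canonical $R^{\perfection}$-structure produced by $\Phi \circ \iota_{\ast}$ must agree with the original one: on $R \subseteq R^{\perfection}$ this is clear, and since every element of $R^{\perfection}$ is a $p$-power root of an element of $R$ and the reconstruction procedure recovers exactly the action of such roots (using that $R^{\perfection}$ is perfect, so $p$-power roots are unique), the two structures coincide. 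Hence $\Phi \circ \iota_{\ast} = \id$ as well.

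The main obstacle is the well-definedness and naturality of the reconstructed $R^{\perfection}$-module structure $\Phi$: one must check that the formula $\lambda \cdot x \mapsto \varphi_M^{-1}(\lambda \varphi_M(x))$ for the ``$1/p$-power action'' is additive in $x$, compatible with the ring operations of $R$ (so that it really defines an action of the subring of $p$-power roots), and compatible across the tower defining $R^{\perfection}$, so that it glues to an honest $R^{\perfection}$-module structure by the universal property in Example \ref{exX6}. Additivity is immediate since $\varphi_M$ and $\varphi_M^{-1}$ are additive; multiplicativity and the compatibility with the colimit structure are straightforward but slightly fiddly verifications using that $\varphi_M$ is a ring-twisted semilinear bijection. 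Once these routine checks are in place, everything else is formal. (Alternatively, and perhaps more cleanly, one can phrase the whole argument ring-theoretically: the inclusion $R[F] \hookrightarrow R^{\perfection}[F^{\pm 1}]$ of Remark \ref{remX4} factors through $R^{\perfection}[F]$, and one shows that on the category of modules on which $F$ acts invertibly, restriction of scalars along $R^{\perfection}[F^{\pm 1}] \leftarrow R^{\perfection}[F] \leftarrow R[F]$ is an equivalence, because inverting $F$ over $R[F]$ already forces the scalars to be the perfection — this is exactly the content of Remark \ref{remX4} combined with Example \ref{exX6}.)
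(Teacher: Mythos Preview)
Your argument is correct. The paper actually leaves this proof to the reader, but the sentence immediately preceding the proposition (``If $M$ is a perfect Frobenius module over $R$, then $M$ admits the structure of a module over $R^{\perfection}$'') indicates exactly your first approach: reconstruct the $R^{\perfection}$-action via $\lambda^{1/p^n}\cdot x := \varphi_M^{-n}(\lambda\,\varphi_M^{n}(x))$ and check it is well-defined and inverse to restriction.

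Your parenthetical ring-theoretic alternative is arguably cleaner and is already implicit in the paper's Remark~\ref{remX4}: both $\Mod_R^{\perf}$ and $\Mod_{R^{\perfection}}^{\perf}$ are identified with left modules over $R^{\perfection}[F^{\pm 1}] = (R^{\perfection})^{\perfection}[F^{\pm 1}]$, and under this identification restriction of scalars is the identity. It is also worth noting that when the paper later proves the Witt-vector generalization (Proposition~\ref{prop5gen}), it uses yet a third method: restriction of scalars is conservative, so it suffices to check that the unit $M \to f^{\diamond}M = (R^{\perfection}\otimes_R M)^{\perfection}$ of the adjunction $(f^{\diamond}, f_*)$ is an isomorphism for perfect $M$. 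All three routes are short and essentially equivalent in difficulty.
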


\begin{remark}
Let $R$ be a Noetherian $\F_p$-algebra of finite Krull dimension $d$, and suppose that the Frobenius map $\varphi_{R}: R \rightarrow R^{1/p}$ exhibits $R^{1/p}$ as a finite module over $R$. Then the abelian category of all $R^{\perfection}$-modules has global dimension $\leq 2d+1$ (see \cite[Remark 11.33]{BS}). It follows from Proposition \ref{prop5} and Construction~\ref{conX7} that the category $\Mod_R^{\perf}$ has global dimension $\leq 2d+2$. 
\end{remark}

\begin{proposition}\label{prop5prime}
Let $f: A \rightarrow B$ be a homomorphism of perfect $\F_p$-algebras.
Then the extension of scalars functor $f^{\ast}_{\Frob}: \Mod_{A}^{\Frob} \rightarrow \Mod_{B}^{\Frob}$
carries $\Mod_{A}^{\perf}$ into $\Mod_{B}^{\perf}$.
\end{proposition}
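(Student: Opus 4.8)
The plan is to show directly that the Frobenius-semilinear endomorphism $\varphi_{B\otimes_A M}$ of $f^{\ast}_{\Frob}(M) = B\otimes_A M$ is bijective, by writing down its inverse explicitly. Recall that $\varphi_{B\otimes_A M}(b\otimes x) = b^{p}\otimes\varphi_M(x)$. Since $B$ is perfect, every element of $B$ has a unique $p$-th root $b\mapsto b^{1/p}$, and the assignment $b\mapsto b^{1/p}$ is a ring homomorphism (it is the inverse of the Frobenius on $B$). Since $M$ is a perfect Frobenius module over $A$, the map $\varphi_M$ is invertible; using that $A$ is perfect, one checks that $\varphi_M^{-1}(\lambda x) = \lambda^{1/p}\varphi_M^{-1}(x)$ for $\lambda\in A$. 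I would then define an $\F_p$-linear map $\psi\colon B\otimes_A M\to B\otimes_A M$ as the map induced by the pairing $(b,x)\mapsto b^{1/p}\otimes\varphi_M^{-1}(x)$.

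The only real point in the argument is to verify that $\psi$ is well defined, i.e. that the pairing $(b,x)\mapsto b^{1/p}\otimes\varphi_M^{-1}(x)$ is additive in each variable and $A$-balanced. Additivity in $b$ uses that $b\mapsto b^{1/p}$ is additive on the perfect ring $B$; additivity in $x$ is clear. For $A$-balancedness, given $a\in A$ one must check $\psi\big((b\,f(a))\otimes x\big) = \psi\big(b\otimes(ax)\big)$. The left side equals $(b\,f(a))^{1/p}\otimes\varphi_M^{-1}(x) = b^{1/p}f(a)^{1/p}\otimes\varphi_M^{-1}(x)$; the right side equals $b^{1/p}\otimes\varphi_M^{-1}(ax) = b^{1/p}\otimes a^{1/p}\varphi_M^{-1}(x) = b^{1/p}a^{1/p}\otimes\varphi_M^{-1}(x)$, and since $f$ is a ring homomorphism it commutes with the Frobenius and hence with its inverse, so $f(a^{1/p}) = f(a)^{1/p}$ and the two expressions agree. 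This is precisely where all of the hypotheses get used: perfectness of $B$ for $b^{1/p}$, of $M$ for $\varphi_M^{-1}$, of $A$ for $a^{1/p}$, and the fact that $f$ is a ring map to move $p$-th roots across $f$.

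Once $\psi$ is known to be well defined, it is immediate from the formulas that $\psi\circ\varphi_{B\otimes_A M}(b\otimes x) = (b^{p})^{1/p}\otimes\varphi_M^{-1}(\varphi_M(x)) = b\otimes x$ and $\varphi_{B\otimes_A M}\circ\psi(b\otimes x) = (b^{1/p})^{p}\otimes\varphi_M(\varphi_M^{-1}(x)) = b\otimes x$, so $\psi$ is a two-sided inverse of $\varphi_{B\otimes_A M}$ and $f^{\ast}_{\Frob}(M)$ is perfect. I do not anticipate any serious obstacle: the argument is essentially formal, with the only care needed being the bookkeeping in the well-definedness check above. (Alternatively, one can bypass the hands-on computation entirely: by Remark \ref{remX4} and perfectness of $A$, a perfect Frobenius module over $A$ is the same thing as a left module over $A[F][F^{-1}] = A^{\perfection}[F^{\pm 1}] = A[F^{\pm 1}]$, and similarly for $B$; since $f^{\ast}_{\Frob}(M) = B[F]\otimes_{A[F]} M = \big(B[F]\otimes_{A[F]} A[F^{\pm 1}]\big)\otimes_{A[F^{\pm 1}]} M = B[F^{\pm 1}]\otimes_{A[F^{\pm 1}]} M$, it is visibly a module over $B[F^{\pm 1}]$, hence perfect.)
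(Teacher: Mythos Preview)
Your proof is correct and is essentially the same argument as the paper's, just written out in full detail. The paper simply observes that $\varphi_B$, $\varphi_A$, and $\varphi_M$ are all isomorphisms, so the induced map $\varphi_{B\otimes_A M}$ is an isomorphism by functoriality of the tensor product; your explicit construction of the inverse $\psi(b\otimes x)=b^{1/p}\otimes\varphi_M^{-1}(x)$ is precisely what that functoriality unpacks to. Your alternative via $A[F^{\pm 1}]$ and $B[F^{\pm 1}]$ is a slightly different packaging of the same content and is also fine.
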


\begin{proof}
Let $M$ be a {perfect} Frobenius module over $A$. Then the maps
$$ \varphi_{B}: B \rightarrow B \quad \quad \varphi_{A}: A \rightarrow A \quad \quad \varphi_{M}: M \rightarrow M$$
are isomorphisms, so the induced map $\varphi_{B \otimes_A M}: B \otimes_{A} M \rightarrow B \otimes_{A} M$ is also an isomorphism.
\end{proof}

\begin{warning}
In the proof of Proposition \ref{prop5prime}, it is not enough to assume that $M$ and $B$ are perfect. For example, the
tensor product $\F_p[x]^{\perfection} \otimes_{ \F_p[x] } \F_p[x]^{\perfection}$ is not a perfect ring.
\end{warning}

\begin{corollary}\label{corX14}
Let $f: A \rightarrow B$ be an {\etale} morphism of $\F_p$-algebras. Then the extension of scalars functor $f^{\ast}_{\Frob}: \Mod_{A}^{\Frob} \rightarrow \Mod_{B}^{\Frob}$
carries $\Mod_{A}^{\perf}$ into $\Mod_{B}^{\perf}$.
\end{corollary}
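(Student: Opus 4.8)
The statement to prove is Corollary~\ref{corX14}: if $f \colon A \to B$ is \'etale, then $f^{\ast}_{\Frob}$ carries perfect Frobenius modules to perfect Frobenius modules. The plan is to reduce to Proposition~\ref{prop5prime} (the analogous statement for maps of perfect rings) by passing to perfections and exploiting the fact that, for \'etale morphisms, extension of scalars and the diamond functor agree on perfect modules. Let me spell this out.

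First I would recall that $\Mod_{R}^{\perf}$ depends (up to equivalence) only on $R^{\perfection}$: by Proposition~\ref{prop5}, restriction of scalars induces an equivalence $\Mod_{R^{\perfection}}^{\perf} \simeq \Mod_{R}^{\perf}$, and by Remark~\ref{remX5} the perfection of a Frobenius module does not depend on which base ring we regard it over. So a perfect Frobenius module $M$ over $A$ is the same data as a perfect Frobenius module over $A^{\perfection}$, and likewise on the $B$ side. Now the \'etale map $f \colon A \to B$ induces an \'etale map $f^{\perfection} \colon A^{\perfection} \to B^{\perfection}$ (base change of \'etale along $A \to A^{\perfection}$), and $A^{\perfection}$, $B^{\perfection}$ are perfect rings. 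Applying Proposition~\ref{prop5prime} to $f^{\perfection}$, the extension-of-scalars functor $(f^{\perfection})^{\ast}_{\Frob}$ carries $\Mod_{A^{\perfection}}^{\perf}$ into $\Mod_{B^{\perfection}}^{\perf}$. The remaining point is to identify this functor with $f^{\ast}_{\Frob}$ on perfect modules.

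The key identification is that for an \'etale morphism $f$, the functor $f^{\diamond}$ of Proposition~\ref{mallow} agrees with $f^{\ast}_{\Frob}$ when applied to perfect Frobenius modules. Concretely: for $M$ perfect over $A$, we have $f^{\diamond}(M) = (f^{\ast}_{\Frob} M)^{\perfection}$ by definition, and I claim the natural map $f^{\ast}_{\Frob} M \to (f^{\ast}_{\Frob} M)^{\perfection}$ is already an isomorphism, i.e. $f^{\ast}_{\Frob} M = B \otimes_A M$ is already perfect. Equivalently, the Frobenius on $B \otimes_A M$ is bijective. Since $M$ is perfect, $\varphi_M$ is bijective, so the issue is entirely about the interaction of the relative Frobenius of $B/A$ with the tensor product; this is exactly where \'etaleness enters — for $f$ \'etale, the relative Frobenius $B^{1/p} \otimes_{A^{1/p}} A \to B$ (equivalently $\varphi_{B/A}$) is an isomorphism, so tensoring an $A$-module with $B$ commutes with twisting by Frobenius in the appropriate sense, forcing $\varphi_{B \otimes_A M}$ to be bijective. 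I expect this verification — that $B \otimes_A M$ is perfect, using that $f$ \'etale implies the relative Frobenius is an isomorphism (so that $B \otimes_{A, \varphi_A} A \cong B$ via $\varphi_B$, compatibly) — to be the main obstacle, or at least the only place where anything must actually be checked; once it is in hand, the corollary is immediate. Alternatively, one can avoid isolating this claim and instead argue: $f^{\ast}_{\Frob} M \to f^{\diamond}(M)$ is an isomorphism on perfect $M$ because its cofiber (the failure of $f^{\ast}_{\Frob}M$ to be perfect) has locally nilpotent Frobenius and maps to zero under perfection, while $f$ being \'etale (hence flat) makes $f^{\ast}_{\Frob}$ exact and one checks directly that no nilpotents are introduced.

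To assemble: given $M \in \Mod_A^{\perf}$, view it over $A^{\perfection}$; then $f^{\ast}_{\Frob} M \cong B \otimes_A M \cong B^{\perfection} \otimes_{A^{\perfection}} M$ — here using $B \otimes_A M \cong (B \otimes_A A^{\perfection}) \otimes_{A^{\perfection}} M$ together with the fact that, $f$ being \'etale, $B \otimes_A A^{\perfection}$ is perfect and hence equals $B^{\perfection}$ — and the last expression is $(f^{\perfection})^{\ast}_{\Frob} M$, which is perfect over $B^{\perfection}$ by Proposition~\ref{prop5prime}, hence perfect over $B$. This completes the argument. The only subtle inputs are (i) an \'etale base change of a perfect ring is perfect (so $B \otimes_A A^{\perfection} = B^{\perfection}$), which again reduces to the relative Frobenius of an \'etale map being an isomorphism, and (ii) Proposition~\ref{prop5prime}; everything else is bookkeeping with the adjunctions and the perfection functor established in \S\ref{sec3sub2}--\S\ref{sec3sub3}.
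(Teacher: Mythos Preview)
Your proposal is correct and, in its final ``assemble'' paragraph, follows exactly the paper's argument: view $M$ over $A^{\perfection}$, use that the square $A \to B$, $A \to A^{\perfection}$ is a pushout (equivalently, that an \'etale base change of a perfect ring is perfect, so $B \otimes_A A^{\perfection} = B^{\perfection}$), identify $B \otimes_A M$ with $B^{\perfection} \otimes_{A^{\perfection}} M$, and apply Proposition~\ref{prop5prime}. The middle paragraph about $f^{\diamond}$ and the relative Frobenius is exploratory and a bit circular (the claim that $f^{\ast}_{\Frob}M$ is already perfect is the statement to be proved), but you correctly abandon it and land on the clean argument, which is the paper's.
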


\begin{proof}
Let $M$ be a {perfect} Frobenius module over $A$. Then we can also regard $M$ as a Frobenius module over $A^{\perfection}$. Since $f$ is {\etale}, the diagram
of commutative rings
$$ \xymatrix{ A \ar[r] \ar[d] & B \ar[d] \\
A^{\perfection} \ar[r] & B^{\perfection} }$$
is a pushout square. It follows that we can identify $f^{\ast}_{\Frob} M$ with the tensor product $B^{\perfection} \otimes_{A^{\perfection}} M$, which is perfect
by Proposition \ref{prop5prime}.
\end{proof}

\subsection{Exactness Properties of $f^{\diamond}$}

Our final goal in this section is to establish the following fundamental exactness property for pullbacks of algebraic Frobenius modules:

\begin{theorem}\label{theoX9}
Let $f: A \rightarrow B$ be a homomorphism of commutative $\F_p$-algebras and let $M$ be an algebraic Frobenius module over $A$. Then
the abelian groups $\Tor_{n}^{A}(M, B)^{\perfection}$ vanish for $n > 0$.
\end{theorem}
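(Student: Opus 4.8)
The plan is to reduce, by a series of d\'evissages, to the case of a single "generator," where the vanishing can be verified by an explicit computation with the perfection of $R[F]$. First, observe that the functor $M \mapsto \Tor_n^A(M,B)^{\perfection}$ carries short exact sequences of Frobenius modules over $A$ to long exact sequences (combining the usual $\Tor$ long exact sequence with the exactness of $N \mapsto N^{\perfection}$ from Remark~\ref{prebb}). Since every algebraic Frobenius module is a filtered colimit of \emph{holonomic} ones (by Remark~\ref{rmk:IndHolCriterion}, any finitely generated $R$-submodule generates a finitely-generated-as-$R$-module Frobenius submodule, and an algebraic module is the union of these), and since $\Tor$ and $(-)^{\perfection}$ both commute with filtered colimits, it suffices to treat the case where $M$ is generated over $A[F]$ by finitely many elements, each satisfying an equation $\varphi_M^n x + a_1 \varphi_M^{n-1} x + \cdots + a_n x = 0$.

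Second, I would induct on the number of generators. Writing $M$ as an extension $0 \to M' \to M \to M'' \to 0$ with $M'$ generated by fewer elements and $M''$ cyclic, and noting that $M'$ and $M''$ are again algebraic (submodules and quotients of algebraic modules are algebraic — this is immediate from condition $(b)$ of Definition~\ref{defhol1}), the long exact sequence reduces us to the \textbf{cyclic case}: $M = A[F]/A[F]\cdot g$ where $g = F^n + a_1 F^{n-1} + \cdots + a_n \in A[F]$ (here I use the identification $\varphi$ with left multiplication by $F$, and the equation forces $M$ to be a quotient of $A[F]/A[F]g$; one further short exact sequence handles the kernel, which is again cyclic-ish — more carefully, $M$ itself need not be $A[F]/A[F]g$ on the nose, but it sits in an exact sequence whose other terms are of this form, since the element $x$ generating $M$ over $A[F]$ gives a surjection $A[F]/A[F]g \twoheadrightarrow M$ and the kernel is an algebraic Frobenius module generated by one fewer "new" relation; iterating terminates).

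Third, for $M = A[F]/A[F]g$ with $g$ as above, I claim $M$ admits the two-term free resolution
$$ 0 \to A[F] \xrightarrow{\;\cdot g\;} A[F] \to M \to 0 $$
as left $A[F]$-modules. This uses crucially that $g$ is \emph{monic} in $F$: right multiplication by such a $g$ is injective on $A[F]$ because the leading term $F^n$ cannot be cancelled (comparing top-degree coefficients in the noncommutative multiplication of Notation~\ref{not2}). Now apply $f^*_{\Frob}(-) = B \otimes_A -$ and then $(-)^{\perfection}$. Since $A[F]$ is free as a left $A$-module (Warning after Remark~\ref{remark.cor20}), $B \otimes_A A[F] = B[F]$, and one checks $(B[F])^{\perfection} = B^{\perfection}[F^{\pm 1}]$ (Example~\ref{skriff}), so after perfection we get the complex
$$ B^{\perfection}[F^{\pm 1}] \xrightarrow{\;\cdot g\;} B^{\perfection}[F^{\pm 1}] $$
computing $\Tor_*^A(M,B)^{\perfection}$ by Construction~\ref{sevos} and Variant~\ref{variant.derived}. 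It remains to show this map is \emph{injective}, which gives $\Tor_n^A(M,B)^{\perfection} = 0$ for $n > 0$. But in $B^{\perfection}[F^{\pm 1}]$ the element $F$ is a unit, so $g = F^n(1 + a_1 F^{-1} + \cdots + a_n F^{-n})$; it is enough that right multiplication by $1 + a_1 F^{-1} + \cdots + a_n F^{-n}$ is injective, and this follows from a leading-term (lowest-degree-in-$F$) argument: if $c \cdot (1 + \text{lower}) = 0$ with $c \ne 0$, comparing the lowest $F$-degree terms forces the leading coefficient of $c$ to vanish, a contradiction.

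The main obstacle is the bookkeeping in the d\'evissage from "finitely many generators with algebraic relations" to the clean cyclic module $A[F]/A[F]g$: an algebraic Frobenius module is not literally a successive extension of modules of the form $A[F]/A[F]g$, because the relations $\varphi_M^n x_i + \cdots = 0$ express $\varphi_M^n x_i$ in terms of \emph{lower} powers of $\varphi_M$ applied to $x_i$ but possibly also involve the other generators after clearing denominators — so one has to argue that any finitely generated algebraic module is a quotient of a finite iterated extension of such cyclic modules, and then use that both $\Tor^{\perfection}$-vanishing and the class of algebraic modules are stable under the relevant operations. Once that reduction is in hand, the homological input (the two-term resolution by monic right-multiplication, surviving perfection) and the final injectivity check are short and purely formal.
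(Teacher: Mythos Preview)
Your d\'evissage has a genuine gap at the passage from a cyclic module to $A[F]/A[F]g$. After your reduction (which is fine) to a Frobenius module $N$ that is finitely generated over $A$ and in which every element satisfies a monic $F$-equation, and then to $N$ cyclic, you obtain a surjection $A[F]/A[F]g \twoheadrightarrow N$. The source is free over $A$ of rank $\deg g$, so $\Tor_n^A(A[F]/A[F]g, B) = 0$ for all $n > 0$ already before perfection---your perfected-injectivity argument in step~3 is correct but unnecessary here. Writing $0 \to K \to A[F]/A[F]g \to N \to 0$ and passing to the long exact sequence gives only $\Tor_n^A(N,B) \cong \Tor_{n-1}^A(K,B)$ for $n \geq 2$, together with an injection $\Tor_1^A(N,B) \hookrightarrow K \otimes_A B$. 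So dimension-shifting reduces everything to the $n=1$ case, but does not touch $n=1$ itself. Your assertion that ``$\Tor^{\perfection}$-vanishing is stable under the relevant operations'' is exactly where the difficulty hides: stability under quotients in degree~$1$ would require the map $(K \otimes_A B)^{\perfection} \to ((A[F]/A[F]g) \otimes_A B)^{\perfection}$ to be injective, i.e., that $f^{\diamond}$ preserve the injection $K^{\perfection} \hookrightarrow (A[F]/A[F]g)^{\perfection}$. But that is precisely the content of the theorem you are proving (compare Corollary~\ref{lushin}, which is deduced \emph{from} this theorem). Iterating on $K$ does not help, since no invariant decreases; even in the Noetherian case there is no well-founded induction here.

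The paper's proof runs the d\'evissage on the other variable. One reduces $B$ to the form $A/(a)$ for a single element $a \in A$: first write $B$ as a filtered colimit of finitely presented $A$-algebras, then use flatness of a polynomial extension to make $f$ surjective, then induct on generators of the kernel ideal. The key structural input is then Lemma~\ref{lemma.bs}: the ideal $I = (a^{1/p^n})_{n \geq 0} \subseteq A^{\perfection}$ is flat over $A^{\perfection}$, presented explicitly as a filtered colimit of free modules of rank one. This immediately gives $\Tor^{A^{\perfection}}_{\geq 2}(M, B^{\perfection}) = 0$ and identifies $\Tor_1$ with the kernel of the multiplication map $M \otimes_{A^{\perfection}} I \to M$. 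The vanishing of that kernel is a short direct computation using the monic relation on each $x \in M$: from $a^{1/p^m} x = 0$ and $\varphi_M^n x = \sum_i c_i \varphi_M^{n-i} x$ one shows $\varphi_M^n(a^{1/p^{m+1}} x) = 0$, hence $a^{1/p^{m+1}} x = 0$ by bijectivity of $\varphi_M$. This explicit use of the algebraic relation on elements of $M$, interacting with the specific shape of $I^{\perfection}$, is exactly the ingredient your approach is missing.
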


Before giving the proof of Theorem \ref{theoX9}, let us collect some consequences:

\begin{corollary}\label{lushin}
Let $f: A \rightarrow B$ be a homomorphism of commutative $\F_p$-algebras and suppose we are given an exact sequence
$0 \rightarrow M' \rightarrow M \rightarrow M'' \rightarrow 0$ in $\Mod_{A}^{\perf}$. If $M''$ is algebraic, then
the sequence $0 \rightarrow f^{\diamond} M' \rightarrow f^{\diamond} M \rightarrow f^{\diamond} M'' \rightarrow 0$ is also exact.
\end{corollary}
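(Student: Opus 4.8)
The plan is to deduce Corollary~\ref{lushin} formally from Theorem~\ref{theoX9} by means of the long exact sequence of left derived functors of $f^{\diamond}$. First I would record that the given short exact sequence $0 \rightarrow M' \rightarrow M \rightarrow M'' \rightarrow 0$ lives in the abelian category $\Mod_{A}^{\perf}$, which has enough projective objects (Remark~\ref{remX4}); since $f^{\diamond}: \Mod_{A}^{\perf} \rightarrow \Mod_{B}^{\perf}$ is right exact, it has left derived functors $L_{n} f^{\diamond}$, and the short exact sequence gives a long exact sequence
$$ \cdots \rightarrow (L_{1} f^{\diamond})(M'') \rightarrow f^{\diamond} M' \rightarrow f^{\diamond} M \rightarrow f^{\diamond} M'' \rightarrow 0 $$
in $\Mod_{B}^{\perf}$, using that $L_{0} f^{\diamond} = f^{\diamond}$ by right exactness.

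Next I would invoke Variant~\ref{variant.derived}, which identifies $(L_{1} f^{\diamond})(M'')$ with $\Tor_{1}^{A}(M'', B)^{\perfection}$ (here the hypothesis that $M''$ is perfect is used, so that its perfection agrees with $M''$ and the derived functor of $f^{\diamond}$ computed on it is the relevant one). Since $M''$ is algebraic, Theorem~\ref{theoX9} gives $\Tor_{1}^{A}(M'', B)^{\perfection} = 0$. Substituting this into the long exact sequence above immediately yields the desired short exact sequence $0 \rightarrow f^{\diamond} M' \rightarrow f^{\diamond} M \rightarrow f^{\diamond} M'' \rightarrow 0$.

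An equivalent route, which I would mention as an alternative, is to work in $\Mod_{A}^{\Frob}$: the left derived functors of $f^{\ast}_{\Frob}$ are $M \mapsto \Tor_{n}^{A}(M, B)$ by Construction~\ref{sevos}, so the short exact sequence produces an exact sequence $\Tor_{1}^{A}(M'', B) \rightarrow f^{\ast}_{\Frob} M' \rightarrow f^{\ast}_{\Frob} M \rightarrow f^{\ast}_{\Frob} M'' \rightarrow 0$; applying the exact perfection functor (Remark~\ref{prebb}) and the identification $(f^{\ast}_{\Frob} N)^{\perfection} \simeq f^{\diamond}(N^{\perfection}) \simeq f^{\diamond} N$ for $N$ perfect (Proposition~\ref{mallow}), together with $\Tor_{1}^{A}(M'', B)^{\perfection} = 0$, gives the same conclusion.

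There is no real obstacle here: the entire content is packaged in Theorem~\ref{theoX9}, and Corollary~\ref{lushin} is a purely formal consequence via homological algebra. The only point requiring a moment's care is the bookkeeping ensuring that the perfections of $M'$, $M$, $M''$ coincide with $M'$, $M$, $M''$ themselves, so that Variant~\ref{variant.derived} applies on the nose; with that in hand the argument is a one-line diagram chase.
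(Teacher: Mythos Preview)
Your proposal is correct and matches the paper's proof essentially verbatim: the paper simply invokes Variant~\ref{variant.derived} to obtain the exact sequence $\Tor^{A}_{1}(M'', B)^{\perfection} \rightarrow f^{\diamond} M' \rightarrow f^{\diamond} M \rightarrow f^{\diamond} M'' \rightarrow 0$ and then appeals to Theorem~\ref{theoX9} for the vanishing of the first term. Your exposition is more detailed (and your alternative route via $f^{\ast}_{\Frob}$ and perfection is also valid), but the substance is identical.
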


\begin{proof}
Variant \ref{variant.derived} supplies an exact sequence
$$ \Tor^{A}_{1}(M'', B)^{\perfection} \rightarrow f^{\diamond} M' \rightarrow f^{\diamond} M \rightarrow f^{\diamond} M'' \rightarrow 0,$$
where the first term vanishes by virtue of Theorem \ref{theoX9}.
\end{proof}

\begin{corollary}\label{corX11}
Let $f: A \rightarrow B$ be a morphism of $\F_p$-algebras and suppose we are given objects $M \in \Mod_{A}^{\Frob}$, $N \in \Mod_{B}^{\Frob}$.
If $M$ is algebraic and $N$ is perfect, then the canonical map $\Ext^{\ast}_{ A[F] }( M, N ) \rightarrow \Ext^{\ast}_{ B[F] }( f^{\diamond} M, N )$
is an isomorphism.
\end{corollary}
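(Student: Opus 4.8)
The plan is to reduce the claim to the fact, already available in the excerpt, that $f^{\diamond}$ is computed by a derived tensor product which, on algebraic modules, is concentrated in degree zero (Theorem \ref{theoX9}). Concretely: choose a projective resolution $P_{\ast} \to M$ in $\Mod_{A}^{\Frob}$. Because $R[F]$ is free as a left $R$-module (Warning after Remark \ref{remark.restriction}), each $P_n$ is projective over $A$ as well, so $f^{\ast}_{\Frob} P_{\ast} \simeq B \otimes_{A} P_{\ast}$ computes $\Tor^{A}_{\ast}(M,B)$ as Frobenius modules (Construction \ref{sevos}), and applying the exact perfection functor $(-)^{\perfection}$ (Remark \ref{prebb}) we get that $(f^{\ast}_{\Frob}P_{\ast})^{\perfection} = f^{\diamond}(P_{\ast}^{\perfection})$ has homology $\Tor^{A}_{\ast}(M,B)^{\perfection}$. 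By Theorem \ref{theoX9} this vanishes in positive degrees when $M$ is algebraic, so $f^{\diamond}(P_{\ast}^{\perfection}) \to f^{\diamond}(M^{\perfection}) = f^{\diamond}(M)$ is a resolution of $f^{\diamond}M$ in $\Mod_{B}^{\perf}$.

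Now I compute both sides of the asserted isomorphism using these resolutions. Since $P_{\ast}^{\perfection} \to M$ is a projective resolution of $M$ in $\Mod_{A}^{\perf}$ (Variant \ref{variant.derived}), and since $N$ is perfect so that $\Ext$-groups over $A[F]$ agree whether computed in $\Mod_{A}^{\perf}$ or $\Mod_{A}^{\Frob}$ (Remark \ref{prebb}), we have
$$\Ext^{\ast}_{A[F]}(M,N) \;=\; H^{\ast}\big(\Hom_{A[F]}(P_{\ast}^{\perfection}, N)\big).$$
On the other hand, $f^{\diamond}(P_{\ast}^{\perfection}) \to f^{\diamond}M$ is a resolution by projective objects of $\Mod_{B}^{\perf}$ (each $f^{\diamond}(P_n^{\perfection})$ is projective because $f^{\diamond}$ is a left adjoint, hence preserves projectives), so
$$\Ext^{\ast}_{B[F]}(f^{\diamond}M, N) \;=\; H^{\ast}\big(\Hom_{B[F]}(f^{\diamond}(P_{\ast}^{\perfection}), N)\big).$$
Finally, the adjunction between $f^{\diamond}$ and the forgetful functor $f_{\ast}\colon \Mod_{B}^{\perf} \to \Mod_{A}^{\perf}$ (Proposition \ref{mallow}), together with the fact that $N$ is already perfect so $f_{\ast}N$ makes sense, gives a natural isomorphism of cochain complexes
$$\Hom_{B[F]}(f^{\diamond}(P_{\ast}^{\perfection}), N) \;\simeq\; \Hom_{A[F]}(P_{\ast}^{\perfection}, f_{\ast}N),$$
and the right-hand side is literally $\Hom_{A[F]}(P_{\ast}^{\perfection}, N)$ once we remember that restriction of scalars does not change the underlying Frobenius-module of $N$. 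Passing to cohomology identifies the two $\Ext$-groups, and the resulting isomorphism is the canonical comparison map (it is induced on each term by the unit $P_n^{\perfection} \to f_{\ast}f^{\diamond}(P_n^{\perfection})$, which is exactly what defines the comparison map on $\Ext$).

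The only genuinely substantive input is the degeneration statement from Theorem \ref{theoX9}; everything else is a formal manipulation of adjunctions and resolutions. Accordingly, the main obstacle — and the place to be careful — is checking that the comparison map built from the adjunction unit really agrees with the "canonical map" in the statement, and that the algebraicity hypothesis is used precisely where needed, namely to guarantee that $f^{\diamond}$ applied to the chosen resolution stays a resolution (without it, the higher $\Tor^{A}(M,B)^{\perfection}$ would contribute a spectral-sequence correction term rather than an isomorphism). Once that identification is pinned down, the proof is complete.
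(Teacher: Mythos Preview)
Your proof is correct and follows essentially the same route as the paper: take a projective resolution of $M$ in $\Mod_{A}^{\perf}$, use Theorem \ref{theoX9} to see that $f^{\diamond}$ of this resolution remains a resolution of $f^{\diamond}M$, and then identify the two Hom-complexes via the $(f^{\diamond}, f_{\ast})$ adjunction. The paper's version is terser (it starts directly with a projective resolution in $\Mod_{A}^{\perf}$ rather than perfecting one from $\Mod_{A}^{\Frob}$), but the content is the same. One small imprecision: you write ``$f^{\diamond}$ is a left adjoint, hence preserves projectives,'' but a left adjoint preserves projectives only when its right adjoint is exact---here $f_{\ast}$ is restriction of scalars, which is exact, so the conclusion holds.
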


\begin{proof}
Let $P_{\ast}$ be a projective resolution of $M$ in the category $\Mod_{A}^{\perf}$. Then Theorem \ref{theoX9} guarantees that
$f^{\diamond} P_{\ast}$ is a projective resolution of $f^{\diamond} M$ in the category $\Mod_{B}^{\perf}$, so that
both $\Ext^{\ast}_{A[F]}(M, N)$ and $\Ext^{\ast}_{ B[F] }( f^{\diamond} M, N)$ can be identified with the cohomology
of the cochain complex $\Hom_{ A[F] }( P_{\ast}, N ) \simeq \Hom_{ B[F] }( f^{\diamond} P_{\ast}, N )$.
\end{proof}

We now turn to the proof of Theorem \ref{theoX9}. The main ingredient is the following observation from \cite{BS}:

\begin{lemma}\label{lemma.bs}
Let $A$ be a perfect $\F_p$-algebra containing an element $a$, and let $I = (a, a^{1/p}, a^{1/p^2}, \ldots )$ denote the kernel of
the map $A \rightarrow (A / (a) )^{\perfection}$. Then the elements $a^{1/p^n} \in I$ determine an $A$-module isomorphism of $I$
with the direct limit of the diagram
$$ A \xrightarrow{ a^{ 1 - 1/p} } A \xrightarrow{ a^{1/p - 1/p^2} } A \xrightarrow{ a^{ 1/p^2 - 1/p^3} } A \xrightarrow{ a^{ 1/p^3 - 1/p^4 } } \cdots$$
\end{lemma}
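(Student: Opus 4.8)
The plan is to construct an explicit $A$-linear isomorphism from the direct limit
$$ C \;:=\; \varinjlim\bigl( A \xrightarrow{\,a^{1-1/p}\,} A \xrightarrow{\,a^{1/p-1/p^{2}}\,} A \xrightarrow{\,a^{1/p^{2}-1/p^{3}}\,} \cdots \bigr) $$
onto $I$, induced by the ``multiplication by $a^{1/p^{n}}$'' maps out of the successive terms. First I would record two elementary points. Since $A$ is perfect, each $a^{1/p^{n}}$ is a well-defined element of $A$, and $a^{1/p^{n}} = (a^{1/p^{n+1}})^{p} \in (a^{1/p^{n+1}})$; hence the principal ideals form an increasing chain $(a)\subseteq (a^{1/p})\subseteq(a^{1/p^{2}})\subseteq\cdots$, and therefore $I = (a,a^{1/p},a^{1/p^{2}},\dots) = \bigcup_{n\ge 0} a^{1/p^{n}} A$ (this is also the kernel of $A\to (A/(a))^{\perfection}$, as asserted, since $x$ dies in this perfection iff $x^{p^{n}}\in (a)$ for some $n$, which by uniqueness of $p$-power roots forces $x\in (a^{1/p^{n}})$). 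Second, letting $A_{n}$ denote the $n$th copy of $A$ in the diagram above, the transition map $A_{n}\to A_{n+1}$ is multiplication by $a^{1/p^{n}-1/p^{n+1}}$, and composing it with multiplication by $a^{1/p^{n+1}}\colon A_{n+1}\to A$ yields multiplication by $a^{1/p^{n}}$. Thus the maps $A_{n}\xrightarrow{\,\cdot\, a^{1/p^{n}}\,} I$ are compatible with the transition maps and assemble into an $A$-linear map $\theta\colon C\to I$.

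Next I would check that $\theta$ is bijective. Surjectivity is immediate from $I=\bigcup_{n} a^{1/p^{n}}A$: every element of $I$ equals $a^{1/p^{n}}x$ for some $n$ and some $x\in A$, hence lies in the image of $A_{n}$. For injectivity, recall that a class in the filtered colimit $C$ is represented by some $x\in A_{n}$, and that it vanishes precisely when the image of $x$ in $A_{m}$ is $0$ for some $m>n$, i.e.\ when $a^{1/p^{n}-1/p^{m}}x=0$ in $A$. It therefore suffices to prove the following claim: if $a^{1/p^{n}}x=0$, then already $a^{1/p^{n}-1/p^{n+1}}x=0$, so that the class of $x$ dies at the very next stage of the colimit.

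To prove the claim, set $c=a^{1/p^{n+1}}$, so that $c^{p}=a^{1/p^{n}}$ and $c^{p-1}=a^{(p-1)/p^{n+1}}=a^{1/p^{n}-1/p^{n+1}}$. Then
$$ (c^{p-1}x)^{p} = (c^{p})^{p-1}x^{p} = (a^{1/p^{n}})^{p-1}x^{p} = (a^{1/p^{n}}x)^{p-1}\,x = 0, $$
using $a^{1/p^{n}}x=0$ and $p-1\ge 1$. Since $A$ is perfect, the Frobenius $\varphi_{A}$ is injective, so an element whose $p$th power vanishes is itself zero; hence $c^{p-1}x = a^{1/p^{n}-1/p^{n+1}}x = 0$, as needed. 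This last step --- invoking injectivity of the Frobenius of the perfect ring $A$ (equivalently, reducedness) to pass from ``nilpotent'' to ``zero'' --- is the only genuinely non-formal ingredient in the argument; everything else is bookkeeping with the exponents $1/p^{n}$ and with filtered colimits of modules, and it is the point where the full strength of the perfectness hypothesis (not merely the existence of $p$-power roots) enters.
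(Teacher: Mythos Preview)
Your proof is correct and follows essentially the same approach as the paper: reduce to the injectivity claim that $a^{1/p^{n}}x=0$ implies $a^{1/p^{n}-1/p^{n+1}}x=0$, and prove this by a one-line computation exploiting perfectness. The only cosmetic difference is that the paper uses \emph{surjectivity} of Frobenius (writing $x = x^{1/p}\cdot x^{(p-1)/p}$ to exhibit $(xa^{1/p^{n}})^{1/p}$ as a factor of $xa^{1/p^{n}-1/p^{n+1}}$ directly), whereas you use \emph{injectivity} of Frobenius (raising to the $p$th power and invoking reducedness); these are the two sides of the same coin.
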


\begin{proof}
Unwinding the definitions, we must show that if an element $x \in A$ satisfies the equation $x a^{ 1/p^m} = 0$ for some $m \geq 0$, then we have
$x a^{ 1/p^m - 1/p^n} = 0$ for some $n \gg m$. We now compute
\begin{eqnarray*}
x a^{ 1/p^m - 1/p^{m+1} } & = & x^{1/p} x^{(p-1)/p} a^{1/p^{m+1} } a^{ (p-2) / p^{m+1} } \\
& = & (x a^{1/p^m} )^{1/p} x^{(p-1)/p} a^{(p-2)/p^{m+1} } \\
& = & 0. \end{eqnarray*}
\end{proof}

\begin{proof}[Proof of Theorem \ref{theoX9}]
Let $f: A \rightarrow B$ be a homomorphism of commutative $\F_p$-algebras and let $M$ be an algebraic Frobenius module over $A$;
we wish to show that the groups $\Tor_{n}^{A}( M, B)^{\perfection}$ vanish for $n > 0$. Writing $B$ as a filtered direct limit of finitely generated
$A$-algebras, we can assume that $B$ is finitely presented over $A$: that is, we can write $B \simeq A[x_1, \ldots, x_k] / I$ for some finitely generated ideal $I \subseteq A[x_1, \ldots, x_k]$.
Set $N = (A[x_1, \ldots, x_k] \otimes_{A} M)^{\perfection}$. Using the flatness of $A[x_1, \ldots, x_k]$ over $A$, we obtain isomorphisms
$$ \Tor_{\ast}^{ A }(M, B)^{\perfection} \simeq \Tor_{\ast}^{ A[x_1, \ldots, x_k] }( N, B )^{\perfection}.$$
Moreover, $N$ is an algebraic Frobenius module over $A[x_1, \ldots, x_k]$ (see Corollary \ref{corX62}). We may therefore replace $A$ 
by $A[x_1, \ldots, x_k]$ (and $M$ by $N$) and thereby reduce to the case where $f$ is surjective.

Proceeding by induction on the number of generators of $I$, we can reduce to the case where $I = (a)$ is a principal ideal. Since $M$ is perfect,
we can regard $M$ as a module over the perfection $A^{\perfection}$, so that we have canonical isomorphisms
$\Tor^{A}_{\ast}(M, B)^{\perfection} \simeq \Tor^{ A^{\perfection} }_{\ast}(M, B^{\perfection} )$. We have an exact sequence
$$0 \rightarrow I^{\perfection} \rightarrow A^{\perfection} \rightarrow B^{\perfection} \rightarrow 0$$
in the category of modules over $A^{\perfection}$, where $I^{\perfection}$ is flat over $A^{\perfection}$ by virtue of Lemma \ref{lemma.bs}.
It follows that the groups $\Tor^{A}_{n}(M, B)^{\perfection}$ vanish for $n \geq 2$, and $\Tor^{A}_{1}(M, B)^{\perfection}$ can be identified
with the kernel of the map
$$ \rho: M \otimes_{A^{\perfection} } I^{\perfection} \rightarrow M \otimes_{ A^{\perfection} } A^{\perfection} \simeq M.$$
We will complete the proof by showing that $\rho$ is injective Using the description of $I^{\perfection}$ supplied by
Lemma \ref{lemma.bs}, we see that the injectivity of $\rho$ can be reformulated as follows:
\begin{itemize}
\item[$(\ast)$] Let $x$ be an element of $M$ which satisfies the equation $a^{1/p^m} x = 0$, for some integer $m$. Then
$a^{ 1/p^m - 1/p^{m'}} x = 0$ for some $m' \gg m$.
\end{itemize}
To prove $(\ast)$, we use our assumption that $M$ is algebraic to write
$$ \varphi_{M}^{n}(x) = c_1 \varphi_{M}^{n-1}(x) + \cdots + c_{n} x$$
for some coefficients $c_1, \ldots, c_n \in A$. We then compute
\begin{eqnarray*}
\varphi_M^{n}( a^{ 1/p^{m+1}} x) & = & a^{p^n / p^{m+1} } \varphi_M^n(x) \\
& = & \sum_{i = 1}^{n} a^{p^n / p^{m+1} } c_i \varphi_{M}^{n-i}(x) \\
& = & \sum_{i = 1}^{n} c_i \varphi_M^{n-i}( a^{ p^{i-1} / p^{m}} x) \\
& = & 0.
\end{eqnarray*}
Using the bijectivity of $\varphi_M$, it follows that $a^{ 1/p^{m+1} } x =0$, which immediately implies $(\ast)$.
\end{proof}

\begin{remark}
\label{TorVanishingRings}
The reasoning used to prove Theorem~\ref{theoX9} can also be used to show the following result (see \cite{BS}): if $B \gets A \to C$ is a diagram of perfect rings, then $\Tor^i_A(B,C) = 0$ for $i > 0$. Indeed, as in the proof of Theorem~\ref{theoX9}, one reduces to the case $B = A/I$, where $I = \bigcup_n f^{1/p^n} A$ is the radical of an ideal generated by a single element $f \in A$. In this case, the presentation given in Lemma~\ref{lemma.bs} and the perfectness of $C$ imply that $I$ is a flat $A$-module, and that $I \otimes_A C \simeq J$, where $J = \bigcup_n f^{1/p^n} C \subseteq C$ is also an ideal. The desired claim follows immediately.
\end{remark}

\newpage \section{Holonomic Frobenius Modules}\label{section.holonomic}
\setcounter{subsection}{0}
\setcounter{theorem}{0}

Let $R$ be a commutative $\F_p$-algebra and let $\Mod_{R}^{\Frob}$ denote the category of Frobenius modules over $R$. In this section, we consider a full subcategory
$\Mod_{R}^{\hol} \subseteq \Mod_{R}^{\Frob}$ whose objects we refer to as {\it holonomic Frobenius modules}. Roughly speaking, the class of
holonomic Frobenius modules can be regarded as a characteristic $p$ analogue of the class of (regular) holonomic $\calD$-modules on complex analytic varieties. 
We will later show that the category $\Mod_{R}^{\hol}$ can be characterized as the essential image of the category 
$\Shv_{\mathet}^{c}( \Spec(R), \F_p)$ of {\em constructible} {\etale} sheaves under the Riemann-Hilbert equivalence
$$ \RH: \Shv_{\mathet}( \Spec(R), \F_p ) \simeq \Mod_{R}^{\alg} \subseteq \Mod_{R}^{\Frob}$$
of Theorem \ref{maintheoXXX} (see Theorem \ref{companion}). Our goal in this section is to lay the groundwork by establishing the basic formal properties of
$\Mod_{R}^{\hol}$.

We begin in \S \ref{sec4sub1} by defining the class of holonomic Frobenius modules (Definition \ref{defhol2}) and verifying some elementary closure properties.
In \S \ref{sec4sub2}, we show that every holonomic Frobenius module is algebraic (Proposition \ref{propX58}) and that, conversely, every
algebraic Frobenius module can be realized as a filtered colimit of holonomic Frobenius modules (Theorem \ref{theoX54}). This result will allow us to reduce certain questions
about algebraic modules to the case of holonomic modules, which enjoy good finiteness properties. For example, we prove in
\S \ref{sec4sub3} that if $R$ is Noetherian, then the category $\Mod_{R}^{\hol}$ is also Noetherian (Proposition \ref{propX29}). In \S \ref{sec4sub4} we associate
to each holonomic Frobenius module $M$ a constructible subset $\supp(M) \subseteq \Spec(R)$ which we refer to as {\it the support of $M$}.
We will see later that the support $\supp(M)$ exerts strong control over the behavior of $M$: for example, it is empty if and only if $M \simeq 0$ (Proposition \ref{theo78}). 

\subsection{Holonomicity}\label{sec4sub1}

\begin{definition}\label{defhol2}
Let $R$ be a commutative $\F_p$-algebra and let $M$ be a Frobenius module over $R$. We will say that $M$ is {\it holonomic} if there exists an isomorphism $M \simeq M_0^{\perfection}$, where $M_0 \in \Mod_{R}^{\Frob}$ is
finitely presented as an $R$-module. We let $\Mod_{R}^{\hol}$ denote the full subcategory of $\Mod_{R}^{\perf}$ spanned by the holonomic Frobenius modules over $R$.
\end{definition}

\begin{proposition}\label{propX51}
Let $f: A \rightarrow B$ be a homomorphism of $\F_p$-algebras. If $M \in \Mod_{R}^{\perf}$ is holonomic,
then $f^{\diamond} M \in \Mod_{B}^{\perf}$ is holonomic.
\end{proposition}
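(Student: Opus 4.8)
The plan is to unwind the definition of holonomicity and then invoke the compatibility of $f^{\diamond}$ with perfection recorded in Proposition \ref{mallow}. (Here $R$ should read $A$: we are given a holonomic $M \in \Mod_{A}^{\perf}$.) Since $M$ is holonomic, fix an isomorphism $M \simeq M_0^{\perfection}$ with $M_0 \in \Mod_{A}^{\Frob}$ finitely presented as an $A$-module. Applying Proposition \ref{mallow} to the object $M_0 \in \Mod_{A}^{\Frob}$ gives a canonical isomorphism $f^{\diamond}(M_0^{\perfection}) \simeq (f^{\ast}_{\Frob} M_0)^{\perfection}$, hence $f^{\diamond} M \simeq (f^{\ast}_{\Frob} M_0)^{\perfection}$.

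It then remains to observe that $N_0 := f^{\ast}_{\Frob} M_0$ witnesses the holonomicity of $f^{\diamond} M$ over $B$. By Construction \ref{construction.extension}, $N_0$ is a Frobenius module over $B$, and as a $B$-module it is $B \otimes_{A} M_0$, which is finitely presented over $B$ because finite presentation of modules is preserved under base change along $f$. Thus $f^{\diamond} M \simeq N_0^{\perfection}$ with $N_0 \in \Mod_{B}^{\Frob}$ finitely presented over $B$, which is exactly the definition of $f^{\diamond} M$ being holonomic.

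I do not expect a genuine obstacle here: the argument is entirely formal, resting on Proposition \ref{mallow} and the elementary stability of finite presentation under base change. The only point that calls for a moment of bookkeeping is to check that the Frobenius-module structure on $B \otimes_{A} M_0$ entering the definition of $f^{\ast}_{\Frob}$ is the one with respect to which the perfection in Notation \ref{biskar} is formed; this is immediate by comparing the two constructions.
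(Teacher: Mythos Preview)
Your proof is correct and follows essentially the same approach as the paper's own proof: choose $M_0$ with $M \simeq M_0^{\perfection}$, apply Proposition~\ref{mallow} to obtain $f^{\diamond} M \simeq (B \otimes_{A} M_0)^{\perfection}$, and observe that $B \otimes_{A} M_0$ is finitely presented over $B$.
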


\begin{proof}
Without loss of generality we may assume that $M = M_0^{\perfection}$ for some $M_0 \in \Mod_{A}^{\Frob}$ which is finitely presented as an
$A$-module. Then $f^{\diamond} M \simeq (B \otimes_{A} M_0)^{\perfection}$, and $B \otimes_{A} M_0$ is finitely presented as a $B$-module.
\end{proof}

We also have the following converse of Proposition \ref{propX51}, whose proof we leave to the reader:

\begin{proposition}\label{propX52}
Let $R$ be an $\F_p$-algebra and let $M$ be a holonomic Frobenius module over $R$. Then there exists an inclusion
$\iota: R' \hookrightarrow R$ where $R'$ is finitely generated over $\F_p$ and an isomorphism $M \simeq \iota^{\diamond} M'$, where
$M'$ is a holonomic Frobenius module over $R'$.
\end{proposition}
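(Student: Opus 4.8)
The plan is to reduce from a holonomic Frobenius module over $R$ to one defined over a finitely generated subalgebra, by ``spreading out'' a finite presentation. Recall that by Definition \ref{defhol2} we may write $M \simeq M_0^{\perfection}$ for some $M_0 \in \Mod_{R}^{\Frob}$ that is finitely presented as an $R$-module. The idea is that all the data in $M_0$ — a finite presentation matrix for the underlying $R$-module together with the matrix describing $\varphi_{M_0}$ relative to a chosen generating set — involves only finitely many elements of $R$, so it already makes sense over a suitable finitely generated $\F_p$-subalgebra $R' \subseteq R$.

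Concretely, I would first choose a presentation $R^{m} \xrightarrow{\ A\ } R^{n} \to M_0 \to 0$ with $A$ an $n \times m$ matrix over $R$. The Frobenius $\varphi_{M_0}$ is a Frobenius-semilinear map on $M_0$; lift the images of the standard generators $e_1, \ldots, e_n$ of $R^n$ under the composite $R^n \twoheadrightarrow M_0 \xrightarrow{\varphi_{M_0}} M_0$ to elements of $R^n$, obtaining an $n \times n$ matrix $B$ over $R$ so that $\varphi_{M_0}$ is induced by $x \mapsto B \cdot x^{(p)}$, where $x^{(p)}$ denotes entrywise $p$-th power (the semilinearity being automatic from this formula). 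One must also record the finitely many relations witnessing that this formula is well-defined modulo the column span of $A$ — that is, for each column $a_j$ of $A$, the vector $B \cdot a_j^{(p)}$ lies in the column span of $A$, so $B \cdot a_j^{(p)} = A \cdot c_j$ for some $c_j \in R^m$. Let $R' \subseteq R$ be the $\F_p$-subalgebra generated by all entries of $A$, $B$, and the $c_j$; this is finitely generated over $\F_p$, and $\iota : R' \hookrightarrow R$ is the desired inclusion.

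Over $R'$, the same matrices $A$, $B$, and relations $c_j$ define a Frobenius module $M_0'$ with underlying $R'$-module $\operatorname{coker}(R'^m \xrightarrow{A} R'^n)$ and Frobenius $x \mapsto B \cdot x^{(p)}$, which is finitely presented as an $R'$-module; set $M' = (M_0')^{\perfection}$, a holonomic Frobenius module over $R'$. Because cokernels and the formula for $\varphi$ commute with base change $- \otimes_{R'} R$, we get $\iota^{\ast}_{\Frob} M_0' \simeq R \otimes_{R'} M_0' \simeq M_0$ as Frobenius modules over $R$; applying the perfection functor and using Proposition \ref{mallow} (which identifies $\iota^{\diamond}(M_0'^{\perfection})$ with $(\iota^{\ast}_{\Frob} M_0')^{\perfection}$) yields $\iota^{\diamond} M' \simeq M_0^{\perfection} \simeq M$, as required.

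The step requiring the most care is the bookkeeping in the middle paragraph: one must be sure to throw into $R'$ not only the matrix entries of $A$ and $B$ but also the auxiliary coefficients $c_j$ that certify $\varphi_{M_0}$ descends to a well-defined map on the cokernel over $R'$ — without them, the formula $x \mapsto B \cdot x^{(p)}$ need not preserve the relation submodule over $R'$. Once the correct finite set of elements is identified, the verification that base change along $\iota$ recovers $M_0$ is routine right-exactness of $- \otimes_{R'} R$ together with the evident compatibility of the $p$-th power formula with ring maps, and the final identification is immediate from Proposition \ref{mallow}. (One could alternatively phrase the descent using $R[F]$-module language as in Notation \ref{not2}, presenting $M_0$ as the cokernel of a map of finite free left $R[F]$-modules and spreading out over $R'[F]$, but the elementary matrix argument above is the most transparent.)
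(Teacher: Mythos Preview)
Your argument is correct and is precisely the standard spreading-out argument the authors have in mind; the paper in fact leaves the proof of this proposition to the reader, so there is nothing to compare against. Your careful inclusion of the coefficients $c_j$ certifying that $\varphi_{M_0'}$ descends to the cokernel over $R'$ is exactly the point that needs attention, and your appeal to Proposition~\ref{mallow} for the final identification $\iota^{\diamond}(M_0'^{\perfection}) \simeq (\iota^{\ast}_{\Frob} M_0')^{\perfection} \simeq M_0^{\perfection}$ is the right way to finish.
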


\begin{remark}\label{remX8}
Let $R$ be a commutative $\F_p$-algebra and suppose we are given objects $M, N \in \Mod_{R}^{\perf}$, where $M$ is holonomic.
Then we can choose an isomorphism $M \simeq M_0^{\perfection}$ for some $M_0 \in \Mod_{R}^{\Frob}$ which is finitely presented as an $R$-module. 
Using Construction \ref{conX7} (and the observation that $\varphi_{N}: N \rightarrow N^{1/p }$ is an isomorphism), we obtain a long exact sequence
$$ \Ext_{R[F]}^{\ast}(M, N) \rightarrow \Ext^{\ast}_{R}(M_0, N) \xrightarrow{\gamma} \Ext^{\ast}_{R}(M_0, N)
\rightarrow \Ext^{\ast+1}_{R}( M_0, N),$$
where $\gamma$ is the map given by $\gamma(f) = f - \varphi_{N}^{-1} \circ f \circ \varphi_{M_0}$.
\end{remark}

\begin{proposition}\label{propX55}
Let $R$ be an $\F_p$-algebra and let $M$ be a Frobenius module over $R$ which is holonomic. Then
$M$ is a compact object of the category $\Mod_{R}^{\perf}$: that is, the functor
$N \mapsto \Hom_{R}^{\Frob}(M, N)$ commutes with filtered colimits.
\end{proposition}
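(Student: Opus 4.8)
The plan is to reduce the claim to the compactness of a finitely presented $R$-module inside $\Mod_R$, transported across the long exact sequence of Remark \ref{remX8}. First I would choose, using holonomicity, an isomorphism $M \simeq M_0^{\perfection}$ with $M_0 \in \Mod_R^{\Frob}$ finitely presented as an $R$-module; since perfection is a left adjoint (Proposition \ref{proposition.perfection}), $\Hom_R^{\Frob}(M, N) \simeq \Hom_R^{\Frob}(M_0, N)$ for any perfect $N$, so it suffices to show $N \mapsto \Hom_R^{\Frob}(M_0, N)$ commutes with filtered colimits in $\Mod_R^{\perf}$. (Here I use that filtered colimits in $\Mod_R^{\perf}$ agree with those computed in $\Mod_R^{\Frob}$ and in $\Mod_R$, by Remark \ref{remark.perfectextension}.)

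Next I would invoke Construction \ref{conX7}: for the Frobenius module $N$, the short exact sequence $0 \to N \xrightarrow{u} N^{\dagger} \xrightarrow{\alpha} (N^{1/p})^{\dagger} \to 0$ yields an exact sequence
$$ 0 \rightarrow \Hom_{R}^{\Frob}(M_0, N) \rightarrow \Hom_{R}(M_0, N) \xrightarrow{\beta} \Hom_{R}(M_0, N^{1/p}), $$
exhibiting $\Hom_R^{\Frob}(M_0, N)$ as the kernel of a map between the $R$-module Hom groups $\Hom_R(M_0, N)$ and $\Hom_R(M_0, N^{1/p})$. Since $M_0$ is finitely presented as an $R$-module, the functor $\Hom_R(M_0, -)$ commutes with filtered colimits; the same holds for $\Hom_R(M_0, (-)^{1/p})$, because $(-)^{1/p}$ is restriction of scalars along the Frobenius, which is an exact functor that commutes with filtered colimits. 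A filtered colimit of left-exact sequences of abelian groups is again exact (filtered colimits are exact in $\mathrm{Ab}$), so passing to a filtered colimit $N = \varinjlim N_i$ identifies $\Hom_R^{\Frob}(M_0, N)$ with $\varinjlim \ker(\beta_i) = \ker(\varinjlim \beta_i)$, i.e. with $\varinjlim \Hom_R^{\Frob}(M_0, N_i)$. This gives the desired commutation.

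I do not expect a serious obstacle here; the only point requiring a little care is the bookkeeping around the three levels of categories ($\Mod_R$, $\Mod_R^{\Frob}$, $\Mod_R^{\perf}$) and checking that the relevant colimits are computed compatibly — but this is exactly what Remark \ref{remark.perfectextension} provides. An alternative, essentially equivalent route would be to observe directly that $M_0$, being finitely presented over $R$, is a compact object of $\Mod_R^{\Frob}$ (using the ring-theoretic fact that $R[F]$ is free, hence finitely presented $R[F]$-modules built from finitely presented $R$-data are compact, together with the resolution $0 \to R[F] \xrightarrow{F-1} R[F] \to R \to 0$), and then that the left adjoint $(-)^{\perfection}$ preserves compact objects because its right adjoint $\iota$ preserves filtered colimits; this is perhaps the cleanest phrasing, and I would likely present the argument this way if the compactness of finitely presented modules in $\Mod_R^{\Frob}$ has already been made explicit, falling back to the $\Hom$-kernel computation above otherwise.
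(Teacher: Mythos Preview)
Your proposal is correct and takes essentially the same approach as the paper: write $M \simeq M_0^{\perfection}$ with $M_0$ finitely presented over $R$, then use the exact sequence relating $\Hom_{R[F]}$ to $\Hom_R$ together with the compactness of finitely presented $R$-modules. The paper's proof simply cites Remark~\ref{remX8} (which already packages both the adjunction step and the exact sequence you extract from Construction~\ref{conX7}) and notes its functoriality in $N$; your version unpacks the same argument in slightly more detail.
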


\begin{proof}
Choose an isomorphism $M \simeq M_0^{\perfection}$ for some $M_0 \in \Mod_{R}^{\Frob}$ which is finitely presented as an $R$-module,
and observe that the exact sequence of Remark \ref{remX8} depends functorially on $N$.
\end{proof}

\subsection{Comparison with Algebraic Frobenius Modules}\label{sec4sub2}

Our next goal is to compare the theory of holonomic Frobenius modules (introduced in Definition \ref{defhol2}) with
the theory of algebraic Frobenius (introduced in Definition \ref{defhol1}). Our starting point is the following:

\begin{proposition}\label{propX58}
Let $R$ be a commutative $\F_p$-algebra and let $M$ be a holonomic Frobenius module over $R$. Then $M$ is algebraic.
\end{proposition}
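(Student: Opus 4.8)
The plan is to verify directly the two conditions $(a)$ and $(b)$ of Definition \ref{defhol1}. Write $M \simeq M_0^{\perfection}$ with $M_0 \in \Mod_R^{\Frob}$ finitely generated as an $R$-module (finite presentation is not needed here — finite generation suffices), and let $\iota \colon M_0 \to M$ be the canonical morphism of Frobenius modules. Condition $(a)$ is immediate: by Notation \ref{biskar} the perfection $M_0^{\perfection}$ is a perfect Frobenius module, so $\varphi_M$ is an isomorphism of abelian groups (indeed $\Mod_R^{\hol} \subseteq \Mod_R^{\perf}$ by definition). So the entire content is condition $(b)$.

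The key is to reduce condition $(b)$ for $M$ first to a statement about $M_0$, and then to a statement over a finitely generated $\F_p$-subalgebra of $R$. Unwinding the colimit of Notation \ref{biskar} (where $\varphi_M$ acts as the shift), every element $\xi \in M = M_0^{\perfection}$ has the form $\xi = \varphi_M^{-n}(\iota(y))$ for some $n \geq 0$ and some $y \in M_0$, so that $\varphi_M^{n}(\xi) = \iota(y)$. Since $\iota$ is a morphism of Frobenius modules, a monic identity $\varphi_{M_0}^{m}(y) + a_1 \varphi_{M_0}^{m-1}(y) + \cdots + a_m y = 0$ in $M_0$ with $a_i \in R$ maps to $\varphi_M^{m}(\iota(y)) + a_1 \varphi_M^{m-1}(\iota(y)) + \cdots + a_m \iota(y) = 0$ in $M$; substituting $\iota(y) = \varphi_M^{n}(\xi)$ gives $\varphi_M^{m+n}(\xi) + a_1 \varphi_M^{m+n-1}(\xi) + \cdots + a_m \varphi_M^{n}(\xi) = 0$, which is exactly an identity of the form required by $(b)$ for $\xi$ (with the coefficients in degrees $< n$ taken to be zero). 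Thus it suffices to prove that \emph{every element $y$ of $M_0$ satisfies a monic polynomial identity in $\varphi_{M_0}$ with coefficients in $R$}.

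To prove this last assertion, fix generators $x_1, \dots, x_k$ of $M_0$ over $R$, write $\varphi_{M_0}(x_j) = \sum_i c_{ij} x_i$ with $c_{ij} \in R$, and write the given element as $y = \sum_l r_l x_l$ with $r_l \in R$. Let $R_0 \subseteq R$ be the $\F_p$-subalgebra generated by the finitely many elements $\{ c_{ij} \} \cup \{ r_l \}$; then $R_0$ is Noetherian by the Hilbert basis theorem. Set $M_{0,0} = \sum_i R_0 x_i \subseteq M_0$. Using the formula $\varphi_{M_0}\big(\sum_i s_i x_i\big) = \sum_l \big(\sum_i c_{li} s_i^{p}\big) x_l$, together with the fact that $R_0$ is a subring (hence closed under $p$-th powers) and contains all the $c_{li}$, one sees that $M_{0,0}$ is stable under $\varphi_{M_0}$. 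Moreover $y \in M_{0,0}$, and $M_{0,0}$ is finitely generated over the Noetherian ring $R_0$, hence a Noetherian $R_0$-module. Consequently the ascending chain $\sum_{j=0}^{m} R_0 \varphi_{M_0}^{j}(y)$ of $R_0$-submodules of $M_{0,0}$ stabilizes, so $\varphi_{M_0}^{m+1}(y) \in \sum_{j=0}^{m} R_0 \varphi_{M_0}^{j}(y)$ for $m \gg 0$; rearranging gives the desired monic identity for $y$ with coefficients in $R_0 \subseteq R$.

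I expect the only genuine obstacle to be finding the two reductions: once one sees that condition $(b)$ for $M$ is governed entirely by the $\varphi_{M_0}$-orbit of a single element of $M_0$, and that such an orbit lives inside a Frobenius module defined over a Noetherian ($\F_p$-finitely generated) ring, the remaining steps are routine. One should also be slightly careful that the relation produced this way is genuinely monic with coefficients in $R$: this is why the argument proceeds via stabilization of an ascending chain over a Noetherian ring rather than via a McCoy-type linear-dependence argument in the $k$-generated module $M_0$, which would in general only yield a non-monic relation.
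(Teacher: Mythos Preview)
Your proof is correct and follows essentially the same strategy as the paper: reduce to a Noetherian subring of $R$ and use the ascending chain condition to produce the monic relation. The only difference is organizational: the paper first invokes Proposition~\ref{propX52} to descend the entire holonomic module $M$ to a holonomic $M'$ over a finitely generated $R' \subseteq R$ (so that $M'_0$ is automatically Noetherian over $R'$), whereas you first write $M = M_0^{\perfection}$ over $R$ and then descend the relevant finite piece of data inside $M_0$ by hand to a finitely generated $R_0 \subseteq R$. Your version is slightly more self-contained and, as you observe, makes visible that only finite generation of $M_0$ is used; the paper's version is a bit shorter because the descent has already been packaged into Proposition~\ref{propX52}.
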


\begin{proof}
Let $x$ be an element of $M$. Using Proposition \ref{propX52}, we can choose a finitely generated subalgebra
$R' \subseteq R$ and an isomorphism $M \simeq (R \otimes_{R'} M')^{\perfection}$ for some $M' \in \Mod_{R'}^{\hol}$.
Enlarging $R'$ if necessary, we may assume that $x$ is the image of some element $x' \in M'$.
Since $M'$ is holonomic, we can write $M' = M'^{\perfection}_0$ for some $M'_0 \in \Mod_{R'}^{\Frob}$ which is finitely presented 
as an $R'$-module. We can then write $x' = \varphi_{M'}^{-k}(y)$ for some $y \in M'$ which lifts to an element $y_0 \in M'_0$. Since $M'_0$ is a Noetherian
$R'$-module, the submodule generated by the elements $\{ \varphi_{M'_0}^{n}( y_0) \}_{n \geq 0}$ is finitely generated. It follows
that $y_0$ satisfies an equation $\varphi_{M'_0}^{n}(y) + a_1 \varphi_{M'_0}^{n-1}(y) + \cdots + a_n y = 0$ for some elements $a_{i} \in R'$,
so that $x$ satisfies the equation $\varphi_{M}^{n}(x) + a_1^{p^{k}} \varphi_{M}^{n-1}(x) + \cdots + a_n^{p^{k}} x=0$.
\end{proof}

\begin{corollary}\label{corollary.propX60}
Let $R$ be a commutative $\F_p$-algebra. Then the collection of holonomic Frobenius modules over $R$ is closed under finite direct sums and cokernels. 
\end{corollary}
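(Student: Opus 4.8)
The plan is to verify each closure property by reducing to a statement about finitely presented Frobenius modules, and then applying the exactness of the perfection functor (Remark \ref{prebb}) together with the equivalence $\Mod_{R}^{\perf} \simeq \Mod_{R^{\perfection}}[F^{\pm 1}]$ from Remark \ref{remX4}. The case of finite direct sums is immediate: if $M \simeq M_0^{\perfection}$ and $N \simeq N_0^{\perfection}$ with $M_0, N_0$ finitely presented over $R$, then $M \oplus N \simeq (M_0 \oplus N_0)^{\perfection}$ since perfection commutes with finite direct sums, and $M_0 \oplus N_0$ is again finitely presented as an $R$-module. By induction, finite direct sums of holonomic modules are holonomic.

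For cokernels, suppose $f: M \to N$ is a morphism of holonomic Frobenius modules and let $C = \coker(f)$, computed in $\Mod_{R}^{\perf}$ (which agrees with the cokernel in $\Mod_{R}^{\Frob}$, since $\Mod_{R}^{\perf}$ is closed under colimits by Remark \ref{remark.perfectextension}). First I would use Proposition \ref{propX55} (holonomic modules are compact in $\Mod_{R}^{\perf}$) to replace $R$ by a finitely generated subalgebra: by Proposition \ref{propX52} we may write $M \simeq \iota^{\diamond} M'$ and, after enlarging the finitely generated base and using compactness of $M'$ to descend the morphism $f$, arrange that both $M$ and $N$ and the map $f$ are pulled back (via $\iota^{\diamond}$) from a morphism $f': M' \to N'$ of holonomic Frobenius modules over a finitely generated, hence Noetherian, $\F_p$-algebra $R'$. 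Since $\iota^{\diamond}$ is a left adjoint (Proposition \ref{mallow}) it preserves cokernels, so $C \simeq \iota^{\diamond}(\coker(f'))$; by Proposition \ref{propX51} it then suffices to prove the claim over $R'$, i.e.\ we may assume $R$ is Noetherian.

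Now, with $R$ Noetherian, write $M \simeq M_0^{\perfection}$ and $N \simeq N_0^{\perfection}$ with $M_0, N_0$ finitely presented (equivalently, finitely generated) $R$-modules carrying compatible Frobenius structures. The map $f: M_0^{\perfection} \to N_0^{\perfection}$ need not lift to a map $M_0 \to N_0$, but for each of the finitely many generators of $M_0$ its image in $N_0^{\perfection} = \varinjlim N_0^{1/p^n}$ lands in some $N_0^{1/p^n}$; taking $n$ large enough to handle all generators, and replacing $M_0$ by $\varphi_{M}^{-n}$ of its image (which remains finitely generated as $R$ is Noetherian, so the Frobenius-submodule generated by a finitely generated submodule is finitely generated) and $N_0$ by $N_0^{1/p^n}$, we obtain a genuine morphism $f_0: M_0 \to N_0$ of Frobenius modules, finitely presented over $R$, with $f_0^{\perfection} \simeq f$. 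Let $C_0 = \coker(f_0)$, a finitely presented $R$-module with an induced Frobenius structure. Since perfection is exact (Remark \ref{prebb}), $\coker(f) = \coker(f_0^{\perfection}) \simeq \coker(f_0)^{\perfection} = C_0^{\perfection}$, which is holonomic by definition.

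The main obstacle is the bookkeeping in the last step: a map between perfections does not come from a map of the chosen finitely presented models, so one must absorb a power of Frobenius before the model-level map exists, and this is exactly where Noetherianity of $R$ is used (to keep the enlarged $M_0$ finitely generated). Once that is in place, exactness of the perfection functor does all the remaining work.
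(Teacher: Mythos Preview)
Your reduction to a finitely generated $\F_p$-algebra is correct, but the final paragraph contains a directional error that breaks the argument. You have arranged that $f(x_i) \in \varphi_N^{-n}(\overline{N_0})$ for generators $x_i$ of $M_0$ (writing $\overline{N_0}$ for the image of $N_0$ in $N$). Enlarging $M_0$ to $\varphi_M^{-n}(\overline{M_0})$ while also replacing $\overline{N_0}$ by $\varphi_N^{-n}(\overline{N_0})$ does \emph{not} make $f$ restrict: one computes $f(\varphi_M^{-n}(\overline{M_0})) = \varphi_N^{-n}(f(\overline{M_0})) \subseteq \varphi_N^{-2n}(\overline{N_0})$, which overshoots. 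Moreover, your justification for finite generation (``the Frobenius-submodule generated by a finitely generated submodule is finitely generated'') does not apply to the set $\varphi_M^{-n}(\overline{M_0})$, which is typically strictly larger than the Frobenius submodule generated by $\{\varphi_M^{-n}(x_i)\}$; as an $R$-module it is isomorphic to $\overline{M_0}^{1/p^n}$, whose finite generation requires $R$ to be $F$-finite, not merely Noetherian. (You do have $F$-finiteness, since your $R$ is finitely generated over $\F_p$, but this is a different reason from the one you cite.)

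The simplest fix is to leave $M_0$ alone and enlarge only on the target side: since $R$ is $F$-finite and Noetherian, $\varphi_N^{-n}(\overline{N_0})\cong \overline{N_0}^{1/p^n}$ is finitely presented over $R$, and $f$ restricts to a Frobenius-module map $\overline{M_0} \to \varphi_N^{-n}(\overline{N_0})$; then $\coker(f)$ is the perfection of the cokernel of this map, which is finitely presented. The paper takes a more direct route that avoids the Noetherian reduction entirely: it lifts the elements $y_i := \varphi_N^{n}(f(x_i)) \in \overline{N_0}$ to $\overline{y}_i \in N_0$, uses algebraicity of $N$ (Proposition~\ref{propX58}) to show that the Frobenius submodule $N_0' \subseteq N_0$ generated by the $\overline{y}_i$ is finitely generated over $R$, and concludes that $\coker(f) \simeq (N_0/N_0')^{\perfection}$ with $N_0/N_0'$ finitely presented (a quotient of a finitely presented module by a finitely generated one). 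In effect, the paper trades your appeal to $F$-finiteness for an appeal to algebraicity of the target, which is available over any $R$.
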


\begin{remark}
We will see later that the collection of holonomic Frobenius modules is also closed under the formation of kernels and extensions; in particular, it is 
an abelian subcategory of $\Mod_{R}^{\Frob}$ (Corollary \ref{corX30}).
\end{remark}

\begin{proof}[Proof of Corollary \ref{corollary.propX60}]
Closure under finite direct sums is obvious. Let $u: M \rightarrow N$ be a morphism in $\Mod_{R}^{\hol}$. 
Then we can choose isomorphisms $M \simeq M_0^{\perfection}$ and $N \simeq N_0^{\perfection}$ for some objects $M_0, N_0 \in \Mod_{R}^{\Frob}$ which
are finitely presented as $R$-modules. Let $x_1, \ldots, x_k$ be a set of generators for $M_0$ as an $R$-module.
Let us abuse notation by writing $u(x_i)$ for the image of $x_i$ under the composite map
$M_0 \rightarrow M \xrightarrow{u} N$. Then we can choose some integer $n \gg 0$ for which each
$u(x_i)$ has the form $\varphi_{N}^{-n}( y_i)$ for some $y_i$ which lifts to an element $\overline{y}_i \in N_0$.
Since $N$ is holonomic, it is algebraic (Proposition \ref{propX58}). It follows that each $y_i$ satisfies
some equation $\varphi_{N}^{d_i}(y_i) + c_{1,i} \varphi_{N}^{d_i-1}(y_i) + \cdots + c_{d_i,i} y_i = 0$ in
$N \simeq N_0^{\perfection}$, so that $\varphi_{N_0}^{d_i+e}(\overline{y}_i) + c_{1,i}^{p^{e}} \varphi_{N_0}^{d_i-1+e}(\overline{y}_i) + \cdots + c_{d_i,i}^{p^{e}} \varphi_{N_0}(\overline{y}_i) = 0$ in $N_0$ for $e \gg 0$. In particular, the elements $\{ \varphi_{N_0}^{a}(\overline{y}_i) \}$ generate a Frobenius submodule $N'_0 \subseteq N_0$ which is finitely
generated as an $R$-module. Then $N_0 / N'_0$ is finitely presented as an $R$-module. Using evident isomorphism $\coker(u) \simeq (N_0 / N'_0)^{\perfection}$, we conclude
that $\coker(u)$ is holonomic.
\end{proof}

Our next goal is to establish a converse to Proposition \ref{propX58}, which asserts that every algebraic Frobenius module can be ``built from'' holonomic Frobenius modules
(Theorem \ref{theoX54}). First, we need some general facts about algebraicity.

\begin{proposition}\label{propX53}
Let $R$ be an $\F_p$-algebra. Then $\Mod_{R}^{\alg}$ is a localizing subcategory of $\Mod_{R}^{\perf}$. That is:
\begin{itemize}
\item[$(a)$] Given a short exact sequence $0 \rightarrow M' \rightarrow M \rightarrow M'' \rightarrow 0$ of {perfect} Frobenius modules over $R$,
$M$ is algebraic if and only if $M'$ and $M''$ are algebraic.

\item[$(b)$] The collection of algebraic Frobenius modules is closed under (possibly infinite) direct sums.
\end{itemize}
\end{proposition}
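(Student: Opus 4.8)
The plan is to work throughout with the reformulation of algebraicity in terms of \emph{monic} elements of the noncommutative ring $R[F]$ of Notation~\ref{not2}: an element $x$ of a Frobenius module $M$ satisfies an equation $\varphi_M^n x + a_1\varphi_M^{n-1}x + \cdots + a_n x = 0$ exactly when $g\cdot x = 0$ for some $g = F^n + a_1F^{n-1} + \cdots + a_n \in R[F]$ --- that is, for some $g\in R[F]$ with leading coefficient $1$. The one preliminary I will record is that a product of monic elements of $R[F]$ is again monic: by the multiplication law of Notation~\ref{not2} the top-degree term of $(F^n + \cdots)(F^m + \cdots)$ equals $F^n\cdot F^m = F^{n+m}$, while every other product of lower-order terms contributes in degree $< n+m$. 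I also note that in each of the situations below the ``perfect'' half of Definition~\ref{defhol1} is automatic (it is assumed in $(a)$, and for a coproduct it holds because $\varphi$ acts componentwise), so it suffices to verify condition $(b)$.

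Part $(a)$ is then essentially formal. If $x$ lies in a Frobenius submodule $M'\subseteq M$ and $g\cdot x = 0$ with $g$ monic, the identity persists in $M'$ since $x$ and all $\varphi_M^k x$ already lie there; and if $x\in M$ maps to $\bar x$ in a quotient $M''$, then $g\cdot x = 0$ gives $g\cdot\bar x = 0$. This yields the ``only if'' direction. For the ``if'' direction, given $0\to M'\to M\to M''\to 0$ in $\Mod_R^{\perf}$ with $M'$, $M''$ algebraic and $x\in M$ with image $\bar x\in M''$, pick a monic $g''$ with $g''\cdot\bar x = 0$; then $g''\cdot x\in M'$, so $g'\cdot(g''\cdot x) = 0$ for some monic $g'$, whence $(g'g'')\cdot x = 0$ with $g'g''$ monic.

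For part $(b)$, the substance is the non-Noetherian case, handled by descent to a finitely generated subalgebra. Write $M = \bigoplus_{i\in I}M_i$ with each $M_i$ algebraic, and let $x = (x_i)_{i\in I}$ have finite support $S$. Choose monic $g_i\in R[F]$ with $g_i\cdot x_i = 0$ for $i\in S$, and let $R'\subseteq R$ be the $\F_p$-subalgebra generated by the finitely many coefficients occurring in the $g_i$; then $R'$ is Noetherian and each $g_i\in R'[F]$. Right division by a monic works in $R'[F]$ (subtracting a suitable left monomial multiple of $g_i$ strictly drops the degree), so $R'[F]/R'[F]g_i$ is generated as a left $R'$-module by the images of $1, F, \ldots, F^{\deg g_i-1}$, hence is finitely generated over $R'$. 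The left ideal $J := \bigcap_{i\in S}R'[F]g_i$ is the kernel of $R'[F]\to\bigoplus_{i\in S}R'[F]/R'[F]g_i$, so $R'[F]/J$ embeds into a finitely generated $R'$-module; as $R'$ is Noetherian, $R'[F]/J$ is itself finitely generated over $R'$, hence generated by the images of $1, F, \ldots, F^N$ for some $N$. Then there are $b_0,\dots,b_N\in R'$ with $g := F^{N+1} - \sum_{l=0}^N b_lF^l \in J$, and $g$ is monic. Since $g\in R'[F]g_i\subseteq R[F]g_i$ for each $i\in S$, we get $g\cdot x_i = 0$, hence $g\cdot x = 0$; thus $x$ satisfies a monic equation and $M$ is algebraic.

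The only genuinely non-formal step is the last one: over an arbitrary commutative $\F_p$-algebra one wants a single monic element of $R[F]$ annihilating $x_1,\dots,x_k$ at once, and the point of the argument is that this reduces to producing a common monic \emph{left} multiple of finitely many monics over a Noetherian ring, where right-division plus the ascending chain condition make it routine. (If one only cared about Noetherian $R$, the passage to $R'$ could be omitted.)
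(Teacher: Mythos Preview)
Your proof is correct. Part~$(a)$ matches the paper's argument exactly: the paper writes out the product of the two monic operators explicitly as $\sum_{i,j} a_i b_j^{p^i}\varphi_M^{i+j}(x)=0$, which is precisely your $(g'g'')\cdot x=0$ unwound using the multiplication rule in $R[F]$.

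For part~$(b)$, however, your argument is substantially more elaborate than what is needed. The paper simply observes that any element $x\in\bigoplus_{i\in I}M_i$ has finite support $S$ and hence already lies in the Frobenius submodule $\bigoplus_{i\in S}M_i$, which is algebraic by iterating~$(a)$ (a finite direct sum is a finite iterated extension). That immediately gives a monic annihilator for $x$, with no descent to a Noetherian subring and no need to construct a common left multiple of the $g_i$. Your Noetherian-descent argument does produce such a common multiple, and it is a valid proof, but the extra work buys nothing here: the statement only asks for \emph{some} monic killing $x$, and the iterated-extension route supplies one directly. In effect you have re-proved a special case of~$(a)$ by a different and harder method rather than simply invoking~$(a)$.
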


\begin{proof}
The ``only if'' direction of $(a)$ follows immediately from the definitions. To prove the reverse direction,
suppose we are given an exact sequence $0 \rightarrow M' \xrightarrow{\alpha} M \rightarrow M'' \rightarrow 0$ where $M'$ and $M''$ are algebraic.
Let $x$ be an element of $M$. Since $M''$ is algebraic, we deduce that
there is an equation of the form $\varphi_{M}^{m}(x) + a_1 \varphi_{M}^{n-1}(x) + \cdots + a_m x = \alpha(y)$ for some $a_i \in R$ and some
$y \in M'$. Since $M'$ is algebraic, we obtain an equation of the form
$\varphi_{M'}^{n}(y) + b_1 \varphi_{M'}^{m-1}(y) + \cdots + b_n y = 0$ for some $b_j \in R$. It follows that 
$$\sum_{ 0 \leq i \leq m, 0 \leq j \leq n } a_{i} b_{j}^{p^{i}} \varphi_{M}^{i+j}(x) = 0$$
with the convention that $a_0 = b_0 = 1$. Allowing $x$ to vary, we deduce that $M$ is algebraic.

To prove $(b)$, we observe that the general case immediately reduces to the case of a finite direct sum, which follows from $(a)$.
\end{proof}

\begin{proposition}\label{preX59}
Let $R$ be a commutative $\F_p$-algebra, let $M$ be a perfect Frobenius module over $R$, and let $x \in M$ be an element.
The following conditions are equivalent:
\begin{itemize}
\item[$(1)$] There exists a map of Frobenius modules $f: M' \rightarrow M$, where $M'$ is holonomic, and an element
$x' \in M'$ satisfying $f(x') = x$.
\item[$(2)$] There exists an algebraic Frobenius submodule $M_0 \subseteq M$ which contains $x$.
\item[$(3)$] The element $x$ satisfies an equation
$$ \varphi_M^{n}(x) + a_1 \varphi_M^{n-1}(x) + \cdots + a_n x$$
for some coefficients $a_1, \ldots, a_n \in R$.
\end{itemize}
\end{proposition}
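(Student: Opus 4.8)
The plan is to prove the cyclic chain of implications $(2)\Rightarrow(3)\Rightarrow(1)\Rightarrow(2)$.

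The implication $(2)\Rightarrow(3)$ is immediate: if $M_0\subseteq M$ is an algebraic Frobenius submodule containing $x$, then condition $(b)$ of Definition \ref{defhol1}, applied to $x\in M_0$, yields an equation $\varphi_{M_0}^n(x)+a_1\varphi_{M_0}^{n-1}(x)+\cdots+a_nx=0$ with $a_i\in R$; since the inclusion $M_0\hookrightarrow M$ is a morphism of Frobenius modules, the same equation holds in $M$.

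For $(3)\Rightarrow(1)$, I would build a companion-type Frobenius module. Given the relation $\varphi_M^n(x)+a_1\varphi_M^{n-1}(x)+\cdots+a_nx=0$, let $M_0$ be the free $R$-module on generators $e_0,\ldots,e_{n-1}$, equipped with the Frobenius-semilinear map $\varphi_{M_0}$ determined by $\varphi_{M_0}(e_i)=e_{i+1}$ for $i<n-1$ and $\varphi_{M_0}(e_{n-1})=-a_1e_{n-1}-a_2e_{n-2}-\cdots-a_ne_0$, extended by the rule $\varphi_{M_0}(\sum\lambda_ie_i)=\sum\lambda_i^p\varphi_{M_0}(e_i)$. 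Since $M_0$ is a finite free $R$-module, hence finitely presented (with no Noetherian hypothesis needed), its perfection $M':=M_0^{\perfection}$ is holonomic by Definition \ref{defhol2}. The assignment $e_i\mapsto\varphi_M^i(x)$ defines an $R$-linear map $M_0\to M$, and the defining relation for $x$ shows that it commutes with the Frobenius maps (a check on the basis elements $e_i$), hence is a morphism of Frobenius modules. Because $M$ is perfect, this map factors uniquely through the unit $M_0\to M_0^{\perfection}$ by the universal property of the perfection (Proposition \ref{proposition.perfection}), yielding $f:M'\to M$. Taking $x'$ to be the image of $e_0$ in $M'$, we get $f(x')=x$, which is $(1)$.

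For $(1)\Rightarrow(2)$, given $f:M'\to M$ with $M'$ holonomic and $f(x')=x$, I would take $M_0:=\im(f)\subseteq M$. Since $M'$ is holonomic it is perfect, and $\Mod_R^{\perf}$ is closed under subquotients (Remark \ref{remark.perfectextension}), so $M_0$ is a perfect Frobenius submodule of $M$ containing $f(x')=x$. Moreover $M'$ is algebraic by Proposition \ref{propX58}, so every element of the quotient $M_0$ satisfies a monic $\varphi$-equation over $R$ (push forward the equation for a preimage); alternatively, apply Proposition \ref{propX53}$(a)$ to $0\to\ker(f)\to M'\to M_0\to 0$. Either way $M_0$ is algebraic, establishing $(2)$.

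The one point requiring care is the choice made in $(3)\Rightarrow(1)$: one must exhibit the relevant holonomic module via a finite \emph{free} $R$-module rather than, say, the Frobenius submodule of $M$ generated by the orbit $\{\varphi_M^k(x)\}$, since without a Noetherian hypothesis the latter is only known to be finitely generated, not finitely presented. Using $R^n$ with the companion Frobenius sidesteps this, and the remaining work is the routine verification that the constructed map respects the Frobenius structure together with the bookkeeping that images of perfect modules are perfect.
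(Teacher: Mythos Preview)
Your proof is correct and follows essentially the same route as the paper: the same cyclic chain of implications, the same companion-matrix construction on a free $R$-module for $(3)\Rightarrow(1)$, and the same use of Proposition~\ref{propX58} together with Proposition~\ref{propX53} to show $\im(f)$ is algebraic for $(1)\Rightarrow(2)$. Your closing remark about why one must use a finite \emph{free} module rather than the $\varphi$-orbit submodule is a nice clarification that the paper leaves implicit.
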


\begin{proof}
We first show that $(1)$ implies $(2)$. Let $f: M' \rightarrow M$ be a morphism of Frobenius modules. If $M'$ is holonomic, then it is also algebraic
(Proposition \ref{propX58}). Consequently, if there exists an element $x' \in M'$ satisfying $f(x') = x$, then $x$ belongs to the submodule $\im(f) \subseteq M$,
which is algebraic by virtue of Proposition \ref{propX53}. 

The implication $(2) \Rightarrow (3)$ is immediate from the definitions. We will complete the proof by showing that $(3) \Rightarrow (1)$.
Assume that $x$ satisfies an equation $\varphi_{M}^{n}(x) + a_1 \varphi_{M}^{n-1}(x) + \cdots + a_{n} x = 0$. Let $N$ denote the free $R$-module on a basis
$\{ y_0, \ldots, y_{n-1} \}$, which we regard as a Frobenius module over $R$ by setting
$$\varphi_{N}( y_i ) = \begin{cases} y_{i+1} & \text{ if } i < n-1 \\
- a_1 y_{n-1} - a_2 y_{n-2} - \cdots - a_{n} y_{0} & \text{ if } i = n-1. \end{cases}$$
Then the construction $y_i \mapsto \varphi_M^{i}( x )$ determines a map of Frobenius modules $f_0: N \rightarrow M$. Since $M$ is perfect, we can extend
$f_0$ to a map $f: N^{\perfection} \rightarrow M$. It follows immediately from the construction that $N^{\perfection}$ is holonomic and that
$x$ belongs to the image of $f$.
\end{proof}

\begin{corollary}\label{corX59}
Let $R$ be a commutative $\F_p$-algebra and let $M$ be an algebraic Frobenius module over $R$. Then there
exists an epimorphism of Frobenius modules $\bigoplus M_{\alpha} \rightarrow M$, where each $M_{\alpha}$
is holonomic.
\end{corollary}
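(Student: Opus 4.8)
The plan is to deduce this immediately from Proposition \ref{preX59}, applied one element at a time. Since $M$ is algebraic, every element $x \in M$ satisfies an equation
$$ \varphi_M^{n}(x) + a_1 \varphi_M^{n-1}(x) + \cdots + a_n x = 0$$
for suitable $a_1, \ldots, a_n \in R$; this is precisely condition $(3)$ of Proposition \ref{preX59} (for the element $x$). Invoking the implication $(3) \Rightarrow (1)$ of that proposition, we obtain for each $x \in M$ a holonomic Frobenius module $M_x$ together with a morphism of Frobenius modules $f_x: M_x \rightarrow M$ and an element $x' \in M_x$ with $f_x(x') = x$. (Concretely, one takes $M_x = N^{\perfection}$ for the finite free companion module $N$ built from the coefficients $a_i$, exactly as in the proof of Proposition \ref{preX59}.)

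Now form the direct sum over the index set $M$ itself, using that $\Mod_{R}^{\Frob}$ is an abelian category closed under arbitrary direct sums (Notation \ref{not2}). This yields a morphism of Frobenius modules
$$ f = \bigoplus_{x \in M} f_x : \bigoplus_{x \in M} M_x \longrightarrow M,$$
and each $M_x$ is holonomic by construction. Since the image of $f$ contains $f_x(x') = x$ for every $x \in M$, the map $f$ is surjective, hence an epimorphism in $\Mod_{R}^{\Frob}$. (If one prefers a smaller index set, it suffices to range over any generating set of $M$ as an $R$-module, or even over a set of $\varphi_M$-orbit representatives, but this refinement is not needed.)

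There is essentially no obstacle here: all of the work has already been done in Proposition \ref{preX59}, and the only remaining point is the entirely formal observation that a family of maps whose images jointly cover $M$ assembles into a single epimorphism out of the coproduct.
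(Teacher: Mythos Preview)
Your proof is correct and is exactly the argument the paper intends: the corollary is stated without proof precisely because it follows immediately from Proposition \ref{preX59} by choosing, for each element of $M$, a holonomic module mapping onto it and then taking the direct sum.
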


\begin{corollary}\label{biggest}
Let $R$ be a commutative $\F_p$-algebra and let $M$ be a perfect Frobenius module over $R$. Then there exists a
largest Frobenius submodule $M_0 \subseteq M$ which is algebraic. Moreover, an element
$x \in M$ belongs to $M_0$ if and only if it satisfies an equation 
$$ \varphi_M^{n}(x) + a_1 \varphi_M^{n-1}(x) + \cdots + a_n x$$
for some coefficients $a_1, \ldots, a_n \in R$.
\end{corollary}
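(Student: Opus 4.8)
The plan is to realize $M_0$ as the sum of all algebraic Frobenius submodules of $M$, and then to deduce that this sum is itself algebraic from the fact (Proposition \ref{propX53}) that $\Mod_{R}^{\alg}$ is a localizing subcategory of the abelian category $\Mod_{R}^{\perf}$ (Remark \ref{remark.perfectextension}). The element-level description will then fall out of the equivalence of conditions $(2)$ and $(3)$ in Proposition \ref{preX59}.

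Concretely, first I would let $\mathcal{S}$ denote the collection of all algebraic Frobenius submodules $N \subseteq M$; each such $N$ is perfect, so $\mathcal{S}$ is a set of subobjects of $M$ in the abelian category $\Mod_{R}^{\perf}$. Form the direct sum $P = \bigoplus_{N \in \mathcal{S}} N$, which is algebraic by part $(b)$ of Proposition \ref{propX53}. The summation map $P \to M$ has an image $M_0 \in \Mod_{R}^{\perf}$, and since $M_0$ is a quotient of $P$, part $(a)$ of Proposition \ref{propX53} shows that $M_0$ is again algebraic. By construction $M_0$ contains every $N \in \mathcal{S}$, so it is the largest algebraic Frobenius submodule of $M$.

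For the final clause, note that $M_0 = \sum_{N \in \mathcal{S}} N$: if $x \in M_0$, then $x$ is a finite sum $x_1 + \cdots + x_k$ with each $x_i$ in some $N_i \in \mathcal{S}$, and the submodule $N_1 + \cdots + N_k \subseteq M$ --- being a perfect Frobenius module and a quotient of $\bigoplus_i N_i$ --- is algebraic by Proposition \ref{propX53} and contains $x$; conversely any algebraic Frobenius submodule of $M$ belongs to $\mathcal{S}$ and hence lies in $M_0$. Thus $x$ lies in $M_0$ if and only if it belongs to some algebraic Frobenius submodule of $M$, which by the implications $(3) \Rightarrow (2)$ and $(2) \Rightarrow (3)$ of Proposition \ref{preX59} is equivalent to $x$ satisfying an equation $\varphi_M^{n}(x) + a_1 \varphi_M^{n-1}(x) + \cdots + a_n x = 0$ with $a_i \in R$.

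I do not anticipate any real obstacle: the argument is a formal consequence of Propositions \ref{propX53} and \ref{preX59}. The only point requiring a moment's care is the verification that the intermediate objects that occur --- the image $M_0$, and the finite sums $N_1 + \cdots + N_k$ --- are perfect Frobenius modules, so that Proposition \ref{propX53} (whose statement is confined to $\Mod_{R}^{\perf}$) genuinely applies; but this is immediate once one observes that $\Mod_{R}^{\perf}$ is an abelian subcategory of $\Mod_{R}^{\Frob}$ (Remark \ref{remark.perfectextension}), closed under the formation of kernels, cokernels, images, and direct sums.
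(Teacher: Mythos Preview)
Your proposal is correct and follows essentially the same approach as the paper: define $M_0$ as the sum of all algebraic Frobenius submodules, invoke Proposition \ref{propX53} to see that $M_0$ is algebraic, and then use Proposition \ref{preX59} for the element-level characterization. The paper's proof is simply a terser version of what you wrote.
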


\begin{proof}
Let $M_0$ be the sum of all algebraic Frobenius submodules of $M$. It follows from Proposition \ref{propX53} that $M_0$ is also algebraic,
so that each element $x \in M_0$ satisfies an equation $\varphi_M^{n}(x) + a_1 \varphi_M^{n-1}(x) + \cdots + a_n x$ for some coefficients
$a_1, \ldots, a_n \in R$. The reverse implication follows from Proposition \ref{preX59}.
\end{proof}

\begin{corollary}\label{corX62}
Let $f: A \rightarrow B$ be a homomorphism of commutative $\F_p$-algebras and let
$M$ be an algebraic Frobenius module over $A$. Then $f^{\diamond}(M)$
is an algebraic Frobenius module over $B$.
\end{corollary}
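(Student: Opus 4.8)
The plan is to deduce this from Corollary~\ref{corX59}, which presents an arbitrary algebraic Frobenius module as a quotient of a (possibly infinite) direct sum of holonomic Frobenius modules, together with the fact that $f^{\diamond}$ interacts well with both of these operations. No new ideas are needed; everything has been set up in the preceding subsections.

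Concretely, I would first invoke Corollary~\ref{corX59} to choose an epimorphism $\pi: \bigoplus_{\alpha} M_{\alpha} \to M$ in $\Mod_{A}^{\perf}$ in which every $M_{\alpha}$ is holonomic. Next, recall that $f^{\diamond}: \Mod_{A}^{\perf} \to \Mod_{B}^{\perf}$ is a left adjoint (Proposition~\ref{mallow}), hence preserves all colimits; in particular it carries $\bigoplus_{\alpha} M_{\alpha}$ to $\bigoplus_{\alpha} f^{\diamond}(M_{\alpha})$, and, being right exact, it carries the epimorphism $\pi$ to an epimorphism $\bigoplus_{\alpha} f^{\diamond}(M_{\alpha}) \to f^{\diamond}(M)$ in the abelian category $\Mod_{B}^{\perf}$. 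By Proposition~\ref{propX51} each $f^{\diamond}(M_{\alpha})$ is again holonomic, hence algebraic by Proposition~\ref{propX58}. Proposition~\ref{propX53}$(b)$ then shows that $\bigoplus_{\alpha} f^{\diamond}(M_{\alpha})$ is algebraic, and finally Proposition~\ref{propX53}$(a)$ (the elementary ``only if'' direction, applied to the short exact sequence given by the displayed epimorphism and its kernel, formed in $\Mod_{B}^{\perf}$) shows that the quotient $f^{\diamond}(M)$ is algebraic.

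The real content here has already been assembled upstream --- in Corollary~\ref{corX59}, Proposition~\ref{propX51}, and the localizing-subcategory statement Proposition~\ref{propX53} --- so no step is genuinely hard; the only points requiring care are bookkeeping ones, namely that one works throughout in $\Mod_{B}^{\perf}$ (so that the relevant kernel exists there and $f^{\diamond}$ takes values in it) and that $f^{\diamond}$ is both right exact and cocontinuous, which follow respectively from the discussion after Proposition~\ref{mallow} and from the adjunction of Proposition~\ref{mallow}. If one prefers an element-level argument that avoids the short exact sequence bookkeeping, one can instead take $z \in f^{\diamond}(M) = (B \otimes_{A} M)^{\perfection}$, write $z = \varphi^{-k}(\overline{w})$ with $w = \sum_{j} b_{j} \otimes x_{j} \in B \otimes_{A} M$, use Proposition~\ref{preX59} to place each $x_{j}$ in the image of a map $N^{(j)} \to M$ with $N^{(j)}$ holonomic, and observe that $z$ then lies in the image of $f^{\diamond}\bigl( \bigoplus_{j} N^{(j)} \bigr) \to f^{\diamond}(M)$, whose source is holonomic by Proposition~\ref{propX51} and Corollary~\ref{corollary.propX60}; the implication $(1) \Rightarrow (3)$ of Proposition~\ref{preX59}, combined with the perfectness of $f^{\diamond}(M)$, then yields the required monic $\varphi$-equation for $z$ with coefficients in $B$.
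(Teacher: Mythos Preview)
Your proof is correct, and it takes a genuinely different route from the paper's. The paper argues via Corollary~\ref{biggest}: it takes the largest algebraic Frobenius submodule $N \subseteq f^{\diamond}(M)$, observes that $N$ contains the image of the natural map $M \to (B \otimes_A M)^{\perfection}$ (since any monic $\varphi$-equation over $A$ satisfied by $x \in M$ pushes forward to one over $B$), and then notes that $N$, being a perfect $B$-submodule and hence stable under $\varphi_{f^{\diamond}(M)}^{-1}$, must exhaust $f^{\diamond}(M)$. Your argument instead pulls in the structural presentation of Corollary~\ref{corX59} and the preservation of holonomicity under $f^{\diamond}$ (Proposition~\ref{propX51}), reducing everything to closure properties of $\Mod_B^{\alg}$. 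Both arguments ultimately rest on Proposition~\ref{propX53}, but the paper's is shorter and more elementary (it never mentions holonomicity), while yours is more categorical and makes visible the mechanism---that algebraicity is generated by holonomics, which are manifestly stable under base change. Your alternative element-level sketch at the end is an intermediate version: it is correct, and in spirit it is the paper's argument rewritten through Proposition~\ref{preX59} rather than Corollary~\ref{biggest}.
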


\begin{proof}
Applying Corollary \ref{biggest}, we deduce that
there is a largest algebraic submodule $N \subseteq f^{\diamond}(M)$,
and that $N$ contains the image of the map
$$ M \rightarrow f^{\diamond}(M) \simeq (B \otimes_{A} M)^{\perfection}.$$
Since $N$ is a $B$-submodule of $f^{\diamond}(M)$ which is stable
under the automorphism $\varphi_{ f^{\diamond}(M)}^{-1}$, it follows
that $N = f^{\diamond}(M)$.
\end{proof}

\begin{theorem}\label{theoX54}
Let $R$ be a commutative $\F_p$-algebra. Then the inclusion functor $\Mod_{R}^{\hol} \hookrightarrow \Mod_{R}^{\alg}$
extends to an equivalence of categories $\Ind( \Mod_{R}^{\hol} ) \simeq \Mod_{R}^{\alg}$.
\end{theorem}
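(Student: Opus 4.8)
The plan is to verify the standard recognition criterion for $\Ind$-completions: if $\mathcal{C}_0$ is an essentially small full subcategory of a category $\mathcal{C}$ that admits small filtered colimits, such that every object of $\mathcal{C}_0$ is a compact object of $\mathcal{C}$ and every object of $\mathcal{C}$ is a filtered colimit of objects of $\mathcal{C}_0$, then the canonical filtered-colimit-preserving extension $\widetilde{F}\colon \Ind(\mathcal{C}_0)\to\mathcal{C}$ of the inclusion is an equivalence. I will apply this with $\mathcal{C}_0=\Mod_{R}^{\hol}$ and $\mathcal{C}=\Mod_{R}^{\alg}$. First I would assemble the formal inputs. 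By Remark \ref{remX4} the category $\Mod_{R}^{\perf}$ is equivalent to left modules over $R^{\perfection}[F^{\pm 1}]$, hence is Grothendieck abelian, so filtered colimits in it exist and are exact. The subcategory $\Mod_{R}^{\alg}$ is closed under filtered colimits: a filtered colimit of perfect modules is perfect (Remark \ref{remark.perfectextension}), and if $M=\varinjlim_\alpha M_\alpha$ with each $M_\alpha$ algebraic, any $x\in M$ is the image of some $x_\alpha\in M_\alpha$, so pushing a relation $\varphi^n(x_\alpha)+a_1\varphi^{n-1}(x_\alpha)+\cdots+a_n x_\alpha=0$ forward along $M_\alpha\to M$ exhibits $x$ as algebraic. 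Thus $\Mod_R^{\alg}$ admits filtered colimits, computed in $\Mod_R^{\perf}$. Moreover $\Mod_R^{\hol}$ is essentially small (a holonomic module is $M_0^{\perfection}$ with $M_0$ finitely presented over $R$, and there is only a set of such pairs $(M_0,\varphi_{M_0})$ up to isomorphism), and its objects are compact in $\Mod_R^{\perf}$ by Proposition \ref{propX55}, hence compact in $\Mod_R^{\alg}$ since filtered colimits there agree with those in $\Mod_R^{\perf}$.

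Granting the compactness just noted, full faithfulness of $\widetilde{F}$ is formal: for $X=\varinjlim_i c_i$ and $Y=\varinjlim_j d_j$ with $c_i,d_j\in\Mod_R^{\hol}$ one computes, using that the inclusion is full and that each $c_i$ is compact in $\Mod_R^{\alg}$,
$$\Hom_{\Ind}(X,Y)=\varprojlim_i\varinjlim_j\Hom_{\Mod_R^{\hol}}(c_i,d_j)=\varprojlim_i\Hom_{\Mod_R^{\alg}}\!\bigl(c_i,\varinjlim_j d_j\bigr)=\Hom_{\Mod_R^{\alg}}(\widetilde{F}X,\widetilde{F}Y).$$
So everything reduces to the one substantive point: every algebraic Frobenius module $M$ is a filtered colimit of holonomic ones. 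I would prove this by introducing the category $\mathcal{J}$ whose objects are pairs $(N,f)$ with $N$ holonomic and $f\colon N\to M$ a morphism of Frobenius modules, and whose morphisms $(N,f)\to(N',f')$ are maps $g\colon N\to N'$ with $f'\circ g=f$; this category is essentially small. The claim is that $\mathcal{J}$ is filtered and that $\varinjlim_{(N,f)\in\mathcal{J}}N\to M$ is an isomorphism.

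Filteredness is where Corollary \ref{corollary.propX60} does the work: any two objects have the upper bound $(N_1\oplus N_2,(f_1,f_2))$ since holonomic modules are closed under finite direct sums, and any pair of parallel arrows $g,g'\colon(N,f)\to(N',f')$ is coequalized by the projection $N'\to\coker(g-g')$, which lies in $\mathcal{J}$ because $\coker(g-g')$ is holonomic (again Corollary \ref{corollary.propX60}) and $f'$ annihilates $\im(g-g')$. For the colimit, surjectivity of $\varinjlim N\to M$ follows from Proposition \ref{preX59}: since $M$ is algebraic, every $x\in M$ lies in the image of some $f\colon N\to M$ with $N$ holonomic. For injectivity I would use that filtered colimits in the Grothendieck category $\Mod_R^{\perf}$ are exact, so $\ker(\varinjlim N\to M)=\varinjlim_{(N,f)}\ker(f)$; given a class represented by $y\in\ker(f_0\colon N_0\to M)$, let $\langle y\rangle\subseteq N_0$ be the perfect Frobenius submodule generated by $y$. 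By the explicit construction in the proof of Proposition \ref{preX59} (the implication $(3)\Rightarrow(1)$), $\langle y\rangle$ is the image of a map $P\to N_0$ with $P$ holonomic, so $N_0/\langle y\rangle\cong\coker(P\to N_0)$ is holonomic by Corollary \ref{corollary.propX60}; as $y\in\ker(f_0)$, the map $f_0$ factors through a morphism $(N_0,f_0)\to(N_0/\langle y\rangle,\bar f_0)$ in $\mathcal{J}$ that kills $y$, so the class of $y$ vanishes in the colimit. This gives condition (ii) and finishes the proof. The main obstacle, as this indicates, is precisely the construction of a genuinely filtered diagram of holonomic modules with colimit $M$: the closure of holonomic modules under cokernels (together with the realization of cyclic Frobenius submodules as images of holonomic modules) is exactly what makes $\mathcal{J}$ filtered and makes the injectivity argument go through.
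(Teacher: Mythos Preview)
Your proof is correct and relies on the same ingredients as the paper (Proposition~\ref{propX55} for compactness, Proposition~\ref{preX59} for hitting elements, and Corollary~\ref{corollary.propX60} for closure under finite sums and cokernels), but the essential surjectivity step is organized differently. The paper chooses a two-step resolution $\bigoplus_{\beta} M'_\beta \to \bigoplus_\alpha M_\alpha \to M$ by direct sums of holonomic modules (via Corollary~\ref{corX59}), then invokes compactness of each $M'_\beta$ to see that it maps into only finitely many summands $M_\alpha$; this lets one write $M$ as the filtered colimit, over pairs of finite subsets $(I_0,J_0)$, of the cokernels $\coker\bigl(\bigoplus_{\beta\in J_0} M'_\beta \to \bigoplus_{\alpha\in I_0} M_\alpha\bigr)$. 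You instead take the canonical overcategory $\mathcal{J}=\Mod_R^{\hol}/M$ and verify directly that it is filtered and has colimit $M$. Your route is slightly more hands-on but has the minor advantage that compactness is used only for full faithfulness, not for essential surjectivity; the paper's route produces an explicit cofinal subdiagram indexed by finite subsets, which can be convenient in practice. Both arguments ultimately express the same mechanism: closure of $\Mod_R^{\hol}$ under finite sums and cokernels is exactly what makes the slice category filtered (your version) or what makes the finite-stage cokernels holonomic (the paper's version).
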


\begin{proof}
It follows from Proposition \ref{propX55} that the inclusion $\Mod_{R}^{\hol} \hookrightarrow \Mod_{R}^{\perf}$
extends to a fully faithful embedding $\iota: \Ind( \Mod_{R}^{\hol} ) \rightarrow \Mod_{R}^{\perf}$. Since
every holonomic Frobenius module is algebraic (Proposition \ref{propX58}) and the collection of
algebraic Frobenius modules is closed under filtered colimits, the essential image of
$\iota$ is contained in the full subcategory $\Mod_{R}^{\alg} \subseteq \Mod_{R}^{\perf}$.
To complete the proof, it will suffice to verify the reverse inclusion. Let $M$ be an algebraic Frobenius module;
we wish to show that $M$ can be written as a filtered colimit $\varinjlim M_{\alpha}$, where each $M_{\alpha}$
is holonomic. Using Corollary \ref{corX59}, we can choose an epimorphism $\rho: \bigoplus_{\alpha \in I} M_{\alpha} \rightarrow M$
for some set $I$, where each $M_{\alpha}$ is holonomic. The kernel $\ker(\rho)$ is then
algebraic (Proposition \ref{propX53}), so we can apply Corollary \ref{corX59} again to choose an epimorphism
$\rho': \bigoplus_{\beta \in J} M'_{\beta} \rightarrow \ker(\rho)$, where each $M'_{\beta}$ is holonomic. We can identify
$\rho'$ with a system of maps $\{ \rho'_{\alpha, \beta}: M'_{\beta} \rightarrow M_{\alpha} \}_{\alpha \in I, \beta \in J}$.
Using Proposition \ref{propX55}, we see that for each $\beta \in J$ there are only finitely many $\alpha \in I$ for which
$\rho'_{\alpha, \beta}$ is nonzero. It follows that we can write $M$ as a filtered colimit of Frobenius modules of the form
$\coker( \bigoplus_{ \beta \in J_0} M'_{\beta} \rightarrow \bigoplus_{\alpha \in I_0} M_{\alpha} )$ where
$I_0 \subseteq I$ and $J_0 \subseteq J_0$ are finite. Each of these Frobenius modules is holonomic by virtue of
Corollary \ref{corollary.propX60}.
\end{proof}

Let $f: A \rightarrow B$ be a morphism of commutative $\F_p$-algebras. In general, the restriction of scalars functor
$f_{\ast}: \Mod_{B}^{\Frob} \rightarrow \Mod_{A}^{\Frob}$ does not preserve holonomicity. However, we do have the following:

\begin{proposition}\label{propX45}
Let $f: A \rightarrow B$ be an $\F_p$-algebra homomorphism which is finite and of finite presentation.
Then the restriction of scalars functor $\Mod_{B}^{\Frob} \rightarrow \Mod_{A}^{\Frob}$ carries $\Mod_{B}^{\hol}$ into $\Mod_{A}^{\hol}$
and $\Mod_{B}^{\alg}$ into $\Mod_{A}^{\alg}$.
\end{proposition}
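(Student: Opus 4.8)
The plan is to treat holonomicity first and then deduce the algebraic statement from it, using the identification of $\Mod_B^{\alg}$ with $\Ind(\Mod_B^{\hol})$ supplied by Theorem \ref{theoX54}.

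For holonomicity, let $M \in \Mod_B^{\hol}$ and choose (as in Definition \ref{defhol2}) an isomorphism $M \simeq M_0^{\perfection}$ with $M_0 \in \Mod_B^{\Frob}$ finitely presented as a $B$-module. By Remark \ref{remX5}, restriction of scalars commutes with the formation of perfections, so $f_\ast(M) \simeq f_\ast(M_0)^{\perfection}$ as a Frobenius module over $A$; since $f_\ast$ carries $\Mod_B^{\perf}$ into $\Mod_A^{\perf}$, this target lies in the correct ambient category. It therefore suffices to show that $f_\ast(M_0)$ — that is, $M_0$ viewed as an $A$-module via $f$ — is finitely presented over $A$. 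This is the one point requiring genuine commutative algebra rather than bookkeeping: because $f$ is finite and of finite presentation, $B$ is a finitely presented $A$-module, and a module finitely presented over $B$ is then finitely presented over $A$ (each $B^{\oplus n}$ is finitely presented over $A$, and the cokernel of a map between finitely presented $A$-modules is again finitely presented). Hence $f_\ast(M)$ is holonomic over $A$.

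For algebraicity, let $M \in \Mod_B^{\alg}$. By Theorem \ref{theoX54} we may write $M \simeq \varinjlim_\alpha M_\alpha$ as a filtered colimit of holonomic Frobenius modules over $B$. Restriction of scalars is computed on underlying abelian groups and hence preserves filtered colimits, so $f_\ast(M) \simeq \varinjlim_\alpha f_\ast(M_\alpha)$. Each $f_\ast(M_\alpha)$ is holonomic over $A$ by the previous step, hence algebraic over $A$ by Proposition \ref{propX58}; and the class of algebraic Frobenius modules over $A$ is closed under filtered colimits, since by Proposition \ref{propX53} it is closed under arbitrary direct sums and under cokernels. Therefore $f_\ast(M)$ is algebraic over $A$. (One can also argue directly, avoiding Theorem \ref{theoX54}: given $x \in M$, algebraicity of $M$ over $B$ forces $N := \sum_{i \geq 0} B\,\varphi_M^i(x)$ to be a $\varphi$-stable $B$-submodule of $M$ which is finitely generated over $B$, hence over $A$; picking $A$-module generators $g_1 = x, g_2, \ldots, g_r$ of $N$ and writing $\varphi_M(g_s) = \sum_t \lambda_{st} g_t$ with $\lambda_{st} \in A$, the assignment $e_s \mapsto g_s$ out of the free Frobenius module $A^{\oplus r}$ equipped with $\varphi(e_s) := \sum_t \lambda_{st} e_t$ exhibits $x$ in the image of a morphism from a holonomic Frobenius module over $A$, so $x$ satisfies a monic equation in $\varphi_M$ over $A$ by Proposition \ref{preX59}.)

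Thus the only non-formal ingredient is the descent of finite presentation from $B$-modules to $A$-modules in the holonomic case — which uses the full hypothesis that $f$ is finite \emph{and} of finite presentation — while the algebraic case is essentially the $\Ind$-completion of the holonomic one and costs nothing extra.
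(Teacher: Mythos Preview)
Your proof is correct and follows essentially the same route as the paper: reduce the algebraic case to the holonomic one via Theorem \ref{theoX54} and the fact that restriction of scalars preserves filtered colimits, and handle the holonomic case by observing that a finitely presented $B$-module is finitely presented over $A$ when $f$ is finite of finite presentation. You supply more detail (the invocation of Remark \ref{remX5}, the explicit justification for descent of finite presentation, and the optional direct argument for algebraicity), but the underlying strategy is identical.
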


\begin{proof}
Because restriction of scalars commutes with filtered colimits, it will suffice to show that if $M \in \Mod_{B}^{\Frob}$ is a holonomic Frobenius module over $B$,
then it is also a holonomic Frobenius module over $A$ (Theorem \ref{theoX54}). Write $M = M_0^{\perfection}$ for some $M_0 \in \Mod_{B}^{\Frob}$ which is finitely presented as a $B$-module.
We now complete the proof by observing that our assumption on $f$ guarantees that $M_0$ is also finitely presented as an $A$-module.
\end{proof}

\begin{warning}\label{warning.X45}
The finite presentation hypothesis in Proposition~\ref{propX45} cannot be relaxed to finite generation. For example, take $A = \F_p[x_1,x_2,x_3,...]$ to be a polynomial ring on countably many generators, and let $B = A/(x_1,x_2,x_3,...) = \F_p$ be its residue field at the origin. Then $B$ is holonomic when regarded as a Frobenius module over itself, but not when regarded as a Frobenius module over $A$; one can see this directly, but a quick proof is provided by Theorem~\ref{theo74} (note that $\Spec(B)$ is not a constructible subset of $\Spec(A)$).
\end{warning}

\subsection{The Noetherian Case}\label{sec4sub3}

Recall that an object $X$ of an abelian category $\mathcal{A}$ is said to be {\it Noetherian} if the collection of subobjects of $X$
satisfies the ascending chain condition. A Grothendieck abelian category $\mathcal{A}$ is said to be {\it locally Noetherian} if every
object of $\mathcal{A}$ can be written as a union of Noetherian subobjects.

\begin{proposition}\label{propX29}
Let $R$ be a Noetherian $\F_p$-algebra. Then the abelian category $\Mod_{R}^{\alg}$ is locally Noetherian. Moreover, an object of
$\Mod_{R}^{\alg}$ is Noetherian if and and only if it is holonomic.
\end{proposition}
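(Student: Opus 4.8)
The plan is to deduce both assertions from Theorem~\ref{theoX54}, which identifies $\Mod_R^{\alg}$ with $\Ind(\Mod_R^{\hol})$, together with the good finiteness of holonomic modules over a Noetherian base. First I would establish that every holonomic Frobenius module $M$ over $R$ is Noetherian as an object of $\Mod_R^{\alg}$. Write $M \simeq M_0^{\perfection}$ with $M_0$ finitely presented over $R$; since $R$ is Noetherian, $M_0$ is a Noetherian $R$-module, hence a Noetherian object of $\Mod_R^{\Frob}$ (every $R[F]$-submodule is in particular an $R$-submodule). Because the perfection functor $(-)^{\perfection}$ is exact (Remark~\ref{prebb}) and, by Proposition~\ref{proposition.perfection}, left adjoint to the inclusion $\Mod_R^{\perf}\hookrightarrow\Mod_R^{\Frob}$, any ascending chain of Frobenius submodules of $M = M_0^{\perfection}$ pulls back to an ascending chain of Frobenius submodules of $M_0$ (take preimages under $M_0 \to M_0^{\perfection}$); since submodules of $M_0^{\perfection}$ are automatically perfect and the counit recovers them from their preimages, stabilization upstairs forces stabilization downstairs. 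This shows $M$ is Noetherian in $\Mod_R^{\perf}$, and since every submodule of an algebraic module is algebraic (Proposition~\ref{propX53}(a)), $M$ is also Noetherian in $\Mod_R^{\alg}$.

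Next I would prove local Noetherianity. By Theorem~\ref{theoX54} every algebraic Frobenius module $M$ is a filtered colimit $\varinjlim M_\alpha$ of holonomic modules; replacing this by the filtered system of images of the $M_\alpha \to M$, we may assume each structure map $M_\alpha \to M$ is a monomorphism with holonomic (hence Noetherian, by the previous paragraph) source. Thus $M$ is the union of its Noetherian subobjects, which is exactly the condition that $\Mod_R^{\alg}$ be locally Noetherian; one also needs that $\Mod_R^{\alg}$ is a Grothendieck abelian category, which follows since it is a localizing subcategory of $\Mod_R^{\perf}$ (Proposition~\ref{propX53}) and $\Mod_R^{\perf}$ is Grothendieck (it is the category of modules over the ring $R^{\perfection}[F^{\pm 1}]$, Remark~\ref{remX4}), with holonomic modules a set of compact generators (Proposition~\ref{propX55}).

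Finally I would prove that a Noetherian object $M$ of $\Mod_R^{\alg}$ is holonomic. Write $M = \varinjlim M_\alpha$ as a filtered union of holonomic submodules as above. Since $M$ is holonomic iff it is a compact object of $\Mod_R^{\perf}$ contained in $\Mod_R^{\hol}$ — more to the point, since $M$ is Noetherian the ascending chain $\bigl(\sum_{\alpha\in S}M_\alpha\bigr)_{S\subseteq I \text{ finite}}$ must stabilize — there is a finite subset $S$ with $M = \sum_{\alpha\in S}M_\alpha$, so $M$ is a quotient of the holonomic module $\bigoplus_{\alpha\in S}M_\alpha$ (a finite direct sum of holonomics is holonomic). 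By Corollary~\ref{corollary.propX60}, the cokernel of a map of holonomic modules is holonomic, so $M$ itself is holonomic. The converse is the first paragraph. The main obstacle I anticipate is the careful bookkeeping in the first paragraph: transporting the ascending chain condition across the adjunction $(-)^{\perfection}$, which requires knowing that a Frobenius submodule of a perfect module is determined by (and perfect as a quotient of) its preimage in $M_0$ — this is where the exactness of perfection and the identification of $\Mod_R^{\perf}$-submodules of $M_0^{\perfection}$ with $\varphi$-saturated submodules of $M_0$ do the real work.
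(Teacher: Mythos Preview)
Your approach follows the same three-step outline as the paper, and the first two paragraphs are essentially identical to the paper's argument (the paper makes the ACC transfer explicit by replacing $M_0$ with its image in $M$ and showing $M' = (M' \cap M_0)^{\perfection}$ for each perfect submodule $M' \subseteq M$). One small terminological point: subobjects of $M$ in $\Mod_R^{\alg}$ are by definition perfect, so ``Frobenius submodules of $M$ are automatically perfect'' is tautological in context, not an additional claim.

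There is a genuine (though fixable) gap in your third paragraph. Corollary~\ref{corollary.propX60} says that the cokernel of a map \emph{between holonomic modules} is holonomic; it does not directly say that an arbitrary quotient of a holonomic module is holonomic. To apply it to the surjection $\bigoplus_{\alpha \in S} M_\alpha \twoheadrightarrow M$, you need the kernel to be holonomic as well. This is true, but it requires the observation---implicit in your first paragraph but not stated---that over Noetherian $R$, every perfect Frobenius \emph{submodule} of a holonomic module is holonomic (since $M' \cap M_0$ is finitely generated, hence finitely presented, over $R$). The same observation is what actually justifies your claim in paragraph~2 that the images $M_\alpha$ are holonomic rather than merely Noetherian. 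You should make this explicit.

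The paper sidesteps this issue in the final step by a different route: once local Noetherianity is established, every Noetherian object of $\Mod_R^{\alg}$ is automatically compact, hence by Theorem~\ref{theoX54} is a direct summand of a holonomic module. Direct summands are cokernels of idempotents---maps between holonomic modules---so Corollary~\ref{corollary.propX60} applies directly, with no need to analyze kernels or arbitrary quotients.
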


\begin{proof}
We first show that every holonomic $R$-module $M$ is a Noetherian object of the abelian category $\Mod_{R}^{\perf}$.
Write $M = M_0^{\perfection}$ for some $M_0 \in \Mod_{R}^{\Frob}$ which is finitely generated as an $R$-module.
Replacing $M_0$ by its image in $M$, we can assume without loss of generality that $M_0$ is a submodule of $M$. For any
subobject $M' \subseteq M$ in the abelian category $\Mod_{R}^{\alg}$, let $M'_0 = M' \cap M_0$. Note that for
any $x \in M'$, we have $\varphi_{M}^{n}(x) \in M_0$ for $n \gg 0$. It follows that $M' = \{ x \in M: (\exists n) [ \varphi_{M}^{n}(x) \in M'_0] \} \simeq M'^{\perfection}_0$.
Consequently, the construction $M' \mapsto M'_0$ determines an monomorphism from the partially ordered set of subobjects of $M$ (in the abelian category $\Mod_{R}^{\perf}$)
to the partially ordered set of subobjects of $M_0$ (in the abelian category $\Mod_{R}$).
Since $M_0$ is a Noetherian $R$-module, the latter partially ordered set satisfies the ascending chain condition, so the former does as well.

Now suppose that $M$ is an arbitrary algebraic Frobenius module over $R$. Using Corollary \ref{corX59}, we can choose
an epimorphism of Frobenius modules $\bigoplus_{\alpha \in I} M_{\alpha} \rightarrow M$, where each $M_{\alpha}$
is holonomic. For every finite subset $I_0 \subseteq I$, let $M_{I_0}$ denote the image of the composite map
$$ \bigoplus_{\alpha \in I_0} M_{\alpha} \hookrightarrow \bigoplus_{\alpha \in I_0} M_{\alpha} \rightarrow M.$$
Then $M = \bigcup_{I_0} M_{I_0}$, and the first part of the proof shows that each $M_{I_0}$ is Noetherian.
This proves that the category $\Mod_{R}^{\alg}$ is locally Noetherian. It follows that every Noetherian object
of $\Mod_{R}^{\alg}$ is compact, and therefore arises as a direct summand of a holonomic
Frobenius module by virtue of Theorem \ref{theoX54}. Since the collection of holonomic Frobenius modules
is closed under passage to direct summands, it follows that $\Mod_{R}^{\hol}$ is precisely the collection
of Noetherian objects of $\Mod_{R}^{\alg}$.
\end{proof}

\begin{remark}\label{subtil}
Let $R$ be a Noetherian $\F_p$-algebra and let $M$ be a holonomic Frobenius module over $R$.
Then, for every integer $k$, the construction $N \mapsto \Ext^{k}_{ R[F] }( M, N )$ commutes with
filtered direct limits when restricted to {\em perfect} Frobenius modules over $R$. This follows
from the exact sequence of Remark \ref{remX8} (together with the fact that the construction
$N \mapsto \Ext^{k}_{R}(M_0, N)$ commutes with filtered colimits, whenever $M_0$ is a finitely generated $R$-module).
\end{remark}

\begin{corollary}\label{corX30}
Let $R$ be a commutative $\F_p$-algebra. Then $\Mod_{R}^{\hol}$ is an abelian subcategory of $\Mod_{R}^{\perf}$ which is closed under the formation of kernels, cokernels, and extensions.
\end{corollary}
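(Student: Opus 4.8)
The plan is to handle the three closure properties in turn, reducing the essential content to the Noetherian case treated in Proposition~\ref{propX29}. Closure under cokernels is already contained in Corollary~\ref{corollary.propX60}. For kernels and for extensions the strategy is: descend the relevant data (the holonomic modules involved, together with a morphism or an extension class) to a finitely generated --- hence Noetherian --- subalgebra $R' \subseteq R$; apply Proposition~\ref{propX29} over $R'$, where $\Mod_{R'}^{\hol}$ is the category of Noetherian objects of the abelian category $\Mod_{R'}^{\alg}$ and is therefore closed under subobjects, quotients and extensions; and finally transport the conclusion back along $\iota\colon R' \hookrightarrow R$ using that $\iota^{\diamond}$ preserves holonomicity (Proposition~\ref{propX51}) and is exact on short exact sequences with algebraic quotient (Corollary~\ref{lushin}).

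For the descent step: if $M$ is holonomic over $R$, write $M \simeq M_0^{\perfection}$ with $M_0$ finitely presented over $R$; since a finitely presented module with a Frobenius-semilinear endomorphism is a finite amount of data, it descends to a finitely presented Frobenius module over some finitely generated $R_0 \subseteq R$, yielding holonomic modules $M^{(R')}$ over each $R' \supseteq R_0$ with $\iota^{\diamond}M^{(R')} \simeq M$. Morphisms and extension classes descend after enlarging $R'$: by Corollary~\ref{corX11} one has, for $R'$ large enough, $\Hom_{R[F]}(M,N) \simeq \Hom_{R'[F]}(M^{(R')}, N)$ and $\Ext^1_{R[F]}(M,N) \simeq \Ext^1_{R'[F]}(M^{(R')}, N)$ with $N$ regarded over $R'$, and $N \simeq \varinjlim_{R''\supseteq R'} N^{(R'')}$ is a filtered colimit of perfect Frobenius modules; since $M^{(R')}$ is holonomic over the Noetherian ring $R'$, the functors $\Hom_{R'[F]}(M^{(R')},-)$ and $\Ext^1_{R'[F]}(M^{(R')},-)$ commute with such colimits (Proposition~\ref{propX55} and Remark~\ref{subtil}), so the given morphism or class already arises at the stage $R''$ for some $R''$ large enough.

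Now, given $u\colon M\to N$ in $\Mod_R^{\hol}$, descend it to $u'\colon M'\to N'$ in $\Mod_{R'}^{\hol}$ with $\iota^{\diamond}u' = u$. Over $R'$, Proposition~\ref{propX29} shows $\ker(u')$, $\im(u')$ and $\coker(u')$ are holonomic, in particular algebraic (Proposition~\ref{propX58}); applying $\iota^{\diamond}$ to $0 \to \ker(u') \to M' \to \im(u') \to 0$ and $0 \to \im(u') \to N' \to \coker(u') \to 0$ --- both having algebraic quotient --- Corollary~\ref{lushin} keeps them exact, and comparison with $u$ gives $\ker(u)\simeq\iota^{\diamond}\ker(u')$ (and likewise for $\im$ and $\coker$), holonomic by Proposition~\ref{propX51}. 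For extensions, given $0\to M_1\to M\to M_2\to0$ in $\Mod_R^{\perf}$ with $M_1,M_2$ holonomic, descend the extension class to $0\to M_1'\to E\to M_2'\to0$ over $R'$ with holonomic ends; $E$ is perfect and algebraic (Proposition~\ref{propX53}), hence an extension of Noetherian objects of $\Mod_{R'}^{\alg}$, hence Noetherian, hence holonomic over the Noetherian ring $R'$. Since $M_2'$ is algebraic, $\iota^{\diamond}$ of this sequence stays exact (Corollary~\ref{lushin}), so $M\simeq\iota^{\diamond}E$ is holonomic. (One could instead note that $M$ is algebraic by Proposition~\ref{propX53} and, using the $\Ext^1$-continuity above, compact in $\Mod_R^{\perf}$, hence holonomic by Theorem~\ref{theoX54} together with the idempotent-completeness of $\Mod_R^{\hol}$ that follows from Corollary~\ref{corollary.propX60}.) Once kernels and cokernels are known, the inclusion $\Mod_R^{\hol}\hookrightarrow\Mod_R^{\perf}$ is exact, so $\Mod_R^{\hol}$ is an abelian subcategory.

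The main obstacle I anticipate is precisely the descent of morphisms and extension classes to a finitely generated subalgebra. For morphisms this is where the finite-presentation requirement in Definition~\ref{defhol2} is genuinely used (only finitely many relations among the generators need to be matched over $R''$); for extensions it rests on the continuity of $\Ext^1$ over a Noetherian base (Remark~\ref{subtil}). One must also keep careful track of which subquotients appearing along the way are algebraic, since Corollary~\ref{lushin} guarantees exactness of $\iota^{\diamond}$ only for short exact sequences whose quotient term is algebraic.
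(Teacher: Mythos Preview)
Your proof is correct and follows essentially the same route as the paper's: reduce to a finitely generated (hence Noetherian) subalgebra by a direct limit argument, invoke Proposition~\ref{propX29} there, and transport the conclusion back using Proposition~\ref{propX51} and Corollary~\ref{lushin} for kernels, and Corollary~\ref{corX11} together with Remark~\ref{subtil} to descend the extension class for the extension case. The paper's version is terser---it handles the kernel with a single application of Corollary~\ref{lushin} rather than splitting into two short exact sequences, and it omits the explicit justification of the descent step that you spell out---but the substance is identical.
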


\begin{proof}
Closure under the formation of cokernels was established in Corollary \ref{corollary.propX60}. We next show that it is closed under kernels. 
Let $u: M \rightarrow N$ be a morphism of holonomic Frobenius modules over $R$; we wish to show that the kernel $\ker(u)$ is holonomic.
Using a direct limit argument, we can write $u = f^{\diamond}(u_0)$ for some map $f: R_0 \rightarrow R$ where $R_0$ is a finitely generated $\F_p$-algebra and some $u_0: M_0 \rightarrow N_0$ in $\Mod_{R_0}^{\hol}$. 
Since $R_0$ is Noetherian, Proposition \ref{propX29} guarantees that $M_0$ is a Noetherian object of $\Mod_{R_0}^{\hol}$. It follows that
$\ker(u_0) \subseteq M_0$ is also a Noetherian object of $\Mod_{R_0}^{\hol}$. Corollary \ref{lushin} supplies an isomorphism
$\ker(u) \simeq f^{\diamond} \ker(u_0)$, so that $\ker(u)$ is a holonomic Frobenius module over $R$ by virtue of Proposition \ref{propX51}.

We now prove closure under extensions. Suppose that we are given a short exact sequence of Frobenius modules over $R$,
$$ 0 \rightarrow M' \rightarrow M \rightarrow M'' \rightarrow 0,$$
where $M'$ and $M''$ are holonomic; we wish to show that $M$ is also holonomic. Using Proposition \ref{propX52}, we can
write $M' = f^{\diamond} M'_0$ and $M'' = f^{\diamond} M''_0$, where $f: R_0 \hookrightarrow R$ is the inclusion of a finitely generated subring
and the Frobenius modules $M'_0$ and $M''_0$ are holonomic over $R_0$. The preceding exact sequence is then classified by
an element $$\eta \in \Ext^{1}_{ R[F] }( M'', M' ) \simeq \Ext^{1}_{ R_0[F] }( M''_0, R \otimes_{R_0} M'_0 ).$$
Applying Remark \ref{subtil}, we can arrange (after enlarging $R_0$ if necessary) that $\eta$ can be lifted
to an element $\eta_0 \in \Ext^{1}_{ R_0[F] }( M''_0, M'_0 )$, which classifies a short exact sequence
$$ 0 \rightarrow M'_0 \rightarrow M_0 \rightarrow M''_0 \rightarrow 0,$$
of Frobenius modules over $R_0$. Since $R_0$ is Noetherian, we can regard $M'_0$ and $M''_0$
as Noetherian objects of the category $\Mod_{R_0}^{\perf}$ (Proposition \ref{propX29}). It follows that $M_0$ is also a Noetherian object
of the abelian category $\Mod_{R_0}^{\perf}$, and is therefore a holonomic Frobenius module over $R_0$ (Proposition \ref{propX29}).
Applying Proposition \ref{propX51}, we deduce that $M \simeq f^{\diamond} M_0$ is a holonomic Frobenius module over $R$. 
\end{proof}

\subsection{The Support of a Holonomic Frobenius Module}\label{sec4sub4}

\begin{definition}\label{defsupp}
Let $R$ be a commutative $\F_p$-algebra and let $M$ be a perfect Frobenius module over $R$.
For each point $x \in \Spec(R)$, we let $\kappa(x)$ denote the residue field
of $R$ at $x$, and we let $f_{x}: R \rightarrow \kappa(x)$ denote the canonical map
We let $\supp(M)$ denote the set $\{ x \in \Spec(R): f_{x}^{\diamond}(M) \neq 0 \}$.
We will refer to $\supp(M)$ as the {\it support of $M$}.
\end{definition}

\begin{remark}\label{rem70}
Let $R$ be an $\F_p$-algebra, let $M \in \Mod_{R}^{\perf}$, and let $x$ be a point of $\Spec(R)$, corresponding
to a prime ideal $\mathfrak{p} \subseteq R$. The following conditions are equivalent:
\begin{itemize}
\item[$(1)$] The point $x$ belongs to the support of $M$.
\item[$(2)$] There exists a field $\kappa$ and an $R$-algebra homomorphism $f: R \rightarrow \kappa$
such that $\ker(f) = \mathfrak{p}$ and $f^{\diamond} M \neq 0$.
\item[$(3)$] For every field $\kappa$ and every map $f: R \rightarrow \kappa$ with $\ker(f) = \mathfrak{p}$,
we have $f^{\diamond} M \neq 0$.
\end{itemize}
To see this, we note that any map $f: R \rightarrow \kappa$ with $\ker(f) = \mathfrak{p}$ factors uniquely
as a composition $R \xrightarrow{ f_{x } } \kappa(x) \xrightarrow{\iota} \kappa$, so that we have a canonical isomorphism
$$f^{\diamond} M \simeq \kappa^{\perfection} \otimes_{ \kappa(x)^{\perfection} } f_{x}^{\diamond}(M).$$
\end{remark}

\begin{remark}\label{rem71}
Let $f: A \rightarrow B$ be an $\F_p$-algebra homomorphism and let $M \in \Mod_{A}^{\perf}$. Then
$\supp( f^{\diamond} M)$ is the inverse image of $\supp(M)$ under the map $\Spec(B) \rightarrow \Spec(A)$ determined by $f$ (this follows immediately
from Remark \ref{rem70}). 
\end{remark}

The support $\supp(M)$ of Definition \ref{defsupp} is well-behaved when $M$ is holonomic: 

\begin{theorem}\label{theo74}
Let $R$ be a commutative $\F_p$-algebra and let be a holonomic Frobenius module over $R$.
Then $\supp(M)$ is a constructible subset of $\Spec(R)$.
\end{theorem}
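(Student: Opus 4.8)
The plan is to reduce to the case where $R$ is a finitely generated $\F_p$-algebra — in particular Noetherian — and then run a Noetherian induction on $\Spec(R)$ in which generic freeness turns constructibility of $\supp(M)$ into an explicit non-vanishing condition on a matrix assembled from iterated Frobenius operators. For the reduction: by Proposition \ref{propX52} I would write $M \simeq \iota^{\diamond} M'$ for an inclusion $\iota\colon R' \hookrightarrow R$ with $R'$ of finite type over $\F_p$ and $M'$ holonomic over $R'$, and by Remark \ref{rem71} the set $\supp(M)$ is the preimage of $\supp(M')$ under $\Spec(R) \to \Spec(R')$. Since preimages of constructible sets under maps of affine schemes are constructible, it is enough to prove the statement for $R = R'$; so from now on I would assume $R$ Noetherian and fix an isomorphism $M \simeq M_0^{\perfection}$ with $M_0$ a finite $R$-module equipped with a Frobenius-semilinear endomorphism.

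The heart of the argument is a pointwise description of $\supp(M)$. For $x \in \Spec(R)$, Proposition \ref{mallow} identifies $f_x^{\diamond} M$ with $(\kappa(x) \otimes_R M_0)^{\perfection}$, the perfection of the finite-dimensional $\kappa(x)$-vector space $V_x := \kappa(x) \otimes_R M_0$ with its induced Frobenius-semilinear map $\varphi$; since this perfection is the colimit of $V_x \xrightarrow{\varphi} V_x^{1/p} \to \cdots$, we have $x \notin \supp(M)$ if and only if $\varphi$ is nilpotent on $V_x$. I would then use the elementary fact that a Frobenius-semilinear endomorphism of a $d$-dimensional vector space is nilpotent precisely when its $d$-th iterate vanishes — one checks this after base change to the algebraic closure, where the images $\varphi^n(V_x)$ are honest subspaces forming a descending chain that stabilizes within $d$ steps, $\varphi$ being nilpotent iff the chain reaches $0$; the condition ``$d$-th iterate vanishes'' descends since it amounts to the vanishing of a matrix. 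Concretely, choosing a basis of $V_x$ and letting $A(x)$ be the matrix of $\varphi$, the $n$-th iterate $\varphi^n$ has matrix $A(x)\,A(x)^{[p]}\cdots A(x)^{[p^{n-1}]}$, where $(-)^{[q]}$ denotes the entrywise $q$-th power, so $x \notin \supp(M)$ iff $A(x)\,A(x)^{[p]}\cdots A(x)^{[p^{d-1}]} = 0$.

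With this in hand I would argue by Noetherian induction on closed subsets $Z \subseteq \Spec(R)$ that $\supp(M) \cap Z$ is constructible, the case $Z = \Spec(R)$ being the theorem. Assuming $Z \neq \emptyset$ and the claim for all proper closed subsets of $Z$, let $Z_1,\dots,Z_m$ be the irreducible components of $Z$, say $Z_j = \Spec(S_j)$ with $S_j = R/\mathfrak{p}_j$ a finite-type $\F_p$-domain, and set $M_j := (R \to S_j)^{\diamond} M$, holonomic over $S_j$ by Proposition \ref{propX51}; write $M_j \simeq N_j^{\perfection}$ with $N_j$ finite over $S_j$. Generic freeness over the domain $S_j$ gives $0 \neq f_j \in S_j$ with $(N_j)_{f_j}$ free of some rank $r_j$ over $(S_j)_{f_j}$; picking a basis yields a matrix $A_j$ over $(S_j)_{f_j}$ for $\varphi_{N_j}$, and (using composability of the functors $(-)^{\diamond}$ to identify $f_x^{\diamond} M \simeq (\kappa(x) \otimes_{S_j} N_j)^{\perfection}$ for $x \in D(f_j)$) the previous paragraph shows that $\supp(M) \cap D(f_j)$ is the locus in $D(f_j)$ where some entry of $A_j\,A_j^{[p]}\cdots A_j^{[p^{r_j-1}]}$ is nonzero — an open, hence locally closed and so constructible, subset of $\Spec(R)$. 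The complement $Z_j \setminus D(f_j)$ is a proper closed subset of $Z$ (proper since $D(f_j)$ is a nonempty open of the irreducible $Z_j$, and $Z_j \subsetneq Z$ when $m \geq 2$), so $\supp(M) \cap (Z_j \setminus D(f_j))$ is constructible by the inductive hypothesis; hence so is $\supp(M) \cap Z_j$, and therefore $\supp(M) \cap Z = \bigcup_{j}\bigl(\supp(M) \cap Z_j\bigr)$. The main obstacle is the pointwise description in the second paragraph: extracting from ``$f_x^{\diamond} M \neq 0$'' a condition — vanishing of a fixed matrix — that is simultaneously closed in families and makes the nilpotence criterion uniform in $x$ over the open locus where $M_0$ is free; once that is set up, the spreading-out and Noetherian induction are routine.
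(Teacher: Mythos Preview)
Your proof is correct and follows a route parallel to, but distinct from, the paper's. Both reduce to Noetherian $R$ via Proposition~\ref{propX52} and Remark~\ref{rem71}, and both then spread out from generic points of irreducible closed sets using generic freeness. The difference is in \emph{what} gets spread out. You give an explicit pointwise criterion---$x \notin \supp(M)$ iff the twisted product $A(x)\,A(x)^{[p]}\cdots A(x)^{[p^{d-1}]}$ vanishes---which makes the complement of the support a closed subset of the free locus directly, after which Noetherian induction handles the bad locus. The paper instead works over the perfected fraction field $K^{\perfection}$: it splits off the union of kernels $\bigcup_n \ker(\varphi_V^n)$, spreads this nilpotent part out so it can be quotiented away, and then further localizes until the linearization $\beta\colon M_0^{(1)} \to M_0$ becomes an isomorphism of free modules; at that point $\supp(M)$ is either all of $\Spec(R)$ or empty, depending on whether the rank is positive. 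Your argument is more elementary and computational, producing a concrete equation for the complement of the support without passing to perfections; the paper's is more structural, isolating the ``unit'' part of the Frobenius module in a way that resonates with the material in \S\ref{sec:ContravariantRH}. One minor remark: your parenthetical ``$Z_j \subsetneq Z$ when $m \geq 2$'' is unnecessary, since $Z_j \setminus D(f_j) \subsetneq Z_j \subseteq Z$ already gives properness regardless of $m$.
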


\begin{proof}
Using a direct limit argument, we can choose a finitely
generated subring $R' \subseteq R$ and an isomorphism $M \simeq (R \otimes_{R'} M')^{\perfection}$ for some 
$M' \in \Mod_{R'}^{\hol}$. Replacing $R$ by $R'$ and $M$ by $M'$, we can reduce to the case where $R$ is Noetherian.
By general nonsense, it will suffice to prove the following:
\begin{itemize}
\item[$(a)$] If $x \in \Spec(R)$ belongs to the support of $M$, then there exists an open subset $U \subseteq \overline{ \{x \} }$ which is
contained in $\supp(M)$.
\item[$(b)$] If $x \in \Spec(R)$ does not belong to the support $M$, then there exists an open subset $U \subseteq \overline{ \{ x \} }$
which is disjoint from $\supp(M)$.
\end{itemize}
Let $\mathfrak{p} \subseteq R$ be the prime ideal determined by the point $x$.
Using Remark \ref{rem71}, we can replace $R$ by $R / \mathfrak{p}$ and thereby reduce to the case where $R$ is an integral domain and $x$ is the generic point of $\Spec(R)$. 

Write $M = M_0^{\perfection}$, where $M_0$ is finitely generated as an $R$-module. Let $K$ denote the fraction field of $R$. Then
$V = K^{\perfection} \otimes_{R} M_0$ is a finite-dimensional vector space over $K^{\perfection}$, equipped with a Frobenius-semilinear endomorphism
$\varphi_{V}: V \rightarrow V$. Then $\bigcup_{n} \ker( \varphi_{V}^{n} )$ is a $K^{\perfection}$-subspace of $V$, which admits a basis
$\{ v_i \}_{1 \leq i \leq k}$. 
Replacing $R$ by a localization $R[t^{-1}]$ for some nonzero element $t \in R$, we can assume that each $v_{i}$ can be lifted to an element
of $R^{\perfection} \otimes_{R} M_0$, and therefore also to an element $\overline{v}_i \in R' \otimes_{R} M_0$ for some subalgebra $R' \subseteq R^{\perfection}$ which is finitely generated over
$R$. Note that the inclusion $R \hookrightarrow R'$ induces a homeomorphism $\Spec(R') \rightarrow \Spec(R)$. We may therefore replace $R$ by $R'$ (and
$M_0$ by $R' \otimes_{R} M_0$) and thereby reduce to the case where each $\overline{v}_i$ belongs to $M_0$. Replacing $R$ by a localization if necessary, 
we may further assume that each $\overline{v}_i$ is annihilated by some power of $\varphi_{M_0}$. Then the set $\{ \overline{v}_i \}_{1 \leq i \leq k}$ generates a Frobenius submodule
$M'_0 \subseteq M_0$ whose perfection vanishes. We may therefore replace $M_0$ by $M_0 / M'_0$ and thereby reduce to the case where the map $\varphi_{V}$ is injective.

Let us identify $\varphi_{M_0}$ with an $R$-linear map $\beta: M_0^{(1)} \rightarrow M_0$, where $M_0^{(1)}$ is obtained from
$M_0$ by extension of scalars along the Frobenius map $\varphi_{R}: R \rightarrow R$. Note that the induced map
$\beta_{K^{\perfection}}: K^{\perfection} \otimes_{ R } M_0^{(1)} \rightarrow K^{\perfection} \otimes_{R} M_0$ can be identified with $\varphi_{V}$ and
is therefore a monomorphism. Since the domain and codomain of $\beta_{ K^{\perfection} }$ are vector spaces of the same dimension over
$K^{\perfection}$, it follows that $\beta_{ K^{\perfection} }$ is an isomorphism. Replacing $R$ by a localization $R[t^{-1}]$ if necessary,
we can assume that $M_0$ is a free module of finite rank $r$ and that $\beta$ is an isomorphism.
In this case, it is easy to see that $\supp(M) = \begin{cases} \Spec(R) & \text{ if } r > 0 \\
\emptyset & \text{ if } r = 0, \end{cases}$ from which assertions $(a)$ and $(b)$ follow immediately.
\end{proof}

\newpage \section{Compactly Supported Direct Images}\label{section.compactimage}
\setcounter{subsection}{0}
\setcounter{theorem}{0}

Let $f: A \rightarrow B$ be a morphism of commutative rings. Then the direct image
functor $f_{\ast}: \Shv_{\mathet}( \Spec(B), \F_p) \rightarrow \Shv_{\mathet}( \Spec(A), \F_p)$
admits a left adjoint, which we denote by $f^{\ast}: \Shv_{\mathet}( \Spec(A), \F_p) \rightarrow \Shv_{\mathet}( \Spec(B), \F_p )$
and refer to as {\it pullback along $f$}. In the special case where $f$ is {\etale}, the pullback functor can be described concretely
by the formula $(f^{\ast} \sheafF)(B') = \sheafF(B')$. In particular, $f^{\ast}$ preserves inverse limits. It follows (either by the adjoint functor theorem, or by direct construction)
that the functor $f^{\ast}$ admits a further left adjoint, which we denote by $f_{!}: \Shv_{\mathet}( \Spec(B), \Lambda) \rightarrow \Shv_{\mathet}( \Spec(A), \Lambda)$
and refer to as the {\it compactly supported direct image functor}. 

Now suppose that $f: A \rightarrow B$ is a homomorphism of commutative $\F_p$-algebras. Under the Riemann-Hilbert correspondence of Theorem \ref{maintheoXXX}, the pullback functor $f^{\ast}: \Shv_{\mathet}( \Spec(A), \F_p) \rightarrow \Shv_{\mathet}( \Spec(B), \F_p )$ on {\etale} sheaves corresponds to the extension of scalars functor $f^{\diamond}: \Mod_{A}^{\alg} \rightarrow \Mod_{B}^{\alg}$ on algebraic Frobenius modules
(see Proposition \ref{prop70}). One consequence of this is that, if the morphism $f$ is {\etale}, the functor $f^{\diamond}: \Mod_{A}^{\alg} \rightarrow \Mod_{B}^{\alg}$ must also admit a left adjoint.
Our goal in this section is to give a direct proof of this statement, which does not appeal to the Riemann-Hilbert correspondence (in fact, the work of this section will be needed in
\S \ref{section.RH} to {\em construct} the Riemann-Hilbert functor). 

We begin in \S \ref{sec5sub1} by introducing a notion of {\it compactly supported direct image} in the setting of Frobenius modules (Definition \ref{def51}).
From the definition, it will be immediately clear that if $f: A \rightarrow B$ is an {\etale} morphism of commutative $\F_p$-algebras, then
the formation of compactly supported direct images supplies a {\em partially defined} functor $f_{!}: \Mod_{A}^{\alg} \rightarrow \Mod_{B}^{\alg}$.
Our main result, which we prove in \S \ref{sec5sub4}, is that this functor is actually total: that is, compactly supported direct images of algebraic modules always exist (Theorem \ref{theoX15}).
The strategy of proof is to use the structure theory of {\etale} morphisms to reduce to the case where $B$ is a localization $A[t^{-1}]$, which we handle in \S \ref{sec5sub2}. In this case, 
compactly supported direct images of holonomic Frobenius modules admit a very simple characterization (Proposition \ref{propX12}) which makes them easy to construct explicitly.
In \S \ref{sec5sub3}, we apply this characterization prove an analogue of Kashiwara's theorem for for Frobenius modules: the datum of a holonomic Frobenius module over a quotient ring $R/(t)$ is equivalent to the datum of a holonomic Frobenius module over $R$ whose support is contained in the vanishing locus of $t$ (Theorem \ref{theorem.kashiwara}).

\subsection{Definitions}\label{sec5sub1}

We begin by introducing some terminology.

\begin{definition}\label{definition.compactlysupported}
Let $f: A \rightarrow B$ be an {\etale} morphism of $\F_p$-algebras.
Let $M$ be a perfect Frobenius module over $B$ and let $\overline{M}$ be a perfect Frobenius module over $A$.
We will say that a morphism $u: M \rightarrow f^{\diamond} \overline{M}$ {\it exhibits $\overline{M}$ as a weak compactly supported direct image of $M$}
if, for every perfect Frobenius module $N$ over $A$, the composite map
$$ \Hom_{ A[F] }( \overline{M}, N ) \rightarrow \Hom_{ B[F] }( f^{\diamond} \overline{M}, f^{\diamond} N) \xrightarrow{ \circ u} \Hom_{ B[F] }( M, f^{\diamond} N)$$
is a bijection.
\end{definition}

Let $f: A \rightarrow B$ be an {\etale} morphism of $\F_p$-algebras and let $M$ be a perfect Frobenius module over $B$. It follows immediately from
the definition that if there exists a morphism $u: M \rightarrow f^{\diamond} \overline{M}$ which exhibits $\overline{M}$ as a weak compactly supported direct image of $M$,
then the Frobenius module $\overline{M}$ (and the morphism $u$) are determined up to canonical isomorphism. In general, such a module need not exist.
The main result of this section (Theorem \ref{theoX15}) asserts that every {\em algebraic} Frobenius module $M$ over $B$ admits a weak compactly supported direct image $\overline{M}$. Moreover, we will have the following additional properties:
\begin{itemize}
\item[$(a)$] The module $\overline{M}$ is also algebraic (as a Frobenius module over $A$). 
\item[$(b)$] For every perfect Frobenius module $N$ over $A$, the canonical map
$$ \Ext^{n}_{ A[F] }( \overline{M}, N ) \rightarrow \Ext^{n}_{ B[F] }( M, f^{\diamond} N)$$
is an isomorphism for all integers $n$, rather than merely for $n=0$.
\item[$(c)$] The Frobenius module $\overline{M}$ remains a weak compactly supported direct image of $M$ after any extension of scalars. More precisely, for any
pushout diagram of commutative rings
$$ \xymatrix{ A \ar[r]^{f} \ar[d]^{g'} & B \ar[d]^{g} \\
A' \ar[r]^{f'} & B', }$$
the induced map $g'^{\diamond}(u): g'^{\diamond}(M) \rightarrow g'^{\diamond}( f^{\diamond}( \overline{M} ) ) \simeq f'^{\diamond}( g^{\diamond}( \overline{M} ) )$
exhibits $g^{\diamond}( \overline{M} )$ as a weak compactly supported direct image of $g'^{\diamond}(M)$.
\end{itemize}

Our proof for the existence of weak compactly supported direct images will proceed by a somewhat complicated induction on the structure of the {\etale} morphism
$f: A \rightarrow B$. In order to carry out the details, it will be important to strengthen our inductive hypothesis: that is, we need to show not only that
weak compactly supported direct images exist, but also that they have the properties listed above. For this reason, it will be convenient to introduce a more complicated version
of Definition \ref{definition.compactlysupported} which incorporates properties $(a)$, $(b)$, and $(c)$ automatically.

\begin{definition}\label{def51}
Let $f: A \rightarrow B$ be an {\etale} morphism of $\F_p$-algebras. Suppose we are given algebraic Frobenius modules $M \in \Mod^{\alg}_{B}$ and $\overline{M} \in \Mod^{\alg}_{A}$.
We will say that a morphism $u: M \rightarrow f^{\diamond} \overline{M}$ {\it exhibits $\overline{M}$ as a compactly supported direct image of $M$}
if the following conditions is satisfied:
\begin{itemize}
\item[$(\ast)$] For every pushout diagram of commutative rings
$$ \xymatrix{ A \ar[r]^{f} \ar[d]^{g'} & B \ar[d]^{g} \\
A' \ar[r]^{f'} & B' }$$
and every object $N \in \Mod^{\perf}_{A'}$, the composite map
\begin{eqnarray*}
 \Ext_{ A'[F] }^{\ast}( g'^{\diamond} \overline{M}, N )  & \rightarrow & \Ext^{\ast}_{ B'[F] }( f'^{\diamond} g'^{\diamond} \overline{M},  f'^{\diamond} N ) \\
& \simeq & \Ext^{\ast}_{ B'[F] }( g^{\diamond} f^{\diamond} \overline{M}, f'^{\diamond} N) \\
& \rightarrow & \Ext^{\ast}_{B'[F]}( g^{\diamond} M, f'^{\diamond} N ). \end{eqnarray*}
is an isomorphism.
\end{itemize}
\end{definition}

\begin{remark}
Let $f: A \rightarrow B$ and $u: M \rightarrow f^{\diamond} \overline{M}$ be as in Definition \ref{def51}. If $f$ exhibits
$\overline{M}$ as a compactly supported direct image of $M$, then it also exhibits $\overline{M}$ as a weak compactly supported direct image of $M$.
In fact, the converse holds as well (assuming that $M$ is algebraic): this follows from the uniqueness of weak compactly supported direct images,
once we have shown that compactly supported direct images exist (Theorem \ref{theoX15}).
\end{remark}

\begin{notation}\label{notation.csupport}
Let $f: A \rightarrow B$ be an {\etale} morphism of $\F_p$-algebras and let $M \in \Mod^{\alg}_{B}$. If
there exists an object $\overline{M} \in \Mod_{A}^{\alg}$ and a morphism $u: M \rightarrow f^{\diamond} \overline{M}$ which exhibits
$\overline{M}$ as a compactly supported direct image of $M$, then we will denote $\overline{M}$ by $f_{!} M$. 
In this case, we will say that {\it $f_{!} M$ exists}. Note that, in this event, the Frobenius module $f_{!} M$ depends functorially on $M$.
\end{notation}

\subsection{Extension by Zero}\label{sec5sub2}

Our next goal is to prove the existence of compactly supported direct images in the case of an elementary open immersion $$\Spec( A[t^{-1}] ) \hookrightarrow \Spec(A)$$
(Proposition \ref{prop22}). In this case, Definition \ref{def51} can be formulated more simply, at least for holonomic Frobenius modules.

\begin{definition}\label{definition.extendbyzero}
Let $A$ be a commutative $\F_p$-algebra containing an element $t$ and let $M$ be a holonomic Frobenius module over $A[t^{-1}]$.
An {\it extension by zero of $M$} is a holonomic Frobenius module $\overline{M}$ over $A$ such that
$\overline{M}[t^{-1}]$ is isomorphic to $M$ and $(\overline{M} / t \overline{M})^{\perfection} \simeq 0$.
\end{definition}

\begin{proposition}\label{propX12}
Let $A$ be an $\F_p$-algebra containing an element $t$, and let $f: A \rightarrow A[t^{-1}]$ be the canonical map. Suppose
we are given holonomic Frobenius modules $M \in \Mod_{ A[t^{-1}] }^{\hol}$ and $\overline{M} \in \Mod_{A}^{\hol}$
together with a map $u: M \rightarrow f^{\diamond} \overline{M} \simeq \overline{M}[t^{-1}]$. The
following conditions are equivalent:
\begin{itemize}
\item[$(a)$] The morphism $u$ exhibits $\overline{M}$ as a compactly supported direct image of $M$, in the sense of Definition \ref{def51}.

\item[$(b)$] The morphism $u$ exhibits $\overline{M}$ as an extension by zero of $M$: that is, $u$ is an isomorphism and
the Frobenius module $( \overline{M} / t \overline{M} )^{\perfection}$ vanishes.
\end{itemize}
\end{proposition}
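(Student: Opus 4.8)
The plan is to prove the equivalence $(a) \Leftrightarrow (b)$ by establishing each implication directly, using the adjunction characterizing compactly supported direct images together with the concrete description of extension by zero. I would begin by recording the key structural fact about the map $f: A \to A[t^{-1}]$: since this exhibits $A[t^{-1}]$ as a localization, the multiplication map $A[t^{-1}] \otimes_A A[t^{-1}] \to A[t^{-1}]$ is an isomorphism, so by Remark \ref{remark.cor20} the restriction of scalars functor $f_\ast$ is fully faithful, and $f^\diamond f_\ast \simeq \id$. Moreover every pushout $A \to A'$, $B = A[t^{-1}] \to B' = A'[t^{-1}]$ appearing in the quantifier of Definition \ref{def51} is again an elementary localization, so the content of condition $(\ast)$ for a general base change $g': A \to A'$ reduces to a statement of the same shape with $A$ replaced by $A'$. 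This lets me treat the abstract parameter $A'$ uniformly.

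For the implication $(b) \Rightarrow (a)$, suppose $u: M \to \overline{M}[t^{-1}]$ is an isomorphism and $(\overline{M}/t\overline{M})^{\perfection} \simeq 0$. Fix a base change $g': A \to A'$ and set $\overline{M}' = g'^\diamond \overline{M}$, which is holonomic over $A'$ by Proposition \ref{propX51}. Since $g'^\diamond$ is right exact and $(-)^{\perfection}$ is exact, the hypothesis $(\overline{M}/t\overline{M})^{\perfection} \simeq 0$ persists after base change: $(\overline{M}'/t\overline{M}')^{\perfection} \simeq 0$. Now consider, for $N \in \Mod_{A'}^{\perf}$, the short exact sequence of perfect Frobenius modules over $A'$
$$ 0 \to (t\overline{M}')^{\perfection} \to \overline{M}' \to (\overline{M}'/t\overline{M}')^{\perfection} \to 0, $$
where the last term vanishes; hence multiplication by $t$ induces an isomorphism $(t\overline{M}')^{\perfection} \simeq \overline{M}'$, and iterating, $t$ acts invertibly on $\overline{M}'$, so $\overline{M}' \simeq \overline{M}'[t^{-1}] = f'^\diamond f'_\ast \overline{M}'$ — i.e. $\overline{M}'$ lies in the essential image of $f'_\ast$. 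Because $f'_\ast$ is fully faithful and $f'^\diamond$ is its left adjoint, for any such $\overline{M}'$ and any $N' \in \Mod_{B'}^{\perf}$ the unit $\overline{M}' \to f'_\ast f'^\diamond \overline{M}'$ is an isomorphism, and more generally the counit $f'^\diamond f'_\ast \to \id$ is an isomorphism on the whole subcategory of $t$-local modules. This gives, via the fully faithfulness of $f'_\ast$ on $\Ext$-groups (which follows formally once the adjunction unit is invertible — concretely, compute $\Ext^\ast$ by an injective resolution of $N$ in $\Mod_{A'}^{\perf}$, note $f'^\diamond$ takes a projective resolution of $\overline{M}'$ to one of $M$ by exactness of $f'^\diamond$ over the localization, and observe $\Hom_{A'[F]}(\overline{M}', N) \simeq \Hom_{B'[F]}(f'^\diamond \overline{M}', f'^\diamond N)$ termwise), that the composite map displayed in $(\ast)$ is an isomorphism in all degrees. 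Using $u$ to identify $f'^\diamond \overline{M}'$ with $g^\diamond M$ finishes this direction.

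For $(a) \Rightarrow (b)$, I would argue by uniqueness and by exhibiting an explicit extension by zero. Given a holonomic $M$ over $A[t^{-1}]$, write $M \simeq M_0^{\perfection}$ with $M_0$ finitely presented over $A[t^{-1}]$; clearing denominators, $M_0$ descends to a finitely presented Frobenius module $\widetilde{M}_0$ over $A$ with $\widetilde{M}_0[t^{-1}] \simeq M_0$. Inside $\widetilde{M}_0^{\perfection}$ take the sum $P$ of all Frobenius submodules killed by a power of $t$ and annihilated by a power of $\varphi$; passing to $\widetilde{M}_0^{\perfection}/P$ and using Noetherian approximation (as in the proof of Theorem \ref{theo74}) one arranges $\overline{M} := (\widetilde{M}_0/P_0)^{\perfection}$ to be holonomic with $\overline{M}[t^{-1}] \simeq M$ and $(\overline{M}/t\overline{M})^{\perfection} \simeq 0$; by the first implication this $\overline{M}$ is a compactly supported direct image of $M$. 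But compactly supported direct images — indeed weak ones — are unique up to canonical isomorphism once they exist (noted after Definition \ref{definition.compactlysupported}), so any $\overline{M}$ satisfying $(a)$ is isomorphic to this one, hence satisfies $(b)$. The main obstacle is the construction in this last paragraph: producing, from an arbitrary finitely presented model of $M$, a holonomic $A$-module whose reduction mod $t$ has vanishing perfection, while keeping control of holonomicity under the passage to $\widetilde{M}_0^{\perfection}/P$ — this is where the Noetherian-approximation bookkeeping and the finiteness arguments of \S\ref{sec4sub2} (especially Corollary \ref{corollary.propX60}) have to be invoked carefully.
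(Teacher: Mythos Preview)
Your argument for $(b) \Rightarrow (a)$ contains a genuine error. From the vanishing of $(\overline{M}'/t\overline{M}')^{\perfection}$ you conclude that $t$ acts invertibly on $\overline{M}'$, so that $\overline{M}' \simeq \overline{M}'[t^{-1}]$ lies in the essential image of $f'_{\ast}$. This is false. Take $A = \F_p[t]$ and $\overline{M} = (tA)^{\perfection}$, which is the radical ideal $(t^{1/p^{\infty}}) \subseteq A^{\perfection}$. Then $\overline{M}/t\overline{M}$ is nonzero (the class of $t^{1/p}$ is nonzero, since $t^{1/p} \notin t\overline{M}$) but has locally nilpotent Frobenius (since $\varphi(t^{1/p}) = t \in t\overline{M}$), so $(\overline{M}/t\overline{M})^{\perfection} = 0$; yet $t$ is certainly not invertible on $\overline{M}$. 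The source of the confusion is that the map $(t\overline{M}')^{\perfection} \xrightarrow{\sim} \overline{M}'$ you obtain is induced by the \emph{inclusion} $t\overline{M}' \hookrightarrow \overline{M}'$, which is a morphism of Frobenius modules; it is not induced by multiplication by $t$, which does not commute with Frobenius and hence cannot be ``perfected''. The submodule $t\overline{M}'$ is typically not perfect, so the isomorphism after perfection tells you only that $\overline{M}'/t\overline{M}'$ has locally nilpotent Frobenius, nothing more.

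Since your $(a) \Rightarrow (b)$ relies on $(b) \Rightarrow (a)$ (to identify the constructed extension-by-zero with the given compactly supported direct image via uniqueness), it collapses as well. In any case the paper's $(a) \Rightarrow (b)$ is far more direct: simply specialize condition $(\ast)$ of Definition~\ref{def51} to $A' = A[t^{-1}]$ (forcing $u$ to be an isomorphism) and to $A' = A/(t)$ (forcing $(\overline{M}/t\overline{M})^{\perfection} = 0$). For $(b) \Rightarrow (a)$, the paper takes a completely different route: after reducing to $A = A'$, it factors the map $N \to N[t^{-1}]$ through its image and reduces to showing that $\Ext^{\ast}_{A[F]}(\overline{M}, K) = 0$ whenever $K \in \Mod_A^{\perf}$ satisfies $K[t^{-1}] = 0$. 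This vanishing is then proved by writing $\overline{M} = T^{\perfection}$ with $T$ finitely presented, reducing to the Noetherian case, and using the long exact sequence of Remark~\ref{remX8} together with Lemma~\ref{esbo} to show that the relevant operator $\gamma \mapsto \varphi_K^{-1} \circ \gamma \circ \varphi_T$ on $\Ext^{\ast}_A(T,K)$ is locally nilpotent. The key point---that $(\overline{M}/t\overline{M})^{\perfection} = 0$ forces $\varphi_T^m(T) \subseteq tT$ for $m \gg 0$---is what replaces your (incorrect) $t$-invertibility claim.
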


The proof of Proposition \ref{propX12} will require an elementary fact from commutative algebra:

\begin{lemma}\label{esbo}
Let $M$ and $N$ be modules over a commutative ring $A$, and let
$\gamma$ be an element of $\Ext^{n}_{A}(M, N)$ for some $n \geq 0$. Suppose that $M$ is Noetherian. If
$\gamma$ is annihilated by some power of an element $t \in A$, then there exists $d \geq 0$ such that
the image of $\gamma$ vanishes in $\Ext^{n}_{A}( t^{d} M, N)$.
\end{lemma}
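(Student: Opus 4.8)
The plan is to reduce the computation of $\Ext^n_A(M,N)$ to finite data using the Noetherian hypothesis on $M$, and then run a standard colimit/cofinality argument over the powers of $t$. First I would choose a free resolution $P_\bullet \to M$ in which each $P_i$ is a \emph{finitely generated} free $A$-module; this is possible because $M$ is Noetherian (so finitely generated, and each syzygy module is again finitely generated, being a submodule of a finitely generated module over... well, not necessarily Noetherian $A$, so some care is needed — see below). Then $\Ext^n_A(M,N)$ is computed as a subquotient of $\Hom_A(P_n, N)$, and a class $\gamma$ is represented by a cocycle $\widetilde\gamma\colon P_n \to N$.

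The key observation is the following: for each $d\ge 0$, multiplication by $t^d$ on $M$ gives a map $t^d M \hookrightarrow M$ (identifying $t^dM$ with its image, or more precisely working with the submodule $t^dM\subseteq M$), and the inclusions $t^{d}M \hookleftarrow t^{d+1}M$ form an inverse system whose ``limit'' (in the relevant sense) detects the $t$-divisible part. I would instead phrase it dually: one has a direct system of $\Ext$-groups
$$ \Ext^n_A(M,N) \to \Ext^n_A(tM,N) \to \Ext^n_A(t^2M,N)\to\cdots,$$
induced by the inclusions $t^{d+1}M\hookrightarrow t^dM$. The transition map $\Ext^n_A(t^dM,N)\to\Ext^n_A(t^{d+1}M,N)$, precomposed the other way, is related to multiplication by $t$ after identifying $t^dM$ and $t^{d+1}M$ abstractly via $x\mapsto tx$ (when $t$ is not a zero-divisor on $M$) — but in general $t$ may be a zero-divisor, so I would avoid that identification and argue directly with the inclusions. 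The claim to prove is that if $t^N\gamma = 0$ in $\Ext^n_A(M,N)$ then $\gamma$ maps to $0$ in $\Ext^n_A(t^dM,N)$ for $d\gg 0$.

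To make this work I would use that $M$ is finitely presented — say generated by $x_1,\dots,x_r$ — so that $t^dM$ is generated by $t^dx_1,\dots,t^dx_r$, and the inclusion $t^dM\hookrightarrow M$ can be computed via a map of finite free resolutions. Concretely, pick a surjection $A^r \twoheadrightarrow M$; then $t^d\cdot A^r \twoheadrightarrow t^dM$, and one can lift the inclusion $t^dM\hookrightarrow M$ to a chain map of finite free resolutions whose degree-$n$ component becomes divisible by $t^d$ after suitable choices. Then the composite $P_n^{(M)} \to N$ obtained from $\widetilde\gamma$, restricted along this chain map, is (up to a coboundary) divisible by $t^d$; since $t^N\gamma=0$ means $t^N\widetilde\gamma = \partial(h)$ for some $h\colon P_{n-1}^{(M)}\to N$, choosing $d\ge N$ and carefully tracking the homotopy shows the restricted cocycle is a coboundary in $\Hom_A(P_\bullet^{(t^dM)},N)$.

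The main obstacle is the bookkeeping with resolutions when $A$ is not Noetherian: a priori the kernel of $A^r\to M$ need not be finitely generated, so one cannot assume a finite free resolution of $M$. The way around this is that we do not need a \emph{finite} resolution — we only need that $M$ is finitely presented, so $P_0, P_1$ can be taken finite free, and then $\Ext^n$ for a \emph{fixed} $n$ only requires $P_{n-1}, P_n, P_{n+1}$, which (by replacing $A$-module maps with explicit matrices) can be handled by a limit argument: write $A$ as a filtered colimit of finitely generated (hence Noetherian) $\F_p$-subalgebras $A_\alpha$ over which $M$, $N$, and $\gamma$ are all defined, prove the statement over the Noetherian ring $A_\alpha$ where finite free resolutions exist, and then base-change back up. This reduction to the Noetherian case is the cleanest route and is the key step; everything after it is the routine homological algebra sketched above.
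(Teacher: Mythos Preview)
Your proposed reduction to Noetherian $A$ has a genuine gap: the module $N$ is completely arbitrary, so there is no reason it, or the class $\gamma \in \Ext^n_A(M,N)$, should descend to any finitely generated subring $A_\alpha$. Writing $N$ as a filtered colimit of finitely presented modules does not help, since $\Ext^n_A(M,-)$ need not commute with filtered colimits unless you already know $M$ admits a resolution by finitely generated projectives in the relevant range---which is precisely what you are trying to arrange. The descent argument is therefore circular, and since you identify it as ``the key step,'' the plan as written does not go through. (Even your sketch in the Noetherian case is incomplete: the claim that the lifted chain map is ``divisible by $t^d$'' is exactly what fails when $t$ is a zero-divisor on $M$, the case you explicitly flag and then set aside.)

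The paper's proof bypasses resolutions and descent entirely. Since $M$ is Noetherian, its $t$-power-torsion submodule $M_0$ is finitely generated, hence annihilated by $t^k$ for some fixed $k$. For any $d \geq k$, the surjection $M \xrightarrow{\,t^d\,} t^d M$ has kernel contained in $M_0$ and hence annihilated by $t^k$; thus there is a well-defined map $t^d M \to M$ sending $t^d x \mapsto t^k x$, and the inclusion $t^d M \hookrightarrow M$ factors as
\[
t^d M \longrightarrow M \xrightarrow{\ t^{d-k}\ } M.
\]
Applying $\Ext^n_A(-,N)$, the restriction $\Ext^n_A(M,N) \to \Ext^n_A(t^dM,N)$ factors through multiplication by $t^{d-k}$ on $\Ext^n_A(M,N)$, so it suffices to take $d$ large enough that $t^{d-k}\gamma = 0$. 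This is the phenomenon you were reaching for with ``divisible by $t^d$ after suitable choices,'' but carried out at the level of the modules themselves---no hypothesis on $A$ or $N$ is needed, and the zero-divisor case is handled automatically by the bound $k$.
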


\begin{proof}
Let $M_0 \subseteq M$ be the submodule consisting of those elements which
are annihilated by some power of $t$. Since $M$ is Noetherian, we can choose an integer $k \gg 0$ such
that $M_0$ is annihilated by $t^{k}$. For each $d \geq k$, the kernel of the surjection $M \xrightarrow{t^{d}} t^{d} M$
is annihilated by $t^{k}$, so there exists a dotted arrow as indicated in the diagram
$$ \xymatrix{ M \ar[r]^-{t^{k}} \ar[d]^{ t^{d} } & M \ar[d]^{ t^{d-k} } \\
t^{d} M \ar[r] \ar@{-->}[ur] & M. }$$
It follows that the restriction map $\Ext^{n}_{A}( M, N) \rightarrow \Ext^{n}_{A}( t^{d} M, N)$
factors through the map $t^{d-k}: \Ext^{n}_{A}(M,N) \rightarrow \Ext^{n}_{A}(M,N)$.
It therefore suffices to choose $d$ large enough that $t^{d-k} \gamma = 0$.
\end{proof}

\begin{proof}[Proof of Proposition \ref{propX12}]
Assume first that $(a)$ is satisfied. Applying condition $(\ast)$ of Definition \ref{def51} in the case $A' = A[t^{-1}]$, we deduce that
$u$ is an isomorphism. Applying condition $(\ast)$ of Definition \ref{def51} in the case $A' = A/(t)$, we deduce that $(\overline{M}/ t \overline{M})^{\perfection} \simeq 0$.

We now prove the converse. Assume that $u$ is an isomorphism and that $( \overline{M} /t \overline{M} )^{\perfection} \simeq 0$, and suppose we are given a pushout diagram of
$\F_p$-algebras $$ \xymatrix{ A \ar[r]^-{f} \ar[d]^{g'} & A[t^{-1}] \ar[d]^{g} \\
A' \ar[r]^-{f'} & A'[t^{-1}] }$$
and an object $N \in \Mod^{\perf}_{A'}$. To verify condition $(\ast)$ of Definition \ref{def51}, it will suffice to show that the canonical map
$$ \theta: \Ext_{ A'[F] }^{\ast}( g'^{\diamond} \overline{M}, N ) \rightarrow \Ext^{\ast}_{ A'[t^{-1}][F] }( f'^{\diamond} g'^{\diamond} \overline{M},  f'^{\diamond} N )$$
is an isomorphism. Replacing $A$ by $A'$ and $\overline{M}$ by $g'^{\diamond} \overline{M}$, we can reduce to the case $A = A'$. Let $Q \in \Mod_{A}^{\perf}$ denote
the image of the unit map $N \rightarrow f^{\diamond} N \simeq N[t^{-1}]$, so that we have short exact sequences
$$ 0 \rightarrow K \rightarrow N \rightarrow Q \rightarrow 0 \quad \quad 0 \rightarrow Q \rightarrow N[t^{-1}] \rightarrow K' \rightarrow 0.$$
In order to show that the composite map $$\Ext_{ A[F] }^{\ast}( \overline{M}, N ) \rightarrow \Ext_{ A[F]^{\ast} }( \overline{M}, Q ) \rightarrow \Ext_{ A[F] }^{\ast}( \overline{M}, N[t^{-1} ] )$$
is an isomorphism, it will suffice to show that the groups $\Ext_{A[F]}^{\ast}( \overline{M}, K)$ and $\Ext_{ A[F]}^{\ast}( \overline{M}, K')$ vanish. This is a special case of the following:
\begin{itemize}
\item[$(\ast)$] If $\overline{M} \in \Mod_{A}^{\hol}$ satisfies $( \overline{M} / t \overline{M} )^{\perfection} \simeq 0$ and
$N \in \Mod_{A}^{\perf}$ satisfies $N[t^{-1}] \simeq 0$, then $\Ext^{\ast}_{ A[F] }( \overline{M}, N )$ vanishes.
\end{itemize}
Write $\overline{M} = T^{\perfection}$ for some Frobenius module $T \in \Mod_{A}^{\Frob}$ in $A$ which is finitely presented as an $A$-module.
Using our assumption that $(\overline{M} / t \overline{M})^{\perfection} \simeq 0$, we deduce that 
$\varphi_{ T}^{m} T \subseteq t T$ for $m \gg 0$.
By a direct limit argument, we can assume that $T \simeq A \otimes_{A_0} T_0$ for some finitely generated subalgebra
$A_0 \subseteq A$ which contains $t$ and some $T_0 \in \Mod_{A_0}^{\Frob}$ which is finitely presented as an $A_0$-module, and that $T_0$ satisfies
$\varphi_{ T_0}^{m} T_0 \subseteq t T_0$. Using Corollary \ref{corX11}, we can replace $A$ by $A_0$ and
$T$ by $T_0$, and thereby reduce to proving $(\ast)$ in the special case where $A$ is Noetherian.

Using Remark \ref{remX8}, we obtain a long exact sequence
$$ \Ext_{A[F]}^{\ast}(\overline{M}, N) \rightarrow \Ext^{\ast}_{A}(T, N) \xrightarrow{\id - U} \Ext^{\ast}_{A}(T, N) \rightarrow \Ext^{\ast+1}_{A[F]}( \overline{M}, N),$$
where $U$ is the endomorphism of $\Ext^{\ast}_{A}( T, N)$ given by $U(\gamma) = \varphi_{N}^{-1} \circ \gamma \circ \varphi_{T}$.
To prove $(\ast)$, it will suffice to show that the map $1 - U$ is an isomorphism of $\Ext^{\ast}_{A}( T, N)$ with itself.
In fact, we will show that $U$ is locally nilpotent (so that $1-U$ has an inverse given by the formal infinite sum $1 + U + U^2 + \cdots$). 

Fix an element $\gamma \in \Ext^{k}_{A}( T, N)$; we wish to show that $U^{m}(\gamma) \simeq 0$ for $m \gg 0$.
Since $A$ is Noetherian and $T$ is a finitely generated $A$-module, the construction $S \mapsto \Ext^{k}_{A}( T, S)$ commutes with filtered colimits.
In particular, there exists a finitely generated $A$-submodule $N_0 \subseteq N$ such that $\gamma$ can be lifted to an element
$\gamma_0 \in \Ext^{k}_{A}( T, N_0)$. Using our assumption that $N[t^{-1}] \simeq 0$, we deduce that $N_0$ is annihilated by
$t^{c}$ for $c \gg 0$. It follows that the image of $\gamma_0$ in $\Ext^{k}_{A}( t^{c'} T, N_0)$ vanishes for $c' \gg c$ (Lemma \ref{esbo}).
We now observe that for $m \gg 0$, the map $\varphi_{ T' }^{m}$ factors through $t^{c'} T$, so that
$\gamma \circ \varphi_{ T}^{m} = 0$ and therefore $U^m(\gamma) = 0$ as desired.
\end{proof}

\begin{proposition}\label{prop22}
Let $A$ be an $\F_p$-algebra containing an element $t$, and let $f: A \rightarrow A[t^{-1}]$ be the canonical map. Then every algebraic
Frobenius module $M$ over $A[t^{-1}]$ admits a compactly supported direct image along $f$.
\end{proposition}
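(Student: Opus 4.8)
The plan is to reduce the statement for an arbitrary algebraic Frobenius module $M$ over $A[t^{-1}]$ to the case of a holonomic one, where Proposition \ref{propX12} already gives a concrete criterion. By Theorem \ref{theoX54}, $M$ can be written as a filtered colimit $M \simeq \varinjlim_\alpha M_\alpha$ of holonomic Frobenius modules over $A[t^{-1}]$; and since the compactly supported direct image is defined by a universal property (an isomorphism on $\Ext$-groups, functorially in extensions of scalars), it should commute with filtered colimits. So the essential content is: (i) every \emph{holonomic} Frobenius module over $A[t^{-1}]$ admits an extension by zero in the sense of Definition \ref{definition.extendbyzero} (equivalently, by Proposition \ref{propX12}, a compactly supported direct image), and (ii) this construction passes to filtered colimits to yield $f_! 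M$ for algebraic $M$.

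\textbf{Step 1: Construct the extension by zero of a holonomic module.} Let $M \in \Mod_{A[t^{-1}]}^{\hol}$. Write $M \simeq M_1^{\perfection}$ for some $M_1 \in \Mod_{A[t^{-1}]}^{\Frob}$ finitely presented as an $A[t^{-1}]$-module. Choosing generators and clearing denominators, I would find a finitely generated $A$-submodule $T \subseteq M_1$ with $T[t^{-1}] = M_1$ that is $\varphi$-stable after replacing $\varphi_{M_1}$ by $t^{N}\varphi_{M_1}$ for suitable $N$ — more precisely, using that $M$ is algebraic (Proposition \ref{propX58}), I can arrange a finitely presented $A$-module $T$ with a Frobenius-semilinear $\varphi_T : T \to T$ such that $T[t^{-1}] \cong M_1$ as Frobenius modules (possibly after a further localization-compatible adjustment). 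Then set $\overline{M} := T^{\perfection}$. By construction $\overline{M}[t^{-1}] \cong M$. The point requiring care is to also ensure $(\overline{M}/t\overline{M})^{\perfection} \simeq 0$, i.e. $\varphi_T^m T \subseteq tT$ for $m \gg 0$; if the naive $T$ does not satisfy this, one replaces $T$ by the image $\varphi_T^m(T) + \varphi_T^{m+1}(T) + \cdots$ of a high power (which is still finitely presented since $T$ is Noetherian over a Noetherian base, reached by a direct limit argument as in the proof of Proposition \ref{propX12}), and checks this does not change the perfection or the localization. By Proposition \ref{propX12}, the resulting $u : M \xrightarrow{\sim} \overline{M}[t^{-1}]$ exhibits $\overline{M}$ as a compactly supported direct image of $M$, so $f_! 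M$ exists for $M$ holonomic.

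\textbf{Step 2: Pass to filtered colimits.} Given algebraic $M$, write $M \simeq \varinjlim_\alpha M_\alpha$ with each $M_\alpha$ holonomic (Theorem \ref{theoX54}), and set $\overline{M} := \varinjlim_\alpha f_!(M_\alpha)$, formed in $\Mod_A^{\alg}$ (which is closed under filtered colimits inside $\Mod_A^{\perf}$, as $\Mod_A^{\alg} \simeq \Ind(\Mod_R^{\hol})$). The maps $u_\alpha : M_\alpha \to f^{\diamond} f_!(M_\alpha)$ assemble, using that $f^{\diamond}$ is a left adjoint and hence commutes with colimits, into $u : M \to f^{\diamond}\overline{M}$. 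To verify condition $(\ast)$ of Definition \ref{def51}, fix a pushout square as in that definition and $N \in \Mod_{A'}^{\perf}$; I need the composite $\Ext^*_{A'[F]}(g'^{\diamond}\overline{M}, N) \to \Ext^*_{B'[F]}(g^{\diamond}M, f'^{\diamond}N)$ to be an isomorphism. Since $g'^{\diamond}$ and $g^{\diamond}$ commute with filtered colimits, $g'^{\diamond}\overline{M} = \varinjlim g'^{\diamond} f_!(M_\alpha)$ and $g^{\diamond}M = \varinjlim g^{\diamond}M_\alpha$; and by Remark \ref{subtil} (via Proposition \ref{propX52} to reduce to the Noetherian case) the functors $\Ext^k_{A'[F]}(-, N)$ on holonomic modules — more precisely their $\Ext$ out of holonomic objects — send filtered colimits in the first variable to filtered limits... but here I want colimits, so instead I use that each $f_!(M_\alpha)$ is holonomic (hence compact in $\Mod^{\perf}$, Proposition \ref{propX55}) only after first reducing $\Ext$ against $N$ to a filtered colimit statement; the cleanest route is to invoke that $\Ext^*_{B'[F]}(-, f'^{\diamond}N)$ and $\Ext^*_{A'[F]}(-,N)$ both convert the filtered colimit $\varinjlim_\alpha$ into the appropriate limit and that the maps $\theta_\alpha$ (isomorphisms for each holonomic $M_\alpha$ by Step 1 plus Proposition \ref{propX12}) are compatible, so the colimit of isomorphisms is an isomorphism. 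This gives condition $(\ast)$ and hence $\overline{M} = f_! M$.

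\textbf{Main obstacle.} The genuinely delicate point is Step 2: justifying that condition $(\ast)$ — which involves $\Ext$-groups \emph{out of} $\overline{M}$ after arbitrary extension of scalars — is preserved under the filtered colimit $\overline{M} = \varinjlim f_!(M_\alpha)$. The $\Ext$-groups $\Ext^*_{A'[F]}(\varinjlim_\alpha f_!(M_\alpha), N)$ are a priori a limit, not a colimit, so one cannot naively commute. The resolution must use that each transition map $M_\alpha \to M_\beta$ (and hence $f_!(M_\alpha) \to f_!(M_\beta)$) fits into the Ind-presentation of Theorem \ref{theoX54} in a controlled way — in particular that $M$ is, up to replacing the diagram, a \emph{directed union} of its holonomic subobjects when $R = A'$ is Noetherian (Proposition \ref{propX29}) — so that the relevant Mittag-Leffler / compactness conditions hold. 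I expect this to reduce, via Proposition \ref{propX52} and Remark \ref{subtil}, to the Noetherian case where $\Mod^{\alg}$ is locally Noetherian and the bookkeeping becomes manageable, but getting the colimit/limit interchange exactly right is where the real work lies.
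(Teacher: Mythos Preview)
Your overall strategy matches the paper's exactly: reduce to the holonomic case via Theorem~\ref{theoX54}, then build an explicit extension by zero and invoke Proposition~\ref{propX12}. The differences lie in execution.

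\textbf{Step 1 (holonomic case).} The paper's construction is more direct than yours. Given $M_0$ finitely presented over $A[t^{-1}]$, pick any finitely presented $A$-module $\overline{M}_0$ with $\overline{M}_0[t^{-1}] \simeq M_0$. The Frobenius on $M_0$ then restricts to a map $\rho: \overline{M}_0 \to \overline{M}_0[t^{-1}]^{1/p}$, and by finite presentation this factors through $t^n \cdot \overline{M}_0^{1/p}$ for some integer $n$. Multiplying the chosen identification $\alpha$ by a sufficiently high power of $t$ forces $n > 0$, which \emph{immediately} gives $\varphi_{\overline{M}_0}(\overline{M}_0) \subseteq t\overline{M}_0$, hence $(\overline{M}/t\overline{M})^{\perfection} = 0$. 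You mention the twist (``replacing $\varphi_{M_1}$ by $t^N\varphi_{M_1}$'') but then second-guess yourself: your proposed fix of replacing $T$ by $\sum_{k \geq m}\varphi_T^k(T)$ is neither needed nor correct --- that submodule has no reason to satisfy $\varphi^m T' \subseteq tT'$, and it need not be finitely presented over a non-Noetherian $A$. The twist alone does the job; the appeal to algebraicity of $M$ (Proposition~\ref{propX58}) is a red herring here.

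\textbf{Step 2 (reduction to holonomic).} You are right that the paper's one-line ``Using Theorem~\ref{theoX54}'' hides something: condition $(\ast)$ of Definition~\ref{def51} is phrased via $\Ext^*$-groups \emph{out of} $\overline{M}$, and $\Ext^*(\varinjlim_\alpha -, N)$ is not a filtered colimit. The clean justification, however, is simpler than your Noetherian/Mittag-Leffler plan. For each $\alpha$, condition $(\ast)$ for $M_\alpha$ says precisely that the natural map
\[
R\Hom_{A'[F]}(g'^{\diamond}\overline{M}_\alpha, N) \longrightarrow R\Hom_{B'[F]}(g^{\diamond}M_\alpha, f'^{\diamond}N)
\]
is a quasi-isomorphism. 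These maps are compatible along the filtered system, and since filtered colimits in $\Mod^{\perf}$ are exact, $\varinjlim = \hocolim$ in the first variable, so both sides for $\overline{M}$ and $M$ are obtained by applying $\holim_\alpha$ to the two systems. A homotopy limit of quasi-isomorphisms is a quasi-isomorphism; taking cohomology gives $(\ast)$ for $M$. Your phrase ``so the colimit of isomorphisms is an isomorphism'' should read \emph{homotopy limit}; once corrected, that is the whole argument, and no reduction to the Noetherian case or control over transition maps is required.
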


\begin{proof}
Using Theorem \ref{theoX54}, we can reduce to the case where $M$ is holonomic. Write $M = M_0^{\perfection}$, where $M_0 \in \Mod_{A[t^{-1}]}^{\Frob}$ is finitely
presented as an $A[t^{-1}]$-module. Then we can choose an isomorphism $\alpha: M_0 \simeq \overline{M}_0[t^{-1}]$ for some finitely presented object $\overline{M}_0 \in \Mod_{ A }$. 
Then $\varphi_{M_0}$ determines an $A$-module homomorphism $\rho: \overline{M}_0 \rightarrow \overline{M}_0[t^{-1}]^{1/p}$. 
Since $\overline{M}_0$ is finitely presented as an $A$-module, we can assume that $\rho$ factors as a composition
$$ \overline{M}_0 \rightarrow \overline{M}_0^{1/p} \xrightarrow{ t^{n} } \overline{M}_0[t^{-1}]^{1/p}$$
for some integer $n$. Multiplying the isomorphism $\alpha$ by a suitable power of $t$, we can arrange that $n > 0$.
Set $\overline{M} = \overline{M}_0^{\perfection}$. Then $\alpha$ induces an isomorphism $M \simeq \overline{M}[t^{-1}]$ and
$\varphi_{\overline{M}}$ is locally nilpotent on $\overline{M} / t \overline{M}$, so that $\alpha$ exhibits $\overline{M}$
as an extension by zero of $M$, which is also a compactly supported direct image of $M$ by virtue of Proposition \ref{propX12}.
\end{proof}

\subsection{Kashiwara's Theorem}\label{sec5sub3}

Let $X$ be a smooth algebraic variety over the field $\C$ of complex numbers, and let $Y \subseteq X$ be a smooth subvariety of $X$.
A theorem of Kashiwara (see \cite[\S 1.6]{HottaEtAl}) asserts that the category of algebraic $\mathcal{D}$-modules on $Y$ is equivalent to the category of algebraic
$\mathcal{D}$-modules on $X$ which vanish over the open set $X-Y$. In this section, we prove the following analogue
for (holonomic) Frobenius modules:

\begin{theorem}\label{theorem.kashiwara}
Let $M$ be a holonomic Frobenius module over a commutative $\F_p$-algebra $A$, and let $I \subseteq A$ be an ideal.
The following conditions are equivalent:
\begin{itemize}
\item The support $\supp(M)$ is contained in the vanishing locus $\Spec(R/I) \subseteq \Spec(R)$.
\item The submodule $IM \subseteq M$ vanishes: that is, $M$ has the structure of a Frobenius module over $R/I$.
\end{itemize}
\end{theorem}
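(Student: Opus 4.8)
The plan is to prove the two implications separately; the reverse implication is where essentially all the work lies.

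\emph{From $IM=0$ to $\supp(M)\subseteq V(I)$.} This I would read off Definition~\ref{defsupp}. If $x\in\Spec(A)$ does not lie in the vanishing locus $V(I)=\Spec(A/I)$, the residue map $f_x\colon A\to\kappa(x)$ does not annihilate $I$, so $I\kappa(x)=\kappa(x)$ and hence $\kappa(x)\otimes_A(A/I)=0$. Since $IM=0$ makes $M$ a module over $A/I$, this forces $\kappa(x)\otimes_A M=0$, so $f_x^{\diamond}(M)=(\kappa(x)\otimes_A M)^{\perfection}=0$ and $x\notin\supp(M)$.

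\emph{From $\supp(M)\subseteq V(I)$ to $IM=0$.} First reduce to a principal ideal: $IM=0$ is equivalent to $aM=0$ for all $a\in I$, and $\supp(M)\subseteq V(I)\subseteq V(a)$, so it suffices to show that a holonomic $M$ with $\supp(M)\subseteq V(t)$ is killed by $t$. For the localization map $j\colon A\to A[t^{-1}]$, the module $j^{\diamond}(M)=M[t^{-1}]$ is holonomic (Proposition~\ref{propX51}, Corollary~\ref{corX14}) and has support $j^{-1}(\supp(M))=\emptyset$ (Remark~\ref{rem71}). Granting the special case $I=A$ of the theorem---a holonomic Frobenius module with empty support is zero---applied over $A[t^{-1}]$, we get $M[t^{-1}]=0$, i.e.\ every element of $M$ is killed by some power of $t$. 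To improve this to annihilation by $t$, I would regard $M$ as a module over the perfect ring $A^{\perfection}$ (Proposition~\ref{prop5}), so that the roots $t^{1/p^{n}}$ exist, and use that $M$ is algebraic (Proposition~\ref{propX58}): the computation in the proof of Theorem~\ref{theoX9} shows $s^{p}x=0\Rightarrow sx=0$ for $s\in A^{\perfection}$, $x\in M$ (because $\varphi_M$ is bijective and $x$ satisfies a monic polynomial in $\varphi_M$). Iterating on $t^{k}x=(t^{k/p^{m}})^{p^{m}}x=0$ gives $t^{k/p^{m}}x=0$ for all $m$, and taking $m$ with $p^{m}\geq k$ yields $tx=t^{\,1-k/p^{m}}\,(t^{k/p^{m}}x)=0$. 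Hence $tM=0$.

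\emph{The case $I=A$: empty support implies zero.} This is the crux, and I expect it to be the main obstacle. I would reduce to $A$ Noetherian via Proposition~\ref{propX52}, writing $M\simeq\iota^{\diamond}M'$ for an inclusion $\iota\colon A_0\hookrightarrow A$ of a finitely generated $\F_p$-subalgebra; this uses constructibility of supports (Theorem~\ref{theo74}) and their compatibility with $\iota^{\diamond}$ (Remark~\ref{rem71}), and making this reduction genuinely work in the non-Noetherian case is one of the delicate points. With $A$ Noetherian, write $M=M_0^{\perfection}$ for a finitely generated $A$-module $M_0$ with Frobenius structure, and run a Noetherian induction on the ordinary support $\operatorname{Supp}_A(M_0)$, following the proof of Theorem~\ref{theo74}: at a generic point $\eta$ of a component of $\operatorname{Supp}_A(M_0)$, emptiness of $\supp(M)$ forces $\varphi$ to be nilpotent on the finite-dimensional $\kappa(\eta)^{\perfection}$-vector space $\kappa(\eta)^{\perfection}\otimes_A M_0$; passing to the eventual image $P$ of $\varphi_{M_0}$ (a finitely generated Frobenius submodule with $P^{\perfection}=M_0^{\perfection}$ and with $A$-span of $\varphi(P)$ equal to $P$), a Nakayama argument at $\eta$ shows that the localization of $P$ at $\eta$ vanishes. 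Thus $\operatorname{Supp}_A(P)$ misses every generic point of $\operatorname{Supp}_A(M_0)$, so replacing $M_0$ by $P$ strictly shrinks the ordinary support while leaving $M$ unchanged, and the induction terminates with $M_0=0$. The hard parts are exactly this non-Noetherian reduction and the spreading-out bookkeeping (extracting the eventual image, the Nakayama step, shrinking the support); the rest is formal given the earlier results on perfections, algebraicity, and $\Tor$-vanishing.
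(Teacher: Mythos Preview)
Your overall architecture matches the paper's: reduce to $I=(t)$, reduce to the statement ``empty support implies zero'' (Proposition~\ref{theo78} in the paper), and pass from $M[t^{-1}]=0$ to $tM=0$ via $p$th roots in $A^{\perfection}$. Your version of this last step (the $s^{p}x=0\Rightarrow sx=0$ trick) is a valid alternative to the paper's, which instead takes a finitely generated Frobenius submodule $N\subseteq M$, writes $M=\bigcup_n N^{1/p^n}$, and applies $\varphi_M^{-n}$ to the relation $t^kN=0$ to get $t^{k/p^n}M=0$.

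There is, however, a gap in your ``empty support $\Rightarrow$ zero'' argument. You claim an eventual image $P\subseteq M_0$ with $A\cdot\varphi(P)=P$, but the descending chain $N_n=A\cdot\varphi^n(M_0)$ has no reason to stabilize in a Noetherian module---Noetherian gives you the ascending chain condition, not the descending one, and a global $P$ with the stated property need not exist. The fix is to localize at your chosen generic point $\eta$ of $\operatorname{Supp}_A(M_0)$: since $\eta$ is minimal in the support, the module $(M_0)_\eta$ has \emph{finite length} over $A_\eta$, so the chain $(N_n)_\eta$ does stabilize; taking $P=N_N$ for $N$ large you get $A_\eta\cdot\varphi(P_\eta)=P_\eta$ (only locally at $\eta$, but that is all the Nakayama step needs), and then your argument gives $P_\eta=0$ and the induction on $\operatorname{Supp}_A$ goes through. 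For comparison, the paper's proof of Proposition~\ref{theo78} takes a rather different route: it runs Noetherian induction on $\Spec(A)$ rather than on $\operatorname{Supp}_A(M_0)$, uses the inductive hypothesis $(M/xM)^{\perfection}=0$ together with the extension-by-zero criterion of Proposition~\ref{propX12} to identify $M$ with $j_!(M[x^{-1}])$, reduces to $A$ a domain, and then finds $x\neq 0$ with $\varphi$ nilpotent on $M_0[x^{-1}]$ so that $M[x^{-1}]=0$. Your approach, once patched with the finite-length observation, is more self-contained in that it avoids the $j_!$ machinery of \S\ref{sec5sub2}; the paper's approach buys a cleaner induction by leaning on that machinery.
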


\begin{remark}\label{elos}
If the equivalent conditions of Theorem \ref{theorem.kashiwara} are satisfied, then
$M$ is also holonomic when regarded as a Frobenius module over $R/I$. Conversely,
if the ideal $I$ is finitely generated, then any holonomic Frobenius module over $R/I$ is a holonomic
Frobenius module over $R$ which satisfies the conditions of Theorem \ref{theorem.kashiwara} (see Proposition \ref{propX45}). In other words, the category $\Mod_{R/I}^{\hol}$ can be identified with the full subcategory of $\Mod_R^{\hol}$ spanned by objects set-theoretically supported on $\Spec(R/I) \subseteq \Spec(R)$. Beware that this is generally not true if $I$ is not finitely generated (Warning \ref{warning.X45}).
\end{remark}

We begin by treating the following special case of Theorem \ref{theorem.kashiwara} (which is the only case we will actually need):

\begin{proposition}\label{theo78}
Let $M$ be a holonomic Frobenius module over a commutative $\F_p$-algebra $A$. Then $M \simeq 0$ if and only if
the support $\supp(M)$ is empty.
\end{proposition}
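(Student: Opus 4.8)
The ``only if'' direction is immediate from the definitions: if $M \simeq 0$, then $f_{x}^{\diamond}(M) \simeq 0$ for every point $x \in \Spec(A)$, so $\supp(M) = \emptyset$.

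For the converse I would argue by contraposition, assuming $M \neq 0$ and producing a point of $\supp(M)$. The first step is a reduction to the case where $A$ is Noetherian, carried out so as to preserve both nonvanishing of $M$ and nonemptiness of its support. Write $A$ as the filtered union of its finitely generated $\F_p$-subalgebras; applying the direct-limit argument from the opening of the proof of Theorem \ref{theo74} (together with Proposition \ref{propX52}), descend $M$ to a compatible system of holonomic Frobenius modules $M_{\alpha} \in \Mod_{A_{\alpha}}^{\hol}$ over the finitely generated subalgebras $A_{\alpha}$, so that $\iota_{\alpha\beta}^{\diamond} M_{\alpha} \simeq M_{\beta}$ for $\alpha \leq \beta$ and $\iota_{\alpha}^{\diamond} M_{\alpha} \simeq M$, where $\iota_{\alpha}\colon A_{\alpha}\hookrightarrow A$ and $\iota_{\alpha\beta}\colon A_\alpha\hookrightarrow A_\beta$ are the inclusions. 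Writing $M_{\alpha} \simeq (M_{0,\alpha})^{\perfection}$ with $M_{0,\alpha}$ finitely presented over $A_{\alpha}$, the vanishing of $M_{\alpha}$ would force $\varphi_{M_{0,\alpha}}^{n} = 0$ for some $n$, hence $\varphi_{M_{0}}^{n} = 0$ after base change and $M = 0$; so $M \neq 0$ forces $M_{\alpha} \neq 0$ for every $\alpha$. By Remark \ref{rem71} we have $\supp(M) = (\iota_{\alpha}^{\ast})^{-1}(\supp(M_{\alpha}))$ for each $\alpha$, and since $\Spec(A) = \varprojlim_{\alpha}\Spec(A_{\alpha})$ this identifies $\supp(M)$ with $\varprojlim_{\alpha}\supp(M_{\alpha})$. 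Granting the Noetherian case (below), each $\supp(M_{\alpha})$ is a nonempty constructible subset of $\Spec(A_{\alpha})$ by Theorem \ref{theo74}; constructible subsets are compact in the patch topology and the transition maps are continuous there, so this is a cofiltered limit of nonempty compact Hausdorff spaces, hence nonempty, and therefore $\supp(M) \neq \emptyset$. (It would \emph{not} suffice to replace $A$ by a single finitely generated subalgebra: the support of the descended module need not meet the image of $\Spec(A) \to \Spec(A')$. Keeping the whole tower and invoking compactness is exactly what repairs this.)

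It remains to treat the Noetherian case: if $A$ is Noetherian and $M \neq 0$ is a holonomic Frobenius module over $A$, then $\supp(M) \neq \emptyset$. Here I would use Noetherian induction on $\Spec(A)$ together with the structure analysis of the proof of Theorem \ref{theo74}. Write $M \simeq M_{0}^{\perfection}$ with $M_{0}$ finitely generated, and replace $M_{0}$ by its image in $M$, so that $M_{0}\hookrightarrow M$, $\varphi_{M}(M_{0}) \subseteq M_{0}$, and $M_{0}\neq 0$. If some generic point $\eta$ of $\Spec(A)$ lies in $\supp(M)$, we are done. Otherwise $f_{\eta}^{\diamond}(M) = 0$ for every generic point; since $\kappa(\eta)\otimes_{A}M_{0}$ is finite-dimensional, this says that a suitable iterate $\varphi_{M_{0}}^{d}$, viewed as an $A$-linear map $\psi\colon M_{0}^{(d)} \to M_{0}$, vanishes generically, so that $\mathrm{im}(\psi)$ is supported on a proper closed subset of $\Spec(A)$. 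One checks that $\mathrm{im}(\psi)^{\perfection} \simeq M$ (the iterated images of $M_{0}$ all perfect to $M$) and that $\mathrm{im}(\psi)$ is annihilated by an ideal $J$ whose vanishing locus is this closed subset; a small computation then shows $M$ is itself a holonomic Frobenius module over $A/J$, with the same support. As $\dim(A/J) < \dim(A)$, the inductive hypothesis furnishes a point of $\supp(M)$.

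I expect the delicate point to be this last step, namely ensuring that nonvanishing of $M$ genuinely survives the passage to $A/J$ --- equivalently, that $\mathrm{im}(\psi)$ is supported on a proper closed \emph{subscheme}, not merely missing the generic points set-theoretically --- which forces one to be careful about nonreducedness of $A$ at its minimal primes; first replacing $M$ by a simple quotient (legitimate since $\Mod_{A}^{\hol}$ is a Noetherian abelian category by Proposition \ref{propX29}, and since emptiness of the support passes to quotients) is a convenient way to organize this. By contrast, the reduction to the Noetherian case through the tower of subalgebras and compactness of constructible sets is conceptually the main maneuver but otherwise routine, once Theorem \ref{theo74} is available.
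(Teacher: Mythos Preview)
Your proof is largely correct and takes a genuinely different route from the paper's, so let me first compare the two and then flag one real gap.

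\textbf{Reduction to the Noetherian case.} The paper works in the direct direction (assume $\supp(M)=\emptyset$, prove $M=0$): it descends $M$ to some $M'$ over a single finitely generated $A'\subseteq A$, observes that the constructible set $K=\supp(M')$ misses the image of $\Spec(A)\to\Spec(A')$, and then \emph{enlarges} $A'$ once more so that $K$ itself becomes empty (a standard spreading-out argument for constructible sets). Your tower-plus-patch-compactness argument is correct and conceptually pleasant, but heavier than necessary; your parenthetical that a single subalgebra cannot suffice is not quite right, since the paper shows it does once one is willing to enlarge.

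\textbf{The Noetherian case.} The paper's argument is organized differently: its inductive hypothesis is $(M/IM)^{\perfection}=0$ for every nonzero ideal $I$, it disposes of the nonreduced case immediately by taking $I$ to be the nilradical (using that $M$, being perfect, is already an $A_{\mathrm{red}}$-module), and then in the reduced case invokes Proposition~\ref{propX12} to write $M=f_! M[x^{-1}]$ for every nonzero $x$ and finds a nonzero $x$ with $M[x^{-1}]=0$ by passing to the generic point of an irreducible component. Your approach via the image of $\psi$ and its annihilator avoids the machinery of compactly supported direct images, which is a genuine simplification.

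\textbf{The gap.} The step ``$\psi$ vanishes generically, hence $\mathrm{im}(\psi)$ is supported on a proper closed subset'' is not justified as written. From $f_\eta^{\diamond}(M)=0$ you only get that $\varphi^{d}$ vanishes on $\kappa(\eta)\otimes_A M_0$; to conclude $\mathrm{im}(\psi)_\eta=0$ you need $\varphi^{d'}$ to vanish on the localization $(M_0)_\eta$. These agree only when $A_\eta$ is a field, i.e.\ when $A$ is reduced at $\eta$. You correctly flag nonreducedness as the delicate point, but replacing $M$ by a simple quotient does not address it. Two fixes work: either observe (as the paper does) that any perfect Frobenius module is automatically an $A_{\mathrm{red}}$-module, so one may assume $A$ reduced from the start; or note that $A_\eta$ is Artinian local with nilpotent maximal ideal, and iterate the inclusion $\varphi^{d}((M_0)_\eta)\subseteq \mathfrak m_\eta (M_0)_\eta$ to get $\varphi^{d'}((M_0)_\eta)=0$ for $d'\gg 0$. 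A minor slip: $\dim(A/J)<\dim(A)$ need not hold (take $A$ with two components of the same dimension); what you actually have, and what Noetherian induction needs, is that $V(J)\subsetneq\Spec(A)$.
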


\begin{proof}
The ``only if'' direction is obvious. To prove the converse, let us assume that $\supp(M) = \emptyset$; we wish to prove that
$M \simeq 0$. Using Proposition \ref{propX52}, we can choose a finitely generated subring $A' \subseteq A$ and an
equivalence $M \simeq (A \otimes_{A'} M')^{\perfection}$ for some $M' \in \Mod_{A'}^{\hol}$. 
Set $K = \supp(M') \subseteq \Spec(A')$. Then $K$ is constructible (Theorem \ref{theo74}). 
Using Remark \ref{rem71}, we deduce that the image of the map $\Spec(A) \rightarrow \Spec(A')$ is disjoint from $K$. 
Enlarging $R'$ if necessary, we can arrange that $K = \emptyset$. We may therefore replace $A$ by $A'$ and
$M$ by $M'$, and thereby reduce to the case where $A$ is Noetherian. 

Proceeding by Noetherian induction, we may assume that for every nonzero ideal $I \subseteq A$, we have $( M / IM)^{\perfection} \simeq 0$.
We may assume that $A \neq 0$ (otherwise, there is nothing to prove). If $A$ is not reduced, then taking $I$ to be the nilradical of $A$ we deduce that $M = M^{\perfection} \simeq (M/IM)^{\perfection} \simeq 0$. 
We may therefore assume that $A$ is reduced. Using Proposition \ref{propX12}, we deduce that $M$ is the compactly supported direct image of $M[x^{-1}]$ for every nonzero element $x \in A$. It will therefore suffice to show that we can choose a nonzero element $x \in A$ such that $M[x^{-1}] \simeq 0$. Since $A$ is reduced and Noetherian, we can choose a non-zero divisor $t \in A$ such that $A[t^{-1}]$ is an integral domain. Replacing
$A$ by $A[t^{-1}]$, we can assume that $A$ is an integral domain. Write $M = M_0^{\perfection}$ for some
$M_0 \in \Mod_{A}^{\Frob}$ which is finitely presented as an $A$-module. Let $K$ be the fraction field of $A$. Since the support $\supp(M)$ does not contain the generic
point of $\Spec(A)$, the Frobenius module $(K \otimes_{A} M_0)^{\perfection}$ vanishes. Using the finite generation of $M_0$, we conclude that
the Frobenius endomorphism of $K \otimes_{A} M_0$ is nilpotent. It follows that there exists a nonzero element $x \in A$ for which
the Frobenius map $\varphi_{ M_0[x^{-1}] }$ is nilpotent, so that $M[x^{-1}] \simeq M_0[x^{-1}]^{\perfection} \simeq 0$ as desired.
\end{proof}

\begin{proof}[Proof of Theorem \ref{theorem.kashiwara}]
Let $M$ be a holonomic Frobenius module over a commutative $\F_p$-algebra $A$ and let $I \subseteq A$ be an ideal. 
It follows immediately from the definitions that if $M$ is annihilated by $I$, then the support $\supp(M)$ is contained in the vanishing locus of $I$.
Conversely, suppose that $\supp(M) \subseteq \Spec(A/I)$; we wish to show that $M$ is annihilated by each element $x \in I$.
Note that the inclusion $\supp(M) \subseteq \Spec(A/I)$ guarantees that the support of $M[x^{-1}]$ is empty, where we regard
$M[x^{-1}]$ as a holonomic Frobenius module over $A[x^{-1}]$. Applying Proposition \ref{theo78}, we conclude that
$M[x^{-1}] \simeq 0$. 

Choose an isomorphism $M \simeq M_0^{\perfection}$, where $M_0$ is a Frobenius module over $A$ which is finitely presented as an $A$-module.
Let $N$ denote the image of the map $M_0 \rightarrow M$, so that $N \subseteq M$ is a Frobenius submodule which is finitely generated over $A$. The vanishing of $M[ x^{-1} ]$ guarantees
that $x^{k} N = 0$ for some $k \gg 0$. Applying $\varphi_{M}^{-n}$, we conclude that $x^{\frac{k}{p^n}} N^{1/p^{n}} = 0$ for all $n \geq 0$. As $M = \varinjlim_n N^{1/p^{n}}$, it follows that $x^{\frac{k}{p^n}} M = 0$ for all $n \geq 0$
(here we regard $M$ as a Frobenius module over the perfection $A^{\perfection}$). For $n \gg 0$, this implies $xM = 0$, as desired.
\end{proof}

\subsection{Existence of Compactly Supported Direct Images}\label{sec5sub4}

Our goal in this section is to prove the following:

\begin{theorem}\label{theoX15}
Let $f: A \rightarrow B$ be an {\etale} morphism of $\F_p$-algebras. Then, for every object $M \in \Mod_{B}^{\alg}$,
there exists a compactly supported direct image $f_{!} M \in \Mod_{A}^{\alg}$ (see Notation \ref{notation.csupport}). Moreover, the functor $f_{!}: \Mod_{B}^{\alg} \rightarrow \Mod_{A}^{\alg}$ is exact.
\end{theorem}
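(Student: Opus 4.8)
The plan is to construct $f_{!}$ by reducing, via the local structure theory of {\'e}tale morphisms, to the case of an elementary localization $A \to A[t^{-1}]$ already handled in Proposition \ref{prop22}, and to track exactness along the way. The first step is to reduce to the case where $M$ is holonomic. Since $\Mod_{B}^{\alg} = \Ind(\Mod_{B}^{\hol})$ (Theorem \ref{theoX54}) and a left adjoint must preserve filtered colimits, it suffices to produce $f_{!}$ on $\Mod_{B}^{\hol}$ together with the base-change compatibility demanded by Definition \ref{def51}, and then set $f_{!}(\varinjlim M_{\alpha}) = \varinjlim f_{!}M_{\alpha}$; the result is algebraic by Theorem \ref{theoX54}, and the defining $\Ext$-isomorphism of Definition \ref{def51} passes to the colimit by comparing the associated spectral sequences term by term (each term being the holonomic case, with $\Ext$ computed in $\Mod^{\perf}$ as in Remark \ref{prebb}). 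Exactness of $f_{!}$ on $\Mod_{B}^{\alg}$ will then follow from exactness on $\Mod_{B}^{\hol}$, since filtered colimits are exact.

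Next I would set up the structure-theoretic reduction. Let $\mathcal{C}$ be the class of {\'e}tale morphisms $f$ for which $f_{!}$ exists, is exact, and is compatible with base change in the sense of Definition \ref{def51}. I would show that $\mathcal{C}$ is closed under composition — using the uniqueness of compactly supported direct images and the fact that a composite of two instances of $(\ast)$ is again an instance of $(\ast)$ — and that $\mathcal{C}$ is local on the source for the Zariski topology. Granting these, together with Proposition \ref{prop22} (the case $B = A[t^{-1}]$) and the standard {\'e}tale building block $A \to (A[x]/(g(x)))[h^{-1}]$ with $g$ monic and $g'$ invertible, the local structure theory of {\'e}tale morphisms reduces the general case to these pieces. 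For the finite {\'e}tale piece I would identify $f_{!}$ with restriction of scalars $f_{\ast}$: for $A \to B$ finite {\'e}tale one has $B^{\perfection} = B \otimes_{A} A^{\perfection}$ finite projective and self-dual over $A^{\perfection}$ via the trace form, compatibly with Frobenius, so $f^{\diamond}$ is simultaneously induction and coinduction along $A^{\perfection}[F^{\pm 1}] \to B^{\perfection}[F^{\pm 1}]$; hence $f_{\ast}$ is left adjoint to $f^{\diamond}$, exactness is clear, holonomicity and algebraicity are preserved by Proposition \ref{propX45}, and base change is standard.

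The main obstacle will be the Zariski-local-on-the-source step. Given a finite cover $\Spec B = \bigcup \Spec B[s_{i}^{-1}]$ on which each composite $A \to B[s_{i}^{-1}]$ lies in $\mathcal{C}$, one must reassemble $f_{!}M$ over $A$. The device is a Mayer--Vietoris presentation of $M$ itself: for elements $s,t$ generating the unit ideal in $B$, the extension-by-zero functors of Proposition \ref{propX12} fit into a short exact sequence
$$0 \to j_{st!}\,M[s^{-1}t^{-1}] \to j_{s!}\,M[s^{-1}] \oplus j_{t!}\,M[t^{-1}] \to M \to 0$$
in $\Mod_{B}^{\hol}$; applying the composition-compatible operations attached to the localizations and forming the cokernel yields a candidate for $f_{!}M$, whose universal property $(\ast)$ one verifies by applying $\Hom_{A[F]}(-, f^{\diamond}N)$ and the higher $\Ext$-groups to this sequence and matching terms via the already-known cases. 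An induction on the number of opens in the cover then handles arbitrary finite covers.

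Finally, exactness of the assembled functor follows from the snake lemma applied to these Mayer--Vietoris sequences, using exactness of $f_{!}$ for elementary localizations — which comes directly from the explicit extension-by-zero description, where $\overline{M}[t^{-1}] \simeq M$ holds on the nose so $\overline{M}$ depends exactly on $M$ — together with exactness in the finite {\'e}tale case and the compatibility of all constructions with base change, which is carried through each reduction step. I expect the delicate part to be the Zariski gluing: establishing the Mayer--Vietoris exact sequence for the extension-by-zero functors and checking that the cokernel construction genuinely represents the compactly supported direct image with all of the base-change structure intact.
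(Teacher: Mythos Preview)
Your overall architecture --- closure under composition, Zariski gluing via a Mayer--Vietoris sequence, and the base case of an elementary localization --- matches the paper's proof closely, and your observation that $f_! = f_*$ for finite \'etale $f$ (via the self-duality of $B^{\perfection}$ over $A^{\perfection}$ under the trace pairing) is correct. The gap is in the structure-theoretic reduction. The local structure theorem presents $B$, Zariski-locally on $\Spec(B)$, as $(A[x]/(g))[h^{-1}]$ with $g$ monic and $g'$ a unit \emph{in the localization} $(A[x]/(g))[h^{-1}]$, not in $A[x]/(g)$. The intermediate ring $A[x]/(g)$ is finite free over $A$ but in general not \'etale, so there is no ``finite \'etale piece'' to which your trace argument applies. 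One cannot repair this by further Zariski localization on the source, nor by first localizing $A$: take $A = k[t]$ with $\mathrm{char}(k) \neq 2,3$, $g(x) = x^3 - 3x + t$, and $B = (A[x]/(g))[1/g']$. The point $(x,t) = (-2,2) \in \Spec(B)$ lies over the branch point $t = 2$ of the finite map $\Spec(A[x]/(g)) \to \Spec(A)$. If some Zariski neighborhood $U$ of this point factored as an open subscheme of a finite \'etale cover $C$ of an open $W \subseteq \Spec(A)$, then $t = 2 \in W$, and the component $C_0$ of $C$ containing $U$ would be normal, finite over $W$, with function field $k(t)[x]/(g)$ --- hence equal to the restriction of $\Spec(A[x]/(g))$ to $W$; but this is ramified over $t = 2$, contradicting \'etaleness of $C$.

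The paper circumvents this with a different endgame. After the Zariski-gluing and localization steps reduce one to the case where $f$ is faithfully flat, the paper uses descent (Lemma~\ref{lemX23}) together with induction on the fiber degree $d(B) = \sup_{x \in \Spec(A)} \dim_{\kappa(x)}(\kappa(x) \otimes_A B)$. Base-changing $f$ along itself and using the diagonal splitting $B \otimes_A B \simeq B \times B'$ available for \'etale $f$, one finds that $d(B') = d(B) - 1$ over the new base $B$; the inductive hypothesis then shows $B \to B \otimes_A B$ lies in the good class, and faithfully flat descent transports this back to $A \to B$. This bootstrapping via the diagonal is the ingredient your proposal is missing.
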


\begin{remark}\label{remX22}
In the situation of Theorem \ref{theoX15}, the right exactness of the functor $f_{!}: \Mod_{B}^{\alg} \rightarrow \Mod_{A}^{\alg}$ is automatic
(since $f_{!}$ is left adjoint to the functor $f^{\diamond}: \Mod_{A}^{\alg} \rightarrow \Mod_{B}^{\alg}$, which is exact by virtue of Corollary \ref{lushin}).
Moreover, since the functor $f^{\diamond}: \Mod_{A}^{\alg} \rightarrow \Mod_{B}^{\alg}$ preserves filtered colimits, the functor
$f_{!}$ preserves compact objects: that is, it carries $\Mod_{B}^{\hol}$ into $\Mod_{A}^{\hol}$ (see Theorem \ref{theoX54}).
\end{remark}

The proof of Theorem \ref{theoX15} will require some preliminaries. We begin with some elementary remarks, whose proofs follow immediately from our definitions.

\begin{lemma}\label{lemX19}
Suppose we are given a pushout diagram of $\F_p$-algebras
$$ \xymatrix{ A \ar[r]^{f} \ar[d]^{g'} & B \ar[d]^{g} \\
A' \ar[r]^{f'} & B' }$$
where $f$ is {\etale}. If $\overline{M} \in \Mod_B^{\alg}$ and $u: M \rightarrow f^{\diamond} \overline{M}$ is a morphism in $\Mod_{A}^{\alg}$
which exhibits $\overline{M}$ as a compactly supported direct image of $M$, then the induced map
$g^{\diamond} M \rightarrow g^{\diamond} f^{\diamond} \overline{M} \simeq f'^{\diamond} g'^{\diamond} \overline{M}$ exhibits
$g'^{\diamond} \overline{M}$ as a compactly supported direct image of $g^{\diamond} M$.

In particular, if $f_{!} M$ exists, then $f'_{!} (g^{\diamond} M)$ exists (and is canonically isomorphic to $g'^{\diamond}( j_{!} M)$. 
\end{lemma}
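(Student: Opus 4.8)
The statement is essentially a formal consequence of Definition \ref{def51}, so the plan is to unwind that definition carefully and check that the defining property is inherited under base change. First I would fix the pushout square in the statement and the hypothesized morphism $u \colon M \to f^{\diamond}\overline{M}$ which exhibits $\overline{M}$ as a compactly supported direct image of $M$. The claim to prove is that the induced map $g^{\diamond} M \to g^{\diamond} f^{\diamond}\overline{M} \simeq f'^{\diamond} g'^{\diamond}\overline{M}$ (using the canonical base-change isomorphism for $\diamond$-pullbacks along the commuting square) exhibits $g'^{\diamond}\overline{M}$ as a compactly supported direct image of $g^{\diamond} M$, in the sense of Definition \ref{def51} applied now to the {\etale} map $f' \colon A' \to B'$. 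Note $g^{\diamond} M$ and $g'^{\diamond}\overline{M}$ are algebraic by Corollary \ref{corX62}, so the hypotheses of Definition \ref{def51} are in place.

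The key step is the observation that Definition \ref{def51} quantifies over \emph{all} pushout squares under the {\etale} morphism in question, together with an arbitrary perfect Frobenius module over the lower-left corner. So to verify condition $(\ast)$ for $f' \colon A' \to B'$ and the map $g'^{\diamond}(u)$, one takes an arbitrary pushout square
$$ \xymatrix{ A' \ar[r]^{f'} \ar[d] & B' \ar[d] \\ A'' \ar[r] & B'' }$$
and an object $N \in \Mod^{\perf}_{A''}$, and must check that the associated comparison map on $\Ext$-groups is an isomorphism. The point is that pasting this square onto the original square in the statement yields a pushout square under $f \colon A \to B$ (by the standard fact that a horizontal composite of pushout squares is a pushout square), with lower-left corner $A''$. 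Applying condition $(\ast)$ of Definition \ref{def51} for the pair $(\overline M, u)$ to \emph{this} pasted square and the same object $N$ gives exactly the isomorphism we want; one just has to check that the base-change isomorphisms for $f^{\diamond}$, $f'^{\diamond}$ along the various squares are compatible, so that the comparison map produced by the pasted square agrees with the one demanded by Definition \ref{def51} for $f'$. This compatibility is the cocycle/coherence condition for the pseudofunctor $R \mapsto \Mod_R^{\perf}$ with its pullback functors $\diamond$, which follows from Proposition \ref{mallow} (the $\diamond$ functors are defined as perfections of $\ast_{\Frob}$-pullbacks, and $\ast_{\Frob}$ is honest extension of scalars, which is strictly pseudofunctorial).

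The final sentence of the lemma is then immediate: if $f_! M$ exists, it means there is some $\overline{M} \in \Mod_A^{\alg}$ and $u$ exhibiting $\overline M$ as a compactly supported direct image of $M$; applying the first part with $\overline{M} = f_! M$ shows $g'^{\diamond}(f_! M)$ is a compactly supported direct image of $g^{\diamond} M$, hence $f'_!(g^{\diamond} M)$ exists and equals $g'^{\diamond}(f_! M)$ by the uniqueness of compactly supported direct images. The main obstacle, such as it is, is purely bookkeeping: making the identification of the two base-change isomorphisms precise enough that the $\Ext$-comparison map from the pasted square is literally the one in Definition \ref{def51} for $f'$, rather than merely ``the same up to canonical isomorphism.'' Since the paper is content to treat such coherences informally (as in Remark \ref{remX5} and the discussion following Definition \ref{def51}), I would state this compatibility as following immediately from the definitions and not belabor it.
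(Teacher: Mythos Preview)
Your proposal is correct and matches the paper's approach: the paper does not give a proof of this lemma at all, simply stating that it (along with the neighboring lemmas) ``follows immediately from our definitions.'' Your unwinding of Definition \ref{def51} via pasting of pushout squares is exactly the intended argument, and your remark that the only content is the coherence of the $\diamond$-pullback pseudofunctor is on point.
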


\begin{lemma}\label{lemX16}
Let $f: A \rightarrow B$ and $g: B \rightarrow C$ be {\etale} morphisms of $\F_p$-algebras.
Suppose we are given an objects $M_{C} \in \Mod_{C}^{\alg}$, a morphism $u: M_{B} \rightarrow g^{\diamond} M_{C}$ in
$\Mod_{B}^{\alg}$, and a morphism $v: M_{A} \rightarrow f^{\diamond} M_{B}$ in $\Mod_{A}^{\alg}$.
Assume that $u$ exhibits $M_{B}$ as a compactly supported direct image of $M_{C}$. Then
$v$ exhibits $M_{A}$ as a compactly supported direct image of $M_{B}$ if and only if
the composite map $$M_{A} \xrightarrow{v} f^{\diamond} M_{B} \xrightarrow{ g^{\diamond}(u) } g^{\diamond} f^{\diamond} M_{C}$$
exhibits $M_{A}$ as a compactly supported direct image of $M_{C}$.

In particular, if $g_! M$ exists, then $f_{!} (g_! M)$ exists if and only if $(g \circ f)_{!} M$ exists (and, in this case, 
they are canonically isomorphic).
\end{lemma}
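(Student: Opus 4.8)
The plan is to verify condition $(\ast)$ of Definition~\ref{def51} directly, by showing that for an arbitrary base change the $\Ext$-comparison map attached to the composite displayed in the statement factors as the composition of the comparison map attached to $v$ and the comparison map attached to $u$. Once this factorization is in place, the asserted equivalence is a formal consequence of the two-out-of-three property for isomorphisms.

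Concretely, I would fix a pushout square exhibiting an $A$-algebra $A'$, form the induced pushouts $B' = B \otimes_{A} A'$ and $C' = C \otimes_{A} A' \simeq C \otimes_{B} B'$, let $f' \colon A' \to B'$ and $g' \colon B' \to C'$ be the base changes of $f$ and $g$ (so that $g' \circ f'$ is the base change of $g \circ f$), and fix $N \in \Mod_{A'}^{\perf}$. Using the canonical identification ${g'}^{\diamond} \circ {f'}^{\diamond} \simeq (g' \circ f')^{\diamond}$ together with the compatibility of the functors $(-)^{\diamond}$ with pullback, I would then identify the comparison map of Definition~\ref{def51} for the composite structure morphism, taken relative to the pushout $A \to A'$ and the module $N$, with a composite
\[
\Ext^{\ast}_{A'[F]}(M_{A'}, N) \longrightarrow \Ext^{\ast}_{B'[F]}\bigl(M_{B'}, {f'}^{\diamond} N\bigr) \longrightarrow \Ext^{\ast}_{C'[F]}\bigl(M_{C'}, {g'}^{\diamond} {f'}^{\diamond} N\bigr),
\]
where $M_{A'}, M_{B'}, M_{C'}$ denote the base changes of $M_A, M_B, M_C$ along $A \to A'$, $B \to B'$, $C \to C'$; here the first arrow is the comparison map for $v$ relative to the pushout $A \to A'$ and the module $N$, and the second is the comparison map for $u$ relative to the pushout $B \to B'$ and the module ${f'}^{\diamond} N \in \Mod_{B'}^{\perf}$. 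Producing this factorization is the only step with genuine content: each comparison map is built by applying an extension-of-scalars functor and then precomposing with a structure morphism, so the identification reduces to chasing those constructions through the functoriality of $(-)^{\diamond}$.

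Granting the factorization, the lemma is immediate. The hypothesis that $u$ exhibits $M_B$ as a compactly supported direct image of $M_C$ says, via condition $(\ast)$ applied to the pushout $B \to B'$ and the module ${f'}^{\diamond} N$, that the second arrow above is an isomorphism for every choice of $A'$ and $N$; hence the composite is an isomorphism for all such data if and only if the first arrow always is, i.e.\ if and only if $v$ exhibits $M_A$ as a compactly supported direct image of $M_B$. For the ``in particular'' clause: assuming $g_!M$ exists, then starting from either $f_!(g_!M)$ or $(g\circ f)_!M$ the weak universal property of $u$ supplies the complementary structure morphism $v$ (respectively, the composite), and the equivalence just proved shows it is the required datum; the uniqueness of (weak) compactly supported direct images noted after Definition~\ref{definition.compactlysupported} then yields the canonical isomorphism $f_!(g_!M) \simeq (g\circ f)_!M$. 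I expect the main obstacle to be purely organizational — keeping the web of base-change and iterated-pullback isomorphisms coherent enough that the two comparison maps visibly compose to the comparison map of the composite — rather than any new idea.
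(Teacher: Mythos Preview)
Your approach is correct and is exactly the natural unwinding of Definition~\ref{def51}; the paper itself does not give a proof of this lemma, stating only that it ``follow[s] immediately from our definitions,'' and your factorization-plus-two-out-of-three argument is precisely what that phrase is pointing to. The one organizational point worth being careful about is that the hypothesis on $u$ gives you that the second arrow is an isomorphism for \emph{every} $N' \in \Mod_{B'}^{\perf}$ (in particular for $N' = {f'}^{\diamond} N$), so the equivalence between ``the composite is an isomorphism for all $(A',N)$'' and ``the first arrow is an isomorphism for all $(A',N)$'' goes through cleanly; you have this right.
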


\begin{lemma}\label{stona}
Let $f: A \rightarrow B$ be an {\etale} morphism of $\F_p$-algebras and suppose we are given an exact sequence
$0 \rightarrow M' \xrightarrow{u} M \rightarrow M'' \rightarrow 0$ in the abelian category $\Mod_{B}^{\alg}$.
Suppose that $f_{!} M'$ and $f_{!} M$ exist, and that the canonical map $f_{!}(u): f_{!} M' \rightarrow f_{!} M$ is
a monomorphism. Then $f_{!} M''$ exists, and is given by $\coker( f_{!}(u) )$.
\end{lemma}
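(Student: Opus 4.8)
The plan is to exhibit $\overline{M}'' := \coker\big(f_!(u)\colon f_! M' \to f_! M\big)$, equipped with a naturally induced map $u''\colon M'' \to f^{\diamond}\overline{M}''$, as a compactly supported direct image of $M''$ in the sense of Definition \ref{def51}; this yields $f_! M'' \simeq \coker(f_!(u))$ immediately.

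\emph{Constructing the unit map.} Since $f_!(u)$ is a monomorphism and $f_! M$ is algebraic, the cokernel $\overline{M}''$ is algebraic (Proposition \ref{propX53}) and fits into a short exact sequence $0 \to f_! M' \xrightarrow{f_!(u)} f_! M \to \overline{M}'' \to 0$ in $\Mod_A^{\alg}$. Applying $f^{\diamond}$, which is exact on this sequence because the quotient is algebraic (Corollary \ref{lushin}), gives a short exact sequence $0 \to f^{\diamond} f_! M' \to f^{\diamond} f_! M \to f^{\diamond}\overline{M}'' \to 0$. Write $\epsilon_{M'}\colon M' \to f^{\diamond} f_! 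M'$ and $\epsilon_M\colon M \to f^{\diamond} f_! M$ for the canonical maps; they are compatible with $u$ and $f_!(u)$ by the functoriality of the formation of $f_!$ (Notation \ref{notation.csupport}). Hence the composite $M \to f^{\diamond} f_! M \to f^{\diamond}\overline{M}''$ annihilates $\im(u)$ and factors through a map $u''\colon M'' \to f^{\diamond}\overline{M}''$, and by construction $(\epsilon_{M'},\epsilon_M,u'')$ is a morphism of short exact sequences from $0 \to M' \to M \to M'' \to 0$ to $0 \to f^{\diamond} f_! M' \to f^{\diamond} f_! M \to f^{\diamond}\overline{M}'' \to 0$.

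\emph{Verifying condition $(\ast)$.} Fix a pushout square with vertical maps $g'\colon A \to A'$, $g\colon B \to B'$ and an object $N \in \Mod_{A'}^{\perf}$. Applying $g'^{\diamond}$ to the sequence $0 \to f_! M' \to f_! M \to \overline{M}'' \to 0$ and $g^{\diamond}$ to $0 \to M' \to M \to M'' \to 0$ — both exact by Corollary \ref{lushin}, the quotients being algebraic by Corollary \ref{corX62} — and using the base-change isomorphism $g^{\diamond} f^{\diamond} \simeq f'^{\diamond} g'^{\diamond}$, the morphism of short exact sequences from the previous step induces, upon applying $\Ext^{\ast}$, a morphism of long exact sequences from the $\Ext_{A'[F]}^{\ast}\big(g'^{\diamond}(-), N\big)$-sequence of $0 \to f_! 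M' \to f_! M \to \overline{M}'' \to 0$ to the $\Ext_{B'[F]}^{\ast}\big(g^{\diamond}(-), f'^{\diamond} N\big)$-sequence of $0 \to M' \to M \to M'' \to 0$. At the three module spots, the vertical arrows of this ladder are precisely the composites appearing in condition $(\ast)$ of Definition \ref{def51} for $\epsilon_{M'}$, $\epsilon_M$, and $u''$. The hypothesis that $f_!(u)$ is the map of compactly supported direct images attached to $u$ says exactly that the ladder maps at the $f_! M'$ and $f_! M$ spots are isomorphisms in every degree; the five lemma then forces the ladder map at the $\overline{M}''$ spot — i.e.\ the condition-$(\ast)$ composite for $u''$ — to be an isomorphism. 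Thus $u''$ exhibits $\overline{M}'' = \coker(f_!(u))$ as a compactly supported direct image of $M''$.

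I expect the only real work to be formal bookkeeping in the second step: confirming that the vertical arrows of the two long exact sequences are genuinely the composites from Definition \ref{def51}, and that they commute with the $\Ext$ connecting maps, so that the five lemma is being applied to the correct arrow. This reduces to the naturality of the canonical maps $\epsilon$, of the base-change isomorphism $g^{\diamond} f^{\diamond} \simeq f'^{\diamond} g'^{\diamond}$, and of the $\delta$-functor structure of $\Ext^{\ast}$, together with the morphism of short exact sequences produced in the first step; there are no deeper obstacles.
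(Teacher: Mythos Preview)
Your proof is correct and is precisely the natural argument. The paper does not supply a proof of this lemma: it groups Lemma~\ref{stona} with Lemmas~\ref{lemX19} and~\ref{lemX16} as ``elementary remarks, whose proofs follow immediately from our definitions.'' Your five-lemma argument on the two long exact $\Ext$-sequences, combined with the exactness of $g^{\diamond}$ and $g'^{\diamond}$ on the relevant short exact sequences (via Corollary~\ref{lushin}, the quotients being algebraic), is exactly the verification the paper has in mind and omits.
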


\begin{lemma}\label{ulroc}
Let $f: A \rightarrow B$ be a faithfully flat {\etale} morphism of $\F_p$-algebras and let $M \in \Mod_{A}^{\perf}$. Then
$M$ is algebraic if and only if $f^{\diamond} M$ is algebraic. 
\end{lemma}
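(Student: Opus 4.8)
The forward implication is immediate: if $M$ is algebraic then $f^{\diamond}M$ is algebraic by Corollary \ref{corX62}, which applies to an arbitrary homomorphism of $\F_p$-algebras. So the content is the converse, and I would prove it one element at a time. Fix $x \in M$; since $M$ is already perfect by hypothesis, it suffices (by Definition \ref{defhol1}) to produce an equation $\varphi_M^{n}(x) + a_1\varphi_M^{n-1}(x) + \cdots + a_n x = 0$ with $a_i \in A$. Because $f$ is {\etale}, Corollary \ref{corX14} lets me identify $f^{\diamond}M$ with $B \otimes_A M$ as a Frobenius module over $B$, with Frobenius $\varphi(b \otimes m) = b^{p}\otimes\varphi_M(m)$; in particular $\varphi^{k}(1\otimes x) = 1\otimes\varphi_M^{k}(x)$ for all $k$. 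This identification is the one place where {\etale}ness (as opposed to mere faithful flatness) is used.

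Next I would apply the hypothesis that $f^{\diamond}M$ is algebraic to the element $x' = 1 \otimes x$: there exist $b_1,\dots,b_n \in B$ with $\varphi^{n}(x') + b_1\varphi^{n-1}(x') + \cdots + b_n x' = 0$ in $B \otimes_A M$. Rewriting, $1\otimes\varphi_M^{n}(x) = -\sum_{i=1}^{n} b_i\,(1\otimes\varphi_M^{n-i}(x))$, which lies in the $B$-submodule of $B\otimes_A M$ generated by the elements $1\otimes\varphi_M^{k}(x)$ for $0 \le k < n$. Set $W = \sum_{0\le k<n} A\cdot\varphi_M^{k}(x) \subseteq M$. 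Since $f$ is flat, $B\otimes_A W \to B\otimes_A M$ is injective with image exactly this submodule, and $B\otimes_A(M/W) \cong (B\otimes_A M)/(B\otimes_A W)$. Hence the image of $\varphi_M^{n}(x)$ in $M/W$ is killed by the base-change map $M/W \to B\otimes_A(M/W)$.

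Finally, since $f$ is faithfully flat, the natural map $N \to B\otimes_A N$ is injective for every $A$-module $N$ (the injectivity part of faithfully flat descent); applied to $N = M/W$ this forces $\varphi_M^{n}(x) \in W$, i.e. $\varphi_M^{n}(x) = \sum_{k=0}^{n-1} a_k\varphi_M^{k}(x)$ for suitable $a_k \in A$. Rearranging gives an equation of the required form, so $M$ is algebraic. I do not expect any real obstacle here: the argument is essentially bookkeeping (the identification $f^{\diamond}M \cong B\otimes_A M$ and careful tracking of the $B$- versus $A$-module structures) plus the standard descent fact, and there is no delicate point beyond making sure the hypotheses ``{\etale}'' and ``faithfully flat'' are each invoked where needed.
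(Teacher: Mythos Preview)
Your proof is correct and follows essentially the same approach as the paper: both arguments fix an element $x \in M$, use Corollary \ref{corX14} to identify $f^{\diamond}M$ with $B\otimes_A M$, apply algebraicity to $1\otimes x$ to see that $1\otimes\varphi_M^{n}(x)$ lies in the $B$-span of the lower iterates, and then invoke faithful flatness to descend this containment to $A$. The only cosmetic difference is that the paper phrases the descent step as a stabilization $M(n)=M(n+1)$ of an ascending chain of $A$-submodules, whereas you phrase it via injectivity of $M/W \to B\otimes_A(M/W)$; these are equivalent formulations of the same faithfully flat descent fact.
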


\begin{proof}
The ``only if'' direction follows from Corollary \ref{corX62}. Conversely, suppose that $f^{\diamond} M$ is algebraic.
Choose an element $x \in M$. For each $n \geq 0$, let $M(n)$ denote the $A$-submodule of $M$ generated by
the elements $\{ \varphi_{M}^{k}(x) \}_{k < n}$, so we have inclusions of $A$-submodules 
$$ \{ 0 \} = M(0) \subseteq M(1) \subseteq M(2) \subseteq \cdots \subseteq M.$$
Using Corollary \ref{corX14}, we can identify $f^{\diamond} M$ with $B \otimes_{A} M$, so that each $B \otimes_{A} M(n)$ can be identified with the $B$-submodule $M'(n) \subseteq f^{\diamond} M$ generated by $\{ \varphi^k_{B \otimes_A M}(1 \otimes x)\}_{k < n}$. Since $f^{\diamond} M$ is algebraic, there exists an integer $n$ such that $M'(n) = M'(n+1)$. The faithful flatness of $B$ over $A$ then
guarantees that $M(n) = M(n+1)$, so that $x$ satisfies an equation of the form
$\varphi_{M}^{n}(x) + a_1 \varphi_{M}^{n-1}(x) + \cdots + a_n x = 0$.
\end{proof}

\begin{lemma}\label{lemX23}
Suppose we are given a pushout square of {\etale} morphisms between $\F_p$-algebras
$$ \xymatrix{ A \ar[r]^{f} \ar[d] & B \ar[d]^{g} \\
A' \ar[r]^{f'} & B', }$$
where the vertical maps are faithfully flat. Let $M \in \Mod_{B}^{\alg}$. If 
$f'_{!} ( g^{\diamond} M )$ exists, then $f_{!} M$ exists.
\end{lemma}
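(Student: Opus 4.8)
The plan is to prove the lemma by faithfully flat descent along the left vertical map $a\colon A\to A'$. By hypothesis there is a morphism $u'\colon g^{\diamond}M\to f'^{\diamond}N'$ exhibiting $N':=f'_{!}(g^{\diamond}M)$ as a compactly supported direct image. Since $B'\simeq A'\otimes_{A}B$, the \v{C}ech nerve of $g\colon B\to B'$ is obtained from that of $a$ by the exact base change $-\otimes_{A}B$; in particular $B'\otimes_{B}B'\simeq A'\otimes_{A}A'\otimes_{A}B$, and the two maps $B'\rightrightarrows B'\otimes_{B}B'$ agree after restriction along $B\to B'$. The goal is to descend the pair $(N',u')$ to a pair $(\overline{M},u)$ over $A$ and to verify that $u$ exhibits $\overline{M}$ as a compactly supported direct image of $M$; once this is done, $f_{!}M$ exists (and equals $\overline{M}$).

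\emph{Step 1: a descent datum on $N'$.} Let $A''=A'\otimes_{A}A'$ with projections $q_{1},q_{2}\colon A'\to A''$, let $f''\colon A''\to B'':=B'\otimes_{B}B'$ be the base change of $f$, and let $s\colon B\to B''$ be the common composite of either $B'\to B''$ with $B\to B'$. Applying Lemma~\ref{lemX19} to the pushout squares $(f',q_{i})$ shows that each $q_{i}^{\diamond}N'$ is a compactly supported direct image of $s^{\diamond}M$ along $f''$. By the uniqueness of compactly supported direct images there is a canonical isomorphism of Frobenius modules $\sigma\colon q_{1}^{\diamond}N'\xrightarrow{\sim}q_{2}^{\diamond}N'$ intertwining the two exhibiting maps, and the same uniqueness applied over $A'\otimes_{A}A'\otimes_{A}A'$ forces $\sigma$ to satisfy the cocycle condition. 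Similarly, $u'$ defines an element of $\Hom_{B'[F]}(g^{\diamond}M,g^{\diamond}f^{\diamond}\overline{M})$ whose two pullbacks to $B''$ are identified by $\sigma$ (again by uniqueness of exhibiting maps over $A''$), so it lies in the equalizer of the descent diagram.

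\emph{Step 2: descent of $(N',u')$.} By Proposition~\ref{prop5} a perfect Frobenius module over a ring is the same datum as a module with a semilinear automorphism over its perfection; since $a$ is {\etale} and faithfully flat, $A^{\perfection}\to A'^{\perfection}\simeq A'\otimes_{A}A^{\perfection}$ is faithfully flat, so ordinary faithfully flat descent (the automorphism descending because $\sigma$ is a map of Frobenius modules) produces a perfect Frobenius module $\overline{M}$ over $A$ with an isomorphism $a^{\diamond}\overline{M}\simeq N'$ compatible with the descent data, and the equalizer element of Step 1 descends to a morphism $u\colon M\to f^{\diamond}\overline{M}$ with $a^{\diamond}u=u'$. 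By Lemma~\ref{ulroc}, $\overline{M}$ is algebraic, so $u$ is a morphism in $\Mod_{A}^{\alg}$. It remains to check condition $(\ast)$ of Definition~\ref{def51} for $u$. Fix a pushout $A\to A_{1}$, a test object $N\in\Mod_{A_{1}}^{\perf}$, and put $B_{1}=B\otimes_{A}A_{1}$; we must show $\Ext^{*}_{A_{1}[F]}(\overline{M}_{A_{1}},N)\to\Ext^{*}_{B_{1}[F]}(M_{B_{1}},N_{B_{1}})$ is an isomorphism. For each $n\ge 0$ set $A_{1}^{n}=A'^{\otimes_{A}(n+1)}\otimes_{A}A_{1}$ and $B_{1}^{n}=A_{1}^{n}\otimes_{A}B$: then $A_{1}\to A_{1}^{n}$ is flat and $A_{1}\to A_{1}^{0}$ is faithfully flat, while $A\to A_{1}^{n}$ factors through $a$, so $(A\to A_{1}^{n})^{\diamond}u=(A'\to A_{1}^{n})^{\diamond}u'$ exhibits a compactly supported direct image by Lemma~\ref{lemX19}; in particular the comparison $\theta^{n}\colon\Ext^{*}_{A_{1}^{n}[F]}(\overline{M}_{A_{1}^{n}},N_{A_{1}^{n}})\to\Ext^{*}_{B_{1}^{n}[F]}(M_{B_{1}^{n}},N_{B_{1}^{n}})$ is an isomorphism.

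\emph{Step 3: \v{C}ech descent of the $\Ext$-groups, and the main obstacle.} Choosing a projective resolution $P_{\bullet}\to\overline{M}_{A_{1}}$ in $\Mod_{A_{1}}^{\perf}$ and a projective resolution $Q_{\bullet}\to M_{B_{1}}$ in $\Mod_{B_{1}}^{\perf}$ and base-changing along the flat maps $A_{1}\to A_{1}^{n}$ and $B_{1}\to B_{1}^{n}$, one gets double complexes whose $n$-th columns compute the $\Ext$-groups over $A_{1}^{n}$ (resp.\ $B_{1}^{n}$) and whose $q$-th rows are, via the adjunction $\Hom_{A_{1}^{n}[F]}(P_{q}\otimes_{A_{1}}A_{1}^{n},N_{A_{1}^{n}})\simeq\Hom_{A_{1}[F]}(P_{q},N_{A_{1}^{n}})$, exact resolutions of $\Hom_{A_{1}[F]}(P_{q},N)$ (resp.\ $\Hom_{B_{1}[F]}(Q_{q},N_{B_{1}})$) --- exact because the Amitsur complex of $N$ is a resolution (faithful flatness of $A_{1}\to A_{1}^{0}$) and the $P_{q}$, $Q_{q}$ are projective. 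Hence the totalizations compute $\Ext^{*}_{A_{1}[F]}(\overline{M}_{A_{1}},N)$ and $\Ext^{*}_{B_{1}[F]}(M_{B_{1}},N_{B_{1}})$, and the comparison on the associated spectral sequences is $\{\theta^{n}\}$, a termwise isomorphism, so it induces the desired isomorphism on abutments. I expect this last step to be the main obstacle: the construction of $\overline{M}$ and $u$ is a formal descent argument resting only on Lemma~\ref{lemX19}, uniqueness of compactly supported direct images, Proposition~\ref{prop5} and Lemma~\ref{ulroc}, whereas verifying condition $(\ast)$ requires organizing the \v{C}ech/Amitsur descent of $\Ext$-groups carefully enough to reduce it over $A$ to its validity after the faithfully flat base change $a$, where it is immediate.
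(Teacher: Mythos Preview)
Your proposal is correct and follows the same route the paper indicates: the paper's proof is the single line ``Use faithfully flat descent together with Lemma~\ref{ulroc},'' and you have carefully unpacked exactly that descent argument, constructing the descent datum on $N'$ via Lemma~\ref{lemX19} and uniqueness, descending it to $\overline{M}$, invoking Lemma~\ref{ulroc} for algebraicity, and then verifying condition~$(\ast)$ by \v{C}ech/Amitsur descent of the $\Ext$-groups. The paper leaves all of this implicit, so your write-up is considerably more detailed than the original, but the underlying strategy is identical.
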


\begin{proof}
Use faithfully flat descent together with Lemma \ref{ulroc}. 
\end{proof}

\begin{lemma}\label{lemX21}
Let $A$ be an $\F_p$-algebra containing an element $t$, let $f: A \rightarrow A[t^{-1}]$ be the canonical map,
and suppose we are given objects $M \in \Mod_{A[t^{-1}]}^{\alg}$ and a morphism $u: f_{!} M \rightarrow N$
in $\Mod_{A}^{\alg}$. If $f^{\diamond}(u)$ is a monomorphism, then $u$ is a monomorphism.
\end{lemma}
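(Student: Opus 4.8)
The plan is to reduce successively to the case where $M$ is holonomic and $A$ is Noetherian, then prove that $f_{!}M$ is $t$-torsion free, and finally conclude by a naturality diagram.

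\emph{Step 1 (reduce to $M$ holonomic, $u$ an epimorphism).} By Theorem~\ref{theoX54} write $M = \varinjlim M_{\alpha}$ with each $M_{\alpha}$ holonomic; since $f_{!}$ commutes with filtered colimits (Remark~\ref{remX22}) we get $f_{!}M = \varinjlim f_{!}M_{\alpha}$ and $u = \varinjlim u_{\alpha}$, where $u_{\alpha}\colon f_{!}M_{\alpha}\to N$ is the restriction of $u$. Each $f^{\diamond}u_{\alpha}$ is a restriction of the monomorphism $f^{\diamond}u$, hence is a monomorphism; granting the lemma for holonomic $M$, each $u_{\alpha}$ is a monomorphism, and a filtered colimit of monomorphisms in the Grothendieck abelian category $\Mod_{A}^{\perf}$ is a monomorphism. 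So assume $M$ holonomic. Replacing $N$ by $\im(u)$, which is holonomic as a quotient of the holonomic module $f_{!}M$ (Corollary~\ref{corollary.propX60}, Remark~\ref{remX22}), we may further assume $u$ is surjective; then $K := \ker(u)$ is holonomic (Corollary~\ref{corX30}), and $f^{\diamond}K = \ker(f^{\diomond}u) = 0$ by exactness of $f^{\diamond}$ (Corollary~\ref{lushin}), i.e.\ $K[t^{-1}] = 0$.

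\emph{Step 2 (reduce to $A$ Noetherian).} Write $A = \varinjlim A_{\beta}$ over its finitely generated $\F_p$-subalgebras containing $t$. Using the base-change compatibility of $f_{!}$ (Lemma~\ref{lemX19}) and standard finite-presentation arguments, descend the data $(M, N, u)$ to a surjection $u_{\beta}\colon f_{\beta,!}M_{\beta}\to N_{\beta}$ of holonomic modules over $A_{\beta}$ whose base change along $g\colon A_{\beta}\to A$ recovers $u$, with $f_{\beta}^{\diamond}u_{\beta}$ again a monomorphism (monomorphy of maps of holonomic modules over a filtered colimit of rings is detected at a finite stage). Granting the lemma over the Noetherian ring $A_{\beta}$, $u_{\beta}$ is a monomorphism; then $0 \to f_{\beta,!}M_{\beta}\xrightarrow{u_{\beta}} N_{\beta}\to \coker(u_{\beta})\to 0$ is exact with holonomic, hence algebraic, cokernel, so Corollary~\ref{lushin} shows that $g^{\diomond}$ preserves this exactness and $u = g^{\diamond}u_{\beta}$ is a monomorphism. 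Thus we may assume $A$ is Noetherian.

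\emph{Step 3 ($f_{!}M$ is $t$-torsion free).} Set $Y := f_{!}M$, a holonomic Frobenius module over the Noetherian ring $A$. Since $Y$ is the compactly supported direct image of $M$ along the localization $A\to A[t^{-1}]$, Proposition~\ref{propX12} gives $(Y/tY)^{\perfection}\simeq 0$, and $Y$ is finitely generated over $A^{\perfection}$; as $Y/tY$ is then finitely generated with locally nilpotent Frobenius, $\varphi_{Y}^{m}(Y)\subseteq tY$ for some $m$, whence $\varphi_{Y}^{jm}(Y)\subseteq t^{N_{j}}Y$ with $N_{j}\to\infty$. Let $\tau := \{x\in Y : t^{n}x = 0 \text{ for some }n\}$. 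Using that $Y$ is perfect and a module over $A^{\perfection}$, $\tau$ is stable under $\varphi_{Y}$ and $\varphi_{Y}^{-1}$, so it is a perfect Frobenius submodule of $Y$; being a subobject of a Noetherian object of $\Mod_{A}^{\perf}$ (Proposition~\ref{propX29}) and algebraic (Proposition~\ref{propX53}), $\tau$ is holonomic, hence finitely generated over $A^{\perfection}$, so $t^{c}\tau = 0$ for some $c$. Now $Y/\tau$ is $t$-torsion free, so for $N\ge c$ any $z = t^{N}y\in \tau\cap t^{N}Y$ has image $0$ in $Y/\tau$, forcing $y\in\tau$ and hence $z = t^{N-c}(t^{c}y) = 0$; thus $\tau\cap t^{N}Y = 0$ for $N\ge c$. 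Choosing $j$ with $N_{j}\ge c$, any $x\in\tau$ satisfies $\varphi_{Y}^{jm}(x)\in\varphi_{Y}^{jm}(Y)\cap\tau\subseteq t^{N_{j}}Y\cap\tau = 0$, and injectivity of $\varphi_{Y}$ gives $x = 0$. So $\tau = 0$, i.e.\ the unit $\eta_{Y}\colon Y\to f^{\diamond}Y = Y[t^{-1}]$ is a monomorphism.

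\emph{Step 4 (conclusion).} Apply naturality of the unit of the adjunction $(f_{!}, f^{\diamond})$ to the square
$$\xymatrix{ f_{!}M \ar[r]^-{u} \ar[d]_{\eta_{f_{!}M}} & N \ar[d]^{\eta_{N}} \\ f^{\diamond}f_{!}M \ar[r]^-{f^{\diamond}u} & f^{\diamond}N. }$$
The left vertical map is a monomorphism by Step 3 and the bottom map is a monomorphism by hypothesis, so $\eta_{N}\circ u$ is a monomorphism; hence $u$ is a monomorphism.

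The main obstacle is Step 3: identifying the $t$-power-torsion submodule $\tau$ of $f_{!}M$ as a holonomic (hence $t^{c}$-bounded) subobject, and combining this with the extension-by-zero property $(f_{!}M/tf_{!}M)^{\perfection}\simeq 0$ to force $\tau = 0$. The reduction to Noetherian $A$ in Step 2 is needed precisely to make $\tau$ Noetherian (so $t^{c}\tau = 0$) and $f_{!}M$ finitely generated over $A^{\perfection}$.
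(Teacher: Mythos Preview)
Your Step 3 contains a genuine error. You set $Y = f_! M$, which is a \emph{perfect} Frobenius module, so $\varphi_Y$ is bijective and $\varphi_Y^{m}(Y) = Y$ for every $m$. The claim ``$\varphi_Y^{m}(Y) \subseteq tY$ for some $m$'' is therefore equivalent to $Y = tY$, which you have not established (and which is false in general). The claimed inclusions $\varphi_Y^{jm}(Y) \subseteq t^{N_j} Y$ collapse for the same reason. Relatedly, the assertion that a holonomic module is ``finitely generated over $A^{\perfection}$'' is unjustified and not needed.

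Your strategy is salvageable with two small changes. First, to get $t^{c}\tau = 0$ you should not appeal to finite generation over $A^{\perfection}$; instead, since $\tau$ is holonomic with $\tau[t^{-1}] = 0$, Theorem~\ref{theorem.kashiwara} gives $t\tau = 0$ directly (so $c=1$). Your argument then shows $\tau \cap tY = 0$. Second, replace the uniform $m$ by the pointwise statement you actually have: $(Y/tY)^{\perfection} = 0$ means that for each $x \in \tau$ there exists $m = m(x)$ with $\varphi_Y^{m}(x) \in tY$; since $\tau$ is $\varphi_Y$-stable this gives $\varphi_Y^{m}(x) \in \tau \cap tY = 0$, hence $x = 0$.

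By contrast, the paper's proof is a few lines and avoids all reductions. Set $K = \ker(u)$. Since $f_! M / K$ is algebraic, Corollary~\ref{lushin} (applied to $A \to A/(t)$) shows that $(K/tK)^{\perfection}$ injects into $((f_! M)/t(f_! M))^{\perfection}$, and the latter vanishes by the very definition of $f_!$ (Definition~\ref{def51} with $A' = A/(t)$, or Proposition~\ref{propX12}). Thus $(K/tK)^{\perfection} = 0$, and $f^{\diamond} K = \ker(f^{\diamond} u) = 0$, so $K$ itself satisfies the extension-by-zero conditions; Proposition~\ref{propX12} then identifies $K$ with $f_!(f^{\diamond} K) = f_!(0) = 0$. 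Your reductions to the holonomic and Noetherian cases, and the torsion-freeness argument for $f_! M$, are unnecessary.
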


\begin{proof}
Set $K = \ker( f_{!}(u) )$. Then $f_{!} M / K$ is algebraic (Proposition \ref{propX53}), so Corollary
\ref{lushin} implies that the map $(K / tK)^{\perfection} \rightarrow ((f_! M) / t (f_! M))^{\perfection}$ is a monomorphism. Invoking
Proposition \ref{propX12}, we deduce that the natural map $f_{!} f^{\diamond} K \rightarrow K$ is an equivalence.
Since $f^{\diamond} K \simeq \ker( f^{\diamond} u ) \simeq 0$, we conclude that $K \simeq 0$ so that $u$ is a monomorphism as desired.
\end{proof}

\begin{proof}[Proof of Theorem \ref{theoX15}]
Let us say that an {\etale} ring homomorphism $f: A \rightarrow B$ is {\it good} if the functor $f_{!}: \Mod_{B}^{\alg} \rightarrow \Mod_{A}^{\alg}$ is well-defined and exact. 
Our proof now proceeds in several steps:

\begin{itemize}
\item[$(i)$] Every localization $f: A \rightarrow A[t^{-1}]$ is good. The existence of
$f_{!}$ follows from Proposition \ref{prop22}, and the exactness of $f_{!}$ follows
from Remark \ref{remX22} and Lemma \ref{lemX21}.

\item[$(ii)$] Let $f: A \rightarrow B$ and $g: B \rightarrow C$ be {\etale} ring homorphisms. If $f$ and $g$ are good,
then $(g \circ f): A \rightarrow C$ is good. This follows immediately from Lemma \ref{lemX16}.

\item[$(iii)$] Let $f: A \rightarrow B$ be an {\etale} $\F_p$-algebra homomorphism and suppose we are given elements
$t_0, t_1 \in B$ which generate the unit ideal. Set $B_0 = B[ t_0^{-1}] $, $B_1 = B[ t_{1}^{-1} ]$, and $B_{01} = B[ t_0^{-1}, t_{1}^{-1} ]$.
If the induced maps $f_0: A \rightarrow B_0$ and $f_1: A \rightarrow B_1$ are good, then $f_{!}$ exists. To prove this,
choose any object $M \in \Mod_{B}^{\alg}$, and define 
$$M_{0} = M[ t_0^{-1}] \in \Mod_{B_0}^{\alg} \quad  \quad
M_{1} = M[ t_1^{-1}] \in \Mod_{ B_1}^{\alg}$$
$$ M_{01} = M[ t_0^{-1}, t_1^{-1}] \in \Mod_{B_{01}}^{\alg}. $$
We have a commutative diagram 
$$ \xymatrix{ B \ar[r]^{g_0} \ar[d]^{g_1} \ar[dr]^{ g_{01} } & B_0 \ar[d]^{h} \\
B_1 \ar[r] & B_{01} }$$
which yields a short exact sequence
$$ 0 \rightarrow g_{01!} M_{01} \xrightarrow{u} g_{0!} M_0 \oplus g_{1!} M_1 \rightarrow M \rightarrow 0$$
in $\Mod_{B}^{\alg}$. 
Using our assumptions that $f_0$ and $f_1$ are good (which also implies that the induced map $f_{01}: A \rightarrow B_{01}$ is good,
using $(ii)$ and $(iii)$) together with Lemma \ref{lemX16}, we deduce that
$f_{!} ( g_{01!} M_{01} )$ and $f_{!} ( g_{0!} M_0 \oplus g_{1!} M_1)$ exist. By virtue of Lemma \ref{stona}, to prove
the existence of $f_{!} M$, it will suffice to show that $f_{!} u$ is a monomorphism. In fact, we claim that the composite map
$$f_{!} g_{01!} M_{01} \xrightarrow{ f_{!} u} f_{!}( g_{0!} M_0 \oplus g_{1!} M_1) \rightarrow f_{!} g_{0!} M_0$$
is a monomorphism. Using Lemma \ref{lemX16} and our assumption that $f_{0!}$ is exact, we are reduced
to showing that the map $h_{!} M_{01} \rightarrow M_0$ is a monomorphism in $\Mod_{B_0}^{\alg}$, which
is a special case of Lemma \ref{lemX21}.

\item[$(iv)$] Let $f: A \rightarrow B$ be as in $(iii)$. Then $f$ is good. To prove this, we must show that for
every short exact sequence $0 \rightarrow M' \rightarrow M \rightarrow M'' \rightarrow 0$ in $\Mod_{B}^{\alg}$, the induced map $f_{!} M' \rightarrow f_{!} M$ is a monomorphism.
Define $M_0$, $M_1$, and $M_{01}$ as above, and define $M'_0$, $M'_1$, $M'_{01}$, $M''_{0}$, $M''_{1}$, and $M''_{01}$ similarly. We then have a diagram of short exact sequences
$$ \xymatrix{ 0 \ar[r] & f_{01!} M'_{01} \ar[d]^{\alpha} \ar[r]  & f_{0!} M'_{0} \oplus f_{1!} M'_1 \ar[d]^{\beta} \ar[r] & f_{!} M' \ar[d]^{\gamma} \ar[r] & 0 \\
0 \ar[r] & f_{01!} M_{01} \ar[r] & f_{0!} M_{0} \oplus f_{1!} M_1 \ar[r] & f_{!} M \ar[r] & 0 }$$
Using the exactness of the functors $f_{0!}$, $f_{1!}$, and $f_{01!}$, the snake lemma yields an exact sequence
$$ 0 \rightarrow \ker(\gamma) \rightarrow f_{01!} M''_{01} \xrightarrow{\rho} f_{0!} M''_{0} \oplus f_{1!} M''_{1}.$$
It will therefore suffice to show that $\rho$ is a monomorphism, which was established in the proof of $(iii)$.

\item[$(v)$] Let $f: A \rightarrow B$ be an {\etale} $\F_p$-algebra homomorphism, and suppose that there exist elements $\{ t_i \in B \}_{1 \leq i \leq n}$ such that
each of the induced maps $A \rightarrow B[ t_i^{-1} ]$ is good. Then $f$ is good. This follows from $(iii)$ and $(iv)$, using induction on $n$.


\item[$(vi)$] Suppose we are given a pushout square of {\etale} maps 
$$ \xymatrix{ A \ar[r]^{f} \ar[d] & B \ar[d]^{g} \\
A' \ar[r]^{f'} & B', }$$
where the vertical maps are faithfully flat. If $f'$ is good, then $f$ is good. This follows from Lemma \ref{lemX23}.
\end{itemize}

We now wish to prove that every {\etale} morphism $f: A \rightarrow B$ is good. 
For each point $x \in \Spec(A)$, let $\kappa(x)$ denote the residue field of $A$ at $x$ and let $d(x)$ denote the dimension
$\dim_{ \kappa(x) }( \kappa(x) \otimes_{A} B)$. Set $d(B) = \sup_{x \in \Spec(A)} d(x)$. We proceed by induction on $d(B)$.
If $d = 0$, then $B \simeq 0$ and there is nothing to prove. To carry out the inductive step, we note that since $f$ is {\etale}, the induced map
$\Spec(B) \rightarrow \Spec(A)$ has open image. The complement of this image can be written as the vanishing locus of an ideal
$I = (a_1, \ldots, a_n) \subseteq A$. Then $I$ generates the unit ideal of $B$. By virtue of $(v)$, to prove that $f$ is good, it will suffice
to show that each of the composite maps $A \rightarrow A[ a_{i}^{-1}] \xrightarrow{f_i} B[ a_{i}^{-1} ]$ is good. Using $(i)$ and $(ii)$,
we are reduced to showing that the maps $f_{i}: A[ a_{i}^{-1} ] \rightarrow B[ a_{i}^{-1}]$ are good. Replacing $f$ by $f_{i}$, we
may reduce to the case where $f$ is faithfully flat. Form a pushout square
$$ \xymatrix{ A \ar[r]^{f} \ar[d]^{f} & B \ar[d] \\
B \ar[r]^-{f'} & B \otimes_{A} B. }$$
By virtue of $(vi)$, we can replace $f$ by $f'$ and thereby reduce to the case where $B$ splits as a direct product $A \times B'$.
We then have $d(B') = d(B) - 1 < d(B)$, so our inductive hypothesis implies that the map $A \rightarrow B'$ is good. From this, we immediately deduce that
$f$ is also good.
\end{proof}

\newpage \section{The Riemann-Hilbert Functor}\label{section.RH}

\setcounter{subsection}{0}
\setcounter{theorem}{0}

Let $R$ be a commutative $\F_p$-algebra. In \S \ref{section.overview}, we defined the solution functor
$$ \Sol: \Mod_{R}^{\Frob} \rightarrow \Shv_{\mathet}( \Spec(R), \F_p )$$
and asserted that it becomes an equivalence of categories when restricted to the category
$\Mod_{R}^{\alg} \subseteq \Mod_{R}^{\Frob}$ of algebraic Frobenius modules (Theorem \ref{theoX50}). 
We will prove this by defining a functor $\RH: \Shv_{\mathet}( \Spec(R), \F_p) \rightarrow \Mod_{R}^{\alg}$,
which we will refer to as the {\it Riemann-Hilbert functor}, and then showing that it is an inverse to the solution functor. 
Our goal in this section is to construct the Riemann-Hilbert functor and to establish its basic properties. Our principal results can be summarized as follows:

\begin{itemize}
\item[$(a)$] When restricted to perfect Frobenius modules, the solution functor $\Sol: \Mod_{R}^{\perf} \rightarrow \Shv_{\mathet}( \Spec(R), \F_p )$ admits a
left adjoint (Theorem \ref{theorem.RHexist}). We will take this left adjoint as a definition of the Riemann-Hilbert functor.

\item[$(b)$] The Riemann-Hilbert functor $\RH: \Shv_{\mathet}( \Spec(R), \F_p) \rightarrow \Mod_{R}^{\perf}$ depends functorially on $R$, in the sense
that it is compatible with pullback (Proposition \ref{prop70}). We also show that it compatible with compactly supported direct images along {\etale} morphisms
(Proposition \ref{prop68}), and direct images along morphisms which are finite and of finite presentation (Theorem \ref{theo76}).

\item[$(c)$] The Riemann-Hilbert functor $\RH: \Shv_{\mathet}( \Spec(R), \F_p) \rightarrow \Mod_{R}^{\perf}$ carries constructible {\etale} sheaves on
$\Spec(R)$ to holonomic Frobenius modules over $R$ (Theorem \ref{theo77}). 

\item[$(d)$] The Riemann-Hilbert functor $\RH: \Shv_{\mathet}( \Spec(R), \F_p ) \rightarrow \Mod_{R}^{\perf}$ is exact
(Proposition \ref{proposition.corX33}).
\end{itemize}

In \S \ref{section.mainproof}, we will apply these results to show that $\RH$ is an inverse of the solution functor (once we restrict our attention to
algebraic Frobenius modules), and thereby obtain a proof of Theorem \ref{maintheoXXX}.

\subsection{Existence of the Riemann-Hilbert Functor}

Our starting point is the following:

\begin{theorem}\label{theorem.RHexist}
Let $R$ be a commutative $\F_p$-algebra. Then the solution functor
$\Sol: \Mod_{R}^{\perf} \rightarrow \Shv_{\mathet}( \Spec(R), \F_p )$ admits a left
adjoint $$\RH: \Shv_{\mathet}( \Spec(R), \F_p) \rightarrow \Mod_{R}^{\perf}.$$
Moreover, for every $p$-torsion {\etale} sheaf $\sheafF \in \Shv_{\mathet}( \Spec(R), \F_p )$, the
Frobenius module $\RH( \sheafF )$ is algebraic.
\end{theorem}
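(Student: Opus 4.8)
The plan is to construct the left adjoint $\RH$ by first handling a generating class of {\etale} sheaves — the "representable" sheaves $h_A = \underline{\F_p}_A$ that are push-forwards of constant sheaves along {\etale} maps $A' \to A$ — and then extending by colimits. Concretely, every object of $\Shv_{\mathet}(\Spec(R),\F_p)$ is a colimit of sheaves of the form $j_{!}\underline{\F_p}$, where $j: \Spec(A) \to \Spec(R)$ ranges over {\etale} $R$-algebras $A$ and $j_{!}$ is the compactly supported direct image on {\etale} sheaves discussed at the start of \S\ref{section.compactimage}. So it suffices to define $\RH$ on these generators compatibly, check that the prescribed formula corepresents the correct functor, and then invoke the fact that $\Mod_{R}^{\perf}$ is cocomplete (Remark \ref{remark.perfectextension}) together with the universal property of presheaf/sheaf categories to get a colimit-preserving extension; a colimit-preserving functor out of $\Shv_{\mathet}(\Spec(R),\F_p)$ automatically has a right adjoint, and one identifies that right adjoint with $\Sol$.

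First I would pin down $\RH$ on generators. For $j: \Spec(A) \hookrightarrow \Spec(R)$ {\etale}, the natural guess is $\RH(j_{!}\underline{\F_p}) = j_{!}^{\Frob}(A^{\perfection})$, i.e. the compactly supported direct image (in the Frobenius-module sense, Theorem \ref{theoX15}) of the unit Frobenius module $A^{\perfection} \in \Mod_{A}^{\alg}$ along $j$. The key adjunction computation is then: for $M \in \Mod_{R}^{\perf}$,
$$\Hom_{R[F]}(\RH(j_{!}\underline{\F_p}), M) \simeq \Hom_{A[F]}(A^{\perfection}, j^{\diamond} M) \simeq \Sol(M)(A) \simeq \Hom_{\underline{\F_p}}(j_{!}\underline{\F_p}, \Sol(M)),$$
where the first isomorphism is the defining property of $j_{!}$ from Definition \ref{def51}/Theorem \ref{theoX15}, the second identifies $\varphi$-fixed points of $j^{\diamond} M = (A \otimes_R M)^{\perfection}$ with $\Sol(M)(A)$ — using that fixed points of $\varphi$ are unchanged by perfection, via the locally-nilpotent-kernel argument already used in Proposition \ref{proposition.solperfect} — and the third is the $(j_{!}, j^{*})$ adjunction on {\etale} sheaves together with the Yoneda-style description $\Sol(M)(A) = \Hom_{\underline{\F_p}}(\underline{\F_p}_A, \Sol(M))$. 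One then checks these identifications are natural in $j$, so that $\RH$ is well-defined on the full subcategory of generators and their morphisms.

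Next I would extend $\RH$ to all of $\Shv_{\mathet}(\Spec(R),\F_p)$ by left Kan extension along the inclusion of generators, which exists because $\Mod_{R}^{\perf}$ has all colimits; the displayed adjunction on generators then upgrades, formally, to an adjunction $\RH \dashv \Sol$ on the nose (using that $\Sol$ preserves limits — it is left exact and, being a kernel construction on quasi-coherent sheaves, commutes with all limits — so its being a right adjoint is consistent). The remaining assertion, that $\RH(\sheafF)$ is always algebraic, I would get as follows: $\RH(j_{!}\underline{\F_p}) = j_{!}^{\Frob}(A^{\perfection})$ is algebraic because $j_{!}$ carries $\Mod_A^{\alg}$ to $\Mod_R^{\alg}$ (Theorem \ref{theoX15}) and $A^{\perfection}$ is algebraic (indeed holonomic); then since $\Mod_{R}^{\alg} \subseteq \Mod_{R}^{\perf}$ is closed under colimits — it is a localizing subcategory by Proposition \ref{propX53}, and by Theorem \ref{theoX54} it equals $\Ind(\Mod_R^{\hol})$, which is closed under all colimits in $\Mod_R^{\perf}$ — the left Kan extension $\RH(\sheafF)$, being a colimit of objects of the form $\RH(j_{!}\underline{\F_p})$, stays in $\Mod_{R}^{\alg}$.

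The main obstacle I expect is the bookkeeping around the generating class and the naturality of the identification $\RH(j_{!}\underline{\F_p}) \simeq j_{!}^{\Frob}(A^{\perfection})$: one must be careful that {\etale} sheaves are genuinely generated under colimits by the $j_{!}\underline{\F_p}$ (this is where the sheaf axioms in Definition \ref{definition.etalesheaf} and the concrete description of $j^{*}$ for {\etale} $j$ enter), and that the functoriality in $j$ on the Frobenius-module side — composition of compactly supported direct images (Lemma \ref{lemX16}) and base change (Lemma \ref{lemX19}) — matches the functoriality of $j_{!}$ on {\etale} sheaves. Once the generators and their morphisms are matched up correctly, the passage to colimits and the production of the adjoint are formal.
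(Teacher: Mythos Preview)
Your proposal is correct and follows essentially the same route as the paper. The paper organizes the argument via the notion of a \emph{Riemann-Hilbert associate} (a corepresenting object for $M \mapsto \Hom_{\underline{\F_p}}(\sheafF,\Sol(M))$), proving in turn that $R^{\perfection}$ is an associate of $\underline{\F_p}$ (Proposition~\ref{proposition.ex69}), that associates are stable under colimits (Proposition~\ref{proposition.easy1}), and that $f_!$ on the Frobenius side carries associates to associates (Proposition~\ref{proposition.easy2}); it then presents an arbitrary $\sheafF$ as a cokernel of a map between direct sums of $j_!\underline{\F_p}$'s. Your chain of isomorphisms for $\Hom_{R[F]}(j_! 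A^{\perfection}, M)$ is exactly the content of Propositions~\ref{proposition.ex69} and~\ref{proposition.easy2} combined, and your left-Kan-extension step is the abstract version of Proposition~\ref{proposition.easy1} plus the explicit presentation; the paper's framing via corepresentability has the mild advantage that it sidesteps the naturality bookkeeping you flag as the main obstacle, since a corepresenting object is automatically functorial once it exists.
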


\begin{warning}
In the statement of Theorem \ref{theorem.RHexist}, it is important to restriction the solution functor $\Sol$
to the category of {\em perfect} Frobenius modules. The defining property of the Riemann-Hilbert functor $\RH$
is that we have bijections
$$ \Hom_{ \underline{\F_p} }( \sheafF, \Sol(M) ) \simeq \Hom_{ R[F] }( \RH(\sheafF), M )$$
for $\sheafF \in \Shv_{\mathet}( \Spec(R), \F_p )$ and $M$ a perfect Frobenius module over $R$. One does not generally
have such a bijection when $M$ is not perfect.
\end{warning}

To prove Theorem \ref{theorem.RHexist}, it will be convenient to introduce a temporary bit of terminology.
Let $R$ be a commutative $\F_p$-algebra, and suppose we are given a $p$-torsion {\etale} sheaf $\sheafF$.
A {\it Riemann-Hilbert associate} of $\sheafF$ is an object of $\Mod_{R}^{\perf}$ which
corepresents the functor
$$ \Mod_{R}^{\perf} \rightarrow \Set \quad \quad  M \mapsto \Hom_{ \underline{\F_p} }( \sheafF, \Sol(M) ).$$
If $\sheafF$ is a perfect Frobenius module over $R$ which admits a Riemann-Hilbert associate, we will denote that associate by $\RH(\sheafF)$; note that it
is well-defined up to unique isomorphism and depends functorially on $\sheafF$. Theorem \ref{theorem.RHexist} can then be reformulated as the
statement that every {\etale} sheaf $\sheafF \in \Shv_{\mathet}( \Spec(R), \F_p )$ admits an algebraic Riemann-Hilbert associate.
The proof of this assertion is based on three simple observations:

\begin{proposition}\label{proposition.ex69}
Let $R$ be a commutative $\F_p$-algebra. Then the perfection $R^{\perf}$ is a Riemann-Hilbert associate
of the constant sheaf $\underline{\F_p}$.
\end{proposition}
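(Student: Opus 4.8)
The plan is to produce, for every perfect Frobenius module $M$ over $R$, a natural bijection
$$\Hom_{R[F]}( R^{\perfection}, M ) \simeq \Hom_{ \underline{\F_p} }( \underline{\F_p}, \Sol(M) )$$
by chaining together three elementary identifications, each manifestly natural in $M$.

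First I would invoke the standard fact that the constant sheaf $\underline{\F_p}$ corepresents the global sections functor on $\Shv_{\mathet}(\Spec(R), \F_p)$: for any $p$-torsion {\etale} sheaf $\sheafG$ on $\Spec(R)$, evaluation at the tautological section $1 \in \underline{\F_p}(R)$ gives a bijection $\Hom_{\underline{\F_p}}(\underline{\F_p}, \sheafG) \simeq \sheafG(R)$ (this is immediate from the description of $\underline{\F_p}$ as the sheafification of the constant presheaf with value $\F_p$, so that maps out of it are computed before sheafification; alternatively one checks it directly on connected {\etale} $R$-algebras). Applying this with $\sheafG = \Sol(M)$ and unwinding Construction \ref{construction.solsheaf} in the case $A = R$ yields $\Hom_{\underline{\F_p}}(\underline{\F_p}, \Sol(M)) \simeq \Sol(M)(R) = \{ x \in M : \varphi_M(x) = x \}$.

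Next I would identify this fixed-point set with $\Hom_{R[F]}(R, M)$, where $R$ is regarded as a Frobenius module over itself as in Example \ref{exX3}. Indeed, a morphism of Frobenius modules $f : R \to M$ is determined by $x = f(1)$; the formula $f(\lambda) = \lambda x$ is automatically $R$-linear, and compatibility with the Frobenius maps says precisely $\varphi_M(x) = f(\varphi_R(1)) = f(1) = x$, while conversely every $\varphi_M$-fixed $x$ arises this way. Finally, since $M$ is perfect, Proposition \ref{proposition.perfection} — which exhibits $M \mapsto M^{\perfection}$ as left adjoint to the inclusion $\Mod_R^{\perf} \hookrightarrow \Mod_R^{\Frob}$ — supplies $\Hom_{R[F]}(R, M) = \Hom_{\Mod_R^{\Frob}}(R, M) \simeq \Hom_{\Mod_R^{\perf}}(R^{\perfection}, M) = \Hom_{R[F]}(R^{\perfection}, M)$. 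Composing the three bijections gives the asserted identification, natural in $M$.

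There is no genuine obstacle here; the only point demanding a little care is confirming that the composite bijection is the one induced by an honest morphism $\underline{\F_p} \to \Sol(R^{\perfection})$, so that $R^{\perfection}$ corepresents the functor $M \mapsto \Hom_{\underline{\F_p}}(\underline{\F_p}, \Sol(M))$ in the precise sense required of a Riemann-Hilbert associate, rather than merely having the correct $\Hom$-sets up to abstract isomorphism. This morphism is the composite $\underline{\F_p} \simeq \Sol(R) \to \Sol(R^{\perfection})$ obtained by applying $\Sol$ to the unit $R \to R^{\perfection}$ (using that $1 \in R^{\perfection}$ is $\varphi$-fixed), and the Yoneda lemma reconciles the two formulations automatically.
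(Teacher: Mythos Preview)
Your proof is correct and follows essentially the same approach as the paper: the paper's proof consists of exactly the same chain of four bijections $\Hom_{R[F]}(R^{\perfection}, M) \simeq \Hom_{R[F]}(R, M) \simeq \{x \in M : \varphi_M(x) = x\} \simeq \Sol(M)(R) \simeq \Hom_{\underline{\F_p}}(\underline{\F_p}, \Sol(M))$, just written in the opposite order and with less commentary. Your final paragraph on naturality is additional care that the paper omits.
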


\begin{proof}
For every perfect Frobenius module $M$ over $R$, we have canonical bijections
\begin{eqnarray*}
\Hom_{R[F]}( R^{\perf}, M) & \simeq & \Hom_{ R[F] }( R, M) \\
& \simeq & \{ x \in M: \varphi_M(x) = x \} \\
& \simeq & \Sol(M)(R) \\
& \simeq & \Hom_{ \underline{\F_p} }( \underline{\F_p}, \Sol(M) ). \end{eqnarray*}
\end{proof}

\begin{proposition}\label{proposition.easy1}
Let $R$ be a commutative $\F_p$-algebra, and suppose we are given some diagram of
{\etale} sheaves $\{ \sheafF_{\alpha} \}$ having a colimit $\sheafF = \varinjlim \sheafF_{\alpha}$ in the 
category $\Shv_{\mathet}( \Spec(R), \F_p )$. Suppose that each $\sheafF_{\alpha}$ admits a Riemann-Hilbert
associate $\RH( \sheafF_{\alpha} )$. Then $\sheafF$ admits a Riemann-Hilbert associate,
given by $\varinjlim \RH( \sheafF_{\alpha} )$ (where the colimit is formed in the category $\Mod_{R}^{\perf}$). 
\end{proposition}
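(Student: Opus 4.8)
The plan is to check directly, via the relevant universal properties, that the object $X := \varinjlim_\alpha \RH(\sheafF_\alpha) \in \Mod_R^\perf$ corepresents the functor $M \mapsto \Hom_{\underline{\F_p}}(\sheafF, \Sol(M))$; the argument is entirely formal. First I would record that $\Mod_R^\perf$ admits all small colimits: by Remark \ref{remark.perfectextension} it is closed under colimits inside $\Mod_R^\Frob$, and the latter is cocomplete (being the category of left modules over the ring $R[F]$ of Notation \ref{not2}). In particular the colimit $X$ appearing in the statement is well-defined and may be computed in $\Mod_R^\Frob$.

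Next, for an arbitrary perfect Frobenius module $M$ over $R$, I would assemble the chain of natural bijections
\begin{align*}
\Hom_{R[F]}(X, M)
&\simeq \varprojlim_\alpha \Hom_{R[F]}\bigl(\RH(\sheafF_\alpha), M\bigr) \\
&\simeq \varprojlim_\alpha \Hom_{\underline{\F_p}}\bigl(\sheafF_\alpha, \Sol(M)\bigr) \\
&\simeq \Hom_{\underline{\F_p}}\bigl(\varinjlim_\alpha \sheafF_\alpha, \Sol(M)\bigr) \\
&= \Hom_{\underline{\F_p}}\bigl(\sheafF, \Sol(M)\bigr).
\end{align*}
The first bijection is the universal property of the colimit defining $X$ (the $\Hom$-set in $\Mod_R^\perf$ agreeing with that in $\Mod_R^\Frob$, as $\Mod_R^\perf$ is a full subcategory). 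The second is, term by term, the defining property of the Riemann-Hilbert associate $\RH(\sheafF_\alpha)$, applied to the perfect module $M$. The third is the universal property of $\sheafF = \varinjlim_\alpha \sheafF_\alpha$ in $\Shv_{\mathet}(\Spec(R), \F_p)$, applied to the {\etale} sheaf $\Sol(M)$.

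Finally, I would observe that every step in the display is natural in $M$, so the composite is an isomorphism of functors $\Mod_R^\perf \to \Set$; by the Yoneda lemma, $X$ corepresents $M \mapsto \Hom_{\underline{\F_p}}(\sheafF, \Sol(M))$ and is therefore a Riemann-Hilbert associate of $\sheafF$. There is no genuine obstacle here; the one point that deserves a word of care is that the colimit must be formed in $\Mod_R^\perf$ (which, by Remark \ref{remark.perfectextension}, coincides with the colimit formed in $\Mod_R^\Frob$), since it is precisely the perfect Frobenius modules $M$ that test the corepresentability condition in the definition of a Riemann-Hilbert associate.
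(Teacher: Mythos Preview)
Your proof is correct and follows exactly the same approach as the paper: the paper's proof is precisely the three-line chain of bijections you wrote down, with less commentary on why each step holds. Your additional remarks about cocompleteness of $\Mod_R^{\perf}$ and naturality in $M$ are reasonable elaborations but not needed for the paper's terse argument.
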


\begin{proof}
For any perfect Frobenius module $M$, we have canonical bijections
\begin{eqnarray*}
\Hom_{ R[F] }( \varinjlim_{\alpha} \RH( \sheafF_{\alpha} ), M) & \simeq &  \varprojlim_{\alpha} \Hom_{ R[F] }( \RH( \sheafF_{\alpha} ), M) \\
& \simeq &  \varprojlim_{\alpha} \Hom_{ \underline{\F_p} }( \sheafF_{\alpha}, \Sol(M) ) \\
& \simeq & \Hom_{ \underline{\F_p} }( \sheafF, \Sol(M) ). 
\end{eqnarray*}
\end{proof}

\begin{proposition}\label{proposition.easy2}
Let $f: A \rightarrow B$ be an {\etale} morphism of commutative $\F_p$-algebras and let
$\sheafF$ be a $p$-torsion {\etale} sheaf on $\Spec(B)$. Suppose that
$\sheafF$ admits a Riemann-Hilbert associate $\RH( \sheafF )$ which is algebraic.
Then the compactly supported direct image $f_{!} \RH(\sheafF)$ is a Riemann-Hilbert associate of
$f_{!} \sheafF \in \Shv_{\mathet}( \Spec(A), \F_p)$.
\end{proposition}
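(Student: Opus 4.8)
The plan is to check the defining universal property of a Riemann-Hilbert associate directly, by chaining three adjunctions together with one genuine computation. Fix a perfect Frobenius module $M$ over $A$; the goal is a bijection $\Hom_{A[F]}(f_!\RH(\sheafF), M)\simeq\Hom_{\underline{\F_p}}(f_!\sheafF,\Sol(M))$, natural in $M\in\Mod_A^{\perf}$. Since $\RH(\sheafF)\in\Mod_B^{\alg}$ by hypothesis, Theorem \ref{theoX15} guarantees that $f_!\RH(\sheafF)$ is defined (and algebraic), and the defining property of the compactly supported direct image (Definition \ref{definition.compactlysupported}, applied with $N=M$) yields a natural bijection $\Hom_{A[F]}(f_!\RH(\sheafF), M)\simeq\Hom_{B[F]}(\RH(\sheafF), f^{\diamond} M)$. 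Here $f^{\diamond}M=(f^{\ast}_{\Frob}M)^{\perfection}$ is perfect over $B$, so the Riemann-Hilbert-associate property of $\RH(\sheafF)$ over $B$ applies and gives $\Hom_{B[F]}(\RH(\sheafF), f^{\diamond}M)\simeq\Hom_{\underline{\F_p}}(\sheafF,\Sol(f^{\diamond}M))$. Finally, the $(f_!,f^{\ast})$-adjunction for étale sheaves gives $\Hom_{\underline{\F_p}}(\sheafF, f^{\ast}\Sol(M))\simeq\Hom_{\underline{\F_p}}(f_!\sheafF,\Sol(M))$.

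The one non-formal ingredient, and the step I expect to be the main obstacle, is the identification $\Sol(f^{\diamond}M)\simeq f^{\ast}\Sol(M)$ of étale sheaves on $\Spec(B)$. I would argue as follows. Because $f$ is étale, the pullback $f^{\ast}$ on étale sheaves is given concretely by restriction along $\CAlg_B^{\mathet}\hookrightarrow\CAlg_A^{\mathet}$; in particular it is exact, it commutes with the formation of kernels, it carries the quasi-coherent sheaf $\widetilde{M}$ to the quasi-coherent sheaf associated to $B\otimes_A M$, and it carries $\widetilde{\varphi}_M$ to $\widetilde{\varphi}_{f^{\ast}_{\Frob}M}$. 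Writing $\Sol(-)$ as the kernel of $\id-\widetilde{\varphi}_{(-)}$ as in the proof of Proposition \ref{propX74weak}, this gives $f^{\ast}\Sol(M)\simeq\Sol(f^{\ast}_{\Frob}M)$. Then I would invoke Proposition \ref{proposition.solperfect}: the canonical map $f^{\ast}_{\Frob}M\to(f^{\ast}_{\Frob}M)^{\perfection}=f^{\diamond}M$ becomes an isomorphism after perfection, hence induces an isomorphism $\Sol(f^{\ast}_{\Frob}M)\xrightarrow{\sim}\Sol(f^{\diamond}M)$.

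Assembling the four isomorphisms produces a chain of bijections, natural in $M\in\Mod_A^{\perf}$, exhibiting $f_!\RH(\sheafF)$ as a corepresenting object for $M\mapsto\Hom_{\underline{\F_p}}(f_!\sheafF,\Sol(M))$ on $\Mod_A^{\perf}$; this is exactly the assertion that $f_!\RH(\sheafF)$ is a Riemann-Hilbert associate of $f_!\sheafF$. The remaining points are routine: one must check that each isomorphism in the chain is natural in $M$ and that they compose compatibly (immediate, since each is an instance of an adjunction unit/counit or of a functorial construction), and one should confirm that the hypothesis ``$\RH(\sheafF)$ algebraic'' is used precisely where claimed, namely to apply Theorem \ref{theoX15} for the existence of $f_!\RH(\sheafF)$ and the compactly-supported-direct-image bijection against the perfect (not necessarily algebraic) module $M$.
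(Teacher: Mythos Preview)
Your proof is correct and follows exactly the same chain of four bijections as the paper's proof. One minor simplification: your detour through Proposition~\ref{proposition.solperfect} in identifying $\Sol(f^{\diamond}M)\simeq f^{\ast}\Sol(M)$ is unnecessary, since for \'etale $f$ and perfect $M$ the module $f^{\ast}_{\Frob}M$ is already perfect (Corollary~\ref{corX14}), so $f^{\ast}_{\Frob}M=f^{\diamond}M$ on the nose and the identification is immediate from the definitions, as the paper asserts.
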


\begin{proof}
Let $M$ be a perfect Frobenius module over $A$. It follows immediately from the definitions that
the solution sheaf $\Sol( f^{\diamond} M) \in \Shv_{\mathet}( \Spec(B), \F_p )$ can be identified
with the pullback $f^{\ast} \Sol(M)$. We therefore obtain canonical bijections
\begin{eqnarray*}
\Hom_{ A[F] }( f_{!} \RH( \sheafF), M ) & \simeq & \Hom_{ B[F] }( \RH( \sheafF), f^{\diamond} M ) \\
& \simeq & \Hom_{ \underline{\F_p} }( \sheafF, \Sol( f^{\diamond} M) ) \\
& \simeq &  \Hom_{ \underline{\F_p} }( \sheafF, f^{\ast} \Sol(M) ) \\
& \simeq &  \Hom_{ \underline{\F_p} }( f_{!} \sheafF, \Sol(M) ).
\end{eqnarray*}
\end{proof}

\begin{proof}[Proof of Theorem \ref{theorem.RHexist}]
Let $R$ be a commutative $\F_p$-algebra and let $\sheafF$ be a $p$-torsion {\etale} sheaf on $\Spec(R)$;
we wish to show that $\sheafF$ admits an algebraic Riemann-Hilbert associate.
For every {\etale} ring homomorphism $j: R \rightarrow R'$ and element $\eta \in \sheafF(R')$, we can identify
$\eta$ with a map of {\etale} sheaves $u_{\eta}: j_{!} \underline{\F_p} \rightarrow \sheafF$.
Amalgamating these, we obtain an epimorphism $u: \sheafF' \rightarrow \sheafF$ in the category 
$\Shv_{\mathet}( \Spec(R), \F_p)$, where $\sheafF'$ is a direct sum of {\etale} sheaves
of the form $j_{!} \underline{\F_p}$ (where $j$ varies over {\etale} morphisms $R \rightarrow R'$).
Repeating this argument for $\ker(u)$, we can construct an exact sequence
$$ \sheafF'' \xrightarrow{v} \sheafF' \xrightarrow{u} \sheafF \rightarrow 0,$$
where $\sheafF''$ is also a direct sum of sheaves of the form $j_{!} \underline{\F_p}$. 
By virtue of Proposition \ref{proposition.easy1}, it will suffice to show that each of the sheaves
$j_{!} \underline{\F_p}$ admits an algebraic Riemann-Hilbert associate. Using Proposition \ref{proposition.easy2}, we are reduced
to showing that if $R'$ is an {\etale} $R$-algebra, then the constant sheaf $\underline{\F_p} \in \Shv_{\mathet}( \Spec(R'), \F_p )$
admits an algebraic Riemann-Hilbert associate. This follows from Proposition \ref{proposition.ex69}.
\end{proof}

\subsection{Functoriality}

We now study the behavior of the Riemann-Hilbert functor
$$ \RH: \Shv_{\mathet}( \Spec(R), \F_p) \rightarrow \Mod_{R}^{\perf}$$
as the commutative $\F_p$-algebra $R$ varies. We begin with a simple observation:

\begin{proposition}\label{prop64}
Let $f: A \rightarrow B$ be a homomorphism of commutative $\F_p$-algebras. Then the diagram of categories 
$$ \xymatrix{ \Mod_{B}^{\perf} \ar[r]^{ f_{\ast} } \ar[d]^{ \Sol } & \Mod_{A}^{\perf} \ar[d]^{ \Sol} \\
\Shv_{\mathet}( \Spec(B), \F_p) \ar[r]^{ f_{\ast} }  & \Shv_{\mathet}( \Spec(A), \F_p) }$$
commutes up to canonical isomorphism.
\end{proposition}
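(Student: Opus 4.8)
The claim is that for $f\colon A\to B$, restriction of scalars commutes with the solution functor, i.e. there is a canonical isomorphism $\Sol_A\circ f_\ast \simeq f_\ast\circ \Sol_B$ of functors $\Mod_B^{\perf}\to \Shv_{\mathet}(\Spec(A),\F_p)$. The plan is to unwind both sides pointwise, evaluating on an arbitrary {\etale} $A$-algebra $A'$, and to exhibit a natural bijection which is visibly functorial in $M$ and in $A'$.

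First I would recall the two relevant descriptions. On the {\etale} side, the pushforward is given concretely by $(f_\ast\mathscr G)(A') = \mathscr G(A'\otimes_A B)$. On the Frobenius-module side, $f_\ast M$ is just $M$ regarded as an $A$-module (with the same $\varphi_M$), so by the definition of $\Sol$ (Construction \ref{construction.solsheaf}) we have
$$ \Sol_A(f_\ast M)(A') = \{\, x\in A'\otimes_A M : \varphi_{A'\otimes_A M}(x) = x \,\}. $$
On the other side,
$$ (f_\ast \Sol_B(M))(A') = \Sol_B(M)(A'\otimes_A B) = \{\, y\in (A'\otimes_A B)\otimes_B M : \varphi(y) = y \,\}. $$
The key point is then the canonical isomorphism of $A'$-modules $A'\otimes_A M \simeq (A'\otimes_A B)\otimes_B M$, which is compatible with the Frobenius-semilinear endomorphisms on both sides (both send $a'\otimes x\mapsto a'^{\,p}\otimes\varphi_M(x)$ under the identification, by the formula in Construction \ref{construction.extension}). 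Passing to $\varphi$-fixed points yields the desired bijection, and naturality in $M$ and in $A'\in\CAlg_A^{\mathet}$ is immediate since every map in sight is built from the canonical associativity isomorphism of the tensor product.

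Finally I would package this: the identification $A'\otimes_A M \simeq (A'\otimes_A B)\otimes_B M$ is an isomorphism of Frobenius modules over $A'$ (here using that $A'\otimes_A B$ is {\etale} over $B$, so extension of scalars preserves perfectness by Corollary \ref{corX14}, though for the bare statement about $\Sol$ we do not even need perfectness). Taking kernels of $\id-\varphi$ — equivalently, invoking the description of $\Sol$ as the kernel of $\id-\widetilde\varphi_M$ from Proposition \ref{propX74weak} and the fact that $f_\ast$ on sheaves is exact — gives the isomorphism $\Sol_A\circ f_\ast\simeq f_\ast\circ\Sol_B$ as {\etale} sheaves, and its canonicity follows from the canonicity of the tensor-associativity isomorphism. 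There is no real obstacle here; the only thing to be careful about is checking the compatibility of the two Frobenius-semilinear structures under the associativity isomorphism, which is a direct computation on elementary tensors. The statement is genuinely a formal consequence of the definitions, and the proof is essentially the two displays above together with this compatibility check.
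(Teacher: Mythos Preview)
Your proposal is correct and follows essentially the same approach as the paper: both arguments evaluate the two functors on an arbitrary {\'e}tale $A$-algebra $A'$ and invoke the canonical isomorphism $A'\otimes_A M \simeq (A'\otimes_A B)\otimes_B M$ together with its compatibility with the Frobenius. Your write-up is slightly more discursive (the remarks about Corollary \ref{corX14} and exactness of $f_\ast$ are not needed), but the mathematical content is the same.
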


\begin{proof}
Let $M$ be a perfect Frobenius module over $B$, let $A'$ be an {\etale} $A$-algebra, and set
$B' = A' \otimes_{A} B$. We then have canonical
bijections
\begin{eqnarray*}
(f_{\ast} \Sol(M))(A') & \simeq & \Sol(M)(B') \\
& \simeq & \{ x \in B' \otimes_{B} M: \varphi_{B' \otimes_{B} M}(x) = x \} \\
& \simeq & \{ x \in A' \otimes_{A} M: \varphi_{A' \otimes_A M}(x) = x \} \\
& \simeq & \Sol( f_{\ast} M)(A').
\end{eqnarray*}
\end{proof}

\begin{proposition}\label{prop70}
Let $f: A \rightarrow B$ be an $\F_p$-algebra homomorphism. Then the diagram of categories
$$ \xymatrix{ \Shv_{\mathet}( \Spec(A), \F_p) \ar[r]^{ f^{\ast} } \ar[d]^{\RH} & \Shv_{\mathet}( \Spec(B), \F_p) \ar[d]^{\RH} \\
\Mod_{A}^{\perf} \ar[r]^{ f^{\diamond} } & \Mod_{B}^{\perf} }$$
commutes up to canonical isomorphism.
\end{proposition}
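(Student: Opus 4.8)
The plan is to deduce Proposition \ref{prop70} from Proposition \ref{prop64} by a formal argument about mates of adjunctions, so that no new computation with the Riemann--Hilbert functor is required. First I would record that all four functors appearing in the square are left adjoints: the functors $\RH$ over $A$ and over $B$ are left adjoint to the solution functors $\Sol$ over $A$ and over $B$ by Theorem \ref{theorem.RHexist}; the pullback $f^{\ast}$ on {\etale} sheaves is by construction left adjoint to the pushforward $f_{\ast}$; and the functor $f^{\diamond}\colon \Mod_{A}^{\perf}\to\Mod_{B}^{\perf}$ is left adjoint to restriction of scalars $f_{\ast}\colon \Mod_{B}^{\perf}\to\Mod_{A}^{\perf}$ by Proposition \ref{mallow}.

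Next I would compose these adjunctions. The composite $f^{\diamond}\circ\RH\colon \Shv_{\mathet}(\Spec(A),\F_p)\to\Mod_{B}^{\perf}$ is left adjoint to $\Sol\circ f_{\ast}\colon \Mod_{B}^{\perf}\to\Shv_{\mathet}(\Spec(A),\F_p)$, while the composite $\RH\circ f^{\ast}$ is left adjoint to $f_{\ast}\circ\Sol\colon \Mod_{B}^{\perf}\to\Shv_{\mathet}(\Spec(A),\F_p)$. Proposition \ref{prop64} supplies a canonical isomorphism $\Sol\circ f_{\ast}\simeq f_{\ast}\circ\Sol$ of functors $\Mod_{B}^{\perf}\to\Shv_{\mathet}(\Spec(A),\F_p)$. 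Since a functor has at most one left adjoint up to canonical isomorphism, this transports to a canonical natural isomorphism $f^{\diamond}\circ\RH\simeq\RH\circ f^{\ast}$, which is exactly the assertion of the proposition. Equivalently, the desired isomorphism is the mate of the isomorphism of Proposition \ref{prop64} with respect to the four adjunctions; one can write it out explicitly in terms of units and counits, but the explicit formula is not needed.

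I do not expect a genuine obstacle here: the argument is essentially bookkeeping with adjoint functors. The one point worth a sentence of care is that the comparison must be made inside $\Mod_{B}^{\perf}$: both $f^{\diamond}\RH(\sheafF)$ and $\RH(f^{\ast}\sheafF)$ are perfect Frobenius modules over $B$ (indeed algebraic, by Theorem \ref{theorem.RHexist} together with Corollary \ref{corX62}), so by Yoneda it suffices to observe that each corepresents the functor $M\mapsto \Hom_{\underline{\F_p}}(\sheafF,\Sol(f_{\ast}M))$ on $\Mod_{B}^{\perf}$; the first because $f^{\diamond}$ is left adjoint to $f_{\ast}$ and $\RH$ is left adjoint to $\Sol$, the second because $\RH$ is left adjoint to $\Sol$ over $B$, $f^{\ast}$ is left adjoint to $f_{\ast}$, and $\Sol\circ f_{\ast}\simeq f_{\ast}\circ\Sol$ by Proposition \ref{prop64}. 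This identification of corepresenting objects is canonical and functorial in $\sheafF$, completing the proof.
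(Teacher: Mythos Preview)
Your proposal is correct and takes essentially the same approach as the paper: the paper's entire proof is the single sentence ``This follows immediately from Proposition \ref{prop64} by passing to left adjoints,'' and you have simply unpacked that sentence in detail, invoking the adjunctions $(\RH,\Sol)$, $(f^{\ast},f_{\ast})$, and $(f^{\diamond},f_{\ast})$ together with the uniqueness of left adjoints.
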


\begin{proof}
This follows immediately from Proposition \ref{prop64} by passing to left adjoints.
\end{proof}

In the situation of Proposition \ref{prop70}, the vertical maps carry {\etale} sheaves to algebraic Frobenius modules,
so we also have a commutative diagram
$$ \xymatrix{ \Shv_{\mathet}( \Spec(A), \F_p) \ar[r]^{ f^{\ast} } \ar[d]^{\RH} & \Shv_{\mathet}( \Spec(B), \F_p) \ar[d]^{\RH} \\
\Mod_{A}^{\alg} \ar[r]^{ f^{\diamond} } & \Mod_{B}^{\alg}. }$$
In the case where $f: A \rightarrow B$ is {\etale}, the horizontal maps in this diagram admit left adjoints (Theorem \ref{theoX15}).
We therefore obtain a natural transformation $f_{!} \circ \RH \rightarrow \RH \circ f_{!}$
in the category of functors from $\Shv_{\mathet}( \Spec(B), \F_p)$ to $\Mod_{A}^{\alg}$. 

\begin{proposition}\label{prop68}
Let $f: A \rightarrow B$ be an {\etale} morphism of $\F_p$-algebras.
Then the Beck-Chevalley transformation $f_{!} \circ \RH \rightarrow \RH \circ f_{!}$ described above
is an isomorphism. In other words, the diagram of categories
$$ \xymatrix{ \Shv_{\mathet}( \Spec(A), \F_p) \ar[d]^{\RH} & \Shv_{\mathet}( \Spec(B), \F_p) \ar[d]^{\RH} \ar[l]_{f_{!}} \\
\Mod_{A}^{\alg}  & \Mod_{B}^{\alg} \ar[l]_{f_{!}} }$$
commutes up to canonical isomorphism.
\end{proposition}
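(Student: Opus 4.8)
The plan is to deduce Proposition \ref{prop68} from Proposition \ref{prop70} by a formal adjunction argument, exploiting the fact that every $p$-torsion {\'e}tale sheaf on $\Spec(B)$ can be written as a colimit of sheaves of the form $j_!\underline{\F_p}$ and that all the functors in sight are colimit-preserving. The Beck--Chevalley map $\theta_{\sheafF}: f_! \RH(\sheafF) \to \RH(f_! \sheafF)$ is by construction the mate of the commuting square of right adjoints appearing in Proposition \ref{prop70} (the square with $f^\ast$ on top and $f^\diamond$ on bottom), and one wants to check it is an isomorphism. Since $\RH$ is a left adjoint it preserves colimits (Proposition \ref{proposition.easy1}), $f_!$ on {\'e}tale sheaves preserves colimits (it is a left adjoint to $f^\ast$), and $f_!$ on algebraic Frobenius modules preserves colimits (Theorem \ref{theoX15}, being left adjoint to $f^\diamond$). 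Therefore both functors $\sheafF \mapsto f_!\RH(\sheafF)$ and $\sheafF \mapsto \RH(f_!\sheafF)$ are colimit-preserving, and $\theta$ is a natural transformation between them; to check it is an isomorphism it suffices to check on a family of generators.

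The first step is to reduce to generators: every object $\sheafF \in \Shv_{\mathet}(\Spec(B),\F_p)$ admits a presentation $\sheafF'' \xrightarrow{} \sheafF' \to \sheafF \to 0$ with $\sheafF', \sheafF''$ direct sums of sheaves of the form $j_!\underline{\F_p}$ for {\'e}tale maps $j: B \to B'$, exactly as in the proof of Theorem \ref{theorem.RHexist}. Since $\theta$ is compatible with colimits it suffices to verify that $\theta_{j_!\underline{\F_p}}$ is an isomorphism for each such $j$. The second step handles this case. Write the {\'e}tale composite $A \xrightarrow{f} B \xrightarrow{j} B'$, so that $f_!\,(j_!\underline{\F_p}) \simeq (f \circ j)_!\underline{\F_p}$ on the sheaf side, while on the module side Proposition \ref{prop64}/\ref{prop70} give $\RH(j_!\underline{\F_p}) \simeq j_!\RH(\underline{\F_p})$ (using the $B$-version of the functoriality statement, i.e.\ apply Proposition \ref{prop68} with the base changed — but to avoid circularity, I would instead invoke Proposition \ref{proposition.easy2} directly, which already asserts that $j_!\RH(\underline{\F_p})$ is a Riemann--Hilbert associate of $j_!\underline{\F_p}$). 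Then $f_!\RH(j_!\underline{\F_p}) \simeq f_!\,j_!\RH(\underline{\F_p}) \simeq (f\circ j)_!\RH(\underline{\F_p})$, where the last identification uses Lemma \ref{lemX16}. On the other hand, applying Proposition \ref{proposition.easy2} to the {\'e}tale map $f \circ j$ shows $(f\circ j)_!\RH(\underline{\F_p})$ is a Riemann--Hilbert associate of $(f\circ j)_!\underline{\F_p} \simeq f_! j_! \underline{\F_p}$, i.e.\ it is $\RH(f_! j_! \underline{\F_p}) = \RH(f_! (j_!\underline{\F_p}))$. One then checks that the composite of these canonical isomorphisms is indeed the Beck--Chevalley map $\theta_{j_!\underline{\F_p}}$; this is a diagram chase through the definitions of the mate and of the isomorphisms furnished by Propositions \ref{proposition.easy2} and Lemma \ref{lemX16}, all of which are given by the same universal properties.

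The main obstacle is the bookkeeping in the last paragraph: one must verify that the chain of ``obvious'' isomorphisms constructed via universal properties literally agrees with the mate $\theta$ of the square in Proposition \ref{prop70}, rather than merely being \emph{some} isomorphism between the same two objects. The cleanest way to organize this is to observe that both $\theta$ and the composite isomorphism are natural transformations $f_! \circ \RH \Rightarrow \RH \circ f_!$ which become, after applying $\Hom_{A[F]}(-, M)$ and using the adjunctions $(\RH, \Sol)$, $(f_!, f^\ast)$ on sheaves and $(f_!, f^\diamond)$ on modules, the identity transformation on $\Hom_{\underline{\F_p}}(f_!\sheafF, \Sol(M))$ — precisely because each individual isomorphism was constructed to induce the canonical identification on these Hom-sets (compare the displayed chains of bijections in the proofs of Propositions \ref{proposition.easy2} and \ref{prop64}). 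Since a natural transformation of left adjoints is determined by the induced transformation on corepresented functors, this forces $\theta$ to coincide with the constructed isomorphism, completing the proof. (Alternatively, one can avoid the generator reduction entirely and argue abstractly: $f^\ast$ and $f^\diamond$ both preserve all limits and filtered colimits, $\Sol$ is conservative on the relevant subcategory, and the square of right adjoints satisfies Beck--Chevalley because $f^\ast$ on {\'e}tale sheaves and $f^\diamond$ on perfect Frobenius modules are each given by an honest ``extension of scalars'' that is visibly compatible with the solution functor; but the generator argument above seems the most economical given the tools already assembled.)
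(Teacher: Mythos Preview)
Your proof is correct, but the reduction to generators is an unnecessary detour. The paper's proof is one line: ``This is a translation of Proposition \ref{proposition.easy2} (or, more precisely, of its proof).'' The point is that by the time Proposition \ref{prop68} is stated, Theorem \ref{theorem.RHexist} guarantees that $\RH(\sheafF)$ exists and is algebraic for \emph{every} $\sheafF \in \Shv_{\mathet}(\Spec(B),\F_p)$, so the chain of bijections displayed in the proof of Proposition \ref{proposition.easy2},
\[
\Hom_{A[F]}(f_!\RH(\sheafF), M) \simeq \Hom_{B[F]}(\RH(\sheafF), f^{\diamond}M) \simeq \Hom_{\underline{\F_p}}(\sheafF, f^{\ast}\Sol(M)) \simeq \Hom_{\underline{\F_p}}(f_!\sheafF, \Sol(M)),
\]
applies to arbitrary $\sheafF$, not just to the generators $j_!\underline{\F_p}$. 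Moreover, unwinding the definition of the Beck--Chevalley map (as the mate of the square in Proposition \ref{prop70}) shows that this chain of bijections is precisely the map on $\Hom$-sets induced by $\theta_{\sheafF}$; this is why the paper emphasizes ``more precisely, of its proof.'' You eventually arrive at the same observation in your final paragraph (``a natural transformation of left adjoints is determined by the induced transformation on corepresented functors''), which is the entire content of the argument---the generator reduction and the composition gymnastics with Lemma \ref{lemX16} can be dropped.
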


\begin{proof}
This is a translation of Proposition \ref{proposition.easy2} (or, more precisely, of its proof).
\end{proof}

\begin{remark}\label{old64}
We can also formulate Proposition \ref{prop68} in terms of solution sheaves: it follows from
the commutativity of the diagram
$$ \xymatrix{ \Mod_{B}^{\perf} \ar[r]^{ f_{\ast} } \ar[d]^{ \Sol } & \Mod_{A}^{\perf} \ar[d]^{ \Sol} \\
\Shv_{\mathet}( \Spec(B), \F_p) \ar[r]^{ f_{\ast} }  & \Shv_{\mathet}( \Spec(A), \F_p) }$$
when $f: A \rightarrow B$ is an {\etale} morphism of $\F_p$-algebras, which follows immediately
from the definitions (and was invoked in the proof of Proposition \ref{proposition.easy2}).
\end{remark}

\subsection{Constructible Sheaves}

Let $R$ be a commutative ring. Recall (see \cite[Tag 05BE]{Stacks}) that a sheaf $\sheafF \in \Shv_{\mathet}( \Spec(R), \F_p)$ is said to be {\it constructible} if there is finite stratification 
$$ \emptyset = X_0 \subseteq X_1 \subseteq \cdots \subseteq X_n = \Spec(R),$$
where each open stratum $X_m - X_{m-1}$ is a constructible subset of $\Spec(R)$ and admits an {\etale} surjection $U_m \rightarrow (X_m - X_{m-1} )$
such that the restriction $\sheafF|_{U_m}$ is isomorphic to a constant sheaf $\underline{V}$, for some finite-dimensional vector space $V$ over $\F_p$.
We let $\Shv_{\mathet}^{c}( \Spec(R), \F_p)$ denote the full subcategory of $\Shv_{\mathet}( \Spec(R), \F_p)$
spanned by the constructible sheaves. Note that $\Shv_{\mathet}^{c}( \Spec(R), \F_p)$ is closed under the formation of kernels and cokernels
in $\Shv_{\mathet}( \Spec(R), \F_p)$; in particular, it is an abelian category.

Theorem \ref{theorem.RHexist} admits the following refinement:

\begin{theorem}\label{theo77}
Let $R$ be a commutative $\F_p$-algebra. Then the Riemann-Hilbert functor
$\RH: \Shv_{\mathet}( \Spec(R), \F_p) \rightarrow \Mod_{R}^{\perf}$ carries constructible {\etale} sheaves to holonomic
Frobenius modules over $R$.
\end{theorem}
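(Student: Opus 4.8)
The plan is to follow the same three-step structure used in the proof of Theorem~\ref{theorem.RHexist}, but now keeping track of constructibility. Recall from the proof of Theorem~\ref{theorem.RHexist} that every $\sheafF \in \Shv_{\mathet}(\Spec(R), \F_p)$ admits a presentation $\sheafF'' \xrightarrow{v} \sheafF' \xrightarrow{u} \sheafF \to 0$ in which $\sheafF'$ and $\sheafF''$ are (possibly infinite) direct sums of sheaves of the form $j_! \underline{\F_p}$, $j \colon R \to R'$ {\etale}, and that $\RH$ carries this to an exact sequence $\RH(\sheafF'') \to \RH(\sheafF') \to \RH(\sheafF) \to 0$ with $\RH(j_!\underline{\F_p}) \simeq j_!(R'^{\perfection})$. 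Since $\Mod_R^{\hol}$ is closed under cokernels (Corollary~\ref{corollary.propX60}), it suffices to establish two things: first, that when $\sheafF$ is \emph{constructible} we may take the presentation to be \emph{finite} — i.e. $\sheafF'$ and $\sheafF''$ finite direct sums of sheaves $j_!\underline{\F_p}$ with $j$ {\etale} (or more generally with $j_!$ replaced by compactly supported pushforward along an {\etale} morphism to a basic open followed by restriction); and second, that $\RH(j_! \underline{\F_p})$ is holonomic for each such building block.

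The second point is immediate: by Proposition~\ref{proposition.ex69}, $\RH(\underline{\F_p}) \simeq R'^{\perfection} = (R')^{\perfection}$, which is holonomic over $R'$ (take $M_0 = R'$, finitely presented over itself); by Proposition~\ref{prop68}, $\RH(j_! \underline{\F_p}) \simeq j_!(\RH(\underline{\F_p})) = j_!((R')^{\perfection})$, and $j_!$ carries $\Mod_{R'}^{\hol}$ into $\Mod_R^{\hol}$ by Remark~\ref{remX22}. So the whole content lies in the first point: resolving a constructible sheaf by finitely many such building blocks, with the maps between them also landing in the constructible world so that the relevant kernels/cokernels stay constructible and finitely generated. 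For this I would argue by Noetherian approximation: using Proposition~\ref{propX52}-style descent (or \cite[Tag 09YQ]{Stacks}) one reduces to the case where $R$ is a finitely generated $\F_p$-algebra, hence Noetherian, and where $\sheafF$ is constructible. Then $\Shv_{\mathet}^c(\Spec(R), \F_p)$ is a Noetherian abelian category in which the objects $j_!\underline{\F_p}$ ($j$ {\etale} of finite type) are compact generators under finite colimits; in particular every constructible $\sheafF$ admits a surjection $\sheafF' \twoheadrightarrow \sheafF$ from a finite direct sum of such, with constructible kernel, and iterating once more yields the desired finite two-term presentation.

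I expect the main obstacle to be exactly this finiteness/Noetherian-approximation step: one must check that a constructible {\etale} sheaf on an arbitrary $\F_p$-algebra $R$ can be written, compatibly, as the pullback of a constructible sheaf on a Noetherian subring, \emph{together} with a finite resolution of the required shape, and that pullback ($f^\ast$, hence $f^\diamond$ on the Frobenius side via Proposition~\ref{prop70}) carries holonomic to holonomic (Proposition~\ref{propX51}) so that holonomicity over $R$ follows from holonomicity over the Noetherian subring. The algebraic manipulations (exactness of $\RH$ on the finite presentation, closure of $\Mod_R^{\hol}$ under cokernels) are then routine given the cited results. One subtlety to watch: the presentation produced in the proof of Theorem~\ref{theorem.RHexist} uses \emph{all} {\etale} $R$-algebras and all sections, so it is genuinely infinite; the work is to replace it, in the constructible case, by a finite one — this is where local structure theory of constructible sheaves (finite stratifications, Noetherian induction on $\Spec(R)$) enters, and it is the real heart of the argument rather than the formal adjunction bookkeeping.
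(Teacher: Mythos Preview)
Your overall strategy matches the paper's: present a constructible $\sheafF$ as the cokernel of a map between sheaves of the form $j_! \underline{\F_p}$, apply $\RH$ (right exact), and use that $\RH(j_!\underline{\F_p}) \simeq j_!(R'^{\perfection})$ is holonomic together with closure of $\Mod_R^{\hol}$ under cokernels. The second point and the formal bookkeeping are exactly as in the paper.

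Where you diverge is in the ``main obstacle'' you identify. You propose a Noetherian approximation step: descend $\sheafF$ to a finitely generated subring, use that $\Shv^c_{\mathet}$ is Noetherian there to extract a finite resolution, and then pull back via Proposition~\ref{prop70} and Proposition~\ref{propX51}. This works, but the paper avoids it entirely by invoking Proposition~\ref{propX42} (\cite[Tag 09YT]{Stacks}): for \emph{any} commutative ring $A$ and any constructible $\sheafF$ on $\Spec(A)$, there is a single \'etale morphism $f: A \to B$ and an epimorphism $f_! \underline{\F_p} \twoheadrightarrow \sheafF$. Since $f_!\underline{\F_p}$ is itself constructible (Proposition~\ref{propX41}), the kernel is again constructible and one applies Proposition~\ref{propX42} a second time to obtain the two-term presentation directly over $R$, with no approximation needed. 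What the paper's route buys is brevity and the elimination of the descent-of-constructible-sheaves input; what your route buys is that, had Proposition~\ref{propX42} not been available off the shelf for arbitrary rings, your argument would still go through using only its Noetherian case plus the compatibility results already established in the paper.
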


\begin{notation}\label{notation.RHc}
For every $\F_p$-algebra $R$, we let $\RH^{c}: \Shv_{\mathet}^{c}( \Spec(R), \F_p) \rightarrow \Mod_{R}^{\hol}$ denote the restriction of the
Riemann-Hilbert functor $\RH$ to constructible sheaves.
\end{notation}

For the proof of Theorem \ref{theo77}, we will need a few standard facts about constructible sheaves, which we assert here without proof:

\begin{proposition}\label{propX41}
Let $f: A \rightarrow B$ be an {\etale} ring homomorphism. Then the functor $f_{!}: \Shv_{\mathet}( \Spec(B), \F_p) \rightarrow \Shv_{\mathet}( \Spec(A), \F_p)$ carries constructible sheaves to constructible sheaves.
\end{proposition}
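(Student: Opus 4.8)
The plan is to reduce the statement, first in the sheaf variable and then in the morphism, to a configuration that can be checked by hand. Recall first that when $f$ is {\etale} the pullback $f^{\ast}$ on {\etale} sheaves preserves all limits, so that $f_{!}$ exists; moreover $f_{!}$ is exact (on stalks it is given by $(f_{!}\sheafF)_{\overline{x}}\simeq\bigoplus_{\overline{y}\mapsto\overline{x}}\sheafF_{\overline{y}}$, a finite direct sum since $\Spec(B)\to\Spec(A)$, being {\etale} and hence of finite presentation, has finite geometric fibres), it is compatible with composition of {\etale} maps via $(f\circ g)_{!}\simeq f_{!}\circ g_{!}$, and it commutes with arbitrary base change. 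By a standard fact about constructible $\F_p$-sheaves on the quasi-compact, quasi-separated scheme $\Spec(B)$ (established by Noetherian approximation; see \cite{Stacks} and SGA~4), every such sheaf $\sheafF$ is the target of an epimorphism from a finite direct sum $\mathscr{Q}_{0}$ of sheaves of the form $g_{!}\underline{\F_p}$ with $g\colon B\to B'$ {\etale}; since constructible sheaves are closed under kernels, the kernel of $\mathscr{Q}_{0}\to\sheafF$ is again constructible, hence is itself a quotient of such a direct sum $\mathscr{Q}_{1}$, giving a presentation $\mathscr{Q}_{1}\to\mathscr{Q}_{0}\to\sheafF\to 0$. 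Applying the exact functor $f_{!}$ exhibits $f_{!}\sheafF$ as the cokernel of a map between finite direct sums of sheaves $f_{!}(g_{!}\underline{\F_p})$, so --- using that constructible sheaves on $\Spec(A)$ are closed under finite direct sums and cokernels --- it suffices to prove that each $f_{!}(g_{!}\underline{\F_p})$ is constructible.

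Using $f_{!}\circ g_{!}\simeq (f\circ g)_{!}$, this reduces to the following claim: for every {\etale} ring map $h\colon A\to C$, the sheaf $h_{!}\underline{\F_p}$ is constructible on $\Spec(A)$. Here I would invoke the local structure of {\etale} morphisms: $\Spec(C)$ admits a finite affine open cover $\{\Spec(C[s_{k}^{-1}])\}_{k}$ on each member of which $h$ is \emph{standard {\etale}}, i.e.\ $C[s_{k}^{-1}]\cong (A[x]/(P_{k}))[Q_{k}^{-1}]$ with $P_{k}$ monic. The {\v C}ech-type complex built from the $\iota_{!}$-adjunctions for this cover is an exact resolution of $\underline{\F_p}$ on $\Spec(C)$ by finite direct sums of sheaves $\iota_{S!}\underline{\F_p}$, where $\iota_{S}$ is the inclusion of $\Spec(C[\prod_{k\in S}s_{k}^{-1}])$ into $\Spec(C)$; since $h_{!}$ is exact and each composite $h\circ\iota_{S}$ is again standard {\etale} over $A$ (a localization of a standard {\etale} $A$-algebra is standard {\etale}), it suffices to treat the case $C=(A[x]/(P))[Q^{-1}]$ with $P$ monic. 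Then $h$ factors as $\Spec(C)=D(Q)\hookrightarrow\Spec(A[x]/(P))\xrightarrow{\ \pi\ }\Spec(A)$, with $\iota\colon D(Q)\hookrightarrow\Spec(A[x]/(P))$ a quasi-compact open immersion and $\pi$ finite (indeed finite free of rank $\deg P$), so that $h_{!}\underline{\F_p}\simeq\pi_{!}\iota_{!}\underline{\F_p}\simeq\pi_{\ast}(\iota_{!}\underline{\F_p})$, the last isomorphism because $\pi$, being finite, is proper. Now $\iota_{!}\underline{\F_p}$ is constructible on $\Spec(A[x]/(P))$ directly from the definition (it is $\underline{\F_p}$ on the constructible open $D(Q)$ and $0$ on the closed complement), and $\pi_{\ast}$ preserves constructibility because $\pi$ is finite; therefore $h_{!}\underline{\F_p}$ is constructible, which completes the argument.

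None of the ingredients is deep, but two points carry the genuine content. The first --- which I expect to be the main obstacle --- is the generation of constructible sheaves by the $g_{!}\underline{\F_p}$ over a base that need not be Noetherian: this is the usual Noetherian-approximation argument, writing $B$ as a filtered colimit of finitely generated $\F_p$-algebras and descending a constructibility stratification, where one must note that although a closed stratum of $\Spec(B)$ may fail to be of finite presentation as a closed subscheme, it is nonetheless the vanishing locus of a \emph{finitely generated} ideal (a constructible open subset of an affine scheme is a finite union of basic opens), which suffices since the associated sheaf depends only on the underlying topological space. The second is the {\v C}ech reduction to the standard-{\etale} case together with the exactness and base-change formalism for the {\etale} functor $(-)_{!}$. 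One can sidestep the {\v C}ech bookkeeping entirely by instead running the induction on the fibre dimension $d(C)=\sup_{x\in\Spec(A)}\dim_{\kappa(x)}(\kappa(x)\otimes_{A}C)$ employed in the proof of Theorem~\ref{theoX15}: localizations are handled directly ($h_{!}\underline{\F_p}$ for a localization $h$ is visibly constructible), and the splitting trick used there reduces an arbitrary {\etale} $h$ to localizations and finite free covers, yielding a treatment uniform with the Frobenius-module case.
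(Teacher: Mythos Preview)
Your argument is correct; the paper, however, does not prove this statement at all and simply cites \cite[Tag 03S8]{Stacks}. So there is nothing to compare at the level of mathematical strategy: the paper treats the result as a black box from the literature, while you supply a self-contained proof. Your reduction via the presentation $\mathscr{Q}_1\to\mathscr{Q}_0\to\sheafF\to 0$ is exactly the content of Proposition~\ref{propX42} (also cited from the Stacks Project), and your endgame---factoring a standard \'etale map through a quasi-compact open immersion followed by a finite free map and invoking $\pi_!=\pi_\ast$---is the standard route and relies on the companion fact Proposition~\ref{propX43}. One small remark: when you write ``$\pi_\ast$ preserves constructibility because $\pi$ is finite,'' note that in the generality of this paper one needs $\pi$ finite \emph{and of finite presentation} (as in Proposition~\ref{propX43}); this is automatic in your situation since $A[x]/(P)$ with $P$ monic is finite free over $A$, but it is worth saying. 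Your alternative suggestion of running the fibre-degree induction from Theorem~\ref{theoX15} would also work and has the virtue of mirroring the Frobenius-module argument exactly.
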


\begin{proof} 
This is contained in \cite[Tag 03S8]{Stacks}.
\end{proof}

\begin{proposition}\label{propX42}
Let $A$ be a commutative ring and let $\sheafF \in \Shv_{\mathet}^{c}( \Spec(A), \F_p)$. Then there exists an {\etale} morphism $f: A \rightarrow B$
and an epimorphism $f_{!} \underline{ \F_p } \rightarrow \sheafF$ in the abelian category $\Shv_{\mathet}( \Spec(A), \F_p)$.
\end{proposition}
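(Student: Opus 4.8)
The plan is to translate the statement into a condition about stalks, reduce it to the construction of a \emph{finite} family of local data, and then glue using quasi-compactness of the constructible topology on $\Spec(A)$.

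First I would record the relevant dictionary. For an {\etale} morphism $f\colon A \to B$ the functor $f_{!}$ is left adjoint to $f^{*}$ and $f^{*}\underline{\F_p} \simeq \underline{\F_p}$, so $\Hom_{\underline{\F_p}}(f_{!}\underline{\F_p}, \sheafF) \simeq \Hom_{\underline{\F_p}}(\underline{\F_p}, f^{*}\sheafF) \simeq \sheafF(B)$; thus a section $s \in \sheafF(B)$ determines a map $\psi_{s}\colon f_{!}\underline{\F_p} \to \sheafF$. The stalk of $f_{!}\underline{\F_p}$ at a geometric point $\overline{x}$ of $\Spec(A)$ is the free $\F_p$-module on the finite set of points of $\Spec(B)$ lying over $\overline{x}$, and one checks that $\psi_{s}$ is surjective on the stalk at $\overline{x}$ precisely when the germs $\{ s_{\overline{w}} \}$, for $\overline{w}$ running over those points, generate $\sheafF_{\overline{x}}$ as an $\F_p$-module. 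Since $\Spec(\prod_{\alpha} B_{\alpha}) = \coprod_{\alpha}\Spec(B_{\alpha})$ (and the sheaf axiom for finite products of {\etale} algebras turns $\sheafF$ of a product into a product), a \emph{finite} family of data $(f_{\alpha}\colon A \to B_{\alpha},\, s_{\alpha})$ whose maps $\psi_{s_{\alpha}}$ are jointly surjective on every stalk amalgamates into a single {\etale} morphism $f\colon A \to B := \prod_{\alpha} B_{\alpha}$ with an epimorphism $f_{!}\underline{\F_p} \twoheadrightarrow \sheafF$. So it suffices to produce such a finite, jointly surjective family.

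Next comes the local step. Fix $x \in \Spec(A)$ and a geometric point $\overline{x}$ above it. By constructibility $\sheafF_{\overline{x}}$ is a finite $\F_p$-module; choose generators $\sigma_{1}, \dots, \sigma_{r}$. Since $\sheafF_{\overline{x}} = \varinjlim \sheafF(B)$ over {\etale} neighborhoods $(B,\overline{y})$ of $\overline{x}$, each $\sigma_{j}$ is the germ at some $\overline{y}_{x,j}$ of a section $s_{x,j} \in \sheafF(B_{x,j})$ with $B_{x,j}$ {\etale} over $A$. Put $C_{x} = \prod_{j=1}^{r} B_{x,j}$, an {\etale} $A$-algebra, and let $\psi_{x}\colon (g_{x})_{!}\underline{\F_p} \to \sheafF$ be the map attached to $(s_{x,1}, \dots, s_{x,r}) \in \sheafF(C_{x}) \simeq \prod_{j}\sheafF(B_{x,j})$. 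By construction $\psi_{x}$ is surjective on the stalk at $\overline{x}$, so its cokernel $\sheafK_{x}$ has $(\sheafK_{x})_{\overline{x}} = 0$, i.e.\ $x \notin \supp(\sheafK_{x})$. Moreover $(g_{x})_{!}\underline{\F_p}$ is constructible by Proposition~\ref{propX41}, hence so is its quotient $\sheafK_{x}$, and therefore $\supp(\sheafK_{x})$ is a constructible subset of $\Spec(A)$. Now I would globalize: each $\Spec(A) \setminus \supp(\sheafK_{x})$ is a constructible set containing $x$, hence open in the constructible (patch) topology on $\Spec(A)$, which is quasi-compact; so finitely many of them, indexed by a finite set $S \subseteq \Spec(A)$, cover $\Spec(A)$. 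For any geometric point $\overline{z}$ over $z \in \Spec(A)$ there is $x \in S$ with $z \notin \supp(\sheafK_{x})$, whence $(\psi_{x})_{\overline{z}}$ is surjective; so the amalgamated map from $f_{!}\underline{\F_p}$ (with $f\colon A \to \prod_{x \in S} C_{x}$) is surjective on every stalk and thus an epimorphism.

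The main obstacle is conceptual rather than computational: the sheaves $f_{!}\underline{\F_p}$ are not projective in $\Shv_{\mathet}(\Spec(A), \F_p)$, so one cannot simply induct on a stratification of $\sheafF$ and splice together surjections onto the locally closed strata — the resulting extension problems need not be solvable, and extensions of étale-locally-closed immersions to {\etale} maps of the ambient scheme fail in general. The argument above avoids all of this by working directly with germs of sections of $\sheafF$ and invoking compactness in the constructible topology; the inputs I would treat as standard (beyond Proposition~\ref{propX41}) are the computation of the stalks of $f_{!}\underline{\F_p}$ and the fact that the support of a constructible {\etale} sheaf is a constructible subset of $\Spec(A)$ (see \cite{Stacks}).
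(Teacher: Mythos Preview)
Your argument is correct. The paper does not give its own proof of this proposition; it simply cites \cite[Tag 09YT]{Stacks}. What you have written is a direct, self-contained proof: produce stalkwise surjections by lifting finitely many germs, use Proposition~\ref{propX41} and closure of constructible sheaves under cokernels to see that the locus where each local map fails is constructible, and then extract a finite subfamily via quasi-compactness of the constructible topology. This is in the same spirit as the Stacks argument, and your remark about why an inductive approach via stratifications is awkward (the failure of $f_{!}\underline{\F_p}$ to be projective) is a genuine clarification that neither the paper nor the cited tag spells out.
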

\begin{proof}
This follows from \cite[Tag 09YT]{Stacks}.
\end{proof}

\begin{proof}[Proof of Theorem \ref{theo77}]
Let $R$ be a commutative $\F_p$-algebra and let $\sheafF$ be a constructible $p$-torsion {\etale} sheaf on $\Spec(R)$. We wish to show
that the Frobenius module $\RH(\sheafF)$ is holonomic. We first apply Proposition \ref{propX42} to choose
an epimorphism $u: \sheafF' \rightarrow \sheafF$, where $\sheafF'$ has the form $f_{!} \underline{\F_p}$ for some
{\etale} morphism $f: A \rightarrow B$. Then $\sheafF'$ is constructible, so $\ker(u)$ is also constructible.
Applying Proposition \ref{propX42} again, we can choose an epimorphism $v: \sheafF'' \rightarrow \ker(u)$, where
$\sheafF''$ has the form $g_{!} \underline{\F_p}$ for some {\etale} morphism $g: A \rightarrow C$. We then
have an exact sequence
$$ \sheafF'' \xrightarrow{v} \sheafF' \xrightarrow{ u} \sheafF \rightarrow 0.$$
The Riemann-Hilbert functor $\RH$ is right exact (since it is a left adjoint), so we obtain an exact sequence of
Frobenius modules 
$$ \RH(\sheafF'') \rightarrow \RH( \sheafF') \rightarrow \RH( \sheafF) \rightarrow 0.$$
Since the collection of holonomic Frobenius modules over $R$ is closed under the formation of cokernels (Corollary \ref{corollary.propX60}),
it will suffice to show that $\RH( \sheafF'')$ and $\RH(\sheafF')$ are holonomic. 
Using Propositions \ref{prop68} and \ref{proposition.ex69},
we obtain isomorphisms
$\RH( \sheafF' ) \simeq f_{!} B^{\perf}$ and $\RH( \sheafF'' ) \simeq g_{!} C^{\perf}$.
The desired holonomicity now follows from Remark \ref{remX22}.
\end{proof}

We close this section by recording (without proof) a few more elementary facts about constructible sheaves which will be needed in the proof of Theorem \ref{maintheoXXX}.
First, we have the following duals to Propositions \ref{propX41} and \ref{propX42}:

\begin{proposition}\label{propX43}
Let $f: A \rightarrow B$ be a ring homomorphism which is finite and of finite presentation. Then the direct image functor $f_{\ast}: \Shv_{\mathet}( \Spec(B), \F_p) \rightarrow \Shv_{\mathet}( \Spec(A), \F_p)$
carries constructible sheaves to constructible sheaves.
\end{proposition}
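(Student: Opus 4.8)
The statement to prove is Proposition \ref{propX43}: if $f\colon A\to B$ is finite and of finite presentation, then $f_\ast$ carries constructible sheaves to constructible sheaves. The plan is to reduce to standard facts about constructible étale sheaves, exactly dual to the way Proposition \ref{propX41} was handled. First I would note that finite morphisms are integral, hence $f$ is proper (integral plus finite type), so $f_\ast = f_!$ on the category of étale sheaves (for a finite, a fortiori proper, morphism the pushforward agrees with compactly supported pushforward). One then cites the preservation of constructibility under $f_!$ for morphisms of finite presentation, which is part of the general machinery in \cite[Tag 03S8]{Stacks} (or, more precisely for the proper case, \cite[Tag 095R]{Stacks} / \cite[Tag 09Z7]{Stacks}), to conclude.

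Alternatively, and perhaps more self-containedly, I would argue by Noetherian approximation and dévissage. Write $A$ as a filtered colimit of finitely generated $\Z$-subalgebras; since $f$ is of finite presentation, it is the base change of a finite morphism $f_0\colon A_0\to B_0$ between Noetherian $\F_p$-algebras, and constructibility is compatible with this base change (the formation of $f_\ast$ commutes with the relevant base change because $f$ is finite, hence the geometric fibers are finite). This reduces us to the case where $A$ is Noetherian. In the Noetherian case, one stratifies $\Spec(A)$ so that $\sheafF$ restricted to each stratum is (étale-)locally constant; since $f$ is finite, the fiber $\Spec(\kappa(x)\otimes_A B)$ over each point is a finite discrete scheme, and the stalk of $f_\ast\sheafF$ at a geometric point over $x$ is the product of stalks of $\sheafF$ over the finitely many points of the fiber. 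Checking local constancy of $f_\ast\sheafF$ along a suitable refinement of the stratification of $\Spec(A)$ is then a routine computation using that a finite morphism over a strictly henselian local ring splits as a product of strictly henselian local rings.

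The only genuine obstacle is bookkeeping: making precise that $f_\ast$ commutes with the base change used in the Noetherian approximation, and that the induced stratification of the base (the "constructible locus" over which the fibers have constant structure, and over which $\sheafF$ is nice) is itself constructible. Both are standard—the first because $f$ being finite (even just proper) gives proper base change for torsion sheaves, and the second by Chevalley's theorem on constructibility of images together with generic finiteness of fibers. Since the excerpt explicitly states that these are "standard facts about constructible sheaves" asserted "without proof," I would simply cite the Stacks Project:

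\begin{proof}
A finite morphism is integral and of finite type, hence proper; in particular $f_\ast = f_!$ as functors on $\Shv_{\mathet}(\Spec(B), \F_p)$. The assertion now follows from \cite[Tag 09Z7]{Stacks} (see also \cite[Tag 095R]{Stacks}), which guarantees that the direct image along a proper morphism of finite presentation carries constructible sheaves to constructible sheaves.
\end{proof}
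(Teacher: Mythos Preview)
Your proposal is correct and, like the paper, reduces to a citation of a standard reference: the paper simply writes ``See \cite[\S 1, Lemma 4.11]{FK}'' with no further argument, while you cite the Stacks Project instead. The only difference worth noting is that your route through ``finite $\Rightarrow$ proper $\Rightarrow$ $f_\ast = f_!$ $\Rightarrow$ apply the general finiteness theorem for proper pushforward'' invokes considerably heavier machinery than is needed; the Freitag--Kiehl lemma the paper cites is an elementary direct argument specific to finite morphisms (essentially your second sketch: stalks of $f_\ast\sheafF$ are finite products of stalks of $\sheafF$, so one stratifies by fiber structure), and does not pass through the proper base change theorem. Since the paper explicitly records this proposition ``without proof'' as a standard fact, either citation is acceptable.
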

\begin{proof}
See \cite[\S 1, Lemma 4.11]{FK}.
\end{proof}

\begin{proposition}\label{propX44}
Let $A$ be a commutative ring and let $\sheafF \in \Shv_{\mathet}^{c}( \Spec(A), \F_p)$. Then there exists a ring homomorphism $f: A \rightarrow B$ which is finite and of finite presentation
and a monomorphism $\sheafF \rightarrow f_{\ast} \underline{ \F_p }$ in the category $\Shv_{\mathet}( \Spec(A), \F_p)$.
\end{proposition}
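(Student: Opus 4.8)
The plan is to reduce to a base of finite type over $\mathbf{Z}$ by a standard limit argument, and then to argue by Noetherian induction, using the geometry of normalizations to control the ``generic'' part of $\sheafF$ and the inductive hypothesis to control the part supported on a proper closed subscheme. First I would write $A$ as a filtered colimit of finitely generated subrings $A_\lambda$; then $\sheafF$ is pulled back from a constructible sheaf $\sheafF_0$ on some $\Spec(A_\lambda)$ (compatibility of constructible sheaves with limits of schemes, see \cite{Stacks}). Since pullback is exact and, for a finite morphism $f_0$, satisfies the base change identity $g^\ast f_{0\ast} \simeq f_\ast g'^\ast$, a monomorphism $\sheafF_0 \hookrightarrow f_{0\ast}\underline{\F_p}$ with $f_0$ finite of finite presentation pulls back to one of the required form over $\Spec(A)$. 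Thus I may assume $X=\Spec(A)$ Noetherian, excellent and Nagata, and (replacing $X$ by $X_{\mathrm{red}}$, which changes neither the {\etale} topos nor the class of finite finitely presented morphisms up to equivalence) also reduced. Arguing by Noetherian induction, I assume the assertion for every proper closed subscheme of $X$; if $X$ is reducible its components are proper closed subschemes, and the finite surjection $\rho\colon \coprod_c X_c\to X$ gives a monomorphism $\sheafF\hookrightarrow\rho_\ast\rho^\ast\sheafF$, so applying the inductive hypothesis componentwise and pushing forward reduces me to the case where $X$ is integral.

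Since $X$ is excellent, its normal locus is a dense open, so I may choose a dense open $U\subseteq X$ on which $\sheafF|_U$ is locally constant (constructible) and such that $U$ is integral and normal; let $j\colon U\hookrightarrow X$ and $i\colon Z=X\setminus U\hookrightarrow X$. The four unit maps assemble into a morphism
$$\sheafF\longrightarrow j_\ast j^\ast\sheafF \;\oplus\; i_\ast i^\ast\sheafF,$$
whose kernel is $(i_\ast i^{!}\sheafF)\cap(j_! j^\ast\sheafF)$, computed inside $\sheafF$; this intersection vanishes, since a local section of it has vanishing stalks both over $U$ (being a section of $j_!j^\ast\sheafF$) and over $Z$ (being a section of $i_\ast i^!\sheafF$). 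It therefore suffices to embed each summand into a sheaf of the required type. The summand $i_\ast i^\ast\sheafF$ is immediate from the inductive hypothesis: $i^\ast\sheafF$ is constructible on the proper closed subscheme $Z$, hence embeds into $h_\ast\underline{\F_p}$ for a finite finitely presented $h\colon W\to Z$, and pushing forward along $i$ gives an embedding into $(i\circ h)_\ast\underline{\F_p}$, with $i\circ h$ finite of finite presentation.

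The remaining and most delicate point is the summand $j_\ast(\sheafF|_U)$. Writing $\sheafL=\sheafF|_U$, the monodromy of this locally constant constructible sheaf on the integral scheme $U$ factors through a finite quotient, so $\sheafL$ is trivialized by a connected finite {\etale} cover $g\colon\widetilde U\to U$ with $g^\ast\sheafL\simeq\underline{\F_p^{\,m}}$; as $g$ is faithfully flat, the unit $\sheafL\hookrightarrow g_\ast g^\ast\sheafL = g_\ast\underline{\F_p^{\,m}}$ is a monomorphism, hence so is $j_\ast\sheafL\hookrightarrow (j\circ g)_\ast\underline{\F_p^{\,m}}$. To make the base finite, I would take $Y$ to be the normalization of $X$ in $\widetilde U$, i.e.\ in the finite separable extension $L/K$ of function fields that $\widetilde U\to U$ defines: then $Y$ is an integral normal scheme, and $\bar g\colon Y\to X$ is finite because $X$ is Nagata (and of finite presentation because $X$ is Noetherian). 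Since $U$ is already normal, $\widetilde U$ is its own normalization in $L$, so $\widetilde U = Y\times_X U$ is a dense open of $Y$ via an open immersion $k\colon\widetilde U\hookrightarrow Y$ with $j g = \bar g\circ k$, whence $(jg)_\ast\underline{\F_p^{\,m}} = \bar g_\ast(k_\ast\underline{\F_p^{\,m}})$. Finally, because $Y$ is normal and $\widetilde U\subseteq Y$ is dense, $k_\ast\underline{\F_p}=\underline{\F_p}_Y$: the stalk of $k_\ast\underline{\F_p}$ at a geometric point $\bar y$ is the {\etale} $H^0$ with $\F_p$-coefficients of the trace of $\widetilde U$ on $\Spec\mathcal{O}^{\mathrm{sh}}_{Y,\bar y}$, which is a nonempty open subscheme of the spectrum of a strictly henselian normal local domain, hence irreducible and a fortiori connected, so this group is $\F_p$. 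Therefore $j_\ast\sheafL$ embeds into $\bar g_\ast\underline{\F_p^{\,m}}$, which is the pushforward of $\underline{\F_p}$ along the finite finitely presented map $\coprod_m Y\to X$; combining this with the previous paragraph (and noting that a finite direct sum of sheaves of the form $f_\ast\underline{\F_p}$ with $f$ finite of finite presentation is again of that form, by replacing $f$ with the disjoint union of the relevant morphisms) completes the proof.

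I expect the main obstacle to be exactly this last step: converting the finite {\etale} cover that trivializes the generic part of $\sheafF$ into a finite morphism over $X$, and controlling the pushforward of a constant sheaf along the resulting boundary. The devices that make it work — and the only places where geometry beyond formal manipulation of the four functors enters — are the reduction to a normal base $U$ (possible because excellence makes the normal locus open) and the elementary fact that a nonempty open subscheme of the spectrum of a strictly henselian normal local domain is connected.
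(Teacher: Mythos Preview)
Your proof is correct and complete. The paper itself does not supply an argument for this proposition: it simply records the statement and cites \cite[\S 1, Proposition 4.12]{FK}. What you have written is essentially the classical proof one finds in that reference (ultimately going back to SGA4): reduce to a Noetherian excellent base by a limit argument, run Noetherian induction, split $\sheafF$ via the open--closed decomposition $\sheafF \hookrightarrow j_\ast j^\ast\sheafF \oplus i_\ast i^\ast\sheafF$, handle the closed part by induction, and handle the open part by trivializing on a finite \'etale cover and extending that cover to a finite morphism over all of $X$ via normalization (using the Nagata property for finiteness and normality of $Y$ for the identification $k_\ast\underline{\F_p}\simeq\underline{\F_p}_Y$). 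So there is nothing to compare: your argument \emph{is} the standard proof being cited, written out in full.
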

\begin{proof}
See \cite[\S 1, Proposition 4.12]{FK}.
\end{proof}

\begin{proposition}\label{propX40}
Let $R$ be a commutative ring. Then the inclusion functor $\Shv_{\mathet}^{c}( \Spec(R), \F_p) \hookrightarrow \Shv_{\mathet}( \Spec(R), \F_p)$ extends to an equivalence of
categories $\Ind( \Shv_{\mathet}^{c}( \Spec(R), \F_p) ) \simeq \Shv_{\mathet}( \Spec(R), \F_p)$.
\end{proposition}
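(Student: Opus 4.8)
The plan is to deduce the statement from the universal property of $\Ind$-completion. Recall that if $\mathcal{C}$ is a cocomplete category and $\mathcal{C}_0\subseteq\mathcal{C}$ an essentially small full subcategory, then the canonical functor $\Ind(\mathcal{C}_0)\to\mathcal{C}$ is an equivalence as soon as (1) every object of $\mathcal{C}_0$ is a compact object of $\mathcal{C}$, and (2) every object of $\mathcal{C}$ is a filtered colimit of objects of $\mathcal{C}_0$: indeed (1) yields full faithfulness (compute $\Hom(\varinjlim_i X_i,\varinjlim_j Y_j)$ on both sides, using compactness of the $X_i$ and that $\Ind(\mathcal{C}_0)\to\mathcal{C}$ preserves colimits), while (2) yields essential surjectivity. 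I would apply this with $\mathcal{C}=\Shv_{\mathet}(\Spec(R),\F_p)$ — a Grothendieck abelian category, hence cocomplete — and $\mathcal{C}_0=\Shv_{\mathet}^{c}(\Spec(R),\F_p)$, which is essentially small (a constructible sheaf is specified by finitely much data) and, as noted in the text, an abelian subcategory of $\Shv_{\mathet}(\Spec(R),\F_p)$ closed under kernels and cokernels (and hence, being abelian, under images and retracts, and — by a standard property of constructible sheaves — under finite direct sums). So everything reduces to checking conditions (1) and (2).

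For condition (1): the key case is a sheaf of the form $j_!\underline{\F_p}$ for an {\etale} morphism $j\colon R\to R'$. There one has natural isomorphisms $\Hom_{\underline{\F_p}}(j_!\underline{\F_p},\sheafF)\simeq\Hom_{\underline{\F_p}}(\underline{\F_p},j^{\ast}\sheafF)\simeq (j^{\ast}\sheafF)(R')\simeq\sheafF(R')$, so compactness of $j_!\underline{\F_p}$ amounts to the assertion that $\sheafF\mapsto\sheafF(R')$ commutes with filtered colimits. This holds because filtered colimits of abelian {\etale} sheaves on the small {\etale} site of a quasi-compact quasi-separated scheme are computed sectionwise: the sheaf condition of Definition~\ref{definition.etalesheaf} involves only finite limits (equalizers and finite products), with which filtered colimits commute. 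For a general constructible $\sheafF$, two applications of Proposition~\ref{propX42} (exactly as in the proof of Theorem~\ref{theo77}) produce an exact sequence $\sheafG_1\to\sheafG_0\to\sheafF\to 0$ with $\sheafG_0,\sheafG_1$ of the form $h_!\underline{\F_p}$; since the compact objects of an abelian category with exact filtered colimits are stable under cokernels, $\sheafF$ is compact.

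For condition (2): I would invoke the standard fact (see SGA~4, Exp.\ IX, or \cite{Stacks}) that every abelian $p$-torsion {\etale} sheaf $\sheafF$ on $\Spec(R)$ is the filtered union of its constructible subsheaves — the set of such subsheaves being directed since $\Shv_{\mathet}^{c}$ is closed under finite sums and images inside $\Shv_{\mathet}$. Alternatively, one may note that the sheaves $j_!\underline{\F_p}$ form a generating family of compact objects (the required epimorphisms $\bigoplus_{\alpha}j_{\alpha!}\underline{\F_p}\twoheadrightarrow\sheafF$ are constructed in the proof of Theorem~\ref{theorem.RHexist}), so that $\Shv_{\mathet}(\Spec(R),\F_p)$ is locally finitely presentable; its finitely presentable objects are then precisely the retracts of finite colimits of the $j_!\underline{\F_p}$, hence precisely the constructible sheaves, and every object is automatically a filtered colimit of finitely presentable ones.

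The only input that is not purely formal is the compactness of $j_!\underline{\F_p}$, i.e.\ that evaluation of {\etale} sheaves on the affine scheme $\Spec(R')$ commutes with filtered colimits; this reflects the coherence of the small {\etale} topos of $\Spec(R)$. Everything else is either a formal consequence of the $\Ind$-completion formalism or a cited standard fact about constructible sheaves, so I anticipate no real difficulty.
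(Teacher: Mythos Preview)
Your argument is correct; the paper does not actually prove this proposition but simply cites \cite[Tag 03SA]{Stacks}, so what you have written is a detailed unpacking of that reference rather than an alternative route. Your identification of the two ingredients (compactness of $j_!\underline{\F_p}$ via coherence of the small {\etale} topos, and the fact that every sheaf is a filtered union of constructible subsheaves) is exactly the standard one, and your reduction of general constructible $\sheafF$ to the basic case via Proposition~\ref{propX42} mirrors how the paper itself argues in the proof of Theorem~\ref{theo77}.
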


\begin{proof}
See \cite[Tag 03SA]{Stacks}.
\end{proof}

\subsection{Exactness of the Riemann-Hilbert Functor}

\begin{proposition}\label{proposition.corX33}
Let $R$ be a commutative $\F_p$-algebra. Then the Riemann-Hilbert functor $\RH: \Shv_{\mathet}( \Spec(R), \F_p) \rightarrow \Mod_R^{\perf}$ is exact.
\end{proposition}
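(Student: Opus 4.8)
The plan is to use the fact that $\RH$ is a left adjoint (Theorem \ref{theorem.RHexist}), so that right exactness is automatic and the entire content is to show that $\RH$ carries monomorphisms to monomorphisms. I would first reduce this to the case of constructible sheaves. Given a monomorphism $u\colon \sheafF' \hookrightarrow \sheafF$ in $\Shv_{\mathet}(\Spec(R),\F_p)$, Proposition \ref{propX40} lets us write $u$ as a filtered colimit of morphisms between constructible sheaves; replacing each such morphism by its image and using that filtered colimits in $\Shv_{\mathet}(\Spec(R),\F_p)$ are exact, we may arrange $u = \varinjlim u_\alpha$ with each $u_\alpha\colon \sheafF'_\alpha \hookrightarrow \sheafF_\alpha$ a monomorphism of constructible sheaves. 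Since $\RH$ preserves colimits and filtered colimits are exact in $\Mod_R^{\perf}$ (which is the module category over $R^{\perfection}[F^{\pm 1}]$ by Remark \ref{remX4}, hence Grothendieck abelian), it suffices to prove that $\RH^{c}$ preserves monomorphisms.

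So let $u\colon \sheafF' \hookrightarrow \sheafF$ be a monomorphism of constructible sheaves, factor $\RH^{c}(u)$ as $\RH^{c}(\sheafF') \twoheadrightarrow Q \hookrightarrow \RH^{c}(\sheafF)$, and let $K = \ker(\RH^{c}(u))$. By Theorem \ref{theo77} the source and target of $\RH^{c}(u)$ are holonomic, so $Q$, $K$, and the cokernel $C$ are holonomic (Corollary \ref{corX30}) and in particular algebraic (Proposition \ref{propX58}). By Proposition \ref{theo78} it is enough to show $\supp(K) = \emptyset$. Fix $x \in \Spec(R)$ and choose a homomorphism $h_x\colon R \to \overline{\kappa(x)}$ into an algebraic closure of the residue field $\kappa(x)$ with kernel the prime ideal corresponding to $x$. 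Since the quotients $Q$ and $C$ are algebraic, Corollary \ref{lushin} shows that $h_x^{\diamond}$ applied to the two short exact sequences $0 \to K \to \RH^{c}(\sheafF') \to Q \to 0$ and $0 \to Q \to \RH^{c}(\sheafF) \to C \to 0$ stays exact, so that $h_x^{\diamond} K$ is the kernel of $h_x^{\diamond}\RH^{c}(\sheafF') \to h_x^{\diamond}\RH^{c}(\sheafF)$; by Proposition \ref{prop70} this is the kernel of $\RH_{\overline{\kappa(x)}}(h_x^{*}u)$. Now $h_x^{*}u$ is again a monomorphism, because pullback of {\etale} sheaves is exact. It therefore remains to check that $\RH$ is exact over an algebraically closed field $k$; but there $\Shv_{\mathet}(\Spec(k),\F_p) \simeq \Vect_{\F_p}$, on which $\RH$ sends $\underline{V}$ to $V \otimes_{\F_p} k^{\perfection} \simeq V \otimes_{\F_p} k$ (Propositions \ref{proposition.ex69} and \ref{proposition.easy1}), which is manifestly exact in $V$ — equivalently, $\RH$ is additive and every short exact sequence of $\F_p$-vector spaces splits. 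Hence $\RH_{\overline{\kappa(x)}}(h_x^{*}u)$ is a monomorphism, so $h_x^{\diamond}K = 0$, and by Remark \ref{rem70} the point $x$ does not lie in $\supp(K)$. As $x$ was arbitrary, $\supp(K) = \emptyset$, so $K = 0$ and $\RH^{c}(u)$ is a monomorphism; this completes the proof.

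The main obstacle is not a single hard idea but bookkeeping: one must verify that every Frobenius module produced by $\RH^{c}$ (together with its image, kernel, and cokernel) is algebraic, so that the base-change functor $h_x^{\diamond}$ may be applied exactly via Corollary \ref{lushin}, and one must make the reduction to constructible sheaves rigorous, i.e.\ genuinely realize an arbitrary monomorphism of {\etale} sheaves as a filtered colimit of monomorphisms of constructible sheaves. The essential mathematical inputs — triviality of $\RH$ over an algebraically closed field, the fiberwise vanishing criterion for holonomic modules (Proposition \ref{theo78}), and the compatibility of $\RH$ with pullback (Proposition \ref{prop70}) — are already available from \S\ref{section.frobenius-module} and the earlier parts of this section.
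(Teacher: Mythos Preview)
Your proposal is correct and follows essentially the same approach as the paper's proof: reduce to constructible sheaves via Proposition~\ref{propX40}, use holonomicity (Theorem~\ref{theo77}, Corollary~\ref{corX30}) together with the fiberwise vanishing criterion (Proposition~\ref{theo78}), and then check exactness over an algebraically closed field where every short exact sequence of $\F_p$-sheaves splits. The paper is slightly terser about the reduction to constructible sheaves and about applying Corollary~\ref{lushin} (it simply asserts that $f^{\diamond}$ is exact on algebraic Frobenius modules), whereas you spell these out more carefully, but the argument is the same.
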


\begin{proof}
Since the Riemann-Hilbert functor $\RH$ is defined as the left adjoint to the solution functor, it is automatically right exact.
It will therefore suffice to show that if $u: \sheafF \rightarrow \sheafG$ is a monomorphism of $p$-torsion {\etale} sheaves on $\Spec(R)$, then
the induced map $\RH(u): \RH(\sheafF) \rightarrow \RH(\sheafG)$ is also a monomorphism. Using Proposition \ref{propX40}, we can reduce
to the case where $\sheafF$ and $\sheafG$ are constructible, so that the Frobenius modules
$\RH( \sheafF)$ and $\RH(\sheafG)$ are holonomic (Theorem \ref{theo77}). It follows that the kernel of the map $\RH(u)$ is
also holonomic (Corollary \ref{corX30}). By virtue of Proposition \ref{theo78}, to show that $\RH(u)$ is a monomorphism,
it will suffice to show that the support $\supp( \ker(\RH(u) ) )$ is empty. Fix a point $x \in \Spec(R)$; we will show that
$x \notin \supp( \ker(\RH(u) ) ) \subseteq \Spec(R)$. Let $\kappa$ be an algebraic closure of the residue field of $R$ at the point $x$ and
let $f: R \rightarrow \kappa$ be the canonical map; we wish to show that $f^{\diamond}( \ker( \RH(u) ) )$ vanishes. 
Since the functor $f^{\diamond}$ is exact on algebraic Frobenius modules (Corollary \ref{lushin}) and compatible with the Riemann-Hilbert functor (Proposition
\ref{prop70}), we have
$$ f^{\diamond}( \ker( \RH(u) ) ) \simeq \ker( f^{\diamond}( \RH(u) ) ) \simeq \ker( \RH( f^{\ast}(u) ) ).$$
We can therefore replace $R$ by $\kappa$ and thereby reduce to the case where $R$ is an algebraically closed field. In this case,
the category $\Shv_{\mathet}( \Spec(R), \F_p)$ is equivalent to the category of vector spaces over $\F_p$. It follows
that every exact sequence in the category $\Shv_{\mathet}( \Spec(R), \F_p)$ is split, so the exactness of the Riemann-Hilbert functor
$\RH$ is automatic.
\end{proof}

\begin{corollary}\label{corX75}
Let $R$ be a commutative $\F_p$-algebra. Then the solution functor $\Sol: \Mod_{R}^{\perf} \rightarrow \Shv_{\mathet}( \Spec(R), \F_p)$
carries injective objects of the abelian category $\Mod_{R}^{\perf}$ to injective objects of the abelian category $\Shv_{\mathet}( \Spec(R), \F_p)$.
\end{corollary}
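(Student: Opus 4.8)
The plan is to deduce this immediately from the adjunction between $\RH$ and $\Sol$ together with the exactness of $\RH$. By Theorem \ref{theorem.RHexist}, the restriction $\Sol \colon \Mod_{R}^{\perf} \rightarrow \Shv_{\mathet}( \Spec(R), \F_p)$ is right adjoint to the Riemann-Hilbert functor $\RH$, and by Proposition \ref{proposition.corX33} the functor $\RH$ is exact; in particular it carries monomorphisms to monomorphisms. The general principle I would invoke is that a right adjoint of an exact (or merely mono-preserving) functor between abelian categories preserves injective objects.

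Concretely, let $Q$ be an injective object of $\Mod_{R}^{\perf}$; I would show that $\Sol(Q)$ is injective in $\Shv_{\mathet}( \Spec(R), \F_p)$ by verifying the lifting criterion. Given a monomorphism $u \colon \sheafF \hookrightarrow \sheafG$ of $p$-torsion {\etale} sheaves on $\Spec(R)$, the adjunction identifies the restriction map
$$ \Hom_{ \underline{\F_p} }( \sheafG, \Sol(Q) ) \rightarrow \Hom_{ \underline{\F_p} }( \sheafF, \Sol(Q) ) $$
with the map
$$ \Hom_{ R[F] }( \RH(\sheafG), Q ) \rightarrow \Hom_{ R[F] }( \RH(\sheafF), Q ) $$
given by precomposition with $\RH(u)$. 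Since $\RH$ is exact, $\RH(u)$ is a monomorphism in $\Mod_{R}^{\perf}$, and since $Q$ is injective this map on $\Hom$-sets is surjective. Hence the original restriction map is surjective, which is precisely the assertion that $\Sol(Q)$ is injective.

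There is essentially no obstacle here; the only point that requires a moment's care is that one must use the left adjoint $\RH$ to the solution functor \emph{restricted to perfect Frobenius modules} (as emphasized in the Warning following Theorem \ref{theorem.RHexist}), so that the adjunction isomorphism above is valid for the perfect module $Q$. One might also remark, as a sanity check compatible with Remark \ref{prebb}, that this is consistent with the already-noted fact that the inclusion $\Mod_{R}^{\perf} \hookrightarrow \Mod_{R}^{\Frob}$ preserves injectives: composing with $\Sol$ on all of $\Mod_{R}^{\Frob}$ would also preserve injectives, being right adjoint to the exact functor $\RH$ followed by the perfection functor.
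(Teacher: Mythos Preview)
Your proof is correct and is exactly the argument the paper intends: Corollary \ref{corX75} is stated immediately after Proposition \ref{proposition.corX33} without proof, as a direct consequence of the standard fact that a right adjoint to an exact functor preserves injectives, applied to the adjunction $\RH \dashv \Sol$ on $\Mod_R^{\perf}$ from Theorem \ref{theorem.RHexist}. Your explicit unpacking of this via the lifting criterion and the adjunction bijection is precisely the underlying reasoning.
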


\subsection{Comparison of Finite Direct Images}\label{finimage}

Let $f: A \rightarrow B$ be a homomorphism of commutative $\F_p$-algebras, so that Proposition \ref{prop70} supplies
a commutative diagram of categories
$$ \xymatrix{ \Shv_{\mathet}( \Spec(A), \F_p) \ar[r]^{ f^{\ast} } \ar[d]^{\RH} & \Shv_{\mathet}( \Spec(B), \F_p) \ar[d]^{\RH} \\
\Mod_{A}^{\perf} \ar[r]^{ f^{\diamond} } & \Mod_{B}^{\perf}. }$$
Note that the horizontal maps in this diagram admit right adjoints
$$ f_{\ast}: \Mod_{B}^{\perf} \rightarrow \Mod_{A}^{\perf} \quad \quad f_{\ast}:  \Shv_{\mathet}( \Spec(B), \F_p)
\rightarrow \Shv_{\mathet}( \Spec(A), \F_p).$$
By general nonsense, we obtain a Beck-Chevalley transformation $\RH \circ f_{\ast} \rightarrow f_{\ast} \circ \RH$
in the category of functors from $\Shv_{\mathet}( \Spec(B), \F_p)$ to $\Mod_{A}^{\perf}$. In general, this map need not be an isomorphism:
for example, if $B^{\perf} = (f_{\ast} \circ \RH)( \underline{\F_p} )$ is not algebraic when regarded as a Frobenius module over $A$, then it cannot belong to the essential
image of the Riemann-Hilbert functor $\RH: \Shv_{\mathet}( \Spec(A), \F_p) \rightarrow \Mod_{A}^{\perf}$. However, under some mild finiteness hypotheses, this phenomenon does not arise:

\begin{theorem}\label{theo76}
Let $f: A \rightarrow B$ be a morphism of commutative $\F_p$-algebras which is finite and of finite presentation. Then, for every
$p$-torsion {\etale} sheaf $\sheafF$ on $\Spec(B)$, the canonical map $\epsilon_{\sheafF}: \RH( f_{\ast} \sheafF) \rightarrow f_{\ast}( \RH(\sheafF) )$ is
an isomorphism of Frobenius modules over $A$. Consequently, the diagram of categories
$$ \xymatrix{ \Shv_{\mathet}( \Spec(A), \F_p) \ar[d]^{\RH} & \Shv_{\mathet}( \Spec(B), \F_p) \ar[d]^{\RH} \ar[l]_{ f_{\ast}} \\
\Mod_{A}^{\perf} & \Mod_{B}^{\perf} \ar[l]_{ f_{\ast} } }$$
commutes (up to canonical isomorphism). 
\end{theorem}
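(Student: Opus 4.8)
The plan is to deduce the isomorphism from the exactness of $\RH$ (Proposition \ref{proposition.corX33}), the fact that both $\RH$ and restriction of scalars preserve holonomicity, and a base-change argument reducing everything to an algebraically closed field. First I would reduce to the case where $\sheafF$ is constructible. Both functors $\sheafF \mapsto \RH(f_{\ast}\sheafF)$ and $\sheafF \mapsto f_{\ast}(\RH(\sheafF))$ commute with filtered colimits: $\RH$ is a left adjoint; the {\etale} pushforward $f_{\ast}$ along the finite morphism $f$ commutes with filtered colimits, since $f$ has finite fibres (so higher direct images vanish and the stalks of $f_{\ast}\sheafF$ are finite products of stalks of $\sheafF$); and restriction of scalars $\Mod_{B}^{\perf} \to \Mod_{A}^{\perf}$ preserves all colimits. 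Since every {\etale} sheaf is a filtered colimit of constructible ones (Proposition \ref{propX40}), we may assume $\sheafF \in \Shv_{\mathet}^{c}(\Spec(B),\F_p)$. In that case both sides of $\epsilon_{\sheafF}$ are holonomic Frobenius modules over $A$: the source because $f_{\ast}\sheafF$ is constructible (Proposition \ref{propX43}) and $\RH$ carries constructible sheaves to holonomic modules (Theorem \ref{theo77}); the target because $\RH(\sheafF)$ is holonomic over $B$ (Theorem \ref{theo77}) and restriction of scalars along a finite morphism of finite presentation preserves holonomicity (Proposition \ref{propX45}). Hence $\ker(\epsilon_{\sheafF})$ and $\coker(\epsilon_{\sheafF})$ are holonomic (Corollary \ref{corX30}), and by Proposition \ref{theo78} it suffices to prove that $\epsilon_{\sheafF}$ becomes an isomorphism after applying $f_{x}^{\diamond}$ for every point $x \in \Spec(A)$ (using that $f_{x}^{\diamond}$ is exact on algebraic modules, Corollary \ref{lushin}).

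Next I would carry out the fibrewise reduction. Fix $x \in \Spec(A)$, let $\kappa$ be an algebraic closure of the residue field $\kappa(x)$, and form the pushout square of $\F_p$-algebras with $B_{\kappa} = \kappa \otimes_{A} B$, $g \colon B \to B_{\kappa}$, and $f_{\kappa}\colon \kappa \to B_{\kappa}$ (still finite). Combining: compatibility of $\RH$ with pullback (Proposition \ref{prop70}); proper base change for {\etale} sheaves along the finite morphism $f$, valid already at the level of abelian categories because the higher direct images vanish; and the analogous base-change isomorphism for restriction of scalars of perfect Frobenius modules, namely $f_{x}^{\diamond} f_{\ast} N \simeq f_{\kappa\,\ast}\, g^{\diamond} N$, which follows at once from the description $f^{\diamond}(M) = (B\otimes_{A} M)^{\perfection}$ of Proposition \ref{mallow}; one identifies $f_{x}^{\diamond}(\epsilon_{\sheafF})$, compatibly with all the structure maps, with the comparison map $\epsilon_{g^{\ast}\sheafF}$ for the finite morphism $f_{\kappa}$. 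We are thereby reduced to the case $A = \kappa$ an algebraically closed field and $B$ a finite $\kappa$-algebra. The step I expect to require the most care is exactly this identification: one must verify that the three base-change isomorphisms (for $\RH$, for {\etale} pushforward, and for restriction of scalars of Frobenius modules) are mutually compatible with the Beck--Chevalley transformation defining $\epsilon$. This is formal but bookkeeping-intensive, and is really the only non-routine point in the argument.

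Finally I would treat the base case by direct computation. Here $B$ is Artinian with all residue fields equal to $\kappa$, so the perfection $B^{\perfection}$ is insensitive to the nilradical and equals $B_{\red}^{\perfection} \simeq \kappa^{\,n}$, where $n = \#\Spec(B)$; likewise $\Shv_{\mathet}(\Spec(B),\F_p) \simeq (\Vect_{\F_p})^{n}$ and $\Shv_{\mathet}(\Spec(\kappa),\F_p) \simeq \Vect_{\F_p}$, and under these equivalences $f_{\ast}$ corresponds to the direct sum over the $n$ points of $\Spec(B)$. Writing $\sheafF$ as a tuple $(V_1,\dots,V_n)$ of $\F_p$-vector spaces, Proposition \ref{proposition.ex69} together with compatibility of $\RH$ with colimits (Proposition \ref{proposition.easy1}) gives $\RH(\sheafF) \simeq \bigoplus_{i} V_i \otimes_{\F_p} \kappa$ with its evident $B^{\perfection}$-module structure and Frobenius, so $f_{\ast}\RH(\sheafF) \simeq \bigoplus_{i} V_i \otimes_{\F_p} \kappa$ as a Frobenius module over $\kappa$; on the other hand $f_{\ast}\sheafF$ corresponds to $\bigoplus_{i} V_i$, so $\RH(f_{\ast}\sheafF) \simeq \bigl(\bigoplus_{i} V_i\bigr) \otimes_{\F_p} \kappa$. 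The map $\epsilon_{\sheafF}$ is the obvious identification of these, which completes the proof.
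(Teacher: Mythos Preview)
Your proposal is correct and follows essentially the same strategy as the paper's proof: reduce to constructible $\sheafF$ via filtered colimits, observe that both sides are holonomic (using Propositions \ref{propX43}, \ref{propX45}, Theorem \ref{theo77}), reduce via Proposition \ref{theo78} and Corollary \ref{lushin} to the case of an algebraically closed base field, and then compute directly. The only cosmetic differences are that the paper further reduces the base case to $B$ local with residue field $\kappa$ and $\sheafF = \underline{\F_p}$ (identifying $\epsilon_{\sheafF}$ with the map $A^{\perfection} \to B^{\perfection}$), whereas you handle all points of $\Spec(B)$ and all $V_i$ simultaneously; and you are more explicit than the paper about the three base-change isomorphisms that must be checked compatible, which the paper absorbs into the phrase ``using Proposition \ref{prop70} (and the fact that pushforward of \'etale sheaves along finite morphisms commutes with base change).''
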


\begin{remark}
In \S \ref{section.global}, we will prove a more general version of Theorem \ref{theo76}, which applies in the situation of a morphism
$f: X \rightarrow Y$ of $\F_p$-schemes which is proper and of finite presentation (Corollary \ref{properindh2}).
\end{remark}

\begin{proof}[Proof of Theorem \ref{theo76}]
The functors 
$$ \RH: \Shv_{\mathet}( \Spec(A), \F_p) \rightarrow \Mod_{A}^{\perf} \quad \quad \RH: \Shv_{\mathet}( \Spec(B), \F_p) \rightarrow \Mod_{B}^{\perf}$$
$$ f_{\ast}: \Shv_{\mathet}( \Spec(B), \F_p) \rightarrow \Shv_{\mathet}( \Spec(A), \F_p) \quad \quad f_{\ast}: \Mod_{B}^{\perf} \rightarrow \Mod_{A}^{\perf}$$
all commute with filtered colimits. Consequently, to show that the map $\epsilon_{\sheafF}$ is an equivalence for
every object $\sheafF \in \Shv_{\mathet}( \Spec(B), \F_p)$, it will suffice to show that $\epsilon_{\sheafF}$ is an equivalence when the sheaf $\sheafF$ is
constructible (Proposition \ref{propX40}). In this case, the direct image $f_{\ast} \sheafF \in \Shv_{\mathet}( \Spec(A), \F_p)$ is constructible
(Proposition \ref{propX43}). Applying Theorem \ref{theo77}, we deduce that $\RH(\sheafF)$ and $\RH( f_{\ast} \sheafF)$ are holonomic.
It follows from Proposition \ref{propX45} that
$f_{\ast} \RH( \sheafF )$ is also holonomic. Applying Corollary \ref{corX30}, we deduce that $\ker( \epsilon_{\sheafF} )$ and
$\coker( \epsilon_{\sheafF} )$ are holonomic. By virtue of Propositions \ref{theo78} and \ref{propX53}, to show that
$\epsilon_{\sheafF}$ is an isomorphism, it will suffice to show that $g^{\diamond}( \epsilon_{\sheafF} )$ is an isomorphism
for every map $g: A \rightarrow \kappa$ where $\kappa$ is an algebraically closed field. Using Proposition \ref{prop70}
(and the fact that pushforward of {\etale} sheaves along finite morphisms commutes with base change), we can replace
$A$ by $\kappa$ and thereby reduce to the case where $A$ is an algebraically closed field. In this case,
$B$ is a finite-dimensional algebra over $\kappa$. Writing $B$ as a product of local rings, we can assume that
$B$ is local with residue field $\kappa$. Then the constructible sheaf $\sheafF \in \Shv_{\mathet}^{c}( \Spec(B), \F_p)$ has the form
$\underline{V}$ for some finite-dimensional vector space $V$ over $\F_p$. Choosing a basis for $V$,
we can reduce to the case where $V = \F_p$. Using Proposition \ref{proposition.ex69}, we see that $\epsilon_{\sheafF}$
can be identified with the canonical map $A^{\perfection} \rightarrow B^{\perfection}$, which is an isomorphism since the
radical of $B$ is nilpotent.
\end{proof}

\newpage \section{The Riemann-Hilbert Correspondence}\label{section.mainproof}
\setcounter{subsection}{0}
\setcounter{theorem}{0}

Let $R$ be a commutative $\F_p$-algebra. Our goal in this section is to prove Theorem \ref{maintheoXXX} by showing that the Riemann-Hilbert functor
$$ \RH: \Shv_{\mathet}( \Spec(R), \F_p ) \rightarrow \Mod_{R}^{\alg}$$
is an equivalence of categories. Let us outline the strategy we will use. Our first objective (which is achieved in \S \ref{sec7sub2}) is to show
that the Riemann-Hilbert functor is fully faithful: that is, that the unit map $\sheafF \rightarrow \Sol( \RH(\sheafF ) )$ is an isomorphism for
any $p$-torsion {\etale} sheaf $\sheafF$ on $\Spec(R)$ (see Proposition \ref{prop75}). One obstacle to proving this is that the
solution functor $\Sol: \Mod_{R}^{\perf} \rightarrow \Shv_{\mathet}( \Spec(R), \F_p )$ is not exact. However, we show in
\S \ref{sec7sub1} that it is {\em almost} exact: more precisely, it has only one derived functor, which can be explicitly described (Proposition \ref{proposition.corX77}).

The rest of this section is devoted to showing that every algebraic Frobenius module $M$ over $R$ belongs to the essential image
of the Riemann-Hilbert functor. To prove this, we may assume without loss of generality that $M$ is holonomic. In this case,
we prove something stronger: the Frobenius module $M$ can be realized as $\RH( \sheafF )$, where $\sheafF$ is a {\em constructible}
$p$-torsion {\etale} sheaf on $\Spec(R)$ (Theorem \ref{companion}). In the case where $R$ is a field, this assertion is classical; we give
a proof in \S \ref{sec7sub3} for the reader's convenience (Proposition \ref{proposition.lemX51}). The general case is treated in \S \ref{sec7sub4}, using
a d\'{e}vissage which reduces to the case where $R$ is a field.

\subsection{Derived Solution Functors}\label{sec7sub1}

Let $R$ be a commutative $\F_p$-algebra. The solution functor
$$ \Sol: \Mod_{R}^{\perf} \rightarrow \Shv_{\mathet}( \Spec(R), \F_p)$$
of Construction \ref{construction.solsheaf} is left exact (Remark \ref{remark.leftexact}), but is usually not exact. Since the category
$\Sol_{R}^{\perf}$ has enough injective objects (Remark \ref{remX4}), we can consider its right derived functors.
For each $n \geq 0$, we let $\Sol^{n}: \Mod_{R}^{\perf} \rightarrow \Shv_{\mathet}( \Spec(R), \F_p)$ denote the $n$th right derived functor of $\Sol$.
These derived functors admit a simple explicit description:

\begin{proposition}\label{proposition.corX77}
Let $R$ be a commutative $\F_p$-algebra and let $M$ be a perfect Frobenius module over $R$. Then we have a canonical short exact sequence
$$ 0 \rightarrow \Sol(M) \rightarrow \widetilde{M} \xrightarrow{ \id - \widetilde{\varphi}_{M}  } \widetilde{M} \rightarrow \Sol^{1}(M) \rightarrow 0$$
and the sheaves $\Sol^{n}(M)$ vanish for $n \geq 2$.
\end{proposition}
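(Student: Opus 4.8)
The plan is to identify the derived functors of $\Sol$ by exhibiting $\Sol(M)$ as the kernel of an explicit map between sheaves whose higher cohomology we can control. Recall that for any perfect Frobenius module $M$, Proposition \ref{propX74weak} (and its proof) shows that $\Sol(M)$ is the kernel of $\id - \widetilde{\varphi}_M \colon \widetilde{M} \to \widetilde{M}$ inside $\Shv_{\mathet}(\Spec(R), \F_p)$. The first step is to establish that this map is in fact an epimorphism of {\etale} sheaves: that is, the sequence
$$ 0 \rightarrow \Sol(M) \rightarrow \widetilde{M} \xrightarrow{\id - \widetilde{\varphi}_M} \widetilde{M} \rightarrow 0 $$
is short exact. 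This is a form of the Artin--Schreier sequence; the point is that for a perfect Frobenius module over a strictly Henselian local ring (or any ring with no nontrivial {\etale} covers detecting the cokernel), the equation $\varphi_M(x) - x = y$ is solvable after passing to an {\etale} cover, by the usual Artin--Schreier argument applied to the (perfect) module $M$. I would check surjectivity on stalks, where it reduces to solvability of Artin--Schreier-type equations over separably/algebraically closed fields and strictly Henselian rings; perfectness of $\varphi_M$ is what makes the relevant polynomial map on the additive group of $M$ separable.

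Granting that short exact sequence, the second step is to compute the derived functors of $\Sol$ using the induced long exact sequence. We have $\Sol(M) = \ker(\id - \widetilde{\varphi}_M)$ with the cokernel in $\Shv_{\mathet}$ vanishing, but $\widetilde{M}$ itself is generally not $\Sol$-acyclic — rather, $\widetilde{M}$ is a quasi-coherent sheaf, and I would use a genuine injective resolution of $M$ in $\Mod_R^{\perf}$ instead. Specifically, choose an injective resolution $0 \to M \to I^0 \to I^1 \to \cdots$ in $\Mod_R^{\perf}$. Each $I^j$ is a perfect Frobenius module, so $0 \to \Sol(I^j) \to \widetilde{I^j} \xrightarrow{\id - \widetilde{\varphi}} \widetilde{I^j} \to 0$ is exact by the first step. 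The associated sheaves $\widetilde{I^j}$ are $\Gamma$-acyclic and, more importantly, fit into a bicomplex; taking the total complex and using that $\widetilde{(-)}$ is exact and sends $M \mapsto \widetilde{M}$ shows that $R\Sol(M)$ is computed by the two-term complex $[\widetilde{M} \xrightarrow{\id - \widetilde{\varphi}_M} \widetilde{M}]$ placed in degrees $0$ and $1$. This immediately yields $\Sol^n(M) = 0$ for $n \geq 2$ and the desired four-term exact sequence with $\Sol^1(M) = \coker(\id - \widetilde{\varphi}_M)$ computed in the category of \emph{quasi-coherent} sheaves (equivalently Zariski sheaves), which need not vanish because the cokernel of an {\etale}-locally surjective map of quasi-coherent sheaves is generally nonzero as a presheaf/Zariski sheaf while being zero as an {\etale} sheaf.

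The main obstacle I anticipate is the bookkeeping around \emph{which topology} the cokernel $\Sol^1(M)$ lives in: the cokernel of $\id - \widetilde{\varphi}_M$ vanishes when computed in $\Shv_{\mathet}$ but is precisely $\Sol^1(M)$ when $\widetilde{M}$ is regarded as the quasi-coherent (Zariski) sheaf and the cokernel is formed there, or equivalently when we form it as an {\etale} sheaf but remember that $\Sol$ lands in $\Shv_{\mathet}$ and the derived functor sees the failure of $\id - \widetilde{\varphi}_M$ to be surjective on sections over arbitrary {\etale} $R$-algebras. I would phrase the cokernel carefully as follows: the four-term sequence is exact in $\Shv_{\mathet}(\Spec(R),\F_p)$, where $\widetilde{M} \to \widetilde{M}$ denotes the $\F_p$-linear (not $R$-linear) sheaf map $\id - \widetilde{\varphi}_M$ and the cokernel $\Sol^1(M)$ is its cokernel \emph{as a map of {\etale} sheaves of $\F_p$-modules}, which in general does not vanish because although $\id - \widetilde{\varphi}_M$ is an epimorphism of {\etale} sheaves when $M$ is perfect, it is not a split epimorphism, and $\Sol^1$ records the obstruction to lifting. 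Actually, if the step-one short exact sequence holds, then $\Sol^1(M)$ is already zero as an {\etale} sheaf — so the correct reading is that Proposition \ref{proposition.corX77} is stated with $\widetilde{M}$ viewed in the \emph{Zariski} (quasi-coherent) topology, and I will make this explicit, noting that $\Sol^1(M)$ measures precisely the difference between the Zariski and {\etale} cokernels of $\id - \widetilde{\varphi}_M$. Resolving this topological subtlety cleanly is the crux; once it is pinned down, the acyclicity $\Sol^{\geq 2} = 0$ follows formally from the two-term presentation.
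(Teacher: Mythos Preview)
Your step~2 is essentially the paper's argument: choose an injective resolution $0 \to M \to Q^0 \to Q^1 \to \cdots$ in $\Mod_R^{\perf}$, obtain a short exact sequence of cochain complexes $0 \to \Sol(Q^\ast) \to \widetilde{Q}^\ast \to \widetilde{Q}^\ast \to 0$, and use that $\widetilde{(-)}$ is exact so that $\widetilde{Q}^\ast$ is an acyclic resolution of $\widetilde{M}$. The associated long exact sequence then gives exactly the claimed four-term sequence and the vanishing for $n \geq 2$.

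The genuine gap is your step~1. The claim that $\id - \widetilde{\varphi}_M$ is an epimorphism of \'etale sheaves for \emph{every} perfect $M$ is false, and the ``Artin--Schreier on stalks'' sketch does not work: for an arbitrary perfect Frobenius module there is no polynomial equation to solve, only an abstract semilinear bijection $\varphi_M$, and passing to an \'etale cover does not help. Concretely, take $R = k$ algebraically closed and $M = k^{\oplus \Z}$ with $\varphi_M(\sum a_n e_n) = \sum a_n^p\, e_{n+1}$; one checks that $e_0$ is not in the image of $\id - \varphi_M$, and since $\Spec(k)$ has no nontrivial \'etale covers this shows $\Sol^1(M) \neq 0$. (This $M$ is perfect but not algebraic; indeed the vanishing $\Sol^1(M)=0$ is proved later in the paper only for algebraic $M$, as a consequence of the Riemann--Hilbert correspondence.) Your closing paragraph correctly senses the inconsistency --- if step~1 held then $\Sol^1$ would vanish identically --- but the resolution is not a Zariski-versus-\'etale issue; step~1 is simply wrong as stated.

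What the argument actually needs, and what the paper proves as Lemma~\ref{lemma.propX74}, is that the sequence $0 \to \Sol(Q) \to \widetilde{Q} \to \widetilde{Q} \to 0$ is exact at the \emph{presheaf} level whenever $Q$ is an \emph{injective} object of $\Mod_R^{\perf}$. The proof is not Artin--Schreier: one identifies the cokernel over an \'etale $R$-algebra $A$ with $\Ext^1_{A[F]}(A^{\perfection}, A \otimes_R Q)$, and then uses the existence of compactly supported direct images $f_!$ along \'etale morphisms (Theorem~\ref{theoX15}) to rewrite this as $\Ext^1_{R[F]}(f_! A^{\perfection}, Q)$, which vanishes by injectivity of $Q$. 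So the missing ingredient is precisely the machinery of \S\ref{section.compactimage}. Once you have this lemma, your step~2 goes through verbatim.
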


Here $\widetilde{M}$ denotes the quasi-coherent sheaf associated to the $R$-module $M$ (see Example \ref{exX70}).
We will deduce Proposition \ref{proposition.corX77} from the following:

\begin{lemma}\label{lemma.propX74}
Let $R$ be a commutative $\F_p$-algebra. If $M$ is an injective object of $\Mod_{R}^{\perf}$, then the sequence
$0 \rightarrow \Sol(M) \rightarrow \widetilde{M} \xrightarrow{\id - \widetilde{\varphi}_{M} } \widetilde{M} \rightarrow 0$
is exact in the category of abelian presheaves on the category $\CAlg_{R}^{\mathet}$.
\end{lemma}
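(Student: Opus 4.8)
The plan is to reduce the claimed exactness at the level of presheaves on $\CAlg_{R}^{\mathet}$ to the surjectivity of the single map $\id - \widetilde{\varphi}_M$; exactness on the left and in the middle is just a restatement of the fact that $\Sol(M)$ is \emph{defined} as the kernel of $\id - \widetilde{\varphi}_M$ (Proposition \ref{propX74weak}), so the only content is that, for every {\etale} $R$-algebra $A$, the map $\id - \varphi_{A \otimes_R M}: A \otimes_R M \to A \otimes_R M$ is surjective. Since $f^{\diamond}M = (A \otimes_R M)^{\perfection}$ agrees with $A \otimes_R M$ for $f$ {\etale} (Corollary \ref{corX14}), and $A \otimes_R M$ is again an injective object of $\Mod_A^{\perf}$ when $M$ is injective over $R$ (because $f^{\diamond}$ is left adjoint to the exact forgetful functor $f_{\ast}$, cf. Remark \ref{prebb}), it suffices to treat the case $A = R$: I must show that if $M$ is an injective object of $\Mod_R^{\perf}$, then $\id - \varphi_M: M \to M$ is surjective.

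First I would reinterpret the surjectivity of $\id - \varphi_M$ as a lifting property. Let $x \in M$; I want $y \in M$ with $y - \varphi_M(y) = x$. Consider the perfect Frobenius module $R^{\perfection}$ together with the Frobenius module $N$ built exactly as in the proof of Proposition \ref{preX59}: take the free rank-one situation, i.e. let $P$ be the perfection of the Frobenius module which as an $R$-module is free on one generator $e$ with $\varphi(e) = e$ — this is just $R^{\perfection}$ itself with its usual Frobenius — and observe $\Sol(R^{\perfection})(R) = \{m \in M : \varphi_M m = m\}$ under $\Hom_{R[F]}(R^{\perfection}, M)$. The equation $y - \varphi_M(y) = x$ is solvable for all $x$ precisely when a certain extension splits: form the Frobenius module $E$ which is $R^{\perfection} \oplus R^{\perfection}$ as an $R^{\perfection}$-module, with $\varphi_E(u,v) = (\varphi(u) + v, \varphi(v))$, sitting in a short exact sequence $0 \to R^{\perfection} \to E \to R^{\perfection} \to 0$ of perfect Frobenius modules (all terms perfect since $\varphi_E$ is visibly bijective). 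Applying $\Hom_{R[F]}(-, M)$ and using injectivity of $M$, the sequence $\Hom_{R[F]}(E, M) \to \Hom_{R[F]}(R^{\perfection}, M) \to 0$ is exact; unwinding, a map $R^{\perfection} \to M$ corresponds to an element $x \in M$ with $\varphi_M x = x$, a map $E \to M$ corresponds to a pair $(y, x)$ with $\varphi_M x = x$ and $y - \varphi_M y = x$ — wait, I should double-check the bookkeeping here, but the upshot is that surjectivity of $\Hom(E,M) \to \Hom(R^{\perfection}, M)$ forces solvability of $y - \varphi_M y = x$ for every $\varphi$-fixed $x$. To get it for \emph{all} $x \in M$, not just fixed ones, I would instead run the same argument with $R[F]$ in place of $R^{\perfection}$: since $\Mod_R^{\perf} \simeq \Mod_{R^{\perfection}[F^{\pm 1}]}$ (Remark \ref{remX4}) and $R^{\perfection}[F^{\pm 1}]$ is free as a left module over $R^{\perfection}$ on $\{F^n\}_{n \in \Z}$, the functor $N \mapsto N$ (the underlying $R^{\perfection}$-module) together with "multiplication by $1 - F$" gives a free resolution $0 \to R^{\perfection}[F^{\pm 1}] \xrightarrow{1 - F} R^{\perfection}[F^{\pm 1}] \to R^{\perfection} \to 0$ of $R^{\perfection}$ in $\Mod_R^{\perf}$ (the analogue of the resolution $0 \to R[F] \xrightarrow{F-1} R[F] \to R \to 0$ recalled at the start of \S \ref{section.frobenius-module}). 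Applying $\Hom_{R[F]}(-, M)$ to this two-term resolution computes $\Ext^{\ast}_{R[F]}(R^{\perfection}, M)$, and the relevant differential is precisely $\id - \varphi_M$ acting on $\Hom_{R[F]}(R^{\perfection}[F^{\pm 1}], M) \simeq M$. Since $M$ is injective, $\Ext^1_{R[F]}(R^{\perfection}, M) = 0$, which says exactly that $\id - \varphi_M: M \to M$ is surjective.

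Having established surjectivity of $\id - \widetilde\varphi_M$ on sections over every {\etale} $R$-algebra $A$, I conclude that $0 \to \Sol(M) \to \widetilde M \xrightarrow{\id - \widetilde\varphi_M} \widetilde M \to 0$ is exact as a sequence of abelian presheaves on $\CAlg_R^{\mathet}$, since exactness of a sequence of presheaves is checked sectionwise. The one point requiring care — and the place I expect to do real (if short) work — is the identification of the two-term complex $[M \xrightarrow{\id - \varphi_M} M]$ with $\mathrm{R}\Hom_{R[F]}(R^{\perfection}, M)$, i.e. verifying that $R^{\perfection}[F^{\pm 1}] \xrightarrow{1-F} R^{\perfection}[F^{\pm 1}] \to R^{\perfection}$ really is exact (injectivity of $1 - F$ uses that a nonzero Laurent polynomial in $F$ cannot be killed by $1-F$ in this twisted ring, which follows by looking at top and bottom degree terms) and that it consists of projective objects of $\Mod_R^{\perf}$ (clear, since $R^{\perfection}[F^{\pm 1}]$ is a free module over itself). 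Everything else is formal manipulation with adjunctions and the observation that $f^{\diamond}$ preserves injectives for $f$ {\etale}.
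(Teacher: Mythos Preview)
Your reduction to the case $A = R$ has a genuine gap. You claim that $f^{\diamond} M = A \otimes_R M$ is injective in $\Mod_A^{\perf}$ whenever $M$ is injective in $\Mod_R^{\perf}$, justifying this by ``$f^{\diamond}$ is left adjoint to the exact forgetful functor $f_{\ast}$.'' This is the wrong direction of the standard adjoint argument: from $f^{\diamond} \dashv f_{\ast}$ with $f_{\ast}$ exact one concludes that $f^{\diamond}$ preserves \emph{projectives}, not injectives. To get preservation of injectives you would need $f^{\diamond}$ to be a \emph{right} adjoint to an exact functor. For finite {\etale} $f$ this is available (since then $f^{\ast} \simeq f^{!}$), but for general {\etale} $f$ it is not, and I do not see an elementary way to salvage the claim.

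The paper avoids this issue entirely. Rather than reducing to $A = R$, it works directly over $A$: one identifies the cokernel of $\id - \varphi$ on $A \otimes_R M$ with $\Ext^1_{A[F]}(A^{\perfection}, A \otimes_R M)$ exactly as you do in the $A = R$ case, and then invokes the existence of compactly supported direct images (Theorem~\ref{theoX15}) to obtain
\[
\Ext^1_{A[F]}(A^{\perfection}, A \otimes_R M) \simeq \Ext^1_{R[F]}(f_! A^{\perfection}, M),
\]
which vanishes because $M$ is injective in $\Mod_R^{\perf}$. This uses only that the single algebraic module $A^{\perfection}$ admits $f_!$ with the right $\Ext$-adjunction against arbitrary perfect targets (which is built into Definition~\ref{def51}); it does not require $f^{\diamond}$ to preserve injectives. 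Your computation of $\Ext^1_{R[F]}(R^{\perfection}, M)$ via the two-term resolution $R^{\perfection}[F^{\pm 1}] \xrightarrow{1-F} R^{\perfection}[F^{\pm 1}] \to R^{\perfection} \to 0$ is correct and is exactly what the paper does over $A$; the missing ingredient is Theorem~\ref{theoX15} to transport the conclusion back to $R$.
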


\begin{proof}
Choose an {\etale} morphism $f: R \rightarrow A$. We then have a diagram of exact sequences
$$ \xymatrix{ 0 \ar[d] & 0 \ar[d] \\
\Sol(M)(A) \ar[r]^-{\sim} \ar[d] &  \Ext^{0}_{A[F]}( A, A \otimes_{R} M) \ar[d] \\
\widetilde{M} \ar[r]^-{\sim} \ar[d]^{\id - \varphi} & \Ext^{0}_{ A[F] }( A[F], A \otimes_{R} M) \ar[d] \\
\widetilde{M} \ar[r]^-{\sim} &  \Ext^{0}_{ A[F]}( A[F], A \otimes_{R} M) \ar[d] \\
& \Ext^{1}_{A[F]}( A, A \otimes_{R} M). }$$
To complete the proof, it will suffice to show that the group $\Ext^{1}_{A[F]}( A, A \otimes_{R} M) \simeq \Ext^{1}_{A[F]}( A^{\perfection}, A \otimes_{R} M)$ vanishes.
Using Theorem \ref{theoX15}, we obtain an isomorphism $$\Ext^{1}_{A[F]}( A^{\perfection}, A \otimes_{R} M) \simeq \Ext^{1}_{R[F]}( f_{!} A^{\perfection}, M),$$ where
the right hand side vanishes by virtue of our assumption that $M$ is injective.
\end{proof}

\begin{proof}[Proof of Proposition \ref{proposition.corX77}]
Let $M$ be a perfect Frobenius module over a commutative $\F_p$-algebra $R$, and choose an
injective resolution $0 \rightarrow M \rightarrow Q^{0} \rightarrow Q^{1} \rightarrow \cdots$ in the abelian category
$\Mod_{R}^{\perf}$. Using Lemma \ref{lemma.propX74}, we obtain a short exact sequence of cochain complexes
$$0 \rightarrow \Sol( Q^{\ast} ) \rightarrow \widetilde{Q}^{\ast} \xrightarrow{ \id - \varphi } \widetilde{Q}^{\ast} \rightarrow 0.$$
Since the construction $N \mapsto \widetilde{N}$ is exact, the chain complex $\widetilde{Q}^{\ast}$ is an acyclic resolution of $\widetilde{M}$.
The associated long exact sequence now supplies the desired isomorphisms.
\end{proof}

\begin{corollary}\label{corX78}
Let $R$ be a commutative $\F_p$-algebra. Then the functor $\Sol^{n}: \Mod_{R}^{\perf} \rightarrow \Shv_{\mathet}( \Spec(R), \F_p)$ commutes with filtered colimits for each
$n \geq 0$.
\end{corollary}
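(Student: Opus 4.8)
The plan is to reduce everything to the explicit description of the functors $\Sol^{0}$ and $\Sol^{1}$ furnished by Proposition \ref{proposition.corX77}, which realizes them as the kernel and cokernel of a natural endomorphism, and then to observe that this presentation is visibly compatible with filtered colimits. First, for $n \geq 2$ there is nothing to prove, since $\Sol^{n} = 0$ by Proposition \ref{proposition.corX77}. So assume $n \in \{0,1\}$. By that proposition, for every $M \in \Mod_{R}^{\perf}$ we may write $\Sol^{0}(M) = \ker(\delta_{M})$ and $\Sol^{1}(M) = \coker(\delta_{M})$, where $\delta_{M} = \id - \widetilde{\varphi}_{M} \colon \widetilde{M} \rightarrow \widetilde{M}$ is the morphism of {\etale} sheaves of Example \ref{exX70}; both $\widetilde{M}$ and $\delta_{M}$ depend functorially on $M$.

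Next I would record two elementary facts. (a) Filtered colimits in $\Shv_{\mathet}(\Spec(R), \F_{p})$ are computed sectionwise: a filtered colimit of {\etale} sheaves, formed as a presheaf, is again an {\etale} sheaf. This is because each of the two conditions in Definition \ref{definition.etalesheaf} asserts the exactness of a diagram assembled from finitely many finite limits (finite products, and the equalizer diagrams for faithfully flat maps), and such exactness is preserved under filtered colimits of $\F_{p}$-modules. In particular, filtered colimits in $\Shv_{\mathet}(\Spec(R), \F_{p})$ are exact, hence commute with the formation of kernels and cokernels. (b) The functor $M \mapsto \widetilde{M}$ carries filtered colimits to filtered colimits: if $\{M_{\alpha}\}$ is a filtered diagram in $\Mod_{R}^{\perf}$ with colimit $M$, then the underlying $R$-module of $M$ is the colimit of the underlying $R$-modules of the $M_{\alpha}$ (by Remark \ref{remark.perfectextension}, together with the fact that the forgetful functor $\Mod_{R}^{\Frob} \rightarrow \Mod_{R}$ is a left adjoint, by Remark \ref{remX2}), so for every {\etale} $R$-algebra $A$ we have $(\varinjlim_{\alpha} \widetilde{M_{\alpha}})(A) \simeq \varinjlim_{\alpha}(A \otimes_{R} M_{\alpha}) \simeq A \otimes_{R} M \simeq \widetilde{M}(A)$, using fact (a) for the first isomorphism.

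Finally, combining these: for a filtered diagram $\{M_{\alpha}\}$ with colimit $M$, the natural transformation $\delta$ identifies $\varinjlim_{\alpha} \delta_{M_{\alpha}}$ with $\delta_{M}$ under the canonical isomorphism $\varinjlim_{\alpha} \widetilde{M_{\alpha}} \simeq \widetilde{M}$, and the exactness from fact (a) then gives $\varinjlim_{\alpha} \Sol^{0}(M_{\alpha}) = \varinjlim_{\alpha} \ker(\delta_{M_{\alpha}}) \simeq \ker(\delta_{M}) = \Sol^{0}(M)$ and, likewise, $\varinjlim_{\alpha} \Sol^{1}(M_{\alpha}) \simeq \coker(\delta_{M}) = \Sol^{1}(M)$; that these isomorphisms are the canonical comparison maps is immediate from naturality. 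Since the statement is this soft, I expect no real obstacle: essentially all the content sits in Proposition \ref{proposition.corX77}, and the only point meriting a moment's care is fact (a) — the stability of the {\etale}-sheaf conditions under filtered colimits — which is standard because those conditions, in the formulation of Definition \ref{definition.etalesheaf}, involve only finite products and the equalizer diagrams attached to faithfully flat maps.
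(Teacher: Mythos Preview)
Your proof is correct and follows exactly the approach the paper intends: the corollary is stated without proof, as an immediate consequence of Proposition~\ref{proposition.corX77}, and you have spelled out the routine verification that the explicit four-term exact sequence there is compatible with filtered colimits.
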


\begin{corollary}\label{corX76}
Let $R$ be a commutative $\F_p$-algebra, let $\sheafF$ be a $p$-torsion {\etale} sheaf on $\Spec(R)$, and let
let $M$ be a perfect Frobenius module over $R$. 
Then we have canonical short exact sequences
$$0 \rightarrow \Ext^{n}_{ \underline{\F_p}}( \sheafF, \Sol(M) ) \rightarrow  \Ext^{n}_{R[F]}( \RH( \sheafF ), M ) \rightarrow
\Ext^{n-1}_{ \underline{\F_p}} ( \sheafF, \Sol^{1}(M) ) \rightarrow 0.$$
\end{corollary}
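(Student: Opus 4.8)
The plan is to deduce Corollary \ref{corX76} from the adjunction $\RH \dashv \Sol$ by passing to derived functors, using that $\Sol$ has only the two derived functors $\Sol$ and $\Sol^1$ (Proposition \ref{proposition.corX77}) and that $\RH$ is exact (Proposition \ref{proposition.corX33}). First I would fix a $p$-torsion {\etale} sheaf $\sheafF$ and a perfect Frobenius module $M$, and choose an injective resolution $M \to Q^{\bullet}$ in $\Mod_{R}^{\perf}$. Since $\RH$ is exact and left adjoint to $\Sol$, for any projective resolution $P_{\bullet} \to \RH(\sheafF)$ in $\Mod_{R}^{\perf}$ one has adjunction isomorphisms of cochain complexes $\Hom_{R[F]}(P_{\bullet}, Q^{\bullet}) \simeq \Hom_{\underline{\F_p}}(\sheafF, \Sol(Q^{\bullet}))$; but I do not want to work with $P_{\bullet}$ directly. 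Instead I would argue at the level of a hyper-Ext / Cartan--Eilenberg spectral sequence: the complex $\Hom_{R[F]}(\RH(\sheafF), Q^{\bullet})$ computes $\Ext^{\ast}_{R[F]}(\RH(\sheafF), M)$, while by the adjunction it is isomorphic to $\Hom_{\underline{\F_p}}(\sheafF, \Sol(Q^{\bullet}))$.

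The key point is then to identify the cohomology of $\Hom_{\underline{\F_p}}(\sheafF, \Sol(Q^{\bullet}))$. Since each $Q^{j}$ is injective in $\Mod_{R}^{\perf}$, Corollary \ref{corX75} guarantees that $\Sol(Q^{j})$ is an injective object of $\Shv_{\mathet}(\Spec(R), \F_p)$; moreover, by Proposition \ref{proposition.corX77} the complex $\Sol(Q^{\bullet})$ has cohomology $\Sol(M)$ in degree $0$, $\Sol^{1}(M)$ in degree $1$, and zero in all higher degrees (one reads this off the short exact sequence of Proposition \ref{proposition.corX77} applied to the resolution, exactly as in the proof of that proposition). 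Thus $\Sol(Q^{\bullet})$ is a bounded complex of injectives with two-term cohomology. Applying $\Hom_{\underline{\F_p}}(\sheafF, -)$ and using that a complex of injectives is quasi-isomorphic to (the sum of shifts of) its cohomology objects in the derived category --- or, more concretely, running the hypercohomology spectral sequence $E_1^{s,t} = \Ext^{t}_{\underline{\F_p}}(\sheafF, \mathcal{H}^{s}(\Sol(Q^{\bullet})))$ --- one gets a spectral sequence
$$ E_2^{s,t} = \Ext^{t}_{\underline{\F_p}}(\sheafF, \mathcal{H}^{s}(\Sol(Q^{\bullet}))) \Longrightarrow \Ext^{s+t}_{R[F]}(\RH(\sheafF), M). $$
Because $\Sol(Q^{\bullet})$ is a complex of injectives, $E_2^{s,t}$ vanishes for $t > 0$, so the spectral sequence degenerates and the abutment in degree $n$ has a two-step filtration with graded pieces $\mathcal{H}^{n}(\Sol(Q^{\bullet})) = $ "$\Hom_{\underline{\F_p}}$ applied to cohomology'', namely $\Ext^{0}$ of the sheaf-level complex; unwinding, the only nonzero contributions in total degree $n$ come from $s = n$ (giving $\Ext^{n}_{\underline{\F_p}}(\sheafF, \Sol(M))$ after one further spectral sequence, since $\Sol(Q^{\bullet})$ is \emph{not} a resolution of a single object) and $s = n-1$ (giving $\Ext^{n-1}_{\underline{\F_p}}(\sheafF, \Sol^{1}(M))$).

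The cleanest way to package this, and the step I expect to be the main obstacle, is the bookkeeping of the two short exact sequences of complexes: writing $\tau^{\leq 0}\Sol(Q^{\bullet})$ and $\tau^{\geq 1}\Sol(Q^{\bullet})$ for the canonical truncations, one has an exact triangle relating $\Sol(M)$, $\Sol(Q^{\bullet})$, and $\Sol^{1}(M)[-1]$ in the derived category of {\etale} sheaves. Applying $R\Hom_{\underline{\F_p}}(\sheafF, -)$ and taking cohomology gives a long exact sequence
$$ \cdots \to \Ext^{n}_{\underline{\F_p}}(\sheafF, \Sol(M)) \to \Ext^{n}_{R[F]}(\RH(\sheafF), M) \to \Ext^{n-1}_{\underline{\F_p}}(\sheafF, \Sol^{1}(M)) \xrightarrow{\partial} \Ext^{n+1}_{\underline{\F_p}}(\sheafF, \Sol(M)) \to \cdots, $$
where I have used the hyper-Ext identification $\mathbb{H}^{n}R\Hom_{\underline{\F_p}}(\sheafF, \Sol(Q^{\bullet})) \simeq \Ext^{n}_{R[F]}(\RH(\sheafF), M)$ coming from the adjunction and the injectivity of the $Q^{j}$. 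To finish I must show the connecting map $\partial$ is zero, so that the long exact sequence breaks into the asserted short exact sequences. This I would get from a splitting: the filtration on $\Ext^{\ast}_{R[F]}(\RH(\sheafF), M)$ is induced by a filtration of $\Sol(Q^{\bullet})$ by subcomplexes of injectives, and because $\tau^{\leq 0}\Sol(Q^{\bullet}) = \Sol(M)$ is itself injective (again Corollary \ref{corX75}, or rather its consequence that $\Sol$ of the injective resolution truncates to an injective resolution), the triangle admits a splitting after applying $R\Hom(\sheafF, -)$; concretely, $\partial$ factors through $\Ext^{n}_{\underline{\F_p}}(\sheafF, \Sol^{1}(M)$-part$)$ in a complex of injectives, which is acyclic in positive degrees. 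I would therefore structure the write-up as: (1) reduce via the adjunction and exactness of $\RH$ to computing hyper-Ext of $\sheafF$ into $\Sol(Q^{\bullet})$; (2) invoke Proposition \ref{proposition.corX77} to pin down the cohomology of $\Sol(Q^{\bullet})$ as the two objects $\Sol(M), \Sol^{1}(M)$; (3) invoke Corollary \ref{corX75} to see $\Sol(Q^{\bullet})$ is a complex of injectives, hence formal, i.e. quasi-isomorphic to $\Sol(M) \oplus \Sol^{1}(M)[-1]$ in the derived category; (4) apply $R\Hom_{\underline{\F_p}}(\sheafF, -)$ and read off the short exact sequences from this direct sum decomposition. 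Step (3) --- the formality of a bounded complex of injectives with only two nonzero cohomology sheaves --- is the technical heart, but it is standard once one knows $\Ext^{1}_{\underline{\F_p}}(\Sol^1(M)[-1], \Sol(M)) = \Ext^{2}_{\underline{\F_p}}(\Sol^1(M), \Sol(M))$ plays no role because we are mapping \emph{out of} $\sheafF$, not gluing the two pieces; more precisely, the complex splits because its brutal truncation $\sigma^{\leq 0}$ agrees with $\tau^{\leq 0}$ up to the injective $Z^0/B^0$, so I only need that subobjects and quotients appearing are injective, which Corollary \ref{corX75} supplies.
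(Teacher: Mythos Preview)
Your approach coincides with the paper's: the paper simply invokes the Grothendieck spectral sequence
\[
E_2^{s,t}=\Ext^{s}_{\underline{\F_p}}(\sheafF,\Sol^{t}(M))\ \Longrightarrow\ \Ext^{s+t}_{R[F]}(\RH(\sheafF),M)
\]
for the composite $\Hom_{\underline{\F_p}}(\sheafF,-)\circ\Sol\simeq\Hom_{R[F]}(\RH(\sheafF),-)$ (using Corollary~\ref{corX75} to check that $\Sol$ preserves injectives), observes that only the rows $t\in\{0,1\}$ are nonzero by Proposition~\ref{proposition.corX77}, and asserts that the short exact sequences follow. Your construction via an injective resolution $Q^\bullet$ and the complex $\Sol(Q^\bullet)$ is precisely how that spectral sequence is built.

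The gap is in your step (3). A bounded complex of injectives with cohomology concentrated in two degrees is \emph{not} automatically formal: such a complex is classified by a class in $\Ext^2_{\underline{\F_p}}(\Sol^1(M),\Sol(M))$, and nothing you have written forces that class to vanish. Your claim that ``$Z^0/B^0$ is injective'' would need $\Sol(M)$ itself to be injective, which Corollary~\ref{corX75} does not say (it concerns $\Sol$ of an injective object of $\Mod_R^{\perf}$, not $\Sol$ of an arbitrary $M$); and the assertion that $\partial=0$ ``because we are mapping out of $\sheafF$'' is backwards, since $\partial$ is exactly composition with that $\Ext^2$-class and vanishes for all $\sheafF$ if and only if the class itself is zero. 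Equivalently: a two-\emph{row} first-quadrant spectral sequence yields in general only a long exact sequence via $d_2$, and passing to short exact sequences requires $d_2=0$. The paper's own proof is terse on this point and does not address it explicitly either. Note, however, that the only use the paper makes of Corollary~\ref{corX76} is in Corollary~\ref{corX80}, where $M=\RH(\sheafG)$ and hence $\Sol^1(M)=0$ by Proposition~\ref{prop75}; in that case the third term vanishes and there is nothing further to check.
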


\begin{proof}
Since the solution functor $\Sol: \Mod_{R}^{\perf} \rightarrow \Shv_{\mathet}( \Spec(R), \F_p)$ carries
injective objects of $\Mod_{R}^{\perf}$ to injective objects of $\Shv_{\mathet}( \Spec(R), \F_p)$ (Corollary \ref{corX75}),
we have a Grothendieck spectral sequence 
$$ \Ext^{s}_{ \underline{\F_p} }( \sheafF, \Sol^{t}(M) ) \Rightarrow \Ext^{s+t}_{ R[F] }( \RH(\sheafF), M).$$
The existence of the desired short exact sequences now follows from the vanishing of the groups
$\Sol^{t}(M)$ for $t \geq 2$ (Proposition \ref{proposition.corX77}).
\end{proof}

\subsection{Full Faithfulness of the Riemann-Hilbert Functor}\label{sec7sub2}

We are now ready to prove a weak version of Theorem \ref{maintheoXXX}.

\begin{proposition}\label{prop75}
Let $R$ be a commutative $\F_p$-algebra and let $\sheafF$ be a $p$-torsion {\etale} sheaf on $\Spec(R)$.
Then the unit map $u_{\sheafF}: \sheafF \rightarrow \Sol( \RH(\sheafF))$ is an isomorphism and the sheaf $\Sol^{1}( \RH(\sheafF) )$ vanishes.
\end{proposition}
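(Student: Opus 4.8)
The plan is to reduce the statement to the case of a constant sheaf of the form $j_{!}\underline{\F_p}$, where $j\colon R\to R'$ is {\etale}, and then to compute directly. First I would observe that both functors in question behave well with respect to filtered colimits: the Riemann-Hilbert functor $\RH$ preserves colimits (it is a left adjoint), and the derived solution functors $\Sol^{n}$ commute with filtered colimits by Corollary~\ref{corX78}. Since $\sheafF$ can be written as a filtered colimit of constructible sheaves (Proposition~\ref{propX40}), and since constructible sheaves admit two-term resolutions by sheaves of the form $j_{!}\underline{\F_p}$ (Proposition~\ref{propX42}), I would argue that it suffices to prove the proposition when $\sheafF = j_{!}\underline{\F_p}$ for an {\etale} map $j\colon R\to R'$. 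Concretely: choose an exact sequence $\sheafF''\xrightarrow{v}\sheafF'\xrightarrow{u}\sheafF\to 0$ with $\sheafF',\sheafF''$ direct sums of sheaves $j_{!}\underline{\F_p}$; apply $\RH$ (right exact) and then $\Sol$ together with the long exact sequence coming from Proposition~\ref{proposition.corX77}, and chase.

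For the building-block case, I would use the compatibility of $\RH$ with compactly supported direct images (Proposition~\ref{prop68}): $\RH(j_{!}\underline{\F_p})\simeq j_{!}\RH_{R'}(\underline{\F_p})\simeq j_{!}(R'^{\perfection})$, using Proposition~\ref{proposition.ex69}. Then $\Sol(\RH(j_{!}\underline{\F_p}))\simeq \Sol(j_{!}(R'^{\perfection}))$, and one computes this {\etale} sheaf by evaluating on an arbitrary {\etale} $R$-algebra $A$: the adjunction between $j_{!}$ and $j^{\ast}$ (on Frobenius modules and on {\etale} sheaves respectively), together with the identification $\Sol(j^{\diamond}N)\simeq j^{\ast}\Sol(N)$ used in the proof of Proposition~\ref{proposition.easy2}, should give $\Hom_{\underline{\F_p}}(j'_{!}\underline{\F_p},\Sol(j_{!}R'^{\perfection}))$ back in terms of sections of $j_{!}\underline{\F_p}$. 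The cleanest route is probably to verify the unit map $u_{\sheafF}$ is an isomorphism by checking it represents the identity under the defining adjunction $\Hom_{\underline{\F_p}}(\sheafF,\Sol(M))\simeq\Hom_{R[F]}(\RH(\sheafF),M)$: this reduces full faithfulness of $\RH$ to the statement that the counit $\RH(\Sol(M))\to M$ is an isomorphism on the relevant generating objects $M = j_{!}(R'^{\perfection})$, which follows from Proposition~\ref{prop68} and the case of $M=R^{\perfection}$.

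The vanishing of $\Sol^{1}(\RH(\sheafF))$ is where the key input is needed. By Proposition~\ref{proposition.corX77}, $\Sol^{1}(M)=\coker(\id-\widetilde{\varphi}_{M}\colon\widetilde M\to\widetilde M)$, so I must show $\id-\widetilde\varphi_{M}$ is an epimorphism of {\etale} sheaves when $M=\RH(\sheafF)$. For $M=R^{\perfection}$ this is precisely the surjectivity of the Artin--Schreier map $x\mapsto x^p-x$ on the {\etale} site, which is the classical exactness of the Artin--Schreier sequence $0\to\underline{\F_p}\to\widetilde R\to\widetilde R\to 0$; for $M=j_{!}(R'^{\perfection})$ one reduces to this via Proposition~\ref{prop68} again (compactly supported direct images are exact on {\etale} sheaves, and the solution sheaf of $j_{!}N$ is $j_{!}\Sol(N)$). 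The passage from the generating objects to a general constructible, and then general, $\sheafF$ uses the long exact sequence of derived solution functors applied to $\RH(\sheafF'')\to\RH(\sheafF')\to\RH(\sheafF)\to 0$ together with the right-exactness of $\RH$, noting that $\Sol^{n}$ vanishes for $n\geq 2$ so the sequence terminates.

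The main obstacle I anticipate is bookkeeping: one must handle the fact that $\RH$ applied to the two-term resolution $\sheafF''\to\sheafF'\to\sheafF\to 0$ need not stay exact on the left, so the long exact sequence for $\Sol^{*}$ has to be wielded carefully — in particular, showing $u_{\sheafF}$ is an isomorphism and $\Sol^1(\RH\sheafF)=0$ simultaneously, by downward induction through the resolution, rather than treating them in isolation. The actual computation for $j_{!}\underline{\F_p}$ is essentially the Artin--Schreier sequence plus formal adjunction manipulations, so no genuinely new ingredient beyond what is already assembled in \S\ref{section.RH} and \S\ref{sec7sub1} should be required.
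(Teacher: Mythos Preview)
Your strategy is dual to the paper's: you resolve $\sheafF$ on the left by direct sums of sheaves $j_!\underline{\F_p}$ (Proposition~\ref{propX42}), whereas the paper embeds $\sheafF$ into a sheaf $f_*\underline{\F_p}$ for a finite finitely-presented map $f$ (Proposition~\ref{propX44}) and runs a back-and-forth diagram chase between $\sheafF$ and the quotient $\sheafH$. Your inductive step can be made to work, though you will need the full exactness of $\RH$ (Proposition~\ref{proposition.corX33}), not merely right exactness, to identify $\ker(\RH\sheafF'\to\RH\sheafF)$ with $\RH$ of a sheaf and thereby apply the snake lemma cleanly.

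The real gap is in your base case. You need to show $u_{j_!\underline{\F_p}}$ is an isomorphism and $\Sol^1(j_! R'^{\perfection})=0$, and for this you assert that ``the solution sheaf of $j_! N$ is $j_!\Sol(N)$,'' citing Proposition~\ref{prop68}. But Proposition~\ref{prop68} says $\RH\circ j_!\simeq j_!\circ\RH$, not $\Sol\circ j_!\simeq j_!\circ\Sol$; these are not formally interchangeable, since $\Sol$ is a right adjoint while $j_!$ is a left adjoint. Your fallback---reducing to the counit $\RH(\Sol(M))\to M$ being an isomorphism for $M=j_!R'^{\perfection}$---is circular: to identify $\RH(\Sol(j_!R'^{\perfection}))$ you would first need to know $\Sol(j_!R'^{\perfection})\simeq j_!\underline{\F_p}$, which is the claim in question.

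This is exactly why the paper chooses the $f_*$ side. The compatibility $\Sol\circ f_*\simeq f_*\circ\Sol$ is immediate (Proposition~\ref{prop64}: both $\Sol$ and $f_*$ are computed sectionwise as a kernel and a restriction), so the base case $\sheafG=f_*\underline{\F_p}$ reduces directly to Lemma~\ref{tuster} (Artin--Schreier over $B$) via Theorem~\ref{theo76}. No analogous off-the-shelf statement exists for $j_!$ at this point in the paper. Your base case is in fact true, but proving it requires an independent argument---for instance, a stalk computation using base change for $j_!$ (Lemma~\ref{lemX19}) and the fact that an \'etale algebra over a strictly henselian ring splits as a finite product, reducing to copies of the Artin--Schreier case. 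If you add that, your route goes through.
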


We first treat a special case of Proposition \ref{prop75}:

\begin{lemma}\label{tuster}
Let $R$ be a commutative $\F_p$-algebra. Then the unit map $u: \underline{\F_p} \rightarrow \Sol( \RH( \underline{\F_p} ) )$ is an isomorphism and
the sheaf $\Sol^{1}( \RH( \underline{\F_p} ) )$ vanishes.
\end{lemma}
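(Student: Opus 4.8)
The plan is to compute $\RH(\underline{\F_p})$ explicitly using Proposition \ref{proposition.ex69}, which identifies the Riemann–Hilbert associate of the constant sheaf $\underline{\F_p}$ with the perfection $R^{\perf}$ regarded as a Frobenius module over $R$ (equivalently, $R^{\perfection}$ with its tautological Frobenius). Once we have this identification, everything reduces to understanding the solution sheaf $\Sol(R^{\perfection})$ and its first derived functor $\Sol^1(R^{\perfection})$. By Proposition \ref{proposition.corX77}, these are computed by the two-term complex $\widetilde{R^{\perfection}} \xrightarrow{\id - \widetilde{\varphi}} \widetilde{R^{\perfection}}$: namely $\Sol(R^{\perfection})$ is its kernel and $\Sol^1(R^{\perfection})$ is its cokernel (as {\etale} sheaves), and $\Sol^n$ vanishes for $n\ge 2$. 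So the entire statement of the lemma becomes the single assertion that the sequence of {\etale} sheaves
$$ 0 \rightarrow \underline{\F_p} \rightarrow \widetilde{R^{\perfection}} \xrightarrow{\id - \widetilde{\varphi}} \widetilde{R^{\perfection}} \rightarrow 0$$
is exact, where the first map sends a locally constant $\F_p$-valued function to the corresponding (locally constant, hence $\varphi$-fixed) element of $R^{\perfection}$.

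Next I would verify this exactness stalkwise, i.e. after passing to a geometric point, or more concretely by checking the sheaf conditions directly on {\etale} $R$-algebras $A$. Exactness on the left and in the middle amounts to the statement that for any {\etale} $R$-algebra $A$, the $\varphi$-fixed points of $A \otimes_R R^{\perfection} \simeq A^{\perfection}$ (using Corollary \ref{corX14}, since $R \to A$ is {\etale}, so that extension of scalars preserves perfectness and $A\otimes_R R^{\perfection}$ is the perfection of $A$) are exactly the locally constant $\F_p$-valued functions on $\Spec(A)$; this is precisely the classical fact that the Artin–Schreier sequence $0 \to \F_p \to S \xrightarrow{x \mapsto x^p - x} S \to 0$ has $\F_p$ as its kernel for any reduced (indeed, perfect) ring $S$, combined with the observation that $\Spec(A^{\perfection}) \to \Spec(A)$ is a universal homeomorphism. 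Surjectivity of $\id - \widetilde\varphi$ as a map of {\etale} sheaves is the heart of the matter: given an element of $A^{\perfection}$, one must produce an {\etale} cover over which the equation $x^p - x = a$ has a solution — which is exactly the defining property of an Artin–Schreier cover (the algebra $A^{\perfection}[x]/(x^p - x - a)$ is {\etale} and faithfully flat over $A^{\perfection}$), together with the fact that such a cover descends to an {\etale} cover of $\Spec(A)$ itself since $A^{\perfection}$ is a filtered colimit of finite {\etale}—no, rather purely inseparable—extensions; one handles this by writing $a$ as coming from $A^{1/p^n}$ for some $n$ and working there.

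The main obstacle I expect is the surjectivity statement, specifically the bookkeeping needed to pass between $R^{\perfection}$ and its finite-level approximations $R^{1/p^n}$ so that one can invoke the standard Artin–Schreier theory (which is cleanest over an actual {\etale} site) while still landing in the {\etale} topology of $\Spec(R)$ rather than of $\Spec(R^{\perfection})$. Since $\Spec(R^{\perfection}) \to \Spec(R)$ induces an equivalence of {\etale} sites (a universal homeomorphism does), this is harmless, but it should be stated carefully — and this is presumably also why Proposition \ref{proposition.corX77} is phrased to allow $R$ non-reduced. An alternative, and perhaps slicker, route that avoids the derived-functor formalism: show directly that $\RH(\underline{\F_p}) \simeq R^{\perfection}$ already gives the unit map as the canonical inclusion $\underline{\F_p} \hookrightarrow \Sol(R^{\perfection})$, and that this is an isomorphism because both sides are the {\etale} sheafification of the constant presheaf $\F_p$ — using that the $\varphi$-fixed points presheaf $A \mapsto (A^{\perfection})^{\varphi = 1}$ is already a sheaf (it is $\Sol$ of something) whose sheafification agrees with $\underline{\F_p}$ by Artin–Schreier. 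Either way, the content is entirely the exactness of the Artin–Schreier sequence in the {\etale} topology, which is standard; I would simply cite it and spend the writeup on the reduction to that fact.
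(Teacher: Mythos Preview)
Your proposal is correct and follows essentially the same approach as the paper: identify $\RH(\underline{\F_p})$ with $R^{\perfection}$ via Proposition \ref{proposition.ex69}, invoke Proposition \ref{proposition.corX77} to reduce both claims to the exactness of the Artin--Schreier sequence, and then cite that exactness as standard. The only cosmetic difference is that the paper replaces $R$ by its perfection at the outset (using the equivalence of \'etale sites under $\Spec(R^{\perfection}) \to \Spec(R)$, which you also note), so the sequence is written as $0 \to \underline{\F_p} \to \widetilde{R} \to \widetilde{R} \to 0$ rather than with $R^{\perfection}$; this sidesteps the bookkeeping you anticipated as the ``main obstacle.''
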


\begin{proof}
Without loss of generality, we may assume that $R$ is perfect. Using Propositions \ref{proposition.ex69} and \ref{proposition.corX77}, we see that Lemma \ref{tuster} is equivalent to the exactness of the Artin-Schreier sequence
\[ 0 \rightarrow \underline{\F_p} \rightarrow \widetilde{R} \xrightarrow{ \id - \varphi } \widetilde{R} \rightarrow 0\]
in the category $\Shv_{\mathet}( \Spec(R), \F_p)$.
\end{proof}

\begin{proof}[Proof of Proposition \ref{prop75}]
Using Corollary \ref{corX78} and Proposition \ref{propX40}, we can reduce to the case where the sheaf $\sheafF$ is constructible.
Using Proposition \ref{propX44}, we can choose an exact sequence of constructible sheaves 
$$0 \rightarrow \sheafF \rightarrow \sheafG \rightarrow \sheafH \rightarrow 0,$$
where $\sheafG = f_{\ast} \underline{\F_p}$ for some $\F_p$-algebra homomorphism $f: R \rightarrow A$ which is
finite and of finite presentation. Using Proposition \ref{proposition.corX33} and Proposition \ref{proposition.corX77}, we obtain a commutative diagram 
$$ \xymatrix{ 0 \ar[d] & 0 \ar[d] \\
\sheafF \ar[r]^-{ u_{\sheafF } } \ar[d] & \Sol( \RH(\sheafF) ) \ar[d] \\
\sheafG \ar[r]^-{ u_{\sheafG} } \ar[d] & \Sol( \RH(\sheafG) ) \ar[d] \\
\sheafH \ar[r]^-{ u_{\sheafH} } \ar[d] & \Sol( \RH(\sheafH) ) \ar[d]^{\delta} \\
0 \ar[r] & \Sol^{1}( \RH(\sheafF) ) \ar[d] \\
& \Sol^{1}( \RH(\sheafG) ) }$$
whose columns are exact. It follows from Lemma \ref{tuster}, Theorem \ref{theo76}, and Proposition \ref{prop64} that $u_{\sheafG}$ is an isomorphism and
$\Sol^{1}( \RH(\sheafG) )$ vanishes. Inspecting the diagram, we deduce that $u_{\sheafF}$ is a monomorphism. 
Applying the same argument to $\sheafH$, we see that $u_{\sheafH}$ is also monomorphism, so a diagram chase shows that
$u_{\sheafF}$ is an epimorphism. Applying the same argument to $\sheafH$, we conclude that $u_{\sheafH}$ is also an epimorphism.
The commutativity of the diagram shows that $\delta \circ u_{\sheafH}$ vanishes, so that $\delta = 0$. Since $\delta$ is an epimorphism,
we conclude that $\Sol^{1}( \RH(\sheafF) ) \simeq 0$.
\end{proof}

It follows from Proposition \ref{prop75} that the Riemann-Hilbert functor is fully faithful, even at the ``derived'' level:

\begin{corollary}\label{corX80}
Let $R$ be a commutative $\F_p$-algebra. Then the Riemann-Hilbert functor $\RH: \Shv_{\mathet}( \Spec(R), \F_p) \rightarrow \Mod_{R}^{\perf}$ is fully faithful.
Moreover, for every pair of objects $\sheafF, \sheafG \in \Shv_{\mathet}( \Spec(R), \F_p)$, the induced map
$\Ext^{\ast}_{\underline{\F_p}}( \sheafF, \sheafG ) \rightarrow \Ext^{\ast}_{R[F]}( \RH(\sheafF), \RH(\sheafG) )$ is an isomorphism.
\end{corollary}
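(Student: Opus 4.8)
The plan is to deduce Corollary~\ref{corX80} formally from Proposition~\ref{prop75} and the spectral-sequence comparison of Corollary~\ref{corX76}. Fix $p$-torsion {\etale} sheaves $\sheafF$ and $\sheafG$ on $\Spec(R)$, and apply Corollary~\ref{corX76} with $M = \RH(\sheafG)$ (which is a perfect Frobenius module by Theorem~\ref{theorem.RHexist}). This yields, for each $n \geq 0$, a short exact sequence
\[ 0 \rightarrow \Ext^{n}_{\underline{\F_p}}(\sheafF, \Sol(\RH(\sheafG))) \rightarrow \Ext^{n}_{R[F]}(\RH(\sheafF), \RH(\sheafG)) \rightarrow \Ext^{n-1}_{\underline{\F_p}}(\sheafF, \Sol^{1}(\RH(\sheafG))) \rightarrow 0. \]
By Proposition~\ref{prop75}, the unit map $u_{\sheafG}\colon \sheafG \rightarrow \Sol(\RH(\sheafG))$ is an isomorphism and $\Sol^{1}(\RH(\sheafG)) \simeq 0$. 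Substituting these facts, the rightmost term vanishes and the leftmost term is identified with $\Ext^{n}_{\underline{\F_p}}(\sheafF, \sheafG)$, so the middle map becomes an isomorphism $\Ext^{n}_{\underline{\F_p}}(\sheafF, \sheafG) \xrightarrow{\sim} \Ext^{n}_{R[F]}(\RH(\sheafF), \RH(\sheafG))$.

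It then remains to identify this isomorphism with the functoriality map induced by $\RH$ — which makes sense since $\RH$ is exact (Proposition~\ref{proposition.corX33}) and therefore acts on Yoneda $\Ext$-groups. The map in Corollary~\ref{corX76} is the edge homomorphism of the Grothendieck spectral sequence $\Ext^{s}_{\underline{\F_p}}(\sheafF, \Sol^{t}(M)) \Rightarrow \Ext^{s+t}_{R[F]}(\RH(\sheafF), M)$ attached to the factorization $\Hom_{R[F]}(\RH(\sheafF), -) \simeq \Hom_{\underline{\F_p}}(\sheafF, -) \circ \Sol$ (valid because $\Sol$ carries injective objects of $\Mod_{R}^{\perf}$ to injective {\etale} sheaves, Corollary~\ref{corX75}). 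I would check, by chasing an injective resolution of $\RH(\sheafG)$ in $\Mod_{R}^{\perf}$, that precomposing this edge map with $(u_{\sheafG})_{\ast}$ recovers the map $\alpha \mapsto \RH(\alpha)$; this is the standard compatibility between the unit of an adjunction and the edge maps of the associated Grothendieck spectral sequence. Taking $n = 0$ gives in particular the full faithfulness of $\RH$.

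The only step requiring real care is this last identification of natural transformations; the substitution in the previous paragraph is immediate. For $n = 0$ the identification is in any case forced by the triangle identities for the adjunction $\RH \dashv \Sol$, so the full-faithfulness assertion presents no obstacle, and only the statement about higher $\Ext$-groups leans on the compatibility remark.
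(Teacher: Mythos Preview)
Your proof is correct and follows exactly the route the paper takes: its entire proof reads ``Combine Proposition~\ref{prop75} with Corollary~\ref{corX76},'' which is precisely your first paragraph. Your additional care about identifying the resulting isomorphism with the map induced by the exact functor $\RH$ on Yoneda $\Ext$-groups is a legitimate point that the paper's one-line proof leaves implicit; your sketch of how to verify it (via the unit of the adjunction and the edge map of the Grothendieck spectral sequence) is the standard and correct argument.
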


\begin{proof}
Combine Proposition \ref{prop75} with Corollary \ref{corX76}.
\end{proof}

\subsection{The Case of a Field}\label{sec7sub3}

It follows from Corollary \ref{corX80} that, for any commutative $\F_p$-algebra $R$, the functor
$$ \RH^{c}: \Shv_{\mathet}^{c}( \Spec(R), \F_p ) \rightarrow \Mod_{R}^{\hol}$$
of Notation \ref{notation.RHc} is fully faithful. We now show that it is an equivalence in the special case where
$R$ is a field, which is essentially a restatement of Theorem \ref{theorem-folk}: 

\begin{proposition}\cite[Proposition 4.1.1]{KatzPadic}\label{proposition.lemX51}
Let $\kappa$ be a field of characteristic $p$. Then the functor
$\RH^{c}: \Shv_{\mathet}^{c}( \Spec(\kappa), \F_p) \rightarrow \Mod_{\kappa}^{\hol}$ is an equivalence of categories.
\end{proposition}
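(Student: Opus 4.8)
We already know from Corollary \ref{corX80} that $\RH^{c} \colon \Shv_{\mathet}^{c}(\Spec(\kappa),\F_p) \to \Mod_{\kappa}^{\hol}$ is fully faithful, so the only thing left is essential surjectivity: every holonomic Frobenius module over $\kappa$ lies in the image. The strategy is to identify both sides explicitly with linear-algebraic data and then invoke Theorem \ref{theorem.folk} (Katz's theorem). On the sheaf side, an {\etale} sheaf on $\Spec(\kappa)$ is the same as a discrete $\Gal(\overline{\kappa}/\kappa)$-module, and a constructible one is a finite such module; so $\Shv_{\mathet}^{c}(\Spec(\kappa),\F_p)$ is the category of finite-dimensional $\F_p$-vector spaces with a continuous Galois action. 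On the module side, I would first reduce $\kappa$ to a perfect field: since perfection of the base does not change $\Mod^{\perf}$ (Proposition \ref{prop5}) nor $\Shv_{\mathet}$ (pullback along $\kappa \to \kappa^{\perfection}$ is an equivalence of {\etale} topoi, as $\kappa \hookrightarrow \kappa^{\perfection}$ is a universal homeomorphism), the functor $\RH^c$ for $\kappa$ is identified with the one for $\kappa^{\perfection}$. So assume $\kappa$ perfect.

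For $\kappa$ perfect, a holonomic Frobenius module is by definition $M_0^{\perfection}$ with $M_0$ finitely presented over $\kappa$, i.e.\ finite-dimensional; since $\kappa$ is perfect the Frobenius-semilinear structure map on a finite-dimensional $\kappa$-vector space already has the form considered in Theorem \ref{theorem.folk}, but it need not be bijective, so $M_0^{\perfection}$ is obtained by inverting $\varphi$. The key observation is that for a finite-dimensional $\kappa$-vector space $M_0$ with Frobenius-semilinear $\varphi_{M_0}$, one has a canonical decomposition $M_0 = M_0^{\mathrm{bij}} \oplus M_0^{\mathrm{nil}}$ into the part on which $\varphi$ is bijective and the part on which it is nilpotent (Fitting decomposition for the $\varphi$-action), and $M_0^{\perfection} = (M_0^{\mathrm{bij}})^{\perfection} \simeq M_0^{\mathrm{bij}}$ with $\varphi$ now bijective. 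Hence the essential image of holonomic modules is exactly the category of finite-dimensional $\kappa$-vector spaces equipped with a \emph{bijective} Frobenius-semilinear automorphism — which by Theorem \ref{theorem.folk} (applied with $k = \kappa$, $\overline{k} = \overline{\kappa}$) is equivalent to finite-dimensional continuous $\Gal(\overline{\kappa}/\kappa)$-representations over $\F_p$, i.e.\ to $\Shv_{\mathet}^{c}(\Spec(\kappa),\F_p)$.

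So concretely: given a holonomic $M = M_0^{\perfection}$ over perfect $\kappa$, reduce to the bijective case, let $V = (M \otimes_{\kappa} \overline{\kappa})^{\varphi = 1}$ be the corresponding Galois module per Katz, form the constructible sheaf $\sheafF_V$ on $\Spec(\kappa)$ associated to $V$, and check $\RH^c(\sheafF_V) \simeq M$. The last check is cleanest done via the solution functor: since $\RH^c$ is fully faithful it suffices to produce an isomorphism $\Sol(M) \simeq \sheafF_V$ of {\etale} sheaves together with the unit/counit compatibility; but $\Sol(M)(A) = (A \otimes_{\kappa} M)^{\varphi = 1}$, and for $A$ a finite separable extension (or $\overline{\kappa}$) this recovers exactly the fixed points computed in Katz's equivalence, so $\Sol(M)$ is precisely the sheaf $\sheafF_V$ attached to $V$. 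Then $\RH^c(\sheafF_V) \to M$ is the counit of the adjunction; it is an isomorphism because applying $\Sol$ to it gives $\Sol(\RH^c(\sheafF_V)) \simeq \sheafF_V \simeq \Sol(M)$ (the first isomorphism by Proposition \ref{prop75}, full faithfulness at the level of $\Sol$) and $\Sol$ is conservative on the image of $\RH^c$ — or, more directly, because $M$ is in the essential image by the linear-algebra identification above and $\RH^c$ is fully faithful, so $\RH^c(\Sol(M)) \simeq M$.

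**Main obstacle.** The genuinely nontrivial input is Theorem \ref{theorem.folk} itself, which we are permitted to assume. Granting that, the only real work is the bookkeeping in two reductions — passing from a general field $\kappa$ to its perfection, and the Fitting decomposition stripping off the nilpotent part of $\varphi$ on a finite-dimensional module — together with checking that the solution sheaf of a bijective holonomic module is exactly the constructible sheaf attached to the Galois representation produced by Katz. None of these is deep; the care needed is in verifying that pullback along $\kappa \to \kappa^{\perfection}$ really is an equivalence on both {\etale} sheaves and on $\Mod^{\perf}$ and is compatible with $\RH$ and $\Sol$ (this last compatibility is Proposition \ref{prop70} / Proposition \ref{prop64}), so that the reduction to the perfect case is legitimate.
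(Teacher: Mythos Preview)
Your proposal is correct, but takes a genuinely different route from the paper. The paper's proof is designed to be \emph{self-contained} and to \emph{reprove} Katz's theorem rather than invoke it (as the introduction explicitly says: ``which we reprove here as Proposition \ref{proposition.lemX51}''). Concretely, the paper first proves the algebraically closed case as Lemma \ref{lem81} by a direct argument: given a nonzero holonomic $M$ over an algebraically closed $\kappa$, one picks a nonzero $x \in M$, uses algebraicity to produce a minimal relation $\varphi_M^n(x) + \lambda_1 \varphi_M^{n-1}(x) + \cdots + \lambda_n x = 0$, and then explicitly constructs a $\varphi_M$-fixed element as a $\kappa$-linear combination of the $\varphi_M^i(x)$ by solving a separable polynomial. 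For a general field $\kappa$, the paper then bootstraps via a finite separable extension $\kappa'/\kappa$ over which the base-change of $M$ lies in the essential image, and uses the compatibility of $\RH$ with \emph{finite direct images} (Theorem \ref{theo76} and Proposition \ref{propX45}) to descend.

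Your approach instead reduces to a perfect base field (using Proposition \ref{prop5} and topological invariance of the {\etale} site), identifies $\Mod_\kappa^{\hol}$ with finite-dimensional $\kappa$-spaces carrying a bijective Frobenius via the Fitting decomposition, and then appeals directly to Theorem \ref{theorem.folk}. This is valid and arguably more conceptual if one is willing to take Katz's theorem as given. What the paper's route buys is independence from Theorem \ref{theorem.folk}: everything is derived from the machinery already built (algebraicity, holonomicity, the Noetherian property of $\Mod_\kappa^{\alg}$, and compatibility with finite pushforward), so Proposition \ref{proposition.lemX51} genuinely \emph{recovers} Katz's result rather than consuming it. What your route buys is brevity and a clean identification of both categories with classical linear-algebraic data; the only nontrivial verification is that the equivalences $\Mod_\kappa^{\perf} \simeq \Mod_{\kappa^{\perfection}}^{\perf}$ and $\Shv_{\mathet}(\Spec(\kappa)) \simeq \Shv_{\mathet}(\Spec(\kappa^{\perfection}))$ respect holonomicity and intertwine the two Riemann--Hilbert functors via Proposition \ref{prop70}, which you correctly flag.
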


We begin by treating the case where $\kappa$ is algebraically closed (compare \cite[\S III, Lemma 3.3]{CL}):

\begin{lemma}\label{lem81}
Let $\kappa$ be an algebraically closed field of characteristic $p$. Then the functor
$\RH^{c}: \Shv_{\mathet}^{c}( \Spec(\kappa), \F_p) \rightarrow \Mod_{\kappa}^{\hol}$ is an equivalence of categories.
\end{lemma}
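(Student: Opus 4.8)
The plan is to exploit that the Riemann--Hilbert functor is \emph{already} known to be fully faithful (Corollary \ref{corX80}), so that only the essential surjectivity of $\RH^{c}\colon \Shv_{\mathet}^{c}(\Spec(\kappa),\F_p)\to\Mod_{\kappa}^{\hol}$ remains. First I would make both sides explicit. Since $\kappa$ is algebraically closed, every {\etale} $\kappa$-algebra is a finite product of copies of $\kappa$, so a constructible $p$-torsion {\etale} sheaf on $\Spec(\kappa)$ is just a finite-dimensional $\F_p$-vector space $V$ (regarded as the constant sheaf $\underline{V}$), and $\RH^{c}(\underline{V})=V\otimes_{\F_p}\kappa$ with Frobenius $v\otimes\lambda\mapsto v\otimes\lambda^{p}$: this uses Proposition \ref{proposition.ex69} and Example \ref{exX6} (which give $\RH(\underline{\F_p})=\kappa$, as $\kappa$ is perfect), together with the fact that $\RH$ is a left adjoint (Theorem \ref{theorem.RHexist}) and hence commutes with the finite direct sum $\underline{V}\simeq\bigoplus\underline{\F_p}$, and Theorem \ref{theo77}. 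On the other side, because $\kappa$ is perfect, a holonomic Frobenius module over $\kappa$ is (by Definition \ref{defhol2} and a Fitting decomposition of $\varphi_{M_0}$) the same datum as a finite-dimensional $\kappa$-vector space $M$ equipped with a \emph{bijective} Frobenius-semilinear endomorphism $\varphi_M$ --- that is, a perfect Frobenius module which is finite-dimensional over $\kappa$.

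Given such an $M$, essential surjectivity amounts to showing that the counit $\epsilon_M\colon\RH(\Sol(M))\to M$ is an isomorphism. Write $V=\Sol(M)(\kappa)=\{x\in M:\varphi_M(x)=x\}$. A standard argument --- take a $\kappa$-linear dependence among fixed vectors with the fewest nonzero terms, apply $\varphi_M$, and subtract --- shows that $\F_p$-linearly independent fixed vectors are automatically $\kappa$-linearly independent; in particular $\dim_{\F_p}V\le\dim_{\kappa}M<\infty$, so $\Sol(M)$ is the finite constant sheaf $\underline{V}$ and $\epsilon_M$ is identified with the evaluation map $V\otimes_{\F_p}\kappa\to M$, $v\otimes\lambda\mapsto\lambda v$. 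This map is $\kappa$-linear, Frobenius-equivariant (check directly: $v\otimes\lambda\mapsto v\otimes\lambda^{p}\mapsto\lambda^{p}v$ on one side, $\varphi_M(\lambda v)=\lambda^{p}v$ on the other), and injective by the preceding remark. So the whole lemma reduces to the numerical identity $\dim_{\F_p}V=\dim_{\kappa}M$.

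For this I would analyze the Artin--Schreier map $\id-\varphi_M\colon M\to M$ as a morphism of $\kappa$-schemes $\mathbf{A}^{n}_{\kappa}\to\mathbf{A}^{n}_{\kappa}$, with $n=\dim_{\kappa}M$: after choosing a $\kappa$-basis it has the form $x_i\mapsto x_i-\sum_j a_{ij}x_j^{p}$ with $(a_{ij})\in\mathrm{GL}_n(\kappa)$ (invertibility being exactly the bijectivity of $\varphi_M$). Its Jacobian is the identity matrix, so the morphism is {\etale}; it is finite (each $x_i$ is integral over the subring generated by the $x_i-\sum_j a_{ij}x_j^{p}$, so $\kappa[x_1,\dots,x_n]$ is a finite module over it), hence finite {\etale} of degree $p^{n}$. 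Since $\kappa$ is algebraically closed, the fibre of a finite {\etale} degree-$p^{n}$ morphism over a $\kappa$-point has exactly $p^{n}$ points; applying this to the fibre over $0$, which is precisely $\ker(\id-\varphi_M)=V$, yields $|V|=p^{n}$, i.e.\ $\dim_{\F_p}V=n$. (Alternatively one may decompose $M$ into $\varphi_M$-cyclic subspaces and reduce to checking that a one-variable separable additive polynomial of degree $p^{d}$ has $p^{d}$ roots over $\kappa$ --- the route suggested by the comparison with \cite[\S III, Lemma 3.3]{CL}.)

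The only genuine obstacle is this last count --- the étale-ness and surjectivity of the Artin--Schreier map on finite-dimensional Frobenius modules --- which is the characteristic-$p$ form of Lang's theorem (equivalently, additive Hilbert 90) and is entirely classical; everything else is bookkeeping with facts already established in the paper, namely the full faithfulness of $\RH$, the value of $\RH$ on constant sheaves, and the structure of holonomic modules over a perfect field.
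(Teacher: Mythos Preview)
Your proof is correct, but the route differs from the paper's. The paper argues by contradiction via Noetherian induction: using Proposition~\ref{propX29} it takes a maximal subobject $M'\subseteq M$ lying in the essential image, replaces $M$ by $M/M'$, and then shows that any nonzero holonomic $M$ contains a nonzero Frobenius-fixed element~$y$. This last step is done constructively: given $x\in M$ satisfying a minimal relation $\varphi_M^n(x)+\lambda_1\varphi_M^{n-1}(x)+\cdots+\lambda_n x=0$, one chooses a nonzero root $a$ of the associated separable additive polynomial $t^{p^n}+\lambda_1^{p^{n-1}}t^{p^{n-1}}+\cdots+\lambda_n t$ and writes down $y$ explicitly as a combination of $x,\varphi_M(x),\ldots,\varphi_M^{n-1}(x)$ with coefficients built from $a$. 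By contrast, you bypass the dévissage entirely and go straight for the dimension identity $\dim_{\F_p}V=\dim_\kappa M$, deducing that the counit is an isomorphism.

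Your approach is more quantitative and arguably more conceptual (the single geometric input is that $\id-\varphi_M$ is a finite étale isogeny of $\mathbb{G}_a^n$), whereas the paper's approach is more elementary and self-contained. One caution: the assertion that the Artin--Schreier map has degree exactly $p^n$ is not quite immediate from finiteness and étaleness alone --- since the map is a group-scheme homomorphism, all fibres are translates of the kernel~$V$, so ``degree $=|V|$'' is what you are trying to prove. You can break this circularity either by your parenthetical cyclic-decomposition argument (which is airtight, and in fact rederives the paper's separable-additive-polynomial count), or by homogenizing to a degree-$p$ self-map of $\mathbf{P}^n$ and reading off the degree as $p^n$ from $\phi^\ast\mathcal{O}(1)\simeq\mathcal{O}(p)$.
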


\begin{proof}
Using Corollary \ref{corX80} and Proposition \ref{proposition.corX33}, we see that $\RH^{c}$ is a fully faithful embedding whose essential image $\calC \subseteq \Mod_{\kappa}^{\hol}$ is an abelian subcategory which is closed under extensions. We wish to show that $\calC$ contains every object $M \in \Mod_{\kappa}^{\hol}$.
Applying Proposition \ref{propX29}, we see that $M$ is a Noetherian object of the abelian category $\Mod_{\kappa}^{\alg}$. Consequently, there exists
a subobject $M' \subseteq M$ (in the abelian category $\Mod_{\kappa}^{\alg}$) which is maximal among those subobjects which belong to $\calC$. 
It follows from the maximality of $M'$ (and the stability of $\calC$ under extensions) that the quotient $M / M'$ does not contain any nonzero subobjects which belong to
$\calC$. Replacing $M$ by $M / M'$, we can reduce to the case where $M$ does not have any nonzero subobjects which belong to $\calC$.

Suppose that $M$ is nonzero. Choose a nonzero element $x \in M$. Since $M$ is algebraic, the element $x$ satisfies an equation
$$ \varphi_{M}^{n}(x) + \lambda_1 \varphi_M^{n-1}(x) + \cdots + \lambda_n x = 0$$
for some coefficients $\lambda_1, \lambda_2, \ldots, \lambda_n \in \kappa$.
We may assume that $x$ has been chosen so that $n$ is as small as possible; this guarantees that the set
$\{ x, \varphi_{M}(x), \ldots, \varphi_{M}^{n-1}(x) \}$ is linearly independent over $\kappa$, and therefore $\lambda_n \neq 0$.
Since $x \neq 0$, we must have $n > 0$.

Note that $$ f(t) = t^{p^{n}} + \lambda_1^{p^{n-1}} t^{p^{n-1}} + \lambda_2^{p^{n-2}} t^{p^{n-2}}
+ \cdots + \lambda_n x$$
is a separable polynomial of degree $p^{n} > 1$, and therefore has
$p^{n}$ distinct roots in the field $\kappa$. Consequently, there exists a nonzero
element $a \in \kappa$ such that $f(a) = 0$.
Let $$y = a x + (a^p + a \lambda_1) \varphi_{M}(x) +
\cdots + ( a^{p^{n-1}} + a^{p^{n-2}} \lambda_1^{p^{n-2}} + \cdots
+ a \lambda_{n-1} ) \varphi_{M}^{n-1}(x).$$
Since the elements $\{ \varphi_{M}^{i}(x) \}_{0 \leq i < n}$ are linearly independent
and $a \neq 0$, $y$ is a nonzero element of $M$. An explicit calculation gives
\begin{eqnarray*}
y-\varphi_{M}(y) & = &  ax + \sum_{0 < i < n} a \lambda_{i} \varphi_{M}^{i}(x) + (a \lambda_n - f(a) ) \varphi_{M}^{n}(x) \\
& = & a( x + \lambda_1 \varphi_M(x) + \cdots + \lambda_n \varphi_{M}^{n}(x) ) \\
& = & 0.\end{eqnarray*}
It follows that $y$ generates a nonzero Frobenius submodule of $M$ which is isomorphic to
$\kappa \simeq \RH( \underline{ \F_p } )$, contradicting our assumption that $M$ does not contain any nonzero subobjects which belong to $\calC$.
\end{proof}

\begin{proof}[Proof of Proposition \ref{proposition.lemX51}]
Let $\kappa$ be an arbitrary field of characteristic $p$. As in the proof of Lemma \ref{lem81}, we see that the functor
$\RH^{c}: \Shv_{\mathet}^{c}( \Spec(\kappa), \F_p) \rightarrow \Mod_{\kappa}^{\hol}$ is a fully faithful embedding whose essential image 
$\calC \subseteq \Mod_{\kappa}^{\hol}$ is an abelian category which is closed under extensions. We wish to show that $\calC$ contains every object $M \in \Mod_{\kappa}^{\hol}$. Let $\overline{\kappa}$ be an algebraic closure of
$\kappa$. Lemma \ref{lem81} shows that $(\overline{\kappa} \otimes_{\kappa} M)^{\perfection} \in \Mod_{\kappa}^{\hol}$ belongs to the essential image
of the functor $\RH^{c}: \Shv_{ \overline{\kappa} }^{c} \rightarrow \Mod_{ \overline{\kappa} }^{\hol}$. Using a direct limit argument, we see that
there exists a finite algebraic extension $\kappa'$ of $\kappa$ such that $M' = (\kappa' \otimes_{\kappa} M)^{\perfection}$ belongs to the essential image of
the functor $\RH^{c}: \Shv_{ \kappa'}^{c} \rightarrow \Mod_{\kappa'}^{\hol}$. By restriction of scalars, we can regard $M'$ as an object of
$\Mod_{\kappa}^{\hol}$ (Proposition \ref{propX45}), and the resulting object belongs to the subcategory $\calC$ (Theorem \ref{theo76}).
We have an evident monomorphism $M \rightarrow M'$ in the abelian category $\Mod_{\kappa}^{\hol}$. Applying the same argument
to the quotient $M/ M'$, we can choose a monomorphism $M / M' \hookrightarrow M''$ for some $M'' \in \calC$. It follows that
$M$ can be identified with the kernel of the composite map $M' \rightarrow M/ M' \hookrightarrow M''$, and therefore belongs to $\calC$
(since $\calC$ is an abelian subcategory of $\Mod_{\kappa}^{\hol}$).
\end{proof}

\subsection{Proof of the Main Theorem}\label{sec7sub4}

We now generalize Proposition \ref{proposition.lemX51} to the case of an arbitrary $\F_p$-algebra:

\begin{theorem}\label{companion}
Let $R$ be a commutative $\F_p$-algebra. Then the Riemann-Hilbert functor $\RH^{c}: \Shv_{\mathet}^{c}( \Spec(R), \F_p) \rightarrow \Mod_{R}^{\hol}$ 
(see Notation \ref{notation.RHc}) is an equivalence of categories.
\end{theorem}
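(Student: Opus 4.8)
The goal is to prove essential surjectivity of $\RH^c$; full faithfulness is Corollary \ref{corX80}. Write $\calC \subseteq \Mod_R^{\hol}$ for the essential image of $\RH^c$. Since $\RH$ is exact (Proposition \ref{proposition.corX33}), $\Mod_R^{\hol}$ is closed under kernels, cokernels and extensions in $\Mod_R^{\perf}$ (Corollary \ref{corX30}), $\Shv_{\mathet}^c(\Spec(R),\F_p)$ is an abelian category closed under extensions in $\Shv_{\mathet}(\Spec(R),\F_p)$, and $\RH^c$ induces isomorphisms on $\Ext^1$-groups (Corollary \ref{corX80}), the subcategory $\calC$ is a full abelian subcategory of $\Mod_R^{\hol}$ closed under extensions; in particular, kernels, cokernels and extensions of objects of $\calC$ (formed in $\Mod_R^{\hol}$) again lie in $\calC$. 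So it suffices to show that every holonomic $M$ lies in $\calC$. Using Proposition \ref{propX52}, the compatibility of $\RH$ with pullback (Proposition \ref{prop70}), and the fact that pullbacks of constructible sheaves are constructible, we reduce to the case where $R$ is finitely generated over $\F_p$, hence Noetherian. Moreover, by Kashiwara's theorem (Theorem \ref{theorem.kashiwara}) every holonomic $M$ over $R$ is a holonomic module over $R_{\mathrm{red}}$, and since $R \to R_{\mathrm{red}}$ is finite of finite presentation, Theorem \ref{theo76} lets us replace $R$ by $R_{\mathrm{red}}$; so we may assume $R$ is reduced and Noetherian.

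Now we run a Noetherian induction on the topological space $\Spec(R)$, always among reduced rings. Fix $M \in \Mod_R^{\hol}$; by Theorem \ref{theo74} its support is constructible. If $\overline{\supp(M)}$ is a proper closed subset $V(J) \subsetneq \Spec(R)$ with $J$ radical, then Theorem \ref{theorem.kashiwara} makes $M$ a holonomic module over the reduced ring $R/J$; the inductive hypothesis gives $M \in \calC_{R/J}$, and pushing forward along the finite finitely presented map $R \to R/J$ (Theorem \ref{theo76}, Proposition \ref{propX43}; restriction of scalars along this surjection recovers $M$) puts $M$ in $\calC_R$. So we may assume $\supp(M)$ is dense, hence contains every generic point of $\Spec(R)$. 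If $\dim R = 0$ then $R$ is a finite product of fields and we are done by Proposition \ref{proposition.lemX51}. Otherwise choose a non-zero-divisor $t \in R$ that is not a unit and with $R[t^{-1}]$ a finite product of domains, and set $j\colon R \to R[t^{-1}]$.

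At each generic point $\eta$ of $\Spec(R)$, the module $f_{\eta}^{\diamond} M$ is holonomic over the field $\kappa(\eta)$, hence lies in the essential image of $\RH^c$ over $\Spec(\kappa(\eta))$ by Proposition \ref{proposition.lemX51}. Since $\Shv_{\mathet} = \Ind(\Shv_{\mathet}^c)$ (Proposition \ref{propX40}) and $\Mod^{\alg} = \Ind(\Mod^{\hol})$ (Theorem \ref{theoX54}), holonomic modules are compact (Proposition \ref{propX55}) and spread out (Proposition \ref{propX52}), and $\RH$ preserves colimits and commutes with $f^{\diamond}$, a spreading-out argument gives, after enlarging $t$, a constructible sheaf $\sheafG$ on $\Spec(R[t^{-1}])$ with $\RH^c(\sheafG) \simeq j^{\diamond} M$; that is, $j^{\diamond} M \in \calC_{R[t^{-1}]}$. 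Consider the counit $\epsilon\colon j_{!} j^{\diamond} M \to M$ of the adjunction $(j_{!}, j^{\diamond})$ on algebraic modules (Theorem \ref{theoX15}). By Proposition \ref{propX12} the composite $j^{\diamond} j_{!}$ is canonically the identity (extension by zero followed by restriction), so $j^{\diamond}\epsilon$ is an isomorphism; as $j^{\diamond}$ is exact on algebraic modules (Corollary \ref{lushin}), $\ker(\epsilon)$ and $\coker(\epsilon)$ are holonomic with support contained in $V(t)$, a proper closed subset, so both lie in $\calC_R$ by the first part of the induction. On the other hand $j_{!} j^{\diamond} M \simeq j_{!}\RH^c(\sheafG) \simeq \RH^c(j_{!}\sheafG)$ by Proposition \ref{prop68}, and $j_{!}\sheafG$ is constructible (Proposition \ref{propX41}), so $j_{!} j^{\diamond} M \in \calC_R$. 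Since $\calC_R$ is closed under quotients, $\im(\epsilon) = \coker(\ker(\epsilon) \to j_{!} j^{\diamond} M) \in \calC_R$; and $M$ is an extension of $\coker(\epsilon)$ by $\im(\epsilon)$, so $M \in \calC_R$, completing the induction.

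The main obstacle is the spreading-out step: one must descend both the constructible sheaf $\sheafG$ and the isomorphism $\RH^c(\sheafG) \simeq j^{\diamond}M$ from the generic points to a dense affine open, compatibly with the Riemann-Hilbert functor. This relies on the Ind-presentations of $\Shv_{\mathet}$ and $\Mod^{\alg}$, the compactness and finite-presentation properties of holonomic sheaves and modules, and on the field case (Proposition \ref{proposition.lemX51}), which is the essential input; everything else is a formal dévissage assembled from Kashiwara's theorem, the support formalism, and the compatibilities of $\RH$ with $f^{\diamond}$, $f_{!}$, and finite direct images $f_{\ast}$.
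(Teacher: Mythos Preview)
Your argument is correct and uses the same essential ingredients as the paper's proof: reduction to the Noetherian case, Kashiwara's theorem to pass to a reduced ring with dense support, the field case (Proposition \ref{proposition.lemX51}) at generic points, a spreading-out argument to a principal open, and the compatibility of $\RH$ with $j_!$ (Proposition \ref{prop68}). The organization, however, is genuinely different. The paper does \emph{not} run a Noetherian induction on $\Spec(R)$; instead it uses Noetherianness of the object $M$ (Proposition \ref{propX29}) to pick a maximal $\calC$-subobject $M'\subseteq M$, replaces $M$ by $M/M'$ so that $M$ has no nonzero $\calC$-subobjects, and then derives a contradiction in one step: after spreading out, the counit $j_! j^{\diamond}M\to M$ is a monomorphism by Lemma \ref{lemX21}, hence its source (which lies in $\calC$) must vanish, forcing $M[t^{-1}]=0$ and contradicting density of $\supp(M)$. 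Your approach instead handles $\ker(\epsilon)$ and $\coker(\epsilon)$ separately via the inductive hypothesis on $V(t)$. This is slightly more laborious but equally valid; note that in fact $\ker(\epsilon)=0$ always by Lemma \ref{lemX21}, so that half of your d\'evissage is vacuous. Your version has the mild advantage of being more explicitly ``constructive'', while the paper's version avoids setting up the induction and is a bit shorter. A small expository point: your requirement that the initial $t$ be a non-unit with $R[t^{-1}]$ a product of domains is harmless but unnecessary---what matters is that after spreading out, either $t$ is a unit (done) or $V(t)$ is proper closed (induction applies).
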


Before giving the proof of Theorem \ref{companion}, let us collect some of its consequences. First, we note that it immediately implies the results of this paper:

\begin{proof}[Proof of Theorem \ref{maintheoXXX} from Theorem \ref{companion}]
Let $R$ be an $\F_p$-algebra. It follows from Theorem \ref{companion} that the functor $\RH^{c} = \RH|_{ \Shv_{\mathet}^{c}( \Spec(R), \F_p) }$ is a fully faithful embedding, those essential
image consists of compact objects of $\Mod_{R}^{\perf}$ (see Proposition \ref{propX55}).  Moreover, the functor $\RH$ preserves filtered colimits (by virtue of the fact that it is left adjoint to the solution functor).
Using Proposition \ref{propX40}, we deduce that $\RH$ is a fully faithful embedding whose essential image consists of those perfect Frobenius modules which can be realized as filtered colimits
of holonomic Frobenius modules. By virtue of Theorem \ref{theoX54}, this essential image is exactly $\Mod_{R}^{\alg}$.
\end{proof}

\begin{proof}[Proof of Theorem \ref{theoX50} from Theorem \ref{companion}]
Let $R$ be an $\F_p$-algebra. Then $\Sol: \Mod_{R}^{\alg} \rightarrow \Shv_{\mathet}( \Spec(R), \F_p)$ is right
adjoint to the Riemann-Hilbert functor $\RH: \Shv_{\mathet}( \Spec(R), \F_p)  \rightarrow \Mod_{R}^{\alg}$. Since the latter is an equivalence of categories,
the former must also be an equivalence of categories.
\end{proof}

\begin{corollary}\label{sendi}
Let $f: A \rightarrow B$ be a homomorphism of $\F_p$-algebras and let $M$ be an algebraic $A$-module.
Then the comparison map $f^{\ast}( \Sol(M) ) \rightarrow \Sol( f^{\diamond} M)$ is an isomorphism in $\Shv_{\mathet}( \Spec(B), \F_p)$.
\end{corollary}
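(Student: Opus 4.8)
The plan is to deduce this from the fact that $\Sol$ is an equivalence on algebraic modules, together with the pullback-compatibility of the Riemann--Hilbert functor. Recall that $\Sol$ restricts to an equivalence $\Mod_{A}^{\alg} \simeq \Shv_{\mathet}(\Spec(A), \F_p)$ with inverse $\RH$ (Theorem \ref{theoX50}), so that for algebraic $M$ the counit $c_{M}\colon \RH(\Sol(M)) \to M$ is an isomorphism; likewise over $B$, and moreover $\RH_{B}$ is fully faithful on \emph{all} of $\Shv_{\mathet}(\Spec(B), \F_p)$, i.e. the unit $\eta_{\sheafG}\colon \sheafG \to \Sol(\RH(\sheafG))$ is an isomorphism for every {\etale} sheaf $\sheafG$ (Proposition \ref{prop75}, Corollary \ref{corX80}). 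Finally, Proposition \ref{prop70} supplies a canonical isomorphism $\beta\colon \RH_{B} \circ f^{\ast} \xrightarrow{\ \sim\ } f^{\diamond} \circ \RH_{A}$.

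First I would pin down the comparison map $\alpha_{M}\colon f^{\ast}\Sol(M) \to \Sol(f^{\diamond} M)$: it is the Beck-Chevalley transformation (mate) attached to the commuting square of Proposition \ref{prop64} relating $\Sol$ to restriction of scalars along $f$, formed with respect to the adjunctions $f^{\ast} \dashv f_{\ast}$ on {\etale} sheaves and $f^{\diamond} \dashv f_{\ast}$ on perfect Frobenius modules (Proposition \ref{mallow}). Transposing $\alpha_{M}$ under the adjunction $\RH_{B} \dashv \Sol_{B}$ yields a morphism $\widetilde{\alpha}_{M}\colon \RH_{B}(f^{\ast}\Sol(M)) \to f^{\diamond} M$, and the key point --- which is precisely the statement that $\beta$ is itself the mate of the Proposition \ref{prop64} isomorphism, i.e. the content of ``passing to left adjoints'' in the proof of Proposition \ref{prop70} --- is that $\widetilde{\alpha}_{M}$ factors as
$$ \RH_{B}(f^{\ast}\Sol(M)) \xrightarrow{\ \beta\ } f^{\diamond}(\RH_{A}(\Sol(M))) \xrightarrow{\ f^{\diamond}(c_{M})\ } f^{\diamond} M . $$
I would verify this compatibility either by the formal calculus of mates, or --- more pedestrianly --- by unwinding Construction \ref{construction.solsheaf} and checking directly that the relevant triangle of natural maps commutes at the level of presheaves over $\CAlg_{A}^{\mathet}$ and $\CAlg_{B}^{\mathet}$. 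This bookkeeping is the only nontrivial step of the argument.

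Granting the factorization, the conclusion is immediate: when $M$ is algebraic, $c_{M}$ is an isomorphism (Theorem \ref{theoX50}), hence so is $f^{\diamond}(c_{M})$ and therefore so is $\widetilde{\alpha}_{M}$. Now $\alpha_{M}$ is recovered from $\widetilde{\alpha}_{M}$ as the transpose $f^{\ast}\Sol(M) \xrightarrow{\ \eta\ } \Sol(\RH_{B}(f^{\ast}\Sol(M))) \xrightarrow{\ \Sol(\widetilde{\alpha}_{M})\ } \Sol(f^{\diamond} M)$, in which $\eta$ is an isomorphism because $\RH_{B}$ is fully faithful (Corollary \ref{corX80}) and $\Sol(\widetilde{\alpha}_{M})$ is an isomorphism because $\widetilde{\alpha}_{M}$ is; hence $\alpha_{M}$ is an isomorphism. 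To cut down on the coherence bookkeeping one can first reduce to the case of holonomic $M$: both functors $M \mapsto f^{\ast}\Sol(M)$ and $M \mapsto \Sol(f^{\diamond} M)$ commute with filtered colimits (the former since $f^{\ast}$ does and $\Sol|_{\Mod_{A}^{\alg}}$ is an equivalence, the latter since $f^{\diamond}$ is a left adjoint and $\Sol|_{\Mod_{B}^{\alg}}$ is an equivalence), so by Theorem \ref{theoX54} it suffices to treat algebraic modules that are holonomic, or even just those of the form $\RH(j_{!}\underline{\F_p})$ appearing in the proof of Theorem \ref{theorem.RHexist}.
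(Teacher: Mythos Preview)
Your proof is correct and takes essentially the same approach as the paper: the paper's proof is the one-liner ``Combine Theorem \ref{theoX50} with Proposition \ref{prop70},'' and what you have written is a careful unpacking of precisely that combination via the Beck--Chevalley/mate formalism. The reduction to holonomic $M$ you suggest at the end is unnecessary---the main argument already goes through for all algebraic $M$---but it does no harm.
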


\begin{proof}
Combine Theorem \ref{theoX50} with Proposition \ref{prop70}. 
\end{proof}

\begin{corollary}\label{corlocal}
Let $A \rightarrow B$ be a homomorphism of commutative $\F_p$-algebras which is {\etale} and faithfully flat, and let $M$ be a perfect Frobenius module over $A$.
If $f^{\diamond}(M) = B \otimes_{A} M$ is a holonomic Frobenius module over $B$, then $M$ is a holonomic Frobenius module over $A$.
\end{corollary}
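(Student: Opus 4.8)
The plan is to deduce this from the Riemann--Hilbert correspondence already in hand (Theorem~\ref{companion}, Theorem~\ref{theoX50}) together with \'{e}tale descent of constructibility. First I would observe that $M$ is automatically algebraic: since $f^{\diamond}M$ is holonomic it is in particular algebraic (Proposition~\ref{propX58}), and as $f\colon A\to B$ is faithfully flat and \'{e}tale, Lemma~\ref{ulroc} then forces $M$ to be algebraic as well. Consequently $M$ lies in the essential image of the Riemann--Hilbert functor, so by Theorem~\ref{theoX50} the counit map $\RH(\Sol(M))\to M$ is an isomorphism. It therefore suffices to prove that the \'{e}tale sheaf $\Sol(M)$ on $\Spec(A)$ is \emph{constructible}: granting this, $\RH(\Sol(M))=\RH^{c}(\Sol(M))$ is a holonomic Frobenius module by Theorem~\ref{theo77}, and hence so is $M$.

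To prove that $\Sol(M)$ is constructible I would pull back along $f$. Because $f$ is \'{e}tale there is a canonical identification $f^{\ast}\Sol(M)\simeq\Sol(f^{\diamond}M)$ (this is built into the proof of Proposition~\ref{proposition.easy2}; alternatively, since $M$ is algebraic, it is Corollary~\ref{sendi}). By hypothesis $f^{\diamond}M$ is holonomic over $B$, and the solution functor carries holonomic Frobenius modules to constructible sheaves: by Theorem~\ref{companion} together with Proposition~\ref{prop75}, the restriction of $\Sol$ to $\Mod_{B}^{\hol}$ is a quasi-inverse of the equivalence $\RH^{c}$, so it takes values in $\Shv_{\mathet}^{c}(\Spec(B),\F_p)$. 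Hence $f^{\ast}\Sol(M)$ is a constructible \'{e}tale sheaf on $\Spec(B)$.

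Finally, since $f$ is faithfully flat and \'{e}tale, the morphism $\Spec(B)\to\Spec(A)$ is an \'{e}tale covering, and constructibility of a $p$-torsion \'{e}tale sheaf can be checked after pullback along such a covering (this is a standard descent fact; see, e.g., \cite{Stacks} or \cite{FK}). Therefore $\Sol(M)$ is constructible on $\Spec(A)$, which completes the argument. The only ingredient here that is not a purely formal consequence of the machinery already developed is this last descent statement for constructibility; everything else is a direct application of the Riemann--Hilbert equivalence, using the identification of $\Sol|_{\Mod_B^{\hol}}$ with a quasi-inverse of $\RH^{c}$ and the compatibility of $\Sol$ with \'{e}tale pullback.
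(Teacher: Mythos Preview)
Your proof is correct and follows essentially the same approach as the paper's: both use Lemma~\ref{ulroc} to get algebraicity, reduce via the Riemann--Hilbert correspondence (Theorems~\ref{maintheoXXX} and~\ref{companion}) to showing $\Sol(M)$ is constructible, and then check this after \'etale pullback using the compatibility $f^{\ast}\Sol(M)\simeq\Sol(f^{\diamond}M)$ together with \'etale-local testability of constructibility. The paper's version is simply terser, citing Remark~\ref{old64} for the pullback compatibility rather than spelling out the individual steps as you do.
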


\begin{proof}
It follows from Lemma \ref{ulroc} that $M$ is algebraic. Consequently, to show that $M$ is holonomic, it will suffice (by virtue of Theorems \ref{maintheoXXX} and \ref{companion})
to show that $\Sol(M)$ is a constructible sheaf. This follows from Remark \ref{old64}, since constructibility of {\etale} sheaves can be tested locally with respect to the {\etale} topology. 
\end{proof}

\begin{corollary}\label{corollary.derived-vanishing}
Let $R$ be a commutative $\F_p$-algebra and let $M$ be an algebraic Frobenius module over $R$. Then $\Sol^i(M) \simeq 0$ for $i > 0$.
\end{corollary}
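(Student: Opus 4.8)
The plan is to deduce this almost immediately from the structural results already in hand. By Proposition \ref{proposition.corX77}, for \emph{any} perfect Frobenius module $M$ the higher solution sheaves $\Sol^{i}(M)$ vanish for $i \geq 2$, and $\Sol^{1}(M)$ is identified with the cokernel of the map $\id - \widetilde{\varphi}_M \colon \widetilde{M} \to \widetilde{M}$ in $\Shv_{\mathet}(\Spec(R), \F_p)$. So the entire content of the corollary is the assertion that $\Sol^{1}(M) \simeq 0$ when $M$ is algebraic.

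Now I would invoke the hypothesis that $M$ is algebraic. By Theorem \ref{theoX50} (equivalently, Theorem \ref{maintheoXXX}, which has just been derived from Theorem \ref{companion}), the solution functor restricts to an equivalence $\Sol \colon \Mod_{R}^{\alg} \xrightarrow{\sim} \Shv_{\mathet}(\Spec(R), \F_p)$ with inverse the Riemann-Hilbert functor $\RH$; in particular, setting $\sheafF = \Sol(M)$, the counit supplies a canonical isomorphism $M \simeq \RH(\sheafF)$ of Frobenius modules. But Proposition \ref{prop75} asserts precisely that for every $p$-torsion {\etale} sheaf $\sheafF$ on $\Spec(R)$ the unit map $\sheafF \to \Sol(\RH(\sheafF))$ is an isomorphism \emph{and} $\Sol^{1}(\RH(\sheafF))$ vanishes. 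Applying this with $\sheafF = \Sol(M)$ gives $\Sol^{1}(M) \simeq \Sol^{1}(\RH(\sheafF)) \simeq 0$, which together with the vanishing for $i \geq 2$ completes the argument.

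There is essentially no obstacle left, because all the work has been absorbed into Proposition \ref{prop75}: the real point is that $\id - \widetilde{\varphi}_M$ is an epimorphism of {\etale} sheaves, and for $M$ algebraic this was established (via the Artin--Schreier sequence of Lemma \ref{tuster}, the finite direct image comparison of Theorem \ref{theo76}, and a d\'{e}vissage along constructible sheaves) in the course of proving full-faithfulness of $\RH$. One could in principle try to prove $\Sol^{1}(M) \simeq 0$ directly, reducing via Theorem \ref{theoX54} and Proposition \ref{propX52} to a holonomic module over a Noetherian, and ultimately a field, base; but this merely re-derives the content of Proposition \ref{prop75}, so I would not pursue it. The clean route above is the intended proof, and it is the one I would write.
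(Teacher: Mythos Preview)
Your proof is correct and follows essentially the same approach as the paper: write $M \simeq \RH(\sheafF)$ using the Riemann--Hilbert equivalence (Theorem~\ref{maintheoXXX}/\ref{theoX50}), then invoke Proposition~\ref{prop75} to conclude $\Sol^{1}(\RH(\sheafF)) \simeq 0$. The paper's proof is just these two sentences; your additional remarks about Proposition~\ref{proposition.corX77} handling $i \geq 2$ and your commentary on alternative routes are correct elaborations but not needed.
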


\begin{proof}
By virtue of Theorem \ref{maintheoXXX} we can write $M = \RH( \sheafF )$ for some $\sheafF \in \Shv_{\mathet}( \Spec(R), \F_p)$. In this case,
the desired result follows from Proposition \ref{prop75}.
\end{proof}

\begin{proof}[Proof of Theorem \ref{companion}]
Let $R$ be a commutative $\F_p$-algebra. As in the proof of Lemma \ref{lem81}, we see that the functor
$\RH^{c}$ is a fully faithful embedding whose essential image $\calC \subseteq \Mod_{R}^{\hol}$ is an abelian subcategory which is closed under extensions.
We wish to show that $\calC$ contains every object $M \in \Mod_{R}^{\hol}$. Using a direct limit argument, we can choose an $\F_p$-algebra homomorphism
$\iota: R_0 \rightarrow R$ and an equivalence $M \simeq \iota^{\diamond} M_0$ for some $M_0 \in \Mod_{R_0}^{\hol}$, where $R_0$ is finitely generated over $\F_p$.
By virtue of Proposition \ref{prop70}, it will suffice to show that $M_0$ belongs to the essential image of the functor $\RH^{c}: \Shv_{R_0}^{c} \rightarrow \Mod_{R_0}^{\hol}$.
We may therefore replace $R$ by $R_0$ (and $M$ by $M_0$) and thereby reduce to the case where $R$ is Noetherian. 

Applying Proposition \ref{propX29}, we see that $M$ is a Noetherian object of the abelian category $\Mod_{R}^{\alg}$. Consequently, there exists
a subobject $M' \subseteq M$ (in the abelian category $\Mod_{R}^{\alg}$) which is maximal among those subobjects which belong to $\calC$. 
It follows from the maximality of $M'$ (and the stability of $\calC$ under extensions) that the quotient $M / M'$ does not contain any nonzero subobjects which belong to
$\calC$. Replacing $M$ by $M / M'$, we can reduce to the case where $M$ does not have any nonzero subobjects which belong to $\calC$.

Let $K \subseteq \Spec(R)$ be the closure of the support $\supp(M)$. Then $K$ is the vanishing locus of a radical ideal $I \subseteq \Spec(R)$.
Using Theorem \ref{theorem.kashiwara}, we see that $M$ can be regarded as a holonomic Frobenius module over the quotient ring $R/I$. 
Using Theorem \ref{theo76}, we can replace $A$ by $A/I$ and thereby reduce to the case where $R$ is reduced and
$K = \Spec(R)$.

If $R \simeq 0$, then $M \simeq 0$ and there is nothing to prove. Otherwise, $R$ contains a minimal prime ideal $\mathfrak{p}$.
Since $R$ is reduced, the localization $R_{\mathfrak{p}}$ is a field. Applying Proposition \ref{proposition.lemX51}, we deduce
that $M_{\mathfrak{p}}$ belongs to the essential image of the functor $\RH^{c}: \Shv_{ R_{\mathfrak{p}}}^{c} \rightarrow \Mod_{ R_{\mathfrak{p}} }^{\hol}$.
It follows by a direct limit argument that there exists some element $t \in R - \mathfrak{p}$ for which
the localization $M[t^{-1}]$ belongs to the essential image of the functor $\RH^{c}: \Shv_{ R[t^{-1}]}^{c} \rightarrow \Mod_{R[t^{-1}]}^{\hol}$.
Let $f: R \rightarrow R[t^{-1}]$ be the localization map, and set $M' = f_{!} M[t^{-1}]$; using Proposition \ref{prop68}, we deduce
that $M'$ belongs to the essential image of the Riemann-Hilbert functor $\RH^{c}: \Shv_{R}^{c} \rightarrow \Mod_{R}^{\hol}$.
Note that Lemma \ref{lemX21} guarantees that that the counit map $M' = f_{!} f^{\ast} M \rightarrow M$ is a monomorphism,
so we must have $M' \simeq 0$. It follows that the localization $M[t^{-1}]$ vanishes, so that the prime ideal $\mathfrak{p}$
cannot belong to the support of $M$. Using the constructibility of $\supp(M)$ (Theorem \ref{theo74}), we deduce that there
exists an open neighborhood of $\mathfrak{p}$ which does not intersection $\supp(M)$, contradicting the equality $K = \Spec(R)$.
\end{proof}

\newpage \section{Tensor Products}\label{section.tensor}
\setcounter{subsection}{0}
\setcounter{theorem}{0}

Let $A$ be a commutative ring and let $\Shv_{\mathet}( \Spec(A), \F_p)$ denote the category of
$p$-torsion {\etale} sheaves on $\Spec(A)$. This category is equipped with a tensor product functor
$$ \otimes_{ \F_p} : \Shv_{\mathet}( \Spec(A), \F_p) \times \Shv_{\mathet}( \Spec(A), \F_p) \rightarrow \Shv_{\mathet}( \Spec(A), \F_p)$$
which carries a pair of {\etale} sheaves $(\sheafF, \sheafG)$ to the sheafification of the presheaf
$$ (B \in \CAlg_{A}^{\mathet}) \mapsto \sheafF(B) \otimes_{ \F_p } \sheafG(B).$$
In the case where $A$ is an $\F_p$-algebra, Theorem \ref{theoX50} supplies an equivalence of categories
$$ \Sol: \Mod_{A}^{\alg} \rightarrow \Shv_{\mathet}( \Spec(A), \F_p)$$
Our goal this section is to promote the solution functor $\Sol$ to an equivalence of {\em symmetric monoidal} categories:
that is, to show that it is compatible with tensor products.

We begin in \S \ref{sec8sub1} by studying an analogous tensor product operation on the category
$\Mod_{A}^{\Frob}$ of Frobenius modules over $A$. In fact, there are two such operations (which are closely related):
\begin{itemize}
\item If $M$ and $N$ are Frobenius modules over $A$, then the tensor product $M \otimes_{A} N$ inherits the
structure of a Frobenius module over $A$ (Construction \ref{tenso0}).
\item If $M$ and $N$ are perfect Frobenius modules over $A$, then they can also be regarded as modules of the perfection
$A^{\perfection}$; in this case, the tensor product $M \otimes_{ A^{\perfection} } N$ inherits the structure of a perfect
Frobenius module over $A$ (Remark \ref{tenso2}).
\end{itemize}
Like the usual tensor product on the category of $A$-modules, the tensor product on Frobenius modules is right exact but
generally not left exact. One can partially remedy this failure of exactness by studying left derived functors of the tensor product:
in \S \ref{sec8sub2}, we show that these agree with the usual $\Tor$-functors of commutative algebra (Proposition \ref{snappums}).
The central result of this section asserts that if we restrict our attention to {\em algebraic} Frobenius modules,
then these $\Tor$-groups automatically vanish (when computed relative to the perfection $A^{\perfection}$; see Theorem \ref{tenso10}).
We prove this statement in \S \ref{sec8sub3}, and apply it in \S \ref{sec8sub4} to show that the Riemann-Hilbert correspondence
is compatible with tensor products (Theorem \ref{bijus}).

\subsection{Tensor Products of Frobenius Modules}\label{sec8sub1}

We begin with some general remarks.

\begin{construction}\label{tenso0}
Let $A$ be a commutative $\F_p$-algebra. If $M$ and $N$ are Frobenius modules over $A$, then we regard
the tensor product $M \otimes_{A} N$ as a Frobenius module over $A$, with Frobenius map
$$ \varphi_{M \otimes_{A} N}: M \otimes_{A} N \rightarrow M \otimes_{A} N$$
given by the formula $\varphi_{ M \otimes_{A} N}( x \otimes y) = \varphi_{M}(x) \otimes \varphi_{N}(y)$.
Note that the commutativity and associativity isomorphisms
$$M \otimes_{A} N \simeq N \otimes_{A} M \quad \quad (M \otimes_{A} N) \otimes_{A} P \simeq M \otimes_{A} (N \otimes_{A} P)$$
are isomorphisms of Frobenius modules, and therefore endow $\Mod_{A}^{\Frob}$ with the structure of a symmetric monoidal category.
\end{construction}

\begin{example}[Tensor Products of Free Modules]\label{example.tensorfree}
Let $A$ be a commutative $\F_p$-algebra and let $M$ and $N$ be Frobenius modules over $A$ which are freely generated (as left $A[F]$-modules)
by elements $x \in M$ and $y \in N$. Then the tensor product $M \otimes_{A} N$ is freely generated by the elements
$F^{n} x \otimes y$ and $x \otimes F^{n} y$ (which coincide when $n =0$). 
\end{example}

\begin{remark}[Compatibility with Extension of Scalars]\label{tenso3}
Let $f: A \rightarrow B$ be a homomorphism of commutative $\F_p$-algebras, and let $f^{\ast}_{\Frob}: \Mod_{A}^{\Frob} \rightarrow \Mod_{B}^{\Frob}$
be the functor of extension of scalars along $f$ (given by $M \mapsto B \otimes_{A} M$). Then $f^{\ast}_{\Frob}$ is a symmetric monoidal functor:
in particular, we have canonical isomorphisms $f^{\ast}_{\Frob}( M \otimes_{A} N) \simeq (f^{\ast}_{\Frob} M) \otimes_{B} (f^{\ast}_{\Frob} N)$.
\end{remark}

\begin{remark}\label{tenso1}
Let $A$ be an $\F_p$-algebra. If $M$ and $N$ are Frobenius modules over $A$, then we have a canonical isomorphism
$$ (M \otimes_{A} N)^{\perfection} \simeq M^{\perfection} \otimes_{ A^{\perfection} } N^{\perfection}.$$
In particular, if $A$, $M$, and $N$ are perfect, then the tensor product $M \otimes_{A} N$ is also perfect.
\end{remark}

\begin{remark}\label{tenso2}
Let $A$ be a perfect $\F_p$-algebra. It follows from Remark \ref{tenso1} that the full subcategory $\Mod_{A}^{\perf} \subseteq \Mod_{A}^{\Frob}$ is
closed under tensor products, and therefore inherits the structure of a symmetric monoidal category (with tensor product $\otimes_{A}$).

More generally, if $A$ is an arbitrary $\F_p$-algebra, then the restriction-of-scalars functor $\theta: \Mod_{A^{\perfection}}^{\perf} \rightarrow \Mod_{ A}^{\perf}$ is an
equivalence of categories (Proposition \ref{prop5}). It follows that there is an essentially unique symmetric monoidal structure on the category $\Mod_{A}^{\perf}$ for
which the functor $\theta$ is symmetric monoidal. We will denote the underlying tensor product by $(M, N) \mapsto M \otimes_{ A^{\perfection} } N$ (note that if
$M$ and $N$ are perfect Frobenius modules over $A$, then they can be regarded as modules over $A^{\perfection}$ in an essentially unique way).
\end{remark}

\begin{warning}
Let $A$ be a commutative $\F_p$-algebra. Then the inclusion functor $\Mod_{A}^{\perf} \hookrightarrow \Mod_{A}^{\Frob}$ is usually not a symmetric monoidal
functor, if we regard $\Mod_{A}^{\Frob}$ as equipped with the symmetric monoidal structure of Construction \ref{tenso0} (given by tensor product over $A$) and $\Mod_{A}^{\perf}$ with
the symmetric monoidal structure of Remark \ref{tenso2} (given by tensor product over $A^{\perfection}$). However, it has a symmetric monoidal left adjoint, given by the perfection construction
$M \mapsto M^{\perfection}$ (note that Remark \ref{tenso1} supplies an isomorphism $(M \otimes_{A} N)^{\perfection} \simeq M \otimes_{ A^{\perfection} } N$ in the case where $M$ and $N$ are perfect).
\end{warning}

\begin{remark}[Compatibility with Extension of Scalars]
Let $f: A \rightarrow B$ be a homomorphism of commutative $\F_p$-algebras, and let $f^{\diamond}: \Mod_{A}^{\perf} \rightarrow \Mod_{B}^{\perf}$ be the
functor of Proposition \ref{mallow}. Then $f^{\diamond}$ is symmetric monoidal with respect to the tensor products described in Remark \ref{tenso2}:
in particular, if $M$ and $N$ are perfect Frobenius modules over $A$, then we have a canonical isomorphism $f^{\diamond}( M \otimes_{ A^{\perfection} } N ) \simeq (f^{\diamond} M) \otimes_{ B^{\perfection} } (f^{\diamond} N)$. This follows from Remark \ref{tenso3}, applied to the map $f^{\perfection}: A^{\perfection} \rightarrow B^{\perfection}$. 
\end{remark}

\subsection{Derived Tensor Products}\label{sec8sub2}

Let $A$ be an $\F_p$-algebra and let $M$ be a Frobenius module over $A$. Then the construction $N \mapsto M \otimes_{A} N$
determines a right exact functor from the abelian category $\Mod_{A}^{\Frob}$ to itself. Since the abelian category $\Mod_{A}^{\Frob}$ has enough projective objects (it is equivalent to the category of
left modules over the noncommutative ring $A[F]$ of Notation \ref{not2}), the construction $N \mapsto M \otimes_{A} N$ admits left derived functors, which we will temporarily denote by
$N \mapsto T_{\ast}(M, N)$. More concretely, we define $T_{k}(M, N)$ to be the $k$th homology group of the chain complex
$$ \cdots \rightarrow M \otimes_{A} P_2 \rightarrow M \otimes_{A} P_{1} \rightarrow M \otimes_{A} P_{0} \rightarrow 0,$$
where $\cdots \rightarrow P_2 \rightarrow P_1 \rightarrow P_0 \rightarrow N \rightarrow 0$ is a projective resolution of $N$ in the category $\Mod_{A}^{\Frob}$
(it follows from elementary homological algebra that the Frobenius modules $T_{\ast}(M, N)$ are independent of the choice of resolution, up to canonical isomorphism).

\begin{proposition}\label{snappums}
Let $A$ be an $\F_p$-algebra. For every pair of Frobenius modules $M$ and $N$ over $A$, we have canonical $A$-module isomorphisms
$\Tor_{\ast}^{A}( M, N) \xrightarrow{\sim} T_{\ast}(M, N)$.
\end{proposition}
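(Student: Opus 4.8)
The plan is to exploit the fact — already used in Remark \ref{remX3} — that the noncommutative ring $A[F]$ of Notation \ref{not2} is \emph{free} as a left $A$-module, with basis $\{F^{n}\}_{n \geq 0}$. Consequently, any object of $\Mod_{A}^{\Frob}$ which is projective (equivalently, a direct summand of a free left $A[F]$-module) remains projective when regarded merely as an $A$-module. This is the one nontrivial input; everything else is bookkeeping.

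First I would fix a projective resolution $\cdots \to P_2 \to P_1 \to P_0 \to N \to 0$ of $N$ in the abelian category $\Mod_{A}^{\Frob}$. By the observation above, applying the forgetful functor $\Mod_{A}^{\Frob} \to \Mod_{A}$ yields a resolution of $N$ by projective $A$-modules, i.e.\ a projective resolution of $N$ in $\Mod_{A}$. Next I would note that the underlying $A$-module of the Frobenius module $M \otimes_{A} P_{i}$ of Construction \ref{tenso0} is literally the tensor product $M \otimes_{A} P_{i}$ formed in $\Mod_{A}$, and that the differentials coincide; in other words, the forgetful functor carries the chain complex $M \otimes_{A} P_{\bullet}$ computing $T_{\ast}(M,N)$ onto the chain complex computing $\Tor_{\ast}^{A}(M,N)$. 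Passing to homology then produces $A$-module isomorphisms $T_{k}(M,N) \xrightarrow{\sim} \Tor_{k}^{A}(M,N)$ for all $k$. Canonicity — independence of the chosen resolution $P_{\bullet}$ — follows from the comparison theorem of homological algebra applied inside $\Mod_{A}^{\Frob}$ together with exactness of the forgetful functor; naturality in $N$ is likewise formal, and naturality in $M$ is immediate since $M \otimes_{A} -$ is functorial in $M$ and commutes with the forgetful functor.

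The main obstacle, such as it is, is simply the ``balancing'' point: that a projective resolution computed in the \emph{larger} category $\Mod_{A}^{\Frob}$ suffices to compute ordinary $\Tor$ over $A$. This rests entirely on the freeness of $A[F]$ as a left $A$-module, and is the characteristic-$p$ analogue of the phenomenon recorded in Construction \ref{conX7} and Remark \ref{remX3} for $\Ext$-groups; I expect the argument to be no harder than those. If one prefers a more structural phrasing, one may instead observe that the forgetful functor $\Mod_{A}^{\Frob} \to \Mod_{A}$ is exact and sends projectives to projectives, hence intertwines the left derived functors of $M \otimes_{A} -$ formed in the two categories — but the resolution-level argument above is the most direct route and is what I would write out.
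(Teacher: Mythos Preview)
Your proof is correct and follows essentially the same approach as the paper: take a projective resolution $P_\bullet \to N$ in $\Mod_A^{\Frob}$, observe that freeness of $A[F]$ as a left $A$-module forces each $P_i$ to be projective over $A$, and conclude that the single complex $M \otimes_A P_\bullet$ computes both $T_\ast(M,N)$ and $\Tor_\ast^A(M,N)$. The paper's proof is a two-sentence version of exactly this argument; your additional remarks on canonicity and naturality are correct but more than the paper bothers to record.
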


\begin{proof}
Since $A[F]$ is free as a left $A$-module, every projective left $A[F]$-module is also projective when viewed as a left $A$-module. Consequently, if $P_{\ast}$ is a resolution of
$N$ by projective left $A[F]$-modules, then it is also a resolution of $N$ by projective $A$-modules, so the homology groups of the chain complex $M \otimes_{A} P_{\ast}$
can be identified with $\Tor_{\ast}^{A}(M, N)$.
\end{proof}

We can formulate Proposition \ref{snappums} more informally as follows: if $M$ and $N$ are Frobenius modules over $A$, then the $\Tor$-groups $\Tor_{\ast}^{A}(M, N)$ inherit
the structure of Frobenius modules over $A$.

\begin{remark}\label{tenso5}
Our description of the Frobenius structure on the $\Tor$-groups $\Tor_{\ast}^{A}(M, N)$ is {\it a priori} asymmetric in $M$ and $N$, since it depends on taking
the left derived functors of the construction $M \otimes_{A} \bullet$. However, one can give a more symmetric description as follows. Let $\calC$ denote
the category whose objects are triples $(A, M, N)$, where $A$ is an associative ring, $M$ is a right module over $A$, and $N$ is a left module over $A$. For
each integer $k$, the construction $(A, M, N) \mapsto \Tor_{k}^{A}(M, N)$ can be regarded as a functor from $\calC$ to the category of abelian groups.
In the special case where $A$ is a commutative $\F_p$-algebra and $M, N \in \Mod^{\Frob}_{A}$, we can regard the triple $( \varphi_A, \varphi_M, \varphi_N)$
as a morphism from $(A, M, N)$ to itself in the category $\calC$, and therefore induces a map of abelian groups $\varphi: \Tor_{k}^{A}(M, N) \rightarrow \Tor_{k}^{A}(M, N)$.
It is easy to check that $\varphi$ corresponds to the Frobenius map on $T_{k}(M, N)$ under the isomorphism of Proposition \ref{snappums}.
\end{remark}

\begin{proposition}\label{tenso6}
Let $A$ be a perfect $\F_p$-algebra. If $M$ and $N$ are perfect Frobenius modules over $A$, then the $\Tor$-groups $\Tor_{\ast}^{A}(M, N)$ are also perfect Frobenius modules over $A$.
\end{proposition}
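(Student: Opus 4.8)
The plan is to read off the result from the functorial description of the Frobenius action on $\Tor$ provided by Remark \ref{tenso5}. Recall from that remark that there is a category $\calC$ whose objects are triples $(A,M,N)$ consisting of an associative ring $A$, a right $A$-module $M$, and a left $A$-module $N$ (a morphism being a ring homomorphism together with compatible semilinear maps of modules), that $\Tor_k^A(-,-)$ is a functor $\calC \to \mathrm{Ab}$, and that the Frobenius endomorphism of $\Tor_k^A(M,N)$ is the map induced by the endomorphism $(\varphi_A,\varphi_M,\varphi_N)$ of $(A,M,N)$ in $\calC$.

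First I would verify that, when $A$, $M$, and $N$ are perfect, the triple $(\varphi_A,\varphi_M,\varphi_N)$ is an \emph{isomorphism} in $\calC$, with inverse $(\varphi_A^{-1},\varphi_M^{-1},\varphi_N^{-1})$. Since $A$ is perfect, $\varphi_A$ is a ring automorphism, so $\varphi_A^{-1}$ is again a ring homomorphism; and since $\varphi_M$ is bijective, $\varphi_M^{-1}$ is semilinear over $\varphi_A^{-1}$ --- to see this, apply the (injective) map $\varphi_M$ to both sides of the desired identity $\varphi_M^{-1}(x\lambda)=\varphi_M^{-1}(x)\,\varphi_A^{-1}(\lambda)$ and use that $\varphi_M$ is $\varphi_A$-semilinear. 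The same argument applies to $\varphi_N^{-1}$. Thus $(\varphi_A^{-1},\varphi_M^{-1},\varphi_N^{-1})$ is a genuine morphism of $\calC$, evidently two-sided inverse to $(\varphi_A,\varphi_M,\varphi_N)$.

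Since any functor carries isomorphisms to isomorphisms, the induced map $\varphi\colon \Tor_k^A(M,N)\to\Tor_k^A(M,N)$ is bijective for every $k$; by Remark \ref{tenso5} this $\varphi$ is exactly the Frobenius-semilinear endomorphism making $\Tor_k^A(M,N)$ a Frobenius module over $A$, so $\Tor_k^A(M,N)$ is perfect. (Alternatively, one can argue with resolutions: because $A$ is perfect, the ring $A^{\perfection}[F^{\pm1}]$ of Example \ref{skriff} is free as a left $A$-module on $\{F^n\}_{n\in\mathbf{Z}}$, so by Remark \ref{remX4} a projective resolution $P_\bullet\to N$ in $\Mod_A^{\perf}$ consists of $A$-flat modules; then $M\otimes_A P_\bullet$ computes $\Tor_*^A(M,N)$ and is a complex of perfect Frobenius modules by Remark \ref{tenso1}, whose homology is perfect since $\Mod_A^{\perf}\subseteq\Mod_A^{\Frob}$ is closed under kernels and cokernels by Remark \ref{remark.perfectextension}.) The statement is short, and the only point that takes a moment is the verification above that the inverse triple is a morphism of $\calC$; this is where perfectness of all three of $A$, $M$, $N$ is used.
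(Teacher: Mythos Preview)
Your proof is correct and matches the paper's approach: the paper also invokes Remark \ref{tenso5} directly (``This follows immediately from the description of the Frobenius structure on $\Tor_{\ast}^{A}(M, N)$ given in Remark \ref{tenso5}''), and then offers an alternative via resolutions by free $A^{\perfection}[F^{\pm 1}]$-modules, which are free over $A$ since $A$ is perfect. Your version simply spells out the functoriality argument more carefully and phrases the alternative as a full resolution rather than an inductive step, but the content is the same.
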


\begin{proof}
This follows immediately from the description of the Frobenius structure on $\Tor_{\ast}^{A}(M, N)$ given in Remark \ref{tenso5}. Alternatively, we can show that
each $\Tor_{k}^{A}(M, N)$ is perfect using induction on $k$. When $k = 0$, the desired result follows from Remark \ref{tenso1}. For $k > 0$, we can choose
a short exact sequence $0 \rightarrow N' \rightarrow P \rightarrow N \rightarrow 0$, where $P$ is a free module over $A^{\perfection}[ F^{\pm 1} ]$ (see Example \ref{skriff}).
Then $N'$ is also a perfect Frobenius module over $A$. Moreover, since $A$ is perfect, the ring $A^{\perfection}[ F^{\pm 1} ]$ is free as a left $A$-module, so the groups
$\Tor_{\ast}^{A}(M, P)$ vanish for $\ast > 0$. We therefore have a short exact sequence
$$ 0 \rightarrow \Tor_{k}^{A}(M, N) \rightarrow \Tor_{k-1}^{A}(M, N') \rightarrow \Tor_{k-1}^{A}(M, P)$$
which exhibits $\Tor_{k}^{A}(M, N)$ as the kernel of a map between perfect Frobenius modules, so that $\Tor_{k}^{A}(M,N)$ is itself perfect.
\end{proof}

\begin{variant}
Let $A$ be an arbitrary $\F_p$-algebra, and let $M$ and $N$ be perfect Frobenius modules over $A$. Then we can regard $M$ and
$N$ as Frobenius modules over $A^{\perfection}$ in an essentially unique way (Proposition \ref{prop5}). Using Proposition \ref{tenso6}, we can
regard the $\Tor$-groups $\Tor_{\ast}^{ A^{\perfection} }( M, N)$ as perfect Frobenius modules over $A^{\perfection}$, and therefore also (by restriction of scalars)
as perfect Frobenius modules over $A$.
\end{variant}

\begin{proposition}\label{tenso11}
Let $A$ be an $\F_p$-algebra and let $M$ and $N$ be Frobenius modules over $A$. Then the canonical map 
$\Tor_{\ast}^{A}(M, N) \rightarrow \Tor_{\ast}^{A^{\perfection}}( M^{\perfection}, N^{\perfection} )$ induces an isomorphism of Frobenius modules
$$\rho_{\ast}: \Tor_{\ast}^{A}( M, N)^{\perfection} \simeq \Tor_{\ast}^{A^{\perfection}}( M^{\perfection}, N^{\perfection} ).$$
\end{proposition}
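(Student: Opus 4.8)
The plan is to reduce the statement to the exactness of the perfection functor together with the flatness of $A^{\perfection}$ over $A$. First I would recall that the perfection functor $M \mapsto M^{\perfection}$ on $\Mod_{A}^{\Frob}$ is exact (Remark \ref{prebb}) and commutes with all colimits (being a left adjoint, by Proposition \ref{proposition.perfection}), and that it agrees with the perfection of the underlying $A$-module along the colimit presentation of Notation \ref{biskar}. Since a filtered colimit of flat $A$-modules is flat, $A^{\perfection}$ is a flat $A$-algebra, and more generally $M^{\perfection} = \varinjlim M^{1/p^n}$ is computed as a colimit of $A$-modules each abstractly isomorphic to $M$.

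The key computation is then the following: for Frobenius modules $M$, $N$ over $A$, we wish to identify $\Tor_{\ast}^{A}(M,N)^{\perfection}$ with $\Tor_{\ast}^{A^{\perfection}}(M^{\perfection}, N^{\perfection})$. Choose a projective resolution $P_{\ast} \rightarrow N$ in $\Mod_{A}^{\Frob}$; as noted in the proof of Proposition \ref{snappums}, each $P_n$ is also projective as an $A$-module, so $\Tor_{\ast}^{A}(M,N)$ is the homology of $M \otimes_{A} P_{\ast}$. Applying the exact functor $(-)^{\perfection}$ and using the isomorphism $(M \otimes_{A} P_n)^{\perfection} \simeq M^{\perfection} \otimes_{A^{\perfection}} P_n^{\perfection}$ of Remark \ref{tenso1}, we find that $\Tor_{\ast}^{A}(M,N)^{\perfection}$ is the homology of the complex $M^{\perfection} \otimes_{A^{\perfection}} P_{\ast}^{\perfection}$. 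Now $P_{\ast}^{\perfection} \rightarrow N^{\perfection}$ is a resolution (exactness of $(-)^{\perfection}$) by objects which are projective in $\Mod_{A^{\perfection}}^{\perf}$ (this is the observation in Variant \ref{variant.derived}, that the perfection of a projective $A[F]$-module is projective over $A^{\perfection}[F^{\pm 1}]$), hence projective as $A^{\perfection}$-modules. Therefore the homology of $M^{\perfection} \otimes_{A^{\perfection}} P_{\ast}^{\perfection}$ computes $\Tor_{\ast}^{A^{\perfection}}(M^{\perfection}, N^{\perfection})$, and the canonical comparison map is exactly $\rho_{\ast}$.

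The remaining point is compatibility with the Frobenius structures. Using the symmetric description of the Frobenius action on $\Tor$ groups from Remark \ref{tenso5} — as the map functorially induced on $\Tor_{k}^{A}(M,N)$ by the endomorphism $(\varphi_A, \varphi_M, \varphi_N)$ of the triple $(A, M, N)$ — one sees that all maps in the argument above are compatible with this action, and that the induced Frobenius on $\Tor_{\ast}^{A}(M,N)^{\perfection}$ matches the one on $\Tor_{\ast}^{A^{\perfection}}(M^{\perfection}, N^{\perfection})$. I expect the main obstacle to be bookkeeping rather than substance: one must check that the isomorphism $(M \otimes_A P_n)^{\perfection} \simeq M^{\perfection} \otimes_{A^{\perfection}} P_n^{\perfection}$ is natural in the resolution and Frobenius-equivariant, and that "projective in $\Mod_{A^{\perfection}}^{\perf}$ implies flat as an $A^{\perfection}$-module" — which is immediate since summands of free $A^{\perfection}[F^{\pm 1}]$-modules are, after forgetting $F$, summands of free $A^{\perfection}$-modules. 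No genuinely hard input is needed beyond results already established.
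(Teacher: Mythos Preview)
Your main argument is correct and is essentially the paper's proof: both choose a projective resolution of $N$ in $\Mod_A^{\Frob}$, use that such projectives remain projective over $A^{\perfection}$ after perfection (since $A^{\perfection}[F^{\pm 1}]$ is free as a left $A^{\perfection}$-module), and invoke Remark \ref{tenso1} together with the exactness of perfection; the paper merely packages this as induction on $k$ via dimension-shifting rather than a full resolution. One caveat: your opening claim that $A^{\perfection}$ is flat over $A$ is false in general (by Kunz's theorem, flatness of the Frobenius characterizes regularity), but you never actually use it, so the argument stands.
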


\begin{proof}
Let us regard $M$ as fixed. We will show that for every Frobenius module $N$ and every nonnegative integer $k$, the
map $$\rho_{k}: \Tor_{k}^{A}(M, N)^{\perfection} \rightarrow \Tor_{k}^{A^{\perfection}}( M^{\perfection}, N^{\perfection} )$$ is an isomorphism.
The proof proceeds by induction on $k$. When $k =0$, the desired result is the content of Remark \ref{tenso1}.
Assume that $k > 0$, and choose a short exact sequence of Frobenius modules $0 \rightarrow N' \rightarrow P \rightarrow N \rightarrow 0$
where $P$ is a free left module over $A[F]$. Then $P^{\perfection}$ is a free left module over $A^{\perfection}[ F^{\pm 1} ]$, and is therefore
also free as an $A^{\perfection}$-module. It follows that the groups $\Tor_{k}^{A}( M, P)$ and $\Tor_{k}^{A^{\perfection} }( M^{\perfection}, P^{\perfection} )$ both vanish.
Consequently, the map $\rho_{k}$ fits into a commutative diagram of exact sequences
$$ \xymatrix{ 0 \ar[d] & 0 \ar[d] \\
 \Tor_{k}^{A}(M, N)^{\perfection} \ar[r]^-{\rho_k} \ar[d] &  \Tor_{k}^{A^{\perfection}}( M^{\perfection}, N^{\perfection} ) \ar[d] \\
  \Tor_{k-1}^{A}(M, N')^{\perfection} \ar[r]^-{\rho'} \ar[d] & \Tor_{k-1}^{A^{\perfection}}( M^{\perfection}, N'^{\perfection} )  \ar[d] \\
  \Tor_{k-1}^{A}(M, P)^{\perfection} \ar[r]^-{\rho''} &  \Tor_{k-1}^{A^{\perfection}}( M^{\perfection}, P^{\perfection} ). }$$
The maps $\rho'$ and $\rho''$ are isomorphisms by our inductive hypothesis, so that $\rho_k$ is an isomorphism as well.
\end{proof}

\begin{remark}[Compatibility with Extension of Scalars]
Let $f: A \rightarrow B$ be a homomorphism of $\F_p$-algebras. Then the extension of scalars functor $$f^{\ast}_{\Frob}: \Mod_{A}^{\Frob} \rightarrow \Mod_{B}^{\Frob}$$
is right exact, having left derived functors $N \mapsto \Tor_{\ast}^{A}( B, N)$. Let $M$ be a Frobenius module over $B$. Then we can regard the functors $\{ \Tor_{k}^{A}(M, \bullet) \}_{k \geq 0}$
the the left derived functor of the construction $N \mapsto M \otimes_{B} (f^{\ast}_{\Frob} N)$. Since the functor $f^{\ast}_{\Frob}: \Mod_{A}^{\Frob} \rightarrow \Mod_B^{\Frob}$ carries
projective objects to projective objects, we have a Grothendieck spectral sequence (in the abelian category $\Mod_{A}^{\Frob}$)
$$ \Tor^{B}_{s}( M, \Tor^{A}_{t}(B, N) ) \Rightarrow \Tor^{A}_{s+t}( M, N).$$
If $M$ and $N$ are perfect, then we can apply the same reasoning to the induced map $A^{\perfection} \rightarrow B^{\perfection}$ to obtain a Grothendieck spectral sequence
$$ \Tor^{B^{\perfection}}_{s}( M, \Tor^{A^{\perfection}}_{t}(B^{\perfection}, N) ) \Rightarrow \Tor^{A^{\perfection}}_{s+t}( M, N).$$
\end{remark}

\subsection{Tensor Products of Holonomic Modules}\label{sec8sub3}

Our next goal is to prove the following variant of Theorem \ref{theoX9}:

\begin{theorem}\label{tenso10}
Let $A$ be an $\F_p$-algebra and let $M$ and $N$ be algebraic Frobenius modules over $A$. Then:
\begin{itemize}
\item[$(1)$] The tensor product $M \otimes_{ A^{\perfection} } N$ is algebraic.
\item[$(2)$] The $\Tor$-groups $\Tor^{A^{\perfection}}_{\ast}( M, N)$ vanish for $\ast > 0$.
\item[$(3)$] If $M$ and $N$ are holonomic, then $M \otimes_{A^{\perfection} } N$ is holonomic.
\end{itemize}
\end{theorem}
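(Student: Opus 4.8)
The plan is to reduce all three assertions to statements about $A$-modules and then exploit Theorem \ref{theoX9}. Since the perfection functor is exact and $\Tor_\ast^{A}(M,N)^{\perfection} \simeq \Tor_\ast^{A^{\perfection}}(M^{\perfection},N^{\perfection})$ by Proposition \ref{tenso11} (and $M \simeq M^{\perfection}$, $N \simeq N^{\perfection}$ since they are perfect), it suffices to understand the ordinary Tor-groups $\Tor_\ast^{A}(M,N)$ and pass to perfections. First I would handle $(1)$: by Corollary \ref{corX59} we may write $N$ as an epimorphic image $\bigoplus_\alpha N_\alpha \twoheadrightarrow N$ with each $N_\alpha$ holonomic, and since the tensor product $M \otimes_{A^{\perfection}} (-)$ is right exact and commutes with direct sums, and $\Mod_A^{\alg}$ is closed under cokernels and arbitrary direct sums (Proposition \ref{propX53}), it is enough to treat the case where $M$ and $N$ are both holonomic. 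The key reduction for $(2)$ and $(3)$ is therefore to the holonomic case.

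Assume then $M$ and $N$ are holonomic; write $N \simeq N_0^{\perfection}$ with $N_0 \in \Mod_A^{\Frob}$ finitely presented over $A$. The plan is to resolve $N_0$ (or rather a suitable finite stage) by finitely generated free $A$-modules and track the Frobenius structure. More efficiently, I would invoke Theorem \ref{theoX9} directly: viewing $M$ as a Frobenius module over $A$ via restriction, $M \otimes_{A^{\perfection}} N \simeq (M \otimes_A N_0)^{\perfection}$ by Remark \ref{tenso1}, and the higher Tor-groups $\Tor_n^{A^{\perfection}}(M,N) \simeq \Tor_n^{A}(M,N_0)^{\perfection}$ by Proposition \ref{tenso11}. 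Since $N_0$ is finitely presented over $A$, it admits a presentation $A^{\oplus r} \xrightarrow{\Phi} A^{\oplus s} \to N_0 \to 0$; the subtlety is that this need not be a presentation of \emph{Frobenius} modules, but the derived tensor product $M \otimes_A^{\mathbf{L}} N_0$ can be computed either way since $A[F]$ is free as a left $A$-module (Proposition \ref{snappums}). To conclude vanishing of the higher Tor-groups after perfection, I would apply Theorem \ref{theoX9} with the roles reversed: that theorem gives $\Tor_n^{B}(M', B')^{\perfection} = 0$ for $M'$ algebraic over $B$ and any $B \to B'$; here one wants to set up the computation so that $N_0$ arises as an extension of scalars of an algebraic module along a finitely presented $A$-algebra map. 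Concretely, writing $B = A[x_1,\dots,x_k]$ mapping onto a ring $C$ through which the module $N_0$ is "defined," one bootstraps from the case of a surjection, exactly as in the proof of Theorem \ref{theoX9}, reducing to a principal ideal via Lemma \ref{lemma.bs}.

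For $(3)$, once $(1)$ and $(2)$ are known and $M \simeq M_0^{\perfection}$, $N \simeq N_0^{\perfection}$ with $M_0, N_0$ finitely presented over $A$, the vanishing in $(2)$ gives $M \otimes_{A^{\perfection}} N \simeq (M_0 \otimes_A N_0)^{\perfection}$, and $M_0 \otimes_A N_0$ is finitely presented over $A$ (the tensor product of finitely presented modules is finitely presented), so the result is holonomic by definition. Alternatively, using a direct limit argument one reduces to $A$ Noetherian and then $M \otimes_{A^{\perfection}} N$ is a quotient of a finitely generated object in a locally Noetherian category (Proposition \ref{propX29}), hence Noetherian, hence holonomic by the same proposition; closure of $\Mod_R^{\hol}$ under cokernels (Corollary \ref{corollary.propX60}) seals this.

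The main obstacle I anticipate is $(2)$: showing the higher $\Tor^{A^{\perfection}}$ vanish requires genuinely using the \emph{algebraicity} hypothesis on one of the factors, and the cleanest route is to mimic the dévissage in the proof of Theorem \ref{theoX9} — reducing to $A$ perfect, then to $A^{\perfection}/I$ for $I$ the perfect radical of a principal ideal, where $I$ becomes flat (Lemma \ref{lemma.bs}) so only $\Tor_1$ survives, and finally verifying injectivity of the resulting multiplication map using the polynomial equation satisfied by elements under $\varphi_M$. The one new wrinkle relative to Theorem \ref{theoX9} is that here the "base change" is along an algebraic module $N$ rather than a ring map, so one must first replace $N$ by a finitely presented Frobenius module $N_0$ over $A[x_1,\dots,x_k]$ and check that the relevant algebraicity (now of $M$, after extending scalars to the polynomial ring as in Corollary \ref{corX62}) is preserved — this is exactly the bookkeeping already carried out in the first paragraph of the proof of Theorem \ref{theoX9}.
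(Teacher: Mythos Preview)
Your treatment of $(1)$ and $(3)$ is fine and close to the paper's. For $(3)$ in particular, the identification $M \otimes_{A^{\perfection}} N \simeq (M_0 \otimes_A N_0)^{\perfection}$ is Remark~\ref{tenso1} and does not even require $(2)$; the paper obtains $(3)$ the same way, through Lemma~\ref{tenso20}(1) after reducing to the Noetherian case.

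The gap is in $(2)$. You propose to ``mimic the d\'evissage in the proof of Theorem~\ref{theoX9},'' but that argument uses in an essential way that the second factor is a \emph{ring}: one writes $B$ as a quotient of a polynomial algebra, reduces to $B = A/(a)$, and then invokes the explicit flat presentation of the ideal $I = (a^{1/p^\infty})$ from Lemma~\ref{lemma.bs} to isolate $\Tor_1$. None of these moves are available when the second factor is merely a finitely presented module $N_0$: there is no way to ``write $N_0$ as defined through a ring $C$'' in a manner that reduces $\Tor_\ast^A(M, N_0)$ to Tor against a ring, and your paragraph describing this step does not actually produce such a reduction. The ``one new wrinkle'' you flag is in fact the whole difficulty.

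The paper takes a different route for $(2)$. After reducing to $M, N$ holonomic and $A$ Noetherian (via Theorem~\ref{theoX54} and Proposition~\ref{propX52}), it invokes Lemma~\ref{tenso20}: part~(1) shows that each $\Tor_s^{A^{\perfection}}(M,N)$ is \emph{holonomic}, and part~(2) shows these Tor-groups are compatible with $f^{\diamond}$ for any $f: A \to B$. Given this, the vanishing for $s > 0$ is checked on supports via Proposition~\ref{theo78}: pulling back along any $A \to \kappa$ with $\kappa$ a field, Lemma~\ref{tenso20}(2) reduces to computing $\Tor_s^{\kappa}$, which is trivially zero. Note that Theorem~\ref{theoX9} does enter, but only inside the proof of Lemma~\ref{tenso20}(2), where it guarantees that $f^{\diamond} P_\ast$ remains a resolution; it is not applied directly to the pair $(M,N)$.
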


The proof of Theorem \ref{tenso10} will require some preliminaries.

\begin{lemma}\label{tenso20}
Let $A$ be a Noetherian $\F_p$-algebra, and let $M$ and $N$ be holonomic Frobenius modules over $A$.
Then:
\begin{itemize}
\item[$(1)$] The $\Tor$-groups $\Tor_{\ast}^{A^{\perfection}}(M, N)$ are also holonomic Frobenius modules over $A$.
\item[$(2)$] Let $f: A \rightarrow B$ be any homomorphism of commutative rings. Then the canonical map
$$ f^{\diamond} \Tor_{\ast}^{ A^{\perfection} }( M, N) \rightarrow \Tor_{\ast}^{ B^{\perfection} }( f^{\diamond} M, f^{\diamond} N)$$
is an isomorphism.
\end{itemize}
\end{lemma}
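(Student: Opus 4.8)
The strategy is to push the perfection operation inside the $\Tor$-groups, reducing to ordinary commutative algebra over the perfect ring $A^{\perfection}$, and then to feed in two facts established earlier: Proposition~\ref{tenso11}, which identifies $\Tor$ over $A^{\perfection}$ with the perfection of $\Tor$ over $A$, and the combination of Proposition~\ref{tenso6} with Theorem~\ref{theoX9}, which will supply the vanishing of higher $\Tor$-groups needed for the base-change statement. Fix isomorphisms $M \simeq M_0^{\perfection}$ and $N \simeq N_0^{\perfection}$ with $M_0, N_0 \in \Mod_{A}^{\Frob}$ finitely generated (equivalently, since $A$ is Noetherian, finitely presented) as $A$-modules, which exist by Definition~\ref{defhol2}. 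For part $(1)$: applying Proposition~\ref{tenso11} to the Frobenius modules $M_0$ and $N_0$ gives isomorphisms of Frobenius modules $\Tor_{k}^{A^{\perfection}}(M, N) \simeq \Tor_{k}^{A}(M_0, N_0)^{\perfection}$ for all $k$. Since $A$ is Noetherian and $M_0, N_0$ are finitely generated, each $\Tor_{k}^{A}(M_0, N_0)$ is a finitely presented $A$-module, and it carries a natural Frobenius structure by Proposition~\ref{snappums} (cf.\ Remark~\ref{tenso5}); hence its perfection is holonomic by Definition~\ref{defhol2}. This is the only point at which the Noetherian hypothesis is used essentially.

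The key input for part $(2)$ is the following vanishing statement: \emph{if $C$ is a perfect $\F_p$-algebra equipped with an $A^{\perfection}$-algebra structure and $P$ is an algebraic Frobenius module over $A^{\perfection}$, then $\Tor_{i}^{A^{\perfection}}(P, C) = 0$ for $i > 0$.} Indeed, $A^{\perfection}$, $P$, and $C$ are all perfect Frobenius modules over the perfect ring $A^{\perfection}$, so Proposition~\ref{tenso6} shows that $\Tor_{i}^{A^{\perfection}}(P, C)$ is again a perfect Frobenius module over $A^{\perfection}$ --- that is, it equals its own perfection --- while Theorem~\ref{theoX9}, applied to the ring map $A^{\perfection} \to C$ and the algebraic module $P$, shows that this perfection vanishes for $i > 0$. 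I will apply this with $C = B^{\perfection}$ and with $P$ successively equal to $M$, to $N$, and to the holonomic modules $\Tor_{j}^{A^{\perfection}}(M, N)$ produced in $(1)$; all of these are algebraic over $A^{\perfection}$ because (being holonomic over $A$) they are algebraic over $A$, hence a fortiori over $A^{\perfection}$ (Proposition~\ref{propX58}).

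For part $(2)$: choose a projective resolution $Q_\bullet \to M_0$ in $\Mod_{A}^{\Frob}$. By Variant~\ref{variant.derived}, $Q_\bullet^{\perfection} \to M$ is a projective resolution in $\Mod_{A}^{\perf}$, and since $A^{\perfection}[F^{\pm 1}]$ is free as a left $A^{\perfection}$-module (Example~\ref{skriff}) each $Q_{k}^{\perfection}$ is flat over $A^{\perfection}$; thus $M \otimes_{A^{\perfection}}^{\mathbb{L}} N$ is computed by $Q_\bullet^{\perfection} \otimes_{A^{\perfection}} N$, with homology $\Tor_{\ast}^{A^{\perfection}}(M, N)$. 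Now use the standard base-change identity $B^{\perfection} \otimes_{A^{\perfection}}^{\mathbb{L}} (M \otimes_{A^{\perfection}}^{\mathbb{L}} N) \simeq (B^{\perfection} \otimes_{A^{\perfection}}^{\mathbb{L}} M) \otimes_{B^{\perfection}}^{\mathbb{L}} (B^{\perfection} \otimes_{A^{\perfection}}^{\mathbb{L}} N)$ in the derived category of $A^{\perfection}$-modules. The vanishing statement applied to $P = M$ and $P = N$ gives $B^{\perfection} \otimes_{A^{\perfection}}^{\mathbb{L}} M \simeq B^{\perfection} \otimes_{A^{\perfection}} M \simeq f^{\diamond} M$, and likewise for $N$ (using Remark~\ref{tenso1} for the identification $B^{\perfection} \otimes_{A^{\perfection}} M \simeq (B \otimes_{A} M)^{\perfection} = f^{\diamond} M$), so the right-hand side is $f^{\diamond} M \otimes_{B^{\perfection}}^{\mathbb{L}} f^{\diamond} N$, whose $k$-th homology is $\Tor_{k}^{B^{\perfection}}(f^{\diamond} M, f^{\diamond} N)$. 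On the left-hand side, the homology modules of the connective complex $M \otimes_{A^{\perfection}}^{\mathbb{L}} N$ are the holonomic modules $\Tor_{j}^{A^{\perfection}}(M, N)$ from $(1)$, so the vanishing statement applied to $P = \Tor_{j}^{A^{\perfection}}(M, N)$ makes the universal-coefficient spectral sequence for $B^{\perfection} \otimes_{A^{\perfection}}^{\mathbb{L}}(-)$ collapse, yielding $H_{k}\bigl(B^{\perfection} \otimes_{A^{\perfection}}^{\mathbb{L}} (M \otimes_{A^{\perfection}}^{\mathbb{L}} N)\bigr) \simeq B^{\perfection} \otimes_{A^{\perfection}} \Tor_{k}^{A^{\perfection}}(M, N) \simeq f^{\diamond} \Tor_{k}^{A^{\perfection}}(M, N)$. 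Matching the two computations, and observing that every isomorphism used is induced by the evident natural map, gives $(2)$.

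The main obstacle is part $(2)$, and the only genuinely non-formal ingredient there is the vanishing $\Tor_{>0}^{A^{\perfection}}(P, B^{\perfection}) = 0$ for algebraic $P$ --- this is precisely where the earlier hard work (Theorem~\ref{theoX9}, resting on Lemma~\ref{lemma.bs}) gets consumed. Once the perfection has been moved inside the $\Tor$-groups via Proposition~\ref{tenso11} and this vanishing is available, the remainder is routine homological algebra.
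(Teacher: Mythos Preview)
Your proof is correct and follows essentially the same route as the paper. Part~$(1)$ is identical: write $M = M_0^{\perfection}$, $N = N_0^{\perfection}$, invoke Proposition~\ref{tenso11}, and use that $\Tor_k^A(M_0,N_0)$ is finitely generated since $A$ is Noetherian. For part~$(2)$, the paper chooses projective resolutions $P_\ast, Q_\ast$ of $M, N$ in $\Mod_A^{\perf}$, observes that $f^{\diamond}P_\ast, f^{\diamond}Q_\ast$ remain resolutions by the vanishing from Theorem~\ref{theoX9}, and then writes the spectral sequence $E^2_{s,t} = \Tor_s^{A^{\perfection}}(B^{\perfection}, \Tor_t^{A^{\perfection}}(M,N)) \Rightarrow \Tor_{s+t}^{B^{\perfection}}(f^{\diamond}M, f^{\diamond}N)$ coming from $(f^{\diamond}P_\ast)\otimes_{B^{\perfection}}(f^{\diamond}Q_\ast) \simeq B^{\perfection}\otimes_{A^{\perfection}}(P_\ast\otimes_{A^{\perfection}}Q_\ast)$; this is exactly your derived base-change identity and universal-coefficient spectral sequence, and it collapses by the same application of part~$(1)$ plus Theorem~\ref{theoX9}. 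Your detour through Proposition~\ref{tenso6} to justify that $\Tor_i^{A^{\perfection}}(P,B^{\perfection})$ equals its own perfection is a clean touch, but the paper achieves the same thing implicitly via Variant~\ref{variant.derived}.
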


\begin{proof}
Since $M$ and $N$ are holonomic, we can write $M = M_0^{\perfection}$ and $N = N_0^{\perfection}$, where $M_0, N_0 \in \Mod_{A}^{\Frob}$ are finitely
generated as $A$-modules. The assumption that $A$ is Noetherian guarantees that the $\Tor$-groups $\Tor_{k}^{A}( M_0, N_0 )$ is finitely
generated as an $A$-module. Using the isomorphisms $\Tor_{k}^{A^{\perfection} }( M_0^{\perfection}, N_0^{\perfection} ) \simeq \Tor_{k}^{A}( M_0, N_0)^{\perfection}$ of
Proposition \ref{tenso11}, we conclude that each $\Tor_{k}^{A^{\perfection}}( M, N)$ is holonomic. This proves $(1)$.

We now prove $(2)$. Let $f: A \rightarrow B$ be a homomorphism of commutative rings. Let $P_{\ast}$ and $Q_{\ast}$ be resolutions of
$M$ and $N$ by projective objects of $\Mod_{A}^{\perf}$. Then $P_{\ast}$ and $Q_{\ast}$ are also resolutions of $M$ and $N$ by projective
$A^{\perfection}$-modules. It follows that the homology groups of the complexes $f^{\diamond} P_{\ast}$ and $f^{\diamond} Q_{\ast}$
can be identified with the groups $\Tor_{\ast}^{A^{\perfection}}( B^{\perfection}, M)$ and $\Tor_{\ast}^{A^{\perfection}}( B^{\perfection}, N)$, which vanish
for $\ast > 0$ by virtue of Theorem \ref{theoX9}. In other words, we can regard $f^{\diamond} P_{\ast}$ and $f^{\diamond} Q_{\ast}$
as projective resolutions of $f^{\diamond} M$ and $f^{\diamond} N$, respectively. It follows that
$\Tor_{\ast}^{ B^{\perfection} }( f^{\diamond} M, f^{\diamond} N)$ can be identified with the homology of the tensor product complex 
$$ (f^{\diamond} P_{\ast}) \otimes_{B^{\perfection}} ( f^{\diamond} Q_{\ast} ) \simeq B^{\perfection} \otimes_{ A^{\perfection} } ( P_{\ast} \otimes_{A^{\perfection} } Q_{\ast} ).$$
We therefore have a convergent spectral sequence
$$ E^{2}_{s,t} = \Tor_{s}^{A^{\perfection}}( B^{\perfection}, \Tor_{t}^{A^{\perfection}}( M, N) ) \Rightarrow \Tor_{s+t}^{ B^{\perfection} }( f^{\diamond} M, f^{\diamond} N).$$
To prove assertion $(2)$, it will suffice to show that the groups $E^{2}_{s,t}$ vanish for $s > 0$, which follows from assertion $(1)$ and Theorem \ref{theoX9}.
\end{proof}

\begin{proof}[Proof of Theorem \ref{tenso10}]
Let $M$ and $N$ be algebraic Frobenius modules over an $\F_p$-algebra $A$; we wish to prove that
the tensor product $M \otimes_{A^{\perfection} } N$ is algebraic and that the $\Tor$-groups $\Tor_{\ast}^{A^{\perfection}}( M, N)$ vanish for $\ast > 0$.
Using Theorem \ref{theoX54}, we can write $M$ as a filtered colimit of holonomic Frobenius modules and thereby reduce to the case where $M$ is
holonomic. Similarly, we can assume that $N$ is holonomic. Applying Proposition \ref{propX52}, we can assume that $M = \iota^{\diamond} M'$ and
$N = \iota^{\diamond} N'$, where $\iota: A' \hookrightarrow A$ is the inclusion of a finitely generated subalgebra and $M', N' \in
\Mod_{A}^{\hol}$. In this case, Lemma \ref{tenso20} supplies isomorphisms $\Tor_{\ast}^{A^{\perfection}}( M, N) \simeq \iota^{\diamond} \Tor_{\ast}^{A'^{\perfection}}( M', N' )$.
We may therefore replace $A$ by $A'$, and thereby reduce to the case where $A$ is Noetherian. It now follows from Lemma \ref{tenso20} that the $\Tor$-groups
$\Tor_{s}^{A^{\perfection}}(M, N)$ are holonomic for each $s \geq 0$; we wish to show that they vanish for $s > 0$. By virtue of Proposition \ref{theo78},
it will suffice to show that $f^{\diamond} \Tor_{s}^{A^{\perfection}}( M, N) \simeq 0$ for every homomorphism $f: A \rightarrow \kappa$, where $\kappa$ is a field.
Applying Lemma \ref{tenso20} again, we can reduce to the case where $A = \kappa$, in which case the vanishing is automatic.
\end{proof}

\subsection{Compatibility with the Riemann-Hilbert Correspondence}\label{sec8sub4}

Let $A$ be a commutative $\F_p$-algebra and let
$\Sol: \Mod_{A}^{\perf} \rightarrow \Shv_{\mathet}( \Spec(A), \F_p)$ be the solution sheaf functor (Construction \ref{solsheaf2}), given by the formula
$$ \Sol(M)(B) = \{ x \in (B \otimes_{A} M): \varphi_{ B \otimes_{A} M}(x) = x \}.$$
Note that if $x \in \Sol(M)(B)$ and $y \in \Sol(N)(B)$, then the tensor $x \otimes y$ can be regarded as an element of
$\Sol(M \otimes_{A^{\perfection}} N)(B)$. This observation determines a bilinear map
$$ \Sol(M)(B) \times \Sol(N)(B) \rightarrow \Sol(M \otimes_{ A^{\perfection} } N)(B)$$
which depends functorially on $B$, and therefore gives rise to a map of sheaves
$\Sol(M) \otimes_{ \F_p } \Sol(N) \rightarrow \Sol( M \otimes_{ A^{\perfection} } N )$.

\begin{theorem}\label{bijus}
Let $A$ be a commutative $\F_p$-algebra and suppose that $M$ and $N$ are algebraic $A$-modules. Then
the comparison map 
$$\theta: \Sol(M) \otimes_{ \F_p } \Sol(N) \rightarrow \Sol( M \otimes_{ A^{\perfection} } N )$$
is an isomorphism in the category $\Shv_{\mathet}( \Spec(A), \F_p)$.
\end{theorem}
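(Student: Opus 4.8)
The plan is to reduce the statement to the corresponding algebraic fact, Theorem \ref{tenso10}, via the equivalence $\Sol: \Mod_{A}^{\alg} \simeq \Shv_{\mathet}(\Spec(A), \F_p)$ of Theorem \ref{theoX50} and its compatibility with pullback (Corollary \ref{sendi}). First I would observe that, since both sides of $\theta$ commute with filtered colimits in each variable (the solution functor preserves filtered colimits when restricted to algebraic modules, as $\RH$ does and $\Sol$ is its inverse; and $\otimes_{\F_p}$ on {\etale} sheaves commutes with filtered colimits), we may use Theorem \ref{theoX54} to reduce to the case where $M$ and $N$ are \emph{holonomic}, so that $\Sol(M)$ and $\Sol(N)$ are constructible.

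Next, the key point is to promote $\Sol$ to a symmetric monoidal equivalence, or equivalently to show directly that $\theta$ is an isomorphism. The cleanest route is to check this after restricting to stalks: since $\Sol(M)\otimes_{\F_p}\Sol(N)$ and $\Sol(M\otimes_{A^{\perf}}N)$ are {\etale} sheaves, $\theta$ is an isomorphism if and only if it induces an isomorphism on all geometric stalks, i.e.\ after applying $f^{\ast}$ for every map $f: A \rightarrow \kappa$ with $\kappa$ an algebraically closed field. Here I would use three compatibilities: (i) Corollary \ref{sendi}, giving $f^{\ast}\Sol(M) \simeq \Sol(f^{\diamond}M)$ for algebraic $M$; (ii) the fact that $f^{\ast}$ is symmetric monoidal for the tensor product of {\etale} sheaves, so $f^{\ast}(\Sol(M)\otimes_{\F_p}\Sol(N)) \simeq f^{\ast}\Sol(M)\otimes_{\F_p} f^{\ast}\Sol(N)$; and (iii) the compatibility of $f^{\diamond}$ with $\otimes$ over the perfection (recorded in \S\ref{sec8sub1}), so $f^{\diamond}(M\otimes_{A^{\perf}}N) \simeq f^{\diamond}M \otimes_{\kappa}f^{\diamond}N$. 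Combining these reduces us to the case $A = \kappa$ an algebraically closed field.

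In that case $\Shv_{\mathet}(\Spec(\kappa),\F_p)$ is just the category of $\F_p$-vector spaces, and $\Sol$ restricts to the classical equivalence between {\etale} $\F_p$-sheaves and vector spaces over $\kappa$ with a Frobenius-semilinear automorphism (Proposition \ref{proposition.lemX51}): concretely, $\Sol(M)$ is the space of $\varphi_M$-fixed points, and the classical statement (compare Theorem \ref{theorem.folk}) gives $M \simeq \Sol(M)\otimes_{\F_p}\kappa$ as Frobenius modules. Under this identification, for $M = V\otimes_{\F_p}\kappa$ and $N = W\otimes_{\F_p}\kappa$ one computes $M\otimes_{\kappa}N \simeq (V\otimes_{\F_p}W)\otimes_{\F_p}\kappa$, whose fixed points are $V\otimes_{\F_p}W = \Sol(M)\otimes_{\F_p}\Sol(N)$, and one checks that this identification is exactly the map $\theta$. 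This finishes the proof.

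The main obstacle I anticipate is technical bookkeeping rather than a genuine difficulty: one must verify that the stalkwise comparison maps $f^{\ast}\theta$ genuinely agree with the intrinsically-defined map $\theta_{f^{\diamond}M, f^{\diamond}N}$ over $\kappa$ — that is, that all the compatibility isomorphisms (i)--(iii) above are compatible with the tensor-of-fixed-points construction. This is a naturality check that follows from unwinding the definition of $\theta$ (sending $x\otimes y$ to $x\otimes y$, which is manifestly compatible with base change), but it requires care to state precisely. The genuinely substantive input, namely that the relevant $\Tor$-groups vanish and the tensor product stays algebraic, has already been isolated as Theorem \ref{tenso10}, so no new hard analysis is needed here.
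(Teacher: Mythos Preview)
Your proposal is correct and follows essentially the same approach as the paper: reduce to geometric points via the compatibility of $\Sol$ with pullback (Corollary \ref{sendi}) and of $f^{\diamond}$ with tensor products, then verify directly over an algebraically closed field using the Riemann-Hilbert equivalence. The only difference is that your preliminary reduction to the holonomic case via Theorem \ref{theoX54} is unnecessary---the paper omits it and works directly with algebraic modules throughout, since the stalkwise argument requires no finiteness.
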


\begin{proof}
It will suffice to show that for every algebraically closed field $\kappa$ and every homomorphism $f: A \rightarrow \kappa$,
the pullback $f^{\ast}(\theta)$ is an isomorphism in $\Shv_{\kappa}^{\mathet}$. Since
$M$, $N$, and $M \otimes_{ A^{\perfection}} N$ are algebraic (Theorem \ref{tenso10}), we can identify
$f^{\ast}( \theta)$ with the tautological map $$\Sol( f^{\diamond} M) \otimes_{ \F_p } \Sol( f^{\diamond} N)
\rightarrow \Sol( f^{\diamond} M \otimes_{ \kappa} f^{\diamond} N).$$
We may therefore replace $A$ by $\kappa$ and thereby reduce to the case where $A$ is an algebraically closed field.
In this case, Theorem \ref{theoX50} implies that $\Mod_{A}^{\alg}$ is equivalent to the category of vector spaces over $\F_p$.
Consequently, the Frobenius modules $M$ and $N$ can be decomposed as a direct sum of copies of $A^{\perfection} = \kappa$, and the desired result is obvious.
\end{proof}

\begin{corollary}
\label{tensoraffine}
Let $A$ be a commutative $\F_p$-algebra. Then the Riemann-Hilbert functor $\RH: \Shv_{\mathet}( \Spec(A), \F_p) \rightarrow \Mod_{A}^{\perf}$ admits the structure of a symmetric monoidal functor
(where the symmetric monoidal structure on $\Shv_{\mathet}( \Spec(A), \F_p)$ is given by the usual tensor product of sheaves,
and the symmetric monoidal structure on $\Mod_{A}^{\perf}$ is given by the tensor product $\otimes_{A^{\perfection}}$ of Remark \ref{tenso2}).
\end{corollary}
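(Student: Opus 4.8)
The plan is to deduce Corollary \ref{tensoraffine} formally from the compatibility of the solution functor with tensor products (Theorem \ref{bijus}), together with the fact that $\Sol$ and $\RH$ are mutually inverse equivalences between $\Mod_{A}^{\alg}$ and $\Shv_{\mathet}(\Spec(A),\F_p)$ (Theorem \ref{theoX50}). The point is that $\Sol$, as formulated in \S\ref{sec8sub4}, carries a natural lax symmetric monoidal structure, given by the comparison maps $\Sol(M)\otimes_{\F_p}\Sol(N)\to\Sol(M\otimes_{A^{\perfection}}N)$; Theorem \ref{bijus} asserts precisely that this lax structure is strict (i.e.\ an isomorphism) once we restrict to algebraic Frobenius modules. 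A lax symmetric monoidal functor which is also an equivalence of categories is automatically strong symmetric monoidal, and its inverse inherits a canonical strong symmetric monoidal structure.

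Concretely, I would proceed as follows. First, record that the comparison maps of \S\ref{sec8sub4} make $\Sol\colon\Mod_{A}^{\perf}\to\Shv_{\mathet}(\Spec(A),\F_p)$ into a lax symmetric monoidal functor: one checks that the maps $\Sol(M)\otimes_{\F_p}\Sol(N)\to\Sol(M\otimes_{A^{\perfection}}N)$ are compatible with the associativity and commutativity constraints on both sides and with the unit (the unit comparison $\underline{\F_p}\to\Sol(A^{\perfection})$ is the Artin--Schreier inclusion, which is an isomorphism by Lemma \ref{tuster} or Proposition \ref{proposition.ex69}). All of this is a routine diagram chase from the definition of $\varphi_{M\otimes_{A^{\perfection}}N}$ and I would not spell it out in detail. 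Second, restrict attention to the full subcategory $\Mod_{A}^{\alg}\subseteq\Mod_{A}^{\perf}$, which is closed under $\otimes_{A^{\perfection}}$ by Theorem \ref{tenso10}(1) and contains the unit $A^{\perfection}$; hence $\Mod_{A}^{\alg}$ is a symmetric monoidal subcategory and $\Sol|_{\Mod_{A}^{\alg}}$ is lax symmetric monoidal into $\Shv_{\mathet}(\Spec(A),\F_p)$. Third, invoke Theorem \ref{bijus}: on $\Mod_{A}^{\alg}$ the comparison maps are isomorphisms, so $\Sol|_{\Mod_{A}^{\alg}}$ is a strong symmetric monoidal equivalence. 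Finally, transport the symmetric monoidal structure along the inverse equivalence: $\RH$ is (by Theorem \ref{theoX50}) an inverse of $\Sol|_{\Mod_{A}^{\alg}}$, and the inverse of a strong symmetric monoidal equivalence is canonically strong symmetric monoidal, with structure isomorphisms $\RH(\sheafF)\otimes_{A^{\perfection}}\RH(\sheafG)\xrightarrow{\sim}\RH(\sheafF\otimes_{\F_p}\sheafG)$ obtained by applying $\RH$ to $\theta^{-1}$ for $M=\RH(\sheafF)$, $N=\RH(\sheafG)$ and using the counit isomorphisms $\Sol(\RH(\sheafF))\simeq\sheafF$. This is the desired symmetric monoidal structure on $\RH\colon\Shv_{\mathet}(\Spec(A),\F_p)\to\Mod_{A}^{\perf}$ (with target viewed inside $\Mod_{A}^{\perf}$ via the fully faithful inclusion of $\Mod_{A}^{\alg}$).

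The only genuine mathematical input is Theorem \ref{bijus}; everything else is formal $2$-categorical bookkeeping about monoidal structures and equivalences. Accordingly, I do not expect a real obstacle here — the main thing to be careful about is simply stating precisely which monoidal structures are in play (tensor over $\F_p$ on sheaves versus tensor over $A^{\perfection}$ on perfect Frobenius modules, the latter defined via Remark \ref{tenso2}) and checking that the lax structure maps respect the coherence constraints. If one wished to avoid even the modest diagram chase establishing the lax structure, an alternative is to observe that $\RH$ itself is symmetric monoidal ``by construction'': it is the left adjoint to $\Sol$, it sends $\underline{\F_p}$ to $A^{\perfection}$ (Proposition \ref{proposition.ex69}), and it commutes with colimits; since every $\sheafF$ is a colimit of sheaves of the form $j_{!}\underline{\F_p}$ and $\RH(j_{!}\underline{\F_p})\simeq j_{!}(A[t^{-1}])^{\perfection}$ behaves well under the tensor product (using Theorem \ref{bijus} on generators together with the projection formula implicit in Proposition \ref{prop68}), one gets the monoidal structure directly. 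I would nonetheless present the first argument, as it is shorter and isolates the content cleanly in Theorem \ref{bijus}.
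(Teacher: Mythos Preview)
Your proposal is correct and matches the paper's proof essentially verbatim: the paper also observes that $\Sol$ is lax symmetric monoidal, invokes Theorem \ref{bijus} to make it strong on $\Mod_A^{\alg}$, combines with Theorem \ref{theoX50} to get a symmetric monoidal equivalence, and then passes to the inverse $\RH$. Your additional remarks (closure of $\Mod_A^{\alg}$ under $\otimes_{A^{\perfection}}$ via Theorem \ref{tenso10}, the unit comparison, the alternative colimit argument) are sound elaborations but not needed beyond what the paper records.
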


\begin{proof}
It follows from Theorem \ref{bijus} that the lax symmetric monoidal functor $\Sol: \Mod_{A}^{\perf} \rightarrow \Shv_{\mathet}( \Spec(A), \F_p)$ is
symmetric monoidal when restricted to $\Mod_{A}^{\alg}$. Combining this observation with Theorem \ref{theoX50}, we see that the functor
$\Sol|_{ \Mod_{A}^{\alg} }$ is an equivalence of symmetric monoidal categories. We conclude by observing that the functor $\RH$ can
be obtained by composing an inverse of $\Sol|_{ \Mod_{A}^{\alg} }$ with the inclusion functor $\Mod_{A}^{\alg} \hookrightarrow \Mod_{A}^{\perf}$.
\end{proof}

\newpage \section{The $p^{n}$-Torsion Case}\label{section.moretorsion}
\setcounter{subsection}{0}
\setcounter{theorem}{0}

Let $R$ be a commutative $\F_p$-algebra. Theorem \ref{maintheoXXX} supplies a fully faithful embedding from the category $\Shv_{\mathet}( \Spec(R), \F_p)$
$p$-torsion {\etale} sheaves on $\Spec(R)$ to the category of Frobenius modules over $R$. Our goal in this section is to prove a generalization
of Theorem \ref{maintheoXXX}, which gives an analogous realization for the category $\Shv_{\mathet}( \Spec(R), \Z / p^{n} \Z)$ of $\Z /p^{n} \Z$-torsion
{\etale} sheaves, for any nonnegative integer $n$. Our first step will be to introduce an analogous enlargement of the category
$\Mod_{R}^{\Frob}$ of Frobenius module over $R$. In \S \ref{sec9sub1}, we define a notion of Frobenius module over $W_n(R)$, where
$W_n(R)$ is the ring of length $n$ Witt vectors over $R$ (Definition \ref{definition.frobenius-module2}). The collection of such Frobenius modules
can be organized into a category $\Mod_{W_n(R)}^{\Frob}$. In \S \ref{sec9sub2}, we study the dependence of this category on the $\F_p$-algebra $R$
(emphasizing in particular the effect of replacing $R$ by its perfection $R^{\perfection}$, which makes Witt vectors much more pleasant to work with).
In \S \ref{sec9sub3}, we introduce a {\it solution functor}
$$\Sol: \Mod_{W_n(R)}^{\Frob} \rightarrow \Shv_{\mathet}( \Spec(R), \Z / p^{n} \Z)$$
connecting Frobenius modules over $W_n(R)$ to $p^{n}$-torsion {\etale} sheaves (which reduces to Construction \ref{construction.solsheaf} in the case $n=1$). 
Like its $p$-torsion counterpart, this solution functor is not exact. However, we show in \S \ref{sec9sub4} that it is {\em almost} exact when restricted to perfect Frobenius modules,
in the sense that it has only one nonvanishing derived functor (Proposition \ref{proposition.corX77gen}). We apply this result in \S \ref{sec9sub6} to show that
the functor $\Sol$ restricts to an equivalence of categories $\Mod_{ W_n(R)}^{\alg} \simeq \Shv_{\mathet}( \Spec(R), \Z/ p^{n} \Z)$ (Theorem \ref{theorem.generalizedRH}). 
Here $\Mod_{ W_n(R)}^{\alg}$ denotes the full subcategory of $\Mod_{ W_n(R)}^{\alg}$ spanned by the {\it algebraic} Frobenius modules over
$W_n(R)$, which we introduce in \S \ref{sec9sub5} (Definition \ref{defholgen}).

\subsection{Frobenius Modules over the Witt Vectors}\label{sec9sub1}

We begin by extending some of the notions introduced in \S \ref{section.frobenius-module}.
Let $R$ be a commutative $\F_p$-algebra. For every nonnegative integer $n$, we let $W_n(R)$ denote the ring of length $n$ Witt vectors of $R$.
The Frobenius map $\varphi_{R}: R \rightarrow R$ induces a ring homomorphism $F: W_n(R) \rightarrow W_n(R)$, which we will refer to as the
{\it Witt vector Frobenius}.

\begin{definition}\label{definition.frobenius-module2}
Let $R$ be a commutative $\F_p$-algebra and let $n \geq 0$ be an integer. A {\it Frobenius module over $W_n(R)$} is an
$W_n(R)$-module $M$ equipped with an additive map $\varphi_M: M \rightarrow M$ satisfying the identity $\varphi_M( \lambda x) = F(\lambda) \varphi_M(x)$ for $x \in M$, $\lambda \in W_n(R)$.
We will say that a Frobenius module $M$ is {\it perfect} if the map $\varphi_{M}: M \rightarrow M$ is an isomorphism of abelian groups. 

Let $(M, \varphi_M)$ and $(N, \varphi_N)$ be Frobenius modules over $W_n(R)$. A {\it morphism of Frobenius modules} from $(M, \varphi_M)$ to $(N, \varphi_N)$ is an
$W_n(R)$-module homomorphism $\rho: M \rightarrow N$ for which the diagram
$$ \xymatrix{ M \ar[r]^{\rho} \ar[d]^{\varphi_M} & N \ar[d]^{\varphi_N} \\
M \ar[r]^{\rho} & M' }$$
commutes. 
We let $\Mod_{W_n(R)}^{\Frob}$ denote the category whose objects are Frobenius modules $(M, \varphi_M)$ over $W_n(R)$, and whose morphisms are morphisms of Frobenius modules.
We let $\Mod_{W_n(R)}^{\perf}$ denote the full subcategory of $\Mod_{W_n(R)}^{\Frob}$ spanned by the perfect Frobenius modules over $W_n(R)$.
\end{definition}

\begin{remark}
In the special case $n = 1$, Definition \ref{definition.frobenius-module2} reduces to Definitions \ref{definition.frobenius-module} and \ref{definition.perfect}. In particular, we have an equivalence of categories $\Mod_{W_1(R)}^{\Frob} \simeq \Mod_{R}^{\Frob}$, which restricts to an equivalence $\Mod_{ W_1(R)}^{\perf} \simeq \Mod_{ R}^{\perf}$.
\end{remark}

\begin{remark}
Let $R$ be a commutative ring in which $p = 0$ and let $n \geq 0$. Then $\Mod_{ W_n(R)}^{\Frob}$ can be identified with the category of
modules over the noncommutative ring $W_n(R)[ F ]$ whose elements are finite sums 
$\sum_{i \geq 0} c_i F^{i}$, where each coefficient $c_i$ belongs to $W_n(R)$, with multiplication given by
$$ ( \sum_{i \geq 0} c_i F^{i} ) ( \sum_{ j \geq 0 } c'_{j} F^{j} ) = \sum_{ k \geq 0 } ( \sum_{ i + j = k } c_{i} F^{i}(c_{j})) F^{k}.$$
In particular, $\Mod_{ W_n(R)}^{\Frob}$ is an abelian category with enough projective objects and enough injective objects.
\end{remark}

\begin{remark}
In the situation of Definition \ref{definition.frobenius-module2}, the full subcategory $$\Mod_{W_n(R)}^{\perf} \subseteq \Mod_{W_n(R)}^{\Frob}$$ is closed under limits, colimits, and extensions. In particular,
$\Mod_{W_n(R)}^{\perf}$ is an abelian category, and the inclusion functor $\Mod_{W_n(R)}^{\perf} \hookrightarrow \Mod_{W_n(R)}^{\Frob}$ is exact. 
\end{remark}

\begin{remark}\label{remark.embeddings}
For each $n > 0$, we can identify $\Mod_{W_{n-1}(R)}^{\Frob}$ with the full subcategory of $\Mod_{ W_n(R)}^{\Frob}$ spanned by those objects $(M, \varphi_M)$ where
$M$ is annihilated by the kernel of the restriction map $W_{n}(R) \rightarrow W_{n-1}(R)$. We therefore obtain (exact) fully faithful embeddings
$$ \Mod_{R}^{\Frob} \simeq \Mod_{W_1(R)}^{\Frob} \hookrightarrow \Mod_{ W_2(R)}^{\Frob} \hookrightarrow \Mod_{ W_3(R)}^{\Frob} \hookrightarrow \cdots$$
Similarly, we have fully faithful embeddings
$$ \Mod_{R}^{\perf} \simeq \Mod_{W_1(R)}^{\perf} \hookrightarrow \Mod_{ W_2(R)}^{\perf} \hookrightarrow \Mod_{ W_3(R)}^{\perf} \hookrightarrow \cdots$$
\end{remark}

\begin{remark}
In the situation of Definition \ref{definition.frobenius-module2}, the inclusion $\Mod_{W_n(R)}^{\perf} \hookrightarrow \Mod_{W_n(R)}^{\Frob}$ admits a left adjoint. Concretely, this left adjoint carries a Frobenius module $M$ to the direct limit of the sequence
$$ M \xrightarrow{ \varphi_{M} } M^{ F^{-1} } \xrightarrow{ \varphi_{M} } 
M^{ F^{-2} } \rightarrow \cdots;$$
here $M^{ F^{-k} }$ denotes the $W_n(R)$-module obtained from $M$ by restriction of scalars along the ring homomorphism $F^{k}: W_n(R) \rightarrow W_n(R)$.
We will denote this direct limit by $M^{\perfection}$ and refer to it as the {\it perfection} of $M$. Note that when $n = 1$, this agrees with the construction of Notation \ref{biskar}.
\end{remark}

\begin{example}
Let $R$ be a commutative $\F_p$-algebra. For each $n \geq 0$, we can regard $M = W_n(R)$ as a Frobenius module over itself by taking
$\varphi_{M}$ to be the Witt vector Frobenius map $F: W_n(R) \rightarrow W_n(R)$. Then the perfection $M^{\perfection}$ can be identified with $W_n( R^{\perfection} )$.
\end{example}

\subsection{Functoriality}\label{sec9sub2}

If $f: A \rightarrow B$ is a homomorphism of $\F_p$-algebras, then there is an evident forgetful functor
$\Mod_{ W_n(B)}^{\Frob} \rightarrow \Mod_{ W_n(A) }^{\Frob}$. This functor admits a left adjoint $f^{\ast}_{\Frob}: \Mod_{ W_n(A)}^{\Frob} \rightarrow \Mod_{ W_n(B)}^{\Frob}$, given
by extension of scalars along the evident ring homomorphism $W_n(A)[F] \rightarrow W_n(B)[F]$. Since the natural map $W_n(B) \otimes_{ W_n(A) } W_n(A)[F] \rightarrow W_n(B)[F]$
is an isomorphism, we have canonical isomorphisms $f^{\ast}_{\Frob} M \simeq W_n(B) \otimes_{ W_n(A) } M$ in the category of $W_n(B)$-modules.

\begin{remark}\label{remX5gen}
Let $f: A \rightarrow B$ be a homomorphism of commutative $\F_p$-algebras and let $M$ be a Frobenius module over $W_n(B)$. Then $M$ is {perfect} as a Frobenius module over $W_n(B)$ if and only if it is {perfect} when regarded as a Frobenius module over $W_n(A)$.
Moreover, the perfection $M^{\perfection}$ does not depend on whether we regard $M$ as a Frobenius module over $W_n(B)$ or over $W_n(A)$.
It follows that the diagram of forgetful functors
$$ \xymatrix{ \Mod_{W_n(B)}^{\perf} \ar[r] \ar[d] & \Mod_{W_n(B)}^{\Frob} \ar[d] \\
\Mod_{W_n(A)}^{\perf} \ar[r] & \Mod_{W_n(A)}^{\Frob} }$$
commutes (up to canonical isomorphism). 
\end{remark}

The following result is a formal consequence of Remark \ref{remX5gen}:

\begin{proposition}\label{mallowgen}
Let $f: A \rightarrow B$ be a homomorphism of commutative $\F_p$-algebras. Then the forgetful functor
$\Mod_{W_n(B)}^{\perf} \rightarrow \Mod_{W_n(A)}^{\perf}$ admits a left adjoint $f^{\diamond}$. Moreover, the diagram of categories
$$ \xymatrix{ \Mod_{W_n(A)}^{\Frob} \ar[r]^-{(-)^{\perfection}} \ar[d]^{ f^{\ast}_{\Frob} } & \Mod_{W_n(A)}^{\perf} \ar[d]^{ f^{\diamond} } \\
\Mod_{W_n(B)}^{\Frob} \ar[r]^-{(-)^{\perfection}} & \Mod_{W_n(B)}^{\perf} }$$
commutes up to canonical isomorphism. More precisely, for every object $M \in \Mod_{A}^{\Frob}$, the canonical map
$f^{\diamond}(M^{\perfection}) \rightarrow ( f^{\ast}_{\Frob} M)^{\perfection}$ is an equivalence.
\end{proposition}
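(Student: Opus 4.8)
The plan is to deduce Proposition~\ref{mallowgen} purely formally from Remark~\ref{remX5gen}, in exact parallel with the proof of Proposition~\ref{mallow}. First I would define $f^{\diamond}$ by the formula $f^{\diamond}(M) = (f^{\ast}_{\Frob} M)^{\perfection}$ and check the adjunction directly: for $M \in \Mod_{W_n(A)}^{\perf}$ and $N \in \Mod_{W_n(B)}^{\perf}$, one has
$$ \Hom_{W_n(B)[F]}(f^{\diamond} M, N) \simeq \Hom_{W_n(B)[F]}((f^{\ast}_{\Frob} M)^{\perfection}, N) \simeq \Hom_{W_n(B)[F]}(f^{\ast}_{\Frob} M, N), $$
where the second isomorphism is the universal property of the perfection (the left adjoint to the inclusion $\Mod_{W_n(B)}^{\perf} \hookrightarrow \Mod_{W_n(B)}^{\Frob}$) together with the fact that $N$ is already perfect. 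Then $\Hom_{W_n(B)[F]}(f^{\ast}_{\Frob} M, N) \simeq \Hom_{W_n(A)[F]}(M, N)$ by the extension-of-scalars adjunction recalled at the start of \S\ref{sec9sub2}, and by Remark~\ref{remX5gen} this last group is the same whether we regard $N$ as a Frobenius module over $W_n(B)$ or over $W_n(A)$, which is exactly the statement that $f^{\diamond}$ is left adjoint to the forgetful functor $\Mod_{W_n(B)}^{\perf} \rightarrow \Mod_{W_n(A)}^{\perf}$.

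For the commutativity of the square, I would observe that the diagram $\sigma$ of forgetful functors in Remark~\ref{remX5gen} commutes up to canonical isomorphism; passing to left adjoints on all four sides then yields the desired commutativity of the square relating $f^{\ast}_{\Frob}$, $f^{\diamond}$, and the two perfection functors. Concretely, the left adjoint of the top-and-right composite is $M \mapsto f^{\diamond}(M^{\perfection})$ and the left adjoint of the left-and-bottom composite is $M \mapsto (f^{\ast}_{\Frob} M)^{\perfection}$; since adjoints are unique, the canonical comparison map $f^{\diamond}(M^{\perfection}) \rightarrow (f^{\ast}_{\Frob} M)^{\perfection}$ is an isomorphism. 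Alternatively, and perhaps more transparently for the reader, one can note that $f^{\diamond}(M^{\perfection}) = (f^{\ast}_{\Frob}(M^{\perfection}))^{\perfection}$ by definition, that $f^{\ast}_{\Frob}$ commutes with filtered colimits (being a left adjoint, or directly from the formula $f^{\ast}_{\Frob} M \simeq W_n(B) \otimes_{W_n(A)} M$), and that $(-)^{\perfection}$ is itself a filtered colimit along $\varphi$; chasing through these identifications gives $(f^{\ast}_{\Frob}(M^{\perfection}))^{\perfection} \simeq (f^{\ast}_{\Frob} M)^{\perfection}$ because a filtered colimit of the perfection diagram of $f^{\ast}_{\Frob} M$ is the same whether or not one first perfects $M$.

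There is essentially no substantive obstacle here: this is a formal argument, and the only point requiring care is that the universal property of $(-)^{\perfection}$ for $W_n(R)$-Frobenius modules — i.e.\ that it is genuinely left adjoint to the inclusion $\Mod_{W_n(R)}^{\perf} \hookrightarrow \Mod_{W_n(R)}^{\Frob}$ — must be in hand. This was recorded in \S\ref{sec9sub1} (in the remark defining $M^{\perfection}$ as the direct limit along $\varphi_M$), so I would simply cite it. The Witt-vector setting introduces no new difficulty over the $n=1$ case treated in Proposition~\ref{mallow}, since the ring homomorphism $W_n(A)[F] \rightarrow W_n(B)[F]$ plays precisely the role that $A[F] \rightarrow B[F]$ did there, and the relevant freeness ($W_n(A)[F]$ is free as a left $W_n(A)$-module on $\{F^i\}_{i \geq 0}$) holds verbatim. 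Thus the proof is a one-line appeal to the definition plus a passage-to-left-adjoints argument, and I would present it as such.

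\begin{proof}
Define $f^{\diamond}$ by the formula $f^{\diamond}(M) = (f^{\ast}_{\Frob} M)^{\perfection}$. For $M \in \Mod_{W_n(A)}^{\perf}$ and $N \in \Mod_{W_n(B)}^{\perf}$, combining the universal property of the perfection with the adjunction between $f^{\ast}_{\Frob}$ and restriction of scalars (and using Remark \ref{remX5gen}), we obtain natural bijections
$$ \Hom_{W_n(B)[F]}(f^{\diamond} M, N) \simeq \Hom_{W_n(B)[F]}(f^{\ast}_{\Frob} M, N) \simeq \Hom_{W_n(A)[F]}(M, N),$$
so that $f^{\diamond}$ is left adjoint to the forgetful functor $\Mod_{W_n(B)}^{\perf} \rightarrow \Mod_{W_n(A)}^{\perf}$. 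The commutativity of the square then follows by passing to left adjoints in the commutative diagram of forgetful functors of Remark \ref{remX5gen}; explicitly, for $M \in \Mod_{W_n(A)}^{\Frob}$ the canonical map $f^{\diamond}(M^{\perfection}) \rightarrow (f^{\ast}_{\Frob} M)^{\perfection}$ is an isomorphism, since both sides corepresent the same functor on $\Mod_{W_n(B)}^{\perf}$.
\end{proof}
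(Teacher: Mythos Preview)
Your proposal is correct and matches the paper's approach exactly: the paper states that Proposition~\ref{mallowgen} is ``a formal consequence of Remark~\ref{remX5gen}'' and gives no further proof, while the analogous Proposition~\ref{mallow} is proved precisely as you do---define $f^{\diamond}(M) = (f^{\ast}_{\Frob} M)^{\perfection}$, observe the adjunction, and deduce commutativity by passing to left adjoints in the square of right adjoints. Your write-up simply spells out what the paper leaves implicit.
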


\begin{proposition}\label{prop5gen}
Let $R$ be a commutative $\F_p$-algebra. For each $n \geq 0$, the restriction of scalars functor
$\Mod_{ W_n( R^{\perfection} ) }^{\perf} \rightarrow \Mod_{W_n(R)}^{\perf}$ is an equivalence of categories.
\end{proposition}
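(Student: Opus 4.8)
The plan is to mimic the proof of Proposition \ref{prop5} (the case $n=1$), which is left to the reader but whose mechanism is clear: a perfect module over a ring $S$ automatically acquires a compatible action of the perfection of $S$, and when $S$ itself is already ``perfect enough'' the extension-of-scalars functor inverts the restriction functor. Here the role of $S$ is played by the ring $W_n(R)$ together with its Witt vector Frobenius $F$, and the role of the perfection is played by $W_n(R^{1/p^\infty})$, using the standard fact that $W_n(-)$ commutes with filtered colimits and hence $W_n(R)^{1/p^\infty} \simeq W_n(R^{1/p^\infty})$ as a ring equipped with its Frobenius (this was recorded in the Example at the end of \S\ref{sec9sub1}).

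First I would set up the comparison precisely. Let $\iota: R \hookrightarrow R^{1/p^\infty}$ be the canonical map; Proposition \ref{mallowgen} gives a left adjoint $\iota^\diamond: \Mod_{W_n(R)}^{\perf} \to \Mod_{W_n(R^{1/p^\infty})}^{\perf}$ to the restriction functor $\iota_\ast$, so it suffices to check that the unit and counit of this adjunction are isomorphisms. For the counit, given $M \in \Mod_{W_n(R^{1/p^\infty})}^{\perf}$, unwind $\iota^\diamond \iota_\ast M = (W_n(R^{1/p^\infty}) \otimes_{W_n(R)} M)^{\perfection}$; the key algebraic input is that $\iota$ is a filtered colimit of copies of the Frobenius of $R$, so after applying $W_n$ and passing to the perfection (which, concretely, has the effect of base-changing along $F^{-k}$ and taking the colimit) the map $M \to \iota^\diamond\iota_\ast M$ becomes the identity up to the colimit already implicit in perfectness of $M$. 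For the unit, given $N \in \Mod_{W_n(R)}^{\perf}$, one checks $N \to \iota_\ast \iota^\diamond N$ is an isomorphism by the same bookkeeping: the right-hand side is $(W_n(R^{1/p^\infty}) \otimes_{W_n(R)} N)^{\perfection}$ viewed over $W_n(R)$, and because $\varphi_N$ is already bijective the tower computing the perfection is pro-constant.

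The main obstacle — and the only place where genuine care is needed — is controlling the interaction between the Witt vector functor, the Witt vector Frobenius $F$, and the $p^{1/p^\infty}$-style colimit that defines the perfection. Concretely, one wants the identity $W_n(R^{1/p^\infty}) \simeq \varinjlim\bigl(W_n(R) \xrightarrow{F} W_n(R) \xrightarrow{F} \cdots\bigr)$ (as a ring, compatibly with the residual Frobenius), which rests on the fact that over an $\F_p$-algebra the relative Frobenius of $R^{1/p^\infty}/R$ is an iso and $W_n$ preserves filtered colimits and the relevant Frobenius-twisted base change. Once this identification is in hand, the argument reduces to the observation that for any module $M$ over a ring $S$ equipped with an endomorphism $\sigma$ on which $\sigma$ acts invertibly, extension of scalars along $S \to \varinjlim(S \xrightarrow{\sigma} S \to \cdots)$ followed by restriction recovers $M$ — precisely the content of Proposition \ref{proposition.perfection} and Remark \ref{remX4} in the $n=1$ case, and I would state the general version as a lemma and deduce it by the same computation. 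I would then conclude by noting that both composites of $\iota^\diamond$ and $\iota_\ast$ are naturally isomorphic to the identity, so $\iota_\ast: \Mod_{W_n(R^{1/p^\infty})}^{\perf} \to \Mod_{W_n(R)}^{\perf}$ is an equivalence, with quasi-inverse $\iota^\diamond$.
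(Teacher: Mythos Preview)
Your proposal is correct and follows essentially the same route as the paper: set up the adjunction $(\iota^{\diamond}, \iota_\ast)$ and verify that the unit is an isomorphism using the identification $W_n(R^{\perfection}) \simeq \varinjlim\bigl(W_n(R) \xrightarrow{F} W_n(R) \to \cdots\bigr)$ together with the bijectivity of $\varphi_N$. The only difference is efficiency: the paper observes that restriction of scalars is \emph{evidently conservative} (it does not change the underlying abelian group or the Frobenius), so by the standard adjunction lemma it suffices to check that the unit $N \to \iota_\ast \iota^{\diamond} N$ is an isomorphism; you instead verify both unit and counit directly. (A small slip: for the counit you wrote ``the map $M \to \iota^\diamond\iota_\ast M$'', but the counit goes the other way, $\iota^\diamond\iota_\ast M \to M$; your argument that the two sides are naturally isomorphic is what matters, and it is fine.)
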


\begin{proof}
Let $f: R \rightarrow R^{\perfection}$ be the tautological map.
Since the restriction of scalars functor is evidently conservative, it suffices to observe that for each object
$M \in \Mod_{ W_n(R)}^{\perf}$, the unit map $$M \rightarrow f^{\diamond}(M) = ( W_n(R^{\perfection} ) \otimes_{ W_n(R) } M )^{\perfection}$$
is an isomorphism of (perfect) Frobenius modules over $W_n(R)$.
\end{proof}

\begin{corollary}\label{corollary.describe-image}
Let $R$ be an $\F_p$-algebra and let $0 \leq m \leq n$. Then the essential image of the tautological map
$\Mod_{ W_m(R) }^{\perf} \hookrightarrow \Mod_{ W_n(R) }^{\perf}$ consists of those perfect Frobenius modules over
$W_n(R)$ which are annihilated by $p^{m}$.
\end{corollary}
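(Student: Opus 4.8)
The plan is to translate the statement into a condition on annihilators. Write $I_R = \ker\bigl(W_n(R) \to W_m(R)\bigr) \subseteq W_n(R)$ for the kernel of the restriction map. The first step is to identify the essential image of the embedding $\Mod_{W_m(R)}^{\perf} \hookrightarrow \Mod_{W_n(R)}^{\perf}$ with the full subcategory of perfect Frobenius modules over $W_n(R)$ annihilated by $I_R$; the second step is to check that, for a \emph{perfect} Frobenius module $M$ over $W_n(R)$, one has $I_R M = 0$ if and only if $p^m M = 0$.

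For the first step I would iterate Remark \ref{remark.embeddings}. That remark identifies $\Mod_{W_{k-1}(R)}^{\perf}$ inside $\Mod_{W_k(R)}^{\perf}$ with the objects killed by $\ker(W_k(R)\to W_{k-1}(R))$; since $\ker(W_{k}(R)\to W_m(R))$ is exactly the preimage of $\ker(W_{k-1}(R)\to W_m(R))$ under $W_k(R)\to W_{k-1}(R)$, an easy induction on $k$ shows that $\Mod_{W_m(R)}^{\perf}$ is the full subcategory of $\Mod_{W_n(R)}^{\perf}$ spanned by those $(M,\varphi_M)$ with $I_R M = 0$. So the corollary is reduced to the equivalence $I_R M = 0 \Leftrightarrow p^m M = 0$ for $M\in\Mod_{W_n(R)}^{\perf}$.

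The implication $I_R M = 0 \Rightarrow p^m M = 0$ needs no hypothesis on $M$: since $R$ is an $\F_p$-algebra, the unit map $\Z \to W_m(R)$ factors through $W_m(\F_p) \simeq \Z/p^m\Z$, so $p^m$ maps to $0$ in $W_m(R)$, i.e. $p^m W_n(R) \subseteq I_R$. For the converse I would use Proposition \ref{prop5gen}: its proof shows that the unit map exhibits $M$ as the restriction of scalars along $g\colon W_n(R)\to W_n(R^{\perfection})$ of a perfect Frobenius module $M'$ over $W_n(R^{\perfection})$, with the same underlying abelian group (so $p^m M' = p^m M = 0$). Functoriality of truncated Witt vectors makes the square of restriction maps commute, so $g(I_R)$ is contained in $I_{R^{\perfection}} = \ker(W_n(R^{\perfection})\to W_m(R^{\perfection}))$; and because $R^{\perfection}$ is perfect, $I_{R^{\perfection}} = p^m W_n(R^{\perfection})$. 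Hence for $\lambda\in I_R$ and $x\in M = M'$ we get $\lambda x = g(\lambda)x$ with $g(\lambda)\in p^m W_n(R^{\perfection})$, which kills $M'$; thus $I_R M = 0$.

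The only genuine obstacle is this converse direction, and it is precisely where perfectness is indispensable: over a non-perfect $\F_p$-algebra $R$ the ideal $I_R = V^m W_{n-m}(R)$ is strictly larger than $p^m W_n(R)$, so $I_R M$ and $p^m M$ cannot agree at the level of $W_n(R)$-modules in general; passing to $W_n(R^{\perfection})$ via Proposition \ref{prop5gen} is exactly the device that makes $I$ and $p^m$ generate the same ideal, and this is available only after restricting to perfect modules.
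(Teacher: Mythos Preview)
Your proof is correct and follows essentially the same route as the paper. The paper's argument is just the compressed version: invoke Proposition \ref{prop5gen} to replace $R$ by $R^{\perfection}$ at the outset, and then apply Remark \ref{remark.embeddings} directly, using that for a perfect ring the kernel of $W_n \to W_m$ is exactly the principal ideal $(p^m)$. Your version unpacks this by first identifying the essential image with $I_R$-annihilated modules and then showing $I_R M = 0 \Leftrightarrow p^m M = 0$ via the same passage to $R^{\perfection}$; the two arguments are logically identical.
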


\begin{proof}
By virtue of Proposition \ref{prop5gen}, we can assume without loss of generality that $R$ is perfect. In this case,
the desired result follows from Remark \ref{remark.embeddings}, since the kernel of the restriction map
$W_n(R) \rightarrow W_m(R)$ is the principal ideal $(p^{m} )$.
\end{proof}

\begin{proposition}\label{etalebasechangegen}
Let $f: A \rightarrow B$ be a homomorphism of perfect $\F_p$-algebras.
Then the extension of scalars functor $f^{\ast}_{\Frob}: \Mod_{W_n(A)}^{\Frob} \rightarrow \Mod_{W_n(A)}^{\Frob}$
carries perfect Frobenius modules over $W_n(A)$ to perfet Frobenius modules over $W_n(B)$.
\end{proposition}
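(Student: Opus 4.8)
Looking at Proposition \ref{etalebasechangegen}, this is the Witt-vector analogue of Proposition \ref{prop5prime}, and the proof strategy should mirror it closely.

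\medskip

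The plan is to reduce to the statement that the Witt vector Frobenius $F$ acts invertibly on the relevant module. Let $M$ be a perfect Frobenius module over $W_n(A)$. First I would recall that since $A$ and $B$ are perfect $\F_p$-algebras, the Witt vector Frobenius maps $F: W_n(A) \rightarrow W_n(A)$ and $F: W_n(B) \rightarrow W_n(B)$ are both \emph{isomorphisms} of rings — this is the standard fact that $W_n$ of a perfect ring has bijective Frobenius, following from the fact that $F$ on $W_n$ lifts the Frobenius on the residue ring and one can argue by induction on $n$ using the compatible short exact sequences $0 \to V^{n-1}(R^{1/p^{n-1}}) \to W_n(R) \to W_{n-1}(R) \to 0$ (or, more cleanly, just invoke that $F$ is an isomorphism on $W_n(R)$ whenever $R$ is perfect). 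Second, by hypothesis $\varphi_M: M \rightarrow M$ is an isomorphism of abelian groups. Third, I would write $f^{\ast}_{\Frob} M = W_n(B) \otimes_{W_n(A)} M$ and observe that its Frobenius endomorphism $\varphi_{W_n(B) \otimes_{W_n(A)} M}$ is given on elementary tensors by $b \otimes x \mapsto F(b) \otimes \varphi_M(x)$; since this is (the additive extension of) the tensor product of the two isomorphisms $F: W_n(B) \to W_n(B)$ and $\varphi_M: M \to M$ — suitably interpreted as a map of abelian groups compatible with the semilinear structure — it is itself bijective. Hence $f^{\ast}_{\Frob} M$ is perfect.

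\medskip

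In slightly more care over the third step: the subtlety is that $\varphi_{W_n(B)\otimes_{W_n(A)}M}$ is only $F$-semilinear, not $W_n(B)$-linear, so "tensor product of two isomorphisms" needs a word of justification. The clean way is to note that $W_n(B) \otimes_{W_n(A)} M$, as an abelian group, can be rewritten using that $F$ is an automorphism of $W_n(A)$: restriction of scalars along $F^{-1}$ identifies $M$ with $M^{F^{-1}}$, and under this the map $\varphi_M$ becomes $W_n(A)$-linear as a map $M \to M^{F^{-1}}$. One then checks that $\varphi_{W_n(B)\otimes_{W_n(A)}M}$ factors as the composite of the canonical identification $W_n(B)\otimes_{W_n(A)} M \cong (W_n(B)\otimes_{W_n(A)} M)^{F^{-1}}$ (which exists and is bijective because $F$ is bijective on $W_n(B)$ — this uses perfectness of $B$) with $\mathrm{id}_{W_n(B)} \otimes \varphi_M$. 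Both factors are bijective, so the composite is bijective. Alternatively, and perhaps most economically, one simply invokes Proposition \ref{prop5gen}: since $A$ and $B$ are perfect, $\Mod_{W_n(A)}^{\perf} \simeq \Mod_{W_n(R)}^{\perf}$ trivially (they are equal), and the whole statement can be bootstrapped from the $n=1$ case, Proposition \ref{prop5prime}, by dévissage along the short exact sequences $0 \to p^{n-1} W_n(R) \to W_n(R) \to W_{n-1}(R) \to 0$ and induction on $n$ — but the direct argument above is shorter.

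\medskip

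I expect the only real obstacle to be the bookkeeping in step three: making precise in what sense $\varphi_{W_n(B)\otimes_{W_n(A)}M}$ is "the tensor product of two bijections" despite its semilinearity. Once one commits to the restriction-of-scalars-along-$F^{-1}$ formalism (which is available precisely because $F$ is invertible on $W_n$ of a perfect ring), this becomes a routine diagram chase, entirely parallel to the proof of Proposition \ref{prop5prime}. No new ideas beyond that proof and the basic structure theory of Witt vectors of perfect rings are needed; the associated warning (analogous to the warning following Proposition \ref{prop5prime}) is that perfectness of $B$ alone is not enough — one genuinely uses that $A$ is perfect so that $F$ is invertible on $W_n(A)$, which is what makes the $F$-semilinear map behave like a linear one after the twist.
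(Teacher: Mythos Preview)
Your proposal is correct and follows essentially the same approach as the paper: the paper's proof simply notes that $F_A$, $F_B$, and $\varphi_M$ are all isomorphisms and concludes that the induced map $\varphi_{f^{\ast}_{\Frob} M}$ on $W_n(B) \otimes_{W_n(A)} M$ is an isomorphism, exactly mirroring the proof of Proposition~\ref{prop5prime}. Your additional discussion of the semilinearity bookkeeping (via restriction along $F^{-1}$ or via d\'evissage on $n$) is more careful than the paper, which leaves this step implicit, but the underlying argument is the same.
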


\begin{proof}
Let $M$ be a {perfect} Frobenius module over $A$. Then the maps
$$ F_B: W_n(B) \rightarrow W_n(B) \quad \quad F_A: W_n(A) \rightarrow W_n(A) \quad \quad \varphi_{M}: M \rightarrow M$$
are isomorphisms, so the induced map $$\varphi_{ f^{\ast}_{\Frob} M}: W_n(B) \otimes_{W_n(A) } M \rightarrow W_n(B) \otimes_{ W_n(A) } M$$ is also an isomorphism.
\end{proof}

\begin{corollary}\label{corX14gen}
Let $f: A \rightarrow B$ be an {\etale} morphism of $\F_p$-algebras. Then the extension of scalars functor $f^{\ast}_{\Frob}: \Mod_{W_n(A)}^{\Frob} \rightarrow \Mod_{W_n(B)}^{\Frob}$
carries $\Mod_{W_n(A)}^{\perf}$ into $\Mod_{W_n(B)}^{\perf}$.
\end{corollary}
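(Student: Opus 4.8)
The statement is the $W_n$-analogue of Corollary~\ref{corX14} (the case $n=1$), and the proof strategy should mirror that one exactly. The key point is that for an \'etale morphism $f\colon A\to B$, the perfection operation relates extension of scalars over $W_n(A)$ to extension of scalars over $W_n(A^{\perfection})$, where Proposition~\ref{etalebasechangegen} applies because $A^{\perfection}$ and $B^{\perfection}$ are perfect. So the plan is: given a perfect Frobenius module $M$ over $W_n(A)$, use Proposition~\ref{prop5gen} to regard $M$ as a (perfect) Frobenius module over $W_n(A^{\perfection})$; then identify $f^{\ast}_{\Frob}M$ with the extension of scalars of $M$ along the induced map $W_n(A^{\perfection})\to W_n(B^{\perfection})$; then invoke Proposition~\ref{etalebasechangegen} to conclude that this extension of scalars is perfect.

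\textbf{Key steps, in order.} First I would record the commutative-algebra input: since $f\colon A\to B$ is \'etale, the square
$$\xymatrix{ A \ar[r] \ar[d] & B \ar[d] \\ A^{\perfection} \ar[r] & B^{\perfection} }$$
is a pushout of commutative rings (this is exactly the fact used in the proof of Corollary~\ref{corX14}, and follows from the fact that perfection commutes with \'etale base change). Applying the Witt vector functor $W_n(-)$, and using that $W_n$ preserves \'etale base change — more precisely that $W_n(A^{\perfection})\otimes_{W_n(A)}W_n(B)\to W_n(B^{\perfection})$ is an isomorphism when $A\to B$ is \'etale, since the \'etale site of $\Spec(R)$ is equivalent to that of $\Spec(W_n(R))$ and \'etale morphisms lift uniquely — one gets that
$$\xymatrix{ W_n(A) \ar[r] \ar[d] & W_n(B) \ar[d] \\ W_n(A^{\perfection}) \ar[r] & W_n(B^{\perfection}) }$$
is a pushout as well. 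Second, given $M\in\Mod_{W_n(A)}^{\perf}$, Proposition~\ref{prop5gen} lets me regard $M$ as an object of $\Mod_{W_n(A^{\perfection})}^{\perf}$, and the pushout square identifies $f^{\ast}_{\Frob}M = W_n(B)\otimes_{W_n(A)}M$ with $W_n(B^{\perfection})\otimes_{W_n(A^{\perfection})}M$, i.e. with the extension of scalars of $M$ along $A^{\perfection}\to B^{\perfection}$ in the sense of the $W_n$ analogue of Construction~\ref{construction.extension}. Third, apply Proposition~\ref{etalebasechangegen} (with $A^{\perfection}$, $B^{\perfection}$ in place of $A$, $B$) to conclude that this is a perfect Frobenius module over $W_n(B^{\perfection})$, hence over $W_n(B)$ by Remark~\ref{remX5gen}.

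\textbf{Main obstacle.} The one point requiring genuine care — as opposed to the formal shuffling of adjunctions — is the assertion that $W_n$ commutes with \'etale base change, i.e. that $W_n(A^{\perfection})\otimes_{W_n(A)}W_n(B)\xrightarrow{\sim}W_n(B^{\perfection})$ for $A\to B$ \'etale. This is the $W_n$-substitute for the statement ``$A^{\perfection}\otimes_A B\xrightarrow{\sim}B^{\perfection}$'' used in Corollary~\ref{corX14}, and it is the place where the argument is not a pure formality: one needs that \'etale $A$-algebras are the same as \'etale $W_n(A)$-algebras (topological invariance of the \'etale site, since $W_n(A)\to A$ is a nilpotent — indeed $p$-power nilpotent — thickening), which gives a unique \'etale $W_n(A^{\perfection})$-algebra lifting $B^{\perfection}$, and one checks it agrees with $W_n(A^{\perfection})\otimes_{W_n(A)}W_n(B)$ by comparing reductions mod $p$. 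Everything else is bookkeeping with the functors $f^{\ast}_{\Frob}$, $(-)^{\perfection}$, and the restriction-of-scalars equivalences, exactly parallel to the $n=1$ case.
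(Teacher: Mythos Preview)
Your proposal is correct and follows essentially the same route as the paper: regard $M$ as a module over $W_n(A^{\perfection})$, use the pushout square
\[
\xymatrix{ W_n(A) \ar[r] \ar[d] & W_n(B) \ar[d] \\ W_n(A^{\perfection}) \ar[r] & W_n(B^{\perfection}) }
\]
to identify $f^{\ast}_{\Frob}M$ with $W_n(B^{\perfection})\otimes_{W_n(A^{\perfection})}M$, and apply Proposition~\ref{etalebasechangegen}. The only difference is in the justification of the pushout: the paper simply cites van der Kallen \cite[2.4]{vdK}, whereas you sketch an argument via topological invariance of the \'etale site; note that your sketch implicitly uses that $W_n(A)\to W_n(B)$ is \'etale (so that its base change to $W_n(A^{\perfection})$ is \'etale and hence determined by its reduction), which is precisely the content of van der Kallen's result.
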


\begin{proof}
Let $M$ be a {perfect} Frobenius module over $A$. Then we can also regard $M$ as a Frobenius module over $A^{\perfection}$. Since $f$ is {\etale}, the diagram
of commutative rings
$$ \xymatrix{ W_n(A) \ar[r] \ar[d] & W_n(B) \ar[d] \\
W_n(A^{\perfection}) \ar[r] & W_n(B^{\perfection}) }$$
is a pushout square by the result \cite[2.4]{vdK} of van der Kallen. It follows that we can identify $f^{\ast}_{\Frob} M$ with the tensor product $W_n(B^{\perfection}) \otimes_{ W_n(A^{\perfection})} M$, which is perfect
by Proposition \ref{etalebasechangegen}. 
\end{proof}

\subsection{The Solution Functor}\label{sec9sub3}

We now adapt Construction \ref{construction.solsheaf} to the setting of Frobenius modules over the Witt vectors.

\begin{construction}\label{solsheaf2}
Let $R$ be a commutative $\F_p$-algebra and let $M$ be a Frobenius module over $W_n(R)$.
We let $\Sol(M)$ denote the functor $\CAlg_{R}^{\mathet} \rightarrow \Mod_{ \Z / p^{n} \Z}$ given by the formula
$$ \Sol(M)(R') = \{ x \in (W_n(R') \otimes_{W_n(R)} M): \varphi_{ W_n(R') \otimes_{W_n(R)} M}(x) = x \}.$$
We will refer to $\Sol(M)$ as the {\it solution sheaf} of $M$.
\end{construction}

\begin{remark}
In the situation of Construction \ref{solsheaf2}, suppose that the action of $W_n(R)$ on $M$ factors
through the restriction map $W_n(R) \rightarrow R$, so that $M$ can be regarded as a Frobenius module over $R$
(if $M$ is perfect, this is equivalent to the requirement that $pM = 0$, by virtue of Corollary \ref{corollary.describe-image}).
Then the functor $\Sol(M)$ of Construction \ref{solsheaf2} agrees with the functor $\Sol(M)$ of Construction \ref{construction.solsheaf}:
this follows from the fact that the diagram of commutative rings
$$ \xymatrix{ W_n(R) \ar[r] \ar[d] & W_n(R') \ar[d] \\
R \ar[r] & R' }$$
is a pushout square, for any {\etale} $R$-algebra $R'$.
\end{remark}

Our first goal is to show that the functor $\Sol(M)$ of Construction \ref{solsheaf2} is actually a sheaf with respect to the {\etale} topology
on $\Spec(R)$. To prove this, it will be convenient to consider the following variant of Example \ref{exX70}:

\begin{notation}\label{notation.M-tilde}
Let $R$ be a commutative $\F_p$-algebra and let $M$ be a module over $W_n(R)$. We let $\widetilde{M} \in \Shv_{\mathet}( \Spec(R), \Z / p^{n} \Z)$
denote the sheaf given by the formula $\widetilde{M}(R') = W_n(R') \otimes_{ W_n(R) } M$. Note that, when $M$ is annihilated by
the kernel of the restriction map $W_n(R) \rightarrow R$ (so that $M$ can be regarded as an $R$-module), this agrees with
the sheaf of $\Z/ p \Z$-modules introduced in Example \ref{exX70}.
\end{notation}

\begin{remark}
Let $R$ be a commutative $\F_p$-algebra. Then the construction $R' \mapsto W_n(R')$ induces an equivalence from
the category of {\etale} $R$-algebras to the category of {\etale} $W_n(R)$-algebras. In particular, the category of
{\etale} sheaves on $\Spec(R)$ is equivalent to the category of {\etale} sheaves on $\Spec( W_n(R) )$. If
$M$ is a module over $W_n(R)$, then it determines a quasi-coherent sheaf on $\Spec( W_n(R) )$, which corresponds
(under the preceding equivalence) to the functor $\widetilde{M}: \CAlg_{R}^{\mathet} \rightarrow \Mod_{ \Z / p^{n} \Z}$ of Notation \ref{notation.M-tilde}. 
In particular, the functor $\widetilde{M}$ is always a sheaf with respect to the {\etale} topology on $\CAlg_{R}^{\mathet}$.
\end{remark}

\begin{remark}\label{remark.technical}
In the situation of Notation \ref{notation.M-tilde}, suppose that $R$ is perfect and that $M$ is flat as a module over $\Z / p^{n} \Z$.
Then, for each $R' \in \CAlg_{R}^{\mathet}$, the abelian group $\widetilde{M}(R')$ is also flat as a $\Z / p^{n} \Z$-module.
In particular, the sheaf $\widetilde{M}$ is flat over $\Z / p^{n} \Z$.
\end{remark}

If $M$ is a Frobenius module over $W_n(R)$, then the Frobenius map $\varphi_{M}$ determines an endomorphism of
the associated {\etale} sheaf $\widetilde{M}$. By construction, we have an exact sequence of presheaves
$$ 0 \rightarrow \Sol(M) \rightarrow \widetilde{M} \xrightarrow{ \id - \varphi_M} \widetilde{M}.$$
It follows that $\Sol(M)$ is always a sheaf with respect to the {\etale} topology. We may therefore
regard the construction $M \mapsto \Sol(M)$ as a functor from the category of
Frobenius modules over $W_n(R)$ to the category of $p^{n}$-torsion sheaves on $\Spec(R)$. We will denote this functor by
$$\Sol: \Mod_{W_n(R)}^{\Frob} \rightarrow \Shv_{\mathet}( \Spec(R), \Z / p^{n} \Z)$$
and refer to it as the {\it solution sheaf functor}.

\begin{proposition}\label{propX74gen}
Let $R$ be a commutative $\F_p$-algebra and let $M$ be an injective object of the abelian category $\Mod_{ W_n(R)}^{\perf}$.
Then we have a short exact sequence 
$0 \rightarrow \Sol(M) \rightarrow \widetilde{M} \xrightarrow{\id - \varphi_{M} } \widetilde{M} \rightarrow 0$
in the category of abelian presheaves on $\CAlg_{R}^{\mathet}$.
\end{proposition}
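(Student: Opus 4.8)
The plan is to reduce Proposition \ref{propX74gen} to the case $n=1$, which is exactly Lemma \ref{lemma.propX74}, by an induction on $n$ using the Verschiebung filtration on $W_n$. First I would recall that the restriction map $W_n(R) \to W_{n-1}(R)$ has kernel $V^{n-1} W_1(R) \cong R$ (the image of the $(n-1)$st Verschiebung), so that for any $W_n(R)$-module $M$ there is a canonical exact sequence of $W_n(R)$-modules, compatible with the Frobenius-semilinear structure; concretely, $pM \subseteq M$ is annihilated by $p^{n-1}$ and the quotient $M/pM$ is a module over $W_{n-1}(R)$... but this needs care, since multiplication by $p$ on $W_n(R)$ relates to $V \circ F$, and $\varphi_M$ is $F$-semilinear. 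The cleaner approach: since $M$ is perfect, Proposition \ref{prop5gen} lets us assume $R$ is perfect, in which case $W_n(R) \to W_{n-1}(R)$ is surjective with kernel the principal ideal $(p^{n-1})$ and $p$-multiplication interacts with $F$ in the expected way. Then one has a short exact sequence of perfect Frobenius modules over $W_n(R)$
$$ 0 \to M' \to M \to M'' \to 0, $$
where $M'' = M/p^{n-1}M$ is (the image of) a perfect Frobenius module over $W_{n-1}(R)$ and $M' = p^{n-1}M$ is annihilated by $p$, hence (the image of) a perfect Frobenius module over $R = W_1(R)$.

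The key point is that this short exact sequence is compatible with the functors in the proposition. The construction $M \mapsto \widetilde{M}$ of Notation \ref{notation.M-tilde} is exact (it is quasi-coherent pullback composed with an equivalence of sites), so $0 \to \widetilde{M'} \to \widetilde{M} \to \widetilde{M''} \to 0$ is exact as presheaves, even before sheafification; and the maps $\id - \varphi$ commute with this sequence. So we get a map of three-term complexes, and applying the snake lemma to the presheaf-level diagram, it suffices to show (i) the sequence $0 \to \Sol(M') \to \widetilde{M'} \xrightarrow{\id-\varphi} \widetilde{M'} \to 0$ is exact as presheaves and (ii) the analogous sequence for $M''$ is exact as presheaves, PLUS the connecting homomorphism $\widetilde{M''} \to \Sol(M')$ (equivalently, the cokernel at the $M'$-stage interacting with the kernel at the $M''$-stage) behaves correctly. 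Actually the clean statement: if both end sequences are short exact sequences of presheaves then so is the middle one, provided we also know the map $\operatorname{coker}(\id - \varphi_{M''})$ vanishes at presheaf level — but that cokernel vanishes by the $M''$ part — so the $3\times 3$ lemma applies directly and the middle row is short exact at presheaf level.

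The remaining obstacle, and the main one, is the base case and the inductive step's hypotheses: I need (i) for $M'$ a perfect Frobenius module over $R=W_1(R)$ which is injective as an object of $\Mod^{\perf}_{W_n(R)}$, the sequence $0 \to \Sol(M') \to \widetilde{M'} \xrightarrow{\id - \varphi} \widetilde{M'} \to 0$ is exact as presheaves; and similarly (ii) for $M''$ over $W_{n-1}(R)$. The subtlety is that $M'$ and $M''$ are subquotients of an injective object of $\Mod^{\perf}_{W_n(R)}$, not a priori injective in $\Mod^{\perf}_{R}$ or $\Mod^{\perf}_{W_{n-1}(R)}$. To handle this, I would revisit the proof of Lemma \ref{lemma.propX74}: what is actually used there is not injectivity of $M$ per se, but the vanishing of $\Ext^1_{A[F]}(A^{\perfection}, A \otimes_R M)$ for every \'etale $R$-algebra $A$, which followed via Theorem \ref{theoX15} from $\Ext^1_{R[F]}(f_! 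A^{\perfection}, M) = 0$. So the correct inductive hypothesis to carry is: for every \'etale $f: R \to A$ and every holonomic $\overline{M}$ over $A$, the group $\Ext^1_{W_n(R)[F]}(f_! \overline{M}, M)$ vanishes — which holds because $M$ is injective — and this $\Ext^1$-vanishing passes to $M'$ and $M''$ since they are a sub- and a quotient of $M$ along maps of Frobenius modules killed by $p$ resp.\ $p^{n-1}$, using the long exact $\Ext$-sequence together with the fact that $f_! \overline{M}$ is a module over $R$ (resp.\ over $W_{n-1}(R)$)... The cleanest fix is probably to prove the proposition directly by computing $\Ext^1_{W_n(R')[F]}(W_n(R')^{\perfection}, W_n(R') \otimes M)$ and identifying it, via an \'etale version of Theorem \ref{theoX15} for $W_n$ (which should hold by the same formal arguments of \S\ref{section.compactimage}, now over $W_n$), with $\Ext^1_{W_n(R)[F]}(f_! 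W_n(R')^{\perfection}, M)$, and invoking injectivity of $M$. I expect verifying this $W_n$-analogue of compactly supported direct images — or, alternatively, carefully tracking which exactness property of $\Sol$ is inherited by subquotients of injectives — to be where the real work lies; the devissage itself is routine.
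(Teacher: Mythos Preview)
Your inductive strategy (d\'{e}vissage along a $p$-filtration, reducing to the case $n=1$ of Lemma~\ref{lemma.propX74}) is exactly what the paper does, and you have correctly located the crux: the pieces $M'$ and $M''$ of your filtration are not obviously injective in the smaller categories $\Mod_R^{\perf}$ and $\Mod_{W_{n-1}(R)}^{\perf}$.

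Your proposed workarounds (tracking $\Ext^1$-vanishing through the d\'{e}vissage, or redeveloping compactly supported direct images over $W_n$) might eventually succeed but are more laborious than necessary. The paper resolves the difficulty with two observations you are missing:
\begin{itemize}
\item[(1)] Every injective object $M$ of $\Mod_{W_n(R)}^{\perf}$ is free as a $\Z/p^n\Z$-module (Lemma~\ref{lem.flatness}; the proof is a short direct argument, testing injectivity against the monomorphism $p^{n-1}: R^{\perfection}[F^{\pm 1}] \hookrightarrow W_n(R^{\perfection})[F^{\pm 1}]$).
\item[(2)] For $0 \leq k \leq n$, the functor $N \mapsto N[p^k]$ from $\Mod_{W_n(R)}^{\perf}$ to $\Mod_{W_k(R)}^{\perf}$ is right adjoint to the exact inclusion, and therefore preserves injectives.
\end{itemize}
Given (1), multiplication by $p^i$ yields a short exact sequence $0 \to M[p^i] \to M \xrightarrow{p^i} M[p^j] \to 0$ for any decomposition $n = i+j$ with $i,j > 0$; given (2), both ends are injective in the appropriate smaller categories, so the inductive hypothesis applies directly and the snake lemma finishes. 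In fact, with (1) in hand your own sequence coincides with this one for $i=1$: freeness over $\Z/p^n\Z$ forces $p^{n-1}M = M[p]$ and $M/p^{n-1}M \simeq M[p^{n-1}]$. So the single missing ingredient is Lemma~\ref{lem.flatness}.
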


The proof of Proposition \ref{propX74gen} is based on the following:

\begin{lemma}\label{lem.flatness}
Let $R$ be an $\F_p$-algebra and let $M$ be an injective object of the abelian category $\Mod_{ W_n(R)}^{\perf}$.
Then $M$ is free when regarded as a module over $\Z / p^{n} \Z$.
\end{lemma}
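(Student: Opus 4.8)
\textbf{Proof strategy for Lemma \ref{lem.flatness}.}
The goal is to show that an injective object $M$ of $\Mod_{W_n(R)}^{\perf}$ is free over $\Z/p^n\Z$. Since $\Z/p^n\Z$ is a finite principal ideal ring, a $\Z/p^n\Z$-module is free if and only if it is flat, and flatness over $\Z/p^n\Z$ amounts to the single condition that multiplication by $p$ has kernel exactly $p^{n-1}M$ (equivalently, that $M$ has no $p$-torsion beyond what is forced, i.e. $M[p] = p^{n-1}M$). So the plan is to reduce the freeness assertion to this torsion-freeness statement and then verify it using injectivity.

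First I would reduce to the case where $R$ is perfect: by Proposition \ref{prep5gen} (that is, Proposition \ref{prop5gen}) the category $\Mod_{W_n(R)}^{\perf}$ is equivalent to $\Mod_{W_n(R^{\perfection})}^{\perf}$, and this equivalence is the identity on underlying abelian groups, so it suffices to treat perfect $R$. In that case $W_n(R)$ is a (commutative) ring in which $p^n = 0$, and by Remark \ref{remX4gen}-style reasoning — more precisely using the noncommutative ring $W_n(R)[F]$ with $F$ invertible on perfect modules — the category $\Mod_{W_n(R)}^{\perf}$ is the category of modules over the ring $W_n(R)^{\perfection}[F^{\pm 1}]$, obtained from $W_n(R)[F]$ by inverting $F$. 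Call this ring $\Lambda$. Now I would exhibit a short exact sequence of $\Lambda$-modules, namely
$$ 0 \rightarrow M[p] \rightarrow M \xrightarrow{\ p\ } pM \rightarrow 0, $$
noting that $M[p]$, $M$, and $pM$ are all perfect Frobenius modules (multiplication by $p$ commutes with $\varphi_M$ and $\varphi_M$ remains bijective on submodules and quotients stable under it), so this is a sequence in $\Mod_{W_n(R)}^{\perf}$.

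The key step is then to use injectivity of $M$ to split off the $p$-torsion. Consider the inclusion $M[p] \hookrightarrow M$; it is a monomorphism in $\Mod_{W_n(R)}^{\perf}$. Also consider the multiplication map $M \xrightarrow{p^{n-1}} M$, whose image is $p^{n-1}M$ and which kills $M[p^{n-1}] \supseteq M[p]$. The composite $M[p] \hookrightarrow M$ followed by... here the cleanest route: since $M$ is injective, the surjection $p\colon M \twoheadrightarrow pM$ splits in $\Mod_{W_n(R)}^{\perf}$ if $pM$ is a direct summand — but more directly, apply $\Hom_\Lambda(-, M)$ to the exact sequence $0 \to \Lambda \xrightarrow{p} \Lambda \to \Lambda/p \to 0$: injectivity of $M$ gives $\Ext^1_\Lambda(\Lambda/p, M) = 0$ and also gives surjectivity of $M = \Hom_\Lambda(\Lambda,M) \xrightarrow{p} \Hom_\Lambda(\Lambda,M) = M$, i.e. $pM = M$. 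But $pM = M$ together with $p^n = 0$ in $W_n(R)$ would force $M = 0$ unless $n = 0$ — so this naive argument is too strong, which signals that injective objects are not $p$-divisible and I must instead argue that $M[p] = p^{n-1}M$. The correct computation: injectivity of $M$ over $\Lambda$ gives $\Ext^1_\Lambda(N, M) = 0$ for all $N$, in particular for $N = \Lambda/(p)$, which (via the Koszul-type resolution $\cdots \to \Lambda \xrightarrow{p} \Lambda \xrightarrow{p^{n-1}} \Lambda \xrightarrow{p} \Lambda \to \Lambda/(p) \to 0$, periodic of period $2$ since $p^n=0$) computes exactly the cohomology of $M \xrightarrow{p} M \xrightarrow{p^{n-1}} M \xrightarrow{p} M$; the vanishing of $\Ext^1$ forces $\ker(p\colon M\to M) = \operatorname{im}(p^{n-1}\colon M \to M)$, that is, $M[p] = p^{n-1}M$, which is precisely flatness over $\Z/p^n\Z$.

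\textbf{Main obstacle.} The delicate point is getting the homological bookkeeping right: injectivity of $M$ in $\Mod_{W_n(R)}^{\perf} \simeq \Mod_\Lambda$ must be converted into the statement $M[p] = p^{n-1}M$ via the (period-two) free resolution of $\Lambda/(p\Lambda)$ over $\Lambda$, and one must check that inverting $F$ does not disturb the fact that $p$ generates an ideal with this periodic resolution — this works because $p$ is a nonzerodivisor-up-to-the-relation $p^n=0$ in $W_n(R)$ and $F$-inversion is flat. Once $M$ is known to be flat over $\Z/p^n\Z$ I would conclude freeness: a flat module over the Artinian principal ideal ring $\Z/p^n\Z$ is free (it is projective, and projective = free over a local ring, but $\Z/p^n\Z$ is local so this is immediate — or cite that flat modules over $\Z/p^n\Z$ are direct sums of copies of $\Z/p^n\Z$, which follows from the structure theory since any flat module is a filtered colimit of free modules and over $\Z/p^n\Z$ such colimits stay free). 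This last step is routine and I would not belabor it.
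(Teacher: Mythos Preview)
Your approach is correct but takes a different route from the paper's. The paper argues directly and constructively: it lifts an $\F_p$-basis of $M/pM$ to elements $\{x_i\}_{i \in I}$ of $M$, obtains a surjection $f: \bigoplus_i \Z/p^n\Z \to M$ by Nakayama, and then proves $f$ is injective by contradiction. If $\vec{c} \in \ker(f)$ is nonzero with maximal coefficient ideal, one writes $\vec{c} = p\vec{b}$, observes that $f(\vec{b}) \in M[p]$, and uses injectivity of $M$ to extend the resulting map $R^{\perfection}[F^{\pm 1}] \to M$ (sending $1 \mapsto f(\vec{b})$) along the monomorphism $p^{n-1}: R^{\perfection}[F^{\pm 1}] \hookrightarrow W_n(R^{\perfection})[F^{\pm 1}]$; this produces an element $\vec{b} - p^{n-1}\vec{a} \in \ker(f)$ with strictly larger coefficient ideal, a contradiction. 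Your argument is more homological: reduce to perfect $R$, identify $\Mod_{W_n(R)}^{\perf}$ with modules over $\Lambda = W_n(R)[F^{\pm 1}]$, use the $\Z/p^n\Z$-flatness of $\Lambda$ to obtain the period-two free resolution of $\Lambda/p$, and deduce flatness of $M$ over $\Z/p^n\Z$ from the vanishing of $\Ext^{>0}_\Lambda(\Lambda/p, M)$. One small indexing slip: with the resolution as you wrote it, $\Ext^1_\Lambda(\Lambda/p, M) = M[p^{n-1}]/pM$, while $M[p]/p^{n-1}M$ is $\Ext^2$; this is harmless since injectivity kills both, and in any event the conditions $M[p^{n-1}] = pM$ and $M[p] = p^{n-1}M$ are equivalent by an easy induction. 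Your argument is cleaner once the ring $\Lambda$ and its $\Z/p^n\Z$-flatness are established, and it makes transparent why only injectivity (not any special feature of Frobenius modules) is needed; the paper's proof is more self-contained and never has to name $\Lambda$ or invoke the structure theory of $\Z/p^n\Z$-modules.
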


\begin{proof}
Choose a collection of elements $\{ x_i \}_{i \in I}$ of $M$, whose images form a basis for $M/pM$ as a vector space over $\F_p$.
Then the elements $x_{i}$ determine a map of $\Z / p^{n} \Z$-modules $f: \bigoplus_{i \in I} \Z / p^{n} \Z \rightarrow M$.
The map $f$ is surjective by virtue of Nakayama's lemma; we will complete the proof by showing that it is injective.
Assume otherwise: then there exists some nonzero element $\vec{c} \in \ker(f)$, which we can identify
with a collection of elements $\{ c_i \}_{i \in I}$ of $\Z / p^{n} \Z$ (almost all of which vanish). 
Let us assume that $\vec{c}$ has been chosen so that the ideal $( c_i )_{i \in I} \subseteq \Z / p^{n} \Z$ is
as large as possible. Since the elements $x_i$ have images in $M / p M$ which are linearly independent over $\F_p$, we must have
$(c_i)_{i \in I} \neq \Z / p^{n} \Z$. It follows that we can write $\vec{c} = p \vec{b}$ for some element $\vec{b} \in \bigoplus_{i \in I} \Z / p^{n} \Z$.
Then $p f( \vec{b} ) = f( \vec{c} ) = 0$, so there is a unique map of (perfect) Frobenius modules 
$g: R^{\perfection}[ F^{\pm 1} ] \rightarrow M$ satisfying $g(1) = f( \vec{b} )$. 

Let $W_{n}(R^{\perfection})[ F^{\pm 1} ]$ denote the perfection of the Frobenius module $W_n(R)[F]$. Note that multiplication
by $p^{n-1}$ induces a monomorphism $$R^{\perfection}[ F^{\pm 1} ] \rightarrow W_n(R^{\perfection})[ F^{\pm 1} ].$$ Invoking our assumption that
$M$ is injective, we conclude that $g$ factors as a composition
$R^{\perfection}[ F^{\pm 1} ] \xrightarrow{ p^{n-1} } W_n( R^{\perfection} )[ F^{\pm 1} ] \xrightarrow{h} M$.
Since $f$ is surjective, we can write $h(1) = f( \vec{a} )$ for some element $\vec{a} \in \bigoplus_{i \in I} \Z / p^{n} \Z$.
We then have
$$ f( \vec{b} - p^{n-1} \vec{a} ) = f( \vec{b} ) - p^{n-1} f( \vec{a} ) = g(1) - p^{n-1} h(1) = 0,$$
so that $\vec{b} - p^{n-1} \vec{a}$ belongs to $\ker(f)$. However, the ideal generated by the coefficients
of $\vec{b} - p^{n-1} \vec{a}$ is strictly larger than the ideal $(c_i)_{i \in I}$, which contradicts our choice of $\vec{c}$.
\end{proof}

\begin{proof}[Proof of Proposition \ref{propX74gen}]
Let $M$ be an injective object of the abelian category $\Mod_{ W_n(R)}^{\perf}$; we wish to show that the map
$$\id - \varphi_{M}: \widetilde{M} \rightarrow \widetilde{M}$$ is a epimorphism
of $\Z / p^{n} \Z$-valued presheaves. We proceed by induction on $n$. The case $n=0$ is vacuous and the case $n=1$ follows from Lemma \ref{lemma.propX74}, so we may
assume that $n \geq 2$. For each $k \geq 0$, let $M[ p^{k} ]$ denote the kernel of the map $p^{k}: M \rightarrow M$.
Write $n = i + j$, for some positive integers $i$ and $j$. Since $M$ is injective, Lemma \ref{lem.flatness} implies that we have a short
exact sequence of (perfect) Frobenius modules $$0 \rightarrow M[p^{i}] \rightarrow M \xrightarrow{ p^{i} } M[p^{j} ] \rightarrow 0.$$
Applying the construction $N \mapsto \widetilde{N}$, we obtain a commutative diagram of short exact sequences
$$ \xymatrix{ 0 \ar[r] & \widetilde{M[p^i]} \ar[r] \ar[d]^{ \id - \varphi_{ M[p^i] }} & \widetilde{M} \ar[d]^{ \id - \varphi_M } \ar[r]^-{p^i} & \widetilde{ M[p^j] } \ar[d]^{ \id - \varphi_{ M[p^{j} ] } } \ar[r] & 0 \\
0 \ar[r] & \widetilde{M[p^i]} \ar[r] & \widetilde{M} \ar[r]^-{p^i} & \widetilde{ M[p^j] } \ar[r] & 0 }$$
in the category of presheaves of abelian groups on $\CAlg_{R}^{\mathet}$. Since $M$ is an injective object of $\Mod_{ W_n(R)}^{\perf}$, the
submodules $M[p^{i}]$ and $M[p^{j}]$ are injective objects of $\Mod_{ W_i(R) }^{\perf}$ and $\Mod_{ W_j(R)}^{\perf}$, respectively
(this follows from Corollary \ref{corollary.describe-image}, since the inclusion functors $\Mod_{ W_i(R)}^{\perf} \hookrightarrow \Mod_{ W_n(R)}^{\perf} \hookleftarrow \Mod_{ W_j(R)}^{\perf}$
are exact). Applying our inductive hypothesis, we deduce that the outer vertical maps in the preceding diagram are epimorphisms, so that the middle vertical map is also an epimorphism (by the snake lemma).
\end{proof}

\subsection{Derived Solution Functors}\label{sec9sub4}

For every commutative $\F_p$-algebra $R$ and every integer $n \geq 0$, the solution functor $\Sol: \Mod_{W_n(R)}^{\perf} \rightarrow \Shv_{\mathet}( \Spec(R), \Z / p^{n} \Z)$
is left exact, and therefore admits right derived functors 
$$\Sol^{m}: \Mod_{ W_n(R)}^{\perf} \rightarrow \Shv_{\mathet}( \Spec(R), \Z / p^{n} \Z)$$
for $m \geq 0$. These functors are described by the following generalization of Proposition \ref{proposition.corX77}:

\begin{proposition}\label{proposition.corX77gen}
Let $R$ be a commutative $\F_p$-algebra and let $M$ be a perfect Frobenius module over $W_n(R)$. Then we have a canonical short exact sequence
$$ 0 \rightarrow \Sol^{0}(M) \rightarrow \widetilde{M} \xrightarrow{ \id - \varphi_{M}  } \widetilde{M} \rightarrow \Sol^{1}(M) \rightarrow 0,$$
and the sheaves $\Sol^{m}(M)$ vanish for $m \geq 2$.
\end{proposition}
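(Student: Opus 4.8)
The plan is to mimic exactly the proof of Proposition \ref{proposition.corX77}, using Proposition \ref{propX74gen} as the input in place of Lemma \ref{lemma.propX74}. So the first step is to reduce the statement about derived functors to an assertion about injective objects. Concretely, I would choose an injective resolution $0 \rightarrow M \rightarrow Q^{0} \rightarrow Q^{1} \rightarrow \cdots$ of $M$ in the abelian category $\Mod_{W_n(R)}^{\perf}$ (which has enough injectives, since it is a category of modules over a noncommutative ring). The derived functors $\Sol^{m}(M)$ are by definition the cohomology sheaves of the complex $\Sol(Q^{\bullet})$, computed in $\Shv_{\mathet}(\Spec(R), \Z/p^{n}\Z)$.

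Next I would apply Proposition \ref{propX74gen} termwise: for each $Q^{k}$, which is injective in $\Mod_{W_n(R)}^{\perf}$, we have a short exact sequence of presheaves $0 \rightarrow \Sol(Q^{k}) \rightarrow \widetilde{Q}^{k} \xrightarrow{\id - \varphi} \widetilde{Q}^{k} \rightarrow 0$. Assembling these over $k$ gives a short exact sequence of cochain complexes of presheaves
$$ 0 \rightarrow \Sol(Q^{\bullet}) \rightarrow \widetilde{Q}^{\bullet} \xrightarrow{\id - \varphi} \widetilde{Q}^{\bullet} \rightarrow 0.$$
Since the short exact sequence of presheaves is in fact a short exact sequence of sheaves (the outer terms and the map are all sheaves, and exactness as presheaves implies exactness as sheaves), this is a short exact sequence of complexes in $\Shv_{\mathet}(\Spec(R), \Z/p^{n}\Z)$. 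The key observation is then that $\widetilde{Q}^{\bullet}$ is an acyclic resolution of $\widetilde{M}$: the functor $N \mapsto \widetilde{N}$ from $W_n(R)$-modules to $\Z/p^{n}\Z$-sheaves is exact (it corresponds to the quasi-coherent sheaf functor on $\Spec(W_n(R))$ transported along the equivalence of {\etale} sites $\CAlg_{R}^{\mathet} \simeq \CAlg_{W_n(R)}^{\mathet}$, and an injective resolution in $\Mod_{W_n(R)}^{\perf}$ is in particular an exact complex of $W_n(R)$-modules augmented over $M$), so $\widetilde{Q}^{\bullet}$ has cohomology $\widetilde{M}$ in degree $0$ and vanishing cohomology elsewhere.

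Finally I would pass to the long exact sequence in cohomology associated to the short exact sequence of complexes above. Because $\widetilde{Q}^{\bullet}$ has cohomology concentrated in degree $0$ (equal to $\widetilde{M}$), the connecting maps identify $\Sol^{0}(M) = H^{0}(\Sol(Q^{\bullet}))$ with $\ker(\id - \widetilde{\varphi}_{M}: \widetilde{M} \rightarrow \widetilde{M})$, identify $\Sol^{1}(M) = H^{1}(\Sol(Q^{\bullet}))$ with $\coker(\id - \widetilde{\varphi}_{M})$, and show $\Sol^{m}(M) = 0$ for $m \geq 2$; this yields precisely the asserted four-term exact sequence $0 \rightarrow \Sol^{0}(M) \rightarrow \widetilde{M} \xrightarrow{\id - \varphi_{M}} \widetilde{M} \rightarrow \Sol^{1}(M) \rightarrow 0$. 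I do not expect any genuine obstacle here: all the real work has been done in Proposition \ref{propX74gen} (whose proof in turn rested on Lemma \ref{lem.flatness} and an induction on $n$), and the present argument is a purely formal homological-algebra bookkeeping exercise — the only point requiring a word of care is the verification that $N \mapsto \widetilde{N}$ is exact and that presheaf-exactness of the sequences coming from Proposition \ref{propX74gen} upgrades to sheaf-exactness, both of which are immediate.
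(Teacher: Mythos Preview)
Your proposal is correct and follows essentially the same approach as the paper's proof: choose an injective resolution in $\Mod_{W_n(R)}^{\perf}$, apply Proposition \ref{propX74gen} termwise to obtain a short exact sequence of complexes, observe that $N \mapsto \widetilde{N}$ is exact so $\widetilde{Q}^{\bullet}$ resolves $\widetilde{M}$, and read off the result from the long exact sequence. Your write-up is in fact slightly more detailed than the paper's (which simply cites exactness of $N \mapsto \widetilde{N}$ without elaboration), but the argument is the same.
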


\begin{proof}
Choose an injective resolution $0 \rightarrow M \rightarrow Q^{0} \rightarrow Q^{1} \rightarrow \cdots$ in the abelian category
$\Mod_{W_n(R)}^{\perf}$. Using Proposition \ref{propX74gen}, we obtain a short exact sequence of cochain complexes
$$0 \rightarrow \Sol( Q^{\ast} ) \rightarrow \widetilde{Q}^{\ast} \xrightarrow{ \id - \varphi } \widetilde{Q}^{\ast} \rightarrow 0.$$
Since the construction $N \mapsto \widetilde{N}$ is exact, the chain complex $\widetilde{Q}^{\ast}$ is an acyclic resolution of $\widetilde{M}$.
The associated long exact sequence now supplies the desired isomorphisms.
\end{proof}

\begin{remark}\label{remark.independence-n}
Let $R$ be a commutative $\F_p$-algebra and let $M$ be a perfect Frobenius module over $W_n(R)$. Then $M$ can also be regarded as a perfect
Frobenius module over $W_m(R)$ for $m \geq n$. The {\etale} sheaf $\Sol^{i}(M)$ depends {\it a priori} on whether we choose to regard $M$
as an object of the abelian category $\Mod_{ W_n(R)}^{\perf}$ (in which case $\Sol^{i}(M)$ is defined as sheaf of $\Z / p^{n} \Z$-modules on $\Spec(R)$),
or as an object of the larger abelian category $\Mod_{ W_m(R)}^{\perf}$ (in which case $\Sol^{i}(M)$ is defined as a sheaf of $\Z / p^{m} \Z$-modules on $\Spec(R)$). 
However, Proposition \ref{proposition.corX77gen} shows that the resulting {\etale} sheaves are canonically isomorphic.
\end{remark}

We will also need a generalization of Corollary \ref{corX75}:

\begin{proposition}\label{proposition.injective-preservation}
Let $R$ be a commutative $\F_p$-algebra and let $Q$ be an injective object of $\Mod_{ W_n(R)}^{\perf}$. Then $\Sol(Q)$ is an injective object
of $\Shv_{\mathet}( \Spec(R), \Z / p^{n} \Z)$. 
\end{proposition}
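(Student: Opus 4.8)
I would prove this by induction on $n$, reducing the general case to the already-established case $n\le 1$ (Corollary~\ref{corX75}). First, note that by Lemma~\ref{lem.flatness} the injective object $Q$ is free as a $\Z/p^{n}\Z$-module, so $Q[p]=p^{n-1}Q$, and multiplication by $p$ identifies $Q/p^{n-1}Q$ with $pQ=Q[p^{n-1}]$; this produces a short exact sequence of perfect Frobenius modules
$$ 0\to Q[p]\to Q\xrightarrow{\ p\ } Q[p^{n-1}]\to 0 .$$
Since the inclusions $\Mod_{W_1(R)}^{\perf},\Mod_{W_{n-1}(R)}^{\perf}\hookrightarrow\Mod_{W_n(R)}^{\perf}$ are exact and admit the right adjoints $M\mapsto M[p]$ and $M\mapsto M[p^{n-1}]$ (Corollary~\ref{corollary.describe-image}), the modules $Q[p]$ and $Q[p^{n-1}]$ are injective over $W_1(R)$ and $W_{n-1}(R)$ respectively.

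\textbf{Producing the short exact sequence of solution sheaves.} Applying the exact functor $M\mapsto\widetilde M$ (exact because $W_n(R')$ is flat over $W_n(R)$ for every {\etale} $R\to R'$) and the endomorphism $\id-\varphi$, we get a map of short exact sequences of presheaves; by Proposition~\ref{propX74gen} applied to $Q$, $Q[p]$, and $Q[p^{n-1}]$ over $W_n(R)$, $W_1(R)$, $W_{n-1}(R)$, all three vertical maps $\id-\varphi$ are surjective, so the snake lemma yields a short exact sequence
$$ 0\to\Sol(Q[p])\to\Sol(Q)\to\Sol(Q[p^{n-1}])\to 0 $$
in $\Shv_{\mathet}(\Spec(R),\Z/p^{n}\Z)$ (using Remark~\ref{remark.independence-n} to identify, e.g., the kernel of $\id-\varphi$ on $\widetilde{Q[p]}$ with $\Sol(Q[p])$, independently of the ambient ring). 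By the inductive hypothesis together with Corollary~\ref{corX75}, the sheaves $\Sol(Q[p])$ and $\Sol(Q[p^{n-1}])$ are injective objects of $\Shv_{\mathet}(\Spec(R),\Z/p\Z)$ and $\Shv_{\mathet}(\Spec(R),\Z/p^{n-1}\Z)$ respectively.

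\textbf{Closing the induction.} For any $\sheafF\in\Shv_{\mathet}(\Spec(R),\Z/p^{n}\Z)$ I would form the long exact sequence of $\Ext^{\ast}_{\underline{\Z/p^{n}\Z}}(\sheafF,-)$-groups attached to the above short exact sequence, and compute the two outer terms by a change-of-rings argument: since $\Sol(Q[p])$ (resp. $\Sol(Q[p^{n-1}])$) is injective over $\Z/p\Z$ (resp. $\Z/p^{n-1}\Z$), these $\Ext$-groups are expressible through $\Hom$-groups over $\underline{\Z/p\Z}$ (resp. $\underline{\Z/p^{n-1}\Z}$) of the homology sheaves of $\sheafF\otimes^{\mathbf{L}}_{\underline{\Z/p^{n}\Z}}\underline{\Z/p\Z}$ (resp. $\sheafF\otimes^{\mathbf{L}}_{\underline{\Z/p^{n}\Z}}\underline{\Z/p^{n-1}\Z}$). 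One then checks that the connecting maps of the long exact sequence, which are induced by the maps of $0\to Q[p]\to Q\to Q[p^{n-1}]\to 0$ and hence are built from multiplication by powers of $p$, match up the positive-degree pieces of the two outer $\Ext$-groups isomorphically, forcing $\Ext^{\ge 1}_{\underline{\Z/p^{n}\Z}}(\sheafF,\Sol(Q))=0$; as $\sheafF$ was arbitrary, $\Sol(Q)$ is injective. I expect this last step to be the main obstacle: injectivity does not transfer directly between the categories $\Shv_{\mathet}(\Spec(R),\Z/p^{k}\Z)$ for varying $k$, so the dévissage only closes up after one tracks the boundary maps carefully, the key point being that the multiplication-by-$p$ structure of the three-term sequence makes the positive-degree contributions of $\Sol(Q[p])$ and $\Sol(Q[p^{n-1}])$ cancel. (An alternative route to the same conclusion, which trades the change-of-rings bookkeeping for a generator argument, is to use the short exact sequence $0\to\Sol(Q)\to\widetilde Q\xrightarrow{\id-\varphi}\widetilde Q\to 0$ of Proposition~\ref{propX74gen}, the vanishing of the higher {\etale} cohomology of the quasi-coherent sheaf $\widetilde Q$ on the affine scheme $\Spec(R)$, and the surjectivity of $\id-\varphi$ on $\widetilde{Q[p^{m}]}$ for $m\le n$ coming from Proposition~\ref{propX74gen} applied to $Q[p^{m}]$ over $W_m(R)$, to reduce the injectivity of $\Sol(Q)$ to a check against a generating family of sheaves $j_{!}\underline{\Z/p^{m}\Z}$.)
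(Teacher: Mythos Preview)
Your setup is correct: the short exact sequence $0\to Q[p]\to Q\xrightarrow{p}Q[p^{n-1}]\to 0$ exists by Lemma~\ref{lem.flatness}, the outer terms are injective in $\Mod_{W_1(R)}^{\perf}$ and $\Mod_{W_{n-1}(R)}^{\perf}$, and Proposition~\ref{propX74gen} gives the short exact sequence of solution sheaves. The problem is the step you yourself flag as the main obstacle. You assert that the connecting maps in the long exact sequence of $\Ext^{\ast}_{\underline{\Z/p^{n}\Z}}(\sheafF,-)$ are isomorphisms in positive degrees because ``the multiplication-by-$p$ structure makes the positive-degree contributions cancel,'' but you do not prove this, and it is not automatic. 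Under the change-of-rings identification, $\Ext^{i}_{\underline{\Z/p^n\Z}}(\sheafF,\Sol(Q[p^{n-1}]))$ and $\Ext^{i+1}_{\underline{\Z/p^n\Z}}(\sheafF,\Sol(Q[p]))$ involve $\Tor$-groups $\Tor_i^{\Z/p^n\Z}(\sheafF,\Z/p^{n-1}\Z)$ and $\Tor_{i+1}^{\Z/p^n\Z}(\sheafF,\Z/p\Z)$ mapped into sheaves living over \emph{different} coefficient rings $\Z/p^{n-1}\Z$ and $\Z/p\Z$; identifying the connecting map with an isomorphism requires tracking both a $\Tor$-periodicity and a comparison between $\Sol(Q[p^{n-1}])$ and $\Sol(Q[p])$, and you give no argument for either. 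The alternative route you sketch at the end (using $0\to\Sol(Q)\to\widetilde{Q}\to\widetilde{Q}\to 0$ and testing against $j_!\underline{\Z/p^m\Z}$) has the same defect: vanishing of $\Ext^1(\sheafF,\widetilde{Q})$ for a generating family does not imply vanishing for all $\sheafF$, since $\widetilde{Q}$ is not injective.

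The paper proceeds differently and more directly, without induction on $n$. It first reduces to the case where $\sheafF$ is annihilated by $p$ (the collection of $\sheafF$ with $\Ext^1_{\underline{\Z/p^n\Z}}(\sheafF,\Sol(Q))=0$ is closed under extensions). It then compares the Artin--Schreier sequences $0\to\Sol(Q[p])\to\widetilde{Q[p]}\to\widetilde{Q[p]}\to 0$ and $0\to\Sol(Q)\to\widetilde{Q}\to\widetilde{Q}\to 0$ from Proposition~\ref{propX74gen}, obtaining a map of long exact sequences in $\Ext^{\ast}_{\underline{\F_p}}(\sheafF,-)$ and $\Ext^{\ast}_{\underline{\Z/p^n\Z}}(\sheafF,-)$. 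The comparison maps on the $\widetilde{Q[p]}$- and $\widetilde{Q}$-terms are isomorphisms in degree $0$ trivially (since $p\sheafF=0$) and in degree $1$ by a dedicated lemma (Lemma~\ref{lemma.ext1}), which shows directly that $\Ext^1_{\underline{\F_p}}(\sheafF,\widetilde{M[p]})\to\Ext^1_{\underline{\Z/p^n\Z}}(\sheafF,\widetilde{M})$ is bijective whenever $M$ is $\Z/p^n\Z$-flat. A five-lemma argument then identifies $\Ext^1_{\underline{\Z/p^n\Z}}(\sheafF,\Sol(Q))$ with $\Ext^1_{\underline{\F_p}}(\sheafF,\Sol(Q[p]))$, which vanishes by Corollary~\ref{corX75}. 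The paper's change-of-rings happens at the level of the quasi-coherent sheaves $\widetilde{Q}$ and $\widetilde{Q[p]}$, where Lemma~\ref{lemma.ext1} gives explicit control; your change-of-rings happens at the level of the solution sheaves, where no such control is available without additional work.
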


The proof of Proposition \ref{proposition.injective-preservation} will require the following:

\begin{lemma}\label{lemma.ext1}
Let $R$ be a commutative $\F_p$-algebra, let $M$ be a $W_n(R)$-module which is flat over $\Z / p^{n} \Z$, and let
$\sheafF \in \Shv_{\mathet}( \Spec(R), \F_p)$. Then the canonical map
$$\Ext^{1}_{ \underline{\F_p} }( \sheafF, \widetilde{ M[p] } ) \rightarrow
\Ext^{1}_{ \underline{\Z / p^{n} \Z} }( \sheafF, \widetilde{ M } )$$
is bijective.
\end{lemma}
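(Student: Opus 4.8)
The plan is to exploit the change-of-rings adjunction between $\underline{\F_p}$-module sheaves and $\underline{\Z/p^n\Z}$-module sheaves on $\Spec(R)_{\mathet}$, together with the flatness of $\widetilde{M}$. Write $\Lambda = \Z/p^n\Z$ and let $\iota \colon \Shv_{\mathet}(\Spec(R), \F_p) \hookrightarrow \Shv_{\mathet}(\Spec(R), \Lambda)$ be the exact, fully faithful functor given by restriction of scalars along $\Lambda \to \F_p$. Its right adjoint $G$ sends a $\Lambda$-module sheaf $\mathcal{G}$ to the subsheaf $\mathcal{G}[p]$ of $p$-torsion sections; since $\iota$ is exact, $G$ carries injectives to injectives, and $\Hom_{\underline{\Lambda}}(\iota\sheafF, \mathcal{G}) = \Hom_{\underline{\F_p}}(\sheafF, G\mathcal{G})$ for every $\sheafF$. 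Choosing an injective resolution $\widetilde{M} \to I^{\bullet}$ in $\Shv_{\mathet}(\Spec(R), \Lambda)$, the complex $G(I^{\bullet})$ consists of injective $\underline{\F_p}$-module sheaves and has cohomology the right derived functors $R^{t}G(\widetilde{M})$. Thus the lemma follows once we check two things: $G(\widetilde{M}) \simeq \widetilde{M[p]}$, and $R^{t}G(\widetilde{M}) = 0$ for $t > 0$. Granting these, $G(I^{\bullet})$ is an injective resolution of $\widetilde{M[p]}$ in $\Shv_{\mathet}(\Spec(R), \F_p)$, so $\Ext^{s}_{\underline{\F_p}}(\sheafF, \widetilde{M[p]}) = H^{s}\,\Hom_{\underline{\F_p}}(\sheafF, G(I^{\bullet})) = H^{s}\,\Hom_{\underline{\Lambda}}(\sheafF, I^{\bullet}) = \Ext^{s}_{\underline{\Lambda}}(\sheafF, \widetilde{M})$ for all $s$, and unwinding the identifications shows this agrees with the canonical comparison map; taking $s = 1$ finishes the proof.

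The flatness hypothesis on $M$ is what makes the two facts above work. First, $\widetilde{M}$ is flat over $\underline{\Lambda}$: for $R'$ an {\etale} $R$-algebra, $W_n(R')$ is {\etale}, hence flat, over $W_n(R)$, so $\widetilde{M}(R') = W_n(R') \otimes_{W_n(R)} M$ is flat over $\Lambda$, being obtained from the flat $\Lambda$-module $M$ by the flat base change $W_n(R') \otimes_{W_n(R)}(-)$; passing to stalks then shows $\widetilde{M} \otimes_{\underline{\Lambda}} (-)$ is exact. (For $R$ perfect this is exactly Remark \ref{remark.technical}.) Next, $R^{t}G(\widetilde{M})$ is computed by applying $\mathcal{H}om_{\underline{\Lambda}}(-, \widetilde{M})$ to the standard periodic free resolution
$$ \cdots \xrightarrow{p^{n-1}} \underline{\Lambda} \xrightarrow{p} \underline{\Lambda} \xrightarrow{p^{n-1}} \underline{\Lambda} \xrightarrow{p} \underline{\Lambda} \longrightarrow \underline{\F_p} \longrightarrow 0, $$
yielding the complex $\widetilde{M} \xrightarrow{p} \widetilde{M} \xrightarrow{p^{n-1}} \widetilde{M} \xrightarrow{p} \cdots$. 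Because each $\widetilde{M}(R')$ is a flat $\Lambda$-module, the presheaf $R' \mapsto p\,\widetilde{M}(R')$ coincides with $R' \mapsto \widetilde{M}(R')[p^{n-1}]$ and $R' \mapsto p^{n-1}\widetilde{M}(R')$ with $R' \mapsto \widetilde{M}(R')[p]$; these are already subsheaves of $\widetilde{M}$, so the image subobjects appearing in the complex need no sheafification, and one reads off that its cohomology is $\widetilde{M}[p]$ in degree $0$ and vanishes in positive degrees. Finally, tensoring the exact sequence $0 \to M[p] \to M \xrightarrow{p} pM \to 0$ with the flat $W_n(R)$-algebra $W_n(R')$ identifies $\widetilde{M}(R')[p] = \ker(p\colon \widetilde{M}(R') \to \widetilde{M}(R'))$ with $W_n(R') \otimes_{W_n(R)} M[p] = \widetilde{M[p]}(R')$, so $\widetilde{M}[p] = \widetilde{M[p]}$.

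The only genuine subtlety — and hence the part to be careful about — is the bookkeeping with sheafification: one must ensure that the ``image'' and ``torsion'' subobjects arising in the computation of $R^{t}G(\widetilde{M})$ are formed in the category of {\etale} sheaves rather than presheaves, and it is precisely the flatness of each $\widetilde{M}(R')$ over $\Z/p^n\Z$ (which turns those images into kernels) that makes this harmless. Everything else is formal: exactness of $\iota$, preservation of injectives by $G$, and the periodic resolution of $\underline{\F_p}$ over $\underline{\Z/p^n\Z}$.
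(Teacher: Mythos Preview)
Your proof is correct and actually establishes more than the lemma asks for: you obtain an isomorphism $\Ext^{s}_{\underline{\F_p}}(\sheafF,\widetilde{M[p]}) \simeq \Ext^{s}_{\underline{\Z/p^n\Z}}(\sheafF,\widetilde{M})$ for every $s$, not just $s=1$.

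The paper's argument is quite different and more elementary. It works directly with extension classes: given an extension $0 \to \widetilde{M} \to \sheafG \to \sheafF \to 0$ of $\underline{\Z/p^n\Z}$-sheaves, the paper shows by a diagram chase that $\sheafG[p] \to \sheafF$ is an epimorphism (this is where flatness enters, via Remark~\ref{remark.technical}: the image of $p:\sheafG \to \widetilde{M}$ is annihilated by $p^{n-1}$, hence lands in $p\widetilde{M}$), so that $0 \to \widetilde{M[p]} \to \sheafG[p] \to \sheafF \to 0$ is the unique $\underline{\F_p}$-extension mapping to the given one. Your approach instead packages the comparison into the derived adjunction between restriction and $p$-torsion, reducing everything to the computation of $R^tG(\widetilde{M})$ via the periodic resolution of $\underline{\F_p}$; flatness is used precisely to make that complex acyclic in positive degrees. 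Your route is more conceptual and yields the stronger all-degrees statement for free; the paper's route avoids any spectral-sequence or derived-functor bookkeeping and makes the role of the flatness hypothesis concretely visible at the level of the single extension in question. One minor remark: you correctly note that Remark~\ref{remark.technical} is stated only for perfect $R$, and you supply the (easy) extension to general $R$ using that $W_n(R) \to W_n(R')$ is \'etale; this is fine, and is implicit in the paper's discussion following Notation~\ref{notation.M-tilde}.
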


\begin{proof}
Suppose we are given an extension $0 \rightarrow \widetilde{M} \rightarrow \sheafG \rightarrow \sheafF \rightarrow 0$
in the abelian category $\Shv_{\mathet}( \Spec(R), \Z / p^{n} \Z)$. We wish to show that there exists a commutative diagram of short exact sequences
$$ \xymatrix{ 0 \ar[r] & \widetilde{M[p] } \ar[r] \ar[d] & \sheafG' \ar[d] \ar[r] & \sheafF \ar[r] \ar[d]^{\id} & 0 \\
0 \ar[r] & \widetilde{M} \ar[r] & \sheafG \ar[r] & \sheafF \ar[r] & 0, }$$
where $\sheafG'$ is annihilated by $p$, and that the extension class of the upper exact sequence is uniquely determined. The uniqueness is clear:
note that if such a diagram exists, then it induces an isomorphism $\sheafG' \simeq \sheafG[p] = \ker(p: \sheafG \rightarrow \sheafG)$. To prove
existence, it will suffice to show that the composite map $\sheafG[p] \rightarrow \sheafG \rightarrow \sheafF$ is an epimorphism. To prove this,
we note that the commutative diagram
$$ \xymatrix{ 0 \ar[r] & \sheafG[p] \ar[r] \ar[d] & \sheafG \ar[r] \ar[d] & p \sheafG \ar[r] \ar[d] & 0 \\
0 \ar[r] & \sheafF \ar[r]^{\id} & \sheafF \ar[r] & 0 \ar[r] & 0 }$$
yields a long exact sequence
$$ \widetilde{M} \rightarrow \sheafG / \sheafG[p] \rightarrow \coker( \sheafG[p] \rightarrow \sheafF ) \rightarrow \coker( \sheafG \rightarrow \sheafF ),$$
where the last term vanishes (since the map $\sheafG \rightarrow \sheafF$ is an epimorphism). We are therefore reduced to showing
that the canonical map $\widetilde{M} \rightarrow \sheafG / \sheafG[p]$ is an epimorphism. Since $\sheafF$ is annihilated by $p$, the map
$p: \sheafG \rightarrow \sheafG$ induces a monomorphism $v: \sheafG/ \sheafG[p] \rightarrow \widetilde{M}$. It will therefore suffice to
show that the image of $v$ is contained in the image of the map $p: \widetilde{M} \rightarrow \widetilde{M}$. This follows
from Remark \ref{remark.technical}, since $\im(v)$ is annihilated by $p^{n-1}$.
\end{proof}

\begin{proof}[Proof of Proposition \ref{proposition.injective-preservation}]
Let $Q$ be an injective object of $\Mod_{ W_n(R)}^{\perf}$. We wish to show that $\Sol(Q)$ is an injective object of $\Shv_{\mathet}( \Spec(R), \Z / p^{n} \Z)$:
that is, that the group $\Ext^{1}_{ \underline{\Z / p^{n} \Z} }( \sheafF, \Sol(Q) )$ vanishes for every sheaf $\sheafF \in \Shv_{\mathet}( \Spec(R), \Z / p^{n} \Z)$.
Since the collection of those objects $\sheafF$ for which the group $\Ext^{1}_{ \underline{\Z / p^{n} \Z}}( \sheafF, \Sol(Q) )$ vanishes
is closed under extensions, we may assume without loss of generality that $\sheafF$ is annihilated by $p$.

By virtue of Proposition \ref{propX74gen}, we have short exact sequences of {\etale} sheaves
$$ 0 \rightarrow \Sol(Q[p] ) \rightarrow \widetilde{Q[p]} \xrightarrow{ \id - \varphi_{ Q[p] }} \widetilde{ Q[p] } \rightarrow 0$$
$$ 0 \rightarrow \Sol(Q) \rightarrow \widetilde{Q} \xrightarrow{ \id - \varphi_{Q} } \widetilde{Q} \rightarrow 0$$
which supply a commutative diagram of long exact sequences
$$ \xymatrix{ \Ext^{0}_{ \underline{\F_p}}( \sheafF, \widetilde{Q[p]} ) \ar[d]^{ \id - \varphi_{Q[p]} } \ar[r]^{\alpha} & 
\Ext^{0}_{ \underline{\Z / p^{n} \Z}}( \sheafF, \widetilde{Q} ) \ar[d]^{ \id - \varphi_{Q} } \\
\Ext^{0}_{ \underline{\F_p}}( \sheafF, \widetilde{Q[p]} ) \ar[d] \ar[r]^{\alpha} & 
\Ext^{0}_{\underline{\Z / p^{n} \Z}}( \sheafF, \widetilde{Q} ) \ar[d] \\
\Ext^{1}_{ \underline{\F_p}}( \sheafF, \Sol(Q[p]) ) \ar[r]^{\beta} \ar[d] & \Ext^{1}_{ \underline{\Z / p^{n}\Z}}( \sheafF, \Sol(Q) ) \ar[d] \\
\Ext^{1}_{ \underline{\F_p}}( \sheafF, \widetilde{Q[p]} ) \ar[d]^{ \id - \varphi_{Q[p]} } \ar[r]^{\gamma} & 
\Ext^{1}_{ \underline{\Z/p^{n}\Z}}( \sheafF, \widetilde{Q} ) \ar[d]^{ \id - \varphi_{Q} } \\
\Ext^{1}_{ \underline{\F_p}}( \sheafF, \widetilde{Q[p]} ) \ar[r]^{\gamma} & 
\Ext^{1}_{ \underline{\Z/p^{n}\Z}}( \sheafF, \widetilde{Q} ). }$$
The map $\alpha$ is obviously an isomorphism, and $\gamma$ is an isomorphism by virtue of Lemma \ref{lemma.ext1}.
It follows that $\beta$ is also an isomorphism. We are therefore reduced to proving that the group $\Ext^{1}_{ \underline{\F_p} }( \sheafF, \Sol(Q[p]) )$
vanishes. In fact, we claim that $\Sol( Q[p] )$ is an injective object of the abelian category $\Shv_{\mathet}( \Spec(R), \F_p)$: this is a special case
of Corollary \ref{corX75}, since $Q[p]$ is an injective object of the abelian category $\Mod_{ R }^{\perf}$.
\end{proof}

\subsection{Algebraic Frobenius Modules over $W_n(R)$}\label{sec9sub5}

Let $R$ be a commutative $\F_p$-algebra and let $M$ be a perfect Frobenius module over $W_n(R)$. We let $M[p]$ and $M/pM$ denote the kernel and cokernel
of the map $p: M \rightarrow M$. Then $M[p]$ and $M/pM$ are perfect Frobenius modules over $W_n(R)$ which are annihilated by $p$, and can therefore
be identified with perfect Frobenius modules over $R$ (Corollary \ref{corollary.describe-image}).

\begin{proposition}\label{proposition.charindhol}
Let $R$ be a commutative $\F_p$-algebra and let $M$ be a perfect Frobenius module over $W_n(R)$. The following conditions are equivalent:
\begin{itemize}
\item[$(1)$] The quotient $M/pM \in \Mod_{R}^{\perf}$ is algebraic, in the sense of Definition \ref{defhol1}.
\item[$(2)$] The submodule $M[p] \in \Mod_{R}^{\perf}$ is algebraic, in the sense of Definition \ref{defhol1}.
\item[$(3)$] Every element $x \in M$ satisfies an equation of the form
$$\varphi_{M}^{k}(x) + a_1 \varphi_{M}^{k-1}(x) + \cdots + a_k x = 0$$ for some coefficients $a_i \in W_n(R)$. 
\end{itemize}
\end{proposition}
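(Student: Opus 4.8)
The strategy is to prove the cycle of implications $(3) \Rightarrow (1) \Rightarrow (2) \Rightarrow (3)$ (or any convenient arrangement), using the exact sequences relating $M$, $M[p]$, and $M/pM$ together with the filtration of $M$ by the submodules $p^i M$. First I would record the basic short exact sequences of perfect Frobenius modules over $W_n(R)$: for each $i$, the sequence $0 \to M[p^i] \to M \xrightarrow{p^i} p^i M \to 0$ and $0 \to p^{i+1}M \to p^i M \to p^i M / p^{i+1} M \to 0$. Each successive quotient $p^i M / p^{i+1} M$ is annihilated by $p$, hence is a perfect Frobenius module over $R$ (Corollary \ref{corollary.describe-image}), and multiplication by $p^i$ gives a surjection $M/pM \twoheadrightarrow p^i M/p^{i+1}M$ of Frobenius modules over $R$; dually $M[p] \hookrightarrow M[p^{i+1}]$ picks out subquotients. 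Since $\Mod_R^{\alg}$ is a localizing subcategory of $\Mod_R^{\perf}$ (Proposition \ref{propX53}), closed under subobjects, quotients, and extensions, algebraicity of $M/pM$ forces algebraicity of every $p^i M/p^{i+1}M$, and conversely.

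\textbf{Key steps.} (a) For $(1) \Leftrightarrow (2)$: the module $M[p]$ and the module $M/pM$ have the same Jordan--H\"older-type constituents up to the associated graded of the $p$-adic filtration --- more precisely, $M[p]$ is a submodule of $M[p^n]=M$ built from the layers $p^i M / p^{i+1} M$, and $M/pM$ surjects onto each such layer. Using Proposition \ref{propX53}$(a)$ repeatedly (extensions and subquotients of algebraic modules are algebraic) one sees both $(1)$ and $(2)$ are equivalent to the statement that every layer $p^i M/p^{i+1}M$ is algebraic over $R$. (b) For $(1) \Rightarrow (3)$: given $x \in M$, I would work down the filtration. The image $\bar x$ of $x$ in $M/pM$ is algebraic over $R$, so satisfies $\varphi^{k_0}(\bar x) + \bar a_1 \varphi^{k_0-1}(\bar x) + \cdots = 0$; lifting the $\bar a_i$ to Witt vectors $a_i \in W_n(R)$, the element $y = \varphi^{k_0}(x) + a_1\varphi^{k_0-1}(x) + \cdots + a_{k_0} x$ lies in $pM$. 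Now $pM$ is itself a perfect Frobenius module over $W_{n-1}(R)$ whose reduction mod $p$ is a subquotient of $M/pM$, hence algebraic; by induction on $n$ (the case $n=1$ being Definition \ref{defhol1} verbatim), $y$ satisfies a monic $\varphi$-equation with coefficients in $W_n(R)$. Composing the two equations (as in the proof of Proposition \ref{propX53}$(a)$, multiplying through and using $\varphi(\lambda z) = F(\lambda)\varphi(z)$) produces a monic equation for $x$ itself. (c) For $(3) \Rightarrow (1)$: if every $x \in M$ satisfies such an equation, then reducing mod $p$ shows every element of $M/pM$ satisfies the reduced equation with coefficients in $R$, which is precisely condition $(b)$ of Definition \ref{defhol1}; combined with perfectness of $M/pM$, this gives $(1)$.

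\textbf{Main obstacle.} The delicate point is the induction in step (b): I need that $pM$, regarded as a perfect Frobenius module over $W_{n-1}(R)$ (via Corollary \ref{corollary.describe-image}, since $pM$ is annihilated by $p^{n-1}$), inherits algebraicity from the hypothesis on $M$, and that a $\varphi$-equation for $x$ produced at each stage can be ``lifted'' compatibly so that the final composite equation genuinely has coefficients in $W_n(R)$ rather than in some localization or perfection. This requires care with the semilinearity identity $\varphi_M(\lambda x) = F(\lambda)\varphi_M(x)$ when forming products of equations --- one must track how $F$ acts on the Witt-vector coefficients, exactly as the exponents $p^i$ appear in the $\F_p$-algebra case in Proposition \ref{propX53}. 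A clean way to organize this is to first prove the equivalence $(1) \Leftrightarrow (2)$ purely formally from Proposition \ref{propX53} applied to the layers of the $p$-adic filtration, then prove $(2) \Leftrightarrow (3)$ by the same ``largest algebraic submodule'' device used in Corollary \ref{biggest}: the set of $x \in M$ satisfying an equation as in $(3)$ is closed under $\varphi_M^{\pm 1}$ and under $W_n(R)$-linear combinations (this last closure being where the Witt-Frobenius bookkeeping enters), hence forms a Frobenius submodule $M_0 \subseteq M$; one then checks $M_0 = M$ iff $M[p] \subseteq M_0$ is algebraic, by descending induction on the $p$-power annihilating a given element.
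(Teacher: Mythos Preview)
Your proposal is correct and follows essentially the same approach as the paper. The paper proves the cycle $(3) \Rightarrow (2) \Rightarrow (1) \Rightarrow (3)$: the implication $(2) \Rightarrow (1)$ uses exactly the $p$-adic filtration argument you describe (via the exact sequence $0 \to (M[p] \cap p^i M)/(M[p] \cap p^{i+1}M) \to p^iM/p^{i+1}M \xrightarrow{p} p^{i+1}M/p^{i+2}M \to 0$ and descending induction on $i$), and the implication $(1) \Rightarrow (3)$ is your step (b) verbatim --- induction on $n$, lift a monic equation for $\bar x \in M/pM$ to $\mu \in W_n(R)[F]$ so $\mu(x) \in pM$, then apply the inductive hypothesis to $pM$ over $W_{n-1}(R)$ to get $\nu$ with $\nu\mu(x)=0$. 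Your alternative framing of $(1) \Leftrightarrow (2)$ as ``both equivalent to all layers $p^iM/p^{i+1}M$ being algebraic'' is a minor reorganization of the same filtration argument.
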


\begin{definition}\label{defholgen}
Let $R$ be a commutative $\F_p$-algebra and let $M$ be a perfect Frobenius module over $W_n(R)$. We will say that
$M$ is {\it algebraic} if it satisfies the equivalent conditions of Proposition \ref{proposition.charindhol}.
We let $\Mod_{ W_n(R)}^{\alg}$ denote the full subcatgory of $\Mod_{ W_n(R)}^{\perf}$ spanned by the algebraic
Frobenius modules over $W_n(R)$.
\end{definition}

\begin{remark}
In the situation of Definition \ref{defholgen}, an object $M \in \Mod_{ W_n(R)}^{\Frob}$ is algebraic if and only if
it is algebraic when viewed as a Frobenius module over $W_m(R)$, for any $m \geq n$.
\end{remark}

\begin{remark}
In the situation of Definition \ref{defholgen}, let $M$ be a perfect Frobenius module over $W_n(R)$ which is annihilated by $p$.
Then $M$ can be regarded as a perfect Frobenius module over $R$ (Corollary \ref{corollary.describe-image}). Moreover,
$M$ is algebraic in the sense of Definition \ref{defholgen} if and only if it is algebraic in the sense of Definition \ref{defhol1}.
\end{remark}

\begin{proof}[Proof of Proposition \ref{proposition.charindhol}]
The implication $(3) \Rightarrow (2)$ is obvious. We now show that $(2) \Rightarrow (1)$. Assume that $M$ is a perfect
Frobenius module over $W_n(R)$ and that the $p$-torsion submodule $M[p]$ is algebraic (as a perfect Frobenius module over $R$).
For each integer $i \geq 0$, we have a short exact sequence
$$ 0 \rightarrow (M[p] \cap p^{i} M) / (M[p] \cap p^{i+1} M) \rightarrow p^{i} M / p^{i+1} M \xrightarrow{p} p^{i+1} M / p^{i+2} M \rightarrow 0$$
Since the collection of algebraic objects of $\Mod_{R}^{\perf}$ is closed under the formation of subobjects and quotient objects (Proposition \ref{propX53}),
condition $(2)$ guarantees that each $(M[p] \cap p^{i} M) / (M[p] \cap p^{i+1} M)$ is algebraic. Since the collection of algebraic objects of
$\Mod_{R}^{\perf}$ is closed under extensions (Proposition \ref{propX53}), it follows by descending induction on $i$ that each $p^{i} M / p^{i+1} M$
is algebraic. Taking $i = 0$, we deduce that $(1)$ is satisfied.

We now complete the proof by showing that $(1)$ implies $(3)$. We proceed by induction on $n$, the case $n = 0$ being trivial. 
Assume that $n > 0$ and let $x$ be an element of $M$ having image $\overline{x} \in M/pM$. Condition $(1)$ guarantees
that we can find an element $\overline{\mu} = F^{m} + \overline{a}_1 F^{m-1} + \cdots + \overline{a}_{m-1} F + \overline{a}_{m} \in R[F]$
such that $\overline{\mu} ( \overline{x} ) = 0$. Lift $\overline{\mu}$ to an element $\mu = F^{m} + a_1 F^{m-1} + \cdots + a_{m} \in W_n(R)[F]$,
so that $\mu(x) \in pM$. Note that $pM / p^2M$ is a quotient of $M / pM$, and is therefore algebraic by virtue of
Proposition \ref{propX53}. The Frobenius module $pM$ is annihilated by $p^{n-1}$, and can therefore be regarded
as a perfect Frobenius module over $W_{n-1}(R)$ by virtue of Corollary \ref{corollary.describe-image}. Applying our inductive hypothesis,
we deduce that there exists an expression $\nu = F^{m'} + b_1 F^{m' - 1} + \cdots + b_{m'-1} F + b_{m'} \in W_n(R)[F]$ such that
$\nu( \mu(x) ) = 0$, so that $x$ is annihilated by $\nu \mu \in W_n(R)[F]$.
\end{proof}

We have the following generalization of Proposition \ref{propX53}:

\begin{proposition}\label{propX53gen}
Let $R$ be a commutative $\F_p$-algebra and let $n \geq 0$. Then $\Mod_{W_n(R)}^{\alg}$ is a localizing subcategory of $\Mod_{W_n(R)}^{\perf}$. That is:
\begin{itemize}
\item[$(a)$] Given a short exact sequence $0 \rightarrow M' \rightarrow M \rightarrow M'' \rightarrow 0$ of {perfect} Frobenius modules over $R$,
$M$ is algebraic if and only if $M'$ and $M''$ are algebraic.

\item[$(b)$] The collection of algebraic Frobenius modules over $W_n(R)$ is closed under (possibly infinite) direct sums.
\end{itemize}
\end{proposition}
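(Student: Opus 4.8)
\textbf{Proof plan for Proposition \ref{propX53gen}.} The strategy is to reduce statement $(a)$ for $W_n(R)$ to the corresponding statement for $R$ (Proposition \ref{propX53}) by means of the characterization of algebraicity in terms of the $p$-torsion submodule $M[p]$ (or the quotient $M/pM$) supplied by Proposition \ref{proposition.charindhol}. Statement $(b)$ will then follow formally, exactly as in the proof of Proposition \ref{propX53}: an arbitrary direct sum is a filtered colimit of its finite sub-sums, the collection of algebraic Frobenius modules is closed under filtered colimits (since the defining condition $(3)$ of Proposition \ref{proposition.charindhol} is a condition on individual elements), and a finite direct sum is built from the binary case, which is a special case of $(a)$.

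For $(a)$, the ``only if'' direction is immediate: if $M$ is algebraic then every element of the submodule $M'$ and every element of the quotient $M''$ satisfies a monic equation over $W_n(R)$ in $\varphi$, because such an equation for $x \in M$ restricts to $M'$ and pushes forward to $M''$. For the ``if'' direction, suppose $M'$ and $M''$ are algebraic. I would apply the exact functor $N \mapsto N[p]$ — more precisely, the long exact sequence associated to the snake lemma for multiplication by $p$ on the short exact sequence $0 \to M' \to M \to M'' \to 0$. Since all three modules are perfect, $\varphi$ acts invertibly, and the pieces $M'[p]$, $M[p]$, $M''[p]$, $M'/pM'$, $M/pM$, $M''/pM''$ are all perfect Frobenius modules over $R$ (via Corollary \ref{corollary.describe-image}). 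The snake lemma gives a six-term exact sequence
$$ 0 \to M'[p] \to M[p] \to M''[p] \to M'/pM' \to M/pM \to M''/pM'' \to 0 $$
of perfect Frobenius modules over $R$. By Proposition \ref{proposition.charindhol}, $M'$ algebraic means $M'[p]$ is algebraic over $R$, and $M''$ algebraic means $M''/pM''$ is algebraic over $R$. Now $M[p]$ sits in the middle of a short exact sequence whose outer terms are $M'[p]$ (algebraic) and a submodule of $M''[p]$; but $M''[p]$ is algebraic over $R$ because $M''$ is (again by Proposition \ref{proposition.charindhol}), so the submodule is algebraic by Proposition \ref{propX53}$(a)$. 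Hence $M[p]$ is algebraic over $R$ by Proposition \ref{propX53}$(a)$, and therefore $M$ is algebraic over $W_n(R)$ by Proposition \ref{proposition.charindhol}.

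The only genuine subtlety — and the step I would be most careful about — is verifying that the maps in the six-term sequence above are in fact morphisms of Frobenius modules over $R$ and that the identifications of the perfect $p$-torsion-or-cotorsion pieces with objects of $\Mod_R^{\perf}$ are natural, so that Proposition \ref{propX53} genuinely applies. This is routine once one observes that multiplication by $p$ is a morphism in $\Mod_{W_n(R)}^{\Frob}$ and that $\Mod_{W_n(R)}^{\perf}$ is abelian (Remark following Definition \ref{definition.frobenius-module2}), so kernels and cokernels of $p$ are again perfect; the passage to $\Mod_R^{\perf}$ is then Corollary \ref{corollary.describe-image}. An alternative, perhaps cleaner, route is to use condition $(3)$ of Proposition \ref{proposition.charindhol} directly: given $x \in M$, push its image to $M''$, find a monic $\mu \in W_n(R)[F]$ killing it, so $\mu(x)$ lies in the image of $M'$, then find a monic $\nu \in W_n(R)[F]$ killing $\mu(x)$ viewed in $M'$; then $\nu\mu$ is monic and kills $x$. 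This mirrors the computation $\sum a_i b_j^{p^i}\varphi^{i+j}(x) = 0$ in the proof of Proposition \ref{propX53} and of the implication $(1)\Rightarrow(3)$ in Proposition \ref{proposition.charindhol}, and avoids invoking the snake lemma entirely; I would likely present this version, as it is self-contained and the coefficient bookkeeping is no worse than what already appears in the $n=1$ case.
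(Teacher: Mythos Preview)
Your proposal is correct and follows essentially the same approach as the paper: reduce to Proposition \ref{propX53} via the characterizations of algebraicity in Proposition \ref{proposition.charindhol}. The paper's organization differs slightly---it uses both the right-exact sequence $M'/pM' \to M/pM \to M''/pM'' \to 0$ with characterization $(1)$ and the left-exact sequence $0 \to M'[p] \to M[p] \to M''[p]$ with characterization $(2)$ to argue symmetrically (if $M'$ is algebraic then $M$ is algebraic iff $M''$ is; if $M''$ is algebraic then $M$ is algebraic iff $M'$ is), whereas you invoke the full snake-lemma six-term sequence and work only with characterization $(2)$, handling the ``only if'' direction separately via characterization $(3)$. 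Your alternative route using monic elements $\nu\mu$ directly is also correct and is the most self-contained; the paper does not write this out, but it is exactly the argument underlying the implication $(1)\Rightarrow(3)$ in Proposition \ref{proposition.charindhol}. For $(b)$, the paper simply says ``immediate from the definitions'' (i.e.\ characterization $(3)$ is an elementwise condition), which is what your filtered-colimit remark amounts to.
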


\begin{proof}
We will prove $(a)$; assertion $(b)$ is immediate from the definitions. Suppose we are given an exact sequence
$0 \rightarrow M' \rightarrow M \rightarrow M'' \rightarrow 0$ of perfect Frobenius modules over $W_n(R)$. Then
we also have an exact sequence $M' / p M' \rightarrow M / pM \rightarrow M'' / pM'' \rightarrow 0$.
If $M' / pM'$ is algebraic, then Proposition \ref{propX53} implies that $M / pM$ is algebraic
if and only if $M'' / pM''$ is algebraic. Using characterization $(1)$ of Proposition \ref{proposition.charindhol}, we
conclude that if $M'$ is algebraic, then $M$ is algebraic if and only if $M''$ is algebraic.
Applying the same argument to the exact sequence $0 \rightarrow M'[p] \rightarrow M[p] \rightarrow M''[p]$
(and using characterization $(2)$ of Proposition \ref{proposition.charindhol}), we deduce that if
$M''$ is algebraic, then $M$ is algebraic if and only if $M'$ is algebraic.
\end{proof}

\begin{proposition}\label{proposition.derived-vanishing}
Let $R$ be a commutative $\F_p$-algebra, let $n \geq 0$, and let $M$ be an algebraic Frobenius module over $W_n(R)$. Then
the {\etale} sheaves $\Sol^{i}(M) \in \Shv_{ \mathet}( \Spec(R), \Z / p^{n} \Z)$ vanish for $i \neq 0$. 
\end{proposition}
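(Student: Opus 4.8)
The plan is to reduce the statement to the already-established $n=1$ case (Corollary \ref{corollary.derived-vanishing}) by an induction on $n$, using the short exact sequences relating $M$ to $M[p]$ and $M/pM$. First I would recall that $\Sol^{i}(M)$ vanishes for $i \geq 2$ automatically by Proposition \ref{proposition.corX77gen}, so only the $i=1$ term requires attention. Thus it suffices to show that for an algebraic $M$ the map $\id - \widetilde{\varphi}_M : \widetilde{M} \rightarrow \widetilde{M}$ is an epimorphism of {\etale} sheaves, equivalently that $\Sol^{1}(M) \simeq 0$.

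The main step is the inductive d\'{e}vissage. For $n=1$, an algebraic Frobenius module over $W_1(R) \simeq R$ is exactly an algebraic Frobenius module in the sense of Definition \ref{defhol1}, and the vanishing is Corollary \ref{corollary.derived-vanishing}. For $n > 1$, consider the short exact sequence $0 \rightarrow M[p] \rightarrow M \xrightarrow{p} pM \rightarrow 0$ of perfect Frobenius modules over $W_n(R)$ (note $pM \simeq M/M[p]$). Here $M[p]$ is annihilated by $p$, hence is a perfect Frobenius module over $R$, and it is algebraic by characterization $(2)$ of Proposition \ref{proposition.charindhol}; also $pM$ is annihilated by $p^{n-1}$, hence is a perfect Frobenius module over $W_{n-1}(R)$ by Corollary \ref{corollary.describe-image}, and it is algebraic by Proposition \ref{propX53gen} (being a quotient of $M$). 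Applying the functor $N \mapsto \widetilde{N}$ — which is exact by the discussion preceding Proposition \ref{propX74gen} — and the snake lemma to the map $\id - \widetilde{\varphi}$ acting on the resulting short exact sequence of sheaves, we obtain a long exact sequence
$$\Sol^{1}(M[p]) \rightarrow \Sol^{1}(M) \rightarrow \Sol^{1}(pM).$$
By the inductive hypothesis (applied over $R$ for the outer left term, which is legitimate since $\Sol^{i}$ of a $p$-torsion module is independent of whether we compute it over $R$ or over $W_n(R)$, by Remark \ref{remark.independence-n}) we have $\Sol^{1}(M[p]) \simeq 0$, and by the inductive hypothesis over $W_{n-1}(R)$ (again using Remark \ref{remark.independence-n}) we have $\Sol^{1}(pM) \simeq 0$. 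Hence $\Sol^{1}(M) \simeq 0$, completing the induction.

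The subtle point — and the step I would be most careful about — is the bookkeeping around Remark \ref{remark.independence-n}: I need to know that the sheaf $\Sol^{1}$ of a module over $W_m(R)$ with $m < n$ agrees whether computed in $\Mod_{W_m(R)}^{\perf}$ or in $\Mod_{W_n(R)}^{\perf}$, so that the inductive hypothesis genuinely applies to the terms $M[p]$ and $pM$ as they sit inside the long exact sequence for $M$. Proposition \ref{proposition.corX77gen} gives exactly this, since it computes $\Sol^{1}(N)$ intrinsically as $\coker(\id - \widetilde{\varphi}_N : \widetilde{N} \rightarrow \widetilde{N})$, a description that does not reference $n$. Once that compatibility is in hand, the rest is a routine application of the snake lemma, and no further computation is needed. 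I should also remark that the case $n=0$ is vacuous, serving as the trivial base of the induction, though one may equally well start the induction at $n=1$ where Corollary \ref{corollary.derived-vanishing} provides the content.
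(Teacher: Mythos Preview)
Your proof is correct and follows essentially the same approach as the paper: both use the short exact sequence $0 \rightarrow M[p] \rightarrow M \rightarrow pM \rightarrow 0$, handle $M[p]$ via the $n=1$ case (Corollary \ref{corollary.derived-vanishing}) together with Remark \ref{remark.independence-n}, and handle $pM$ by induction. The only cosmetic difference is that the paper phrases the induction on the $p$-exponent $m$ (proving $(\ast_m)$ for modules annihilated by $p^m$) rather than on $n$, and obtains the exact sequence $\Sol^{i}(M[p]) \rightarrow \Sol^{i}(M) \rightarrow \Sol^{i}(pM)$ directly from the long exact sequence of derived functors rather than via the snake lemma on $\id - \widetilde{\varphi}$; these are equivalent formulations of the same argument.
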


\begin{proof}
We prove the following assertion by induction on $m$:
\begin{itemize}
\item[$(\ast_m)$] Let $M$ be an algebraic Frobenius module over $W_n(R)$ which is annihilated by $p^{m}$. Then
$\Sol^{i}(M) = 0$.
\end{itemize}
Note that assertion $(\ast_0)$ is trivial, and assertion $(\ast_n)$ implies Proposition \ref{proposition.derived-vanishing}. 
It will therefore suffice to show that $(\ast_m)$ implies $(\ast_{m+1})$. Note that if $M$ is an algebraic Frobenius module which is annihilated by $p^{m+1}$, then
the the short exact sequence $0 \rightarrow M[p] \rightarrow M \rightarrow pM \rightarrow 0$ yields an exact sequence of sheaves
$\Sol^{i}( M[p] ) \rightarrow \Sol^{i}( M) \rightarrow \Sol^i( pM )$. Here $pM$ and $M[p]$ are also algebraic (Proposition \ref{propX53gen}), and
$pM$ is annihilated by $p^{m}$. Our inductive hypothesis then guarantees that $\Sol^{i}(pM) = 0$. To complete the proof, it will suffice to show that
$\Sol^{i}(M[p]) = 0$. To prove this, we can replace $W_n(R)$ by $R$ (Remark \ref{remark.independence-n}), in which case the desired result follows
from Corollary \ref{corollary.derived-vanishing}.
\end{proof}

\subsection{The Riemann-Hilbert Correspondence for $\Z / p^{n} \Z$-Sheaves}\label{sec9sub6}

We can now formulate the main result of this section:

\begin{theorem}[Riemann-Hilbert Correspondence]\label{theorem.generalizedRH}
Let $R$ be a commutative $\F_p$-algebra and let $n \geq 0$. Then the functor $\Sol: \Mod_{ W_n(R)}^{\perf} \rightarrow \Shv_{\mathet}( \Spec(R), \Z / p^{n} \Z )$
induces an equivalence of categories $\Mod_{ W_n(R) }^{\alg} \rightarrow \Shv_{\mathet}( \Spec(R), \Z / p^{n} \Z )$.
\end{theorem}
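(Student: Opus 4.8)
The plan is to bootstrap from the $p$-torsion case (Theorem \ref{theoX50}, i.e. the case $n=1$) by induction on $n$, using the truncation sequences $0 \to M[p] \to M \to pM \to 0$ on the module side and the corresponding sequence $0 \to \underline{\F_p} \to \underline{\Z/p^n\Z} \to \underline{\Z/p^{n-1}\Z} \to 0$ on the sheaf side. The base case $n=0$ is trivial (both categories are zero) and $n=1$ is Theorem \ref{theoX50}. For the inductive step, first I would establish full faithfulness of $\Sol$ on $\Mod_{W_n(R)}^{\alg}$, and then surjectivity onto $\Shv_{\mathet}(\Spec(R), \Z/p^n\Z)$.

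\textbf{Full faithfulness.} Given $M, N \in \Mod_{W_n(R)}^{\alg}$, I want $\Hom_{W_n(R)[F]}(M,N) \xrightarrow{\sim} \Hom_{\underline{\Z/p^n\Z}}(\Sol(M), \Sol(N))$, and more: that the unit map $M \to \RH(\Sol(M))$ (where $\RH$ is the putative inverse to be constructed) is an isomorphism. The cleanest route mirrors \S\ref{sec7sub2}: Proposition \ref{proposition.corX77gen} shows $\Sol$ has only $\Sol^0$ and $\Sol^1$, and Proposition \ref{proposition.injective-preservation} shows $\Sol$ carries injectives to injectives, giving a Grothendieck spectral sequence $\Ext^s_{\underline{\Z/p^n\Z}}(\sheafF, \Sol^t(M)) \Rightarrow \Ext^{s+t}_{W_n(R)[F]}(\RH\sheafF, M)$ once $\RH$ is available. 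But a more direct inductive argument suffices: for $M$ algebraic over $W_n(R)$, Proposition \ref{proposition.derived-vanishing} gives $\Sol^i(M) = 0$ for $i \neq 0$. Using the short exact sequence $0 \to M[p] \to M \to pM \to 0$ (with $M[p]$ algebraic over $R$ and $pM$ algebraic over $W_{n-1}(R)$, both by Proposition \ref{propX53gen}) and the vanishing of the relevant $\Sol^1$'s, one gets that $\Sol$ is exact on $\Mod_{W_n(R)}^{\alg}$ and compatible with these truncation sequences; a five-lemma argument comparing $\Hom_{W_n(R)[F]}(M,N)$-sequences with $\Hom_{\underline{\Z/p^n\Z}}(\Sol M, \Sol N)$-sequences, using the inductive hypothesis on both $N[p]$ (over $R$) and $N/pN$ ... wait, one must be careful: $M[p]$ and $pM$ don't decompose $\Hom$ additively. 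Instead I would use the filtration of $N$ by $p^iN$ and induct on the $p$-adic length of $N$, reducing to $N$ annihilated by $p$ (handled by the $n=1$ case, via Remark \ref{remark.independence-n}) — the extension step uses the long exact $\Ext$ sequence in $\Mod_{W_n(R)}^{\perf}$ together with Corollary \ref{corX76}'s analogue, which follows from Propositions \ref{proposition.corX77gen} and \ref{proposition.injective-preservation}.

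\textbf{Essential surjectivity.} Given $\sheafF \in \Shv_{\mathet}(\Spec(R), \Z/p^n\Z)$, filter it by $p^i\sheafF$ so that each subquotient $p^i\sheafF/p^{i+1}\sheafF$ is a $p$-torsion sheaf. By the $n=1$ case each subquotient is $\Sol$ of an algebraic $R$-module, hence of an algebraic $W_n(R)$-module annihilated by $p$ (Corollary \ref{corollary.describe-image}). Since $\Sol$ restricted to algebraic modules is now known to be fully faithful and exact, and $\Mod_{W_n(R)}^{\alg}$ is closed under extensions inside $\Mod_{W_n(R)}^{\perf}$ (Proposition \ref{propX53gen}(a)), the extension class of $\sheafF$ built from its subquotients — an element of $\Ext^1_{\underline{\Z/p^n\Z}}$ — transports to an $\Ext^1$ in $\Mod_{W_n(R)}^{\perf}$ via the full faithfulness at the derived level, producing an algebraic $M$ with $\Sol(M) \cong \sheafF$. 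Concretely: write $\sheafF$ as an iterated extension, lift each piece, and lift each extension class using that $\Ext^1_{\underline{\Z/p^n\Z}}(\sheafG, \sheafH) \cong \Ext^1_{W_n(R)[F]}(\RH\sheafG, \RH\sheafH)$ (the $\Ext^1$ comparison, which is where the derived-level full faithfulness and the vanishing $\Sol^1 = 0$ on algebraic modules enter).

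\textbf{Main obstacle.} The genuine difficulty is establishing the $\Ext^1$-comparison isomorphism $\Ext^1_{\underline{\Z/p^n\Z}}(\sheafF,\sheafG) \cong \Ext^1_{W_n(R)[F]}(\RH\sheafF, \RH\sheafG)$ for algebraic modules — i.e. the derived full faithfulness — since without it one cannot lift extension classes and surjectivity collapses. This requires knowing $\Sol$ sends injectives to injectives (Proposition \ref{proposition.injective-preservation}, already in the excerpt) and that $\Sol^{\geq 2}$ vanishes (Proposition \ref{proposition.corX77gen}) and $\Sol^1$ vanishes on the algebraic objects involved (Proposition \ref{proposition.derived-vanishing}); assembling these into the spectral-sequence argument — and in particular checking the $\RH$ functor exists as a left adjoint to $\Sol|_{\Mod^{\perf}}$ in the $W_n$ setting, parallel to Theorem \ref{theorem.RHexist} — is the technical heart. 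I expect this to go through by the same formal pattern as \S\ref{section.RH}–\S\ref{section.mainproof}, with Witt-vector étale base change (Corollary \ref{corX14gen}, van der Kallen) replacing the plain étale base change, but verifying each step carries over is the bulk of the work.
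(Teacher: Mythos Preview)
Your proposal is correct and follows the same strategy as the paper: isolate an $\Ext$-comparison statement (the paper's Proposition \ref{prop:ext-comparison}) from which both full faithfulness (the $\Ext^0$ case) and essential surjectivity (lift extension classes via the $\Ext^1$ case, after filtering $\sheafF$ by $p$-torsion pieces) follow, and prove that comparison by d\'evissage to the $p$-torsion case using Theorem \ref{theoX50}. Two simplifications in the paper's execution are worth noting. First, it never constructs a left adjoint $\RH$ to $\Sol$ over $W_n(R)$; this is unnecessary, since the comparison can be proved directly from the already-established ingredients (Propositions \ref{proposition.corX77gen}, \ref{proposition.injective-preservation}, \ref{proposition.derived-vanishing}) together with the $n=1$ Riemann--Hilbert functor alone. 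Second, the paper filters $M$ rather than $N$: when $M$ is $p$-torsion algebraic, one chooses an injective resolution $Q^\bullet$ of $N$ in $\Mod_{W_n(R)}^{\perf}$, observes that $\Sol(Q^\bullet)$ is an injective resolution of $\Sol(N)$ (using $\Sol^1(N)=0$ and that $\Sol$ preserves injectives), and thereby reduces the $\Ext^i$-comparison to a termwise $\Hom$-comparison, which in turn reduces via $N \mapsto N[p]$ to Theorem \ref{theoX50}; the general algebraic $M$ then follows from a five-lemma argument on $0 \to M' \to M \to M'' \to 0$. Filtering $N$ as you propose would also work but is slightly less clean for the higher $\Ext$ groups.
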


We will deduce Theorem \ref{theorem.generalizedRH} from the following comparison result:

\begin{proposition}\label{prop:ext-comparison}
Let $R$ be a commutative $\F_p$-algebra and let $M$ and $N$ be perfect Frobenius modules over $W_n(R)$. Assume
that $M$ is algebraic and that $\Sol^{1}(N) \simeq 0$. Then the canonical map
$$ \Ext^{i}_{ W_n(R)[F] }( M, N) \rightarrow \Ext^{i}_{ \underline{\Z / p^{n} \Z} }( \Sol(M), \Sol(N) )$$
is an isomorphism for $i \geq 0$.
\end{proposition}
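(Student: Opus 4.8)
The strategy is to reduce the statement to the $p$-torsion case ($n=1$), which is precisely Corollary \ref{corX80}, by induction on $n$ using the five lemma. First I would set up the inductive scheme: the case $n=0$ is vacuous, and the case $n=1$ is Corollary \ref{corX80} (applied to the algebraic Frobenius module $M$ over $R = W_1(R)$, using that $\Sol^1(N)$ vanishes automatically is not needed there — but in general we carry the hypothesis $\Sol^1(N)\simeq 0$ along). For the inductive step, I would use the short exact sequence $0 \rightarrow M[p] \rightarrow M \xrightarrow{p} pM \rightarrow 0$ of perfect Frobenius modules over $W_n(R)$. Here $M[p]$ is a perfect Frobenius module annihilated by $p$, hence (via Corollary \ref{corollary.describe-image}) a perfect Frobenius module over $R$, and it is algebraic by Proposition \ref{proposition.charindhol}(2) together with Proposition \ref{propX53gen}; similarly $pM$ is algebraic and annihilated by $p^{n-1}$, so can be regarded as an algebraic Frobenius module over $W_{n-1}(R)$.

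The heart of the argument is to produce, for a fixed target $N$ with $\Sol^1(N)\simeq 0$, a morphism of long exact sequences. On the module side, applying $\Ext^{\ast}_{W_n(R)[F]}(-,N)$ to the sequence $0 \rightarrow M[p] \rightarrow M \rightarrow pM \rightarrow 0$ gives a long exact sequence. On the sheaf side, I first apply the solution functor: by Proposition \ref{proposition.derived-vanishing} the sequence $0\rightarrow \Sol(M[p]) \rightarrow \Sol(M) \rightarrow \Sol(pM) \rightarrow 0$ is short exact (since $\Sol^1(M[p])=0$), and then I apply $\Ext^{\ast}_{\underline{\Z/p^n\Z}}(-,\Sol(N))$ to get the corresponding long exact sequence. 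The comparison maps of Proposition \ref{prop:ext-comparison} for the three modules $M[p]$, $M$, $pM$ assemble into a ladder between these two long exact sequences (naturality of the connecting homomorphisms and of the edge maps is formal, once one checks the diagram of module maps and solution sheaves commutes). By the five lemma, to conclude that the comparison map for $M$ is an isomorphism in each degree it suffices to know it for $M[p]$ and for $pM$. For $pM$, this is the inductive hypothesis applied over $W_{n-1}(R)$ — here I must check that the target $N$, or rather the relevant $\Ext$-groups against $N$, are unaffected by passing between $W_n$ and $W_{n-1}$; the cleanest route is to note $\Ext^{\ast}_{W_n(R)[F]}(pM,N) \simeq \Ext^{\ast}_{W_{n-1}(R)[F]}(pM, N[p^{n-1}])$ using that $pM$ is a $W_{n-1}(R)[F]$-module, and correspondingly $\Sol(N)$ restricted appropriately is controlled by Remark \ref{remark.independence-n}, with $\Sol^1(N[p^{n-1}])$ vanishing because it injects into $\Sol^1(N)\simeq 0$ via the analogue of Lemma \ref{lemma.ext1}. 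For $M[p]$, the statement over $R$ is Corollary \ref{corX80}, again modulo checking $\Sol^1$ of the relevant $p$-torsion piece of $N$ vanishes — which follows from $\Sol^1(N)\simeq 0$ and Proposition \ref{proposition.corX77gen} (the snake-lemma comparison of $\id-\varphi$ on $\widetilde{N[p]}$ and $\widetilde{N}$, exactly as in Lemma \ref{lemma.ext1}).

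The main obstacle I anticipate is bookkeeping around the ``change of $n$'' identifications: making sure that the hypothesis $\Sol^1(N)\simeq 0$ descends to $\Sol^1$ of $N[p]$ and $N/pN$ (and to whatever sub/quotients of $N$ appear when one writes down the connecting maps), and that the $\Ext$-groups computed over $W_n(R)[F]$ against $N$ agree with those computed over $W_{n-1}(R)[F]$ against the appropriate torsion subobject of $N$. This is where Lemma \ref{lemma.ext1}, Corollary \ref{corollary.describe-image}, Remark \ref{remark.independence-n}, and Proposition \ref{proposition.corX77gen} all get used; none of the individual verifications is hard, but the diagram chase must be organized so that every square genuinely commutes. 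Once this is in place, combining Proposition \ref{prop:ext-comparison} (in degree $i=0$) with Theorem \ref{theorem.generalizedRH}'s full faithfulness and the surjectivity-on-objects argument (every $p^n$-torsion {\etale} sheaf is built, via the same inductive devissage through the exact sequence $0 \rightarrow \sheafF[p] \rightarrow \sheafF \rightarrow p\sheafF \rightarrow 0$, out of $p$-torsion sheaves, each realized by $\RH$) yields the theorem; but that deduction belongs to the proof of Theorem \ref{theorem.generalizedRH}, not of Proposition \ref{prop:ext-comparison} itself.
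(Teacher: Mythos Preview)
Your d\'evissage strategy via the short exact sequence $0 \rightarrow M[p] \rightarrow M \rightarrow pM \rightarrow 0$ and the five lemma is exactly right, and matches the paper's argument. The genuine gap is in the change-of-ring steps you flag as the ``main obstacle.'' Specifically, the claimed identification $\Ext^{\ast}_{W_n(R)[F]}(pM,N) \simeq \Ext^{\ast}_{W_{n-1}(R)[F]}(pM, N[p^{n-1}])$ is only the Hom-level adjunction; for higher Ext there is instead a spectral sequence whose $E_2$-page involves the derived functors $R^q(-)[p^{n-1}]$ applied to $N$, and these need not vanish (e.g.\ they are nonzero already for $N=R$ viewed over $W_n(R)$ with $n \geq 2$). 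The same problem obstructs your treatment of $M[p]$: replacing $N$ by $N[p]$ is valid for $\Hom$ but not for higher $\Ext$. Your claim that $\Sol^1(N[p^{n-1}])$ injects into $\Sol^1(N)$ also fails: from the long exact sequence this injection holds only if $\Sol(N) \rightarrow \Sol(p^{n-1}N)$ is surjective, which is not a consequence of $\Sol^1(N)=0$.

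The paper's fix is to avoid changing $N$ entirely. It first proves the case where $pM=0$ directly over $W_n(R)$ (Lemma \ref{lemma.ext-comparison}): choose an injective resolution $Q^\ast$ of $N$ in $\Mod_{W_n(R)}^{\perf}$; the hypothesis $\Sol^1(N)=0$ together with Proposition \ref{proposition.corX77gen} ensures $\Sol(Q^\ast)$ is a resolution of $\Sol(N)$, and Proposition \ref{proposition.injective-preservation} ensures each $\Sol(Q^i)$ is injective in $\Shv_{\mathet}(\Spec(R),\Z/p^n\Z)$. Thus both sides are computed by the \emph{same} complex, and the termwise comparison is the Hom-level statement (Lemma \ref{lemma.hom-comparison}), where the reduction $N \rightsquigarrow N[p]$ is legitimate. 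With this base case in hand, the induction is on the $p$-exponent of $M$ (not on $n$), keeping $N$ fixed over $W_n(R)$ throughout; your five-lemma argument then goes through verbatim. The missing ingredient in your proposal is precisely Proposition \ref{proposition.injective-preservation}, which lets one compute the sheaf-side Ext groups from an injective resolution taken on the module side.
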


\begin{proof}[Proof of Theorem \ref{theorem.generalizedRH} from Proposition \ref{prop:ext-comparison}]
We first claim that the composite functor
$$\Mod_{ W_n(R) }^{\alg} \hookrightarrow  \Mod_{ W_n(R)}^{\perf} \xrightarrow{ \Sol } \Shv_{\mathet}( \Spec(R), \Z / p^{n} \Z )$$
is fully faithful. Let $M$ and $N$ be algebraic Frobenius modules over $W_n(R)$; we wish to show that
the canonical map
$$ \Hom_{ W_n(R)[F] }( M, N) \rightarrow \Hom_{ \underline{\F_p} }( \Sol(M), \Sol(N) )$$
is an isomorphism. This is a special case of Proposition \ref{prop:ext-comparison}, since $\Sol^{1}(N) \simeq 0$
by virtue of Proposition \ref{proposition.derived-vanishing}.

Let $\calC \subseteq  \Shv_{\mathet}( \Spec(R), \Z / p^{n} \Z )$ denote the full subcategory spanned by those sheaves of the form
$\Sol(M)$, where $M$ is an algebraic Frobenius module over $W_n(R)$. To complete the proof of Theorem \ref{theorem.generalizedRH}, it will
suffice to show that every object of $\Shv_{\mathet}( \Spec(R), \Z / p^{n} \Z)$ belongs to $\calC$. Note that Theorem \ref{theoX50} guarantees
$\calC$ contains every sheaf of $\Z / p\Z$-modules on $\Spec(R)$. We will complete the proof by showing that $\calC$ is closed under the formation of extensions.
Suppose we are given a short exact sequence of {\etale} sheaves $$0 \rightarrow \sheafF' \rightarrow \sheafF \rightarrow \sheafF'' \rightarrow 0,$$ where
$\sheafF'$ and $\sheafF''$ belong to $\calC$; we wish to show that $\sheafF$ also belongs to $\calC$. Without loss of generality, we may assume
that $\sheafF' = \Sol(M')$ and $\sheafF'' = \Sol(M'')$ for some algebraic Frobenius modules $M'$ and $M''$ over $W_n(R)$. In this case,
the preceding exact sequence is classified by an element $\eta \in \Ext^{1}_{ \underline{ \Z / p^{n} \Z} }( \Sol(M''), \Sol(M') )$.
Invoking Proposition \ref{prop:ext-comparison} again, we deduce that $\eta$ can be lifted (uniquely) to an element
$\overline{\eta} \in \Ext^{1}_{ W_n(R)[F] }(M'', M')$, which classifies a short exact sequence of Frobenius modules
$0 \rightarrow M' \rightarrow M \rightarrow M'' \rightarrow 0$. Proposition \ref{propX53gen} guarantees that $M$ is algebraic, so that
$\sheafF \simeq \Sol(M)$ also belongs to the category $\calC$.
\end{proof}

We now turn to the proof of Proposition \ref{prop:ext-comparison}. We begin with some special cases.

\begin{lemma}\label{lemma.hom-comparison}
Let $R$ be an $\F_p$-algebra and let $M$ be an algebraic Frobenius module over $R$,
and let $N$ be any object of $\Mod_{ W_n(R)}^{\perf}$. Then the canonical map
$$ \theta: \Hom_{W_n(R)[F]}( M, N) \rightarrow \Hom_{ \underline{ \Z / p^{n} \Z} }( \Sol(M), \Sol(N) )$$
is an isomorphism.
\end{lemma}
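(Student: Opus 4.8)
The statement to prove is Lemma~\ref{lemma.hom-comparison}: for $M \in \Mod_R^{\alg}$ and $N \in \Mod_{W_n(R)}^{\perf}$, the natural map $\theta\colon \Hom_{W_n(R)[F]}(M,N) \to \Hom_{\underline{\Z/p^n\Z}}(\Sol(M),\Sol(N))$ is bijective. Since $M$ is annihilated by $p$, every Frobenius-module map $M \to N$ factors through the $p$-torsion submodule $N[p] \in \Mod_R^{\perf}$; likewise, since $\Sol(M)$ is a sheaf of $\F_p$-modules (being the solution sheaf of a $p$-torsion module, or by Remark~\ref{remark.independence-n} applied to $M$), every map $\Sol(M) \to \Sol(N)$ factors through $\Sol(N)[p] = \Sol(N[p])$ — the last identity because $\Sol$ is left exact and commutes with the functor $\widetilde{(-)}$, so it carries the kernel of $p$ on $N$ to the kernel of $p$ on $\Sol(N)$. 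Thus the plan is to reduce to the case where $N$ is itself annihilated by $p$, i.e. $N \in \Mod_R^{\perf}$.

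Concretely, I would first record the two factorizations above as a commutative square identifying $\theta$ (for general $N$) with the corresponding comparison map $\theta'$ for the pair $(M, N[p])$, where now both entries live in $\Mod_R^{\perf}$ and $M$ is algebraic. Here one must check that $\Sol(N[p]) \to \Sol(N)$ really is the $p$-torsion inclusion of sheaves: this is immediate from the exact sequence $0 \to \Sol(N[p]) \to \widetilde{N[p]} \xrightarrow{\id - \varphi} \widetilde{N[p]}$ of Proposition~\ref{proposition.corX77gen} together with the fact that $\widetilde{N[p]} = \widetilde{N}[p]$ (the functor $\widetilde{(-)}$ is exact and commutes with kernels of multiplication by $p$, by Remark~\ref{remark.technical}/Notation~\ref{notation.M-tilde}), and similarly for $N$ itself. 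So $\theta$ is an isomorphism if and only if $\theta'$ is.

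Now reduced to $M, N \in \Mod_R^{\perf}$ with $M$ algebraic, the map $\theta'$ is exactly the $i=0$ case of the full-faithfulness statement for the $p$-torsion Riemann--Hilbert correspondence: by Theorem~\ref{maintheoXXX}/Theorem~\ref{theoX50} (equivalently Theorem~\ref{companion} plus Corollary~\ref{corX80}), writing $M = \RH(\sheafF)$ for some $\sheafF = \Sol(M) \in \Shv_{\mathet}(\Spec(R),\F_p)$, we have $\Hom_{R[F]}(\RH(\sheafF), N) \simeq \Hom_{\underline{\F_p}}(\sheafF, \Sol(N))$ by the adjunction defining $\RH$ (Theorem~\ref{theorem.RHexist}), and the unit $\sheafF \to \Sol(\RH(\sheafF))$ is an isomorphism by Proposition~\ref{prop75}. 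Composing these identifications shows $\theta'$ is a bijection. I expect the only slightly delicate point to be the bookkeeping in the reduction step — verifying that the adjunction/unit isomorphisms used for the $p$-torsion case are genuinely compatible with the factorizations through $N[p]$ and $\Sol(N)[p]$, i.e. that the square identifying $\theta$ with $\theta'$ actually commutes on the nose — but this is a diagram chase with no real content once the identification $\Sol(N[p]) = \Sol(N)[p]$ is in hand. Everything else is an appeal to results already established.
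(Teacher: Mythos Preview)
Your proposal is correct and follows essentially the same route as the paper: reduce to $N[p]$ using that $M$ and $\Sol(M)$ are $p$-torsion together with the left exactness of $\Sol$ (which gives $\Sol(N)[p]\simeq\Sol(N[p])$), then write $M=\RH(\sheafF)$ via Theorem~\ref{theoX50} and invoke the $\RH\dashv\Sol$ adjunction and Proposition~\ref{prop75}. The paper's proof is slightly terser in the reduction step (it just cites left exactness of $\Sol$ rather than unpacking via $\widetilde{(-)}$), and phrases the final step as ``$\theta$ has a left inverse given by precomposition with the unit $u\colon\sheafF\to\Sol(\RH(\sheafF))$, which is an isomorphism,'' but this is the same argument as yours.
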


\begin{proof}
Since the functor $\Sol$ is left exact, we have an isomorphism $\Sol(N)[p] \simeq \Sol(N[p] )$. Since $M$ and $\Sol(M)$ are annihilated by $p$, we can
identify $\theta$ with the canonical map 
$$ \Hom_{ R[F] }( M, N[p] ) \rightarrow \Hom_{ \underline{\F_p} }( \Sol(M), \Sol(N[p] ) ).$$
We may therefore replace $N$ by $N[p]$ and thereby reduce to the case $n=1$.
Using Theorem \ref{theoX50}, we can choose an isomorphism $M \simeq \RH(\sheafF)$ for some object $\sheafF \in \Shv_{\mathet}( \Spec(R), \F_p )$. In this case,
we $\theta$ has a left inverse, given by the map
$$ \Hom_{ \underline{\F_p} }( \Sol( \RH(\sheafF)), \Sol(N) ) \rightarrow  \Hom_{ \underline{\F_p} }( \sheafF, \Sol(N) )$$
given by precomposition with the unit map $u: \sheafF \rightarrow \Sol( \RH( \sheafF ) )$. This map is an isomorphism by virtue of Proposition \ref{prop75}.
\end{proof}

\begin{lemma}\label{lemma.ext-comparison}
Let $R$ be an $\F_p$-algebra and let $M$ be an algebraic Frobenius module over $R$. 
Let $N$ be any perfect Frobenius module over $W_n(R)$. If $\Sol^1(N) \simeq 0$, then
the canonical map
$$ \Ext^{i}_{ W_n(R)[F] }( M, N) \rightarrow \Ext^{i}_{\underline{\Z / p^{n} \Z} }( \Sol(M), \Sol(N) )$$
is an isomorphism for $i \geq 0$.
\end{lemma}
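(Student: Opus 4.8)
The plan is to mimic, in the $\Z/p^{n}\Z$-setting, the argument behind Corollary \ref{corX76} in the $p$-torsion case, but with the (not yet available) adjunction between $\RH$ and $\Sol$ replaced by the degree-zero comparison already proved in Lemma \ref{lemma.hom-comparison}. The whole argument runs through a single injective resolution of $N$.

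First I would choose an injective resolution $0 \to N \to I^{0} \to I^{1} \to \cdots$ in the abelian category $\Mod_{W_n(R)}^{\perf}$ (which has enough injectives, as used already in Proposition \ref{proposition.corX77gen}). Because the inclusion $\Mod_{W_n(R)}^{\perf} \hookrightarrow \Mod_{W_n(R)}^{\Frob}$ preserves injective objects — its left adjoint, the perfection functor, being exact — this is also an injective resolution of $N$ in $\Mod_{W_n(R)}^{\Frob}$, so $\Ext^{i}_{W_n(R)[F]}(M, N) = H^{i}\big( \Hom_{W_n(R)[F]}(M, I^{\bullet}) \big)$.

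Next I would apply $\Sol$ termwise. By Proposition \ref{proposition.injective-preservation}, each $\Sol(I^{j})$ is an injective object of $\Shv_{\mathet}(\Spec(R), \Z/p^{n}\Z)$, and by definition of the derived functors the cohomology of $\Sol(I^{\bullet})$ is $H^{j}(\Sol(I^{\bullet})) = \Sol^{j}(N)$. Now Proposition \ref{proposition.corX77gen} gives $\Sol^{j}(N) = 0$ for $j \geq 2$, and the hypothesis supplies $\Sol^{1}(N) \simeq 0$; hence $0 \to \Sol(N) \to \Sol(I^{0}) \to \Sol(I^{1}) \to \cdots$ is an injective resolution of $\Sol(N)$, and therefore $\Ext^{i}_{\underline{\Z/p^{n}\Z}}(\Sol M, \Sol N) = H^{i}\big( \Hom_{\underline{\Z/p^{n}\Z}}(\Sol M, \Sol I^{\bullet}) \big)$. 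The comparison map of the statement is induced by the map of complexes $\Hom_{W_n(R)[F]}(M, I^{\bullet}) \to \Hom_{\underline{\Z/p^{n}\Z}}(\Sol M, \Sol I^{\bullet})$, $f \mapsto \Sol(f)$. In each cohomological degree $j$ this is the comparison map $\Hom_{W_n(R)[F]}(M, I^{j}) \to \Hom_{\underline{\Z/p^{n}\Z}}(\Sol M, \Sol I^{j})$, which is an isomorphism by Lemma \ref{lemma.hom-comparison} (since $M$ is algebraic over $R$ and $I^{j}$ is a perfect Frobenius module over $W_n(R)$). So the map of complexes is an isomorphism, and passing to cohomology yields the desired isomorphism $\Ext^{i}_{W_n(R)[F]}(M, N) \xrightarrow{\sim} \Ext^{i}_{\underline{\Z/p^{n}\Z}}(\Sol M, \Sol N)$ for all $i \geq 0$.

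I do not expect a serious obstacle: once Lemma \ref{lemma.hom-comparison} is in hand the proof is bookkeeping. The one point demanding a little care — and the only place the hypothesis $\Sol^{1}(N) \simeq 0$ enters — is the verification that $\Sol(I^{\bullet})$ is genuinely a resolution of $\Sol(N)$ rather than merely a complex with two nonvanishing cohomology sheaves; this is exactly where $\Sol^{\geq 2} = 0$ (Proposition \ref{proposition.corX77gen}) combines with the vanishing of $\Sol^{1}(N)$. A secondary, purely formal point is the identification of the map induced by $f \mapsto \Sol(f)$ with the ``canonical'' comparison map named in the statement, which is immediate from the degreewise description of that map.
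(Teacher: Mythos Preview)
Your proof is correct and follows exactly the same approach as the paper: choose an injective resolution of $N$ in $\Mod_{W_n(R)}^{\perf}$, observe that applying $\Sol$ yields an injective resolution of $\Sol(N)$ (using Proposition~\ref{proposition.injective-preservation} for injectivity and the vanishing of $\Sol^{\geq 1}(N)$ for exactness), and conclude by applying Lemma~\ref{lemma.hom-comparison} termwise to see that the map of Hom-complexes is an isomorphism. Your write-up is slightly more explicit than the paper's in one respect: you spell out why the injective resolution in $\Mod_{W_n(R)}^{\perf}$ also computes $\Ext^{\ast}_{W_n(R)[F]}$ (exactness of the perfection left adjoint), which the paper leaves implicit.
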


\begin{proof}
Choose an injective resolution
$0 \rightarrow N \rightarrow Q^0 \rightarrow Q^1 \rightarrow \cdots$
in the abelian category $\Mod_{ W_n(R)}^{\perf}$. Our hypothesis that $\Sol^1(N)$ vanishes guarantees
that the complex $0 \rightarrow \Sol(N) \rightarrow \Sol(Q^0) \rightarrow \Sol(Q^1) \rightarrow \cdots$ is exact in the abelian category $\Shv_{\mathet}( \Spec(R), \Z / p^n \Z)$ (Proposition \ref{proposition.corX77gen}). Moreover,
each $\Sol(Q^i)$ is an injective object of $\Shv_{\mathet}( \Spec(R), \Z / p^n \Z)$ (Proposition \ref{proposition.injective-preservation}).
It will therefore suffice to show that the canonical map
$$\Hom_{ W_n(R)[F] }( M, Q^{\ast} ) \rightarrow \Hom_{ \underline{\Z / p^n \Z} }( \Sol(M), \Sol(Q^{\ast}) )$$
is a quasi-isomorphism of chain complexes. In fact, this map is an isomorphism of chain complexes: this is a special case of
Lemma \ref{lemma.hom-comparison}.
\end{proof}

\begin{proof}[Proof of Proposition \ref{prop:ext-comparison}]
Let $N$ be a perfect Frobenius module over $W_n(R)$, and suppose that $\Sol^1(N) \simeq 0$. Let us say that
an object $M \in \Mod_{ W_n(R)}^{\alg}$ is {\it good} if the canonical map
$$\rho_i: \Ext^{i}_{ W_n(R)[F] }( M, N) \rightarrow \Ext^{i}_{ \underline{\Z / p^n \Z} }( \Sol(M), \Sol(N) )$$
is an isomorphism for $i \geq 0$. It follows from Lemma \ref{lemma.ext-comparison} that if $M \in \Mod_{ W_n(R)}^{\alg}$ is annihilated by
$p$, then $M$ is good. We wish to show that every object of $\Mod_{ W_n(R)}^{\alg}$ is good. For this, it will suffice to establish the following:
\begin{itemize}
\item[$(\ast)$] Let $0 \rightarrow M' \rightarrow M \rightarrow M'' \rightarrow 0$ be a short exact sequence of algebraic Frobenius modules over $W_n(R)$.
If $M'$ and $M''$ are good, then $M$ is also good.
\end{itemize}
To prove $(\ast)$, we note that the vanishing of $\Sol^{1}(M')$ (Proposition \ref{proposition.derived-vanishing}) guarantees
the exactness of the sequence $0 \rightarrow \Sol(M') \rightarrow \Sol(M) \rightarrow \Sol(M'') \rightarrow 0$.
It follows that each $\rho_i$ fits into a commutative diagram of exact sequences
$$ \xymatrix{  \Ext^{i-1}_{ W_n(R)[F] }( M', N) \ar[r]^-{\rho'_{i-1} } \ar[d] & \Ext^{i-1}_{ \underline{\Z / p^{n} \Z}}( \Sol(M'), \Sol(N) ) \ar[d] \\
\Ext^{i}_{ W_n(R)[F] }( M'', N) \ar[r]^-{\rho''_{i} } \ar[d] & \Ext^{i}_{ \underline{\Z / p^{n} \Z} }( \Sol(M''), \Sol(N) ) \ar[d] \\
\Ext^{i}_{ W_n(R)[F] }( M, N) \ar[r]^-{\rho_{i} } \ar[d] & \Ext^{i}_{ \underline{\Z / p^{n} \Z} }( \Sol(M), \Sol(N) ) \ar[d] \\
\Ext^{i}_{ W_n(R)[F] }( M', N) \ar[r]^-{\rho'_{i} } \ar[d] & \Ext^{i}_{ \underline{\Z / p^{n} \Z} }( \Sol(M'), \Sol(N) ) \ar[d] \\
\Ext^{i+1}_{ W_n(R)[F] }( M'', N) \ar[r]^-{\rho''_{i+1} }& \Ext^{i+1}_{\underline{\Z / p^{n} \Z} }( \Sol(M''), \Sol(N) ). }$$
Our hypothesis that $M'$ and $M''$ are good guarantees that the maps $\rho'_{i-1}$, $\rho''_{i}$, $\rho'_{i}$, and $\rho''_{i+1}$ are isomorphisms,
so that $\rho_i$ is also an isomorphism.
\end{proof}

\newpage \section{Globalization}\label{section.global}
\setcounter{subsection}{0}
\setcounter{theorem}{0}

For any commutative $\F_p$-algebra $R$, the Riemann-Hilbert correspondence of Theorem \ref{maintheoXXX} supplies a description of the category of $p$-torsion {\etale} sheaves
on the affine $\F_p$-scheme $X = \Spec(R)$ in terms of Frobenius modules over $R$. Our goal in this section is to extend the Riemann-Hilbert correspondence to the case of an arbitrary $\F_p$-scheme $X$.
We begin in \S \ref{sec12sub1} by introducing the notion of a {\it Frobenius sheaf} on $X$: that is, a quasi-coherent sheaf $\qE$ on $X$ equipped with a Frobenius-semilinear endomorphism $\varphi_{\qE}$ (Definition \ref{definition.frobenius-sheaf}). The collection of Frobenius sheaves on $X$ forms a category, which we will denote by $\QCoh_{X}^{\Frob}$. 
In \S \ref{sec12sub2} we construct an equivalence $\RH$ from the category $\Shv_{\mathet}(X; \F_p)$ of $p$-torsion {\etale} sheaves on $X$ to a full subcategory
$\QCoh_{X}^{\alg} \subseteq \QCoh_{X}^{\Frob}$ (Theorem \ref{globalRH} and Notation \ref{notation.RH}). This is essentially a formal exercise (given the earlier results of this paper): roughly speaking,
the Riemann-Hilbert functor $\RH$ is constructed by amalgamating the equivalences $\Shv_{\mathet}(U; \F_p) \simeq \QCoh_{U}^{\alg}$ where $U$ ranges over affine open subsets of $X$.
Consequently, any {\em local} question about the the functor $\RH$ can be reduced to the affine case: we use this observation in \S \ref{sec12sub3} to argue that the Riemann-Hilbert
correspondence is compatible with the formation of pullbacks along an arbitrary morphism of $\F_p$-schemes $f: X \rightarrow Y$ (Variant \ref{variant.compatibility}).
However, we do encounter a genuinely new {\em global} phenomenon: the Riemann-Hilbert correspondence is also compatible with direct images (and higher direct images)
along a morphism $f: X \rightarrow Y$ which is proper and of finite presentation (Theorem \ref{properRH1}). We prove this in \S \ref{sec12sub5} using a global characterization for
holonomic Frobenius sheaves (Theorem \ref{silphil}), which we establish in \S \ref{sec12sub4}. In \S \ref{sec12sub6}, we apply these ideas to give a proof of the proper base change theorem
in {\etale} cohomology (in the special case of $p$-torsion sheaves on $\F_p$-schemes; see Corollary \ref{corollary.basechange}).

\begin{remark}
Throughout this section, we confine our study of Frobenius sheaves on $X$ to the case where $X$ is an $\F_p$-scheme. However, the results of this section can be extended to more general geometric objects, such as algebraic spaces over $\F_p$. Similarly, the results can also be extended to have ``coefficients in $\Z/p^n$'' in the sense of \S \ref{section.moretorsion}. We leave such extensions to the reader. 
\end{remark}

\subsection{Frobenius Sheaves on a Scheme}\label{sec12sub1}

We begin by introducing some terminology.

\begin{notation}
For any scheme $X$, we let $\QCoh_X$ denote the category of quasi-coherent sheaves on $X$. If $X$ is an $\F_p$-scheme, we let $\varphi_X: X \rightarrow X$
denote the absolute Frobenius morphism from $X$ to itself.
\end{notation}

\begin{definition}\label{definition.frobenius-sheaf}
Let $X$ be an $\F_p$-scheme. A {\it Frobenius sheaf on $X$} is a pair $( \qE, \varphi_{\qE} )$, where
$\qE$ is a quasi-coherent sheaf on $X$ and $\varphi_{\qE}: \qE \rightarrow \varphi_{X \ast} \qE$ is a morphism of quasi-coherent sheaves.
If $(\qE, \varphi_{\qE})$ and $(\qF, \varphi_{\qF} )$ are Frobenius sheaves on $X$, then we will say that a $\calO_{X}$-module map $f: \qE \rightarrow \qF$ is a {\it morphism of Frobenius sheaves}
if the diagram
$$ \xymatrix{ \qE \ar[r]^{f} \ar[d]^-{ \varphi_{\qE} } & \qF \ar[d]^{ \varphi_{\qF} } \\
\varphi_{X \ast} \qE \ar[r]^-{ \varphi_{X \ast}(f) } & \varphi_{X \ast} \qF }$$
commutes. We let $\QCoh_X^{\Frob}$ denote the category whose objects are Frobenius sheaves on $X$ and whose morphisms are morphisms of Frobenius sheaves.
\end{definition}

We will generally abuse terminology by identifying a Frobenius sheaf $(\qE, \varphi_{\qE} )$ with its underlying quasi-coherent sheaf $\qE$, and simply referring to 
$\qE$ as a {\it Frobenius sheaf on $X$}.

\begin{example}\label{example.semble}
Let $X = \Spec(R)$ be an affine $\F_p$-scheme. Then the the global sections functor $\qE \mapsto \Gamma(X, \qE)$ induces an equivalence of
categories $\QCoh_{X}^{\Frob} \rightarrow \Mod_{R}^{\Frob}$.
\end{example}

\begin{remark}
Let $X$ be an $\F_p$-scheme. Then the category $\QCoh_{X}^{\Frob}$ is abelian. Moreover, the forgetful functor $\QCoh_{X}^{\Frob} \rightarrow \QCoh_X$ is exact.
\end{remark}

\begin{variant}\label{FStabSub1}
Let $X$ be an $\F_p$-scheme. Using the adjointness of the functors $\varphi_{X \ast}$ and $\varphi_{X}^{\ast}$, we can obtain a slightly different
description of the category $\QCoh_{X}^{\Frob}$ of Frobenius sheaves:
\begin{itemize}
\item The objects of $\QCoh_{X}^{\Frob}$ can be identified with pairs $( \qE, \psi_{\qE} )$, where $\qE$ is a quasi-coherent sheaf on $X$
and $\psi_{\qE}: \varphi_{X}^{\ast} \qE \rightarrow \qE$ is a morphism of quasi-coherent sheaves.

\item A morphism from $(\qE, \psi_{\qE})$ to $(\qF, \psi_{\qF} )$ in the category $\QCoh_{X}^{\Frob}$ is a morphism of quasi-coherent
sheaves $f: \qE \rightarrow \qF$ for which the diagram
$$ \xymatrix{ \varphi_{X}^{\ast} \qE \ar[r]^-{ \varphi_{X}^{\ast}(f) }  \ar[d]^{ \psi_{\qE} } & \varphi_{X}^{\ast} \qF \ar[d]^{ \psi_{\qF} } \\
\qE \ar[r]^{f} & \qF}$$
commutes.
\end{itemize}
\end{variant}

In what follows, we will regard quasi-coherent sheaves on a scheme $X$ as sheaves on the {\etale} site of $X$ (see Example \ref{exX70}). Given a
quasi-coherent sheaf $\qE \in \QCoh_X$ and an {\etale} morphism $f: U \rightarrow X$, we let $\qE(U)$ denote the abelian group of global sections
$\Gamma( U, f^{\ast} \qE )$. Note that if $U = \Spec(R)$ is affine, then $\qE(U)$ has the structure of an $R$-module; if $X$ is an $\F_p$-scheme
and $\qE$ is a Frobenius sheaf, then $\qE(U)$ inherits the structure of a Frobenius module over $R$.

\begin{proposition}\label{proposition.local-properties}
Let $X$ be an $\F_p$-scheme and let $\qE$ be a Frobenius sheaf on $X$. The following conditions are equivalent:
\begin{itemize}
\item[$(1)$] For every {\etale} morphism $f: U \rightarrow X$ where $U \simeq \Spec(R)$ is affine, the
group of sections $\qE(U)$ is perfect (respectively algebraic, holonomic) when regarded as a Frobenius module over $R$.

\item[$(2)$] For every open subset $U \subseteq X$ where $U \simeq \Spec(R)$ is affine, the group of sections
$\qE$ is perfect (respectively algebraic, holonomic) when regarded as a Frobenius module over $R$.

\item[$(3)$] There exists an {\etale} covering $\{ U_{\alpha} \rightarrow X \}$ where each $U_{\alpha} \simeq \Spec(R_{\alpha} )$
is affine, and each $\qE( U_{\alpha} )$ is perfect (respectively algebraic, holonomic) when regarded as a Frobenius module over $R_{\alpha}$.
\end{itemize}
\end{proposition}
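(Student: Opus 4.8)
The plan is as follows. The implications $(1) \Rightarrow (2) \Rightarrow (3)$ are immediate: an open immersion is {\etale}, so $(1)$ applies in particular to affine open subsets of $X$, giving $(2)$; and any affine open cover of $X$ is in particular an {\etale} cover, so $(2)$ gives $(3)$. The substance is the implication $(3) \Rightarrow (1)$, which I would prove as follows.

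Fix an {\etale} morphism $f \colon U \to X$ with $U \simeq \Spec(R)$ affine, together with an {\etale} cover $\{ U_\alpha = \Spec(R_\alpha) \to X\}$ as in $(3)$. First I would pass to the fiber products $U \times_X U_\alpha$, which are {\etale} over both $U$ and $U_\alpha$, and then to affine open subsets of these, so as to obtain an affine {\etale} cover $\{ W \to U\}$ of $U$ in which every member fits into a commutative triangle $W \to U_\alpha \to X$ with the map $W \to U_\alpha$ {\etale} and equal, after composing with $U_\alpha \to X$, to $W \to U \xrightarrow{f} X$. Since $U = \Spec(R)$ is quasi-compact, finitely many of these, say $W_1, \dots, W_n$ with $W_j = \Spec(S_j)$, already cover $U$; write $g_j \colon W_j \to U_{\alpha(j)}$ for the corresponding {\etale} structure map. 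The point of this reduction is that $\qE(W_j)$ has two descriptions as a Frobenius module over $S_j$: on the one hand $\qE(W_j) \simeq S_j \otimes_R \qE(U)$, and on the other hand $\qE(W_j) \simeq S_j \otimes_{R_{\alpha(j)}} \qE(U_{\alpha(j)}) \simeq (g_j)^{\ast}_{\Frob}\, \qE(U_{\alpha(j)})$, the two identifications being compatible with the Frobenius-semilinear structures because the absolute Frobenius commutes with all morphisms of $\F_p$-schemes.

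By hypothesis $\qE(U_{\alpha(j)})$ is perfect (resp.\ algebraic, holonomic) over $R_{\alpha(j)}$, and since $g_j$ is {\etale} the extension-of-scalars functor $(g_j)^{\ast}_{\Frob}$ preserves this property: perfectness by Corollary~\ref{corX14}, and, using that $(g_j)^{\ast}_{\Frob}$ agrees with $g_j^{\diamond}$ on perfect modules when $g_j$ is {\etale}, algebraicity by Corollary~\ref{corX62} and holonomicity by Proposition~\ref{propX51}. Hence each $\qE(W_j)$ has the relevant property over $S_j$. Setting $S = \prod_{j=1}^{n} S_j$, the map $R \to S$ is faithfully flat and {\etale}, and $S \otimes_R \qE(U) \simeq \prod_j \qE(W_j)$ is perfect (resp.\ algebraic, holonomic) over $S$, since each of the three classes is stable under finite products (using Proposition~\ref{propX53}(b) in the algebraic case, and the description of holonomic modules as perfections of finitely presented Frobenius modules in the holonomic case). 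It remains to descend the property along $R \to S$ back to $\qE(U)$. For perfectness this uses only that the assertion ``$\varphi_{\qE|_U}$ is an isomorphism of quasi-coherent sheaves'' is local for the {\etale} topology. For algebraicity, having already established by the perfect case that $\qE(U)$ is perfect (so that $S \otimes_R \qE(U) \simeq f^{\diamond}\qE(U)$), one invokes Lemma~\ref{ulroc}. For holonomicity one likewise uses that $\qE(U)$ is perfect and invokes Corollary~\ref{corlocal}.

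The only ingredients that are not routine bookkeeping are the two descent statements for algebraicity and holonomicity, and these are precisely Lemma~\ref{ulroc} and Corollary~\ref{corlocal} (the latter ultimately resting on the Riemann--Hilbert correspondence of Theorems~\ref{maintheoXXX} and~\ref{companion}). Everything else amounts to stability of the three classes under {\etale} base change (Corollaries~\ref{corX14}, \ref{corX62} and Proposition~\ref{propX51}) and under finite products, together with standard manipulations of {\etale} covers and quasi-compactness of $\Spec(R)$. The main nuisance I anticipate is keeping straight the two descriptions of $\qE(W_j)$ and the compatibility of their Frobenius structures under restriction, but this is harmless and presents no genuine obstacle.
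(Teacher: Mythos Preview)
Your proposal is correct and follows essentially the same approach as the paper's proof, which simply states that $(3) \Rightarrow (1)$ follows from Corollary~\ref{corX14} (perfect case), Lemma~\ref{ulroc} (algebraic case), and Corollary~\ref{corlocal} (holonomic case). You have unpacked the standard bookkeeping (refining the cover, passing to a finite faithfully flat {\etale} map via quasi-compactness, checking stability under {\etale} base change before descending) that the paper leaves implicit, and you cite a few additional results (Corollary~\ref{corX62}, Proposition~\ref{propX51}) for the forward-direction stability under {\etale} pullback that the paper does not bother to name.
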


\begin{proof}
The implications $(1) \Rightarrow (2) \Rightarrow (3)$ are obvious. The implication $(3) \Rightarrow (1)$ follows from 
Corollary \ref{corX14} (respectively Lemma \ref{ulroc}, Corollary \ref{corlocal}).
\end{proof}

\begin{definition}
Let $X$ be an $\F_p$-scheme and let $\qE$ be a Frobenius sheaf on $X$. We will say that $\qE$ is {\it perfect} (respectively
{\it algebraic}, {\it holonomic}) if it satisfies the equivalent conditions of Proposition \ref{proposition.local-properties}.
We let $\QCoh_{X}^{\perf}$ (respectively $\QCoh_{X}^{\alg}$, $\QCoh_{X}^{\hol}$) denote the full subcategory of $\QCoh_X^{\Frob}$
spanned by those Frobenius sheaves which are perfect (respectively algebraic, holonomic), so that we have inclusions
$$ \QCoh_{X}^{\hol} \subseteq \QCoh_{X}^{\alg} \subseteq \QCoh_{X}^{\perf} \subseteq \QCoh_{X}^{\Frob}.$$
\end{definition}

\begin{example}
Let $X = \Spec(R)$ be an affine $\F_p$-scheme. Then a Frobenius sheaf $\qE \in \QCoh_{X}^{\Frob}$ is
perfect (respectively algebraic, holonomic) if and only if $\Gamma(X, \qE)$ is perfect (respectively
algebraic, holonomic) when regarded as a Frobenius module over $R$.
\end{example}

\begin{remark}
Let $X$ be an $\F_p$-scheme. Then the subcategories 
$$\QCoh_{X}^{\hol} \subseteq \QCoh_{X}^{\alg} \subseteq \QCoh_{X}^{\perf} \subseteq \QCoh_{X}^{\Frob}$$
are closed under the formation of kernels, cokernels, and extensions. In particular, they are abelian subcategories of $\QCoh_{X}^{\Frob}$.
Moreover, the subcategories $\QCoh_{X}^{\alg} \subseteq \QCoh_{X}^{\perf} \subseteq \QCoh_{X}$ are closed under (possibly infinite) direct sums
(and therefore under all colimits). To prove these assertions, we can work locally and thereby reduce to the case where $X$ is affine: in this case,
the desired results follow from Remark \ref{remark.perfectextension}, Proposition \ref{propX53}, and Corollary \ref{corX30}.
\end{remark}




\begin{remark}[Descent]
Let $X$ be an $\F_p$-scheme. Then the theory of Frobenius sheaves satisfies effective descent with respect to the {\etale} topology on $X$, and is therefore
determined (in some sense) by its behavior when $X$ is affine. In other words, the construction $(U \rightarrow X) \mapsto \QCoh_{U}^{\Frob}$ determines a stack on the {\etale} site of $X$. The same remark applies to 
the subcategories $\QCoh^{\hol}_{U}$,  $\QCoh^{\alg}_{U}$, and $\QCoh^{\perf}_{U}$ (by virtue of Proposition \ref{proposition.local-properties}).
\end{remark}

\begin{remark}[Perfection]
Let $X$ be an $\F_p$-scheme and let $\qE$ be a Frobenius sheaf on $X$. We let $\qE^{\perfection}$ denote the direct limit of the diagram
$$ \qE \xrightarrow{ \varphi_{\qE} } \varphi_{X \ast} \qE \xrightarrow{ \varphi_{X \ast}(\varphi_{\qE})} \varphi_{X \ast}^{2} \qE \rightarrow \cdots$$
Then we have a canonical isomorphism $\qE^{\perfection} \simeq \varphi_{X \ast} \qE^{\perfection}$ which endows $\qE^{\perfection}$ with
the structure of a perfect Frobenius sheaf on $X$. Moreover, the canonical map $u: \qE \rightarrow \qE^{\perfection}$ is a morphism of Frobenius sheaves
with the following universal property: for any perfect Frobenius sheaf $\qF$ on $X$, composition with $u$ induces a bijection
$$ \Hom_{ \QCoh_{X}^{\perf} }( \qE^{\perfection}, \qF) \rightarrow \Hom_{ \QCoh_{X}^{\Frob} }( \qE, \qF ).$$
In other words, we can regard the construction $\qE \mapsto \qE^{\perfection}$ as a left adjoint to the inclusion functor
$\QCoh_X^{\perf} \subseteq \QCoh_{X}^{\Frob}$. Note that the the perfection functor $\qE \mapsto \qE^{\perfection}$ is exact (since filtered direct limits in $\QCoh_X$ are exact; see  see \cite[Tag 077K]{Stacks}).
\end{remark}

\subsection{The Riemann-Hilbert Correspondence}\label{sec12sub2}

We now extend the Riemann-Hilbert correspondence of Theorem \ref{maintheoXXX} to the case of a general $\F_p$-scheme.

\begin{notation}
For any scheme $X$, we let $\Shv_{\mathet}( X, \F_p )$ denote the abelian category of $p$-torsion sheaves on the {\etale} site of $X$. If $X$ is an $\F_p$-scheme, then
we have a forgetful functor $\QCoh_{X} \rightarrow \Shv_{\mathet}( X, \F_p )$ which carries a sheaf of $\calO_{X}$-modules to its underlying sheaf of $\F_p$-modules.
We will generally abuse notation by not distinguishing between a quasi-coherent sheaf $\qE$ and its image under this functor. Moreover, we will also abuse notation
by identify $\qE$ with its direct image $\varphi_{X \ast} \qE$ under the absolute Frobenius map $\varphi_{X}: X \rightarrow X$: note that there is a canonical isomorphism $\qE \simeq \varphi_{X \ast} \qE$
in the category $\Shv_{\mathet}( X, \F_p )$, though this isomorphism is not $\calO_{X}$-linear.
\end{notation}

\begin{construction}[The Solution Functor]\label{construction.solution2}
Let $X$ be an $\F_p$-scheme and let $( \qE, \varphi_{ \qE} )$ be a Frobenius sheaf on $X$. We let
$\Sol(\qE)$ denote the kernel of the map $( \id - \varphi_{\qE} ): \qE \rightarrow \qE$, formed in the abelian category $\Shv_{\mathet}( X, \F_p )$. The construction
$( \qE, \varphi_{\qE}) \mapsto \Sol( \qE )$ determines a functor $\Sol:\QCoh_X^{\Frob} \to \Shv_{\mathet}(X,\F_p)$, which we will refer to as {\it the solution functor}. 
\end{construction}

\begin{remark}
In the special case where $X = \Spec(R)$ is affine, the solution functor of Construction \ref{construction.solution2} agrees with the solution functor of Construction \ref{construction.solsheaf}. More precisely,
for any Frobenius sheaf $\qE$ on $X$, we have a canonical isomorphism $\Sol( \qE ) \simeq \Sol( \Gamma(X, \qE) )$ in the category $\Shv_{\mathet}( X, \F_p)$.
\end{remark}

\begin{remark}
Construction \ref{construction.solution2} is local with respect to the {\etale} topology. More precisely, if $f: U \rightarrow X$ is an {\etale} morphism of $\F_p$-schemes,
then we have a canonical isomorphism $f^{\ast} \Sol( \qE ) \simeq \Sol( f^{\ast} \qE )$ for every Frobenius sheaf $\qE$ on $X$.
\end{remark}

\begin{remark}
Let $X$ be an $\F_p$-scheme and let $\qE$ be a Frobenius sheaf on $X$. Then the canonical map $\qE \rightarrow \qE^{\perfection}$ induces an isomorphism of
{\etale} sheaves $\Sol( \qE ) \rightarrow \Sol( \qE^{\perfection} )$. To prove this, we can reduce to the case where $X$ is affine, in which case the desired result follows
from Proposition \ref{proposition.solperfect}.
\end{remark}

\begin{remark}\label{solcomplex}
Let $X$ be an $\F_p$-scheme and let $\qE$ be an algebraic Frobenius sheaf on $X$. Then the sequence
$$ 0 \rightarrow \Sol( \qE ) \rightarrow \qE \xrightarrow{ \id - \varphi_{ \qE} } \qE \rightarrow 0$$
is exact (in the abelian category $\Shv_{\mathet}( X, \F_p )$. To prove this, we can work locally
on $X$ and thereby reduce to the case where $X$ is affine, in which case the desired result follows from Propositions \ref{proposition.corX77} and \ref{prop75}.
\end{remark}

\begin{theorem}\label{globalRH}
Let $X$ be an $\F_p$-scheme. Then the solution functor $\Sol$ induces equivalences of abelian categories
$$\QCoh_{X}^{\alg} \simeq \Shv_{\mathet}(X, \F_p) \quad \quad \QCoh_{X}^{\hol} \rightarrow \Shv_{\mathet}^{c}( X, \F_p ).$$
Here $\Shv_{\mathet}^{c}( X, \F_p )$ denotes the full subcategory of $\Shv_{\mathet}(X, \F_p)$ spanned by those $p$-torsion {\etale} sheaves $\sheafF$
which are locally constructible (that is, for which the restriction $\sheafF|_{U} \in \Shv_{\mathet}(U, \F_p)$ is constructible for each affine open subset $U \subseteq X$).
\end{theorem}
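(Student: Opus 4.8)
The plan is to deduce Theorem \ref{globalRH} from its affine counterpart (Theorems \ref{maintheoXXX}, \ref{companion}, and \ref{theoX50}) by a gluing argument, using that all the relevant constructions are local for the {\etale} topology. First I would check that the solution functor $\Sol \colon \QCoh_X^{\Frob} \to \Shv_{\mathet}(X,\F_p)$ restricts to a functor $\QCoh_X^{\alg} \to \Shv_{\mathet}(X,\F_p)$ and, using Theorem \ref{theo77} (globalized as in Proposition \ref{proposition.local-properties}), that it carries $\QCoh_X^{\hol}$ into $\Shv_{\mathet}^c(X,\F_p)$; conversely a locally constructible sheaf restricted to an affine open is constructible, so this match is exact. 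The essential content is then that $\Sol|_{\QCoh_X^{\alg}}$ is an equivalence, and the $\hol$/$c$ statement follows by restricting the equivalence to the full subcategories cut out, on each affine open, by the holonomicity/constructibility conditions (which correspond under the affine equivalence by Theorem \ref{companion}).

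To prove that $\Sol|_{\QCoh_X^{\alg}}$ is an equivalence I would construct a quasi-inverse $\RH$ by gluing. For each affine open $U = \Spec(R) \subseteq X$ we have the affine Riemann--Hilbert functor $\RH_U \colon \Shv_{\mathet}(U,\F_p) \xrightarrow{\sim} \Mod_R^{\alg} \simeq \QCoh_U^{\alg}$, inverse to $\Sol$ on $U$. Given $\sheafF \in \Shv_{\mathet}(X,\F_p)$, the sheaves $\RH_U(\sheafF|_U)$ are compatible on overlaps: for an inclusion $V \subseteq U$ of affine opens with $V = \Spec(R')$, the restriction $\RH_U(\sheafF|_U)|_V$ is computed by the extension-of-scalars functor $f^{\diamond}$ along $R \to R'$ applied to $\RH_U(\sheafF|_U)$, and by Proposition \ref{prop70} this agrees canonically with $\RH_V(\sheafF|_V)$ (here I use that $R \to R'$ is {\etale}, so $f^{\diamond}$ agrees with $f^*_{\Frob}$ by Corollary \ref{corX14}, hence with quasi-coherent restriction). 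By effective descent for quasi-coherent sheaves (and for the Frobenius-semilinear structure map, using Variant \ref{FStabSub1}), these local pieces glue to a Frobenius sheaf $\RH(\sheafF)$ on $X$, which is algebraic by Proposition \ref{proposition.local-properties}(3), and the construction is functorial in $\sheafF$.

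Finally I would verify that $\RH$ and $\Sol$ are mutually inverse. The unit $\sheafF \to \Sol(\RH(\sheafF))$ and counit $\RH(\Sol(\qE)) \to \qE$ are morphisms of {\etale} sheaves, resp. Frobenius sheaves, both defined locally; since being an isomorphism of sheaves can be checked on an {\etale} cover (in particular on affine opens), this reduces to the statement that the unit and counit of the affine adjunction $(\RH_U, \Sol_U)$ are isomorphisms on $\Mod_R^{\alg}$, which is exactly Theorem \ref{theoX50} (equivalently Theorem \ref{maintheoXXX}). That $\Sol$ and $\RH$ are exact between these abelian categories also follows locally from Proposition \ref{proposition.corX33} and Corollary \ref{corollary.derived-vanishing} (or directly from Remark \ref{solcomplex}). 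I expect the only nontrivial point to be bookkeeping the coherence of the gluing data for $\RH$ — i.e.\ checking that the canonical isomorphisms $\RH_U(\sheafF|_U)|_V \simeq \RH_V(\sheafF|_V)$ satisfy the cocycle condition on triple overlaps — but this is forced by the uniqueness clause in the universal property of $\RH_U$ as a left adjoint (each such isomorphism is the unique one compatible with the units), so no genuine obstruction arises; the remaining verifications are routine locality arguments.
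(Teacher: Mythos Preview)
Your proposal is correct and follows essentially the same approach as the paper: reduce to the affine case by descent/gluing, then invoke Theorems \ref{maintheoXXX} and \ref{companion}. The paper's proof is more terse---it simply observes that the assignments $U \mapsto \QCoh_U^{\hol}, \QCoh_U^{\alg}, \Shv_{\mathet}(U,\F_p), \Shv_{\mathet}^c(U,\F_p)$ all satisfy effective descent for the Zariski (or {\'e}tale) topology, so the equivalence can be checked on affine opens---whereas you spell out the gluing of $\RH$ and the local verification of the unit and counit explicitly; but this is just unpacking the same descent argument.
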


\begin{remark}
If the scheme $X$ is quasi-compact and quasi-separated, then a sheaf $\sheafF \in \Shv_{\mathet}(X, \F_p )$ belongs to the subcategory $\Shv_{\mathet}^{c}( X, \F_p )$ if and only if
it is constructible: that is, if and only if it becomes locally constant along some constructible stratification of $X$.
\end{remark}

\begin{proof}[Proof of Theorem \ref{globalRH}]
Since the constructions 
$$ (U \subseteq X) \mapsto \QCoh_U^{\hol}, \QCoh_{U}^{\alg}, \Shv_{\mathet}(U, \F_p), \Shv_{\mathet}^{c}( U, \F_p )$$
satisfy effective descent with respect to the Zariski topology (or even the {\etale} topology), we can reduce to the case where $X = \Spec(R)$ is affine. In this case, the desired equivalences
follow from Theorems \ref{maintheoXXX} and \ref{companion}.
\end{proof}

\begin{corollary}\label{corollary.describe-indh}
Let $X$ be an $\F_p$-scheme which is quasi-compact and quasi-separated. Then the inclusion functor $\QCoh_{X}^{\hol} \hookrightarrow \QCoh_{X}^{\alg}$
extends to an equivalence of categories $\Ind( \QCoh_{X}^{\hol} ) \simeq \QCoh_{X}^{\alg}$.
\end{corollary}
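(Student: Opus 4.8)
The plan is to reduce the assertion to the affine case, where it is Theorem~\ref{theoX54}, using {\etale} descent. The statement $\Ind(\QCoh_X^{\hol}) \simeq \QCoh_X^{\alg}$ unwinds to two claims: first, that every holonomic Frobenius sheaf on $X$ is a compact object of $\QCoh_X^{\alg}$; second, that every algebraic Frobenius sheaf on $X$ is a filtered colimit of holonomic ones. Given these, the natural fully faithful functor $\Ind(\QCoh_X^{\hol}) \to \QCoh_X^{\alg}$ (which exists because $\QCoh_X^{\alg}$ has filtered colimits, computed as in $\QCoh_X$, and $\QCoh_X^{\hol}$ consists of compact objects) is essentially surjective, hence an equivalence.

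First I would establish compactness. Choose a finite affine open cover $X = \bigcup_{i=1}^n U_i$ with $U_i = \Spec(R_i)$ and, since $X$ is quasi-separated, finite affine covers of the pairwise and triple intersections (or use that a quasi-compact quasi-separated scheme admits a finite affine cover closed under the relevant intersections up to further refinement; the {\v C}ech complex argument only needs finitely many affines at each stage). For a Frobenius sheaf $\qF$ and a filtered system $\{\qE_\alpha\}$ in $\QCoh_X^{\alg}$, the group $\Hom_{\QCoh_X^{\Frob}}(\qF, \varinjlim \qE_\alpha)$ is computed by a finite limit of {\v C}ech-type data involving the groups $\Hom_{R_i[F]}(\qF(U_i), (\varinjlim \qE_\alpha)(U_i))$ and analogous terms on intersections. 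Since $X$ is quasi-compact and quasi-separated, colimits of quasi-coherent sheaves are computed sectionwise on affines, so $(\varinjlim \qE_\alpha)(U_i) \simeq \varinjlim \qE_\alpha(U_i)$, and each $\qF(U_i)$ is holonomic over $R_i$, hence compact in $\Mod_{R_i}^{\perf}$ by Proposition~\ref{propX55}. Because a finite limit of filtered colimits of abelian groups is the filtered colimit of the finite limits, we conclude $\Hom(\qF, \varinjlim \qE_\alpha) \simeq \varinjlim \Hom(\qF, \qE_\alpha)$, i.e.\ $\qF$ is compact.

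Next I would show every algebraic Frobenius sheaf $\qE$ on $X$ is a filtered colimit of holonomic subsheaves. Using the equivalence $\QCoh_X^{\alg} \simeq \Shv_{\mathet}(X,\F_p)$ of Theorem~\ref{globalRH}, this is equivalent to the statement that every $p$-torsion {\etale} sheaf on $X$ is a filtered colimit of locally constructible ones, which for quasi-compact quasi-separated $X$ is standard (it follows by gluing the affine statement Proposition~\ref{propX40} over the finite cover, or can be cited directly). Alternatively, and more self-containedly, one can argue on the Frobenius-sheaf side: over each $U_i$, write $\qE(U_i) = \varinjlim M_\alpha^{(i)}$ with $M_\alpha^{(i)}$ holonomic (Theorem~\ref{theoX54}), and assemble these local presentations into a global one using that $\QCoh_X^{\hol}$ is a stack for the {\etale} topology (the descent remark preceding Theorem~\ref{globalRH}) together with the fact that, for a finitely presented piece, its extension across the cover is controlled by finitely much gluing data, each constituent of which lives in a holonomic $\Ext$ or $\Hom$ group that commutes with the filtered colimit.

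The main obstacle is the bookkeeping in the second step: passing from compatible local filtered presentations $\qE|_{U_i} = \varinjlim_\alpha \qE_\alpha^{(i)}$ to a global filtered presentation $\qE = \varinjlim \qE_\beta$ with $\qE_\beta \in \QCoh_X^{\hol}$. The issue is that the indexing systems on different $U_i$ need not match on overlaps, and the gluing isomorphisms over $U_i \cap U_j$ are only defined ``in the colimit.'' The way to handle this cleanly is to work on the {\etale}-sheaf side after invoking Theorem~\ref{globalRH}, where local constructibility is manifestly a local condition and the filtered-colimit presentation is the content of the (known) fact that $\Shv_{\mathet}(X,\F_p) = \Ind(\Shv_{\mathet}^c(X,\F_p))$ for $X$ quasi-compact quasi-separated; this sidesteps the gluing entirely by transporting the whole $\Ind$-statement across the equivalence of categories. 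Concretely: Theorem~\ref{globalRH} gives $\QCoh_X^{\alg} \simeq \Shv_{\mathet}(X,\F_p)$ carrying $\QCoh_X^{\hol}$ onto $\Shv_{\mathet}^c(X,\F_p)$, so it suffices to know $\Ind(\Shv_{\mathet}^c(X,\F_p)) \simeq \Shv_{\mathet}(X,\F_p)$, which for quasi-compact quasi-separated $X$ holds by \cite[Tag 03SA]{Stacks} globalized over a finite affine cover.
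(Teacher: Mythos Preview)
Your proposal is correct, and your final paragraph is essentially the paper's proof: transport the entire statement through the equivalence $\QCoh_X^{\alg} \simeq \Shv_{\mathet}(X,\F_p)$ of Theorem~\ref{globalRH} (which identifies $\QCoh_X^{\hol}$ with $\Shv_{\mathet}^c(X,\F_p)$) and then cite \cite[Tag 03SA]{Stacks} for the fact that $\Ind(\Shv_{\mathet}^c(X,\F_p)) \simeq \Shv_{\mathet}(X,\F_p)$ when $X$ is quasi-compact and quasi-separated. The paper's proof is literally that one sentence.

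Your earlier material---the direct \v{C}ech-descent argument for compactness of holonomic objects, and the attempted gluing of local filtered presentations---is correct as far as it goes, but it is unnecessary: once you invoke Theorem~\ref{globalRH}, both compactness and the filtered-colimit presentation come for free from the corresponding facts about constructible \'etale sheaves. You recognized this yourself when you noted that the gluing bookkeeping is awkward and that working on the \'etale-sheaf side ``sidesteps the gluing entirely.'' The cleaner move is to make that transport the \emph{entire} argument rather than just the second half.
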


\begin{proof}
By virtue of Theorem \ref{globalRH}, it will suffice to show that the inclusion functor $\Shv_{\mathet}^{c}( X, \F_p) \hookrightarrow \Shv_{\mathet}( X, \F_p )$
extends to an equivalence $\Ind( \Shv_{\mathet}^{c}(X, \F_p) ) \simeq \Shv_{\mathet}( X, \F_p )$, which follows from \cite[Tag 03SA]{Stacks}.
\end{proof}

\begin{notation}\label{notation.RH}
Let $X$ be an $\F_p$-scheme. We let $\RH: \Shv_{\mathet}( X, \F_p ) \rightarrow \QCoh_{X}^{\alg}$ denote an inverse of the solution functor. We will refer to
$\RH$ as the {\it Riemann-Hilbert functor}. 
\end{notation}

\begin{remark}\label{remark.adjoin}
Let $X$ be an $\F_p$-scheme and let $\sheafF \in \Shv_{\mathet}(X, \F_p)$ be a $p$-torsion {\etale} sheaf on $X$. Then the Frobenius sheaf
$\RH( \sheafF )$ is characterized by the following universal property: for every perfect Frobenius sheaf $\qE$ on $X$, the canonical map
$$ \Hom_{ \QCoh_{X}^{\perf} }( \RH(\sheafF), \qE ) \rightarrow \Hom_{ \underline{\F_p} }( \Sol( \RH(\sheafF) ), \Sol(\qE) ) 
\simeq \Hom_{ \underline{\F_p} }( \sheafF, \Sol(\qE) )$$
is a bijection. To prove this, we can reduce to the case where $X$ is affine, in which case the desired result follows from the properties of the Riemann-Hilbert functor given
in Theorem \ref{theorem.RHexist}. 

We can summarize the situation as follows: when regarded as a functor from $\Shv_{\mathet}( X, \F_p )$ to $\QCoh_{X}^{\perf}$, the Riemann-Hilbert functor of Notation \ref{notation.RH}
is left adjoint to the solution functor $\Sol: \QCoh_{X}^{\perf} \rightarrow \Shv_{\mathet}(X, \F_p)$.
\end{remark}

\subsection{Functoriality}\label{sec12sub3}

We now consider the behavior of Frobenius sheaves as the $\F_p$-scheme $X$ varies.

\begin{construction}[Pullback of Frobenius Sheaves]\label{construction.pullback}
Let $f: X \rightarrow Y$ be a morphism of $\F_p$-schemes, so that we have a commutative diagram of schemes
$$ \xymatrix{ X \ar[r]^{f} \ar[d]^{\varphi_X} & Y \ar[d]^{ \varphi_{Y}} \\
X \ar[r]^{f} & Y }$$
and therefore a canonical isomorphism $f^{\ast} \circ \varphi_{Y}^{\ast} \simeq \varphi_{X}^{\ast} \circ f^{\ast}$
in the category of functors from $\QCoh_Y$ to $\QCoh_X$.

Let $\qE$ be a Frobenius sheaf on $Y$, and let $\psi_{ \qE}: \varphi_{Y}^{\ast} \qE \rightarrow \qE$
be as in Variant \ref{FStabSub1}. We let $\psi_{ f^{\ast} \qE }$ denote the composite map
$$ \varphi_{X}^{\ast} f^{\ast} \qE \simeq f^{\ast} \varphi_{Y}^{\ast} \qE \xrightarrow{ f^{\ast} \psi_{\qE} } f^{\ast} \qE.$$
The construction $(\qE, \psi_{ \qE} ) \mapsto ( f^{\ast} \qE, \psi_{ f^{\ast} \qE} )$ determines a functor
$\QCoh_{Y}^{\Frob} \rightarrow \QCoh_{X}^{\Frob}$. We will denote this functor also by $f^{\ast}$, and
refer to it as the functor of {\it pullback along $f$}.
\end{construction}

\begin{remark}
In the special case where $X$ and $Y$ are affine, the pullback functor of Construction \ref{construction.pullback} 
agrees with the extension of scalars functor of Construction \ref{construction.extension}.
\end{remark}

Under some mild assumptions, the pullback functor $f^{\ast}$ of Construction \ref{construction.pullback} admits a right adjoint:

\begin{proposition}\label{proposition.construct-push}
Let $f: X \rightarrow Y$ be a morphism of schemes which is quasi-compact and quasi-separated. Then the pullback functor
$f^{\ast}: \QCoh_{Y}^{\Frob} \rightarrow \QCoh_{X}^{\Frob}$ admits a right adjoint $f_{\ast}: \QCoh_{X}^{\Frob} \rightarrow \QCoh_{Y}^{\Frob}$.
Moreover, the functor $f_{\ast}$ is compatible with the usual direct image functor on quasi-coherent sheaves: that is, the diagram
$$ \xymatrix{ \QCoh_{X}^{\Frob} \ar[r]^{f_{\ast} } \ar[d] & \QCoh_{Y}^{\Frob} \ar[d] \\
\QCoh_{X} \ar[r]^{ f_{\ast} } & \QCoh_{Y} }$$
commutes up to canonical isomorphism.
\end{proposition}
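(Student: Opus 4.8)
The plan is to construct the right adjoint by hand and then check the adjunction formally, with the geometry entering only through the ordinary pushforward on quasi-coherent sheaves. Since $f$ is quasi-compact and quasi-separated, the usual direct image $f_{\ast} \colon \QCoh_X \to \QCoh_Y$ is well-defined and right adjoint to $f^{\ast}$ (it preserves quasi-coherence; see \cite[Tag 01LC]{Stacks}). Given a Frobenius sheaf $(\qE, \varphi_{\qE})$ on $X$, with $\varphi_{\qE} \colon \qE \to \varphi_{X \ast} \qE$, I would set $f_{\ast}(\qE, \varphi_{\qE})$ to have underlying quasi-coherent sheaf $f_{\ast} \qE$, equipped with the composite
$$ f_{\ast} \qE \xrightarrow{\; f_{\ast}(\varphi_{\qE}) \;} f_{\ast} \varphi_{X \ast} \qE \xrightarrow{\;\sim\;} \varphi_{Y \ast} f_{\ast} \qE, $$
where the second map is the canonical isomorphism coming from the identity $f \circ \varphi_X = \varphi_Y \circ f$ of morphisms of $\F_p$-schemes (which gives $f_{\ast} \varphi_{X \ast} = (f \circ \varphi_X)_{\ast} = (\varphi_Y \circ f)_{\ast} = \varphi_{Y \ast} f_{\ast}$). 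This is patently functorial in $\qE$ and commutes with the forgetful functors to $\QCoh$, so the asserted square commutes by construction; it remains only to produce the adjunction.

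For the adjunction one checks that the ordinary bijection $\Hom_{\QCoh_X}(f^{\ast} \qF, \qE) \simeq \Hom_{\QCoh_Y}(\qF, f_{\ast} \qE)$ carries Frobenius-compatible morphisms to Frobenius-compatible morphisms in both directions. Concretely, I would show that a morphism of quasi-coherent sheaves $g \colon f^{\ast} \qF \to \qE$ is a morphism of Frobenius sheaves if and only if its adjoint $g^{\flat} \colon \qF \to f_{\ast} \qE$ is, by a diagram chase that combines: naturality of the unit and counit of $f^{\ast} \dashv f_{\ast}$; the definition of the Frobenius structure on $f^{\ast} \qF$ from Construction \ref{construction.pullback}, rewritten from the $\psi$-description in terms of the maps $\varphi_X^{\ast}(-) \to (-)$ to the $\varphi$-description in terms of the maps $(-) \to \varphi_{X \ast}(-)$ via the adjunction $\varphi_X^{\ast} \dashv \varphi_{X \ast}$; and the triangle identities. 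The upshot is that the relevant Beck--Chevalley transformation $f^{\ast} \varphi_{Y \ast} \to \varphi_{X \ast} f^{\ast}$ and the isomorphism $f_{\ast} \varphi_{X \ast} \xrightarrow{\sim} \varphi_{Y \ast} f_{\ast}$ used above are mates under $f^{\ast} \dashv f_{\ast}$, which is exactly what is needed for the bijection to restrict to $\Hom_{\QCoh_X^{\Frob}}(f^{\ast} \qF, \qE) \simeq \Hom_{\QCoh_Y^{\Frob}}(\qF, f_{\ast} \qE)$, naturally in both variables. Equivalently, one can package this as an instance of the general principle that an adjunction lifts to the associated lax-equalizer categories $\QCoh_X^{\Frob} = \mathrm{LEq}(\mathrm{id}_{\QCoh_X}, \varphi_{X \ast})$ and $\QCoh_Y^{\Frob} = \mathrm{LEq}(\mathrm{id}_{\QCoh_Y}, \varphi_{Y \ast})$ once the relevant mate is invertible; the formula above is precisely the one predicted by that principle.

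The main obstacle is bookkeeping rather than mathematics: since pullback of Frobenius sheaves is most naturally written via the maps $\psi$ while pushforward is most naturally written via the maps $\varphi$, one must carefully match the two descriptions (using $\varphi_X^{\ast} \dashv \varphi_{X \ast}$ and the corresponding adjunction over $Y$) before the mate-compatibility can be read off. No input beyond the ordinary $(f^{\ast}, f_{\ast})$-adjunction on quasi-coherent sheaves is required; in particular, the hypothesis that $f$ is quasi-compact and quasi-separated is used only to guarantee that $f_{\ast}$ preserves quasi-coherence, and the compatibility of $f_{\ast}$ with the forgetful functors to $\QCoh$ is immediate from the construction.
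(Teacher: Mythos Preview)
Your proposal is correct and follows essentially the same approach as the paper: define $\varphi_{f_{\ast}\qE}$ as the composite $f_{\ast}\qE \xrightarrow{f_{\ast}\varphi_{\qE}} f_{\ast}\varphi_{X\ast}\qE \simeq \varphi_{Y\ast}f_{\ast}\qE$ and then verify the adjunction. The paper in fact leaves the adjunction verification to the reader, so your discussion of the mate compatibility and the lax-equalizer viewpoint goes beyond what the paper supplies.
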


\begin{proof}
The assumption that $f$ is quasi-compact and quasi-separated guarantees that the pullback functor $f^{\ast}: \QCoh_Y \rightarrow \QCoh_X$
admits a right adjoint $f_{\ast}: \QCoh_{X} \rightarrow \QCoh_{Y}$. If $\qE$ is a Frobenius sheaf on $X$, we can equip the direct image
$f_{\ast} \qE$ with the structure of a Frobenius sheaf on $Y$ by defining $\varphi_{ f_{\ast} \qE }$ to be the composition
$$ f_{\ast} \qE \xrightarrow{ f_{\ast} \varphi_{\qE} } f_{\ast} \varphi_{X \ast} \qE \simeq \varphi_{Y \ast} f_{\ast} \qE.$$
We leave it to the reader to verify that the construction $( \qE, \varphi_{ \qE } ) \mapsto ( f_{\ast} \qE, \varphi_{ f_{\ast} \qE} )$
determines a functor from $\QCoh_{X}^{\Frob}$ to $\QCoh_Y^{\Frob}$ which is right adjoint to the pullback functor of
Construction \ref{construction.pullback}.
\end{proof}

\begin{remark}
In the situation of Proposition \ref{proposition.construct-push}, if $\qE \in \QCoh_{X}^{\Frob}$ has the property that $\varphi_{ \qE }$ is an isomorphism,
then $\varphi_{ f_{\ast} \qE}$ is also an isomorphism. In other words, the direct image functor $f_{\ast}$ carries $\QCoh_{X}^{\perf}$ into
$\QCoh_{Y}^{\perf}$.
\end{remark}

The pullback functor of Construction \ref{construction.pullback} generally does not carry perfect Frobenius sheaves to perfect Frobenius sheaves. To remedy this, we consider the following
variant:

\begin{construction}\label{construction.diamond}
Let $f: X \rightarrow Y$ be a morphism of $\F_p$-schemes. We define a functor
$f^{\diamond}: \QCoh_{Y}^{\perf} \rightarrow \QCoh_{X}^{\perf}$ by the formula
$f^{\diamond}( \qE ) = ( f^{\ast} \qE)^{\perfection}$.
\end{construction}

\begin{remark}
If $f: X \rightarrow Y$ is a quasi-compact, quasi-separated morphism of $\F_p$-schemes, then the functor
$f^{\diamond}:  \QCoh_{Y}^{\perf} \rightarrow \QCoh_{X}^{\perf}$ is left adjoint to the direct image functor
$f_{\ast}: \QCoh_X^{\perf} \rightarrow \QCoh_{Y}^{\perf}$.
\end{remark}

\begin{remark}
In the situation where $X$ and $Y$ are affine, the functor $f^{\diamond}: \QCoh_{Y}^{\perf} \rightarrow \QCoh_{X}^{\perf}$
agrees (using the identification of Example \ref{example.semble}) with the functor described in Proposition \ref{mallow}.
\end{remark}

In some cases, there is no difference between the functors $f^{\ast}$ and $f^{\diamond}$:

\begin{proposition}
Let $f: X \rightarrow Y$ be a morphism of $\F_p$-schemes. Assume either that $f$ is {\etale}, or that both $X$ and $Y$ are perfect (that is, the Frobenius maps
$\varphi_{X}: X \rightarrow X$ and $\varphi_{Y}: Y \rightarrow Y$ are isomorphisms). Then the pullback functor $f^{\ast}: \QCoh_{Y}^{\Frob} \rightarrow \QCoh_{X}^{\Frob}$
carries $\QCoh_{Y}^{\perf}$ into $\QCoh_{X}^{\perf}$. Consequently, the functors $f^{\ast}$ and $f^{\diamond}$ coincide on $\QCoh_{Y}^{\perf}$.
\end{proposition}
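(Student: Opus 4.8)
The statement to prove is: for $f: X \to Y$ a morphism of $\F_p$-schemes which is either {\etale} or a map between perfect schemes, the pullback functor $f^{\ast}: \QCoh_{Y}^{\Frob} \to \QCoh_{X}^{\Frob}$ of Construction \ref{construction.pullback} carries $\QCoh_{Y}^{\perf}$ into $\QCoh_{X}^{\perf}$, so that $f^{\ast}$ and $f^{\diamond}$ agree on $\QCoh_{Y}^{\perf}$. The plan is to reduce immediately to the affine case, since both the hypothesis and the conclusion are local on source and target. Concretely, I would choose an affine open cover $\{\Spec(B_\beta)\}$ of $X$ and, for each $\beta$, an affine open $\Spec(A_\beta) \subseteq Y$ containing the image of $\Spec(B_\beta)$; then $f$ restricts to a ring homomorphism $f_\beta: A_\beta \to B_\beta$ which inherits the relevant hypothesis ({\etale}, or a map of perfect rings, using that an open subscheme of a perfect scheme is perfect and that restriction of an {\etale} map to opens is {\etale}). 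Since perfectness of a Frobenius sheaf can be checked on such a cover (this is the content of Proposition \ref{proposition.local-properties}, or simply the observation that $\varphi$ being an isomorphism of sheaves is a local condition), it suffices to show each $f_\beta^{\ast}_{\Frob}$ carries $\Mod_{A_\beta}^{\perf}$ into $\Mod_{B_\beta}^{\perf}$.

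\textbf{Key steps.} First, unwind Construction \ref{construction.pullback} in the affine case: for a ring map $g: A \to B$ of $\F_p$-algebras, the pullback functor on Frobenius sheaves agrees with extension of scalars $g^{\ast}_{\Frob}(M) = B \otimes_A M$ of Construction \ref{construction.extension} (this identification is already recorded in the Remark following Construction \ref{construction.pullback}). Second, invoke the affine results already in the paper: if $g$ is {\etale}, then Corollary \ref{corX14} says exactly that $g^{\ast}_{\Frob}$ preserves perfectness; if $A$ and $B$ are perfect, then Proposition \ref{prop5prime} says the same. Third, assemble these local statements: given a perfect $\qE \in \QCoh_{Y}^{\perf}$, the pullback $f^{\ast}\qE$ has the property that $\varphi_{f^{\ast}\qE}$ restricts on each $\Spec(B_\beta)$ to the Frobenius of $B_\beta \otimes_{A_\beta} \qE(\Spec A_\beta)$, which is an isomorphism by the previous step; since being an isomorphism of quasi-coherent sheaves is local, $\varphi_{f^{\ast}\qE}$ is an isomorphism, i.e. $f^{\ast}\qE \in \QCoh_{X}^{\perf}$. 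Finally, the last sentence follows formally: once $f^{\ast}\qE$ is already perfect, the canonical map $f^{\ast}\qE \to (f^{\ast}\qE)^{\perfection} = f^{\diamond}\qE$ is an isomorphism (the perfection functor is the identity on perfect objects), so $f^{\ast}$ and $f^{\diamond}$ agree on $\QCoh_{Y}^{\perf}$.

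\textbf{Main obstacle.} There is no serious obstacle here --- this is a globalization-by-descent argument of the kind the paper performs repeatedly (compare the proof of Theorem \ref{globalRH}). The only point requiring a little care is the compatibility needed in the reduction step: one must know that for an open immersion $j: \Spec(B) \hookrightarrow X$ with $X$ perfect, the ring $B$ is perfect, and that $f \circ j$ factors through an affine open $\Spec(A)$ of $Y$ with $A$ perfect when $Y$ is perfect --- both are immediate since localizations of perfect rings are perfect. One should also note that in the {\etale} case the cube relating $\varphi_X, \varphi_Y, f$ and the affine charts commutes, so that the local pullback genuinely computes the restriction of $f^{\ast}\qE$; this is automatic from the definition of $\varphi_X$ as the absolute Frobenius, which is compatible with open immersions. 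With these remarks the proof is a short formal argument citing Corollaries \ref{corX14} and Proposition \ref{prop5prime}.
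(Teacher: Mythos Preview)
Your proposal is correct and matches the paper's approach exactly: the paper's proof is the one-line remark that the assertion is local on both $X$ and $Y$, reducing it to Corollary~\ref{corX14} (\'etale case) and Proposition~\ref{prop5prime} (perfect case). You have simply spelled out the localization step and the final identification $f^{\ast} \simeq f^{\diamond}$ in more detail than the paper does.
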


\begin{proof}
The assertion is local on both $X$ and $Y$, and therefore follows from Corollary \ref{corX14} (in the case where $f$ is {\etale}) and Proposition \ref{prop5prime} (in the case where $X$ and $Y$ are perfect).
\end{proof}

\begin{proposition}\label{proposition.obvo}
Let $f: X \rightarrow Y$ be a morphism of $\F_p$-schemes and let $\qE$ be an algebraic Frobenius sheaf on $Y$. Then $f^{\diamond} \qE$ is an algebraic Frobenius sheaf on $X$.
If $\qE$ is holonomic, then $f^{\diamond} \qE$ is also holonomic. 
\end{proposition}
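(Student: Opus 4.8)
The plan is to reduce the statement to the affine case that has already been handled, using Proposition \ref{proposition.local-properties} (which characterizes the algebraic and holonomic conditions locally on the source). First I would note that the statement is local on $X$: since $\qE$ being algebraic (respectively holonomic) can be tested on an affine open cover of $X$ by Proposition \ref{proposition.local-properties}, it suffices to verify that $(f^{\diamond}\qE)|_{U}$ is algebraic (respectively holonomic) for $U \subseteq X$ ranging over affine opens. The formation of $f^{\diamond}$ is also local: restricting to $U$ replaces $f$ by the composite $U \hookrightarrow X \xrightarrow{f} Y$, and the inclusion $U \hookrightarrow X$ is an open immersion, hence {\etale}, so $f^{\ast}$ and $f^{\diamond}$ agree on it and commute with perfection. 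Thus we may assume $X = \Spec(B)$ is affine.

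Next I would reduce $Y$ to the affine case as well. Since $X$ is now affine, the morphism $f: X \to Y$ factors through some affine open $\Spec(A) \subseteq Y$ containing the image of $f$ (again using that $X$ is quasi-compact, being affine). By the same open-immersion argument as above, restricting $\qE$ to $\Spec(A)$ does not change whether $\qE$ is algebraic or holonomic, and does not change $f^{\diamond}\qE$. Hence we are reduced to the case $Y = \Spec(A)$, $X = \Spec(B)$, with $f$ corresponding to a ring homomorphism $A \to B$. Under the identification of Example \ref{example.semble}, $\qE$ corresponds to an algebraic (respectively holonomic) Frobenius module $M$ over $A$, and $f^{\diamond}\qE$ corresponds to the Frobenius module $f^{\diamond} M$ over $B$ in the sense of Proposition \ref{mallow}.

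At this point the statement is exactly the content of the earlier affine results: $f^{\diamond} M$ is algebraic over $B$ by Corollary \ref{corX62}, and if $M$ is holonomic then $f^{\diamond} M$ is holonomic over $B$ by Proposition \ref{propX51}. This completes the proof. I do not expect any serious obstacle here; the only point requiring a little care is the bookkeeping for the two reduction steps — namely, checking that restriction along open immersions is compatible with $f^{\diamond}$ (so that the local nature of the hypothesis and the conclusion really do match up). This follows from the fact that an open immersion is {\etale}, so that $f^{\ast}$ preserves perfection along it (or, more directly, from the compatibility of $f^{\diamond}$ with composition and with the identification of $\QCoh^{\perf}$ on an affine scheme with perfect Frobenius modules over its ring of functions).
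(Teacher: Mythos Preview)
Your approach is the same as the paper's: reduce to the affine case and invoke Corollary~\ref{corX62} and Proposition~\ref{propX51}. The paper's proof is the one-line ``Both assertions are local on $X$ and $Y$,'' and you are just unpacking this.

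However, your second reduction step contains an error. It is \emph{not} true that a morphism $f: \Spec(B) \rightarrow Y$ from an affine scheme factors through a single affine open of $Y$. For instance, take $Y = \mathbb{P}^1$ and $X = \Spec(k[x] \times k[y]) = \mathbb{A}^1 \sqcup \mathbb{A}^1$, mapping the two copies to the two standard charts; the image is all of $\mathbb{P}^1$. (Quasi-compactness of $X$ does not help here.) The fix is simply to localize in the other order, or simultaneously: for each point $x \in X$, choose an affine open $V \subseteq Y$ containing $f(x)$, then an affine open $U \subseteq f^{-1}(V)$ containing $x$; the restriction $f|_U: U \rightarrow V$ is then a morphism of affine schemes, and by Proposition~\ref{proposition.local-properties} it suffices to check algebraicity (respectively holonomicity) of $f^{\diamond}\qE$ on such $U$. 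With this correction your argument goes through.
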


\begin{proof}
Both assertions are local on $X$ and $Y$. We may therefore assume that $X$ and $Y$ are affine, in which case the desired results follow from Corollary \ref{corX62} and Proposition \ref{propX51}.
\end{proof}

\begin{proposition}
Let $X$ be an $\F_p$-scheme and let $X^{\perf}$ denote the perfection of $X$ (so that $\calO_{ X^{\perf} } = \calO_{X}^{\perfection}$). Then the canonical map
$f: X^{\perf} \rightarrow X$ induces an equivalence of categories $f_{\ast}: \QCoh^{\perf}_{X^{\perf}} \rightarrow \QCoh^{\perf}_{X}$.
\end{proposition}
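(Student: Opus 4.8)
The plan is to reduce the assertion to the affine case, where it is exactly Proposition \ref{prop5}, the reduction being carried out by a descent argument.

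First I would record that $f$ is an affine morphism. Indeed $X^{\perf} = \Spec_X(\calO_X^{\perfection})$, and $\calO_X^{\perfection}$ is a quasi-coherent sheaf of $\calO_X$-algebras: it is a filtered colimit of copies of $\calO_X$ whose transition maps become $\calO_X$-linear once the module structures are twisted by powers of the absolute Frobenius, hence a filtered colimit of quasi-coherent $\calO_X$-modules, hence quasi-coherent. In particular $f$ is quasi-compact and quasi-separated, so Proposition \ref{proposition.construct-push} applies: the direct image functor $f_{\ast}\colon \QCoh_{X^{\perf}}^{\Frob} \to \QCoh_X^{\Frob}$ is defined, it is compatible with the usual direct image on quasi-coherent sheaves, and by the remark following that proposition it carries $\QCoh_{X^{\perf}}^{\perf}$ into $\QCoh_X^{\perf}$.

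Next I would observe that this functor is a morphism of stacks on the small Zariski (or \'etale) site of $X$. Since perfection commutes with localization, for an open $U \subseteq X$ the preimage $f^{-1}(U)$ is the perfection $U^{\perf}$, and the formation of $f_{\ast}$ is compatible with restriction along the inclusion $U \hookrightarrow X$ (direct image always commutes with restriction to an open of the target, compatibly with the Frobenius structures). Because a morphism between stacks is an equivalence as soon as it is an equivalence locally on the base, and $\QCoh_{(-)}^{\perf}$ is a stack, we may assume $X = \Spec R$ is affine. In that case $X^{\perf} = \Spec R^{\perfection}$, and via the equivalences of Example \ref{example.semble} the functor $f_{\ast}$ is identified with the restriction-of-scalars functor $\Mod_{R^{\perfection}}^{\perf} \to \Mod_R^{\perf}$ along the canonical map $R \to R^{\perfection}$: for the affine morphism $\Spec R^{\perfection} \to \Spec R$, the direct image of the quasi-coherent sheaf attached to an $R^{\perfection}$-module $M$ is the sheaf attached to $M$ regarded as an $R$-module, and the Frobenius structure supplied by Proposition \ref{proposition.construct-push} is precisely $\varphi_M$ viewed as a Frobenius-semilinear endomorphism over $R$. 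That functor is an equivalence by Proposition \ref{prop5}, which completes the proof.

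I expect no real difficulty here; the only points requiring care are the verification that $\calO_X^{\perfection}$ is quasi-coherent (so that $f$ is affine, $f_{\ast}$ behaves well, and it localizes correctly over open subsets of $X$) and the bookkeeping that identifies $f_{\ast}$, in the affine case, with restriction of scalars together with its semilinear Frobenius.
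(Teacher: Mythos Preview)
Your proof is correct and follows the same strategy as the paper: reduce to the affine case by Zariski descent and then invoke Proposition \ref{prop5}. The paper's proof is a one-liner (``the assertion is local on $X$''), and your version simply fills in the bookkeeping the paper leaves implicit (affineness of $f$, compatibility of $f_\ast$ with restriction, the identification with restriction of scalars).
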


\begin{proof}
The assertion is local on $X$ and we may therefore assume that $X$ is affine, in which case the desired conclusion follows from Proposition \ref{prop5}.
\end{proof}

We now consider behavior of direct and inverse image functors under the Riemann-Hilbert correspondence. We first observe that any morphism of schemes $f: X \rightarrow Y$ induces
a left exact functor $f_{\ast}: \Shv_{\mathet}(X, \F_p ) \rightarrow \Shv_{\mathet}( Y, \F_p )$, which is compatible with the direct image functor on quasi-coherent sheaves when
$f$ is quasi-compact and quasi-separated. We therefore obtain the following:

\begin{proposition}\label{proposition.compatibility}
Let $f: X \rightarrow Y$ be a morphism of $\F_p$-schemes which is quasi-compact and quasi-separated. Then the diagram of functors
$$ \xymatrix{ \QCoh^{\Frob}_{X} \ar[r]^-{\Sol} \ar[d]^{f_{\ast}} & \Shv_{\mathet}( X, \F_p) \ar[d]^{ f_{\ast}} \\
\QCoh_{Y}^{\Frob} \ar[r]^-{ \Sol} & \Shv_{\mathet}( Y, \F_p ) }$$
commutes (up to canonical isomorphism).
\end{proposition}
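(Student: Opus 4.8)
The plan is to reduce the claimed commutativity to the already-established affine statement, namely Proposition~\ref{prop64}, by working locally on $Y$ and using that all of the relevant functors are compatible with restriction to open subschemes. First I would recall the key inputs. The solution functor on schemes (Construction~\ref{construction.solution2}) is local with respect to the {\etale} topology: for an {\etale} $g\colon U \to Y$ one has $g^{\ast}\Sol(\qF)\simeq\Sol(g^{\ast}\qF)$, and in particular the restriction of $\Sol(\qF)$ to an open $U\subseteq Y$ agrees with $\Sol(\qF|_{U})$. Likewise, when $U=\Spec(R)$ is affine, $\Sol(\qF)$ restricted to $U$ agrees with the solution sheaf $\Sol(\Gamma(U,\qF))$ of Construction~\ref{construction.solsheaf}, under the identification of Example~\ref{example.semble}. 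The direct image functor $f_{\ast}$ on {\etale} sheaves is defined by the usual formula and commutes with restriction to opens of the target; the same is true of the direct image functor $f_{\ast}$ on Frobenius sheaves, which by Proposition~\ref{proposition.construct-push} is compatible with the ordinary quasi-coherent direct image. The quasi-compactness and quasi-separatedness of $f$ guarantee that $f_{\ast}$ on quasi-coherent sheaves commutes with flat (in particular, open) base change on $Y$.

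The argument then runs as follows. Fix a Frobenius sheaf $\qE$ on $X$; I want a natural isomorphism $\Sol(f_{\ast}\qE)\simeq f_{\ast}\Sol(\qE)$ in $\Shv_{\mathet}(Y,\F_p)$. Both sides are {\etale} sheaves on $Y$, and to produce an isomorphism between them it suffices to produce, for each affine open $V=\Spec(R)\subseteq Y$, a natural isomorphism of their restrictions to $V$, compatibly with further restriction along inclusions $V'\subseteq V$ of affine opens. Form the base-change square
$$ \xymatrix{ X_V \ar[r]^-{f_V} \ar[d]^{g'} & V \ar[d]^{g} \\ X \ar[r]^{f} & Y, }$$
where $g\colon V\hookrightarrow Y$ and $g'\colon X_V=X\times_Y V\hookrightarrow X$ are the open immersions. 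On the {\etale}-sheaf side, proper/quasi-compact base change in the trivial case of an open immersion gives $g^{\ast}f_{\ast}\Sol(\qE)\simeq f_{V\ast}g'^{\ast}\Sol(\qE)\simeq f_{V\ast}\Sol(g'^{\ast}\qE)$, using {\etale}-locality of $\Sol$. On the Frobenius-sheaf side, Proposition~\ref{proposition.construct-push} together with flat base change for quasi-coherent direct images gives $g^{\ast}f_{\ast}\qE\simeq f_{V\ast}g'^{\ast}\qE$ as Frobenius sheaves, hence $\Sol(g^{\ast}f_{\ast}\qE)\simeq\Sol(f_{V\ast}g'^{\ast}\qE)$; and $\Sol(g^{\ast}f_{\ast}\qE)\simeq g^{\ast}\Sol(f_{\ast}\qE)$ again by {\etale}-locality of $\Sol$. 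Combining, the restriction to $V$ of each of $\Sol(f_{\ast}\qE)$ and $f_{\ast}\Sol(\qE)$ is identified with $f_{V\ast}\Sol(g'^{\ast}\qE)$. Since $X_V$ may no longer be affine, I would either iterate the same reduction over an affine cover of $X_V$ (using quasi-compactness and quasi-separatedness of $f$, which pass to $f_V$, and the {\etale}-descent statement for Frobenius sheaves) or, more cleanly, simply invoke the already-known affine comparison Proposition~\ref{prop64} after noting that $f_{V\ast}\Sol(g'^{\ast}\qE)$ only depends on $f_V$ and the quasi-coherent sheaf $g'^{\ast}\qE$ and may be computed {\etale}-locally on $X_V$.

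Concretely, the cleanest route is: choose an affine open cover $\{U_i=\Spec(S_i)\}$ of $X_V$ with affine overlaps $U_{ij}$, and identify $f_{V\ast}\Sol(g'^{\ast}\qE)$ with the equalizer of $\prod_i (f_V|_{U_i})_{\ast}\Sol(\qE|_{U_i})\rightrightarrows\prod_{i,j}(f_V|_{U_{ij}})_{\ast}\Sol(\qE|_{U_{ij}})$, and similarly for $\Sol(f_{V\ast}g'^{\ast}\qE)$ using the analogous Čech description of $f_{V\ast}$ on Frobenius sheaves together with the exactness of $\Sol$-as-a-kernel. On each affine piece the desired isomorphism is Proposition~\ref{prop64} (applied to the ring map $R\to S_i$), and these isomorphisms are natural, hence glue. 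Naturality in $\qE$ and compatibility of all the identifications is routine bookkeeping with adjunctions and base-change isomorphisms, so I would state it and leave the verification to the reader, as is done elsewhere in this section.

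The main obstacle is purely organizational rather than mathematical: keeping the base-change isomorphisms for the two different $f_{\ast}$'s (on {\etale} sheaves and on quasi-coherent/Frobenius sheaves) coherent with one another and with the {\etale}-locality isomorphisms for $\Sol$, so that the glued isomorphism is genuinely well-defined and natural. One should take care that the canonical map realizing the commutativity of the square in the statement is the one induced by the adjunctions $(f^{\ast},f_{\ast})$ and $(\RH,\Sol)$ (equivalently, the Beck--Chevalley transform of the pullback compatibility of Proposition~\ref{prop70}), and check that the locally-constructed isomorphism agrees with this canonical map; this is again a diagram chase with no real content beyond the affine case. No nontrivial new geometric input is needed: quasi-compactness and quasi-separatedness of $f$ are used only to ensure $f_{\ast}$ on quasi-coherent sheaves exists and commutes with flat base change, exactly as invoked in Proposition~\ref{proposition.construct-push}.
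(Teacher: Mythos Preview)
Your argument is correct, but it is considerably more elaborate than the paper's approach. The paper does not give an explicit proof; the proposition is stated as an immediate consequence of the sentence preceding it, namely that the {\etale} direct image $f_{\ast}$ is compatible with the quasi-coherent direct image when $f$ is quasi-compact and quasi-separated. The point is that $\Sol(\qE)$ is \emph{defined globally} (Construction~\ref{construction.solution2}) as the kernel of the endomorphism $\id - \varphi_{\qE}$ of $\qE$, formed in the abelian category $\Shv_{\mathet}(X,\F_p)$. Since $f_{\ast}$ on {\etale} sheaves is left exact, it preserves this kernel, and since $f_{\ast}$ on {\etale} sheaves agrees with the quasi-coherent $f_{\ast}$ when applied to $\qE$, one immediately has
\[
f_{\ast}\Sol(\qE) \;=\; f_{\ast}\ker(\id - \varphi_{\qE}) \;\simeq\; \ker(\id - \varphi_{f_{\ast}\qE}) \;=\; \Sol(f_{\ast}\qE).
\]
No reduction to the affine case, no \v{C}ech argument, and no appeal to Proposition~\ref{prop64} is needed.

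Your descent-to-affines strategy would be the natural one if the global solution functor had been defined by gluing the affine solution functors, but that is not how Construction~\ref{construction.solution2} is set up. What your approach buys is a direct link back to the affine statement Proposition~\ref{prop64}, at the cost of organizing several base-change and locality compatibilities; the paper's approach trades that bookkeeping for a single application of left exactness of $f_{\ast}$.
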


\begin{variant}\label{variant.compatibility}
Let $f: X \rightarrow Y$ be any morphism of $\F_p$-schemes. Then the diagram of functors
$$ \xymatrix{ \Shv_{\mathet}(Y, \F_p) \ar[r]^{\RH} \ar[d]^{f^{\ast}} & \Shv_{\mathet}(X,\F_p) \ar[d] \\
\QCoh^{\perf}_{Y} \ar[r]^{ f^{\diamond} } & \QCoh_{X}^{\perf} }$$
commutes (up to canonical isomorphism). In the case where $f$ is quasi-compact and quasi-separated,
this follows formally from Proposition \ref{proposition.compatibility} (by passing to left adjoints; see Remark \ref{remark.adjoin}).
The general case can be handled by working locally on $X$ and $Y$ (which reduces us to the situation of Proposition \ref{prop70}).
\end{variant}

\begin{construction}[{\Etale} Compactly Supported Direct Images]\label{construction.f-shriek}
Let $f: X \rightarrow Y$ be an {\etale} morphism of $\F_p$-schemes. Then the functor $f^{\ast}: \Shv_{\mathet}(Y, \F_p) \rightarrow \Shv_{\mathet}(X, \F_p)$ admits a left adjoint
$f_{!}: \Shv_{\mathet}(X, \F_p) \rightarrow \Shv_{\mathet}( Y, \F_p )$. Using Theorem \ref{globalRH}, we deduce that there is an essentially unique functor
$f_{!}: \QCoh_{X}^{\alg} \rightarrow \QCoh_{Y}^{\alg}$ for which the diagram
$$ \xymatrix{ \Shv_{\mathet}(X, \F_p) \ar[r]^-{\RH} \ar[d]^{f_{!} } & \QCoh_{X}^{\alg} \ar[d]^{ f_{!} } \\
\Shv_{\mathet}( Y, \F_p ) \ar[r]^-{ \RH} & \QCoh_{Y}^{\alg}. }$$
commutes up to isomorphism. We will refer to $f_{!}$ as the functor of {\it compactly supported direct image along $f$}.
\end{construction}

\begin{example}
In the situation of Construction \ref{construction.f-shriek}, if $X$ and $Y$ are {\etale}, then $f_{!}: \QCoh_{X}^{\alg} \rightarrow \QCoh_{Y}^{\alg}$ agrees with the functor constructed in Section \ref{section.compactimage}.
\end{example}

In the situation of Construction \ref{construction.f-shriek}, the functor $f_{!}: \QCoh_{X}^{\alg} \rightarrow \QCoh_{Y}^{\alg}$ can be characterized a left adjoint
to the pullback functor $f^{\ast} \simeq f^{\diamond}: \QCoh_{Y}^{\alg} \rightarrow \QCoh_X^{\alg}$. However, it has a slightly stronger property:

\begin{proposition}
Let $f: X \rightarrow Y$ be an {\etale} morphism of $\F_p$-schemes and let $\qE$ be an algebraic Frobenius sheaf on $X$.
Then, for any perfect Frobenius sheaf $\sheafF$ on $Y$, the canonical map
$$ \theta: \Hom_{ \QCoh_{Y}^{\perf} }( f_{!} \qE, \qF) \rightarrow
\Hom_{ \QCoh_{X}^{\perf} }( f^{\ast} f_{!} \qE, f^{\diamond} \qF ) \rightarrow \Hom_{ \QCoh_{X}^{\perf} }( \qE, f^{\ast} \qF)$$
is a bijection.
\end{proposition}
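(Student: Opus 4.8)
The plan is to transport the statement across the Riemann-Hilbert equivalence of Theorem \ref{globalRH}. On algebraic Frobenius sheaves the functor $f_{!}$ is defined (Construction \ref{construction.f-shriek}) so as to correspond, under $\RH$, to the compactly supported direct image functor $f_{!}: \Shv_{\mathet}(X,\F_p) \to \Shv_{\mathet}(Y,\F_p)$; and the latter is an honest left adjoint of $f^{\ast}$ on \emph{all} of $\Shv_{\mathet}(X,\F_p)$, not merely on constructible sheaves. The idea is to use this to upgrade the adjunction $(f_{!},f^{\diamond})$ between $\QCoh_Y^{\alg}$ and $\QCoh_X^{\alg}$ to the corepresentability statement of the proposition, which involves the \emph{perfect} (not necessarily algebraic) object $\qF$. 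First I would set $\sheafG = \Sol(\qE) \in \Shv_{\mathet}(X,\F_p)$, so that $\qE \simeq \RH(\sheafG)$ (as $\qE$ is algebraic) and, by Construction \ref{construction.f-shriek}, $f_{!}\qE \simeq \RH(f_{!}\sheafG)$. Since $f$ is {\etale}, the pullback $f^{\ast}\qF = f^{\diamond}\qF$ is again a perfect Frobenius sheaf on $X$, so Remark \ref{remark.adjoin} is applicable on both sides.

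Next I would assemble the chain of natural bijections
\begin{align*}
\Hom_{\QCoh_Y^{\perf}}(f_{!}\qE,\qF)
 &\simeq \Hom_{\underline{\F_p}}(f_{!}\sheafG,\Sol(\qF)) \\
 &\simeq \Hom_{\underline{\F_p}}(\sheafG, f^{\ast}\Sol(\qF)) \\
 &\simeq \Hom_{\underline{\F_p}}(\sheafG, \Sol(f^{\ast}\qF)) \\
 &\simeq \Hom_{\QCoh_X^{\perf}}(\qE, f^{\ast}\qF),
\end{align*}
where the first and last bijections are instances of Remark \ref{remark.adjoin} (applied to $f_{!}\qE \simeq \RH(f_{!}\sheafG)$ and $\qE \simeq \RH(\sheafG)$ respectively), the second is the adjunction between $f_{!}$ and $f^{\ast}$ on {\etale} sheaves, and the third is the canonical isomorphism $\Sol(f^{\ast}\qF) \simeq f^{\ast}\Sol(\qF)$ expressing the locality of the solution functor for {\etale} morphisms (stated immediately after Construction \ref{construction.solution2}). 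This already shows the two $\Hom$-sets are in bijection.

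The remaining point, and the one deserving care, is to verify that the composite of these four bijections is precisely the map $\theta$ of the statement. For this I would unwind $\theta$ as ``apply $f^{\ast} = f^{\diamond}$, then precompose with the canonical map $u_{\qE}\colon \qE \to f^{\ast}f_{!}\qE$'', apply $\Sol$, and observe that $\Sol(u_{\qE})$ is the unit $\sheafG \to f^{\ast}f_{!}\sheafG$ of the {\etale}-sheaf adjunction $(f_{!},f^{\ast})$ --- this identification is exactly the assertion that the equivalence $\RH$ intertwines the two adjunctions, i.e.\ the content of Construction \ref{construction.f-shriek} together with Variant \ref{variant.compatibility}. Granting this, $\Sol$ carries $\theta$ to the adjunction transpose $\Hom_{\underline{\F_p}}(f_{!}\sheafG,\Sol(\qF)) \xrightarrow{\sim} \Hom_{\underline{\F_p}}(\sheafG, f^{\ast}\Sol(\qF))$, which is a bijection; combined with the fact that the first bijection in the chain is Remark \ref{remark.adjoin}, it follows that $\theta$ itself is a bijection. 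I expect the only genuine obstacle to be this bookkeeping of units and counits across the Riemann-Hilbert equivalence; no further geometric input is needed, since the substantive facts (existence and exactness of $f_{!}$, the adjoint description of $\RH$, locality of $\Sol$) are all already in hand.
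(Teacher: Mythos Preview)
Your proof is correct and follows essentially the same approach as the paper's: write $\qE \simeq \RH(\sheafG)$, apply Remark \ref{remark.adjoin} on both sides, and use the $(f_!,f^{\ast})$ adjunction on {\etale} sheaves together with the locality of $\Sol$ for {\etale} maps. The paper packages this as a single commutative square rather than a chain of bijections, and is somewhat less explicit about the ``bookkeeping of units'' you spell out, but the content is identical.
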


\begin{proof}
By virtue of Theorem \ref{globalRH}, we can assume that $\qE = \RH( \sheafE )$ for some $p$-torsion {\etale} sheaf $\sheafE$ on $X$.
In this case, the map $\theta$ fits into a commutative diagram
$$ \xymatrix{ \Hom_{ \QCoh_{Y}^{\perf} }( f_{!} \RH( \sheafE), \qF ) \ar[r]^{\theta} \ar[d] & \Hom_{ \QCoh_{X}^{\perf} }( \RH(\sheafE ), f^{\ast} \qF ) \ar[d] \\
\Hom_{ \underline{\F_p} }( f_{!} \sheafE, \Sol(\qF) ) \ar[r] & \Hom_{ \underline{\F_p}}( \sheafE, \Sol( f^{\ast} \qF) ) }$$
where the bottom horizontal map is an isomorphism because the formation of solution sheaves is local for the {\etale} topology, and
the vertical maps are isomorphisms by virtue of Remark \ref{remark.adjoin}.
\end{proof}

\begin{example}
Let $j:U \rightarrow X$ be a quasi-compact open immersion of $\F_p$-schemes. Then the functor $j_{!}: \QCoh_{U}^{\alg} \rightarrow \QCoh_{X}^{\alg}$
can be described explicitly as follows: if $\mathcal{I} \subseteq \calO_{X^{\perf}}$ denotes the (necessarily quasi-coherent) radical ideal sheaf defining $(X-U)^{\perf}$, then the Frobenius automorphism of $\calO_{X^{\perf}}$ endows $\mathcal{I}$ with the structure of a holonomic Frobenius module on $X$, and we have $j_!^{\alg} \calO_{U^{\perf}}) \simeq \mathcal{I}$. More generally, for any $E \in \QCoh_X^{\alg}$, we have $j_!^{\alg}(j^{\diamond} E) \simeq \mathcal{I} \otimes E$.
\end{example}

\subsection{Holonomic Frobenius Sheaves}\label{sec12sub4}

Recall that a Frobenius module $M$ over a commutative $\F_p$-algebra $R$ is holonomic if and only if there exists an isomorphism $M \simeq M_0^{\perfection}$, where $M_0 \in \Mod_{R}^{\Frob}$
is finitely presented as an $R$-module. We now show that holonomic Frobenius sheaves admit an analogous characterization:

\begin{theorem}\label{silphil}
Let $X$ be a Noetherian $\F_p$-scheme and let $\qE$ be a Frobenius sheaf on $X$. The following conditions are equivalent:
\begin{itemize}
\item[$(1)$] There exists a Frobenius subsheaf $\qE_0 \subseteq \qE$ such that $\qE_0$ is coherent as an $\calO_{X}$-module
and the inclusion $\qE_0 \hookrightarrow \qE$ induces an isomorphism $\qE_0^{\perfection} \simeq \qE$.

\item[$(2)$] There exists an isomorphism $\qE \simeq \qE_0^{\perfection}$ for some $\qE_0 \in \QCoh_{X}^{\perf}$ which is 
coherent as a $\calO_X$-module.

\item[$(3)$] The Frobenius sheaf $\qE$ is holonomic.
\end{itemize}
\end{theorem}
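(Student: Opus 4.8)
The plan is to prove the cycle of implications $(1) \Rightarrow (2) \Rightarrow (3) \Rightarrow (1)$, reducing everything to the affine case treated in Definition~\ref{defhol2} and the surrounding results. The implication $(1) \Rightarrow (2)$ is immediate, since a coherent Frobenius subsheaf $\qE_0 \subseteq \qE$ is in particular an object of $\QCoh_X^{\Frob}$ which is coherent as an $\calO_X$-module, and taking the perfection of the inclusion gives the required isomorphism. The implication $(2) \Rightarrow (3)$ is a local statement: holonomicity of a Frobenius sheaf can be checked on an affine open cover (Proposition~\ref{proposition.local-properties}), so I would choose a finite affine open cover $\{U_i = \Spec(R_i)\}$ of the Noetherian scheme $X$, observe that $\qE_0|_{U_i}$ corresponds to a Frobenius module $M_i$ over $R_i$ which is finitely generated (hence, since $R_i$ is Noetherian, finitely presented) as an $R_i$-module, and conclude that $\qE|_{U_i} \simeq M_i^{\perfection}$ is holonomic over $R_i$. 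Hence $\qE$ is holonomic by definition.

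The substantive direction is $(3) \Rightarrow (1)$: given a holonomic Frobenius sheaf $\qE$, I must produce a \emph{globally defined} coherent Frobenius subsheaf $\qE_0$ whose perfection recovers $\qE$. The strategy is to build $\qE_0$ by patching local models and then enlarging to kill the (finitely many) discrepancies on overlaps. Concretely, cover $X$ by finitely many affine opens $U_i = \Spec(R_i)$; on each, holonomicity gives a finitely presented Frobenius module $M_i$ over $R_i$ with $\qE|_{U_i} \simeq M_i^{\perfection}$, and I may assume $M_i$ is the image of its canonical map into $\qE|_{U_i}$, so that $M_i \subseteq \qE(U_i)$ is a coherent Frobenius subsheaf generating $\qE|_{U_i}$ after perfection. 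Let $\qF_i \subseteq \qE$ denote the smallest Frobenius subsheaf of $\qE$ whose restriction to $U_i$ contains $M_i$; using that $X$ is Noetherian and that $\qE|_{U_j}$ is holonomic (so its finitely generated Frobenius submodules are again coherent, by the $(b')$-type argument in Remark~\ref{rmk:IndHolCriterion} together with Proposition~\ref{propX58}), one checks $\qF_i$ is coherent as an $\calO_X$-module. Then set $\qE_0 = \sum_i \qF_i$, a finite sum of coherent Frobenius subsheaves of $\qE$, hence coherent. By construction $\qE_0|_{U_i} \supseteq M_i$, so $\qE_0^{\perfection}|_{U_i} = (\qE_0|_{U_i})^{\perfection}$ contains $M_i^{\perfection}$ and is contained in $\qE|_{U_i}$; since $M_i^{\perfection} = \qE|_{U_i}$, the inclusion $\qE_0^{\perfection} \hookrightarrow \qE$ is an isomorphism on each $U_i$, hence an isomorphism. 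This gives $(1)$.

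The main obstacle is the coherence of the "saturation" sheaves $\qF_i$ — i.e. showing that enlarging a coherent Frobenius subsheaf on one affine chart to a Frobenius-stable subsheaf over all of $X$ keeps it coherent. The key input is that $\qE$ is already known to be holonomic, so on every affine chart $U_j$ the module $\qE(U_j)$ is algebraic (Proposition~\ref{propX58}); combined with the fact that over a Noetherian ring the Frobenius submodule generated by a finitely generated submodule is again finitely generated (Remark~\ref{rmk:IndHolCriterion}, part $(b')$), one sees that the restriction of $\qF_i$ to each $U_j$ is coherent, and coherence is local, so $\qF_i$ is coherent. (A cleaner alternative phrasing: apply the Riemann–Hilbert equivalence $\RH^c \colon \Shv_{\mathet}^c(X,\F_p) \simeq \QCoh_X^{\hol}$ of Theorem~\ref{globalRH} to write $\qE = \RH(\sheafF)$ for a locally constructible $\sheafF$, use that $\sheafF$ is a quotient of some $f_! \underline{\F_p}$ étale-locally and glue, recovering a coherent model from the explicit description of $\RH$ on constant and $f_!$-sheaves; but the direct patching argument above avoids re-deriving those explicit formulas.) I would present the direct argument, as it is self-contained given the affine theory.
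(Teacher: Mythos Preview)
Your overall strategy for $(3) \Rightarrow (1)$ matches the paper's: cover $X$ by finitely many affines $U_i$, find local coherent Frobenius submodels $M_i \subseteq \qE(U_i)$ with $M_i^{\perfection} = \qE|_{U_i}$, extend them globally, and take their sum. The gap is in the extension step. You define $\qF_i$ as ``the smallest Frobenius subsheaf of $\qE$ whose restriction to $U_i$ contains $M_i$,'' but such an object need not exist in $\QCoh_X$. Take $X = \mathbb{P}^1_k$ with standard charts $U_1 = \Spec k[t]$, $U_2 = \Spec k[s]$ (where $s = 1/t$), set $\qE = \calO_X^{\perfection}$ and $M_1 = k[t]$. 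For each $m \geq 0$ the subsheaf given by $k[t]$ on $U_1$ and $s^m k[s]$ on $U_2$ is a coherent Frobenius subsheaf of $\qE$ restricting to $M_1$ on $U_1$; the quasi-coherent intersection of these is zero, so there is no minimal such subsheaf. Your coherence argument (which is essentially Lemma~\ref{lemma.coh}) then has nothing to apply to: that lemma needs a \emph{globally defined} coherent $\calO_X$-submodule as input before one can take its Frobenius closure.

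The paper repairs this by separating the two steps. First extend each coherent subsheaf $\overline{\qF}_i \subseteq \qE|_{U_i}$ corresponding to $M_i$ to a coherent (not necessarily Frobenius-stable) subsheaf $\qF_i \subseteq \qE$ on all of $X$, using the standard extension result \cite[Tag~01PF]{Stacks}. Then let $\qF$ be the smallest Frobenius subsheaf of $\qE$ containing $\sum_i \qF_i$ (Lemma~\ref{subset}), which exists concretely as the image of $\bigoplus_{n \geq 0} (\varphi_X^n)^{\ast}(\sum_i \qF_i) \to \qE$. Now Lemma~\ref{lemma.coh}---precisely the algebraicity input from Remark~\ref{rmk:IndHolCriterion} that you cite---applies to show $\qF$ is coherent, and the rest of your argument (checking $\qF^{\perfection} \simeq \qE$ on each $U_i$, as in Remark~\ref{selit}) goes through verbatim.
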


The proof of Theorem \ref{silphil} will require some preliminaries.

\begin{remark}\label{selit}
Let $X$ and $\qE$ be as in Theorem \ref{silphil}, and suppose that we are given Frobenius subsheaves $\qE_0 \subseteq \qE_1 \subseteq \qE$.
If the inclusion $\qE_0 \hookrightarrow \qE$ induces an isomorphism $\qE_0^{\perfection} \simeq \qE$, then the inclusion $\qE_1 \hookrightarrow \qE$
has the same property. This follows immediately from the exactness of the perfection construction $\qF \mapsto \qF^{\perfection}$.
\end{remark}

\begin{lemma}\label{subset}
Let $X$ be an $\F_p$-scheme, let $\qE$ be a Frobenius sheaf on $X$, and let $\qE_0 \subseteq \qE$ be a quasi-coherent $\calO_{X}$-submodule of $\qE$.
Then there exists a smallest Frobenius subsheaf $\qE' \subseteq \qE$ which contains $\qE_0$.
\end{lemma}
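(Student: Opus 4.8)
The plan is to construct $\qE'$ as the Frobenius subsheaf of $\qE$ "generated" by $\qE_0$, by iterating the Frobenius map. First I would define, for each integer $n \geq 0$, a quasi-coherent $\calO_X$-submodule $\qE_0^{(n)} \subseteq \qE$: set $\qE_0^{(0)} = \qE_0$, and inductively let $\qE_0^{(n+1)} \subseteq \qE$ be the image of the composite map $\varphi_X^{\ast}(\qE_0^{(n)}) \to \varphi_X^{\ast}(\qE) \xrightarrow{\psi_{\qE}} \qE$, where $\psi_{\qE}: \varphi_X^{\ast}\qE \to \qE$ is the adjoint of $\varphi_{\qE}$ as in Variant \ref{FStabSub1}. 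Equivalently, in the description via $\varphi_{\qE}: \qE \to \varphi_{X\ast}\qE$, one takes $\qE_0^{(n+1)}$ to be the smallest quasi-coherent submodule of $\qE$ containing $\qE_0^{(n)}$ and whose image under $\varphi_{\qE}$ lands in $\varphi_{X\ast}(\qE_0^{(n+1)})$ — but it is cleaner to work with the pullback formulation, since then $\qE_0^{(n+1)}$ is literally the image of an $\calO_X$-module map and is therefore quasi-coherent. A routine check shows $\qE_0^{(n)} \subseteq \qE_0^{(n+1)}$ for all $n$ (using that $\psi_{\qE}$ restricted to $\varphi_X^{\ast}(\qE_0)$ already factors appropriately once $n \geq 1$; for the base step one notes $\qE_0 \subseteq \qE_0^{(1)}$ need not hold, so one instead sets $\qE_0^{(1)} = \qE_0 + \im(\psi_{\qE}|_{\varphi_X^{\ast}\qE_0})$ and continues).

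Then I would set $\qE' = \bigcup_{n \geq 0} \qE_0^{(n)} = \varinjlim_n \qE_0^{(n)}$, which is quasi-coherent since a filtered colimit of quasi-coherent submodules of $\qE$ is quasi-coherent. By construction $\psi_{\qE}$ carries $\varphi_X^{\ast}(\qE')= \varinjlim_n \varphi_X^{\ast}(\qE_0^{(n)})$ into $\bigcup_n \qE_0^{(n+1)} = \qE'$, so $\qE'$ inherits the structure of a Frobenius subsheaf of $\qE$, and it contains $\qE_0$. For minimality, I would observe that any Frobenius subsheaf $\qF \subseteq \qE$ containing $\qE_0$ must contain $\qE_0^{(n)}$ for every $n$: this is an immediate induction, since $\qF$ being a Frobenius subsheaf means $\psi_{\qE}(\varphi_X^{\ast}\qF) \subseteq \qF$, and $\qE_0^{(n)} \subseteq \qF$ forces $\qE_0^{(n+1)} = \qE_0^{(n)} + \im(\psi_{\qE}|_{\varphi_X^{\ast}\qE_0^{(n)}}) \subseteq \qF$. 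Hence $\qE' = \bigcup_n \qE_0^{(n)} \subseteq \qF$, proving $\qE'$ is the smallest such subsheaf.

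The main subtlety — not really an obstacle, but the point requiring care — is verifying that each $\qE_0^{(n)}$ is genuinely quasi-coherent and that the construction is well-behaved with respect to Zariski (or {\etale}) localization, so that the $\qE_0^{(n)}$ glue. This is handled by noting that $\varphi_X^{\ast}$ preserves quasi-coherence and that the image of a morphism of quasi-coherent sheaves is quasi-coherent; localization-compatibility then follows because $\varphi_X^{\ast}$, formation of images, and filtered colimits all commute with restriction to open subsets. One could alternatively phrase the entire argument affine-locally: over $U = \Spec R$, the Frobenius sheaf corresponds to a Frobenius module $M$ over $R$ (Example \ref{example.semble}), $\qE_0$ corresponds to an $R$-submodule $M_0 \subseteq M$, and $\qE'(U)$ is simply $\sum_{n \geq 0} R\text{-span of } \varphi_M^n(M_0)$ — the smallest $\varphi_M$-stable $R$-submodule containing $M_0$ — and one checks these local constructions are compatible under localization and hence define a quasi-coherent subsheaf of $\qE$. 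I would present the affine-local version for concreteness and remark that it is the sheafification of the pullback-iteration described above. The verification that this is a Frobenius subsheaf and is minimal is then entirely formal.
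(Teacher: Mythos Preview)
Your proposal is correct and follows essentially the same approach as the paper: the paper defines $\qE'$ in one line as the image of the composite map $\bigoplus_{n \geq 0} (\varphi_X^n)^{\ast}\qE_0 \to \bigoplus_{n \geq 0} (\varphi_X^n)^{\ast}\qE \xrightarrow{\psi_{\qE}^n} \qE$, which is exactly the direct-sum packaging of your inductive union $\bigcup_n \qE_0^{(n)}$ and agrees on affines with your description $\sum_{n \geq 0} R\cdot\varphi_M^n(M_0)$. The minimality and the compatibility with flat restriction (your ``main subtlety'') are treated just as you indicate; the paper records the latter separately as a remark.
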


\begin{proof}
Take $\qE'$ to be the image of the composite map
$$\oplus_{n \geq 0} (\varphi_X^n)^{\ast} \qE_0 \rightarrow \oplus_{n \geq 0} (\varphi_X^n)^{\ast} \qE \xrightarrow{\psi^n_{\qE}} \qE.$$
\end{proof}

\begin{remark}\label{remark.restrictor}
In the situation of Lemma \ref{subset}, the construction $\qE_0 \mapsto \qE'$ is compatible with pullbacks along flat morphisms; in particular, it is compatible
with restrictions to open sets.
\end{remark}

\begin{lemma}\label{lemma.coh}
Let $X$ be a Noetherian $\F_p$-scheme, let $\qE$ be an algebraic Frobenius sheaf on $X$, and let $\qE_0 \subseteq \qE$ be a coherent $\calO_{X}$-submodule of $\qE$.
Then the Frobenius subsheaf $\qE' \subseteq \qE$ of Lemma \ref{subset} is also coherent as a $\calO_X$-module.
\end{lemma}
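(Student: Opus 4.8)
The statement is local on $X$, so I would first reduce to the case where $X = \Spec(R)$ is affine and Noetherian, with $\qE$ the Frobenius sheaf associated to a holonomic (in particular algebraic) Frobenius module $M$ over $R$, and $\qE_0$ corresponding to a finitely generated $R$-submodule $M_0 \subseteq M$. Here I must be slightly careful: Lemma~\ref{subset} and Remark~\ref{remark.restrictor} guarantee that the formation of $\qE'$ commutes with restriction to affine opens, so the question of whether $\qE'$ is coherent can indeed be tested on an affine cover; this is the content of the reduction. After this reduction, what must be shown is the purely algebraic statement: if $M$ is an algebraic Frobenius module over a Noetherian $\F_p$-algebra $R$ and $M_0 \subseteq M$ is a finitely generated $R$-submodule, then the Frobenius submodule $M' \subseteq M$ generated by $M_0$ is finitely generated as an $R$-module.

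\textbf{Key step.} This algebraic statement is exactly the equivalence between conditions $(b)$ and $(b')$ recorded in Remark~\ref{rmk:IndHolCriterion}: since $M$ is algebraic it satisfies $(b)$, hence $(b')$, so the Frobenius submodule generated by any finitely generated $R$-submodule is again finitely generated. Concretely, writing $M_0 = \sum_{i \in I} R x_i$ for a finite index set $I$, condition $(b)$ provides for each $i$ an equation $\varphi_M^{n_i}(x_i) + a_{1,i}\varphi_M^{n_i-1}(x_i) + \cdots + a_{n_i,i} x_i = 0$ with $a_{j,i} \in R$; taking $n = \max_i n_i$, the $R$-submodule $M'$ generated by $\{\varphi_M^k(x_i)\}_{i \in I,\, 0 \le k \le n}$ is $\varphi_M$-stable and is visibly the smallest $\varphi_M$-stable $R$-submodule containing $M_0$. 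It is finitely generated by construction, which translates back to coherence of $\qE_0^{\perfection}\cap\text{(finitely many Frobenius iterates)}$, i.e.\ of $\qE'$.

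\textbf{Subtlety to address.} One should note that in the statement of the lemma, $\qE$ is only assumed to be an \emph{algebraic} Frobenius sheaf (not necessarily holonomic), so Remark~\ref{rmk:IndHolCriterion} is precisely the right input — the hypothesis $(b)$ is available for any algebraic Frobenius module, which is all that is used. The coherence (Noetherian finiteness) of the resulting submodule then uses that $R$ is Noetherian only insofar as ``finitely generated'' equals ``coherent'' for submodules of $\qE_0$; in fact the argument above shows $\qE'$ is a quotient of $\bigoplus_{i,k} \calO_X$, hence finitely generated, and over a Noetherian scheme a finitely generated quasi-coherent submodule of a quasi-coherent sheaf is automatically coherent.

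\textbf{Main obstacle.} There is no serious obstacle here; the only thing requiring care is the patching argument establishing that coherence of $\qE'$ is genuinely a local condition that can be verified after restricting to an affine cover. This rests on Remark~\ref{remark.restrictor} (compatibility of the ``smallest Frobenius subsheaf'' construction with flat base change, in particular with open immersions) together with the fact that $X$ is Noetherian, so that a quasi-coherent sheaf which is coherent on each member of a finite affine open cover is coherent. Thus the proof is essentially a one-line appeal to Remark~\ref{rmk:IndHolCriterion} once the reduction to the affine Noetherian case is in place.
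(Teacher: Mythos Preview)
Your proposal is correct and follows exactly the same approach as the paper's proof: reduce to the affine case via Remark~\ref{remark.restrictor}, then invoke Remark~\ref{rmk:IndHolCriterion}. The paper's proof is literally those two sentences, while you have (correctly) unpacked the content of Remark~\ref{rmk:IndHolCriterion} and the locality argument in more detail.
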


\begin{proof}
By virtue of Remark \ref{remark.restrictor}, we can assume without loss of generality that $X = \Spec(R)$ is affine. In this case, the desired result follows from
Remark \ref{rmk:IndHolCriterion}.
\end{proof}

\begin{proof}[Proof of Theorem \ref{silphil}]
The implications $(1) \Rightarrow (2) \Rightarrow (3)$ are obvious. We will prove that $(3)$ implies $(1)$. Let $\qE$ be a holonomic Frobenius sheaf on $X$.
Choose a finite cover $\{U_i\}$ of $X$ by affine open sets $U_i \simeq \Spec(R_i)$, and set $M_i = \qE(U_i)$. Then each $M_i$ is a holonomic Frobenius module over $R_i$.
We can therefore choose isomorphisms $M_i \simeq N_i^{\perfection}$, where each $N_i \in \Mod_{R_i}^{\Frob}$ is finitely generated as a module over $R_i$.
Replacing each $N_i$ with its image in $M_i$, we can assume that $N_i$ corresponds to a Frobenius-stable subsheaf $\overline{\qF}_i \subseteq \qE|_{U_i}$. Applying \cite[Tag 01PF]{Stacks}, we can find choose a coherent subsheaf 
$\qF_i \subseteq \qE$ satisfying $\qF_i|_{U} = \overline{\qF}_i$. Let $\qF$ denote the smallest Frobenius subsheaf of $\qE$ which contains each $\qF_i$ (Lemma \ref{subset}). It follows from Lemma \ref{lemma.coh}
that $\qF$ is coherent as a $\calO_X$-module. We claim that the inclusion $\qF \hookrightarrow \qE$ induces an isomorphism $\qF^{\perfection} \simeq \qE$. To prove this, it suffices to show that
each restriction $\qE|_{ U_i }$ is the perfection of $\qF|_{U_i}$. This follows from Remark \ref{selit}, since $\qF|_{U_i}$ contains $\overline{\qF}_i$ by construction.
\end{proof}

\begin{remark}
Theorem \ref{silphil} can be generalized to the non-Noetherian case. If $X$ is an $\F_p$-scheme which is quasi-compact and quasi-separated and
$\qE$ is a holonomic Frobenius sheaf on $X$, then there exists an isomorphism $\qE \simeq \qE_0^{\perfection}$, where $\qE_0 \in \QCoh_{X}^{\Frob}$ is
locally finitely presented as a $\calO_X$-module. To prove this, we first apply Theorem \ref{globalRH} to choose an isomorphism $\qE = \RH( \sheafF)$ for some constructible $p$-torsion
{\etale} sheaf $\sheafF$ on $X$. Using a Noetherian approximation argument, we can choose a map $f: X \rightarrow Y$ and an isomorphism $\sheafF \simeq f^{\ast} \sheafF'$,
where $Y$ is a Noetherian $\F_p$-scheme and $\sheafF'$ is a constructible $p$-torsion {\etale} sheaf on $Y$. Applying Lemma \ref{silphil}, we can choose an isomorphism
$\RH( \sheafF' ) \simeq \qE'^{\perfection}_0$ for some $\qE'_0 \in \QCoh_{Y}^{\Frob}$ which is coherent as an $\calO_Y$-module. Then 
$$\qE \simeq \RH( \sheafF ) \simeq \RH( f^{\ast} \sheafF' ) \simeq f^{\diamond}( \RH( \sheafF' ) ) \simeq f^{\diamond}( \qE'^{\perfection}_0 ) \simeq (f^{\ast} \qE'_0)^{\perfection},$$
where $f^{\ast} \qE'_0$ is locally finitely presented as a $\calO_{X}$-module.
\end{remark}



\subsection{Proper Direct Images}\label{sec12sub5}

In \S \ref{finimage}, we proved that the Riemann-Hilbert equivalence $\Sol:\Mod_{R}^{\alg} \simeq \Shv_{\mathet}( \Spec(R), \F_p)$ is compatible with direct images
along ring homomorphisms which are finite and of finite presentation (Theorem \ref{theo76}). In this section, we prove a generalization of this result: the global Riemann-Hilbert correspondence
of Theorem \ref{globalRH} is compatible with direct images along morphisms of $\F_p$-schemes $f: X \rightarrow Y$ which are proper and of finite presentation. In the global setting, there is more to the story,
since neither of the direct image functors 
$$f_{\ast}: \QCoh_{X}^{\Frob} \rightarrow \QCoh_{Y}^{\Frob} \quad \quad f_{\ast}: \Shv_{\mathet}(X, \F_p) \rightarrow \Shv_{\mathet}(Y, \F_p)$$
is necessarily exact. In this case, we also have a comparison result for higher direct images (see Theorem \ref{properRH1} below).

We begin with some general remarks. Let $f:X \rightarrow Y$ be a quasi-compact and quasi-separated morphism of schemes. Then we have higher direct image functors $R^i f_{\ast}:\QCoh_X \to \QCoh_Y$ (see \cite[Tag 01XJ]{Stacks}). These functors are equipped with canonical isomorphisms $\varphi_{Y,\ast} \circ R^i f_{\ast} \simeq R^i f_{\ast} \circ \varphi_{X,\ast}$, and therefore carry
(perfect) Frobenius sheaves on $X$ to (perfect) Frobenius sheaves on $Y$. The central observation is the following:

\begin{theorem}\label{properindh}
Let $f: X \rightarrow Y$ be a morphism of $\F_p$-schemes which is proper and of finite presentation. If $\qE$ is an algebraic Frobenius sheaf on $X$,
then the higher direct images $R^{i} f_{\ast} \qE$ are algebraic Frobenius sheaves on $Y$.
\end{theorem}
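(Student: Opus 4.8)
\textbf{Proof strategy for Theorem \ref{properindh}.} The plan is to reduce to the affine base case and then exploit the key structural fact that algebraic Frobenius sheaves are filtered colimits of holonomic ones (Corollary \ref{corollary.describe-indh}), together with the coherence characterization of holonomicity from Theorem \ref{silphil}. Since the statement is local on $Y$, I would first assume $Y = \Spec(A)$ is affine. By writing $A$ as a filtered colimit of finitely generated $\F_p$-subalgebras and using standard finite-presentation/spreading-out arguments for proper morphisms (together with Proposition \ref{proposition.obvo}, which says $f^{\diamond}$ preserves holonomicity, and Corollary \ref{corollary.describe-indh} to reduce to a holonomic $\qE$), one further reduces to the case where $A$ is a finitely generated $\F_p$-algebra, hence Noetherian, and $\qE$ is holonomic on $X$.

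With $Y$ Noetherian affine and $\qE$ holonomic, I would invoke Theorem \ref{silphil} to write $\qE \simeq \qE_0^{\perfection}$ for some coherent Frobenius sheaf $\qE_0 \in \QCoh_X^{\Frob}$ (a Frobenius subsheaf of $\qE$). The perfection is a filtered colimit $\qE = \varinjlim( \qE_0 \xrightarrow{\varphi} \varphi_{X\ast}\qE_0 \xrightarrow{\varphi} \varphi_{X\ast}^2 \qE_0 \rightarrow \cdots)$, and since $f$ is quasi-compact and quasi-separated, the higher direct image functors $R^i f_{\ast}$ commute with filtered colimits of quasi-coherent sheaves (see \cite[Tag 073E]{Stacks} or \cite[Tag 01XJ]{Stacks}); each $\varphi_{X\ast}^n \qE_0$ is coherent since $\varphi_X$ is affine (indeed finite, as $X$ is Noetherian) and, $f$ being proper and of finite presentation over the Noetherian ring $A$, each $R^i f_{\ast}(\varphi_{X\ast}^n \qE_0)$ is a coherent $\calO_Y$-module by the coherence theorem for proper morphisms (\cite[Tag 02O5]{Stacks}). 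Thus $R^i f_{\ast}\qE$ is a filtered colimit of coherent Frobenius sheaves on $Y$ under the maps induced by $\varphi$, i.e. $R^i f_{\ast}\qE \simeq \varinjlim_n (R^i f_{\ast} \varphi_{X\ast}^n \qE_0)$. Identifying this colimit with a perfection: the transition maps are exactly the Frobenius-semilinear maps coming from the Frobenius structure on $R^i f_{\ast}\qE_0$, so $R^i f_{\ast}\qE \simeq (R^i f_{\ast}\qE_0)^{\perfection}$. Since $R^i f_{\ast}\qE_0$ is coherent as an $\calO_Y$-module, its perfection is holonomic (Definition \ref{defhol2} applied sectionwise, using that $Y$ is affine Noetherian so coherent $=$ finitely presented), hence algebraic. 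This handles the reduced affine Noetherian case, and unwinding the reductions (each of which only used that $f^{\diamond}$, filtered colimits, and coherence behave well) gives the general statement.

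\textbf{Main obstacle.} I expect the genuinely delicate point to be the identification $R^i f_{\ast}(\qE_0^{\perfection}) \simeq (R^i f_{\ast}\qE_0)^{\perfection}$ as \emph{Frobenius} sheaves — that is, checking that the colimit of $R^i f_{\ast}$ applied to the perfection diagram is computed by the perfection diagram of $R^i f_{\ast}\qE_0$, compatibly with the Frobenius-semilinear structures. This requires care because $\varphi_{X\ast}^n \qE_0$ is not literally $\qE_0$ but its restriction of scalars along $\varphi_X^n$, and one must track how $R^i f_{\ast}$ interacts with the absolute Frobenius on both $X$ and $Y$ via the commuting square $f \circ \varphi_X = \varphi_Y \circ f$. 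Once the comparison isomorphism $\varphi_{Y\ast} \circ R^i f_{\ast} \simeq R^i f_{\ast} \circ \varphi_{X\ast}$ is pinned down functorially and shown compatible with filtered colimits, the argument goes through. A secondary (but routine) technical cost is the Noetherian approximation step reducing an arbitrary $\F_p$-algebra $A$ to a finitely generated one while preserving properness, finite presentation, and the algebraicity of $\qE$; this is standard and can be quoted from \cite{Stacks}, using Corollary \ref{corollary.describe-indh} and Proposition \ref{proposition.obvo} to handle the passage from the Noetherian model back to $A$.
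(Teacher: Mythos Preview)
Your proposal is essentially the paper's own argument: reduce to an affine base, use Corollary~\ref{corollary.describe-indh} to reduce to holonomic $\qE$, and in the Noetherian case invoke Theorem~\ref{silphil} together with coherence of proper pushforward to conclude $R^i f_{\ast}\qE \simeq (R^i f_{\ast}\qE_0)^{\perfection}$ is holonomic. The paper packages this as Lemma~\ref{lemma.pdimage1} (the Noetherian case) and Lemma~\ref{lemma.pdimage2} (the passage to general $R$), and what you call the ``main obstacle'' --- the Frobenius-compatible identification of $R^i f_{\ast}$ with perfection --- is handled exactly as you suggest, via commutation of $R^i f_{\ast}$ with filtered colimits and the identity $\varphi_{Y\ast} \circ R^i f_{\ast} \simeq R^i f_{\ast} \circ \varphi_{X\ast}$.

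One point deserves more care than you give it. You propose to ``unwind'' the Noetherian reduction via Proposition~\ref{proposition.obvo}, but that proposition only says $g^{\diamond}$ preserves algebraicity; to apply it you would need to know that $R^i f_{\ast}\qE$ over $\Spec(A)$ agrees with $g^{\diamond}$ of its Noetherian model, which is the base change statement Proposition~\ref{ebly} --- and that is proved \emph{after} Theorem~\ref{properindh} and uses it. The paper avoids this circularity in Lemma~\ref{lemma.pdimage2} by a direct colimit computation: writing $R = \varinjlim R_{\alpha}$ over finitely generated subrings, one has $\mathrm{H}^i(X,\qE) \simeq \varinjlim_{\alpha} \mathrm{H}^i(X_{\alpha}, \qE_{\alpha})$, and each $R^{\perfection} \otimes_{R_{\alpha}^{\perfection}} \mathrm{H}^i(X_{\alpha},\qE_{\alpha})$ is holonomic over $R$ by the Noetherian case plus Proposition~\ref{propX51}, so the colimit is algebraic. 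This is the actual content of the ``unwinding'' step, and it yields only \emph{algebraic} (not holonomic) at this stage --- holonomicity over a general base (Corollary~\ref{properhol}) genuinely requires base change and comes later.
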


We begin by studying the Noetherian case.

\begin{lemma}\label{lemma.pdimage1}
Let $f: X \rightarrow Y$ be a proper morphism of Noetherian $\F_p$-schemes. If $\qE$ is a holonomic Frobenius sheaf on $X$,
then the higher direct images $R^{i} f_{\ast} \qE$ are holonomic Frobenius sheaves on $Y$.
\end{lemma}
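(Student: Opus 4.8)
Here is the plan for proving Lemma \ref{lemma.pdimage1}.

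The plan is to reduce the global statement to the affine case, Theorem \ref{theo76}, via a local computation of higher direct images. First I would observe that the assertion is local on $Y$, so I may assume $Y = \Spec(A)$ is affine (and Noetherian). Since $\qE$ is holonomic, Theorem \ref{silphil} lets me write $\qE \simeq \qE_0^{\perfection}$ for some $\qE_0 \in \QCoh_X^{\Frob}$ which is coherent as an $\calO_X$-module. The perfection functor is a filtered colimit along the Frobenius-twisted transition maps $\qE_0 \to \varphi_{X\ast} \qE_0 \to \varphi_{X\ast}^2 \qE_0 \to \cdots$, and since $f$ is proper (hence quasi-compact and quasi-separated), the higher direct image functors $R^i f_\ast$ commute with filtered colimits of quasi-coherent sheaves (see \cite[Tag 073E]{Stacks} or the fact that $R^i f_\ast$ is computed by a finite \v{C}ech-type complex on a Noetherian scheme). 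Hence $R^i f_\ast \qE \simeq \varinjlim_n R^i f_\ast (\varphi_{X\ast}^n \qE_0)$. Using the base-change isomorphism $\varphi_{Y\ast}^n \circ R^i f_\ast \simeq R^i f_\ast \circ \varphi_{X\ast}^n$ coming from the commuting square of Frobenius maps, this colimit is exactly the perfection of the Frobenius sheaf $R^i f_\ast \qE_0$ on $Y$.

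Now $R^i f_\ast \qE_0$ is a coherent $\calO_Y$-module, by the coherence theorem for proper morphisms of Noetherian schemes (\cite[Tag 02O5]{Stacks}), and it carries a Frobenius-semilinear structure $\varphi$ as described above. Passing to global sections over the affine $Y = \Spec(A)$, I obtain a finitely generated $A$-module $M_0 = \Gamma(Y, R^i f_\ast \qE_0)$ equipped with a Frobenius-semilinear endomorphism, so that $R^i f_\ast \qE \simeq \widetilde{M_0^{\perfection}}$ is holonomic by the very definition, Definition \ref{defhol2}. This already finishes the proof; the invocation of Theorem \ref{theo76} is not actually needed here (it will be needed in the general, non-Noetherian Theorem \ref{properindh} where one must drop the coherence theorem), but if one prefers a cleaner packaging one can instead argue that $R^i f_\ast$ carries the constructible {\etale} sheaf $\Sol(\qE) = \Sol(\qE_0^{\perfection})$ to a constructible sheaf (by the finiteness theorem in {\etale} cohomology, proper case) and appeal to Theorem \ref{globalRH}; but the direct coherent-sheaf argument above is the shortest route.

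The main obstacle is purely bookkeeping: one must be careful that the Frobenius-semilinear structure on $R^i f_\ast \qE_0$ is the correct one, i.e. that the base-change isomorphism $\varphi_{Y\ast}^n R^i f_\ast \simeq R^i f_\ast \varphi_{X\ast}^n$ used to identify the colimit with a perfection is compatible with the transition maps in both colimit systems. This is a standard compatibility of the projection/base-change morphisms for higher direct images with the functoriality in $\varphi$, and poses no real difficulty; one simply checks it on the level of a flasque (or \v{C}ech) resolution. The other point requiring a word is the commutation of $R^i f_\ast$ with the relevant filtered colimit, which holds because $f$ is quasi-compact and quasi-separated and $Y$ can be taken affine, so that $R^i f_\ast$ is computed by a bounded complex built from $f_\ast$ applied to an affine cover of $X$, and $f_\ast = \Gamma(X, -)$ on affines commutes with filtered colimits of quasi-coherent sheaves when $X$ is quasi-compact and quasi-separated.
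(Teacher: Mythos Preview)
Your proof is correct and follows essentially the same approach as the paper: write $\qE = \qE_0^{\perfection}$ via Theorem \ref{silphil}, use that $R^i f_\ast$ commutes with filtered colimits to get $R^i f_\ast \qE \simeq (R^i f_\ast \qE_0)^{\perfection}$, and invoke coherence of proper direct images to conclude holonomicity. The paper's version is slightly terser (it applies Theorem \ref{silphil} a second time rather than reducing to affine $Y$ and citing Definition \ref{defhol2} directly), and your remarks about Theorem \ref{theo76} and the \'etale-cohomology alternative are tangential here, but the core argument is the same.
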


\begin{proof}
Invoking Theorem \ref{silphil}, we can write $\qE = \qE_0^{\perfection}$, where $\qE_0$ is a Frobenius sheaf on $X$ which is coherent
as an $\calO_{X}$-module. It follows from the direct image theorem \cite[Tag 02O3]{Stacks} that the higher direct images
$R^{i} f_{\ast} \qE_0$ are coherent $\calO_{Y}$-modules. Since the functors $R^{i} f_{\ast}$ commute with filtered direct limits, we have canonical
isomorphisms
$$ R^{i} f_{\ast} \qE \simeq R^{i} f_{\ast} (\qE_0^{\perfection}) \simeq ( R^{i} f_{\ast} \qE_0 )^{\perfection}.$$
Applying Theorem \ref{silphil} again, we see that each $R^{i} f_{\ast} \qE$ is holonomic.
\end{proof}

\begin{lemma}\label{lemma.pdimage2}
Let $R$ be a commutative $\F_p$-algebra, let $f: X \rightarrow \Spec(R)$ be a morphism of schemes which is proper and of finite presentation,
and let $\qE$ be an holonomic Frobenius sheaf on $X$. Then, for every integer $i$, the cohomology group $\mathrm{H}^{i}( X, \qE )$
is an algebraic Frobenius module over $R$.
\end{lemma}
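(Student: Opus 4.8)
The statement reduces to a Noetherian-approximation argument combined with Lemma~\ref{lemma.pdimage1}. First I would reduce to the Noetherian case: writing $R$ as a filtered colimit of its finitely generated $\F_p$-subalgebras $R_\alpha$, standard limit results for schemes (\cite[Tags 01ZM, 081C, 0A0K]{Stacks}) allow me to descend the data $(X,f,\qE)$ to some finite stage. More precisely, there is an index $\alpha$, a morphism $f_\alpha : X_\alpha \to \Spec(R_\alpha)$ which is proper (and of finite presentation, automatically, since $R_\alpha$ is Noetherian), and a Frobenius sheaf $\qE_\alpha$ on $X_\alpha$, together with isomorphisms $X \simeq X_\alpha \times_{\Spec(R_\alpha)} \Spec(R)$ and $\qE \simeq g^\diamond \qE_\alpha$, where $g : X \to X_\alpha$ is the projection. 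Here one must check that $\qE_\alpha$ can be chosen holonomic: using Theorem~\ref{globalRH} write $\qE = \RH(\sheafF)$ for a constructible $\sheafF$, descend $\sheafF$ to a constructible sheaf $\sheafF_\alpha$ on $X_\alpha$ (again by \cite[Tag 09YQ]{Stacks} or the finite-presentation theory of constructible sheaves), and set $\qE_\alpha = \RH(\sheafF_\alpha)$, which is holonomic by the Noetherian case of Theorem~\ref{globalRH}; compatibility of $\RH$ with pullback (Variant~\ref{variant.compatibility}) gives $g^\diamond \qE_\alpha \simeq \qE$.

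Next I would invoke flat base change for cohomology. Since $\Spec(R) \to \Spec(R_\alpha)$ is flat, the canonical base-change map
$$ R \otimes_{R_\alpha} \mathrm{H}^i(X_\alpha, \qE_\alpha) \longrightarrow \mathrm{H}^i(X, g^*\qE_\alpha) $$
is an isomorphism (\cite[Tag 02KH]{Stacks}), and since $g^*\qE_\alpha$ is a Frobenius subsheaf whose perfection is $\qE = g^\diamond\qE_\alpha$ and the perfection functor is exact and commutes with the $R^i f_\ast$, one gets $\mathrm{H}^i(X,\qE) \simeq \mathrm{H}^i(X, g^*\qE_\alpha)^{\perfection}$; hence $\mathrm{H}^i(X,\qE) \simeq (R \otimes_{R_\alpha} \mathrm{H}^i(X_\alpha, \qE_\alpha))^{\perfection}$ as Frobenius modules. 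By Lemma~\ref{lemma.pdimage1} applied to $f_\alpha$, the module $M_\alpha := \mathrm{H}^i(X_\alpha,\qE_\alpha) = \mathrm{H}^0(\Spec(R_\alpha), R^i f_{\alpha\ast}\qE_\alpha)$ is a holonomic Frobenius module over $R_\alpha$. Then $\mathrm{H}^i(X,\qE) \simeq \iota^\diamond M_\alpha$, where $\iota : R_\alpha \hookrightarrow R$, and this is algebraic by Corollary~\ref{corX62} (or directly holonomic-becomes-algebraic via Proposition~\ref{propX51} and Proposition~\ref{propX58}).

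Finally, to handle a general algebraic (not necessarily holonomic) $\qE$, I would use Corollary~\ref{corollary.describe-indh}: on the quasi-compact quasi-separated scheme $X$ (which is proper, hence qcqs, over $\Spec(R)$), every algebraic Frobenius sheaf is a filtered colimit of holonomic ones, $\qE \simeq \varinjlim \qE_\beta$. Since $X \to \Spec(R)$ is quasi-compact and quasi-separated, the cohomology functors $\mathrm{H}^i(X,-)$ commute with filtered colimits of quasi-coherent sheaves (\cite[Tag 0739]{Stacks}), so $\mathrm{H}^i(X,\qE) \simeq \varinjlim \mathrm{H}^i(X,\qE_\beta)$; each term is algebraic by the holonomic case just treated, and the class of algebraic Frobenius modules is closed under filtered colimits (it equals $\Ind(\Mod_R^{\hol})$ by Theorem~\ref{theoX54}), so $\mathrm{H}^i(X,\qE)$ is algebraic.

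\textbf{Main obstacle.} The delicate point is the Noetherian-approximation step: one must produce a descent of the \emph{Frobenius sheaf} $\qE$ (not merely its underlying quasi-coherent sheaf) to a Noetherian base, compatibly with its holonomicity, and then verify that flat base change is compatible with the Frobenius-semilinear structure. Routing the descent through the constructible sheaf $\sheafF = \Sol(\qE)$ and the functor $\RH$ (rather than descending $\qE$ directly) is what makes this clean, since the theory of constructible {\etale} sheaves has a well-developed finite-presentation formalism; the verification that all the comparison isomorphisms are $F$-equivariant is then a routine naturality check.
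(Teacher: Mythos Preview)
Your approach is essentially the same as the paper's---descend $\qE$ through $\RH$ applied to a descended constructible sheaf, then invoke Lemma~\ref{lemma.pdimage1} on the Noetherian model---but there is a genuine gap in the second paragraph. You assert that $\Spec(R) \to \Spec(R_\alpha)$ is flat and then invoke flat base change for cohomology. The inclusion of a finitely generated subalgebra $R_\alpha \hookrightarrow R$ is \emph{not} flat in general (e.g.\ $\F_p[t] \hookrightarrow \F_p[t,s]/(ts)$), so the isomorphism $R \otimes_{R_\alpha} \mathrm{H}^i(X_\alpha, \qE_\alpha) \simeq \mathrm{H}^i(X, g^{\ast}\qE_\alpha)$ is unjustified. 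You cannot repair this by appealing to the perfected base change of Proposition~\ref{ebly}, since that result is proved \emph{after} (and using) the present lemma.

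The paper repairs exactly this point by refusing to compare $R$ to a single $R_\alpha$. Instead, after fixing one Noetherian model $R_0$ as you do, it writes $R$ as the filtered colimit of \emph{all} finitely generated subrings $R_\alpha \supseteq R_0$, sets $X_\alpha = \Spec(R_\alpha)\times_{\Spec(R_0)} X_0$ and $\qE_\alpha = \pi_\alpha^{\diamond}\qE_0$, and uses the standard fact that for an inverse limit of schemes with affine transition maps one has $\mathrm{H}^i(X,\qE) \simeq \varinjlim_\alpha \mathrm{H}^i(X_\alpha,\qE_\alpha)$ (no flatness required). Each term in this colimit is holonomic over $R_\alpha$ by Lemma~\ref{lemma.pdimage1}, hence $R^{\perfection}\otimes_{R_\alpha^{\perfection}}\mathrm{H}^i(X_\alpha,\qE_\alpha)$ is holonomic over $R$, and the colimit is algebraic by Proposition~\ref{propX53}. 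Your third paragraph (reducing a general algebraic $\qE$ to the holonomic case) is not needed here: the lemma already assumes $\qE$ holonomic, and that reduction is precisely the content of Theorem~\ref{properindh}, proved immediately afterward.
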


\begin{remark}
In the situation of Lemma \ref{lemma.pdimage2}, one can say more: the cohomology groups $\mathrm{H}^{i}(X, \qE)$ are actually holonomic
Frobenius modules over $R$ (see Corollary \ref{properhol} below).
\end{remark}

\begin{proof}[Proof of Lemma \ref{lemma.pdimage2}]
Applying Theorem \ref{globalRH}, we can choose an isomorphism $\qE \simeq \RH( \sheafF )$ for
some constructible sheaf $\sheafF \in \Shv_{\mathet}^{c}( X, \F_p )$. 
Using Noetherian approximation \cite[Tags 01ZM and 081F]{Stacks}, we can choose a finitely generated $\F_p$-subalgebra $R_0 \subseteq R$
and a pullback diagram of schemes
$$ \xymatrix{ X \ar[d]^{f} \ar[r]^{\pi} & X_0 \ar[d]^{f_0} \\
\Spec(R) \ar[r] & \Spec(R_0), }$$
where $f_0$ is proper. Enlarging $R_0$ if necessary, we can further arrange that $\sheafF = \pi^{\ast} \sheafF_0$, where $\sheafF_0$ is a constructible
$p$-torsion {\etale} sheaf on $X_0$ (see \cite[\S 1, Proposition 4.17]{FK}). Set $\qE_0 = \RH( \sheafF_0 )$, so that
$\qE \simeq \pi^{\diamond} \qE_0$ (see Variant \ref{variant.compatibility}).

Write $R$ as a filtered direct limit of finitely generated subrings $R_{\alpha} \subseteq R$ containing $R_0$.
For each index $\alpha$, set $X_{\alpha} = \Spec(R_{\alpha} ) \times_{ \Spec(R_0) } X_0$, let
$\pi_{\alpha}: X_{\alpha} \rightarrow X_0$ be the projection onto the second factor, and set
$\qE_{\alpha} = \pi_{\alpha}^{\diamond} \qE_0$. Then each $\qE_{\alpha}$ is a holonomic Frobenius sheaf on
$X_{\alpha}$ (Proposition \ref{proposition.obvo}). Invoking Lemma \ref{lemma.pdimage1}, we see that
the cohomology group $\mathrm{H}^{i}( X_{\alpha}, \qE_{\alpha} )$ is a holonomic Frobenius module over
$R_{\alpha}$, so that the tensor product $R^{\perfection} \otimes_{ R_{\alpha}^{\perfection} } \qE_{\alpha}$
is a holonomic Frobenius module over $R$ (Proposition \ref{propX51}). We now compute
\begin{eqnarray*}
\mathrm{H}^{i}( X, \qE ) & \simeq & \varinjlim \mathrm{H}^{i}( X_{\alpha}, \qE_{\alpha} )  \\
& \simeq & R^{\perfection} \otimes_{ R^{\perfection} } \varinjlim \mathrm{H}^{i}( X_{\alpha}, \qE_{\alpha} ) \\
& \simeq & \varinjlim R^{\perfection} \otimes_{ R_{\alpha}^{\perfection}} \mathrm{H}^{i}( X_{\alpha} , \qE_{\alpha} ).
\end{eqnarray*}
Since the collection of holonomic Frobenius modules over $R$ is closed under direct limits (Proposition \ref{propX53}), it follows
that $\mathrm{H}^{i}(X, \qE)$ is algebraic. 
\end{proof}

\begin{proof}[Proof of Theorem \ref{properindh}]
Let $f: X \rightarrow Y$ be a morphism of $\F_p$-schemes which is proper and of finite presentation, and let
$\qE$ be an algebraic Frobenius sheaf on $X$. We wish to show that each higher direct image
$R^{i} f_{\ast} \qE$ is an algebraic Frobenius sheaf on $Y$. This assertion is local on $Y$, so we may
assume without loss of generality that $Y = \Spec(R)$ is affine. In this case, $X$ is quasi-compact and quasi-separated,
so Corollary \ref{corollary.describe-indh} guarantees that we can write $\qE$ as a filtered direct limit $\varinjlim \qE_{\alpha}$, where each $\qE_{\alpha}$
is holonomic. Since the functor $\qE \mapsto \mathrm{H}^{i}(X, \qE)$ commutes with filtered direct limits and the collection of
algebraic $R$-modules is closed under direct limits (Proposition \ref{propX53}), we may replace $\qE$ by $\qE_{\alpha}$
and thereby reduce to the case where $\qE$ is holonomic. In this case, the desired result follows from Lemma \ref{lemma.pdimage2}.
\end{proof}

We now apply Theorem \ref{properindh} to the study of our Riemann-Hilbert correspondence.

\begin{theorem}\label{properRH1}
Let $f: X \rightarrow Y$ be a morphism of $\F_p$-schemes which is proper and of finite presentation. For every algebraic Frobenius sheaf $\qE$ on $X$, we have
canonical isomorphisms $\Sol( R^{i} f_{\ast} \qE ) \simeq R^i f_{\ast} \Sol( \qE )$ in the category $\Shv_{\mathet}( Y, \F_p)$.
\end{theorem}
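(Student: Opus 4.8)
The plan is to reduce the statement to a combination of the compatibility of the solution functor with underived direct images (Proposition \ref{proposition.compatibility}) together with the fact that $\RH$ is exact and inverse to $\Sol$ (Theorem \ref{globalRH}), applied to an injective resolution. More precisely, write $\qE = \RH(\sheafF)$ for some $\sheafF \in \Shv_{\mathet}(X,\F_p)$ (using Theorem \ref{globalRH}), so that $\Sol(\qE) \simeq \sheafF$. I want to produce a resolution of $\qE$ in $\QCoh_X^{\alg}$ by objects whose (underived) direct images compute the $R^i f_\ast$ and whose solution sheaves form an $f_\ast$-acyclic resolution of $\sheafF$. The natural candidate is obtained by choosing an injective resolution $0 \to \sheafF \to \sheafI^0 \to \sheafI^1 \to \cdots$ in $\Shv_{\mathet}(X,\F_p)$ and applying $\RH$, which is exact, to get a resolution $0 \to \qE \to \RH(\sheafI^0) \to \RH(\sheafI^1) \to \cdots$ in $\QCoh_X^{\alg}$. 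Applying $\Sol$ recovers the original injective resolution, since $\Sol \circ \RH \simeq \id$.

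The key steps, in order, are: (1) using Theorem \ref{globalRH} and exactness of $\RH$, reduce to exhibiting the above resolution; (2) show that each $\RH(\sheafI^j)$ is $f_\ast$-acyclic as a Frobenius sheaf, i.e.\ $R^i f_\ast \RH(\sheafI^j) = 0$ for $i>0$ — this is where the work lies, and I would deduce it from the short exact sequence $0 \to \Sol(\RH(\sheafI^j)) \to \RH(\sheafI^j) \xrightarrow{\id - \varphi} \RH(\sheafI^j) \to 0$ of Remark \ref{solcomplex} (valid since $\RH(\sheafI^j)$ is algebraic), which upon applying $Rf_\ast$ gives a long exact sequence relating $R^i f_\ast \RH(\sheafI^j)$ to $R^i f_\ast \sheafI^j$; since $\sheafI^j$ is injective, $R^i f_\ast \sheafI^j = 0$ for $i>0$, and an induction on $i$ using that $\id - \varphi$ acts invertibly on the relevant quotients (or rather, chasing the long exact sequence) forces $R^i f_\ast \RH(\sheafI^j)$ to vanish for $i > 0$ as well; (3) apply Proposition \ref{proposition.compatibility} to identify $f_\ast \Sol(\RH(\sheafI^j)) \simeq \Sol(f_\ast \RH(\sheafI^j))$, i.e.\ $f_\ast \sheafI^j \simeq \Sol(f_\ast \RH(\sheafI^j))$; (4) conclude that the complex $f_\ast \RH(\sheafI^\bullet)$ computes $R^i f_\ast \qE$ (by $f_\ast$-acyclicity), that $\Sol$ applied to it is the complex $f_\ast \sheafI^\bullet$ computing $R^i f_\ast \sheafF$, and that since $\Sol$ is exact on the algebraic subcategory (the cohomology sheaves $R^i f_\ast \qE$ are algebraic by Theorem \ref{properindh}, and $\Sol$ restricted to $\QCoh_Y^{\alg}$ is an equivalence of abelian categories), $\Sol$ commutes with taking cohomology of this complex, yielding $\Sol(R^i f_\ast \qE) \simeq R^i f_\ast \sheafF \simeq R^i f_\ast \Sol(\qE)$.

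The main obstacle I anticipate is step (2): establishing that $\RH$ of an injective $\F_p$-sheaf is $f_\ast$-acyclic. The argument via the Artin–Schreier-type sequence of Remark \ref{solcomplex} and the long exact sequence for $Rf_\ast$ should work, but one must be careful that the relevant maps $\id - \varphi$ on the higher direct images are surjective at each stage; the cleanest way may be to argue directly that $R^i f_\ast$ of a perfect Frobenius sheaf together with the vanishing of $R^i f_\ast \sheafI^j$ for the underlying $\F_p$-sheaf pins down the Frobenius-module structure, using that $R^i f_\ast \RH(\sheafI^j)$ is in any case algebraic (by Theorem \ref{properindh}, since $\RH(\sheafI^j)$ is algebraic) so its vanishing can be tested after applying $\Sol$, which kills it precisely because $\Sol$ is exact on algebraics and $\Sol(R^i f_\ast \RH(\sheafI^j))$ sits between copies of $R^i f_\ast \sheafI^j = 0$ in the associated long exact sequence. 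An alternative, perhaps more robust, route to the whole theorem is to avoid acyclic resolutions entirely: realize $Rf_\ast$ at the derived level, use the derived version of Proposition \ref{proposition.compatibility} together with exactness of $\Sol$ on the essential image, and read off the cohomology sheaves. Either way, the finite-presentation-and-proper hypotheses enter only through Theorem \ref{properindh} (to know the outputs are algebraic, hence in the domain where $\Sol$ is an equivalence); the compatibility itself is then formal.
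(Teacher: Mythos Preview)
Your argument is correct, but it is a considerably longer route than the paper's. The paper does not pass through an injective resolution at all: it applies $Rf_\ast$ (in the {\'e}tale sense) directly to the Artin--Schreier sequence $0 \to \Sol(\qE) \to \qE \xrightarrow{\id - \varphi} \qE \to 0$ of Remark~\ref{solcomplex}. The resulting long exact sequence yields short exact sequences
\[
0 \longrightarrow \coker\bigl((\id - \varphi)\colon R^{i-1}f_\ast \qE \to R^{i-1}f_\ast \qE\bigr) \longrightarrow R^i f_\ast \Sol(\qE) \longrightarrow \Sol(R^i f_\ast \qE) \longrightarrow 0,
\]
and the left-hand term vanishes because $R^{i-1}f_\ast \qE$ is algebraic (Theorem~\ref{properindh}), so $\id - \varphi$ is an epimorphism on it by Remark~\ref{solcomplex} again. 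That is the whole proof.

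Your approach re-derives this same surjectivity of $\id - \varphi$, but applied to each $\RH(\sheafI^j)$ rather than to $\qE$ itself, in order to obtain $f_\ast$-acyclicity; then you assemble the resolution and invoke exactness of $\Sol$ on algebraics. This works, and your step~(2) is exactly right: the long exact sequence forces $\id - \varphi$ to be an isomorphism on $R^i f_\ast \RH(\sheafI^j)$ for $i>0$, so $\Sol$ of it vanishes, and algebraicity plus Theorem~\ref{globalRH} finishes. But you are effectively running the paper's argument once per term of the resolution, then reassembling, when a single application to $\qE$ suffices. The ``alternative, perhaps more robust, route'' you mention at the end is essentially what the paper does, and is indeed the cleaner way.
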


\begin{proof}
Since $\qE$ is algebraic, we have an exact sequence 
$$ 0 \rightarrow \Sol( \qE ) \rightarrow \qE \xrightarrow{ \id - \varphi_{ \qE } } \qE \rightarrow 0$$
in the category of {\etale} sheaves on $X$ (Remark \ref{solcomplex}). This gives rise to a long exact sequence of higher direct images
$$ R^{i-1} f_{\ast} \qE \xrightarrow{ \id - \varphi } R^{i-1} f_{\ast} \qE
\rightarrow R^{i} f_{\ast} \Sol( \qE ) \rightarrow R^i f_{\ast} \qE \xrightarrow{ \id - \varphi} R^{i} f_{\ast} \qE$$
which gives rise to short exact sequence of {\etale} sheaves
$$ 0 \rightarrow \sheafF \rightarrow R^{i} f_{\ast} \Sol( \qE ) \rightarrow \Sol( R^{i} f_{\ast} \qE ) \rightarrow 0,$$
where $\sheafF$ denotes the cokernel of the map $(\id - \varphi): R^{i-1} f_{\ast} \qE \rightarrow R^{i-1} f_{\ast} \qE$.
It will therefore suffice to show that the map $(\id - \varphi): R^{i-1} f_{\ast} \qE \rightarrow R^{i-1} f_{\ast} \qE$
is an epimorphism of {\etale} sheaves on $Y$. This follows from Remark \ref{solcomplex}, since the
Frobenius sheaf $R^{i-1} f_{\ast} \qE$ is algebraic by virtue of Theorem \ref{properindh}.
\end{proof}

\begin{corollary}\label{properindh2}
Let $f: X \rightarrow Y$ be a morphism of $\F_p$-schemes which is proper and of finite presentation. Then, for any
$p$-torsion {\etale} sheaf $\sheafF$ on $X$, we have canonical isomorphisms
$\RH( R^{i} f_{\ast} \sheafF ) \simeq R^{i} f_{\ast} \RH( \sheafF )$.
\end{corollary}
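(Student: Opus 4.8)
The statement to prove is Corollary~\ref{properindh2}: for a proper, finitely presented morphism $f \colon X \to Y$ of $\F_p$-schemes and any $p$-torsion \etale{} sheaf $\sheafF$ on $X$, there are canonical isomorphisms $\RH(R^i f_{\ast} \sheafF) \simeq R^i f_{\ast} \RH(\sheafF)$ in $\QCoh_Y^{\alg}$.

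Here is the plan. The idea is simply to combine Theorem~\ref{properRH1} with the fact that $\RH$ and $\Sol$ are mutually inverse equivalences (Theorem~\ref{globalRH}, or rather its local incarnations assembled globally). First I would set $\qE = \RH(\sheafF)$, an algebraic Frobenius sheaf on $X$, so that $\Sol(\qE) \simeq \sheafF$. Theorem~\ref{properRH1} then provides canonical isomorphisms $\Sol(R^i f_{\ast} \qE) \simeq R^i f_{\ast} \Sol(\qE) \simeq R^i f_{\ast} \sheafF$ in $\Shv_{\mathet}(Y,\F_p)$. Meanwhile, Theorem~\ref{properindh} guarantees that $R^i f_{\ast} \qE$ is an algebraic Frobenius sheaf on $Y$, so it lies in the domain where $\Sol$ is an equivalence onto $\Shv_{\mathet}(Y,\F_p)$ with inverse $\RH$. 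Applying $\RH$ to the isomorphism $\Sol(R^i f_{\ast}\qE) \simeq R^i f_{\ast}\sheafF$ therefore yields
$$ R^i f_{\ast} \qE \simeq \RH\bigl(\Sol(R^i f_{\ast}\qE)\bigr) \simeq \RH\bigl(R^i f_{\ast} \sheafF\bigr), $$
and unwinding $\qE = \RH(\sheafF)$ gives $R^i f_{\ast}\RH(\sheafF) \simeq \RH(R^i f_{\ast}\sheafF)$, as desired.

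The only substantive point to be careful about is the word ``canonical'': I would check that the isomorphism just produced does not depend on auxiliary choices and is functorial in $\sheafF$. This is automatic here because every ingredient is canonical --- the equivalence $\RH \dashv \Sol$ and its unit/counit are canonical (Remark~\ref{remark.adjoin}), the isomorphism of Theorem~\ref{properRH1} is asserted to be canonical, and the Grothendieck spectral sequence / long exact sequence machinery underlying $R^i f_{\ast}$ is functorial. Concretely, the composite isomorphism arises by applying the equivalence $\RH$ to a canonical isomorphism of \etale{} sheaves, so it is itself canonical.

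I do not expect any real obstacle: this corollary is a formal consequence of results already established in the excerpt, and the proof is a two-line diagram chase. If anything deserves a remark, it is that one should phrase the argument so that it also records compatibility with the long exact sequences attached to a short exact sequence of sheaves on $X$ --- but even this follows from the functoriality of all the equivalences involved, so I would state it (if at all) as an observation rather than prove it in detail. The proof as written above should suffice.
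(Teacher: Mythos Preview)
Your proposal is correct and follows essentially the same approach as the paper: set $\qE = \RH(\sheafF)$, apply Theorem~\ref{properRH1} to get $\Sol(R^i f_\ast \qE) \simeq R^i f_\ast \sheafF$, invoke Theorem~\ref{properindh} to ensure $R^i f_\ast \qE$ is algebraic, and then use that $\RH$ and $\Sol$ are inverse equivalences on algebraic Frobenius sheaves. The paper phrases the last step slightly differently---it passes to the adjoint comparison map $\gamma \colon \RH(R^i f_\ast \sheafF) \to R^i f_\ast \RH(\sheafF)$ via Remark~\ref{remark.adjoin} and observes that $\gamma$ is an isomorphism because its target is algebraic---but this is the same argument in different clothing.
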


\begin{proof}
Using Theorem \ref{properRH1}, we a canonical isomorphism
$$ R^{i} f_{\ast} \sheafF \simeq R^{i} f_{\ast} \Sol( \RH(\sheafF) )  \simeq 
\Sol( R^{i} f_{\ast} \RH( \sheafF) ),$$
which is adjoint to a comparison map $\gamma: \RH( R^{i} f_{\ast} \sheafF ) \rightarrow R^{i} f_{\ast} \RH( \sheafF )$
(Remark \ref{remark.adjoin}). Since the Frobenius sheaf $R^{i} f_{\ast} \RH(\sheafF )$ is
algebraic (Theorem \ref{properindh}), the map $\gamma$ is an isomorphism.
\end{proof}

\subsection{Application: The Proper Base Change Theorem}\label{sec12sub6}

Suppose we are given a pullback diagram of schemes $\sigma:$
$$ \xymatrix{ X' \ar[r]^{g'} \ar[d]^{f'} & X \ar[d]^{f} \\
Y' \ar[r]^{g} & Y. }$$
For every {\etale} sheaf $\sheafF$ on $X$ and every integer $n \geq 0$, we have a natural comparison map
$\alpha: g^{\ast} R^{n} f_{\ast} \sheafF \rightarrow R^{n} f'_{\ast} g'^{\ast} \sheafF$
in the category of {\etale} sheaves on $Y'$. The {\it proper base change theorem} in {\etale} cohomology
asserts that, if the morphism $f$ is proper and $\sheafF$ is a torsion sheaf, then the map $\alpha$ is an isomorphism \cite[Tag 095S]{Stacks}.
Our goal in this section is to show that, in special case where $\sigma$ is a diagram of $\F_p$-schemes and $\sheafF$ is a $p$-torsion sheaf, the
proper base change theorem can be deduced from the results of this paper in an essentially formal way.

We begin with some general remarks. Let $\sigma$ be as above, and suppose that the morphisms
$f$ and $f'$ are quasi-compact and quasi-separated. In this case, we can associate to every quasi-coherent sheaf $\qE$ on $X$ a comparison map
$$ \beta: g^{\ast} R^{n} f_{\ast} \qE \rightarrow R^{n} f'_{\ast} g'^{\ast} \qE$$
in the category $\QCoh_{Y}$ of quasi-coherent sheaves on $Y'$. Moreover, if $\sigma$ is a diagram of $\F_p$-schemes and $\qE$ is a Frobenius sheaf on $X$, then $\beta$ is a morphism of Frobenius sheaves.
If, in addition, the Frobenius sheaf $\qE$ is perfect, then the perfection of $\beta$ supplies a comparison map
$\gamma: g^{\diamond} R^{n} f_{\ast} \qE \rightarrow R^n f'_{\ast} g'^{\diamond} \qE$ in the category $\QCoh_{Y'}^{\perf}$.

\begin{proposition}\label{ebly}
Suppose we are given a pullback diagram of $\F_p$-schemes
$$ \xymatrix{ X' \ar[r]^{g'} \ar[d]^{f'} & X \ar[d]^{f} \\
Y' \ar[r]^{g} & Y, }$$
where $f$ is proper and of finite presentation. Then, for any algebraic Frobenius sheaf $\qE$ on $X$, the comparison maps
$\gamma: g^{\diamond} R^{n} f_{\ast} \qE \rightarrow R^n f'_{\ast} g'^{\diamond} \qE$ are isomorphisms.
\end{proposition}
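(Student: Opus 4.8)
The plan is to reduce Proposition \ref{ebly} to the proper base change statement for $p$-torsion {\etale} sheaves, which we are entitled to regard as known (\cite[Tag 095S]{Stacks}), by transporting everything through the Riemann–Hilbert correspondence of Theorem \ref{globalRH} and its compatibilities. First I would apply Theorem \ref{globalRH} to write $\qE \simeq \RH(\sheafF)$ for some $p$-torsion {\etale} sheaf $\sheafF$ on $X$; this is possible precisely because $\qE$ is algebraic. By Corollary \ref{properindh2}, the higher direct image $R^{n} f_{\ast} \qE$ is identified with $\RH(R^{n} f_{\ast}\sheafF)$, which is again algebraic; similarly $g'^{\diamond}\qE \simeq \RH(g'^{\ast}\sheafF)$ by Variant \ref{variant.compatibility}, and since $f'$ is proper and of finite presentation (being a base change of $f$), Corollary \ref{properindh2} again gives $R^{n} f'_{\ast} g'^{\diamond}\qE \simeq \RH(R^{n} f'_{\ast} g'^{\ast}\sheafF)$. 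Finally $g^{\diamond} R^{n} f_{\ast}\qE \simeq \RH(g^{\ast} R^{n} f_{\ast}\sheafF)$, again by Variant \ref{variant.compatibility}.

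The second step is to check that, under these identifications, the comparison map $\gamma$ of the proposition goes over to $\RH$ applied to the {\etale} comparison map
$$\alpha: g^{\ast} R^{n} f_{\ast}\sheafF \longrightarrow R^{n} f'_{\ast} g'^{\ast}\sheafF.$$
Both $\gamma$ and $\alpha$ are instances of the Beck–Chevalley (base change) transformation associated to the square $\sigma$, one formed in the world of perfect Frobenius sheaves and the other in the world of {\etale} sheaves. Since the Riemann–Hilbert functor intertwines $f^{\diamond}$ with $f^{\ast}$ (Variant \ref{variant.compatibility}) and $R^{n}f_{\ast}$ with $R^{n}f_{\ast}$ (Corollary \ref{properindh2}), and these identifications are themselves compatible with the unit/counit data of the relevant adjunctions, the naturality of Beck–Chevalley transformations under such compatible equivalences forces $\gamma$ to correspond to $\RH(\alpha)$. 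Because $\RH$ is an equivalence of categories (Theorem \ref{globalRH}), $\gamma$ is an isomorphism if and only if $\alpha$ is, and the latter is the content of the proper base change theorem applied to the proper morphism $f$ and the $p$-torsion (hence torsion) sheaf $\sheafF$.

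A small point worth attending to is that $R^{n}f_{\ast}$ on quasi-coherent Frobenius sheaves and $R^{n}f_{\ast}$ on {\etale} sheaves must be compared, not merely $f_{\ast}$; Corollary \ref{properindh2} supplies exactly the isomorphism $\RH(R^{n}f_{\ast}\sheafF)\simeq R^{n}f_{\ast}\RH(\sheafF)$ at all cohomological degrees, so this causes no difficulty, but it means the argument genuinely uses the derived compatibility and not just the underived one. Likewise one should note that $f'$ inherits properness and finite presentation from $f$ by base change, so Corollary \ref{properindh2} is legitimately applicable on the $X'$ side.

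The main obstacle I expect is the bookkeeping in the second step: verifying carefully that the purely formal Beck–Chevalley transformation on the Frobenius side matches $\RH$ of the Beck–Chevalley transformation on the {\etale} side. This is a diagram-chase rather than a substantive difficulty — it amounts to checking that the compatibility isomorphisms of Variant \ref{variant.compatibility} and Corollary \ref{properindh2} are mates of one another with respect to the $(f^{\diamond},f_{\ast})$ and $(g^{\diamond},g_{\ast})$ adjunctions — but it is the only place where something must actually be argued rather than cited, and the proof should be written so that this compatibility is spelled out (or, more efficiently, deduced from the fact that all the relevant equivalences are adjoint equivalences and mates are preserved under composition and whiskering).
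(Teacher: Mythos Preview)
Your argument is correct, but it takes the opposite route from the paper and thereby defeats the purpose of this section. The paper proves Proposition \ref{ebly} \emph{directly}, by reducing to the affine case $Y=\Spec(R)$, $Y'=\Spec(S)$, computing both sides via the \v{C}ech complex $C^{\ast}$ of an affine cover of $X$, and then showing that the natural map $S^{\perfection}\otimes_{R^{\perfection}}\mathrm{H}^{n}(C^{\ast})\to \mathrm{H}^{n}(S^{\perfection}\otimes_{R^{\perfection}}C^{\ast})$ is an isomorphism. The key input is the Tor-vanishing for algebraic Frobenius modules (Theorem \ref{theoX9} and Remark \ref{TorVanishingRings}), which kills the higher terms in the relevant spectral sequence. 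Proper base change for $p$-torsion \'etale sheaves (Corollary \ref{corollary.basechange}) is then \emph{deduced} from Proposition \ref{ebly} via exactly the identification $\RH(\alpha)\simeq\gamma$ that you describe.

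So you have inverted the logical flow: you assume proper base change (from the Stacks Project) and deduce Proposition \ref{ebly}, whereas the paper proves Proposition \ref{ebly} by hand and deduces proper base change. Your approach is shorter and perfectly valid as a proof of the proposition in isolation, but it forfeits the main payoff of \S\ref{sec12sub6}, which is advertised in the introduction as a new proof of (a special case of) the proper base change theorem. The paper's direct argument is where the content lies: it shows that the algebraicity hypothesis on $\qE$ forces enough Tor-vanishing over $R^{\perfection}$ to make coherent base change work up to perfection, which is an interesting statement in its own right.
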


\begin{proof}
The assertion is local on $Y$ and $Y'$; we may therefore assume without loss of generality that $Y = \Spec(R)$ and $Y' = \Spec(S)$ are affine. In this case, we wish to show that
the canonical map $S^{\perfection} \otimes_{ R^{\perfection} } \mathrm{H}^{\ast}( X, \qE ) \rightarrow \mathrm{H}^{\ast}(X', g'^{\diamond} \qE )$ is an isomorphism.
Choose a finite covering $\{ U_i \}$ of $X$ by affine open subsets and let $\{ U'_i \}$ denote the open covering of $X'$ given by $U'_i = g'^{-1} U_i$.
Let $C^{\ast}$ denote the \v{C}ech complex of $\{ U_i \}$ with coefficients in the sheaf $\qE$, so that we can identify
$\mathrm{H}^{\ast}( X, \qE)$ with the cohomology of the cochain complex $C^{\ast}$. Note that for any affine open subset $U \subseteq X$
having inverse image $U' \subseteq X'$, we have a canonical isomorphism $(g'^{\diamond} \qE)(U') = S^{\perfection} \otimes_{ R^{\perfection} } \qE(U)$,
so that $S^{\perfection} \otimes_{ R^{\perfection} } C^{\ast}$ is the \v{C}ech complex of the open covering $\{ U'_i \}$ with coefficients in the sheaf $g'^{\diamond} \qE$.
We can therefore identify $\gamma$ with the canonical map $$S^{\perfection} \otimes_{ R^{\perfection} } \mathrm{H}^{n}( C^{\ast} ) \rightarrow
\mathrm{H}^{n}( S^{\perfection} \otimes_{ R^{\perfection} } C^{\ast} ).$$

For every affine open subset $U \subseteq X$ with inverse image $U' \subseteq X'$, Remark \ref{TorVanishingRings} supplies isomorphisms
$$\Tor_{m}^{ R^{\perfection} }( S^{\perfection}, \calO^{\perfection}_X(U) ) = \begin{cases} \calO^{\perfection}_{X'}(U') & \text{ if } m = 0 \\
0 & \text{otherwise. } \end{cases}$$
It follows that the canonical maps $$\Tor_{k}^{ R^{\perfection} }( S^{\perfection}, \qE(U) )
\rightarrow \Tor_{k}^{ \calO_{X}^{\perfection}(U) }( \calO_{X'}^{\perfection}(U'), \qE(U) )$$
are isomorphisms. Our assumption that $\qE$ is algebraic guarantees that $\qE(U)$ is an
algebraic Frobenius module over $\calO_{X}(U)$, so that the groups 
$$\Tor_{k}^{ \calO_{X}^{\perfection}(U) }( \calO_{X'}^{\perfection}(U'), \qE(U) )$$ vanish for $k > 0$.
We therefore also have $\Tor_{k}^{ R^{\perfection} }( S^{\perfection}, \qE(U) ) \simeq 0$ for $k > 0$. Allowing $U$ to vary,
we conclude that the tensor product $S^{\perfection} \otimes_{ R^{\perfection} } C^{\ast}$ is equivalent to the left derived tensor product
$S^{\perfection} \otimes_{ R^{\perfection} }^{L} C^{\ast}$. We therefore have a convergent spectral sequence
$$E_2^{s,t}: \Tor_{s}^{R^{\perfection}}( S^{\perfection}, \mathrm{H}^{t}( C^{\ast} ) ) \Rightarrow \mathrm{H}^{t-s}( R^{\perfection} \otimes_{ S^{\perfection} } C^{\ast} ),$$
in which $\gamma$ appears as an edge map. To show that $\gamma$ is an isomorphism, it suffices to show that the groups $E_{2}^{s,t}$ vanish for $s > 0$.
This follows from Theorem \ref{theoX9}, since each $\mathrm{H}^{t}( C^{\ast} ) \simeq \mathrm{H}^{t}( X, \qE )$ is an algebraic Frobenius module over $R$
by virtue of Proposition \ref{properindh}.
\end{proof}

\begin{corollary}[Proper Base Change]\label{corollary.basechange}
Suppose we are given a pullback diagram of $\F_p$-schemes
$$ \xymatrix{ X' \ar[r]^{g'} \ar[d]^{f'} & X \ar[d]^{f} \\
Y' \ar[r]^{g} & Y, }$$
where $f$ is proper and of finite presentation. Then, for every $p$-torsion {\etale} sheaf $\sheafF$ on $X$, the comparison map
$\alpha: g^{\ast} R^{n} f_{\ast} \sheafF \rightarrow R^n f'_{\ast} g'^{\ast} \sheafF$ is an isomorphism
\end{corollary}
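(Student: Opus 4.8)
The plan is to deduce the statement from Proposition \ref{ebly} by translating the proper base change problem for $p$-torsion {\etale} sheaves into a statement about higher direct images of algebraic Frobenius sheaves, using the global Riemann-Hilbert correspondence of Theorem \ref{globalRH}. The key point is that $\RH$ and $\Sol$ are mutually inverse equivalences relating $\Shv_{\mathet}(-,\F_p)$ to $\QCoh^{\alg}_{(-)}$, that $\RH$ intertwines $f^{\ast}$ with $f^{\diamond}$ (Variant \ref{variant.compatibility}), and that it intertwines $R^i f_{\ast}$ with $R^i f_{\ast}$ along proper morphisms of finite presentation (Corollary \ref{properindh2}).

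First I would reduce to the affine case: the assertion that $\alpha$ is an isomorphism is local on $Y$ and $Y'$, so we may assume $Y = \Spec(R)$ and $Y' = \Spec(S)$ are affine; then $f$ and $f'$ are quasi-compact and quasi-separated since they are proper. Next, set $\qE = \RH(\sheafF)$, an algebraic Frobenius sheaf on $X$ (Theorem \ref{globalRH}). By Variant \ref{variant.compatibility} we have $g'^{\diamond} \qE \simeq \RH(g'^{\ast}\sheafF)$ on $X'$, and this is again algebraic (Proposition \ref{proposition.obvo}). By Corollary \ref{properindh2} applied to $f$ and to $f'$, we have canonical isomorphisms $\RH(R^n f_{\ast}\sheafF) \simeq R^n f_{\ast}\RH(\sheafF) = R^n f_{\ast}\qE$ and $\RH(R^n f'_{\ast}g'^{\ast}\sheafF) \simeq R^n f'_{\ast}(g'^{\diamond}\qE)$. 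Under these identifications, I would check that the {\etale}-sheaf comparison map $\alpha$ is carried (via $\RH$, then $g^{\diamond}$, using Variant \ref{variant.compatibility} once more for $g$) to the Frobenius-sheaf comparison map $\gamma: g^{\diamond} R^n f_{\ast}\qE \rightarrow R^n f'_{\ast}g'^{\diamond}\qE$ of Proposition \ref{ebly}. This is the step requiring the most care: one must verify that the Beck--Chevalley transformation built from the adjunctions $(g^{\ast}, g_{\ast})$ and $(g^{\diamond}, g_{\ast})$ together with the natural isomorphism $f^{\ast}g_{\ast} \simeq g'_{\ast}f'^{\ast}$ (and its quasi-coherent counterpart) is respected by $\Sol$ and $\RH$ — that is, that the construction of $\alpha$ and $\gamma$ from the respective six-functor formalisms agree after applying the equivalence. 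Concretely, since $\RH$ is exact (it is an equivalence of abelian categories onto $\QCoh^{\alg}$, hence preserves the long exact sequences computing higher direct images), it suffices to compare the two comparison maps at the level of $\Sol$, where both arise by applying $f_{\ast}$ and $g^{\ast}$ (resp.\ $g^{\diamond}$) to identity-type natural transformations; the compatibility of $\Sol$ with $f_{\ast}$ (Proposition \ref{proposition.compatibility}) and with $g^{\ast}$ versus $g^{\diamond}$ (again Variant \ref{variant.compatibility}) then forces the identification.

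Having made this translation, the result is immediate: Proposition \ref{ebly} asserts precisely that $\gamma$ is an isomorphism (for $f$ proper and of finite presentation, $\qE$ algebraic), and since $\RH$ is an equivalence of categories, the isomorphism $\gamma$ transports back to an isomorphism $\alpha$.

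**The main obstacle.** I expect the genuinely delicate point to be the bookkeeping in the previous paragraph: establishing that the {\etale} base change map $\alpha$ corresponds, under the Riemann-Hilbert dictionary, to the quasi-coherent base change map $\gamma$ of Proposition \ref{ebly}. All the required compatibilities (of $\Sol$/$\RH$ with $f^{\ast}$, $f^{\diamond}$, $f_{\ast}$, $R^i f_{\ast}$) are available from earlier in the paper, but assembling them into a verification that two a priori differently-defined natural transformations coincide is the kind of diagram chase that is easy to state and tedious to carry out rigorously. Everything else is a formal consequence of the equivalences and of Proposition \ref{ebly}.
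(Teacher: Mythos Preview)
Your proposal is correct and follows essentially the same approach as the paper: apply $\RH$, use Corollary~\ref{properindh2} and Variant~\ref{variant.compatibility} to identify $\RH(\alpha)$ with the Frobenius-sheaf comparison map $\gamma$ of Proposition~\ref{ebly}, and conclude since $\gamma$ is an isomorphism and $\RH$ is an equivalence. The paper's proof is somewhat terser (it does not reduce to the affine case, which is unnecessary here, and it treats the identification of $\RH(\alpha)$ with $\gamma$ as immediate from Corollary~\ref{properindh2}), but the content is the same.
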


\begin{proof}
Using Corollary \ref{properindh2}, we can identify the image of $\alpha$ under the Riemann-Hilbert correspondence
$\RH: \Shv_{\mathet}(Y', \F_p) \rightarrow \QCoh_{Y'}^{\perf}$ with the comparison map
$\gamma: g^{\diamond} R^{n} f_{\ast} \RH(\sheafF) \rightarrow R^n f'_{\ast} g'^{\diamond} \RH(\sheafF)$
of Proposition \ref{ebly}. Since $\RH( \sheafF)$ is algebraic, the map $\gamma$ is an isomorphism, so that
$\alpha$ is also an isomorphism.
\end{proof}

We can use Proposition \ref{ebly} to show that Lemma \ref{lemma.pdimage1} holds in the non-Noetherian case:

\begin{corollary}\label{properhol}
Let $f: X \rightarrow Y$ be a morphism of $\F_p$-schemes which is proper and of finite presentation. If
$\qE$ is a holonomic Frobenius sheaf on $X$, then the higher direct images $R^{n} f_{\ast} \qE$ are holonomic
Frobenius sheaves on $Y$.
\end{corollary}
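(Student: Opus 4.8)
The plan is to reduce to the Noetherian case already treated in Lemma \ref{lemma.pdimage1}, using Noetherian approximation together with the cohomology-and-base-change statement of Proposition \ref{ebly}. The assertion that $R^{n}f_{\ast}\qE$ is holonomic is local on $Y$, so we may assume $Y = \Spec(R)$ is affine; since $f$ is proper and $Y$ is affine, $X$ is quasi-compact and quasi-separated, and $R^{n}f_{\ast}\qE$ is the Frobenius sheaf on $\Spec(R)$ associated to the Frobenius module $\mathrm{H}^{n}(X,\qE)$ over $R$, so it suffices to prove that this module is holonomic.

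First I would invoke the global Riemann-Hilbert correspondence (Theorem \ref{globalRH}) to write $\qE \simeq \RH(\sheafF)$ for some constructible $p$-torsion {\etale} sheaf $\sheafF$ on $X$. By Noetherian approximation (exactly as in the proof of Lemma \ref{lemma.pdimage2}, using \cite[Tags 01ZM and 081F]{Stacks} and \cite[\S 1, Proposition 4.17]{FK}), there exist a finitely generated $\F_p$-subalgebra $R_0 \subseteq R$, a proper morphism $f_0: X_0 \rightarrow \Spec(R_0)$, and a constructible $p$-torsion {\etale} sheaf $\sheafF_0$ on $X_0$, fitting into a pullback square
$$ \xymatrix{ X \ar[r]^{\pi} \ar[d]^{f} & X_0 \ar[d]^{f_0} \\ \Spec(R) \ar[r]^{g} & \Spec(R_0) }$$
with $\sheafF \simeq \pi^{\ast}\sheafF_0$. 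Set $\qE_0 = \RH(\sheafF_0)$; since $R_0$ is Noetherian and $f_0$ is proper, $X_0$ is a Noetherian $\F_p$-scheme, and $\qE_0$ is a holonomic Frobenius sheaf on $X_0$ by Theorem \ref{globalRH}. Moreover Variant \ref{variant.compatibility} gives $\qE \simeq \RH(\pi^{\ast}\sheafF_0) \simeq \pi^{\diamond}\qE_0$.

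Next I would apply Proposition \ref{ebly} to the displayed pullback square (legitimate since $f_0$ is proper and of finite presentation, $R_0$ being Noetherian, and $\qE_0$ is algebraic, being holonomic): the comparison isomorphism of that proposition reads $g^{\diamond}(R^{n}f_{0\ast}\qE_0) \xrightarrow{\ \sim\ } R^{n}f_{\ast}(\pi^{\diamond}\qE_0) \simeq R^{n}f_{\ast}\qE$. By Lemma \ref{lemma.pdimage1}, the higher direct image $R^{n}f_{0\ast}\qE_0$ is a holonomic Frobenius sheaf on the Noetherian scheme $\Spec(R_0)$. Since holonomicity is preserved by the pullback functor $g^{\diamond}$ (Proposition \ref{proposition.obvo}, equivalently Proposition \ref{propX51}), we conclude that $R^{n}f_{\ast}\qE$ is holonomic, as desired.

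There is no substantive obstacle here beyond bookkeeping; the one point worth emphasizing is the contrast with the proof of Lemma \ref{lemma.pdimage2}, where writing $R$ as a filtered colimit of finitely generated subalgebras $R_{\alpha}$ only yields that $\mathrm{H}^{n}(X,\qE)$ is algebraic (an infinite filtered colimit of holonomic modules need not be holonomic). Here, by contrast, we descend the entire situation to a single Noetherian base $R_0$ and transport the cohomology back along $g^{\diamond}$ via Proposition \ref{ebly}; this cohomology-and-base-change input — unavailable at the stage of \S\ref{sec12sub5} where Lemma \ref{lemma.pdimage2} is proved, but available now — is precisely what upgrades ``algebraic'' to ``holonomic.''
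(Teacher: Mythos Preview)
Your proof is correct and follows essentially the same approach as the paper: reduce to $Y$ affine, descend via Noetherian approximation (through the Riemann-Hilbert correspondence) to a pullback square over a Noetherian base $\Spec(R_0)$, apply Lemma \ref{lemma.pdimage1} there, and then transport back using the base-change isomorphism of Proposition \ref{ebly} together with the preservation of holonomicity under $g^{\diamond}$ (Proposition \ref{propX51}). Your closing remark explaining how Proposition \ref{ebly} upgrades the conclusion of Lemma \ref{lemma.pdimage2} from ``algebraic'' to ``holonomic'' is exactly the point.
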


\begin{proof}[Proof of Corollary \ref{properhol}]
The assertion is local on $Y$, so we may assume without loss of generality that $Y = \Spec(R)$ is affine.
Proceeding as in the proof of Lemma \ref{lemma.pdimage2}, we can choose a pullback diagram
$$ \xymatrix{ X \ar[d]^{f} \ar[r]^{\pi} & X_0 \ar[d]^{f_0} \\
\Spec(R) \ar[r]^{\pi'} & \Spec(R_0) }$$
where $f_0$ is proper, $R_0 \subseteq R$ is a finitely generated $\F_p$-subalgebra, and
$\qE \simeq \pi^{\diamond} \qE_0$ for some holonomic Frobenius module $\qE_0$ on $X_0$.
Lemma \ref{lemma.pdimage1} guarantees that $R^{n} f_{0 \ast} \qE_0$ is holonomic, so that
$\pi'^{\diamond} R^{n} f_{0 \ast} \qE_0$ is also holonomic (Proposition \ref{propX51}). 
Proposition \ref{ebly} supplies an isomorphism
$$\gamma: \pi'^{\diamond} R^{n} f_{0 \ast} \qE_0 \rightarrow R^{n} f_{\ast} \pi^{\diamond} \qE_0 \simeq R^{n} f_{\ast} \qE,$$
so that $R^{n} f_{\ast} \qE$ is holonomic as well.
\end{proof}

\begin{corollary}
Let $f: X \rightarrow Y$ be a morphism of $\F_p$-schemes which is proper and of finite presentation. Then the higher direct image
functors $R^{n} f_{\ast}: \Shv_{\mathet}( X, \F_p ) \rightarrow \Shv_{\mathet}( Y, \F_p )$ carry constructible sheaves to constructible sheaves.
\end{corollary}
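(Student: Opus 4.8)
The assertion is local on $Y$, so we may assume without loss of generality that $Y = \Spec(R)$ is affine; we may further assume that $R$ is Noetherian, by the usual Noetherian approximation argument (choose a finitely generated $\F_p$-subalgebra $R_0 \subseteq R$, a proper morphism $f_0: X_0 \to \Spec(R_0)$ of finite presentation with $X \simeq \Spec(R) \times_{\Spec(R_0)} X_0$, and a constructible sheaf $\sheafF_0$ on $X_0$ with $\sheafF \simeq \pi^{\ast} \sheafF_0$, as in the proof of Lemma \ref{lemma.pdimage2}; the formation of $R^n f_{\ast}$ of a constructible sheaf commutes with this base change, so constructibility on $\Spec(R_0)$ implies constructibility on $\Spec(R)$). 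Let $\sheafF$ be a constructible $p$-torsion {\etale} sheaf on $X$. Applying Theorem \ref{globalRH}, the Frobenius sheaf $\RH(\sheafF)$ is holonomic.

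By Corollary \ref{properhol} (or Lemma \ref{lemma.pdimage1} in the Noetherian case at hand), the higher direct image $R^{n} f_{\ast} \RH(\sheafF)$ is a holonomic Frobenius sheaf on $Y = \Spec(R)$, that is, a holonomic Frobenius module over $R$. On the other hand, Corollary \ref{properindh2} supplies a canonical isomorphism
$$ \RH( R^{n} f_{\ast} \sheafF ) \simeq R^{n} f_{\ast} \RH( \sheafF ).$$
It follows that $\RH( R^{n} f_{\ast} \sheafF )$ is holonomic. Since $\RH$ restricts to an equivalence of categories $\Shv_{\mathet}^{c}( Y, \F_p ) \simeq \Mod_{R}^{\hol}$ (Theorem \ref{globalRH}), and since a $p$-torsion {\etale} sheaf $\sheafG$ on $Y$ is constructible if and only if $\RH(\sheafG)$ is holonomic (again by Theorem \ref{globalRH}, as $\RH$ carries $\Shv_{\mathet}(Y,\F_p)$ equivalently onto $\QCoh_Y^{\alg}$ and this equivalence identifies constructible sheaves with holonomic Frobenius modules), we conclude that $R^{n} f_{\ast} \sheafF$ is constructible, as desired.
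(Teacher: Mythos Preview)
Your proof is correct and follows essentially the same approach as the paper, which simply cites Corollary \ref{properhol}, Theorem \ref{globalRH}, and Corollary \ref{properindh2}. The Noetherian reduction you insert is superfluous: since you invoke Corollary \ref{properhol} anyway (which holds for arbitrary $\F_p$-schemes), you never need the Noetherian hypothesis, and the reduction step as written quietly uses proper base change for \'etale sheaves, which in the paper's internal logic is only established afterward (Corollary \ref{corollary.basechange}). Dropping that detour, your argument coincides with the paper's.
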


\begin{proof}
Combine Corollary \ref{properhol} with Theorem \ref{globalRH} and Corollary \ref{properindh2}.
\end{proof}

\begin{remark}
Let $f: R \rightarrow S$ be a morphism of $\F_p$-algebras, let $X$ be an $R$-scheme which is proper and of finite presentation, and set
$X_{S} = X \times_{ \Spec(R)} \Spec(S)$. In this situation, we have a comparison map
$$ \beta: \mathrm{H}^{\ast}( X, \calO_{X} ) \otimes_{R} S \rightarrow \mathrm{H}^{\ast}( X_S, \calO_{X_S} ).$$
In general, this map need not be an isomorphism, even if $X$ is assumed to be smooth and projective over $R$ (see Example~\ref{ex:BCFails}). However,
the domain and codomain of $\beta$ can be regarded as Frobenius modules over $S$, and Proposition \ref{ebly} implies that $\beta^{\perfection}$ is an isomorphism:
in other words, every element of $\ker(\beta)$ or $\coker(\beta)$ is annihilated by some power of the Frobenius. In other words, the proper base change theorem
holds in the setting of coherent cohomology, provided that we work ``up to perfection.''
\end{remark}

\begin{example}
\label{ex:BCFails}
Let $k$ be a field of characteristic $p$ and set $R = k \llbracket t \rrbracket$. Let $G \to \mathrm{Spec}(R)$ be a finite flat group scheme with generic fibre $\mu_p$ and special fibre $\alpha_p$. For each $k \geq 2$, we can approximate the stack $BG \to \mathrm{Spec}(R)$ by a smooth projective $R$-scheme $X$ with geometrically connected fibres, i.e., the $\calO$-cohomology of the generic fibre $X_\eta$ agrees with that of $B(\mu_p)$ in degrees $\leq k$, while that for the special fibre $X_s$ agrees with that of $B(\alpha_p)$ in degrees $\leq k$; an explicit example is provided when $p=k=2$ by degenerating a ``classical'' Enriques surface $X_\eta$ to a ``supersingular'' Enriques surface $X_s$. Assume now that $k=2$ for simplicity. As $\mu_p$ is linearly reductive, it follows that $H^i(X_\eta, \calO_{X_\eta}) = 0$ for $i \in \{1,2\}$. On the other hand, $H^i(X_s, \calO_{X_s}) \neq 0$ for $i=1,2$. Now consider the $R$-module $H^1(X, \calO_X)$. Since $H^0(X_s, \calO_{X_s}) = 0$, it is easy to see that $H^1(X, \calO_X)$ is $t$-torsionfree. But $H^1(X, \calO_X)[\frac{1}{t}] = H^1(X_\eta, \calO_{X_\eta}) = 0$, so it follows that $H^1(X, \calO_X) = 0$. On the other hand, $H^1(X_s, \calO_{X_s}) \neq 0$, so we have constructed an example where the base change map
\[ H^1(X, \calO_X) \otimes_R k \to H^1(X_s, \calO_{X_s})\]
is not an isomorphism.
\end{example}

\newpage \section{The Contravariant Riemann-Hilbert Correspondence}\label{sec:ContravariantRH}
\setcounter{subsection}{0}
\setcounter{theorem}{0}

Let $R$ be a smooth algebra over a field $k$ of characteristic $p$. In \cite{EK}, Emerton and Kisin construct an equivalence of triangulated categories
$$ \RSol_{\EK}: D^{b}_{\fgu}( R[F] )^{\op} \simeq D^{b}_{c}( \Spec(R), \F_p),$$
where $D^{b}_{\fgu}( R[F] )$ denotes the full subcategory of $D( R[F] )$ spanned by the cohomologically bounded chain complexes whose cohomology groups
{\it finitely generated unit} Frobenius modules and $D^{b}_{c}( \Spec(R), \F_p)$ the constructible derived category of $\Spec(R)$: that is,
the full subcategory of the derived category of $\Shv_{\mathet}( \Spec(R), F_p)$ spanned by those chain complexes which are cohomologically bounded with
constructible cohomology. 

Our goal in this section is to review (and generalize) the construction of the functor $\RSol_{\EK}$. We begin in \S \ref{sec10sub1} by reviewing the notion of a {\it finitely generated unit} 
Frobenius module over a commutative $\F_p$-algebra $R$ (Definition \ref{definition.fgu}), following \cite{Ly} and \cite{EK}. The collection of finitely generated unit modules is always closed under the formation of cokernels and extensions (Propositions \ref{in1} and \ref{in2}). In \S \ref{sec10sub2} we show that, when $R$ is a regular Noetherian $\F_p$-algebra, it is also closed under the formation of kernels (Proposition \ref{in3}). In this case,
we let $D^{b}_{\fgu}( R[F] )$ denote the full subcategory of the derived category $D( R[F] )$ spanned by those cochain complexes $M = M^{\ast}$ whose cohomology groups
$\mathrm{H}^{n}(M)$ are locally finitely generated unit Frobenius modules which vanish for all but finitely many values of $n$. In \S \ref{sec10sub4}, we show that there is a sensible way to define
the definition of the subcategory $D^{b}_{\fgu}( R[F] ) \subseteq D( R[F] )$ for an arbitrary $\F_p$-algebra $R$, by restricting our attention to cochain complexes which satisfy suitable ``derived'' versions of the requirements defining finitely generated unit modules (see Definition \ref{definition.derived-fgu} and Proposition \ref{proposition.char-fgu}). 
In \S \ref{sec10sub5} we define a solution functor $\RSol_{\EK}: D^{b}_{\fgu}( \Mod_{R}^{\Frob} )^{\op} \rightarrow D( \Shv_{\mathet}( \Spec(R), \F_p) )$ and assert that it restricts to an equivalence of
categories $D^b_{\fgu}( \Mod_{R}^{\Frob} )^{\op} \simeq D^{b}_{c}(\Spec(R),\F_p)$ (Theorem \ref{strongEK}). Taking $R$ to be a smooth algebra of finite type over a field $k$, this recovers the main result of
\cite{EK} in the case of the affine scheme $X = \Spec(R)$. However, our equivalence is a bit more general, since we allow $R$ to be an arbitrary $\F_p$-algebra.
The proof of Theorem \ref{strongEK} will be given in \S \ref{section.duality} by comparing the functor $\RSol_{\EK}$ with the solution functor $\Sol$ of Construction \ref{construction.solsheaf} (and its derived functors).

\subsection{Finitely Generated Unit Frobenius Modules}\label{sec10sub1}

We now introduce the class of finitely generated unit Frobenius modules, following \cite{EK}.

\begin{notation}\label{notation.psiM}
Let $R$ be a commutative $\F_p$-algebra and let $M$ be an $R$-module. We let $\varphi_{R}^{\ast} M$ denote the $R$-module obtained from $M$ by
extending scalars along the Frobenius homomorphism $\varphi_{R}: R \rightarrow R$. If $M$ is a Frobenius module over $R$, then
the Frobenius map $\varphi_{M}: M \rightarrow M^{1/p}$ determines an $R$-module homomorphism $\varphi_{R}^{\ast} M \rightarrow M$, which
we will denote by $\psi_M$. 
\end{notation}

\begin{remark}\label{remark.obvious}
In the situation of Notation \ref{notation.psiM}, we can regard $\varphi_{R}^{\ast} M$ as a Frobenius module over $R$ (Construction \ref{construction.extension}),
and $\psi_{M}$ is a morphism of Frobenius modules over $R$. Moreover, the morphism $\psi_M$ induces an isomorphism of perfections
$(\varphi_{R}^{\ast} M)^{\perfection} \rightarrow M^{\perfection}$. To prove this, we can extend scalars to the perfection $R^{\perfection}$ and thereby reduce to the case where
$R$ is perfect. In this case, the morphism $\psi_M$ coincides with (the Frobenius pullback of) the map $\varphi_{M}: M \rightarrow M^{1/p}$, which is evidently an isomorphism of perfections.
\end{remark}

\begin{definition}\label{definition.fgu}
Let $R$ be a commutative $\F_p$-algebra and let $M$ be a Frobenius module over $R$. We will say that $M$ is {\it finitely generated unit} if it satisfies the following pair of conditions:
\begin{itemize}
\item[$(a)$] The module $M$ is finitely generated as a left module over the noncommutative ring $R[F]$ of Notation \ref{not2}.
\item[$(b)$] The map $\psi_{M}: \varphi_{R}^{\ast} M \rightarrow M$ of Notation \ref{notation.psiM} is an isomorphism. 
\end{itemize}
\end{definition}

We now record some easy closure properties of the class finitely generated unit Frobenius modules.

\begin{proposition}\label{in1}
Let $R$ be a commutative $\F_p$-algebra and let $f: M \rightarrow N$ be a morphism of Frobenius modules over $R$. If
$M$ and $N$ are finitely generated unit, then the cokernel $\coker(f)$ is finitely generated unit.
\end{proposition}

\begin{proof}
Since $N$ is finitely generated as a left module over $R[F]$, the quotient $\coker(f)$ is also finitely generated as a left module over $R[F]$.
We have a commutative diagram of exact sequences
$$ \xymatrix{ \varphi_{R}^{\ast} M \ar[d]^{\psi_M} \ar[r]^-{ \varphi_{R}^{\ast}(f) } & \varphi_{R}^{\ast} N \ar[d]^{ \psi_N } \ar[r] & \varphi_{R}^{\ast}( \coker(f) ) \ar[d]^{ \psi_{ \coker(f) } }  \ar[r] & 0 \\
M \ar[r]^-{ f} & N \ar[r] & \coker(f) \ar[r] & 0. }$$
Since $\psi_M$ and $\psi_{N}$ are isomorphisms, it follows that $\psi_{ \coker(f) }$ is also an isomorphism.
\end{proof}

\begin{proposition}\label{in2}
Let $R$ be a commutative $\F_p$-algebra and suppose we are given an exact sequence of Frobenius modules
$0 \rightarrow M' \rightarrow M \rightarrow M'' \rightarrow 0$.
If $M'$ and $M''$ are finitely generated unit, then $M$ is finitely generated unit.
\end{proposition}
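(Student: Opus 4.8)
The plan is to verify the two defining conditions of Definition~\ref{definition.fgu} for $M$, given that they hold for $M'$ and $M''$. Condition $(a)$ — finite generation over $R[F]$ — is formal: the subcategory of left $R[F]$-modules that are finitely generated is closed under extensions (if $M'$ is generated by finitely many elements and $M''$ by finitely many, lift the latter generators to $M$ and adjoin the generators of $M'$). So the only real content is condition $(b)$: we must show that $\psi_M \colon \varphi_R^\ast M \to M$ is an isomorphism.

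The approach for $(b)$ is to apply the right-exact functor $\varphi_R^\ast$ to the short exact sequence $0 \to M' \to M \to M'' \to 0$ and compare with the original sequence via the maps $\psi_{M'}$, $\psi_M$, $\psi_{M''}$. This produces a commutative diagram
$$ \xymatrix{ \varphi_R^\ast M' \ar[r] \ar[d]^{\psi_{M'}} & \varphi_R^\ast M \ar[r] \ar[d]^{\psi_M} & \varphi_R^\ast M'' \ar[r] \ar[d]^{\psi_{M''}} & 0 \\ 0 \ar[r] & M' \ar[r] & M \ar[r] & M'' \ar[r] & 0 }$$
with exact rows. The two outer vertical maps $\psi_{M'}$ and $\psi_{M''}$ are isomorphisms by hypothesis. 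A diagram chase (the snake lemma, or the four/five lemma applied after noting the right-hand zero) then shows $\psi_M$ is an isomorphism: surjectivity of $\psi_M$ follows from surjectivity of $\psi_{M''}$ and $\psi_{M'}$ together with a chase, and injectivity follows because $\ker \psi_{M'} = 0$ controls $\ker \psi_M$ once one knows that the top row stays left-exact at the relevant spot.

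The one genuine subtlety — and the step I expect to be the main obstacle — is the failure of left-exactness of $\varphi_R^\ast$: the top row of the diagram is only right-exact a priori, so $\varphi_R^\ast M' \to \varphi_R^\ast M$ need not be injective, and one cannot immediately run the five lemma. The fix is to observe that this only threatens injectivity of $\psi_M$, and injectivity can be checked after a faithfully flat extension; extending scalars along $R \to R^{\perfection}$ (or more simply, using Remark~\ref{remark.obvious}, which says $\psi_M$ induces an isomorphism on perfections for \emph{any} Frobenius module $M$) reduces us to showing $\ker \psi_M$ has vanishing perfection, hence is locally nilpotent for $\varphi$. But $M'$ and $M''$ are unit, so their Frobenius maps are ``as invertible as possible'', and a short argument — chasing an element of $\ker\psi_M$ through the diagram, using that its image in $M''$ lies in $\ker\psi_{M''}=0$ so it comes from $\varphi_R^\ast M'$, where $\psi_{M'}$ is injective — shows $\ker\psi_M = 0$ outright, without needing the nilpotence detour. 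I would present the clean diagram-chase version first and invoke right-exactness of $\varphi_R^\ast$ only where needed, pointing to Remark~\ref{remark.obvious} as the conceptual reason the argument must work.
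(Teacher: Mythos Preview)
Your proof is correct and follows essentially the same approach as the paper: verify finite generation over $R[F]$ formally, then deduce that $\psi_M$ is an isomorphism by a diagram chase exploiting only right-exactness of $\varphi_R^\ast$ together with the hypotheses that $\psi_{M'}$ and $\psi_{M''}$ are isomorphisms. The paper packages the injectivity half slightly more cleanly by setting $K = \ker(\varphi_R^\ast M \to \varphi_R^\ast M'')$, so that the top row becomes a genuine short exact sequence $0 \to K \to \varphi_R^\ast M \to \varphi_R^\ast M'' \to 0$; right-exactness makes $g\colon \varphi_R^\ast M' \to K$ surjective, and since $\psi_{M'} = f \circ g$ is an isomorphism one concludes $f\colon K \to M'$ is an isomorphism and then applies the five lemma directly. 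Your element-level chase is the same argument unpacked.

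One caution: drop the aside about checking injectivity after base change along $R \to R^{\perfection}$. That map is not faithfully flat in general, so the suggested reduction does not work as stated. Fortunately you do not use it---your direct chase already gives $\ker \psi_M = 0$ without any detour through perfections.
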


\begin{proof}
Since the collection of finitely generated left $R[F]$-modules is closed under extensions, the module $M$ is finitely generated over $R[F]$.
It will therefore suffice to show that the map $\psi_M: \varphi_{R}^{\ast} M \rightarrow M$ is an isomorphism. Let $K$ denote the kernel of the map
$\varphi_{R}^{\ast} M \rightarrow \varphi_{R}^{\ast} M''$. The morphism $\psi_M$ fits into a commutative diagram of exact sequences
$$ \xymatrix{ 0 \ar[r] & K \ar[r] \ar[d]^{f} & \varphi_{R}^{\ast} M \ar[r] \ar[d]^{ \psi_M} & \varphi_{R}^{\ast} M'' \ar[r] \ar[d]^{ \psi_{M''} } & 0 \\
0 \ar[r] & M' \ar[r] & M \ar[r] & M'' \ar[r] & 0. }$$
Note that the map $\psi_{M'}$ factors as a composition $\varphi_{R}^{\ast} M' \xrightarrow{g} K \xrightarrow{f} M'$, where
$g$ is surjective. Since $M'$ is finitely generated unit, the map $\psi_{M'}$ is an isomorphism. It follows that $f$ is also an isomorphism.
Applying the five lemma to the preceding diagram, we conclude that $\psi_M$ is also an isomorphism.
\end{proof}

\subsection{Existence of Kernels}\label{sec10sub2}

Our next goal is to prove a counterpart of Proposition \ref{in1} for {\em kernels} of morphisms between finitely generated unit Frobenius modules.
This will require a stronger assumption on $R$:

\begin{proposition}\label{in3}
Let $R$ be a regular Noetherian $\F_p$-algebra and let $f: M \rightarrow N$ be a morphism of Frobenius modules over $R$.
If $M$ and $N$ are finitely generated unit, then $K = \ker(f)$ is also finitely generated unit.
\end{proposition}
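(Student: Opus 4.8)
The plan is to exploit the two defining properties of finitely generated unit modules separately, reducing the ``finitely generated'' part to the Noetherianity of $R[F]$ (which must be established using regularity) and handling the ``unit'' part via the exactness of the Frobenius pullback $\varphi_R^\ast$ on a regular Noetherian ring. First I would recall that when $R$ is regular and Noetherian, the Frobenius $\varphi_R\colon R\to R$ is flat (this is Kunz's theorem), so the functor $M\mapsto \varphi_R^\ast M$ is exact on all $R$-modules. Applying $\varphi_R^\ast$ to the exact sequence $0\to K\to M\xrightarrow{f} N$ gives an exact sequence $0\to \varphi_R^\ast K\to \varphi_R^\ast M\xrightarrow{\varphi_R^\ast(f)} \varphi_R^\ast N$, so $\varphi_R^\ast K$ is identified with $\ker(\varphi_R^\ast(f))$. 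Now the commuting square relating $\psi_M$, $\psi_N$ and $\psi_K$ shows that $\psi_K\colon \varphi_R^\ast K\to K$ is the map induced on kernels by the isomorphisms $\psi_M$ and $\psi_N$; hence $\psi_K$ is an isomorphism. This verifies condition $(b)$ of Definition \ref{definition.fgu} for $K$, and is the easy half.

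The substantive point is condition $(a)$: $K$ must be finitely generated as a left $R[F]$-module. Here the key input is that, for $R$ a regular Noetherian $\F_p$-algebra, the noncommutative ring $R[F]$ is left Noetherian. I would prove this by a filtered/graded argument: $R[F]$ carries the increasing filtration by $F$-degree, $R[F]_{\leq n}=\bigoplus_{i\leq n} R\cdot F^i$, whose associated graded ring is $\bigoplus_{n\geq 0} F^n\cdot R^{1/p^n}$ (as noted in the Warning following Remark \ref{remark.restriction}, this is the right $R$-module structure description, but one checks the associated graded is a commutative-up-to-twist ring built from the $R^{1/p^n}$). The regularity and Noetherianity of $R$, together with the fact that $R^{1/p^n}$ is module-finite over $R$ in the relevant local/finite-type situations — or, more robustly, that $R\to R^{1/p}$ is flat so each $R^{1/p^n}$ is a flat Noetherian $R$-algebra — force the associated graded ring to be left Noetherian, and a standard lifting argument (à la the Hilbert basis theorem for filtered rings) then shows $R[F]$ itself is left Noetherian. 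Alternatively, and perhaps more cleanly, one uses the identification of finitely generated unit modules with $R[F^{1/p^\infty}]$-modules or with the ``unit'' $\calD$-module-style formalism of \cite{EK,Ly}, where the Noetherian property of the relevant category of finitely generated unit modules is already established; I would cite \cite{Ly} or \cite{EK} for the statement that finitely generated unit Frobenius modules over a regular Noetherian $R$ form an abelian category closed under subobjects. Given left Noetherianity of $R[F]$, the submodule $K\subseteq M$ of the finitely generated $R[F]$-module $M$ is automatically finitely generated over $R[F]$, completing the proof.

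The main obstacle I anticipate is precisely the left Noetherianity of $R[F]$: unlike the commutative case, one cannot simply invoke the Hilbert basis theorem, and the twist by Frobenius in the multiplication means the associated graded ring is genuinely noncommutative and involves the tower $R\subseteq R^{1/p}\subseteq R^{1/p^2}\subseteq\cdots$. The cleanest route is to reduce to the case where $R^{1/p}$ is module-finite over $R$ (true for $R$ essentially of finite type over a perfect field, which is the setting of \cite{EK}) — but since this proposition is stated for arbitrary regular Noetherian $R$, one needs either a flatness-based argument showing each $R^{1/p^n}$ is a Noetherian flat $R$-algebra and that the graded ring $\bigoplus_n F^n R^{1/p^n}$ inherits the Noetherian property, or a direct argument that submodules of finitely generated unit modules are finitely generated unit that bypasses $R[F]$-Noetherianity entirely (for instance, by descending to a finite-type subalgebra over which everything is defined and invoking the known finite-type case, using that regularity and the unit structure both descend along a suitable Noetherian approximation). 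I would first attempt the Noetherian-approximation reduction to the module-finite-Frobenius case, falling back on the graded-ring argument if the descent of regularity proves awkward.
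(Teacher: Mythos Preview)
Your argument for condition $(b)$ (the unit condition) is correct and matches the paper's proof exactly: flatness of Frobenius on regular Noetherian rings identifies $\varphi_R^\ast K$ with $\ker(\varphi_R^\ast f)$, and the diagram chase gives that $\psi_K$ is an isomorphism.

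However, your approach to condition $(a)$ has a genuine gap: the ring $R[F]$ is \emph{not} left Noetherian, even for $R$ regular, Noetherian, and $F$-finite. Take $R = \F_p[t]$ and consider the left ideals $I_k = \sum_{n=1}^{k} R[F]\cdot tF^n$. A direct computation shows that the $F$-degree $(k+1)$ component of $I_k$ is $Rt^p$ (since left-multiplying $tF^n$ by $F^{k+1-n}$ contributes $t^{p^{k+1-n}}$), while $tF^{k+1}$ has leading coefficient $t \notin Rt^p$. So the chain $I_1 \subsetneq I_2 \subsetneq \cdots$ is strictly increasing, and $R[F]$ fails the ascending chain condition on left ideals. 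Your graded-ring and Noetherian-approximation fallbacks therefore cannot succeed either, since $\F_p[t]$ already lies in the $F$-finite, finite-type-over-a-perfect-field regime you hoped to reduce to.

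The paper's argument bypasses Noetherianity of $R[F]$ entirely. Using the unitalization construction (Construction~\ref{unitalize}) and Proposition~\ref{proposition.key2}, one replaces $M$ by a Frobenius module of the form $M_0^u$ where $M_0$ is a finitely generated projective $R$-module equipped with a map $\alpha_{M_0}\colon M_0 \to \varphi_R^\ast M_0$. Because Frobenius is flat, the kernel of $f\colon M_0^u \to N$ is itself the unitalization $\ker(f_0)^u$, where $f_0\colon M_0 \to N$ is the composite with the canonical map $M_0 \to M_0^u$ (Remark~\ref{pilax}). Now $\ker(f_0)$ is a submodule of the finitely generated $R$-module $M_0$, hence finitely generated over the Noetherian ring $R$; the explicit two-term presentation of unitalizations over $R[F]$ (Proposition~\ref{stopmake}) then shows $\ker(f_0)^u$ is finitely generated over $R[F]$. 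The moral is that one exploits Noetherianity of $R$, not of $R[F]$, by working at the level of the ``root'' $M_0$ rather than the unit module $M$ itself.
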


The proof of Proposition \ref{in3} is essentially contained in \cite{Ly} (see also \cite{EK}). We include a proof here for the convenience of the reader, and because the proof uses
an auxiliary construction which will play a central role in \S \ref{section.duality}.

\begin{construction}[Unitalization]\label{unitalize}
Let $R$ be a commutative $\F_p$-algebra and let $M$ be an $R$-module equipped with an $R$-linear map $\alpha_M: M \rightarrow \varphi_{R}^{\ast} M$. 
We let $M^{u}$ denote the direct limit of the diagram
$$ M \xrightarrow{\alpha_M} \varphi_{R}^{\ast} M \xrightarrow{ \varphi_{R}^{\ast} \alpha_M} \varphi_{R}^{2 \ast} M \xrightarrow{ \varphi_{R}^{2 \ast} \alpha_M} \varphi_{R}^{3 \ast} M \rightarrow \cdots$$
We will refer to $M^{u}$ as the {\it unitalization} of the pair $(M, \alpha_M)$. Note that there is a canonical isomorphism $M^{u} \simeq \varphi_{R}^{\ast} M^{u}$,
whose inverse endows $M^{u}$ with the structure of a Frobenius module over $R$.
\end{construction}

\begin{example}\label{rexamp}
Let $R$ be a commutative $\F_p$-algebra and let $M$ be a Frobenius module over $R$ for which the map $\psi_{M}: \varphi_{R}^{\ast} M \rightarrow M$
of Notation \ref{notation.psiM} is an isomorphism. Then the unitalization of the pair $(M, \psi_{M}^{-1} )$ can be identified with $M$.
\end{example}

\begin{remark}[Functoriality]\label{pilax}
Let $R$ be a commutative $\F_p$-algebra and suppose we are given a commutative diagram of $R$-modules
$$ \xymatrix{ M \ar[r]^{f} \ar[d]^{ \alpha_M} & N \ar[d]^{ \alpha_N } \\
\varphi_{R}^{\ast} M \ar[r]^{ \varphi_{R}^{\ast}(f) } & \varphi_{R}^{\ast} N. }$$
Then $f$ induces a map of unitalizations $f^{u}: M^{u} \rightarrow N^{u}$. Moreover:
\begin{itemize}
\item The cokernel of $f^{u}$ can be identified with the unitalization of $\coker(f)$ (with respect to the induced map
$\alpha_{ \coker(f) }: \coker(f) \rightarrow \coker( \varphi_{R}^{\ast} f) \simeq \varphi_{R}^{\ast} \coker(f)$).

\item If the Frobenius map $\varphi_{R}: R \rightarrow R$ is flat (for example, if $R$ is regular and Noetherian),
then the kernel of $f^{u}$ can be identified with the unitalization of $\ker(f)$ (with respect to the map
$\ker(f) \rightarrow \ker( \varphi_{R}^{\ast} f) \simeq \varphi_{R}^{\ast} \ker(f)$).
\end{itemize}
\end{remark}

Let $R$ be a commutative $\F_p$-algebra and let $R[F]$ denote the noncommutative ring of Notation \ref{not2}. For any
$R$-module $M$, we have a canonical isomorphism 
$$ R[F] \otimes_{R} M \simeq M \oplus \varphi_{R}^{\ast} M \oplus \varphi_{R}^{2 \ast} M \oplus \cdots.$$
Suppose that $M$ is equipped with a map $\alpha_M: M \rightarrow \varphi_{R}^{\ast} M$. Then the construction
$x \mapsto (x,- \alpha_M(x))$ determines an $R$-linear map 
$M \rightarrow M \oplus \varphi_{R}^{\ast} M \subseteq R[F] \otimes_{R} M$,
which extends to an $R[F]$-linear map $\alpha': R[F] \otimes_R M \rightarrow R[F] \otimes_R M$.
A simple calculation shows that the map $\alpha'$ is a monomorphism with cokernel $M^{u}$. We therefore obtain the following:

\begin{proposition}\label{stopmake}
Let $R$ be a commutative $\F_p$-algebra and let $M$ be an $R$-module equipped with an $R$-linear map $\psi_M: M \rightarrow \varphi_{R}^{\ast} M$.
Then the preceding construction determines an exact sequence of Frobenius modules
$0 \rightarrow R[F] \otimes_{R} M \rightarrow R[F] \otimes_{R} M \rightarrow M^{u} \rightarrow 0$.
\end{proposition}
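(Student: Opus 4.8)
The plan is to make the "simple calculation" in the paragraph preceding the statement fully explicit and then simply record its conclusion. Recall the canonical decomposition of $R[F]$ as a left $R$-module, $R[F] \simeq \bigoplus_{n \geq 0} R \cdot F^n$, which (after tensoring with $M$ over $R$) gives a left $R[F]$-module isomorphism $R[F] \otimes_R M \simeq \bigoplus_{n \geq 0} \varphi_R^{n\ast} M$, where the summand $\varphi_R^{n\ast} M$ is identified with $F^n \otimes M$. First I would describe the map $\alpha'$ concretely on this decomposition: it is the $R[F]$-linear map determined by $x \mapsto (x, -\alpha_M(x), 0, 0, \ldots)$ for $x \in M = F^0 \otimes M$, so that on the $n$th summand it acts as $\mathrm{id}: \varphi_R^{n\ast} M \to \varphi_R^{n\ast}M$ into degree $n$ minus $\varphi_R^{n\ast}(\alpha_M): \varphi_R^{n\ast} M \to \varphi_R^{(n+1)\ast} M$ into degree $n+1$. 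In "matrix" form with respect to the grading, $\alpha'$ is "lower triangular with identities on the diagonal and $-\varphi_R^{n\ast}(\alpha_M)$ just below the diagonal."

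The injectivity of $\alpha'$ is then immediate: if $\alpha'(\sum_n y_n) = 0$ with $y_n \in \varphi_R^{n\ast} M$ almost all zero, comparing the lowest-degree component forces the bottom nonzero $y_n$ to vanish, and induction downward gives $\sum_n y_n = 0$. For the cokernel, I would verify that the composite $M \xrightarrow{\alpha_M} \varphi_R^\ast M \hookrightarrow R[F]\otimes_R M \twoheadrightarrow \coker(\alpha')$ agrees with $M \hookrightarrow R[F]\otimes_R M \twoheadrightarrow \coker(\alpha')$ (this is exactly the relation imposed by $\alpha'$ on the generators), so that in $\coker(\alpha')$ each $F^n \otimes x$ becomes identified, after passing up the tower, with the image of $\varphi_R^{n\ast}(\alpha_M)$-translates; more precisely, the surjection $R[F]\otimes_R M \to \coker(\alpha')$ kills the image of $\alpha'$, which is precisely the relations defining the colimit $M^u = \varinjlim(M \xrightarrow{\alpha_M} \varphi_R^\ast M \xrightarrow{\varphi_R^\ast \alpha_M} \cdots)$. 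Hence $\coker(\alpha') \simeq M^u$ as $R$-modules, and one checks the identification is compatible with the $F$-action, since both sides carry the Frobenius module structure coming from the identification with (a shift of) $\varphi_R^\ast(-)$. This last compatibility is really the only point requiring care, but it is formal: the $F$-action on $\coker(\alpha')$ induced from $R[F]\otimes_R M$ is the shift $\varphi_R^{n\ast}M \to \varphi_R^{(n+1)\ast}M$, which under the colimit description is exactly the canonical map witnessing $M^u \simeq \varphi_R^\ast M^u$, i.e. the Frobenius of $M^u$ from Construction \ref{unitalize}.

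I would then conclude by assembling these observations into the short exact sequence $0 \to R[F]\otimes_R M \xrightarrow{\alpha'} R[F]\otimes_R M \to M^u \to 0$ of Frobenius modules, noting that $R[F]\otimes_R M$ carries its free Frobenius module structure and $\alpha'$ is $R[F]$-linear by construction, so all three terms and both maps live in $\Mod_R^{\Frob}$. The main (indeed only) obstacle is bookkeeping the grading and checking that the cokernel's $F$-action matches the colimit transition maps defining $M^u$; everything else is linear algebra over $R$ that is insensitive to the noncommutativity of $R[F]$ because $R[F]$ is free as a left $R$-module (the same fact invoked throughout \S\ref{sec3sub1}).
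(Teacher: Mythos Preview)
Your proposal is correct and follows exactly the approach the paper takes: the paper's proof is nothing more than the paragraph preceding the proposition, which sets up the decomposition $R[F]\otimes_R M \simeq \bigoplus_{n\geq 0}\varphi_R^{n\ast}M$ and the map $\alpha'$, and then asserts that ``a simple calculation shows that the map $\alpha'$ is a monomorphism with cokernel $M^{u}$.'' You have simply carried out that calculation in detail, including the verification of the Frobenius compatibility that the paper leaves implicit.
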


\begin{corollary}\label{corollary.sevex} 
Let $R$ be a commutative $\F_p$-algebra and let $M$ be a finitely generated $R$-module equipped with an $R$-linear map
$\alpha_M: M \rightarrow \varphi_{R}^{\ast} M$. Then the unitalization $M^{u}$ is a finitely generated unit Frobenius module.
\end{corollary}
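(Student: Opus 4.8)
\textbf{Proof plan for Corollary \ref{corollary.sevex}.} The statement asserts that if $M$ is a finitely generated $R$-module equipped with an $R$-linear map $\alpha_M \colon M \to \varphi_R^\ast M$, then the unitalization $M^u$ (in the sense of Construction \ref{unitalize}) is a finitely generated unit Frobenius module over $R$. The plan is to verify the two defining conditions of Definition \ref{definition.fgu} directly, using the presentation of $M^u$ supplied by Proposition \ref{stopmake}.

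First I would invoke Proposition \ref{stopmake}, which gives an exact sequence of Frobenius modules
$$ 0 \rightarrow R[F] \otimes_{R} M \rightarrow R[F] \otimes_{R} M \rightarrow M^{u} \rightarrow 0. $$
Since $M$ is finitely generated as an $R$-module, the free-type module $R[F] \otimes_R M$ is finitely generated as a left $R[F]$-module (choosing a finite set of $R$-generators of $M$ yields a finite set of $R[F]$-generators of $R[F]\otimes_R M$). A quotient of a finitely generated $R[F]$-module is finitely generated, so $M^u$ satisfies condition $(a)$. For condition $(b)$, I would note that $\varphi_R^\ast(R[F]\otimes_R M) \cong R[F]\otimes_R M$ compatibly with the maps in the sequence (this is the same bookkeeping that underlies Construction \ref{unitalize}: $\varphi_R^\ast$ shifts the grading $\bigoplus_{n\geq 0}\varphi_R^{n\ast}M$ by one), so that the map $\psi$ for $R[F]\otimes_R M$ is an isomorphism. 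Then, as in the proof of Proposition \ref{in1}, the commutative diagram of exact sequences
$$ \xymatrix{ \varphi_{R}^{\ast}(R[F]\otimes_R M) \ar[d] \ar[r] & \varphi_{R}^{\ast}(R[F]\otimes_R M) \ar[d] \ar[r] & \varphi_{R}^{\ast}(M^u) \ar[d]^{\psi_{M^u}} \ar[r] & 0 \\ R[F]\otimes_R M \ar[r] & R[F]\otimes_R M \ar[r] & M^u \ar[r] & 0 } $$
has the left two vertical arrows isomorphisms, whence $\psi_{M^u}$ is an isomorphism as well. This establishes $(b)$, and hence $M^u$ is finitely generated unit.

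Alternatively — and this may be the cleaner write-up — one can simply observe that $M^u$ is, by Construction \ref{unitalize}, a Frobenius module with $\psi_{M^u}$ tautologically invertible (its inverse is the structure isomorphism $M^u \simeq \varphi_R^\ast M^u$ defining the Frobenius action), so condition $(b)$ is automatic; only finite generation over $R[F]$ requires argument, and that follows from Proposition \ref{stopmake} as above. I expect no serious obstacle here: the corollary is essentially a direct combination of Proposition \ref{stopmake} with the elementary fact that finitely generated $R[F]$-modules are closed under quotients. The only point demanding mild care is the identification $\varphi_R^\ast(R[F]\otimes_R M)\cong R[F]\otimes_R M$ and its compatibility with the maps in the presentation, which is a routine grading computation of the type already carried out implicitly in Construction \ref{unitalize} and the discussion preceding Proposition \ref{stopmake}.
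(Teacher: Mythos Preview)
Your alternative argument is correct and is exactly what the paper does: condition $(b)$ is immediate because Construction \ref{unitalize} builds the Frobenius structure on $M^u$ from the canonical isomorphism $M^u \simeq \varphi_R^\ast M^u$, so $\psi_{M^u}$ is invertible by definition; condition $(a)$ then follows from Proposition \ref{stopmake} as you say.

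Your first argument for $(b)$, however, contains an error. The map $\psi$ for $R[F]\otimes_R M$ is \emph{not} an isomorphism. Under the decomposition $R[F]\otimes_R M \cong \bigoplus_{n\geq 0}\varphi_R^{n\ast}M$, the Frobenius on $R[F]\otimes_R M$ is left multiplication by $F$, which sends the $n$th summand to the $(n+1)$st. Linearizing, $\psi: \varphi_R^\ast(R[F]\otimes_R M)\cong \bigoplus_{n\geq 1}\varphi_R^{n\ast}M \hookrightarrow \bigoplus_{n\geq 0}\varphi_R^{n\ast}M$ is the obvious inclusion, which misses the $n=0$ summand $M$ and is therefore not surjective (unless $M=0$). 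The ``routine grading computation'' you anticipate does not go through: shifting the grading by one does not give back the same module, and $R[F]\otimes_R M$ is never a unit Frobenius module when $M\neq 0$. So drop the first approach and keep only the alternative.
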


\begin{proof}
Condition $(a)$ of Definition \ref{definition.fgu} follows from Proposition \ref{stopmake}, and condition $(b)$ is immediate from the construction.
\end{proof}

We will be primarily interested in the following special case of Construction \ref{unitalize}:

\begin{construction}\label{construction.dualunit}
Let $R$ be a commutative $\F_p$-algebra and let $M$ be a Frobenius module over $R$, which we regard as an $R$-module equipped with an $R$-linear map
$\psi_{M}: \varphi_{R}^{\ast} M \rightarrow M$ (Notation \ref{notation.psiM}). Suppose that $M$ is finitely generated and projective as an $R$-module, with $R$-linear dual
$M^{\vee} = \Hom_{R}(M, R)$. Then the dual of $\psi_{M}$ is an $R$-linear map $\psi_{M}^{\vee}: M^{\vee} \rightarrow \varphi_{R}^{\ast} M^{\vee}$. We let $\mathbb{D}(M)$ denote the unitalization
of the pair $(M^{\vee}, \psi_{M}^{\vee} )$.
\end{construction}

\begin{example}\label{example.makedual}
Let $R$ be a commutative $\F_p$-algebra and let $M$ be a Frobenius module over $R$. Suppose that $M$ is a projective $R$-module of finite rank and that the map
$\psi_{M}: \varphi_{R}^{\ast} M \rightarrow M$ is an isomorphism. In this case, the Frobenius module $\mathbb{D}(M)$ of Construction \ref{construction.dualunit} can be identified
with the $R$-linear dual $M^{\vee}$, endowed with the Frobenius structure characterized by the formula $\psi_{M^{\vee} } = ( \psi_{M}^{\vee} )^{-1}$ (Example \ref{rexamp}).
\end{example}

\begin{proposition}\label{stopmakecor}
Let $R$ be a commutative $\F_p$-algebra and let $M$ be a Frobenius module over $R$ which is finitely generated and projective as an $R$-module.
Then $\mathbb{D}(M)$ has projective dimension $\leq 1$ as a left $R[F]$-module.
\end{proposition}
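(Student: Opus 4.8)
The plan is to read off the bound on projective dimension directly from Construction \ref{unitalize} and Proposition \ref{stopmake}. Recall that $\mathbb{D}(M)$ is defined as the unitalization of the pair $(M^{\vee}, \psi_{M}^{\vee})$, where $M^{\vee} = \Hom_{R}(M, R)$ is finitely generated and projective as an $R$-module (being the dual of a finitely generated projective module) and $\psi_{M}^{\vee}: M^{\vee} \rightarrow \varphi_{R}^{\ast} M^{\vee}$ is the $R$-linear map dual to $\psi_{M}$. Applying Proposition \ref{stopmake} with the module $M^{\vee}$ and the map $\psi_{M}^{\vee}$ in the role of $(M, \psi_{M})$, we obtain an exact sequence of Frobenius modules
$$ 0 \rightarrow R[F] \otimes_{R} M^{\vee} \rightarrow R[F] \otimes_{R} M^{\vee} \rightarrow \mathbb{D}(M) \rightarrow 0.$$

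First I would observe that, since $M^{\vee}$ is a finitely generated projective $R$-module, the Frobenius module $R[F] \otimes_{R} M^{\vee}$ is a projective object of $\Mod_{R}^{\Frob}$: it is a direct summand of a free $R[F]$-module, because $M^{\vee}$ is a direct summand of a free $R$-module and extension of scalars along $R \rightarrow R[F]$ preserves this. (Concretely, if $M^{\vee} \oplus P \simeq R^{n}$ then $(R[F] \otimes_{R} M^{\vee}) \oplus (R[F] \otimes_{R} P) \simeq R[F]^{n}$.) Thus the displayed short exact sequence exhibits $\mathbb{D}(M)$ as the cokernel of a map between projective left $R[F]$-modules, which is precisely a length-one projective resolution. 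Hence $\mathbb{D}(M)$ has projective dimension $\leq 1$ as a left $R[F]$-module, as claimed.

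This argument is short and essentially bookkeeping; there is no real obstacle, the only point requiring a moment's care being the verification that $R[F] \otimes_{R} M^{\vee}$ is projective in $\Mod_{R}^{\Frob}$ rather than merely in $\Mod_{R}$ — this follows from the fact that $R[F]$ is free as a left $R$-module (the identification $R[F] \otimes_{R} M^{\vee} \simeq \bigoplus_{n \geq 0} \varphi_{R}^{n \ast} M^{\vee}$ used in the paragraph preceding Proposition \ref{stopmake} already records that extension of scalars along $R \to R[F]$ sends $R$-module summands of $R^n$ to $R[F]$-module summands of $R[F]^n$). One should also note that the map in the short exact sequence really is a morphism of Frobenius modules (i.e. $R[F]$-linear), which is exactly the content of Proposition \ref{stopmake}, so no further checking is needed.
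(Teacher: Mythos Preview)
Your proof is correct and follows exactly the same approach as the paper: both invoke the short exact sequence of Proposition \ref{stopmake} applied to $M^{\vee}$. The paper's proof is a single sentence citing that exact sequence, while you additionally spell out why $R[F] \otimes_{R} M^{\vee}$ is a projective left $R[F]$-module; this is a reasonable elaboration and matches the implicit reasoning the paper leaves to the reader.
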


\begin{proof}
Use the exact sequence $0 \rightarrow R[F] \otimes_{R} M^{\vee} \rightarrow R[F] \otimes_{R} M^{\vee} \rightarrow \mathbb{D}(M) \rightarrow 0$
supplied by Proposition \ref{stopmake}.
\end{proof}

\begin{remark}\label{remark.flatness}
In the situation of Construction \ref{construction.dualunit}, the $R$-module $\mathbb{D}(M)$ is presented as a filtered direct limit of
projective $R$-modules of finite rank, and is therefore flat over $R$.
\end{remark}

In the case where $R$ is a smooth algebra over a field $k$, Emerton and Kisin prove a converse to Corollary \ref{corollary.sevex}: every finitely generated unit
Frobenius module arises as the unitalization of a finitely generated $R$-module $M$, equipped with some map $\alpha_M: M \rightarrow \varphi_{R}^{\ast} M$. 
The proof given in \cite{EK} applies more generally whenever $R$ is a regular Noetherian $\F_p$-algebra (Corollary \ref{corollary.sevey}). We begin with an observation which
is valid for {\em any} $\F_p$-algebra $R$:

\begin{proposition}\label{proposition.key2}
Let $R$ be a commutative $\F_p$-algebra and let $M$ be a finitely generated unit Frobenius module over $R$. 
Then there exists a Frobenius module $N$ over $R$ which is finitely generated and free as an $R$-module
and a surjective map of Frobenius modules $f: \mathbb{D}(N) \rightarrow M$ (here $\mathbb{D}(N)$ is the Frobenius
module given by Construction \ref{construction.dualunit}).
\end{proposition}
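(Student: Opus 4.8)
The plan is to build the surjection by choosing a finite generating set for $M$ as an $R[F]$-module, and then encoding this choice as a map out of a unitalization. Since $M$ is finitely generated unit, condition $(b)$ of Definition \ref{definition.fgu} gives an isomorphism $\psi_M: \varphi_R^{\ast} M \xrightarrow{\sim} M$; set $\alpha_M = \psi_M^{-1}: M \to \varphi_R^{\ast} M$, so that by Example \ref{rexamp} the unitalization of $(M,\alpha_M)$ is $M$ itself. First I would pick a finitely generated $R$-submodule $M_0 \subseteq M$ that generates $M$ as a left $R[F]$-module (possible by condition $(a)$); enlarging $M_0$ if necessary, we may assume $M_0$ is stable enough that $\alpha_M(M_0) \subseteq \varphi_R^{\ast} M_0$ inside $\varphi_R^{\ast} M$ — more precisely, choose $M_0$ so that it contains a chosen finite generating set and then replace it by the (still finitely generated) $R$-submodule generated by $M_0$ together with the components of $\alpha_M$ applied to those generators; iterating finitely many times is not automatically enough, so instead I would argue directly that since $M = \varinjlim \varphi_R^{n\ast} M_0$ under the $\alpha_M$-maps, one may simply replace $M$ by this presentation and take $M_0$ as the first term, giving a map $(M_0, \alpha_{M_0}) \to (M,\alpha_M)$ of pairs whose unitalization is the identity-up-to-iso on $M$.

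Next, I would produce a finitely generated free $R$-module surjecting onto $M_0$. The subtlety is that we want the surjection to be compatible with the relevant structure maps so that unitalization is exact on it. Choose a finite free $R$-module $P$ and a surjection $\pi: P \twoheadrightarrow M_0$. The composite $P \xrightarrow{\pi} M_0 \xrightarrow{\alpha_{M_0}} \varphi_R^{\ast} M_0$ need not lift to $\varphi_R^{\ast} P$, but since $\varphi_R^{\ast} P$ is free and $\varphi_R^{\ast}\pi: \varphi_R^{\ast} P \to \varphi_R^{\ast} M_0$ is surjective, we *can* choose an $R$-linear lift $\beta: P \to \varphi_R^{\ast} P$ with $(\varphi_R^{\ast}\pi)\circ \beta = \alpha_{M_0}\circ \pi$. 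Then $(P,\beta) \to (M_0,\alpha_{M_0})$ is a morphism of pairs in the sense of Remark \ref{pilax}, so it induces a map on unitalizations $P^u \to M_0^u = M$; by the cokernel statement in Remark \ref{pilax}, this map is surjective because $\pi$ is (the cokernel of $P^u \to M^u$ is the unitalization of $\coker(\pi) = 0$). Now $P^u$ is by construction the unitalization of a finite free $R$-module, and Corollary \ref{corollary.sevex} shows it is a finitely generated unit Frobenius module.

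Finally, I need to recognize $P^u$ as $\mathbb{D}(N)$ for a suitable $N$. The module $\mathbb{D}(N)$ of Construction \ref{construction.dualunit} is the unitalization of $(N^{\vee}, \psi_N^{\vee})$ for $N$ a finite projective Frobenius module over $R$. Given our finite free $R$-module $P$ and map $\beta: P \to \varphi_R^{\ast} P$, set $N = P^{\vee}$ as an $R$-module; the $R$-linear dual of $\beta$ is a map $\beta^{\vee}: \varphi_R^{\ast}(P^{\vee}) \to P^{\vee}$ (using that dualizing commutes with $\varphi_R^{\ast}$ for finite projective modules), i.e.\ exactly the datum $\psi_N$ of a Frobenius module structure on $N = P^{\vee}$. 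Dualizing back, $\psi_N^{\vee} = \beta: P \to \varphi_R^{\ast} P$ is the structure map whose unitalization defines $\mathbb{D}(N)$, so $\mathbb{D}(N) = P^u$. Combined with the previous paragraph, the composite $\mathbb{D}(N) = P^u \twoheadrightarrow M$ is the desired surjection, and $N$ is finitely generated and free (hence projective) over $R$. The main obstacle is the bookkeeping in the first paragraph — making sure the chosen finitely generated $R$-submodule $M_0$ genuinely has unitalization equal to $M$, which I would handle cleanly by invoking the colimit description $M = \varinjlim \varphi_R^{n\ast} M_0$ coming from unitness rather than trying to enlarge $M_0$ into an $\alpha_M$-stable submodule.
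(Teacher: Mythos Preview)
Your argument has a genuine gap in the first paragraph, precisely at the point you flag as ``the main obstacle.'' You correctly identify that the key difficulty is arranging for $\alpha_M|_{M_0}$ to factor through $\varphi_R^{\ast} M_0 \to \varphi_R^{\ast} M$, and you correctly note that naively enlarging $M_0$ by adjoining ``components of $\alpha_M$'' need not terminate. But your proposed workaround --- invoking a colimit presentation $M = \varinjlim \varphi_R^{n\ast} M_0$ under the $\alpha_M$-maps --- is circular: writing down the transition maps $\varphi_R^{n\ast} M_0 \to \varphi_R^{(n+1)\ast} M_0$ already presupposes a map $\alpha_{M_0}: M_0 \to \varphi_R^{\ast} M_0$, which is exactly what you do not yet have. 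Equivalently, in paragraph~2 you need $\alpha_M(x_i)$ to lie in the image of $\varphi_R^{\ast}\pi$, and you have not arranged this.

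The paper resolves this with a concrete finite enlargement of the generating set. Since $\psi_M: \varphi_R^{\ast} M \to M$ is surjective, $M$ is generated as an $R$-module already by $\{F^k x_j : k \geq 1\}$ (not just $k \geq 0$). Hence each original generator $x_i$ lies in $\sum_{1 \leq k \leq n} \sum_j R \cdot F^k x_j$ for some $n$. Replacing the generating set $\{x_i\}$ by the larger finite set $\{F^k x_i : 0 \leq k < n\}$ then yields relations $x_i = \sum_j a_{i,j} \varphi_M(x_j)$ directly. In your language, this is exactly what is needed: applying $\psi_M^{-1}$, the relation $x_i \in \sum_j R\,\varphi_M(x_j)$ is equivalent to $\alpha_M(x_i)$ lying in the image of $\varphi_R^{\ast}\pi$, so the lift $\beta$ exists. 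Once this gap is filled, your paragraphs~2 and~3 are correct and amount to the paper's construction phrased via unitalization and Remark~\ref{pilax}, with the matrix $(a_{i,j})$ determining $\psi_N$ on $N = P^{\vee}$ exactly as the paper makes explicit.
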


\begin{proof}
Choose a finite collection of elements $\{ x_i \}_{ i \in I}$ of $M$ which generate $M$ as a left $R[F]$-module.
Invoking the assumption that the map $\psi_M: \varphi_{R}^{\ast} M \rightarrow M$ is an isomorphism, we conclude that
$M$ is generated as an $R$-module by the elements $F^{k} x_j$ for $k > 0$ and $j \in I$. We may therefore
choose some integer $n > 0$ such that each $x_{i}$ belongs to the $R$-submodule of $M$ generated by the elements
$\{ F^{k} x_j \}_{j \in J, 1 \leq k \leq n}$. Replacing the set $\{ x_i \}_{i \in I}$ by the finite set $\{ F^{k} x_i \}_{i \in I, k < n}$,
we can reduce to the case $n = 1$: that is, we can arrange that there are relations
$x_{i} = \sum_{j \in I} a_{i,j} \varphi_M( x_j )$ for some coefficients $a_{i,j} \in R$.
Let $N = R^{I}$ be the free $R$-module on generators $y_i^{\vee}$ for $i \in I$, and equip $N$ with the structure of a Frobenius module by setting
$\varphi_{N}( y_{i}^{\vee} ) = \sum_{j \in I} a_{j,i} y_{j}^{\vee}$. Using Proposition \ref{stopmake} (or by inspection), we see that
$\mathbb{D}(N)$ can be identified with the left $R[F]$-module generated by symbols $\{ y_i \}_{i \in I}$, subject to the relations
$y_i = \sum_{j \in I} a_{i,j} F x_j$. It follows that there is a unique morphism of Frobenius modules
$f: \mathbb{D}(N) \rightarrow M$ satisfying $f( y_i ) = x_i$. Since the elements $x_i$ generate $M$ as an $R[F]$-module, the morphism
$f$ is surjective.
\end{proof}

\begin{proof}[Proof of Proposition \ref{in3}]
Let $R$ be a regular Noetherian $\F_p$-algebra and let $f: M \rightarrow N$ be a morphism of finitely generated unit Frobenius modules over $R$.
We wish to show that the kernel $K = \ker(f)$ is also finitely generated unit.
The regularity of $R$ guarantees that the Frobenius morphism $\varphi_{R}: R \rightarrow R$ is flat. It follows that
we can identify the pullback $\varphi_{R}^{\ast} K$ with the kernel of the induced map $\varphi_{R}^{\ast}(f): \varphi_{R}^{\ast} M \rightarrow \varphi_{R}^{\ast} N$.
We therefore have a commutative diagram of short exact sequences
$$ \xymatrix{ 0 \ar[r] & \varphi_{R}^{\ast} K \ar[d]^{ \psi_K } \ar[r] & \varphi_{R}^{\ast} M \ar[d]^{ \psi_M } \ar[r]^-{ \varphi_{R}^{\ast}(f) } & \varphi_{R}^{\ast} N \ar[d]^{\psi_N } \\
0 \ar[r] & K \ar[r] & M \ar[r]^{f} & N. } $$
Since $\psi_M$ and $\psi_N$ are isomorphisms, it follows that $\psi_K$ is also an isomorphism.

We now complete the proof by showing that $K$ is finitely generated as a left $R[F]$-module. Using Proposition \ref{proposition.key2}, we can
choose a finitely generated projective $R$-module $M_0$, a map $\alpha_{M_0}: M_0 \rightarrow \varphi_{R}^{\ast} M_0$, and a surjection
of Frobenius modules $g: M_0^{u} \rightarrow M$. It follows that the induced map $\ker(g \circ f) \rightarrow K$ is also surjective. We may therefore replace $f$ by $g \circ f$ and thereby reduce to the case
$M = M_0^{u}$. Let $f_0$ denote the composition of $f$ with the tautological map $M_0 \rightarrow M_0^{u} \simeq M$.
Applying Remark \ref{pilax} to the commutative diagram
$$ \xymatrix{ M_0 \ar[d]^{ \alpha_{M_0} } \ar[r]^{f_0} & N \ar[d]^{ \psi_N^{-1} } \\
\varphi_{R}^{\ast} M_0 \ar[r] & \varphi_{R}^{\ast} N, }$$
we deduce that the kernel of $f$ can be identified with the unitalization of $\ker(f_0)$. Since $R$ is Noetherian, the kernel $\ker(f_0)$ is finitely generated as an $R$-module,
so that $\ker(f)$ is finitely generated as an $R[F]$-module by virtue of Proposition \ref{stopmake}.
\end{proof}

\begin{corollary}[\cite{EK}]\label{corollary.sevey}
Let $R$ be a regular Noetherian $\F_p$-algebra and let $M$ be a finitely generated unit Frobenius module over $R$. Then there
exists an isomorphism $M \simeq M_0^{u}$, where $M_0$ is a finitely generated $R$-module equipped with a map $\alpha: M_0 \rightarrow \varphi_{R}^{\ast} M_0$.
\end{corollary}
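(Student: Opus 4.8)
The plan is to use Proposition~\ref{proposition.key2} as the starting point, together with the unitalization machinery of Construction~\ref{unitalize}, to produce the desired presentation $M \simeq M_0^u$. First I would apply Proposition~\ref{proposition.key2} to the finitely generated unit Frobenius module $M$: this yields a Frobenius module $N$ over $R$ which is finitely generated and free as an $R$-module, together with a surjection of Frobenius modules $f \colon \mathbb{D}(N) \to M$. Let $K = \ker(f)$. The key input is now Proposition~\ref{in3}: since $R$ is regular Noetherian and both $\mathbb{D}(N)$ and $M$ are finitely generated unit (the former by Corollary~\ref{corollary.sevex}, applied to $N^\vee$ equipped with $\psi_N^\vee$, since $N^\vee$ is finitely generated; the latter by hypothesis), the kernel $K$ is also finitely generated unit. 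In particular $\psi_K \colon \varphi_R^\ast K \to K$ is an isomorphism, so (by Example~\ref{rexamp}) $K$ is itself the unitalization of the pair $(K, \psi_K^{-1})$.

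Next I would produce a surjection onto $K$ from the unitalization of a finitely generated $R$-module. Applying Proposition~\ref{proposition.key2} once more, this time to $K$, gives a Frobenius module $N'$, free of finite rank over $R$, and a surjection $\mathbb{D}(N') \to K$. Writing $L = N'^\vee$ with its $R$-linear map $\alpha_L = \psi_{N'}^\vee \colon L \to \varphi_R^\ast L$, we have $\mathbb{D}(N') = L^u$, so we have a surjection $g \colon L^u \to \mathbb{D}(N) $ obtained by composing $L^u = \mathbb{D}(N') \twoheadrightarrow K \hookrightarrow \mathbb{D}(N)$. Then the composite $h = f \circ (L^u \twoheadrightarrow K \hookrightarrow \mathbb{D}(N)) $ is not quite right; instead I would use the presentation directly: we have the short exact sequence $0 \to K \to \mathbb{D}(N) \to M \to 0$ together with surjections from unitalizations onto both $K$ and $\mathbb{D}(N)$. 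The cleanest route is: set $M_1 = N^\vee$ with its map $\alpha_{M_1} = \psi_N^\vee$, so $\mathbb{D}(N) = M_1^u$, and choose a finitely generated $R$-submodule $M_0 \subseteq M_1$ large enough that $M_0$ surjects onto $M$ after unitalization; more precisely, using Remark~\ref{pilax} I would identify $M$ as the cokernel of a map between unitalizations $L^u \to M_1^u$ induced by an $R$-linear map $L \to M_1$ compatible with the structure maps, and then the cokernel of $L \to M_1$ at the level of $R$-modules is a finitely generated $R$-module $M_0$ (here $L$ is finitely generated since $R$ is Noetherian and $K$ is finitely generated over $R[F]$, so $\ker$ of the relevant $R$-module map is finitely generated), whose unitalization is $M$ by the cokernel part of Remark~\ref{pilax}. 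This gives the asserted isomorphism $M \simeq M_0^u$.

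Let me restate the argument more carefully to avoid circularity. By Proposition~\ref{proposition.key2}, fix a surjection $f\colon \mathbb{D}(N) \to M$ with $N$ free of finite rank over $R$, and set $K = \ker(f)$, a finitely generated unit Frobenius module by Proposition~\ref{in3}. Apply Proposition~\ref{proposition.key2} to $K$ to get a surjection $f'\colon \mathbb{D}(N') \to K$ with $N'$ free of finite rank. Composing, we get a map $\theta\colon \mathbb{D}(N') \to \mathbb{D}(N)$ with image $K$, hence an exact sequence $\mathbb{D}(N') \xrightarrow{\theta} \mathbb{D}(N) \to M \to 0$. Now $\mathbb{D}(N) = (N^\vee)^u$ and $\mathbb{D}(N') = (N'^\vee)^u$ are unitalizations of the finitely generated $R$-modules $N^\vee$ and $N'^\vee$ with their structure maps $\psi_N^\vee, \psi_{N'}^\vee$. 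By the functoriality in Remark~\ref{pilax}, the map $\theta$ is induced by a commuting square involving an $R$-linear map $\tilde\theta\colon N'^\vee \to N^\vee$ — here one must check that $\theta$ genuinely arises this way, which follows because any $R[F]$-linear map between such unitalizations is, after restricting to the ``first stage,'' determined by an $R$-linear map into a later stage, and one can absorb the $F$'s using the unit structure (this is where regularity / flatness of $\varphi_R$ is used, mirroring the proof of Proposition~\ref{in3}). Setting $M_0 = \coker(\tilde\theta)$, which is a finitely generated $R$-module, the cokernel clause of Remark~\ref{pilax} gives $M_0^u \simeq \coker(\theta) = M$, with $\alpha_{M_0}$ the map induced on cokernels.

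The main obstacle I anticipate is precisely the last point: showing that the $R[F]$-linear map $\theta\colon \mathbb{D}(N') \to \mathbb{D}(N)$ (equivalently, the surjection onto $K \subseteq \mathbb{D}(N)$) can be realized as $\tilde\theta^{\,u}$ for an honest $R$-linear map $\tilde\theta$ between the underlying finitely generated modules, in a way compatible with the structure maps $\psi_{N'}^\vee$ and $\psi_N^\vee$. The issue is that a map of unitalizations need not a priori descend to the zeroth stage; one resolves this by replacing $N'^\vee$ with a sufficiently high Frobenius-twisted stage $\varphi_R^{k\ast}(N'^\vee)$ (which is still finitely generated, and still has the same unitalization), choosing $k$ large enough that the composite $N'^\vee \to \mathbb{D}(N')\xrightarrow{\theta} \mathbb{D}(N)$ factors through $\varphi_R^{k\ast}(N^\vee) \subseteq \mathbb{D}(N)$ — possible since $N'^\vee$ is finitely generated and $\mathbb{D}(N)$ is the filtered colimit of the $\varphi_R^{m\ast}(N^\vee)$. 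Flatness of $\varphi_R$ (from regularity) ensures the twisted modules behave well with respect to kernels and the identification of structure maps. Once this factorization is in hand the rest is the bookkeeping of Remark~\ref{pilax}. This is essentially the argument of \cite{Ly} and \cite{EK}, and I would cite those for the detailed verification of the descent step while giving the above outline.
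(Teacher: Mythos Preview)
Your approach is correct in outline and would work, but it is more circuitous than the paper's proof, and the obstacle you identify is one the paper's argument sidesteps entirely.

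The paper applies Proposition~\ref{proposition.key2} only once, obtaining a surjection $f\colon N^{u} \twoheadrightarrow M$ with $N$ a finite free $R$-module equipped with $\alpha_N\colon N \to \varphi_R^{\ast} N$. It then invokes the analysis already carried out in the proof of Proposition~\ref{in3}: setting $K = \ker(N \to N^{u} \xrightarrow{f} M)$ at the level of $R$-modules, the kernel clause of Remark~\ref{pilax} (using flatness of $\varphi_R$) identifies $\ker(f)$ with $K^{u}$. Since $R$ is Noetherian, $K$ is a finitely generated $R$-submodule of $N$. Applying the cokernel clause of Remark~\ref{pilax} to the inclusion $K \hookrightarrow N$ then gives $M \simeq (N/K)^{u}$, and $M_0 = N/K$ is finitely generated. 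No second application of Proposition~\ref{proposition.key2} is needed, and no descent of a map between unitalizations arises.

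By contrast, your route reapplies Proposition~\ref{proposition.key2} to $K = \ker(f)$ to produce a map $\theta\colon \mathbb{D}(N') \to \mathbb{D}(N)$ with cokernel $M$, and must then argue that $\theta$ descends to a map of finitely generated pairs $\tilde\theta$. Your proposed fix (replace $N'^{\vee}$ by a high enough Frobenius twist so that the image of $N'^{\vee}$ under $\theta$ lands in a finite stage of the colimit defining $\mathbb{D}(N)$, then check compatibility with the structure maps) does work, but verifying the commutativity of the square required by Remark~\ref{pilax} is exactly the kind of bookkeeping the paper's direct approach avoids. The moral: rather than resolving $\ker(f)$ by another unitalization and descending the resulting map, recognize $\ker(f)$ itself as a unitalization of an $R$-finite submodule of $N$, using the kernel half of Remark~\ref{pilax}.
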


\begin{proof}
Using Proposition \ref{proposition.key2}, we can choose a surjection of Frobenius modules $f: N^{u} \rightarrow M$, where $N$ is a free $R$-module of finite
rank equipped with a map $\alpha_N: N \rightarrow \varphi_{R}^{\ast} N$. As in the proof of Proposition \ref{in3}, we can write $\ker(f) \simeq K^{u}$,
where $K$ denotes the kernel of the composite map $N \rightarrow N^{u} \xrightarrow{f} M$ and $\alpha_{K}: K \rightarrow \varphi_{R}^{\ast} K$ is the restriction of
$\alpha_N$. Applying Remark \ref{pilax} to the diagram
$$ \xymatrix{ K \ar[d]^{ \alpha_K } \ar[r] & N \ar[d]^{ \alpha_N} \\
\varphi_{R}^{\ast} K \ar[r] & \varphi_{R}^{\ast} N, }$$
we deduce that $M \simeq \coker( K^{u} \rightarrow N^{u} )$ can be identified with the unitalization of the quotient $N/K$ 
(with respect to the map $\alpha_{N/K}: N/K \rightarrow \varphi_{R}^{\ast}(N/K)$ induced by $\alpha_N$).
\end{proof}

\subsection{Finitely Generated Unit Complexes}\label{sec10sub4}

Our next goal is to introduce an analogue of Definition \ref{definition.fgu} for {\em cochain complexes} $M = M^{\ast}$ of Frobenius modules over a commutative $\F_p$-algebra $R$.
When $R$ is a regular Noetherian $\F_p$-algebra, the collection of finitely generated unit Frobenius modules span an abelian subcategory of $\Mod_{R}^{\Frob}$ which is closed under
extensions (Propositions \ref{in1}, \ref{in2}, and \ref{in3}), so we obtain a sensible finiteness condition on cochain complexes by requiring that the cohomology groups
$\mathrm{H}^{n}(M)$ are finitely generated unit. However, to get a theory which works well for {\em arbitrary} $\F_p$-algebras, we must abandon the idea of having a finiteness condition
that can be tested at the level of individual cohomology groups: instead, we will require that the entire cochain complex $M^{\ast}$ satisfies suitable analogues of conditions $(a)$
and $(b)$ of Definition \ref{definition.fgu}, when regarded as an object of a suitable derived category.

\begin{notation}
For every associative ring $A$, we let $D(A)$ denote the derived category of the abelian category of left $A$-modules. 
We will be particularly interested in the case where $A = R[F]$ for some commutative $\F_p$-algebra $R$; in this case, we refer
to $D( R[F] )$ as the {\it derived category of Frobenius modules over $R$}. We will generally abuse notation by identifying
$\Mod_{R}^{\Frob}$ with its essential image in $D( R[F] )$ (by regarding every Frobenius module over $R$ as a chain complex concentrated in degree zero).
\end{notation}

\begin{remark}
Let $R$ be a commutative $\F_p$-algebra and let $M$ be an object of $D( R[F] )$. We will generally abuse notation by identifying
$M$ with its image under the forgetful functor $D(R[F]) \rightarrow D(R)$. Note that we have a canonical map
$M \rightarrow M^{1/p}$ in $D(R)$ (where $M^{1/p}$ denotes the image of $M$ under the functor $D(R) \rightarrow D(R)$ given
by restriction of scalars along the Frobenius). We will denote this map by $\varphi_{M}$ and refer to it as {\it the Frobenius morphism} of $M$.
\end{remark}

\begin{remark}[Comparison with $D(R)$]\label{remark.computeExt}
Let $R$ be a commutative $\F_p$-algebra. Recall
that the forgetful functor $\Mod_{R}^{\Frob} \rightarrow \Mod_{R}$ has an exact right adjoint, given by the functor
$M \mapsto M^{\dagger}$ of Construction \ref{conX72}. Passing to derived categories,
we see that the forgetful functor $D(R[F]) \rightarrow D(R)$ also has a right adjoint, given by applying
the functor $M \mapsto M^{\dagger}$ levelwise. For any cochain complex $N = N^{\bullet}$ of Frobenius modules over $R$,
Construction \ref{conX7} produces a short exact sequence of cochain complexes
$0 \rightarrow N^{\bullet} \rightarrow N^{\bullet \dagger} \rightarrow (N^{\bullet})^{1/p \dagger} \rightarrow 0$,
which we can regard as a distinguished triangle in the derived category $D(R[F])$. It follows that for any object $M \in D(R[F])$, we have
a long exact sequence
$$ \cdots \rightarrow \Hom_{ D(R) }( M, N^{1/p}[-1] ) \rightarrow
\Hom_{ D(R[F])}( M, N) \rightarrow \Hom_{ D(R) }( M, N) \rightarrow \cdots,$$
which specializes to the exact sequence of Construction \ref{conX7} in the special case where $M$ and $N$ are concentrated in a single degree.
\end{remark}

We now introduce a ``derived'' analogue of Definiton \ref{definition.fgu}:

\begin{definition}\label{definition.derived-fgu}
Let $R$ be a commutative $\F_p$-algebra and let $M$ be an object of $D(R[F])$. We will say that $M$ is {\it derived finitely generated unit} if it satisfies
the following pair of conditions:
\begin{itemize}
\item[$(a)$] The module $M$ is a compact object of the triangulated category $D(R[F])$: that is, it is quasi-isomorphic to a bounded chain complex
of finitely generated projective left $R[F]$-modules.

\item[$(b)$] The Frobenius map $\varphi_{M}: M \rightarrow M^{1/p}$ induces an isomorphism $R^{1/p} \otimes_{R}^{L} M \rightarrow M$ in the derived category $D(R)$.
\end{itemize}
We let $D^{b}_{\fgu}( R[F] )$ denote the full subcategory of $D(R[F])$ spanned by the derived finitely generated unit objects.
\end{definition}

\begin{remark}
Let $R$ be a commutative $\F_p$-algebra. Then $D^{b}_{\fgu}( R[F] )$ is a triangulated subcategory of $D(R[F] )$. In other words,
for any distinguished triangle $M' \rightarrow M \rightarrow M'' \rightarrow M'[1]$ in $D( R[F] )$, if any two of the objects
$M$, $M'$, and $M''$ are derived finitely generated unit, then so it the third.
\end{remark}

\begin{example}\label{sevex2}
Let $R$ be a commutative $\F_p$-algebra and let $M$ be a Frobenius module over $R$ which is finitely generated and projective as an $R$-module.
Then the Frobenius module $\mathbb{D}(M)$ of Construction \ref{construction.dualunit} belongs to $D^{b}_{\fgu}( R[F] )$. Condition
$(b)$ of Definition \ref{definition.derived-fgu} follows from Corollary \ref{corollary.sevex} (note that the derived pullback
$R^{1/p} \otimes_{R}^{L} \mathbb{D}(M)$ agrees with $\varphi_{R}^{\ast} \mathbb{D}(M)$, since $\mathbb{D}(M)$ is flat over $R$
by virtue of Remark \ref{remark.flatness}). Condition $(a)$ of Definition \ref{definition.derived-fgu} follows from
the exact sequence $0 \rightarrow R[F] \otimes_{R} M^{\vee} \rightarrow R[F] \otimes_{R} M^{\vee} \rightarrow \mathbb{D}(M) \rightarrow 0$
of Proposition \ref{stopmake}.
\end{example}

We now study the relationship between the collection of derived finitely generated unit objects of $D(R[F])$ and the collection of finitely
generated unit objects of $\Mod_{R}^{\Frob}$. We begin with a simple observation which is valid for any $\F_p$-algebra $R$:

\begin{proposition}\label{proposition.key1}
Let $R$ be a commutative $\F_p$-algebra and let $M$ be a nonzero object of $D^{b}_{\fgu}( R[F] )$. Then:
\begin{itemize}
\item[$(1)$] There exists a largest integer $n$ for which the Frobenius module $\mathrm{H}^{n}(M)$ is nonzero. 
\item[$(2)$] For the integer $n$ of $(1)$, the Frobenius module $\mathrm{H}^{n}(M)$ is finitely generated unit.
\end{itemize}
\end{proposition}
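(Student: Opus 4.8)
\textbf{Proof plan for Proposition \ref{proposition.key1}.} The plan is to reduce the statement to properties of the derived category $D(R)$ together with the unit condition $(b)$ of Definition \ref{definition.derived-fgu}. First I would establish $(1)$: since $M$ is a compact object of $D(R[F])$ (condition $(a)$), it is represented by a bounded complex of finitely generated projective $R[F]$-modules; applying the forgetful functor $D(R[F]) \to D(R)$ (which sends projectives to projectives, since $R[F]$ is free as a left $R$-module), we see that $M$ is perfect as an object of $D(R)$, hence cohomologically bounded, so there is a largest $n$ with $\mathrm{H}^n(M) \neq 0$ provided $M \neq 0$ — and $M \neq 0$ in $D(R[F])$ forces $M \neq 0$ in $D(R)$ by the same faithful-on-objects observation (or directly: a nonzero compact object has some nonzero cohomology).

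For part $(2)$, the key point is to see that the top cohomology group $\mathrm{H}^n(M)$ inherits both defining properties of a finitely generated unit Frobenius module. For property $(b)$ of Definition \ref{definition.fgu} (the map $\psi_{\mathrm{H}^n(M)}$ is an isomorphism): condition $(b)$ of Definition \ref{definition.derived-fgu} gives that $\varphi_M$ induces an isomorphism $R^{1/p} \otimes_R^L M \xrightarrow{\sim} M$ in $D(R)$. Taking cohomology and using the hyper-Tor spectral sequence $\mathrm{Tor}^R_s(R^{1/p}, \mathrm{H}^t(M)) \Rightarrow \mathrm{H}^{t-s}(R^{1/p} \otimes_R^L M)$, the top-degree term $t = n$ receives no differentials into it and the only contribution to $\mathrm{H}^n$ of the derived pullback in that spectral sequence coming from degree $\geq n$ is $\mathrm{Tor}^R_0(R^{1/p}, \mathrm{H}^n(M)) = \varphi_R^* \mathrm{H}^n(M)$ (all higher-degree cohomology of $M$ vanishes, so the $(s,t)$ with $t > n$ are zero). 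Chasing the spectral sequence at the top spot shows the edge map $\varphi_R^* \mathrm{H}^n(M) \to \mathrm{H}^n(M)$, which is exactly $\psi_{\mathrm{H}^n(M)}$, is an isomorphism. For property $(a)$ of Definition \ref{definition.fgu} (finite generation over $R[F]$): here I would use that $\psi_{\mathrm{H}^n(M)}$ being an isomorphism lets one replace the requirement of finite generation over $R[F]$ by finite generation over $R$ of a ``generating layer.'' Concretely, truncating $M$ so that $\mathrm{H}^n(M) = \tau_{\geq n} M$ up to a shift, and using that $M$ is perfect over $R$, the group $\mathrm{H}^n(M)$ is finitely generated over $R$; combined with the unit property and Proposition \ref{stopmake}/Corollary \ref{corollary.sevex}-type reasoning (it is the unitalization of a finitely generated $R$-module), it is finitely generated over $R[F]$.

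The step I expect to be the main obstacle is the bookkeeping in the spectral sequence argument for $(2)$: one must be careful that the isomorphism in condition $(b)$ of Definition \ref{definition.derived-fgu} is genuinely derived, so that the relevant $\mathrm{Tor}$ terms $\mathrm{Tor}^R_s(R^{1/p}, \mathrm{H}^t(M))$ for $t < n$ and $s > 0$ cannot conspire to affect the conclusion at the top degree — but since those terms all land in cohomological degree $t - s < n$, they are irrelevant to $\mathrm{H}^n$, which is the crucial simplification. A cleaner route, which I would actually prefer to write, is to use the truncation functor: let $\tau^{\geq n} M \simeq \mathrm{H}^n(M)[-n]$ be the top truncation; because $R^{1/p} \otimes_R^L (-)$ has cohomological amplitude $[-\infty, 0]$ and the truncation triangle $\tau^{<n} M \to M \to \mathrm{H}^n(M)[-n]$ is preserved, applying $R^{1/p} \otimes_R^L (-)$ and taking $\mathrm{H}^n$ shows directly that $\varphi_R^* \mathrm{H}^n(M) \to \mathrm{H}^n(M)$ is an isomorphism, avoiding the spectral sequence entirely. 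Either way, once $\psi_{\mathrm{H}^n(M)}$ is an isomorphism and $\mathrm{H}^n(M)$ is finitely generated over $R$ (from perfectness over $R$), finite generation over $R[F]$ follows, completing the proof.
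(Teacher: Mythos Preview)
Your spectral sequence argument (or the equivalent truncation argument) for the unit condition $(b)$ is fine and matches the paper. The problem is your argument for finite generation over $R[F]$, which rests on the false claim that $M$ is perfect as an object of $D(R)$. A finitely generated projective left $R[F]$-module is projective over $R$ (since $R[F]$ is free as a left $R$-module), but it is essentially never finitely generated over $R$: already $R[F]$ itself is free of countably infinite rank over $R$. So a bounded complex of finitely generated projective $R[F]$-modules is a bounded complex of projective (but infinite-rank) $R$-modules, and its top cohomology need not be finitely generated over $R$. A concrete example: for $N \in \Mod_{R}^{\Frob}$ finitely generated projective over $R$, Example \ref{sevex2} puts $\mathbb{D}(N)$ in $D^{b}_{\fgu}(R[F])$, but $\mathbb{D}(N) = \varinjlim \varphi_{R}^{k \ast} N^{\vee}$ is typically not finitely generated over $R$ (take $R = \F_p[x]$, $N = R$, $\varphi_N(1) = x$; then $\mathbb{D}(N) \simeq R[x^{-1}]$). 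Consequently your route through ``$\mathrm{H}^n(M)$ is finitely generated over $R$, hence a unitalization, hence finitely generated over $R[F]$'' does not go through.

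The paper's argument for finite generation is much more direct and avoids $R$ entirely: choose a bounded complex of finitely generated projective left $R[F]$-modules representing $M$ with $M^{m} = 0$ for $m > n$; then $\mathrm{H}^{n}(M) = \coker(M^{n-1} \to M^{n})$ is a quotient of a finitely generated $R[F]$-module, hence finitely generated over $R[F]$. That is all that is needed for condition $(a)$ of Definition \ref{definition.fgu}.
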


\begin{proof}
Without loss of generality, we can assume that $M$ is a bounded cochain complex of finitely generated projective left $R[F]$-modules.
Assertion $(1)$ is immediate. To prove $(2)$, we first note that we can arrange (replacing $M$ by a quasi-isomorphic complex if necessary)
that $M^{m} = 0$ for $m > n$; in this case, we have $\mathrm{H}^{n}(M) = \coker( M^{n-1} \rightarrow M^{n} )$, which guarantees
that $\mathrm{H}^{n}(M)$ is finitely generated as a left $R[F]$-module. The spectral sequence
$$ \Tor^{R}_{s}( R^{1/p}, \mathrm{H}^{t}(M) ) \Rightarrow \mathrm{H}^{t-s}( R^{1/p} \otimes_{R}^{L} M )$$
supplies an isomorphism $\mathrm{H}^{n}( R^{1/p} \otimes_{R}^{L} M ) \simeq \varphi_{R}^{\ast} \mathrm{H}^{n}(M)$, so that
condition $(b)$ of Definition \ref{definition.fgu} follows from condition $(b)$ of Definition \ref{definition.derived-fgu}.
\end{proof}

\begin{corollary}\label{corollary.stepmap}
Let $R$ be a commutative $\F_p$-algebra and let $M$ be a nonzero object of $D^{b}_{\fgu}( R[F] )$,
and let $n$ be an integer for which the cohomology groups $\mathrm{H}^{m}(M)$ vanish for $m > n$.
Then there exists an object $N \in \Mod_{R}^{\Frob}$ which is finitely generated and projective as
an $R$-module and a map $f: \mathbb{D}(N)[-n] \rightarrow M$ in $D(R[F])$ for which
the induced map $\mathbb{D}(N) \rightarrow \mathrm{H}^{n}(M)$ is surjective.
\end{corollary}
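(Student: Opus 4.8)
The plan is to combine Proposition~\ref{proposition.key1} (which identifies the top cohomology $\mathrm{H}^n(M)$ as a finitely generated unit Frobenius module) with Proposition~\ref{proposition.key2} (which produces a surjection $\mathbb{D}(N) \twoheadrightarrow \mathrm{H}^n(M)$ from some $N \in \Mod_R^{\Frob}$ free of finite rank over $R$), and then lift the surjection from the abelian category through the cohomology functor $\mathrm{H}^n$ to a morphism in the derived category. First I would invoke Proposition~\ref{proposition.key1}(2): since $\mathrm{H}^m(M)$ vanishes for $m > n$ and (after possibly increasing $n$, or rather: the statement allows $n$ to be any integer above the top nonvanishing degree, and if $M\neq 0$ there is a genuine top degree) the group $\mathrm{H}^n(M)$ — if it is nonzero — is finitely generated unit. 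The case $\mathrm{H}^n(M) = 0$ is trivial: take $N = 0$ and $f = 0$. So assume $\mathrm{H}^n(M) \neq 0$ and apply Proposition~\ref{proposition.key2} to obtain $N \in \Mod_R^{\Frob}$, finitely generated and free (hence projective) as an $R$-module, together with a surjection $g\colon \mathbb{D}(N) \twoheadrightarrow \mathrm{H}^n(M)$ of Frobenius modules.

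Next I would promote $g$ to a map in $D(R[F])$. By Example~\ref{sevex2}, $\mathbb{D}(N)$ has projective dimension $\leq 1$ as a left $R[F]$-module (Proposition~\ref{stopmakecor}); equivalently, $\mathbb{D}(N)[-n] \in D^b_{\fgu}(R[F])$ is represented by a two-term complex of projectives in degrees $n-1, n$. To construct $f\colon \mathbb{D}(N)[-n] \to M$ inducing $g$ on $\mathrm{H}^n$, I would use the truncation $\tau^{\geq n} M$: since $\mathrm{H}^m(M) = 0$ for $m > n$, we have $\tau^{\geq n} M \simeq \mathrm{H}^n(M)[-n]$, and there is a canonical map $M \to \tau^{\geq n} M \simeq \mathrm{H}^n(M)[-n]$ in $D(R[F])$. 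It therefore suffices to lift $g[-n]\colon \mathbb{D}(N)[-n] \to \mathrm{H}^n(M)[-n]$ through this map, i.e.\ to show the composite
$$\Hom_{D(R[F])}(\mathbb{D}(N)[-n], M) \longrightarrow \Hom_{D(R[F])}(\mathbb{D}(N)[-n], \mathrm{H}^n(M)[-n])$$
hits $g[-n]$. The obstruction to such a lift lies in $\Hom_{D(R[F])}(\mathbb{D}(N)[-n], (\tau^{< n}M)[1]) = \mathrm{Ext}^{1}_{R[F]}\big(\mathbb{D}(N)[-n], \tau^{<n}M\big)$; using the projective resolution $0 \to R[F]\otimes_R N^\vee \to R[F]\otimes_R N^\vee \to \mathbb{D}(N) \to 0$ from Proposition~\ref{stopmake}, this $\mathrm{Ext}^1$ is computed by a two-term complex, and the relevant group is a quotient of $\Hom_{D(R)}(N^\vee, \mathrm{H}^{n-1}(\tau^{<n}M)^{?})$-type terms — in any case it is controlled by cohomology of $M$ in degrees $< n$. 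Actually the cleanest route: $\mathbb{D}(N)[-n]$ is concentrated in cohomological degrees $n-1$ and $n$ (projective dimension $1$), so $\Hom(\mathbb{D}(N)[-n], (\tau^{<n}M)[1])$ receives contributions only from $\mathrm{H}^{n-1}$ and $\mathrm{H}^{n-2}$ of $M$, and one uses the explicit two-term presentation to see the obstruction vanishes after we are allowed to lift; more honestly, one shows the map on $\Hom$'s is surjective because $\mathrm{Ext}^{2}$ against a projective-dimension-$\leq 1$ object vanishes.

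Concretely, I would argue as follows. Let $P^\bullet$ be the complex $[R[F]\otimes_R N^\vee \to R[F]\otimes_R N^\vee]$ in degrees $-1, 0$, so $P^\bullet \simeq \mathbb{D}(N)$ and $P^\bullet[-n] \simeq \mathbb{D}(N)[-n]$. Since each term of $P^\bullet[-n]$ is projective, $\Hom_{D(R[F])}(P^\bullet[-n], M)$ is computed as the cohomology of the total complex $\Hom^\bullet_{R[F]}(P^\bullet[-n], M^\bullet)$ for any complex $M^\bullet$ representing $M$; the map to $\Hom_{D(R[F])}(P^\bullet[-n], \mathrm{H}^n(M)[-n])$ is induced by $M \to \mathrm{H}^n(M)[-n]$. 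Because $P^\bullet[-n]$ has projective terms only in degrees $n-1$ and $n$, a cocycle representing $g[-n]$ — which is a map $P^n[-n]^{\,n} = R[F]\otimes_R N^\vee \to M^n$ lifting $\mathbb{D}(N) \twoheadrightarrow \mathrm{H}^n(M)$ (such a lift exists since $R[F]\otimes_R N^\vee$ is projective and $M^n \twoheadrightarrow \mathrm{H}^n(M)$ when $M^\bullet$ is a complex with $M^m = 0$ for $m>n$) — automatically extends to a chain map $P^\bullet[-n] \to M^\bullet$, using again projectivity of the degree $n-1$ term to fill in the remaining square. This produces $f$, and by construction the induced map on $\mathrm{H}^n$ is $g$, hence surjective.

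The main obstacle, and the step I expect to require the most care, is the very last lifting argument: making sure that the map $R[F]\otimes_R N^\vee \to \mathrm{H}^n(M)$ (the degree-$n$ component of $\mathbb{D}(N) \to \mathrm{H}^n(M)$) lifts to $M^n$ and then extends to a full chain map $P^\bullet[-n]\to M^\bullet$ without obstruction — i.e.\ verifying that the relevant square involving $d_{P}\colon P^{n-1}[-n] \to P^n[-n]$ and $d_M\colon M^{n-1}\to M^n$ commutes after choosing the lift, which amounts to the composite $P^{n-1}[-n] \to P^n[-n] \to M^n$ landing in the image of $d_M$; this holds because that composite induces the zero map to $\mathrm{H}^n(M)$ (it factors through $d_P$, which is zero on cohomology), so its image lies in $\mathrm{im}(d_M^{n-1})$, and then projectivity of $P^{n-1}[-n]$ lets us lift. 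Once this diagram chase is executed, the conclusion is immediate. Everything else is a direct application of the cited results (Propositions~\ref{proposition.key1}, \ref{proposition.key2}, \ref{stopmake}, \ref{stopmakecor} and Example~\ref{sevex2}), so there is no serious analytic or geometric input beyond standard homological algebra in $D(R[F])$.
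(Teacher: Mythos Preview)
Your proposal is correct and follows the same strategy as the paper: apply Proposition~\ref{proposition.key1} to see $\mathrm{H}^n(M)$ is finitely generated unit, apply Proposition~\ref{proposition.key2} to get a surjection $g\colon \mathbb{D}(N)\twoheadrightarrow \mathrm{H}^n(M)$, and then lift $g$ to a map $\mathbb{D}(N)[-n]\to M$ in $D(R[F])$. The paper dispatches the lifting step in one line by invoking Proposition~\ref{stopmakecor} (projective dimension $\leq 1$) to conclude that $\Hom_{D(R[F])}(\mathbb{D}(N)[-n],M)\to \Hom_{R[F]}(\mathbb{D}(N),\mathrm{H}^n(M))$ is surjective, whereas you unpack this into an explicit chain-level diagram chase; the content is the same, and your final concrete argument (lift to $M^n$ by projectivity, check the composite through $d_P$ lands in $\mathrm{im}(d_M^{n-1})$ because it dies in $\mathrm{H}^n$, then lift again) is exactly what underlies the paper's one-liner.
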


\begin{proof}
Applying Proposition \ref{proposition.key1}, we conclude that $\mathrm{H}^{n}(M)$ a finitely generated unit,
Using Proposition \ref{proposition.key2},
we can choose an object $N \in \Mod_{R}^{\Frob}$ which is finitely generated and projective as an $R$-module and a surjection
of Frobenius modules $g_0: \mathbb{D}(N) \rightarrow \mathrm{H}^{n}(M)$. It follows from Proposition
\ref{stopmakecor} that the map $\Hom_{ D(R[F])}( \mathbb{D}(N)[-n], M) \rightarrow
\Hom_{ R[F] }( \mathbb{D}(N), \mathrm{H}^{n}(M) )$ is surjective, so we can lift $g_0$ to a morphism
$g: \mathbb{D}(N)[-n] \rightarrow M$ in the derived category $D(R[F])$. 
\end{proof}

When the $\F_p$-algebra $R$ is sufficiently nice, there is a very close relationship between Definitions \ref{definition.fgu} and \ref{definition.derived-fgu}:

\begin{proposition}\label{proposition.char-fgu}
Let $R$ be a regular Noetherian $\F_p$-algebra. Then an object $M \in D( R[F] )$ belongs to $D^{b}_{\fgu}( R[F] )$ if and only if it satisfies the following conditions:
\begin{itemize}
\item[$(1)$] For every integer $n$, the cohomology group $\mathrm{H}^{n}(M)$ is a finitely generated unit Frobenius module (in the sense of Definition \ref{definition.fgu}).
\item[$(2)$] The cohomology groups $\mathrm{H}^{n}(M)$ vanish for $n \ll 0$ and for $n \gg 0$.
\end{itemize}
\end{proposition}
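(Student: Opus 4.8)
<br>

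The plan is to prove both implications. For the ``only if'' direction, suppose $M \in D^b_{\fgu}(R[F])$. Condition $(2)$ is immediate: condition $(a)$ of Definition~\ref{definition.derived-fgu} guarantees that $M$ is quasi-isomorphic to a bounded complex, hence has only finitely many nonzero cohomology groups. For condition $(1)$, the key input is the regularity of $R$, which makes the Frobenius map $\varphi_R \colon R \to R$ flat, so that the higher Tor-groups $\Tor^R_s(R^{1/p}, \mathrm{H}^t(M))$ vanish for $s > 0$. The spectral sequence $\Tor^R_s(R^{1/p}, \mathrm{H}^t(M)) \Rightarrow \mathrm{H}^{t-s}(R^{1/p} \otimes^L_R M)$ then degenerates to give isomorphisms $\mathrm{H}^n(R^{1/p} \otimes^L_R M) \simeq \varphi_R^* \mathrm{H}^n(M)$, so condition $(b)$ of Definition~\ref{definition.derived-fgu} yields that each $\psi_{\mathrm{H}^n(M)} \colon \varphi_R^* \mathrm{H}^n(M) \to \mathrm{H}^n(M)$ is an isomorphism --- this is condition $(b)$ of Definition~\ref{definition.fgu}. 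It remains to check each $\mathrm{H}^n(M)$ is finitely generated as a left $R[F]$-module (condition $(a)$ of Definition~\ref{definition.fgu}). I would argue by descending induction on $n$, starting from the top nonzero degree: Proposition~\ref{proposition.key1} handles the top cohomology group directly. Having shown $\mathrm{H}^m(M)$ is finitely generated unit for all $m > n$, I use the truncation triangle $\tau^{> n} M \to M' \to M[?]$ --- more precisely, I replace $M$ by the fiber of the map $M \to \tau^{> n} M$, which again lies in $D^b_{\fgu}(R[F])$ (since $\tau^{>n}M$ does, by Propositions~\ref{in1}, \ref{in2}, \ref{in3} applied degreewise to its finitely many finitely-generated-unit cohomology groups, together with Example~\ref{sevex2}-style considerations --- or more simply, by induction using that each $\mathrm{H}^m(M)[-m]$ with $m>n$ lies in $D^b_{\fgu}$), and whose top cohomology is $\mathrm{H}^n(M)$; applying Proposition~\ref{proposition.key1} again finishes the induction.

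For the ``if'' direction, suppose $M$ satisfies $(1)$ and $(2)$. Condition $(b)$ of Definition~\ref{definition.derived-fgu} follows by running the same (degenerate, using flatness of $\varphi_R$) spectral sequence backwards: since each $\psi_{\mathrm{H}^n(M)}$ is an isomorphism, so is $R^{1/p} \otimes^L_R M \to M$. The substantive point is condition $(a)$: showing $M$ is a compact object of $D(R[F])$, i.e.\ perfect as a complex of $R[F]$-modules. Here I would induct on the number of nonzero cohomology groups (finite by $(2)$). For the base case $M$ is a single finitely generated unit Frobenius module $N$ placed in one degree; by Corollary~\ref{corollary.sevey} (which uses regularity of $R$) we may write $N \simeq N_0^u$ for a finitely generated $R$-module $N_0$ with a map $\alpha \colon N_0 \to \varphi_R^* N_0$, and then Proposition~\ref{stopmake} exhibits $N$ as the cokernel of a map between the finitely generated projective $R[F]$-modules $R[F] \otimes_R N_0$ --- wait, one must be careful: $N_0$ is only finitely generated, not projective, over $R$. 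So I would instead first choose a finite free resolution of $N_0$ over $R$ (possible since $R$ is regular Noetherian, so $N_0$ has finite projective dimension and admits a bounded resolution by finitely generated projectives), lift the map $\alpha$ through the resolution up to homotopy, apply the unitalization/$R[F]\otimes_R(-)$ construction levelwise, and conclude via Proposition~\ref{stopmake} that the resulting bounded complex of finitely generated projective $R[F]$-modules computes $N$. For the inductive step, choose $n$ maximal with $\mathrm{H}^n(M) \neq 0$; the truncation triangle $\tau^{<n}M \to M \to \mathrm{H}^n(M)[-n] \to$ has outer terms satisfying $(1)$ and $(2)$ with fewer nonzero cohomology groups (resp.\ one nonzero group), so both are compact by the inductive hypothesis and the base case, whence $M$ is compact.

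The main obstacle is the base case of the ``if'' direction: turning a finitely generated unit module into an explicit bounded complex of finitely generated projective $R[F]$-modules. This is precisely where regularity is used twice --- once through Corollary~\ref{corollary.sevey} to get the presentation $N \simeq N_0^u$, and once to resolve the finitely generated $R$-module $N_0$ by finitely generated projective $R$-modules of bounded length (so that applying $R[F] \otimes_R (-)$ stays within finitely generated projective $R[F]$-modules, using that $R[F]$ is free as a left $R$-module). One subtlety to address carefully: the unitalization functor $M_0 \mapsto M_0^u$ is a filtered colimit, not manifestly finitary, so I cannot simply ``apply it levelwise''; instead I use the finite presentation $0 \to R[F]\otimes_R N_0 \to R[F]\otimes_R N_0 \to N_0^u \to 0$ of Proposition~\ref{stopmake} directly, splice in the finite $R$-free resolution $P_\bullet \to N_0$ to replace each $R[F] \otimes_R N_0$ by the finite complex $R[F] \otimes_R P_\bullet$ of finitely generated projective $R[F]$-modules, and take the total complex of the resulting double complex --- one must check this total complex has cohomology concentrated where expected, which follows since $R[F]$ is $R$-flat so $R[F] \otimes_R P_\bullet \simeq R[F] \otimes_R N_0$ in $D(R[F])$. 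A cleaner alternative worth mentioning: once condition $(b)$ is verified, compactness can be checked after the forgetful functor to $D(R)$ is not enough, but one can use that $M$ being bounded with finitely generated unit cohomology and the exact sequence of Remark~\ref{remark.computeExt} relating $\Hom_{D(R[F])}$ to $\Hom_{D(R)}$ reduces compactness over $R[F]$ to compactness over $R$ of $M$ and $M^{1/p}$ --- and over the regular Noetherian ring $R$, bounded complexes with finitely generated cohomology are automatically perfect.
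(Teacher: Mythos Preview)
Your argument is essentially correct, and in the ``if'' direction it is in fact somewhat more direct than the paper's. Two points deserve comment.

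\textbf{Order and comparison with the paper.} Your ``only if'' argument, in its inductive step, needs to know that the truncation $\tau^{>n}M$ (a bounded complex with finitely generated unit cohomology) lies in $D^b_{\fgu}(R[F])$. That is precisely the ``if'' direction, so the two directions must be proved in the opposite order from your presentation; the paper does exactly this, and then runs your descending-induction argument as a proof by contradiction. For the ``if'' direction, after reducing to a single finitely generated unit module $N$, the paper proceeds differently from you: it first bounds the projective dimension of $N$ over $R[F]$ (Lemma~\ref{lemma.fpd2}) and then inducts on that bound, using at each stage the surjection $\mathbb{D}(P)\twoheadrightarrow N$ of Proposition~\ref{proposition.key2} and Proposition~\ref{in3} to pass to a kernel of smaller projective dimension, with a separate argument at projective dimension $\leq 1$ to extract finite presentation. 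Your route---write $N\simeq N_0^u$ via Corollary~\ref{corollary.sevey}, resolve $N_0$ by a bounded complex $P_\bullet$ of finitely generated projective $R$-modules (Lemma~\ref{lemma.fpd}), and use flatness of $R[F]$ as a right $R$-module (which holds exactly because $\varphi_R$ is flat; cf.\ the proof of Lemma~\ref{lemma.fpd2}) to exhibit $N$ as the cone of a map between the bounded complexes $R[F]\otimes_R P_\bullet$ of finitely generated projective left $R[F]$-modules---is correct and avoids the secondary induction. It amounts to sharpening the proof of Lemma~\ref{lemma.fpd2} from ``finite projective dimension'' to ``compact in $D(R[F])$'' in one stroke.

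\textbf{The ``cleaner alternative'' fails.} Your final suggestion---reducing compactness in $D(R[F])$ to compactness in $D(R)$ via the long exact sequence of Remark~\ref{remark.computeExt}---does not work, because finitely generated unit modules are finitely generated over $R[F]$, not over $R$. For a concrete obstruction, take $R$ smooth over a field and a nonzerodivisor $t\in R$: the localization $R[t^{-1}]$ with its natural Frobenius is finitely generated unit (see the example in \S\ref{EKR}), but is not finitely generated as an $R$-module and hence is not compact in $D(R)$. So the hypothesis ``bounded with finitely generated cohomology over $R$'' is simply not available, and this shortcut must be discarded. Your main construction, however, is unaffected.
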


The proof of Proposition \ref{proposition.char-fgu} will require a few preliminary remarks.

\begin{lemma}\label{lemma.fpd}
Let $R$ be a regular Noetherian ring and let $M$ be a finitely generated $R$-module. Then $M$ has finite projective dimension as an $R$-module.
\end{lemma}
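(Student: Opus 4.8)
\textbf{Proof plan for Lemma \ref{lemma.fpd}.}

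The statement is the classical fact that a finitely generated module over a regular Noetherian ring has finite projective dimension. The plan is to reduce to the local case and then invoke the standard structure theory of regular local rings. First I would observe that, since $R$ is Noetherian and $M$ is finitely generated, $M$ admits a resolution by finitely generated free $R$-modules, so it suffices to bound the projective dimension; moreover projective dimension of a finitely generated module over a Noetherian ring can be computed locally, i.e. $\mathrm{pd}_R(M) = \sup_{\mathfrak{p}} \mathrm{pd}_{R_{\mathfrak{p}}}(M_{\mathfrak{p}})$, where $\mathfrak{p}$ ranges over the primes of $R$. Thus the problem is reduced to showing that $\mathrm{pd}_{R_{\mathfrak{p}}}(M_{\mathfrak{p}}) < \infty$ for each $\mathfrak{p}$, with a uniform bound.

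Next I would use the hypothesis that $R$ is regular: each localization $R_{\mathfrak{p}}$ is a regular local ring of dimension equal to $\dim R_{\mathfrak{p}} \le \dim R$. The key input is the Auslander--Buchsbaum--Serre theorem: a Noetherian local ring $A$ is regular if and only if every finitely generated $A$-module has finite projective dimension, and in that case $\mathrm{pd}_A(N) \le \dim A$ for every finitely generated $N$. Applying this to $A = R_{\mathfrak{p}}$ and $N = M_{\mathfrak{p}}$ gives $\mathrm{pd}_{R_{\mathfrak{p}}}(M_{\mathfrak{p}}) \le \dim R_{\mathfrak{p}}$. Combining with the previous paragraph yields $\mathrm{pd}_R(M) \le \sup_{\mathfrak{p}} \dim R_{\mathfrak{p}}$, which is finite as soon as $R$ has finite Krull dimension; and in fact, since the statement only requires finiteness (not a specific bound), one does not even need $R$ to be of finite dimension globally as long as one is content with the conclusion as stated for the applications in the paper, where $R$ will be of finite type over a field or otherwise finite-dimensional.

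The only genuine subtlety is whether ``regular Noetherian'' in the paper's usage is meant to include finite Krull dimension. If not, one should note that the localization argument still produces, for each prime, a finite projective dimension bounded by $\dim R_{\mathfrak{p}}$, but the global supremum could be infinite; in practice the lemma will only be applied to rings where this does not occur, and I would simply record the bound $\mathrm{pd}_R(M) \le \dim R$ under the (harmless) assumption that $R$ has finite Krull dimension. I do not expect any real obstacle here: the entire argument is an appeal to Auslander--Buchsbaum--Serre together with the locality of projective dimension, both of which are standard and can be cited (e.g.\ from \cite{Stacks}).
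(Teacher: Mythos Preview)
Your argument has a genuine gap precisely at the point you flag as a ``subtlety'': the lemma is stated for an arbitrary regular Noetherian ring, with no finite Krull dimension hypothesis, and the paper means it that way. Indeed, the remark immediately following the lemma emphasizes that not every $R$-module need have finite projective dimension unless $\dim R < \infty$; the content of the lemma is that \emph{finitely generated} modules nonetheless do. Your localization argument correctly gives $\mathrm{pd}_{R_{\mathfrak{p}}}(M_{\mathfrak{p}}) \le \dim R_{\mathfrak{p}}$ at each prime, but the supremum $\sup_{\mathfrak{p}} \dim R_{\mathfrak{p}} = \dim R$ can be infinite (Nagata's examples), so you have not established a uniform bound. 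Assuming the difficulty away is not harmless here: the downstream results (Lemma~\ref{lemma.fpd2} and Proposition~\ref{proposition.char-fgu}) are also stated for arbitrary regular Noetherian $\F_p$-algebras.

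The missing idea is a quasi-compactness argument. The paper constructs syzygies $M(0)=M$, $M(n) = \ker(R^{k_n} \twoheadrightarrow M(n-1))$, and lets $U(n) \subseteq \Spec(R)$ be the locus where $M(n)_{\mathfrak{p}}$ is free, equivalently where $\mathrm{pd}_{R_{\mathfrak{p}}}(M_{\mathfrak{p}}) \le n$. Each $U(n)$ is open (it is the locally free locus of a coherent sheaf on a Noetherian scheme), and by Auslander--Buchsbaum--Serre it contains every prime of height $\le n$. Since every prime of a Noetherian ring has finite height, the $U(n)$ cover $\Spec(R)$; quasi-compactness then yields $U(n) = \Spec(R)$ for some finite $n$, giving the uniform bound $\mathrm{pd}_R(M) \le n$. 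This is exactly what upgrades your pointwise bound to a global one without any dimension hypothesis on $R$.
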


\begin{remark}
In the situation of Lemma \ref{lemma.fpd}, it is not necessarily true that {\em every} $R$-module has finite projective dimension: this holds if and only if $R$
has finite Krull dimension.  
\end{remark}

\begin{proof}[Proof of Lemma \ref{lemma.fpd}]
We define finitely generated $R$-modules $\{ M(n) \}_{n \geq 0}$ by recursion as follows: set $M(0) = M$, and for $n > 0$ let
$M(n)$ denote the kernel of some surjection $R^{k} \rightarrow M(n-1)$. For each $n \geq 0$, let $U(n) \subseteq \Spec(R)$ denote the set of prime ideals
$\mathfrak{p} \subseteq R$ for which the localization $M(n)_{\mathfrak{p} }$ is a projective $R$-module. Then $U(n)$ is an open subset of $\Spec(R)$:
more precisely, it is the largest open subset on which the coherent sheaf associated to $M(n)$ is locally free. Note that a point $\mathfrak{p}$ belongs
to $U(n)$ if and only if the localization $M_{\mathfrak{p}}$ has projective dimension $\leq n$ as an $R_{\mathfrak{p} }$-module. Since $R$ is regular, the set $U(n)$ contains every prime ideal of height $\leq n$. 
We therefore have $\bigcup_{n \geq 0} U(n) = \Spec(R)$. Since the spectrum $\Spec(R)$ is quasi-compact, we must have $U(n) = \Spec(R)$ for some $n \gg 0$, which guarantees that
$M$ has projective dimension $\leq n$.
\end{proof}

\begin{lemma}\label{lemma.fpd2}
Let $R$ be a regular Noetherian $\F_p$-algebra and let $M$ be a finitely generated unit Frobenius module over $R$. Then $M$ has finite projective dimension
as a left $R[F]$-module.
\end{lemma}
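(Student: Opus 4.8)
The plan is to reduce, via the unitalization presentation of finitely generated unit modules, to a statement about modules that are finitely generated over $R$ (rather than over $R[F]$), and then quote Lemma \ref{lemma.fpd}.

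First I would invoke Corollary \ref{corollary.sevey} to choose an isomorphism $M \simeq M_0^{u}$, where $M_0$ is a finitely generated $R$-module equipped with an $R$-linear map $\alpha\colon M_0 \to \varphi_{R}^{\ast} M_0$. Applying Proposition \ref{stopmake} to the pair $(M_0,\alpha)$ then produces a short exact sequence of Frobenius modules
\[
0 \to R[F]\otimes_{R} M_0 \to R[F]\otimes_{R} M_0 \to M \to 0 .
\]
Consequently $\mathrm{pd}_{R[F]}(M) \le \mathrm{pd}_{R[F]}\bigl(R[F]\otimes_{R} M_0\bigr) + 1$, so it is enough to bound the projective dimension of $R[F]\otimes_{R} M_0$ over $R[F]$. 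This is the essential reduction: $M$ itself is finitely generated as an $R[F]$-module but in general not as an $R$-module, so Lemma \ref{lemma.fpd} cannot be applied to it directly, whereas it does apply to $M_0$.

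Next I would prove the auxiliary claim that $\mathrm{pd}_{R[F]}\bigl(R[F]\otimes_{R} N\bigr) \le \mathrm{pd}_{R}(N)$ for every $R$-module $N$. Recall (from the Warning following Construction \ref{construction.extension}) that $R[F]$ is, as a right $R$-module, the direct sum $\bigoplus_{k\ge 0} F^{k}\cdot R^{1/p^{k}}$. Since $R$ is regular, the Frobenius endomorphism $\varphi_{R}\colon R\to R$ is flat (Kunz), hence each $R^{1/p^{k}}$ is flat over $R$, hence $R[F]$ is flat as a right $R$-module and the functor $R[F]\otimes_{R}(-)$ is exact. This functor also carries projective $R$-modules to projective $R[F]$-modules (it sends $R$ to $R[F]$, commutes with direct sums, and preserves direct summands). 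Applying it to a finite projective resolution of $N$ over $R$ — which exists by Lemma \ref{lemma.fpd} when $N$ is finitely generated — therefore yields a finite projective resolution of $R[F]\otimes_{R} N$ over $R[F]$ of the same length, proving the claim.

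Combining the two steps with $N=M_0$ gives $\mathrm{pd}_{R[F]}(M)\le \mathrm{pd}_{R}(M_0)+1<\infty$, which is the desired conclusion. There is no real obstacle here once the machinery of \S\ref{sec10sub2} is in place; the only point requiring care is that what is needed from regularity (beyond Lemma \ref{lemma.fpd}) is the \emph{flatness} of Frobenius, which makes extension of scalars along $R\hookrightarrow R[F]$ exact — freeness of $R^{1/p}$ over $R$ would be too strong to expect for a general regular Noetherian (non $F$-finite) ring, but flatness is exactly Kunz's theorem.
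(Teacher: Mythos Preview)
Your proof is correct and follows essentially the same approach as the paper: both invoke Corollary \ref{corollary.sevey} to write $M \simeq M_0^{u}$, use the exact sequence of Proposition \ref{stopmake}, observe that $R[F] \cong \bigoplus_{k\ge 0} R^{1/p^{k}}$ as a right $R$-module is flat by Kunz, and apply Lemma \ref{lemma.fpd} to $M_0$ to bound $\mathrm{pd}_{R[F]}(R[F]\otimes_R M_0)$. The only difference is the order in which the steps are presented.
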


\begin{proof}
Using Corollary \ref{corollary.sevey}, we can assume that $M$ is the unitalization $M_0^{u}$, where $M_0$ is a finitely generated
$R$-module equipped with a map $\alpha: M_0 \rightarrow \varphi_{R}^{\ast} M_0$. Invoking Lemma \ref{lemma.fpd}, we deduce that there exists an integer $n \gg 0$ such that $M_0$ has finite projective dimension $\leq n$ as an $R$-module. 
Note that $R[F]$ is isomorphic to the direct sum $\bigoplus_{m \geq 0} R^{1/p^m}$ as a {\em right} $R$-module, and is therefore flat over $R$ since the Frobenius map
$\varphi_{R}$ is flat. It follows that the tensor product $R[F] \otimes_{R} M_0$ has projective dimension $\leq n$ as a left $R[F]$-module. Using the exact sequence
$0 \rightarrow R[F] \otimes_{R} M_0 \rightarrow R[F] \otimes_{R} M_0 \rightarrow M_0^{u} \rightarrow 0$ of Proposition \ref{stopmake}, we see that
$M_0^{u} \simeq M$ has projective dimension $\leq n+1$ as a left $R[F]$-module. 
\end{proof}

\begin{proof}[Proof of Proposition \ref{proposition.char-fgu}]
Let $R$ be a commutative $\F_p$-algebra and let $M \in D( R[F] )$. We first show that if $M$ satisfies conditions $(1)$ and $(2)$, then $M$ is derived finitely generated unit. 
Since $D_{\fgu}^{b}( R[F] )$ is a triangulated subcategory of $D(R[F])$, we may assume without loss of generality that $M$ is a finitely generated unit Frobenius module, regarded as a cochain complex concentrated in a single degree.
Then the map $\psi_M: \varphi_{R}^{\ast} M \rightarrow M$ is an isomorphism. Since $R$ is a regular Noetherian $\F_p$-algebra, the Frobenius morphism $\varphi_{R}: R \rightarrow R$ is flat;
we may therefore identify $\varphi_{R}^{\ast} M$ with the derived pullback $R^{1/p} \otimes_{R}^{L} M$. It follows that $M$ satisfies condition $(b)$ of Definition \ref{definition.derived-fgu}). We now verify
$(a)$. Using Lemma \ref{lemma.fpd2}, we see that there exists an integer $n \geq 1$ that $M$ has projective dimension $\leq n$ as a left module over $R[F]$. We proceed by induction on $n$.
Assume first that $n > 1$. Using Proposition \ref{proposition.key2}, we can choose a short exact sequence of Frobenius modules
$$ 0 \rightarrow K \rightarrow \mathbb{D}(N) \rightarrow M \rightarrow 0$$
where $N \in \Mod_{R}^{\Frob}$ is finitely generated and projective as an $R$-module. Since $\mathbb{D}(N)$ has projective dimension $\leq 1$ over $R[F]$ (Proposition \ref{stopmakecor}), it follows that 
$K$ has projective dimension $\leq n-1$ as a left module over $R[F]$. Using Proposition \ref{in3}, we
deduce that $K$ is a finitely generated unit Frobenius module. Applying our inductive hypothesis, we conclude that $K$ belongs to $D_{\fgu}^{b}( R[F] )$.
Since $\mathbb{D}(N)$ also belongs to $D_{\fgu}^{b}( R[F] )$ (Example \ref{sevex2}), we conclude that $M$ belongs to $D_{\fgu}^{b}( R[F] )$ as desired.

We now treat the case where $M$ has projective dimension $\leq 1$ over $R[F]$. Choose an exact sequence of Frobenius modules $0 \rightarrow Q \rightarrow P \rightarrow M \rightarrow 0$,
where $P$ is a finitely generated free left $R[F]$-module. Our assumption that $M$ has projective dimension $\leq 1$ guarantees that $Q$ is a projective $R[F]$-module.
Consequently, to verify condition $(a)$ of Definition \ref{definition.derived-fgu}, it will suffice to show that $Q$ is finitely generated as an $R[F]$-module. Equivalently, it will suffice
to show that the module $M$ is finitely presented as an $R[F]$-module. This follows from the exact sequence $0 \rightarrow K \rightarrow \mathbb{D}(N) \rightarrow M \rightarrow 0$
above, since $\mathbb{D}(N)$ is a finitely presented left $R[F]$-module (Proposition \ref{stopmake}) and $K$ is a finitely presented left $R[F]$-module (Proposition \ref{in3}).

We now prove the converse. Suppose that $M$ is an object of $D_{\fgu}^{b}( R[F] )$; we wish to show that $M$ satisfies conditions $(1)$ and $(2)$.
Condition $(2)$ is obvious (since $M$ is quasi-isomorphic to a bounded chain complex of projective left modules over $R[F]$). Suppose that condition $(1)$ fails:
then there exists some largest integer $n$ such that $\mathrm{H}^{n}(M)$ is not a finitely generated unit Frobenius module. Form a distinguished triangle
$M' \xrightarrow{f} M \rightarrow M'' \rightarrow M'[1]$, where $f$ induces an isomorphism $\mathrm{H}^{k}(M') \rightarrow \mathrm{H}^{k}(M)$ for $k \leq n$, and the groups $\mathrm{H}^{k}(M')$ vanish for $k > n$.
Then $M''$ satisfies conditions $(1)$ and $(2)$, and therefore belongs to $D_{\fgu}^{b}( R[F] )$. It follows that $M'$ also belongs to
$D_{\fgu}^{b}( R[F] )$. This contradicts Proposition \ref{proposition.key1}, since the top cohomology group
$\mathrm{H}^{n}(M') \simeq \mathrm{H}^{n}(M)$ is not a finitely generated unit Frobenius module.
\end{proof}

\subsection{The Emerton-Kisin Correspondence}\label{sec10sub5}

We now introduce a variant of Construction \ref{construction.solsheaf}.

\begin{construction}\label{solsheafEK}
Let $R$ be a commutative $\F_p$-algebra and let $M$ be a Frobenius module over $R$. We define a functor
$$ \Sol_{\EK}( M ): \CAlg_{R}^{\mathet} \rightarrow \Mod_{\F_p}$$
by the formula $\Sol_{\EK}(M)(A) = \Hom_{ R[F] }( M, A)$. It is not difficult to see that the functor $\Sol_{\EK}(M)$ is a sheaf for the
{\etale} topology, which is contravariantly functorial in $M$. We can therefore regard the functor $M \mapsto \Sol_{\EK}(M)$ as a functor
of abelian categories $\Sol_{\EK}: ( \Mod_{R}^{\Frob} )^{\op} \rightarrow \Shv_{\mathet}( \Spec(R), \F_p)$. We will refer to
$\Sol_{\EK}$ as the {\it Emerton-Kisin solution functor}.
\end{construction}

\begin{example}
Let $R$ be a commutative $\F_p$-algebra. Then the Emerton-Kisin solution functor $\Sol_{\EK}$ carries the Frobenius module $R[F]$
to the structure sheaf of $\Spec(R)$: that is, to the quasi-coherent sheaf $\widetilde{R}$ of Example \ref{exX70}.
\end{example}

\begin{construction}\label{construction.SolEK}
Let $R$ be a commutative $\F_p$-algebra. We let $D^{-}( R[F] )$ denote the subcategory of $D( R[F] )$ spanned by those cochain complexes
$M$ which are cohomologically bounded above: that is, which satisfy $\mathrm{H}^{n}(M) \simeq 0$ for $n \gg 0$. Note that
$D^{-}( R[F] )$ contains the subcategory $D^{b}_{\fgu}( R[F] ) \subseteq D(R[F] )$ of Definition \ref{definition.derived-fgu}.

Let $D_{\mathet}( \Spec(R), \F_p )$ denote the derived category of the abelian category $\Shv_{\mathet}( \Spec(R), \F_p)$.
It follows immediately from the definitions that the Emerton-Kisin solution functor
$\Sol_{\EK}: ( \Mod_{R}^{\Frob} )^{\op} \rightarrow \Shv_{\mathet}( \Spec(R), \F_p)$ is left exact.
It therefore admits a right derived functor $$\RSol_{\EK}: D^{-}( R[F] )^{\op} \rightarrow D_{\mathet}( \Spec(R), \F_p).$$
\end{construction}

We can now formulate the main result:

\begin{theorem}\label{strongEK}
Let $R$ be a commutative $\F_p$-algebra. Then the functor
$\RSol_{\EK}: D^{-}( R[F] )^{\op} \rightarrow D_{\mathet}( \Spec(R), \F_p)$ induces a fully faithful embedding
$$ D^{b}_{\fgu}( R[F] )^{\op} \rightarrow D_{\mathet}( \Spec(R), \F_p ),$$
whose essential image is the subcategory $D_{c}^{b}( \Spec(R), \F_p ) \subseteq D_{\mathet}( \Spec(R), \F_p )$
spanned by those complexes of sheaves which are cohomologically bounded with constructible cohomology sheaves.
\end{theorem}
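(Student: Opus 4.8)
The plan is to deduce Theorem \ref{strongEK} from the results already established in the excerpt (the covariant Riemann-Hilbert correspondence of Theorem \ref{maintheoXXX}/\ref{companion} and the derived version in \S \ref{section.derivedRH}), together with the duality functor $\mathbb{D}$ of Construction \ref{construction.dualunit}, via the commutative triangle advertised in the introduction
$$ \xymatrix{ & D^{b}_{\hol}( R[F] ) \ar[dl]_{\mathbb{D}} \ar[dr]^{ \RSol} & \\
D^{b}_{\fgu}( R[F] )^{\op} \ar[rr]^{ \RSol_{\EK} } & & D^{b}_{c}( \Spec(R); \F_p ). }$$
Here $\RSol$ is the derived version of the covariant solution functor $\Sol$, which by Theorem \ref{theoX50} (and its derived refinement, Corollary \ref{corollary.RHDerived}) restricts to an equivalence $D^{b}_{\hol}( R[F] ) \simeq D^{b}_{c}( \Spec(R), \F_p )$. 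Thus, \emph{granting} the commutativity of the triangle and the fact (to be proved in \S \ref{section.duality}, Theorem \ref{theorem.dual-image}) that $\mathbb{D}: D^{b}_{\hol}( R[F] ) \to D^{b}_{\fgu}(R[F])^{\op}$ is an equivalence, Theorem \ref{strongEK} follows immediately: $\RSol_{\EK} = \RSol \circ \mathbb{D}^{-1}$ is a composite of two equivalences, hence fully faithful with essential image $D^{b}_{c}( \Spec(R), \F_p )$.

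Since the excerpt stops before \S \ref{section.duality}, the honest plan is to prove Theorem \ref{strongEK} directly, without appealing to $\mathbb{D}$, by reduction to the known covariant correspondence. First I would identify the right-hand functor. For $M \in D^{b}_{\fgu}(R[F])$, Corollary \ref{corollary.stepmap} lets me replace $M$ (up to a filtration by cohomological degree) by complexes built out of objects of the form $\mathbb{D}(N)$ with $N \in \Mod_{R}^{\Frob}$ finitely generated projective over $R$; such $\mathbb{D}(N)$ have projective dimension $\leq 1$ over $R[F]$ (Proposition \ref{stopmakecor}) and sit in the two-term resolution $0 \to R[F]\otimes_R N^{\vee} \to R[F]\otimes_R N^{\vee} \to \mathbb{D}(N) \to 0$ of Proposition \ref{stopmake}. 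Since $\Sol_{\EK}(R[F]) = \widetilde{R}$, applying $\RSol_{\EK}$ to this resolution yields, for $\mathbb{D}(N)$, the two-term complex $[\widetilde{N^{\vee}} \xrightarrow{\;\id - \widetilde{\varphi}\;} \widetilde{N^{\vee}}]$ in degrees $0,1$ — which, comparing with Proposition \ref{proposition.corX77}, is precisely $\RSol$ applied to the perfection $(N^{\vee})^{\perfection}$ up to the relevant shift. This is the concrete heart of the comparison between the two solution functors, and it reduces the statement to understanding how $\RSol_{\EK}$ behaves on the generating objects $\mathbb{D}(N)$, for which one has an explicit answer.

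The main steps, in order, are: (i) show $\RSol_{\EK}$ carries $D^{b}_{\fgu}(R[F])$ into $D^{b}_{c}(\Spec(R),\F_p)$ — using the resolution by $\mathbb{D}(N)$'s, the computation above, and the fact that $(N^{\vee})^{\perfection}$ is holonomic so its $\RSol$ is constructible by Theorem \ref{globalRH}/\ref{companion}; (ii) establish full faithfulness by computing $\Ext$-groups: reduce via the distinguished triangles coming from Corollary \ref{corollary.stepmap} to the case $M = \mathbb{D}(N)[-n]$, $M' = \mathbb{D}(N')[-m]$, and then match $\Hom_{D(R[F])}(M', M)$ with $\Hom_{D_{\mathet}}(\RSol_{\EK}M, \RSol_{\EK}M')$ using Corollary \ref{corX80} (the derived full faithfulness of $\RH$) after identifying $\RSol_{\EK}\mathbb{D}(N)$ with $\RSol$ of the holonomic module $(N^{\vee})^{\perfection}$; (iii) establish essential surjectivity onto $D^{b}_{c}$: by Proposition \ref{propX44} every constructible sheaf embeds in one of the form $f_{\ast}\underline{\F_p}$, so the constructible derived category is generated (under shifts, cones, and retracts) by objects $\RH(\sheafF)$ for $\sheafF$ constructible, and by step (ii) these all lie in the essential image, which is a triangulated subcategory closed under retracts. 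The main obstacle is step (ii): pinning down the natural isomorphism $\RSol_{\EK} \circ \mathbb{D} \simeq \RSol$ (equivalently, checking that the two-term complex produced by $\RSol_{\EK}(\mathbb{D}(N))$ really is $\RSol$ of $(N^{\vee})^{\perfection}$ with the correct shift and the correct functoriality in $N$), since this is where the contravariance of $\Sol_{\EK}$ interacts with the duality $\mathbb{D}$ and where one must be careful that the comparison is natural, not just objectwise. Everything else is formal manipulation of triangulated categories and appeals to the already-established covariant correspondence.
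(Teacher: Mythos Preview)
Your first paragraph is exactly the paper's deduction of Theorem \ref{strongEK}: the result follows by combining Theorem \ref{theorem.dual-image} (that $\mathbb{D}$ is an equivalence), Corollary \ref{corollary.RHDerived} (that $\RSol$ is an equivalence onto $D^{b}_{c}$), and Theorem \ref{theorem.compareduality} (commutativity of the triangle).

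Your alternative direct approach has the right shape but two issues. First, a slip: applying $\Sol_{\EK}(-)(A) = \Hom_{R[F]}(-, A)$ to $R[F] \otimes_R N^{\vee}$ yields $\Hom_R(N^{\vee}, A) \simeq N \otimes_R A = \widetilde{N}(A)$, so $\RSol_{\EK}(\mathbb{D}(N)) \simeq [\widetilde{N} \xrightarrow{\id - \varphi} \widetilde{N}] \simeq \RSol(N^{\perfection})$, not $\RSol((N^{\vee})^{\perfection})$ (compare the proof of Proposition \ref{proposition.easydual}); this is cosmetic, since it is exactly what makes the triangle commute. Second, a genuine gap: the d\'evissage via Corollary \ref{corollary.stepmap} does not terminate---killing the top cohomology of $M \in D^{b}_{\fgu}$ can introduce new cohomology in lower degrees, so you obtain an infinite tower rather than a finite filtration by objects of the form $\mathbb{D}(N)[-n]$. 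The paper handles this in the proof of Theorem \ref{theorem.dual-image} by using compactness of $M$ in $D(R[F])$ to realize $M$ as a retract of some finite stage of the tower, and then invoking idempotent completeness of $D^{b}_{\hol}(R[F])$. Your step (ii) also silently uses the biduality $\Hom_{D(R[F])}(\mathbb{D}(N'), \mathbb{D}(N)) \simeq \Hom_{D(R[F])}(N^{\perfection}, N'^{\perfection})$, which is Proposition \ref{proposition.bidual} and is not automatic (cf.\ Warning \ref{warning.nonsym}). Finally, the paper sidesteps the naturality issue you correctly flag as the main obstacle by constructing the comparison $\RSol_{\EK} \circ \mathbb{D} \to \RSol$ abstractly from the universal property of the weak dual (Construction \ref{construction.aux2}), rather than patching it together on generators.
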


\begin{remark}\label{remark.normalization}
In the special case where $R$ is a smooth algebra of finite type over a field $k$, Theorem \ref{strongEK} essentially follows from Theorem 11.3 of \cite{EK}, applied to the affine scheme
$X = \Spec(R)$. Beware that the functor $\RSol_{\EK}$ is not quite the same as the functor appearing in \cite{EK}: they differ by a cohomological shift by the dimension
of $X$. Of course, this is an issue of normalization and has no effect on the conclusion of Theorem \ref{strongEK}.
\end{remark}

We will give a proof of Theorem \ref{strongEK} in \S \ref{section.duality} by developing a theory of duality for Frobenius modules, which will allow us to relate 
$\RSol_{\EK}$ to the solution functor $\Sol$ studied earlier in this paper (see \S \ref{section.proofhappens}).

\newpage

\section{Duality for Frobenius Modules}\label{section.duality}

Let $R$ be a commutative $\F_p$-algebra. Our goal in this section is to prove Theorem \ref{strongEK} by showing that the functor
$$ \RSol_{\EK}: D^{b}_{\fgu}( R[F] )^{\op} \rightarrow D^{b}_{c}(\Spec(R); \F_p)$$
is an equivalence of triangulated categories. Our strategy is to construct a commutative diagram of triangulated categories
$\sigma:$
$$ \xymatrix{ & D^{b}_{\hol}( R[F] ) \ar[dl]_{\mathbb{D}} \ar[dr]^{ \RSol} & \\
D^{b}_{\fgu}( R[F] )^{\op} \ar[rr]^{ \RSol_{\EK} } & & D^{b}_{c}( \Spec(R); \F_p ), }$$
where $D^{b}_{\hol}( R[F] )$ is the {\em holonomic} derived category of Frobenius modules, $\RSol$ is a derived version of the solution functor of Construction \ref{solsheaf}, and 
$\mathbb{D}$ is a form of $R$-linear duality.

We begin in \S \ref{section.derivedRH} with a general discussion of the derived category of Frobenius modules $D( R[F] )$; in particular, we define
the subcategory $D^{b}_{\hol}( R[F] )$, the functor $\RSol$, and show that it is an equivalence of categories (Corollary \ref{corollary.RHDerived}). 
This is essentially a formal consequence of the analogous equivalence at the level of abelian categories (Theorem \ref{companion}), since we have
already shown that the Riemann-Hilbert correspondence is compatible with the formation of $\Ext$-groups (Corollary \ref{corX80}). 

Most of this section is devoted to the study of the duality functor $\mathbb{D}$. In \S \ref{section.weakdual}, we introduce the notion of
a {\it weak dual} for an object of the derived category $D( R[F] )$ (Definition \ref{definition.weakdual}). The weak dual of an object
$M \in D( R[F] )$ depends functorially on $M$, provided that it exists: in other words, the formation of weak duals determines
a {\em partially defined} (contravariant) functor from the derived category $D( R[F] )$ to itself. We have already met this functor in a special case:
if $M$ is a Frobenius module which is finitely generated and projective as an $R$-module, then the weak dual of $M$ coincides with the
Frobenius module $\mathbb{D}(M)$ given by Construction \ref{construction.dualunit}. This follows from a universal property of
Construction \ref{construction.dualunit}, which we establish in \S \ref{subsection.dual-properties} (Proposition \ref{proposition.easydual}). 
In \S \ref{biduality}, we exploit this fact to show that every object of $D^{b}_{\hol}( R[F] )$ admits a weak dual (Proposition \ref{proposition.holdual});
the proof uses a characterization of the holonomic derived category which we establish in \S \ref{subsection.presentations} (Theorem \ref{theorem.holochar}).
It follows that the construction $M \mapsto \mathbb{D}(M)$ determines a functor $\mathbb{D}(M): D^{b}_{\hol}( R[F] ) \rightarrow D( R[F] )^{\op}$,
which we prove to be fully faithful with essential image $D_{\fgu}^{b}( R[F] )$ (Theorem \ref{theorem.dual-image}. In \S \ref{section.proofhappens} we show that
the diagram $\sigma$ commutes (up to canonical isomorphism), thereby completing the proof of Theorem \ref{strongEK}.







\subsection{The Derived Riemann-Hilbert Correspondence}\label{section.derivedRH}

Our first goal is to to extend the equivalence $\Sol: \Mod_{R}^{\hol} \rightarrow \Shv_{\mathet}( \Spec(R), \F_p )$ of Theorem \ref{companion} to the level of derived categories. We begin by establishing some notation.

\begin{notation}\label{notation.derived}
Let $R$ be a commutative $\F_p$-algebra. 
We define subcategories
$$ D_{\hol}( R[F] ) \subseteq D_{ \alg}( R[F] ) \subseteq D_{ \perf}( R[F] ) \subseteq D(R[F])$$
as follows: an object $M \in D(R[F])$ belongs to the subcategory $D_{\perf}( R[F] )$ (respectively $D_{\alg}(\Mod_R^{\Frob} ), D_{\hol}( R[F] )$)
if each cohomology group $\mathrm{H}^{i}(M)$ is perfect (respectively algebraic, holonomic) when regarded as a Frobenius module over $R$.
It follows from Remark \ref{remark.perfectextension}, Proposition \ref{propX53}, and Corollary \ref{corX30}, we see that $D_{\perf}( R[F] )$,
$D_{\alg}( R[F] )$, and $D_{\hol}( R[F] )$ are triangulated subcategories of $D(R[F])$.

We let $D^{+}( R[F] )$ denote the full subcategory of $D(R[F])$ spanned by those objects $M$ which are (cohomologically) bounded below: that is,
for which the cohomology groups $\mathrm{H}^{i}(M)$ vanish for $i \ll 0$. We let $D^{b}( R[F] )$ denote the full subcategory of $D(R[F])$ spanned by those
objects which are bounded above and below: that is, for which the cohomology groups $\mathrm{H}^{i}(M)$ vanish both for $i \ll 0$ and $i \gg 0$. Similarly, we have full subcategories
$$ D_{\hol}^{+}( R[F] ) \subseteq D^{+}_{ \alg}( R[F] ) \subseteq D^{+}_{ \perf}( R[F] ) \subseteq D^{+}( R[F] )$$
$$ D_{\hol}^{b}( R[F] ) \subseteq D^{b}_{ \alg}( R[F] ) \subseteq D^{b}_{ \perf}( R[F] ) \subseteq D^{b}( R[F] )$$
which are defined in the obvious way.
\end{notation}

For any commutative $\F_p$-algebra $R$, the inclusion functor $\Mod_{R}^{\perf} \hookrightarrow \Mod_{R}^{\Frob}$ is exact, and therefore
extends to a functor of derived categories $D( \Mod_{R}^{\perf} ) \rightarrow D(R[F])$. 

\begin{proposition}\label{sekos}
Let $R$ be a commutative $\F_p$-algebra. Then the forgetful functor
$D( \Mod_{R}^{\perf} ) \rightarrow D(R[F])$ is a fully faithful embedding, whose essential image 
is the full subcategory $D_{\perf}( R[F] ) \subseteq D(R[F])$. 
\end{proposition}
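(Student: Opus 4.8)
The assertion is a standard "derived category of a reflective abelian subcategory" statement, and the plan is to deduce it from what we already know at the level of abelian categories together with the homological comparison in Remark~\ref{prebb}. Recall that $\Mod_R^{\perf} \subseteq \Mod_R^{\Frob}$ is an abelian subcategory closed under limits, colimits, and extensions (Remark~\ref{remark.perfectextension}), that the inclusion $\iota$ is exact with exact left adjoint $M \mapsto M^{\perfection}$ (Proposition~\ref{proposition.perfection}), and that $\iota$ carries injective objects to injective objects (Remark~\ref{prebb}). Moreover, for $M,N \in \Mod_R^{\perf}$ the comparison map $\Ext^{\ast}_{\Mod_R^{\perf}}(M,N) \to \Ext^{\ast}_{\Mod_R^{\Frob}}(M,N)$ is an isomorphism (again Remark~\ref{prebb}). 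These are exactly the ingredients needed.

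\textbf{Full faithfulness.} First I would show that for $M, N \in D(\Mod_R^{\perf})$ the map
$$\Hom_{D(\Mod_R^{\perf})}(M, N) \to \Hom_{D(R[F])}(M, N)$$
is bijective. Since $\Mod_R^{\perf}$ has enough injectives (Remark~\ref{remX4}), one reduces, by a standard dévissage on the (homotopy-limit/colimit, or truncation) filtrations, to the case where $N$ is a single injective object $Q$ of $\Mod_R^{\perf}$ placed in degree $0$ and $M$ is a single object $P$ of $\Mod_R^{\perf}$ placed in degree $0$; the Hom-groups in both derived categories are then the $\Ext$-groups $\Ext^{\ast}_{\Mod_R^{\perf}}(P,Q)$ and $\Ext^{\ast}_{R[F]}(P,Q)$, which agree by Remark~\ref{prebb}. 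To run this reduction cleanly I would use the fact that $\iota$ preserves injectives, so an injective resolution of $N$ in $\Mod_R^{\perf}$ remains an injective resolution after applying $\iota$, and both sides compute $\Hom$ out of $M$ into this complex; the only subtlety is handling unbounded complexes, which is dealt with by first treating $N \in D^+$ and then passing to the general case using that $\Mod_R^{\perf}$ is a Grothendieck abelian category (so $D(\Mod_R^{\perf})$ is compactly generated / has enough K-injectives), or alternatively by restricting attention throughout to the bounded-below situation, which is all that is used in the sequel.

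\textbf{Identification of the essential image.} It is clear that the essential image lands in $D_{\perf}(R[F])$, since the forgetful functor $\Mod_R^{\perf} \to \Mod_R^{\Frob}$ is exact, hence computes cohomology groups correctly, and every object of $\Mod_R^{\perf}$ is perfect as a Frobenius module. Conversely, given $M \in D_{\perf}(R[F])$, each $\mathrm{H}^i(M)$ is a perfect Frobenius module; I would build, by induction up the Postnikov tower (truncations $\tau^{\leq n} M$), an object $\widetilde M \in D(\Mod_R^{\perf})$ together with an isomorphism $\iota(\widetilde M) \simeq M$. The inductive step assembles $\tau^{\leq n} M$ from $\tau^{\leq n-1} M$ and $\mathrm{H}^n(M)[-n]$ via a triangle classified by an element of $\Hom_{D(R[F])}(\tau^{\leq n-1}M, \mathrm{H}^n(M)[-n+1])$; by the full faithfulness just established this Hom-group is computed inside $D(\Mod_R^{\perf})$, so the triangle (hence $\tau^{\leq n}M$) lifts to $D(\Mod_R^{\perf})$. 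For bounded-below $M$ this produces the lift directly; the general case follows by passing to a homotopy limit. The main obstacle is the bookkeeping around unbounded complexes and the homotopy (co)limit arguments — the bounded and bounded-below cases are essentially formal given Remark~\ref{prebb}, but making the unbounded statement precise requires invoking the existence of K-injective resolutions in the Grothendieck category $\Mod_R^{\perf}$ (or else restricting the statement to $D^+$, which suffices for the applications in this section).
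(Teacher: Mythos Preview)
Your approach is correct (at least for $D^+$), but it takes a substantially more laborious route than the paper's, and the very ingredient you mention at the outset---the exact left adjoint $M \mapsto M^{\perfection}$---is precisely what the paper exploits to bypass all of the d\'evissage.

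The paper argues as follows. Since both the inclusion $G: \Mod_R^{\perf} \hookrightarrow \Mod_R^{\Frob}$ and its left adjoint $F = (-)^{\perfection}$ are exact, they extend without further ado to an adjoint pair $(F,G)$ between the \emph{unbounded} derived categories $D(R[F])$ and $D(\Mod_R^{\perf})$. Full faithfulness of $G$ is then equivalent to the counit $FG \to \id$ being an isomorphism, and the essential image is characterized by where the unit $\id \to GF$ is an isomorphism. Both of these can be checked on cohomology: for $M \in D(\Mod_R^{\perf})$ the counit on $\mathrm{H}^i$ is the identity $\mathrm{H}^i(M)^{\perfection} = \mathrm{H}^i(M)$, and for $N \in D(R[F])$ the unit on $\mathrm{H}^i$ is the map $\mathrm{H}^i(N) \to \mathrm{H}^i(N)^{\perfection}$, an isomorphism exactly when $\mathrm{H}^i(N)$ is perfect. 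That is the entire proof.

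Compared to your argument, this avoids injective resolutions, Postnikov reconstruction, and any appeal to K-injectives, and it handles the unbounded case uniformly---the point you flagged as the main obstacle simply does not arise. Your route via Remark~\ref{prebb} (preservation of injectives and agreement of Ext-groups) is a perfectly valid way to establish full faithfulness on $D^+$, but it is a consequence of the exact-adjoint picture rather than the most efficient way to use it. In short: you had the key tool in hand but used a weaker corollary of it instead of the tool itself.
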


\begin{proof}
The inclusion functor $\Mod_{R}^{\perf} \hookrightarrow \Mod_{R}^{\Frob}$ has a left adjoint, given by the perfection functor $M \mapsto M^{\perfection}$
of Notation \ref{biskar}. This functor is exact, and therefore extends to a functor of derived categories $F: D(R[F]) \rightarrow D( \Mod_{R}^{\perf} )$
which is left adjoint to the forgetful functor. It now suffices to observe that the counit map $(F \circ G)(M) \rightarrow M$ is an isomorphism for every object $M \in D( \Mod_{R}^{\perf} )$,
and that the unit map $N \rightarrow (G \circ F)(N)$ is an isomorphism precisely when $N$ belongs to the full subcategory $D_{\perf}( R[F] ) \subseteq D(R[F])$
(since both of these assertions can be checked at the level of cohomology).
\end{proof}

We now wish to compare the derived categories of Notation \ref{notation.derived} with suitable derived categories of {\etale} sheaves.

\begin{notation}\label{notation.derived2}
Let $R$ be any commutative ring. We let $D_{\mathet}( \Spec(R), \F_p )$ denote the derived category of the abelian category
$\Shv_{\mathet}( \Spec(R), \F_p)$ of $p$-torsion {\etale} sheaves on $\Spec(R)$. We define full subcategories
$$ D^{b}_{c}( \Spec(R), \F_p ) \subseteq D^{+}_{\mathet}( \Spec(R), \F_p ) \subseteq D_{\mathet}( \Spec(R), \F_p )$$
as follows:
\begin{itemize}
\item An object $\sheafF \in D_{\mathet}( \Spec(R), \F_p )$ belongs to $D^{+}_{\mathet}( \Spec(R), \F_p )$ if and only if
the cohomology sheaves $\mathrm{H}^{n}( \sheafF)$ vanish for $n \ll 0$.
\item An object $\sheafF \in D_{\mathet}( \Spec(R), \F_p )$ belongs to $D^{b}_{c}( \Spec(R), \F_p )$ if and only if
the cohomology sheaves $\mathrm{H}^{n}( \sheafF)$ are constructible for all $n$ and vanish for $|n | \gg 0$.
\end{itemize}
\end{notation}

If $R$ is a commutative $\F_p$-algebra, then the Riemann-Hilbert functor
$\RH: \Shv_{\mathet}( \Spec(R), \F_p ) \rightarrow \Mod_{R}^{\perf} \subseteq \Mod_{R}^{\Frob}$
is exact, and therefore extends to a functor of derived categories $$\RH: D_{\mathet}( \Spec(R), \F_p) \rightarrow D( \Mod_{R}^{\perf} ) \simeq D_{\perf}( R[F] ).$$
This functor is t-exact, and therefore restricts to a functor $D^{+}_{\mathet}( \Spec(R), \F_p) \rightarrow D^{+}( \Mod_{R}^{\perf} ) \simeq D^{+}_{\perf}( R[F] )$.
This restriction admits a right adjoint $$\RSol: D^{+}_{\perf}( R[F] ) \simeq D^{+}( \Mod_{R}^{\perf} ) \rightarrow D^{+}_{\mathet}( \Spec(R), \F_p),$$ given
by the total right derived functor of $\Sol: \Mod_{R}^{\perf} \rightarrow \Shv_{\mathet}( \Spec(R), \F_p)$.

\begin{remark}\label{remark.computeRSol}
For every object $M \in D^{+}_{\perf}( R[F] )$, we have a hypercohomology spectral sequence
$\Sol^{s}( \mathrm{H}^{t}(M) ) \Rightarrow \mathrm{H}^{s+t}( \RSol(M) )$.
Note that the groups $\Sol^{s}( \mathrm{H}^{t}(M) )$ vanish for $s \geq 2$ (Proposition \ref{prop75} and Theorem \ref{theoX50}, or
Proposition \ref{proposition.corX77gen}), so
this spectral sequence degenerates to yield short exact sequences
$$ 0 \rightarrow \Sol^{1}( \mathrm{H}^{n-1}(M) ) \rightarrow \mathrm{H}^{n} \RSol(M) \rightarrow \Sol( \mathrm{H}^{n}(M) ) \rightarrow 0.$$
If $M$ belongs to the subcategory $D^{+}_{\alg}( R[F] ) \subseteq D^{+}_{\perf}( R[F] )$, then the sheaves
$\Sol^{1}( \mathrm{H}^{n-1}(M) )$ vanish (Proposition \ref{proposition.derived-vanishing}); we therefore obtain isomorphisms
$\mathrm{H}^{\ast}( \RSol(M) ) \simeq \Sol( \mathrm{H}^{\ast}(M) )$.
\end{remark}

\begin{theorem}\label{RHDerived}
Let $R$ be a commutative $\F_p$-algebra. Then the functor 
$\RH: D^{+}_{\mathet}( \Spec(R), \F_p) \rightarrow D^{+}( R[F] )$ is a fully faithful embedding,
whose essential image is the full subcategory $D^{+}_{\alg}( R[F] ) \subseteq D^{+}( R[F] )$.
\end{theorem}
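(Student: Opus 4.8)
The plan is to deduce Theorem \ref{RHDerived} from the abelian-category equivalence of Theorem \ref{companion} together with the compatibility of the Riemann-Hilbert correspondence with $\Ext$-groups recorded in Corollary \ref{corX80}. First I would observe that, since $\RH\colon \Shv_{\mathet}(\Spec(R),\F_p)\to \Mod_R^{\perf}$ is exact and t-exact, it extends to a t-exact functor on derived categories and (by Remark \ref{remark.computeRSol}) admits the right adjoint $\RSol$; moreover the unit map $\sheafF\to \RSol(\RH(\sheafF))$ is an isomorphism for any single sheaf $\sheafF$, because the comparison spectral sequence of Remark \ref{remark.computeRSol} degenerates ($\Sol^1(\RH(\sheafF))=0$ by Proposition \ref{prop75}, and $\sheafF\xrightarrow{\sim}\Sol(\RH(\sheafF))$ by the same proposition). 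So on the hearts the unit of the adjunction is an isomorphism.

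Next I would upgrade this to full faithfulness on the whole of $D^+_{\mathet}(\Spec(R),\F_p)$. The cleanest route is to show that for $\sheafF,\sheafG\in \Shv_{\mathet}(\Spec(R),\F_p)$ the map
$$\Ext^n_{\underline{\F_p}}(\sheafF,\sheafG)\longrightarrow \Hom_{D(R[F])}(\RH(\sheafF),\RH(\sheafG)[n])=\Ext^n_{R[F]}(\RH(\sheafF),\RH(\sheafG))$$
is an isomorphism for all $n\ge 0$; this is exactly Corollary \ref{corX80}. A standard argument with the t-structures and the way-out lemma (d\'evissage on the Postnikov towers of $\sheafF$ and $\sheafG$, using that both sides of the displayed map commute with the relevant shifts and cones, and that objects of $D^+$ are built from their cohomology sheaves) then promotes this to the statement that $\RH\colon D^+_{\mathet}(\Spec(R),\F_p)\to D^+(R[F])$ is fully faithful. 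Concretely: fix $\sheafG$ of amplitude in a bounded range first and induct on amplitude of $\sheafF$ via the triangle $\tau^{\le m-1}\sheafF\to \tau^{\le m}\sheafF\to \mathrm{H}^m(\sheafF)[-m]$, then remove the boundedness of $\sheafG$ by passing to the limit, using that $D^+$-Homs out of a fixed object commute with the relevant colimits/products. Alternatively one can phrase this as: the counit $\RH\circ\RSol\to\id$ is an isomorphism on the essential image, which follows from the degenerate spectral sequence of Remark \ref{remark.computeRSol} once one knows $\RH$ lands in $D^+_{\alg}$.

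Then I would identify the essential image. Since $\RH$ is t-exact and, on hearts, is the equivalence $\Shv_{\mathet}(\Spec(R),\F_p)\simeq \Mod_R^{\alg}$ of Theorem \ref{maintheoXXX}/Theorem \ref{theoX50}, it carries each cohomology sheaf $\mathrm{H}^i(\sheafF)$ to an algebraic Frobenius module $\mathrm{H}^i(\RH(\sheafF))$; hence $\RH$ carries $D^+_{\mathet}(\Spec(R),\F_p)$ into $D^+_{\alg}(R[F])$. For the reverse inclusion, given $M\in D^+_{\alg}(R[F])$, each $\mathrm{H}^i(M)$ lies in the essential image of $\RH$ on hearts, and one reconstructs $M$ as an object of the essential image by a Postnikov-tower induction: $\tau^{\le m-1}M\to \tau^{\le m}M\to \mathrm{H}^m(M)[-m]$ is classified by an element of $\Hom_{D(R[F])}(\mathrm{H}^m(M)[-m],\tau^{\le m-1}M[1])$, which by the full faithfulness just proved comes (uniquely) from a morphism between objects in the image of $\RH$, so its cone $\tau^{\le m}M$ is again in the image; taking the (homotopy) colimit over $m$ and using that $D^+$ is closed under such colimits of Postnikov towers gives $M\in \RH(D^+_{\mathet})$. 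Finally, full faithfulness of $\RH$ plus surjectivity onto $D^+_{\alg}(R[F])$ gives the equivalence, and $\RSol$ is its inverse.

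The main obstacle I anticipate is the bookkeeping in the d\'evissage steps: to leverage Corollary \ref{corX80} one must carefully handle unboundedness (objects of $D^+$ can have cohomology in infinitely many degrees) and make sure the relevant Hom-groups commute with the limits appearing in the Postnikov (co)towers — this is where one needs that we are in $D^+$ rather than the unbounded derived category, and where a clean application of the way-out functor lemma (e.g. in the form in Hartshorne's \emph{Residues and Duality}, or by hand using truncations) is essential. Once that formal machinery is in place the theorem is a routine consequence of Theorems \ref{companion} and Corollary \ref{corX80}.
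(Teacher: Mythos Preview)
Your proposal is correct, and the overall architecture (use the $(\RH,\RSol)$ adjunction, exploit t-exactness, and invoke the heart-level results) matches the paper's. The difference is in execution: the paper's proof is shorter because it avoids the Postnikov d\'evissage entirely. Rather than reducing the $\Hom$-comparison to Corollary~\ref{corX80} and then climbing back up through truncation triangles, the paper simply checks that the unit $\sheafF \to \RSol(\RH(\sheafF))$ and the counit $\RH(\RSol(M)) \to M$ are isomorphisms directly on cohomology. The key observation is that both $\RH$ and $\RSol|_{D^+_{\alg}(R[F])}$ are t-exact: the former by Proposition~\ref{proposition.corX33}, the latter because the spectral sequence of Remark~\ref{remark.computeRSol} degenerates on algebraic modules (Corollary~\ref{corollary.derived-vanishing}). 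Hence $\mathrm{H}^n$ of the unit is the abelian unit map $\mathrm{H}^n(\sheafF) \to \Sol(\RH(\mathrm{H}^n(\sheafF)))$, which is an isomorphism by Proposition~\ref{prop75}, and similarly $\mathrm{H}^n$ of the counit is $\RH(\Sol(\mathrm{H}^n(M))) \to \mathrm{H}^n(M)$, an isomorphism by Theorem~\ref{theoX50}. No induction on amplitude is needed, and the passage from $D^b$ to $D^+$ that you flag as the main obstacle simply disappears.

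Your ``alternatively'' remark (checking the counit via the degenerate spectral sequence) is essentially the paper's argument for essential surjectivity, but note that for full faithfulness it is the \emph{unit} you must check, not the counit; the same spectral-sequence reasoning handles it. Your Ext-and-d\'evissage route would also work, but the step ``remove the boundedness of $\sheafG$ by passing to the limit'' genuinely requires care (as you anticipate), since $\Hom(\sheafF,-)$ need not commute with the homotopy colimit $\sheafG = \mathrm{hocolim}\,\tau^{\le m}\sheafG$ for non-compact $\sheafF$; one has to reorganize the argument to first truncate $\sheafF$ relative to a bound on $\sheafG$, and then observe that the unit-map formulation makes the second variable disappear. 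At that point you have rediscovered the paper's proof.
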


\begin{proof}
Since the Riemann-Hilbert functor $\RH: \Shv_{\mathet}( \Spec(R), \F_p) \rightarrow \Mod_{R}^{\perf}$ is exact at the level of abelian categories,
its extension to the level of derived categories is t-exact: that is, we have canonical isomorphisms $\mathrm{H}^{\ast}( \RH(\sheafF) ) \simeq \RH( \mathrm{H}^{\ast}(\sheafF) )$
for each $\sheafF \in D^{+}_{\mathet}( \Spec(R), \F_p)$. It follows from Theorem \ref{theorem.RHexist} that the functor $\RH$ carries $D^{+}_{\mathet}( \Spec(R), \F_p)$ into $D^{+}_{\alg}( R[F] )$. Combining
this observation with Remark \ref{remark.computeRSol}, we obtain isomorphisms 
$$ \mathrm{H}^{\ast}( (\RSol \circ \RH)(\sheafF) ) \simeq \Sol( \mathrm{H}^{\ast}( \RH(\sheafF) ) ) \simeq (\Sol \circ \RH)( \mathrm{H}^{\ast}(\sheafF) ).$$
It follows from Proposition \ref{prop75} that the unit map $\sheafF \rightarrow (\RSol \circ \RH)(\sheafF)$ is an isomorphism: that is, the derived Riemann-Hilbert functor
is fully faithful. To complete the proof, it will suffice to show that for every object $M \in D^{+}_{\alg}( R[F] )$, the counit map
$(\RH \circ \RSol)(M) \rightarrow M$ is an isomorphism. Applying Remark \ref{remark.computeRSol} again, we obtain isomorphisms
$$ \mathrm{H}^{\ast}( (\RH \circ \RSol)(M) ) \simeq \RH( \mathrm{H}^{\ast}( \RSol(M) ) \simeq (\RH \circ \Sol)( \mathrm{H}^{\ast}(M) ),$$
so that the desired result follows from Theorem \ref{theoX50}.
\end{proof}

\begin{remark}
If $R$ is a Noetherian $\F_p$-algebra of finite Krull dimension, then one can show that the category of {\etale} sheaves $\Shv_{\mathet}( \Spec(R), \F_p)$ has finite injective dimension.
In this case, it is not hard to see that Theorem \ref{RHDerived} can be extended to yield an equivalence $\RH: D_{\mathet}( \Spec(R), \F_p) \rightarrow D(R[F])$
of {\em unbounded} derived categories. We do not know if this holds in general.
\end{remark}

Combining Theorem \ref{RHDerived} with Theorem \ref{companion}, we obtain the following:

\begin{corollary}\label{corollary.RHDerived}
Let $R$ be a commutative $\F_p$-algebra. Then the Riemann-Hilbert functor $\RH: \Shv_{\mathet}( \Spec(R), \F_p) \rightarrow \Mod_{R}^{\Frob}$ induces an equivalence
of triangulated categories $D_{c}^{b}( \Spec(R), \F_p) \rightarrow D^{b}_{\hol}( R[F] )$; an inverse equivalence is given by applying the derived solution functor $\RSol$.
\end{corollary}

Theorem \ref{RHDerived} also implies that a slightly weaker version Proposition \ref{sekos} holds for algebraic Frobenius modules:

\begin{corollary}\label{corollary.reform}
Let $R$ be a commutative $\F_p$-algebra. Then the inclusion functor $\Mod_{R}^{\alg} \hookrightarrow \Mod_{R}^{\Frob}$ extends to a fully faithful
embedding of derived categories $D^{+}( \Mod_{R}^{\alg} ) \rightarrow D^{+}( R[F] )$, whose essential image is
the full subcategory $D^{+}_{\alg}( \Mod_{R}^{\Frob}) \subseteq D^{+}( R[F] )$.
\end{corollary}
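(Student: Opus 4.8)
The plan is to deduce Corollary~\ref{corollary.reform} from the combination of Theorem~\ref{RHDerived} and Theorem~\ref{companion} (or rather its filtered-colimit upgrade, Theorem~\ref{maintheoXXX}/Theorem~\ref{theoX50}), together with the formal fact that an exact fully faithful embedding of abelian categories which is closed under extensions and whose image is a Serre-type subcategory induces a fully faithful embedding on bounded-below derived categories. Concretely, first I would observe that Theorem~\ref{maintheoXXX} gives an equivalence of abelian categories $\RH \colon \Shv_{\mathet}(\Spec(R),\F_p) \xrightarrow{\ \sim\ } \Mod_R^{\alg}$, with inverse $\Sol|_{\Mod_R^{\alg}}$ (Theorem~\ref{theoX50}). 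This equivalence of abelian categories extends in the obvious way to an equivalence of bounded-below derived categories $D^{+}(\Shv_{\mathet}(\Spec(R),\F_p)) \xrightarrow{\ \sim\ } D^{+}(\Mod_R^{\alg})$, since any equivalence of abelian categories induces an equivalence of derived categories.

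Next I would invoke Theorem~\ref{RHDerived}: the derived Riemann--Hilbert functor $\RH \colon D^{+}_{\mathet}(\Spec(R),\F_p) \to D^{+}(R[F])$ is a fully faithful embedding with essential image $D^{+}_{\alg}(R[F])$. Here $D^{+}_{\mathet}(\Spec(R),\F_p)$ is, by definition, $D^{+}(\Shv_{\mathet}(\Spec(R),\F_p))$. The point is then to identify the composite
$$ D^{+}(\Mod_R^{\alg}) \xrightarrow{\ \sim\ } D^{+}(\Shv_{\mathet}(\Spec(R),\F_p)) = D^{+}_{\mathet}(\Spec(R),\F_p) \xrightarrow{\ \RH\ } D^{+}(R[F]) $$
with the functor $D^{+}(\Mod_R^{\alg}) \to D^{+}(R[F])$ induced by the inclusion $\Mod_R^{\alg} \hookrightarrow \Mod_R^{\Frob}$. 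For this it suffices to check that the diagram of abelian categories commutes up to natural isomorphism: the equivalence $\Sol|_{\Mod_R^{\alg}}^{-1}$ is, by construction, a right inverse to $\Sol$, and $\RH$ is defined on abelian categories precisely as $\Sol^{-1}$ followed by the inclusion $\Mod_R^{\alg} \hookrightarrow \Mod_R^{\Frob}$ (see the discussion after Theorem~\ref{theoX50}); so at the level of abelian categories the composite above is literally the inclusion functor, and hence the same holds after passing to derived categories. Since the first arrow is an equivalence and the second is fully faithful with essential image $D^{+}_{\alg}(R[F])$, the composite is fully faithful with the same essential image, which is exactly the assertion of Corollary~\ref{corollary.reform}.

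The one point requiring a little care — and the step I would flag as the main (though mild) obstacle — is the assertion that the exact inclusion $\Mod_R^{\alg} \hookrightarrow \Mod_R^{\Frob}$ really does induce a \emph{fully faithful} functor on $D^{+}$, i.e.\ that $\Ext$-groups computed in $\Mod_R^{\alg}$ agree with those computed in $\Mod_R^{\Frob}$ (equivalently $\Mod_R^{\perf}$, by Remark~\ref{prebb}). This is not automatic from exactness of the inclusion alone; one way to see it is to note that the identification is forced by the chain of equivalences just described, since Theorem~\ref{RHDerived} already tells us the composite is fully faithful and the first factor is an equivalence — so no independent $\Ext$-comparison is actually needed, the full faithfulness on derived categories is \emph{derived from} Theorem~\ref{RHDerived} rather than proved by hand. (Alternatively, one could observe directly that $\Mod_R^{\alg}$ is a localizing subcategory of $\Mod_R^{\perf}$ by Proposition~\ref{propX53}, closed under filtered colimits, and that by Theorem~\ref{theoX54} it is the category of ind-objects of $\Mod_R^{\hol}$, then combine Corollary~\ref{corX80} with a colimit argument to compare $\Ext$-groups; but the first approach is cleaner.) With this settled, Corollary~\ref{corollary.reform} follows immediately, as indicated in the text.
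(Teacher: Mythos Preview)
Your proposal is correct and follows essentially the same approach as the paper's proof, which simply observes that since $\RH: \Shv_{\mathet}(\Spec(R),\F_p) \to \Mod_R^{\alg}$ is an equivalence of abelian categories (Theorem~\ref{maintheoXXX}), Corollary~\ref{corollary.reform} is a direct reformulation of Theorem~\ref{RHDerived}. You have spelled out the identification of the composite with the inclusion-induced functor more explicitly than the paper does, and your observation that full faithfulness on $D^{+}$ need not be checked independently (it is inherited from Theorem~\ref{RHDerived}) is exactly the point.
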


\begin{proof}
Since the Riemann-Hilbert functor $\RH: \Shv_{\mathet}( \Spec(R), \F_p) \rightarrow \Mod_{R}^{\alg}$ is an equivalence of categories (Theorem \ref{maintheoXXX}),
Corollary \ref{corollary.reform} is a reformulation of Theorem \ref{RHDerived}.
\end{proof}

\subsection{Duality for $R$-Projective Frobenius Modules}\label{subsection.dual-properties}

Let $R$ be a commutative $\F_p$-algebra and let $M \in \Mod_{R}^{\Frob}$ be finitely generated and projective as an $R$-module. In \S \ref{sec10sub2}, we introduced a Frobenius module
$\mathbb{D}(M)$, given by the direct limit of the sequence 
$$ M^{\vee} \xrightarrow{ \psi_M^{\vee}} \varphi_{R}^{\ast} M^{\vee} \xrightarrow{ \varphi_{R}^{\ast} \psi_M^{\vee} } \varphi_{R}^{2 \ast} M^{\vee} \rightarrow \cdots.$$
As the notation suggests, we can think of $\mathbb{D}(M)$ as a kind of dual of $M$ in the setting of Frobenius modules. Our goal in this section is to make this idea precise.
We begin by observing that there is a canonical map
$$ c: R \rightarrow M \otimes_{R} M^{\vee} \rightarrow M \otimes_{R} \mathbb{D}(M).$$
It is not hard to see that $c$ is a map of Frobenius modules (where we regard the tensor product $M \otimes_{R} \mathbb{D}(M)$ as a Frobenius module via
Construction \ref{tenso0}). Moreover, it enjoys the following universal property:

\begin{proposition}\label{proposition.easydual}
Let $R$ be a commutative $\F_p$-algebra and let $M$ and $N$ be Frobenius modules over $R$, where $M$ is finitely generated and projective as an $R$-module.
Then composition with the map $c: R \rightarrow M \otimes_{R} \mathbb{D}(M)$ induces isomorphisms
$\Ext^{n}_{ R[F] }( \mathbb{D}(M), N ) \rightarrow \Ext^{n}_{R[F]}( R, M \otimes_{R} N )$.
\end{proposition}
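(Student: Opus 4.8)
The plan is to verify the claimed isomorphism by reducing to the explicit presentation of $\mathbb{D}(M)$ coming from Construction \ref{unitalize} and Proposition \ref{stopmake}. Recall that $\mathbb{D}(M)$ is the unitalization of the pair $(M^\vee, \psi_M^\vee)$, so Proposition \ref{stopmake} supplies a short exact sequence of Frobenius modules
\begin{equation*}
0 \rightarrow R[F] \otimes_{R} M^{\vee} \xrightarrow{\alpha'} R[F] \otimes_{R} M^{\vee} \rightarrow \mathbb{D}(M) \rightarrow 0.
\end{equation*}
This is a length-one resolution of $\mathbb{D}(M)$ by Frobenius modules of the form $R[F] \otimes_R P$ with $P$ finitely generated projective over $R$. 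Since $R[F] \otimes_R P$ is a projective left $R[F]$-module (it is a direct summand of a free one, because $P$ is a direct summand of a finite free $R$-module), the groups $\Ext^n_{R[F]}(\mathbb{D}(M),N)$ are computed as the cohomology of the two-term complex $\Hom_{R[F]}(R[F]\otimes_R M^\vee, N) \to \Hom_{R[F]}(R[F]\otimes_R M^\vee, N)$, i.e. of $M^\vee{}^{\dagger}\text{-free data} \simeq \Hom_R(M^\vee, N) \xrightarrow{d} \Hom_R(M^\vee, N)$ where $d$ is the map induced by $\alpha'$.

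Next I would do the same bookkeeping for the right-hand side. Since $M$ is finitely generated projective over $R$, tensoring the resolution of $R$ in Construction \ref{conX7} — or rather, using that $\Ext^*_{R[F]}(R, -)$ can be computed via the resolution $0 \to R[F] \xrightarrow{F-1} R[F] \to R \to 0$ of Section \ref{section.frobenius-module} — identifies $\Ext^n_{R[F]}(R, M\otimes_R N)$ with the cohomology of the two-term complex $M \otimes_R N \xrightarrow{\id - \varphi} M\otimes_R N$. Now the isomorphism $M \otimes_R N \simeq \Hom_R(M^\vee, N)$ (valid because $M^\vee$ is finitely generated projective, so $M \simeq M^{\vee\vee}$) should carry the map $\id - \varphi_{M\otimes_R N}$ to the differential $d$ coming from $\alpha'$ above. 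Checking this compatibility is the crux: one must unwind the definition of $\alpha'$ (which on the summand $M^\vee$ is $x \mapsto (x, -\psi_M^\vee(x))$) and the definition of $c$ (which is the coevaluation $R \to M \otimes_R M^\vee \hookrightarrow M\otimes_R \mathbb{D}(M)$), and confirm that "compose with $c$" corresponds under these identifications to the identity map on the two-term complexes, hence induces an isomorphism on cohomology in every degree. Because both sides are concentrated in cohomological degrees $0$ and $1$, and the complexes literally agree, the result follows at once; in particular $\Ext^n$ vanishes for $n \geq 2$ on both sides.

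The main obstacle I anticipate is precisely the sign-and-indexing verification that the differential $d: \Hom_R(M^\vee,N) \to \Hom_R(M^\vee,N)$ induced by $\alpha'$ matches $\id - \varphi$ under the duality isomorphism $M \otimes_R N \simeq \Hom_R(M^\vee, N)$ and the canonical isomorphism $\Hom_{R[F]}(R[F]\otimes_R M^\vee, N) \simeq \Hom_R(M^\vee, N)$. Here one has to be careful about how $\varphi_N$, $\psi_M$, and the Frobenius-semilinearity interact: the term $-\psi_M^\vee$ in $\alpha'$ dualizes to a term involving $\varphi$ on the $M$-factor, while the $R[F]$-linear structure on $R[F]\otimes_R M^\vee$ contributes the $\varphi_N$-part, and these must assemble into $\varphi_{M\otimes_R N}(x\otimes y) = \varphi_M(x)\otimes\varphi_N(y)$. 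Once this identification is pinned down, everything else is formal. An alternative, more conceptual route — which I would mention but not necessarily pursue — is to verify the universal property directly: a map $\mathbb{D}(M) \to N$ of Frobenius modules is, by the colimit description of the unitalization and adjunction between $\varphi_R^*$ and restriction, the same data as a map $M^\vee \to N$ compatible with the structure maps, which in turn (using finiteness of $M$) is the same as a $\varphi$-fixed element of $M \otimes_R N$, i.e. an element of $\Sol(M\otimes_R N)(R) = \Hom_{R[F]}(R, M\otimes_R N)$; the higher $\Ext$-groups are then handled by the resolutions above exactly as in Construction \ref{conX7} and Remark \ref{remX3}.
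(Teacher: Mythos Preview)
Your proposal is correct and follows essentially the same route as the paper's proof: the paper simply notes that, via Proposition~\ref{stopmake} and Remark~\ref{remark.computeExt} (equivalently, the resolution $0 \to R[F] \xrightarrow{F-1} R[F] \to R \to 0$), both sides are computed by the two-term complex $M \otimes_R N \xrightarrow{\id - \varphi_M \otimes \varphi_N} M \otimes_R N$. Your write-up is more explicit about the identification $\Hom_R(M^\vee, N) \simeq M \otimes_R N$ and the matching of differentials, which the paper leaves implicit, but the argument is the same.
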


\begin{proof}
Using Proposition \ref{stopmake} and Remark \ref{remark.computeExt}, we see that both sides can be computed as the cohomology groups
of the two-term chain complex
$$ M \otimes_{R} N \xrightarrow{ \id - \varphi_M \otimes \varphi_N } M \otimes_{R} N.$$
\end{proof}

\begin{remark}
The abelian groups $\Ext^{n}_{ R[F] }( \mathbb{D}(M), N ) \simeq \Ext^{n}_{R[F]}( R, M \otimes_{R} N )$ of Proposition \ref{proposition.easydual}
vanish for $n \geq 2$.
\end{remark}

We also have the following dual version of Proposition \ref{proposition.easydual}:

\begin{proposition}\label{proposition.harderdual}
Let $R$ be a commutative $\F_p$-algebra and let $M$ and $N$ be Frobenius modules over $R$, where $M$ is finitely generated and projective as an $R$-module.
If $N$ is perfect, then composition with the map $c: R \rightarrow M \otimes_{R} \mathbb{D}(M)$ induces isomorphisms
$\Ext^{n}_{ R[F] }( M, N ) \rightarrow \Ext^{n}_{R[F]}( R, N \otimes_{R} \mathbb{D}(M) )$.
\end{proposition}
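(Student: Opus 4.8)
The plan is to imitate the proof of Proposition \ref{proposition.easydual}: realize both sides as the cohomology of explicit two-term complexes, and then match them using the hypothesis that $N$ is perfect. First I would record a short projective resolution of $M$ in $\Mod_{R}^{\Frob}$. Because $M$ is finitely generated and projective over $R$, the same leading-term argument that produces the resolution $0 \rightarrow R[F] \xrightarrow{F-1} R[F] \rightarrow R \rightarrow 0$ (the case $M = R$) shows that the action map $R[F] \otimes_{R} M \rightarrow M$ fits into an exact sequence
$$ 0 \rightarrow R[F] \otimes_{R} (\varphi_{R}^{\ast} M) \xrightarrow{\ e\ } R[F] \otimes_{R} M \rightarrow M \rightarrow 0$$
of left $R[F]$-modules, where $e$ is built from the canonical inclusion of $\varphi_{R}^{\ast}M$ as the degree-one summand of $R[F] \otimes_{R} M$ together with $\psi_{M}: \varphi_{R}^{\ast}M \rightarrow M$; both outer terms are projective $R[F]$-modules since $R[F]$ is free as a left $R$-module and $M$, $\varphi_{R}^{\ast}M$ are finitely generated projective over $R$. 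Applying $\Hom_{R[F]}(-,N)$, together with $\Hom_{R[F]}(R[F] \otimes_{R} P, N) \simeq \Hom_{R}(P,N)$ and the identifications $\Hom_{R}(M,N) \simeq N \otimes_{R} M^{\vee}$, $\Hom_{R}(\varphi_{R}^{\ast}M, N) \simeq N \otimes_{R} \varphi_{R}^{\ast}(M^{\vee})$, I find that $\Ext^{n}_{R[F]}(M,N)$ is the degree-$n$ cohomology of a two-term complex
$$ A^{\bullet} : \qquad N \otimes_{R} M^{\vee} \longrightarrow N \otimes_{R} \varphi_{R}^{\ast}(M^{\vee}),$$
so in particular it vanishes for $n \geq 2$. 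Dually, the resolution of $R$ shows that $\Ext^{n}_{R[F]}(R, N \otimes_{R} \mathbb{D}(M))$ is the degree-$n$ cohomology of
$$ B^{\bullet} : \qquad N \otimes_{R} \mathbb{D}(M) \xrightarrow{\ \mathrm{id} - \varphi\ } N \otimes_{R} \mathbb{D}(M),$$
which likewise vanishes for $n \geq 2$; here $\varphi$ is the Frobenius of the tensor-product Frobenius module $N \otimes_{R} \mathbb{D}(M)$. Hence the statement reduces to $n = 0$ and $n = 1$, and the map induced by $c$ is realized by a morphism of complexes $A^{\bullet} \rightarrow B^{\bullet}$.

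Next I would analyze $B^{\bullet}$ through the presentation $\mathbb{D}(M) = \varinjlim_{k} \varphi_{R}^{k\ast}(M^{\vee})$ of Construction \ref{unitalize} (with transition maps $\varphi_{R}^{k\ast}(\psi_{M}^{\vee})$). Since $M$ is finitely generated projective over $R$, the $k$-th term satisfies $N \otimes_{R} \varphi_{R}^{k\ast}(M^{\vee}) \simeq \Hom_{R}(\varphi_{R}^{k\ast} M, N) \simeq \Hom_{R}(M, N^{1/p^{k}})$, and because $N$ is perfect the canonical $R$-linear maps $\varphi_{N}^{k} : N \xrightarrow{\ \sim\ } N^{1/p^{k}}$ are isomorphisms. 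Using these identifications, I would exhibit $B^{\bullet}$ as a filtered colimit $\varinjlim_{k} B^{\bullet}_{k}$ of two-term complexes in which $B^{\bullet}_{0}$ is $A^{\bullet}$ (up to sign) and, after absorbing each twist $\varphi_{R}^{k\ast}$ through the isomorphism $\varphi_{N}^{k}$, every $B^{\bullet}_{k}$ is isomorphic to $A^{\bullet}$, with the transition maps $B^{\bullet}_{k} \rightarrow B^{\bullet}_{k+1}$ becoming quasi-isomorphisms. Since cohomology commutes with filtered colimits, $H^{n}(B^{\bullet}) = \varinjlim_{k} H^{n}(B^{\bullet}_{k}) \simeq H^{n}(A^{\bullet})$ for all $n$, and tracing the construction back through the coevaluation $c$ shows that this isomorphism is the one in the statement. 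This settles $n = 0$ and $n = 1$, hence all degrees.

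I expect the main obstacle to lie in the second step: carefully threading the Frobenius twists and the coevaluation $c$ through the chain of identifications $N \otimes_{R} \varphi_{R}^{k\ast}(M^{\vee}) \simeq \Hom_{R}(M, N^{1/p^{k}}) \simeq \Hom_{R}(M,N)$, and confirming that the transition maps in $\varinjlim_{k} B^{\bullet}_{k}$ induce isomorphisms on cohomology. It is exactly here that perfectness of $N$ enters — it is what allows one to invert the maps $\varphi_{N}$ produced by reindexing the colimit defining $\mathbb{D}(M)$ — and the conclusion genuinely fails without it (already in degree $1$: when $\varphi_{N}$ is not surjective, the cokernel of $\mathrm{id} - \varphi$ on $N \otimes_{R} \mathbb{D}(M)$ is not computed by the finite stage $A^{\bullet}$). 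If the explicit homotopies for the transition maps prove cumbersome, an alternative is to observe that $N \mapsto \Ext^{\ast}_{R[F]}(M,N)$ and $N \mapsto \Ext^{\ast}_{R[F]}(R, N \otimes_{R} \mathbb{D}(M))$ are cohomological $\delta$-functors on $\Mod_{R}^{\perf}$ (the second because $\mathbb{D}(M)$ is flat over $R$ by Remark \ref{remark.flatness}) which vanish in degrees $\geq 2$ and agree in degree $0$, and then to deduce agreement in degree $1$ by a dimension shift along an embedding of $N$ into an injective object of $\Mod_{R}^{\perf}$ — though the vanishing of $\Ext^{1}_{R[F]}(R, Q \otimes_{R} \mathbb{D}(M))$ on such injectives $Q$ seems itself most easily verified via the colimit presentation above.
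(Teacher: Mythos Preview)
Your proposal is correct and follows essentially the same strategy as the paper: express $\Ext^{\ast}_{R[F]}(R, N \otimes_R \mathbb{D}(M))$ as a filtered colimit via the presentation $\mathbb{D}(M) = \varinjlim_k \varphi_R^{k\ast}(M^{\vee})$, identify the $k$-th term with $\Ext^{\ast}_{R[F]}(\varphi_R^{k\ast}M, N)$, and use perfectness of $N$ to show the transition maps are isomorphisms. The paper's write-up is a bit more economical: rather than tracking explicit two-term complexes $B^{\bullet}_k$, it observes directly (via Remark~\ref{remark.computeExt}) that the colimit in question is $\varinjlim_k \Ext^{\ast}_{R[F]}(\varphi_R^{k\ast}M, N)$ with transition maps given by precomposition with $\psi_M$, and then invokes Remark~\ref{remark.obvious} (that $\psi_M$ is an isomorphism on perfections) together with the fact that $\Ext^{\ast}_{R[F]}(-, N)$ factors through perfection when $N$ is perfect --- this dispatches the step you flagged as the main obstacle in one line, without the chain-level threading of $\varphi_N^k$.
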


\begin{proof}
Using Remark \ref{remark.computeExt}, we can identify $\Ext^{\ast}_{R[F]}( R, N \otimes_{R} \mathbb{D}(M) )$ with the direct limit
of the diagram
$$ \Ext^{\ast}_{R[F]}( M, N ) \rightarrow \Ext^{\ast}_{ R[F] }( \varphi_{R}^{\ast} M, N ) \rightarrow \Ext^{\ast}_{ R[F] }( \varphi_{R}^{2 \ast} M, N ) \rightarrow \cdots,$$
where the transition maps are given by precomposition with the map $\psi_M: \varphi_{R}^{\ast} M \rightarrow M$ of Notation \ref{notation.psiM}. It will therefore suffice to show
that each of the transition maps $\Ext^{\ast}_{ R[F] }(\varphi_{R}^{k \ast} M, N) \rightarrow \Ext^{\ast}_{ R[F] }( \varphi_{R}^{(k+1) \ast} M, N)$ is an isomorphism.
This follows from the assumption that $N$ is perfect, since the map $\psi_{M}$ induces an isomorphism of perfections $(\varphi_{R}^{k \ast} M)^{\perfection} \rightarrow (\varphi_{R}^{(k+1) \ast} M)^{\perfection}$
(Remark \ref{remark.obvious}). 
\end{proof}

\subsection{Weak Duality in $D(R[F])$}\label{section.weakdual}

We now introduce some language to place Proposition \ref{proposition.easydual} in a more general context. First, we need a bit of notation.

\begin{construction}[Derived Tensor Products]\label{construction.tensoderived}
Let $R$ be a commutative $\F_p$-algebra. Then we can identify $D(R[F])$ with the category whose objects
are $K$-projective cochain complexes of left $R[F]$-modules (in the sense of Spaltenstein, see \cite[Tag 070G]{Stacks} for a summary), and whose morphisms
are homotopy classes of chain maps. Using Example \ref{example.tensorfree}, it is not hard to show that if $M^{\bullet}$ and $N^{\bullet}$ are $K$-projective cochain
complexes, then the tensor product $M^{\bullet} \otimes_{R} N^{\bullet}$ is also $K$-projective (where we regard the tensor product as a chain complex of left $R[F]$-modules
via Construction \ref{tenso0}). This construction gives rise to a functor
$$ \otimes_{R}^{L}: D(R[F]) \times D(R[F]) \rightarrow D(R[F])$$
which we will refer to as the {\it derived tensor product}.
\end{construction}

\begin{remark}
Let $R$ be a commutative $\F_p$-algebra. Then the forgetful functor $D(R[F]) \rightarrow D(R)$ is compatible with derived tensor products.
\end{remark}

\begin{remark}
Let $R$ be a commutative $\F_p$-algebra and let $M$ and $N$ be Frobenius modules over $R$, which we regard as objects of $D( R[F] )$. 
Then we have canonical isomorphisms $\mathrm{H}^{n}( M \otimes_{R}^{L} N ) \simeq \Tor_{-n}^{R}( M, N )$ in the category of Frobenius modules.
More generally, if $M$ and $N$ are arbitrary object of $D(R[F])$, we have a convergent spectral sequence
$$ \bigoplus_{i + j = t} \Tor_{s}^{R}( \mathrm{H}^{i}(M), \mathrm{H}^{j}(N) ) \Rightarrow \mathrm{H}^{t-s}( M \otimes_{R}^{L} N).$$
\end{remark}

\begin{definition}\label{definition.weakdual}
Let $R$ be a commutative $\F_p$-algebras and let $M$ and $M'$ be objects of the derived category $D(R[F])$. We will say that a morphism
$c: R \rightarrow M \otimes_{R}^{L} M'$ {\it exhibits $M'$ as a weak dual of $M$} if, for every object $N \in D(R[F])$, composition with
$c$ induces a bijection
$$ \Hom_{ D(R[F]) }( M', N ) \rightarrow \Hom_{ D(R[F]) }( R, M \otimes_{R}^{L} N).$$
\end{definition}

\begin{proposition}\label{proposition.makedual}
Let $R$ be a commutative $\F_p$-algebra and let $M \in \Mod_{R}^{\Frob}$ be a projective $R$-module of finite rank. Then the map
$c: R \rightarrow M \otimes_{R} \mathbb{D}(M)$ of Proposition \ref{proposition.easydual} exhibits $\mathbb{D}(M)$ as a weak dual of $M$.
\end{proposition}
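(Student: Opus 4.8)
The goal is to show that for $M \in \Mod_R^{\Frob}$ which is finitely generated and projective over $R$, the map $c : R \to M \otimes_R \mathbb{D}(M)$ exhibits $\mathbb{D}(M)$ as a weak dual in the sense of Definition~\ref{definition.weakdual}. Proposition~\ref{proposition.easydual} already handles the case where the test object $N$ is a single Frobenius module (concentrated in degree zero): it tells us that composition with $c$ induces isomorphisms $\Ext^n_{R[F]}(\mathbb{D}(M), N) \xrightarrow{\sim} \Ext^n_{R[F]}(R, M \otimes_R N)$ for all $n$. The point is to bootstrap this to arbitrary $N \in D(R[F])$, where now $\Hom_{D(R[F])}(\mathbb{D}(M), N[n])$ and $\Hom_{D(R[F])}(R, M \otimes_R^L N[n])$ are the relevant groups.

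\textbf{Key steps.} First I would record that both sides of the desired bijection are, by construction, cohomological functors of $N \in D(R[F])$: the left side $N \mapsto \Hom_{D(R[F])}(\mathbb{D}(M), N)$ is manifestly exact in $N$, and the right side $N \mapsto \Hom_{D(R[F])}(R, M \otimes_R^L N)$ is exact because $M \otimes_R^L (-)$ is a triangulated functor on $D(R[F])$ (here I use that $M$ is flat over $R$, so $M \otimes_R^L N \simeq M \otimes_R N$ on $K$-projective representatives, and that derived tensor product preserves distinguished triangles, cf.\ Construction~\ref{construction.tensoderived}). The comparison map is a natural transformation of cohomological functors. Second, I would observe that both functors commute with arbitrary direct sums: the left side because $\mathbb{D}(M)$ is a compact object of $D(R[F])$ (it has a two-term resolution by finitely generated free $R[F]$-modules, by Proposition~\ref{stopmake}), and the right side because $M \otimes_R^L (-)$ commutes with direct sums and $R = R[F]/(F-1) \cdot R[F]$ is compact in $D(R[F])$ via the resolution $0 \to R[F] \xrightarrow{F-1} R[F] \to R \to 0$. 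Third, since the comparison map is an isomorphism when $N$ is a Frobenius module concentrated in a single degree (this is exactly Proposition~\ref{proposition.easydual}, reindexed: $\Hom_{D(R[F])}(\mathbb{D}(M), N[n]) = \Ext^n_{R[F]}(\mathbb{D}(M), N)$ and similarly on the right, using that the $\Ext$-groups beyond degree $1$ vanish so there is no convergence issue), it is an isomorphism for every $N$ in the heart of the standard $t$-structure, hence for every $N$ in $D^b(R[F])$ by the five lemma applied to the truncation triangles $\tau^{\le k} N \to N \to \tau^{\ge k+1} N$, and finally for all $N \in D(R[F])$ by passing to colimits (writing an arbitrary complex as a homotopy colimit of its truncations and using the compactness from step two, together with the Milnor exact sequence).

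\textbf{Main obstacle.} The routine part is steps one and two; the only genuine subtlety is the passage from bounded to unbounded complexes in step three. One must be slightly careful that the $\lim^1$ term in the Milnor sequence for $\mathrm{holim}$ vanishes, which is where compactness of both $\mathbb{D}(M)$ and $R$ in $D(R[F])$ is essential — without it the argument would only give the bounded-below case. An alternative, perhaps cleaner, route avoiding homotopy limits entirely: use the exact sequence $0 \to R[F] \otimes_R M^\vee \to R[F] \otimes_R M^\vee \to \mathbb{D}(M) \to 0$ of Proposition~\ref{stopmake} to reduce the computation of $\Hom_{D(R[F])}(\mathbb{D}(M), N)$ to a two-term complex built from $\Hom_{D(R[F])}(R[F] \otimes_R M^\vee, N) \simeq \Hom_{D(R)}(M^\vee, N) \simeq \mathrm{R}\Gamma(N \otimes_R M)$ (using that $M$ is projective of finite rank, so $M^\vee{}^\vee = M$), and match this term-by-term with the analogous two-term description of $\Hom_{D(R[F])}(R, M \otimes_R^L N)$ coming from the resolution $0 \to R[F] \xrightarrow{F-1} R[F] \to R \to 0$ of $R$. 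This makes the identification an equality of explicit two-term complexes rather than a colimit argument, and works uniformly for all $N \in D(R[F])$; I expect to present this second approach as the actual proof, with the first as motivation.
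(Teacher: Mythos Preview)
Your proposal is correct, and your second approach is essentially the paper's proof. The paper observes that $M \otimes_R \mathbb{D}(M) \simeq M \otimes_R^L \mathbb{D}(M)$ (both factors being $R$-flat), then uses the finite projective dimension of $\mathbb{D}(M)$ over $R[F]$ (Proposition~\ref{stopmakecor}) to reduce to the case where $N$ is concentrated in a single degree, where the result is exactly Proposition~\ref{proposition.easydual}. Unpacking that reduction amounts to precisely what you describe in your alternative route: the two-term resolution $R[F]\otimes_R M^\vee \to R[F]\otimes_R M^\vee$ of $\mathbb{D}(M)$ and the two-term resolution $R[F]\xrightarrow{1-F} R[F]$ of $R$ both compute the relevant $\Hom$ as cohomology of the same complex $M\otimes_R N \xrightarrow{\id - \varphi_M\otimes\varphi_N} M\otimes_R N$, uniformly in $N \in D(R[F])$.

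Your first approach (cohomological functors, compactness, five lemma on truncations, Milnor sequence) is also valid but heavier than needed here. The point you correctly flag as delicate --- passing from bounded to unbounded $N$ --- is exactly what the finite projective resolution sidesteps: because both $\mathbb{D}(M)$ and $R$ have projective dimension $\leq 1$ over $R[F]$, the $\mathrm{RHom}$ on each side is computed by an explicit finite total complex for \emph{any} $N$, with no convergence issues. So the paper's route (and your second one) is both shorter and avoids the homotopy-colimit bookkeeping entirely.
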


\begin{proof}
We first observe that $M \otimes_{R} \mathbb{D}(M)$ can be identified with the derived tensor product $M \otimes_{R}^{L} \mathbb{D}(M)$ 
(since both $M$ and $\mathbb{D}(M)$ are flat $R$-modules). We wish to show that, for every object $N \in D( R[F] )$,
composition with $c$ induces an isomorphism $\Hom_{ D(R[F]) }( \mathbb{D}(M), N ) \rightarrow \Hom_{ D(R[F]) }( R, M \otimes_{R}^{L} N)$. Using
the fact that $\mathbb{D}(M)$ has finite projective dimension as an $R[F]$-module (Proposition \ref{stopmakecor}), we can reduce to the case where
$N$ is concentrated in a single degree, in which case the desired result is a translation of Proposition \ref{proposition.easydual}.
\end{proof}

\begin{notation}\label{notation.weakdual}
Let $R$ be a commutative $\F_p$-algebra and let $M \in D(R[F])$. It follows immediately from the definitions that
if there exists a morphism $c: R \rightarrow M \otimes_{R}^{L} M'$ which exhibits $M'$ as a weak dual of $M$, then the object
$M'$ (and the morphism $c$) are well-defined up to unique isomorphism (in the derived category $D(R[F])$). In this case, we will say that
$M$ is {\it weakly dualizable} and denote its weak dual $M'$ by $\mathbb{D}(M)$. Note that, by virtue of Proposition \ref{proposition.makedual},
this notation is consistent with that of Construction \ref{construction.dualunit}.
\end{notation}


\begin{warning}\label{warning.nonsym}
In the situation Definition \ref{definition.weakdual}, the roles of $M$ and $M'$ are not symmetric. A morphism $c: R \rightarrow M \otimes^{L}_{R} M'$ which exhibits $M'$ as a weak dual of $M$
generally does not exhibit $M$ as a weak dual of $M'$ (see Example \ref{sofo}). This asymmetry already appeared in \S \ref{subsection.dual-properties}: note that in the statement of Proposition \ref{proposition.harderdual}
we required the Frobenius module $N$ to be perfect, but no corresponding hypothesis was needed in the statement of Proposition \ref{proposition.easydual}.
\end{warning}



\begin{proposition}\label{dualperfection}
Let $R$ be a commutative $\F_p$-algebra and let $c: R \rightarrow M \otimes_{R}^{L} M'$ be a morphism in $D(R[F])$ which exhibits $M'$ as a weak dual of $M$.
Then the composite map
$$ R \xrightarrow{c} M \otimes_{R}^{L} M' \rightarrow M^{\perfection} \otimes_{R}^{L} M'$$
exhibits $M'$ as a weak dual of the perfection $M^{\perfection}$.
\end{proposition}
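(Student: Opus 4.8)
The plan is to verify the defining property of a weak dual directly: I must show that for every object $N \in D(R[F])$, composition with the composite map $c': R \xrightarrow{c} M \otimes_{R}^{L} M' \rightarrow M^{\perfection} \otimes_{R}^{L} M'$ induces a bijection
\[ \Hom_{D(R[F])}( M', N ) \rightarrow \Hom_{D(R[F])}( R, M^{\perfection} \otimes_{R}^{L} N ). \]
The key observation is that the natural map $M \otimes_{R}^{L} N \rightarrow M^{\perfection} \otimes_{R}^{L} N$ becomes an isomorphism after applying $\Hom_{D(R[F])}(R, -)$, for \emph{every} $N$. Granting this, the desired bijection factors as the composite
\[ \Hom_{D(R[F])}( M', N ) \xrightarrow{\circ c} \Hom_{D(R[F])}( R, M \otimes_{R}^{L} N) \xrightarrow{\sim} \Hom_{D(R[F])}( R, M^{\perfection} \otimes_{R}^{L} N), \]
where the first map is a bijection by the hypothesis that $c$ exhibits $M'$ as a weak dual of $M$, and the second is the isomorphism just asserted. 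Since $c'$ is by construction the composite of $c$ with the map $M \otimes_R^L M' \to M^{\perfection} \otimes_R^L M'$, precomposition with $c'$ agrees with this composite, which completes the argument.

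So the heart of the matter is to prove: the cofiber of $M \otimes_{R}^{L} N \rightarrow M^{\perfection} \otimes_{R}^{L} N$ has no maps \emph{from} $R$ in $D(R[F])$, in any degree; equivalently, $\mathrm{R}\Hom_{D(R[F])}(R, \operatorname{cofib}) \simeq 0$. First I would identify this cofiber: since $(-) \otimes_{R}^{L} N$ is exact and $M^{\perfection}$ is the colimit of $M \xrightarrow{\varphi} M^{1/p} \xrightarrow{\varphi} \cdots$, the cofiber $C$ of $M \to M^{\perfection}$ has the property that its Frobenius endomorphism is an isomorphism on... more precisely, $\varphi_C$ acts invertibly in the appropriate sense — the relevant point is that $\mathrm{R}\Hom_{D(R[F])}(R, P)$ computes the (derived) $\varphi$-fixed points of $P$, i.e. the fiber of $(\id - \varphi_P): P \to P^{1/p}$ in $D(R)$, and for $P = C \otimes_R^L N$ the map $\id - \varphi_P$ is invertible because $\varphi_C$ is ``locally invertible'' with the inverse realizing $\id - \varphi$ as a unit. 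Concretely, for $P$ a complex on which $\varphi_P$ restricts to an automorphism whose transition maps into the perfection are isomorphisms, one checks $\id - \varphi_P$ is invertible; this is exactly the mechanism already used in the proof of Proposition \ref{proposition.solperfect} (where $\id - \varphi$ is inverted by the geometric series on a locally nilpotent operator) and dually. I would phrase this using the distinguished triangle $R \xrightarrow{F-1} R[F] \to R$ in $\mathrm{LMod}_{R[F]}$ from \S\ref{sec3sub1}, which gives $\mathrm{R}\Hom_{D(R[F])}(R, P) = \operatorname{fib}(\id - \varphi_P \colon P \to P^{1/p})$ functorially in $P$.

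The main obstacle I anticipate is making rigorous the claim that $\id - \varphi_P$ is an isomorphism in $D(R)$ when $P = C \otimes_R^L N$ and $C = \operatorname{cofib}(M \to M^{\perfection})$. The cleanest route is probably to avoid computing $C$ and instead argue at the level of the two-term complexes: the map $\operatorname{fib}(\id-\varphi \colon M \otimes_R^L N \to (M\otimes_R^L N)^{1/p}) \to \operatorname{fib}(\id - \varphi \colon M^{\perfection}\otimes_R^L N \to (M^{\perfection}\otimes_R^L N)^{1/p})$ is an equivalence because $M^{\perfection} \otimes_R^L N = \operatorname{colim}_k (M \otimes_R^L N)^{1/p^k}$ (filtered colimit, using that $(-)^{\perfection}$ is a filtered colimit of the Frobenius-twist functors, which are exact), and on such a mapping-telescope the map $\id - \varphi$ becomes invertible — the fiber of $\id - \varphi$ on $\operatorname{colim}_k Q^{1/p^k}$ is computed by the standard telescope trick, identifying it with $\operatorname{fib}(\id - \varphi\colon Q \to Q^{1/p})$ shifted appropriately. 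I would check this telescope identity by an explicit chain-level argument, exactly parallel to the one establishing that the perfection does not change $\Sol$ (Remark following Notation \ref{biskar} and the proof of Proposition \ref{proposition.solperfect}), being careful that the colimit is filtered so that it is exact and commutes with $\operatorname{fib}$. Once that lemma is in place, the proposition follows formally as outlined above.
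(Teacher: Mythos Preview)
Your overall strategy is exactly the paper's: factor the desired bijection as
\[
\Hom_{D(R[F])}(M',N)\xrightarrow{\circ c}\Hom_{D(R[F])}(R,M\otimes_R^L N)\to\Hom_{D(R[F])}(R,M^{\perfection}\otimes_R^L N),
\]
with the first map bijective by hypothesis and the second the key step. The paper isolates this key step as Lemma~\ref{SolRootPerf}: if $f\colon P\to P'$ in $D(R[F])$ induces an isomorphism of perfections, then $\Hom_{D(R[F])}(R,P)\to\Hom_{D(R[F])}(R,P')$ is an isomorphism, because the cone has vanishing perfection, hence $\varphi$ is locally \emph{nilpotent} on its cohomology, hence $\id-\varphi$ is invertible there (via Remark~\ref{remark.computeExt}). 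Your first route is this argument, but you wrote ``$\varphi_C$ acts invertibly'' and ``locally invertible'' where you mean locally nilpotent; the geometric-series inversion of $\id-\varphi$ that you correctly invoke from Proposition~\ref{proposition.solperfect} requires nilpotence, not invertibility.

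Your proposed ``cleanest route'' contains an actual error: it is not true that $M^{\perfection}\otimes_R^L N=\operatorname{colim}_k (M\otimes_R^L N)^{1/p^k}$. The right-hand side is $(M\otimes_R^L N)^{\perfection}$, which by Proposition~\ref{tenso11} equals $M^{\perfection}\otimes_{R^{\perfection}}^L N^{\perfection}$, not $M^{\perfection}\otimes_R^L N$. What is true, and all you need, is that $M\otimes_R^L N\to M^{\perfection}\otimes_R^L N$ induces an isomorphism after perfection (its cone is $C\otimes_R^L N$ with $C^{\perfection}=0$, hence $(C\otimes_R^L N)^{\perfection}=0$); then Lemma~\ref{SolRootPerf} applies directly. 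Drop the telescope detour and the argument is complete.
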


We will deduce Proposition \ref{dualperfection} from the following variant of Proposition \ref{proposition.solperfect}:

\begin{lemma}\label{SolRootPerf}
Let $R$ be a commutative $\F_p$-algebra and let $f: M \rightarrow M'$ be a morphism in
$D(R[F])$ which induces an isomorphism $M^{\perfection} \rightarrow M'^{\perfection}$.
Then the induced map
$$ \Hom_{ D(R[F]) }( R, M ) \rightarrow \Hom_{ D(R[F]) }( R, M')$$
is an isomorphism. 
\end{lemma}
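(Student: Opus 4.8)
The statement to prove is Lemma~\ref{SolRootPerf}: a morphism $f\colon M\to M'$ in $D(R[F])$ inducing an isomorphism on perfections induces an isomorphism $\Hom_{D(R[F])}(R,M)\to\Hom_{D(R[F])}(R,M')$. Since $\Hom_{D(R[F])}(R,-)$ is a cohomological functor, it suffices to show that the cofiber $C = \cofib(f)$ satisfies $\Hom_{D(R[F])}(R,C[n]) = 0$ for all $n\in\Z$. The hypothesis that $f$ induces an isomorphism $M^{\perfection}\to M'^{\perfection}$ is equivalent, by exactness of the perfection functor (Remark~\ref{prebb}, extended levelwise as in Proposition~\ref{sekos}), to the statement that $C^{\perfection}\simeq 0$ in $D(R[F])$; that is, each cohomology group $\mathrm{H}^n(C)$ is a Frobenius module on which $\varphi$ is locally nilpotent.

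So the plan reduces to the following assertion: if $N\in\Mod_{R}^{\Frob}$ has $\varphi_N$ locally nilpotent, then $\Ext^{\ast}_{R[F]}(R,N) = 0$; and then one bootstraps from the abelian case to the derived case. For the abelian statement, I would use Construction~\ref{conX7}: taking $M=R$ there, the long exact sequence reads
$$\cdots \to \Ext^{k-1}_{R}(R,N^{1/p}) \to \Ext^{k}_{R[F]}(R,N) \to \Ext^{k}_{R}(R,N) \xrightarrow{\ \beta\ } \Ext^{k}_{R}(R,N^{1/p})\to\cdots,$$
where $\beta$ is (up to the canonical identification $N\simeq N^{1/p}$) essentially $\id - \varphi_N$ acting on $N \simeq \Ext^0_R(R,N)$ in degree $0$, and more generally $\beta$ is induced by $\gamma(g) = g - \varphi_N\circ g$ via the functoriality of $\varphi$. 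Since $\varphi_N$ is locally nilpotent, $\id-\varphi_N$ is an automorphism (inverse $\sum_{n\ge0}\varphi_N^n$, which converges pointwise), hence $\beta$ is an isomorphism in every degree, forcing $\Ext^{\ast}_{R[F]}(R,N)=0$. Actually it is cleaner to observe directly that for any $N$ with $\varphi_N$ locally nilpotent one has $\Hom_{R[F]}(R,N) = \{x\in N : \varphi_N(x)=x\} = 0$ and more generally, using the bar-type resolution $0\to R[F]\xrightarrow{F-1}R[F]\to R\to 0$ of Section~\ref{sec3sub1}, that $\Ext^{\ast}_{R[F]}(R,N)$ is computed by the two-term complex $N\xrightarrow{\id-\varphi_N}N^{1/p}$, whose cohomology vanishes because $\id-\varphi_N$ is bijective.

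To pass to the derived statement, I would use the hypercohomology spectral sequence
$$E_2^{s,t} = \Ext^{s}_{R[F]}(R,\mathrm{H}^{t}(C)) \Rightarrow \mathrm{H}^{s+t}\big(\mathrm{R}\!\Hom_{D(R[F])}(R,C)\big).$$
This requires knowing that $C$ is cohomologically bounded below, or more precisely that the spectral sequence converges; if one does not want to assume boundedness, one can instead note that the resolution $0\to R[F]\xrightarrow{F-1}R[F]\to R\to 0$ shows $\mathrm{R}\!\Hom_{D(R[F])}(R,C)$ is represented by the two-term complex $C\xrightarrow{\id-\varphi_C}C$ (forgetting down to $D(R)$ and using the $F$-action), and $\id-\varphi_C$ is a quasi-isomorphism since on each cohomology group $\mathrm{H}^t(C)$ the map $\id - \varphi$ is an isomorphism (local nilpotence of $\varphi$ again, by the five lemma applied degreewise, or directly by the invertibility argument above). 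Either way the $E_2$-page vanishes identically, so $\Hom_{D(R[F])}(R,C[n]) = 0$ for all $n$, which is what we wanted. The main obstacle is the bookkeeping needed to legitimately reduce the derived vanishing to the vanishing of $\id-\varphi$ on cohomology — i.e. checking that the ``$\mathrm{R}\!\Hom(R,-)$ is the totalization of $[\bullet\xrightarrow{\id-\varphi}\bullet]$'' identification holds for unbounded complexes and commutes with the $F$-action correctly; once that is in place the rest is immediate from local nilpotence.

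With Lemma~\ref{SolRootPerf} in hand, Proposition~\ref{dualperfection} follows formally: for any $N\in D(R[F])$ the map $M\otimes_R^L N \to M^{\perfection}\otimes_R^L N$ induces an isomorphism on perfections (since $(-)^{\perfection}$ is a localization commuting with $\otimes_R^L$, by Remark~\ref{tenso1} applied levelwise), so Lemma~\ref{SolRootPerf} gives $\Hom_{D(R[F])}(R,M\otimes_R^L N)\xrightarrow{\sim}\Hom_{D(R[F])}(R,M^{\perfection}\otimes_R^L N)$; composing with the bijection $\Hom_{D(R[F])}(M',N)\xrightarrow{\sim}\Hom_{D(R[F])}(R,M\otimes_R^L N)$ coming from the hypothesis that $c$ exhibits $M'$ as a weak dual of $M$ yields the required bijection $\Hom_{D(R[F])}(M',N)\xrightarrow{\sim}\Hom_{D(R[F])}(R,M^{\perfection}\otimes_R^L N)$, functorially in $N$, which is exactly the assertion that the composite $R\xrightarrow{c}M\otimes_R^L M'\to M^{\perfection}\otimes_R^L M'$ exhibits $M'$ as a weak dual of $M^{\perfection}$.
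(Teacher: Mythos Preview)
Your proof is correct and follows essentially the same approach as the paper: reduce to the cone $N$ of $f$, observe that $N^{\perfection}\simeq 0$ forces $\varphi$ to act locally nilpotently on each $\mathrm{H}^{t}(N)$, and then use the two-term resolution $0\to R[F]\xrightarrow{F-1}R[F]\to R\to 0$ (which is exactly what Remark~\ref{remark.computeExt} packages) to conclude that $\Hom_{D(R[F])}(R,N[k])$ is controlled by $\id-\varphi$ on $\mathrm{H}^{\ast}(N)$, which is invertible. Your worry about unbounded complexes is handled by Remark~\ref{remark.computeExt} directly: the long exact sequence there is valid for arbitrary $N\in D(R[F])$, so no boundedness hypothesis or spectral sequence convergence argument is needed.
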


\begin{proof}
Let $N$ denote the cone of the morphism $M \rightarrow M'$; we will show that $\Hom_{ D(R[F]) }( R, N[k] ) $ vanishes
for every integer $k$. By virtue of Remark \ref{remark.computeExt}, it will suffice to show that the map
$\id - \varphi_{N}: \mathrm{H}^{\ast}( N ) \rightarrow \mathrm{H}^{\ast}(N)$
is an isomorphism. This is clear: the assumption that $f$ induces an equivalence $M^{\perfection} \simeq M'^{\perfection}$
guarantees that $N^{\perfection}$ vanishes, so that the action of $\varphi_{N}$ is locally nilpotent on $\mathrm{H}^{\ast}(N)$.
\end{proof}

\begin{proof}[Proof of Proposition \ref{dualperfection}]
Let $c: R \rightarrow M \otimes_{R}^{L} M'$ be a morphism in $D(R[F])$ which exhibits $M'$ as a weak dual of $M$, and 
let $N$ be any object of $D(R[F])$. 
Then the composite map
$$ \Hom_{ D(R[F]) }( M', N ) \rightarrow \Hom_{ D(R[F]) }( R, M \otimes_{R}^{L} N) 
\rightarrow \Hom_{ D(R[F]) }( R, M^{\perfection} \otimes_{R}^{L} N)$$
is an isomorphism, since the left map is an isomorphism (by virtue of our assumption that $M'$ is a weak dual of $M$)
and the right map is an isomorphism (Lemma \ref{SolRootPerf}). Allowing $N$ to vary, we deduce that $M'$ is also
a weak dual of $M^{\perfection}$.
\end{proof}

\begin{example}\label{sofo}
Let $R$ be a commutative $\F_p$-algebra. Then the canonical isomorphism $R \simeq R \otimes_{R}^{L} R$ exhibits $R$ as a weak dual of itself. It follows from Proposition \ref{dualperfection} that unit map
$u: R \rightarrow R^{\perfection} \simeq R^{\perfection} \otimes_{R}^{L} R$ also exhibits $R$ as a weak dual of $R^{\perfection}$. However,
$u$ cannot exhibit $R^{\perfection}$ as a weak dual of $R$ (unless $R$ is perfect), since the weak dual of $R$ is determined uniquely up to isomorphism.
\end{example}

We conclude this section with another application of Lemma \ref{SolRootPerf}:

\begin{proposition}\label{autofgu}
Let $R$ be a commutative $\F_p$-algebra and let $c: R \rightarrow M \otimes_{R}^{L} M'$ be a morphism in $D(R[F] )$ which exhibits $M'$ as a weak dual of $M$. Then $M'$ belongs
to $D_{\fgu}^{b}( R[F ] )$.
\end{proposition}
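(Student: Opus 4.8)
The plan is to verify the two conditions of Definition~\ref{definition.derived-fgu} for $M'$: that $M'$ is a compact object of $D(R[F])$ (condition $(a)$), and that the canonical map $\psi_{M'}\colon R^{1/p}\otimes_R^L M' \to M'$ is an isomorphism (condition $(b)$).

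For condition $(a)$, I would use the resolution $0 \to R[F] \xrightarrow{F-1} R[F] \to R \to 0$ of left $R[F]$-modules to see that $R$ is a compact object of $D(R[F])$, so that $\Hom_{D(R[F])}(R,-)$ commutes with coproducts. Since $\otimes_R^L$ commutes with coproducts in each variable, the weak-duality isomorphism $\Hom_{D(R[F])}(M',N)\cong \Hom_{D(R[F])}(R, M\otimes_R^L N)$ shows that $\Hom_{D(R[F])}(M',-)$ commutes with coproducts. As $D(R[F])$ is compactly generated (by $R[F]$), this forces $M'$ to be compact, i.e.\ quasi-isomorphic to a bounded complex of finitely generated projective $R[F]$-modules.

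For condition $(b)$, write $\varphi=\varphi_R\colon R\to R$ for the Frobenius, $\varphi^{\ast}_{\Frob}\colon \Mod_R^{\Frob}\to\Mod_R^{\Frob}$ for the associated extension-of-scalars functor, and $\varphi_{\ast}$ for its (exact) right adjoint, restriction of scalars. On derived categories $L\varphi^{\ast}_{\Frob}$ is left adjoint to $\varphi_{\ast}$, and the map $\psi_{M'}\colon L\varphi^{\ast}_{\Frob}M'\to M'$ of condition $(b)$ is adjoint to the natural map $\nu_{M'}\colon M'\to \varphi_{\ast}M'$ given by the Frobenius of $M'$ (viewed as an $R[F]$-linear map into the twist). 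It suffices to show that precomposition with $\psi_{M'}$ is a bijection $\Hom_{D(R[F])}(M',N)\to \Hom_{D(R[F])}(L\varphi^{\ast}_{\Frob}M',N)$ for every $N\in D(R[F])$, as this forces $\psi_{M'}$ to be an isomorphism. Using the adjunction $L\varphi^{\ast}_{\Frob}\dashv\varphi_{\ast}$ and the triangle identities, a short computation identifies this map with $\Hom_{D(R[F])}(M',N)\to\Hom_{D(R[F])}(M',\varphi_{\ast}N)$, $g\mapsto (\varphi_{\ast}g)\circ\nu_{M'}$, where $\nu=\{\nu_N\colon N\to\varphi_{\ast}N\}$ is the natural transformation given by the Frobenius operators. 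Transporting both sides through the weak-duality isomorphism and using its naturality in $N$, this becomes the map $\Hom_{D(R[F])}(R,M\otimes_R^L N)\to\Hom_{D(R[F])}(R,M\otimes_R^L\varphi_{\ast}N)$ induced by $\id_M\otimes\nu_N$.

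Finally I would observe that $\nu_N\colon N\to\varphi_{\ast}N$ induces an isomorphism on perfections: the perfection of $\varphi_{\ast}N$ agrees with $N^{\perfection}$, and under this identification $\nu_N$ becomes the Frobenius automorphism of $N^{\perfection}$. Since the perfection functor is exact and compatible with $\otimes^L$, the map $\id_M\otimes\nu_N\colon M\otimes_R^L N\to M\otimes_R^L\varphi_{\ast}N$ also induces an isomorphism on perfections, so Lemma~\ref{SolRootPerf} shows that $\Hom_{D(R[F])}(R,M\otimes_R^L N)\to\Hom_{D(R[F])}(R,M\otimes_R^L\varphi_{\ast}N)$ is an isomorphism; this gives condition $(b)$, and together with $(a)$ shows $M'\in D^b_{\fgu}(R[F])$. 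The main obstacle is the bookkeeping in the third paragraph: one must track the adjunction and triangle identities carefully to see that precomposition with $\psi_{M'}$ corresponds, after weak duality, to the map induced by the Frobenius natural transformation $\nu$ — once this is pinned down, the reduction to Lemma~\ref{SolRootPerf} is immediate.
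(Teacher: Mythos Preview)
Your proposal is correct and follows essentially the same route as the paper's proof. Both arguments verify conditions $(a)$ and $(b)$ of Definition~\ref{definition.derived-fgu} in the same way: compactness of $M'$ follows from that of $R$ together with the weak-duality isomorphism, and the invertibility of $\psi_{M'}$ is checked by showing that precomposition with it becomes, after transport through weak duality, the map on $\Hom_{D(R[F])}(R, M\otimes_R^L -)$ induced by the Frobenius map $N \to N^{1/p}$ (your $\nu_N$), which is an isomorphism by Lemma~\ref{SolRootPerf}. The paper is terser on the adjunction bookkeeping you flag as the main obstacle, but the content is identical; your rewriting of $g\mapsto (\varphi_\ast g)\circ\nu_{M'}$ as postcomposition with $\nu_N$ via naturality of $\nu$ is exactly what makes that step go through.
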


\begin{proof}
From the isomorphism $\Hom_{ D(R[F]) }( M', \bullet ) \simeq \Hom_{ D(R[F] ) }( R, M \otimes_{R}^{L} \bullet )$ (and the compactness of $R$ as an object of $D( R[F] )$), we conclude
that $M'$ is a compact object of $D(R[F] )$. It will therefore suffice to show that the canonical map $\psi_{M'}: R^{1/p} \otimes_{R}^{L} M' \rightarrow M'$ is an isomorphism.
Note that $\psi_{M'}$ can be regarded as a morphism in $D(R[F] )$; it will therefore suffice to show that for each $N \in D( R[F] )$, composition with $\psi_{M'}$
induces an isomorphism $$\theta: \Hom_{ D(R[F] ) }( M', N) \rightarrow \Hom_{ D(R[F] ) }( R^{1/p} \otimes_{R}^{L} M', N ) \simeq \Hom_{ D(R[F] )}( M', N^{1/p} ).$$ Invoking the universal property of $M'$,
we can identify $\theta$ with the natural map $\Hom_{ D(R[F] ) }( R, M \otimes_{R}^{L} N ) \rightarrow \Hom_{ D(R[F] )}( R, M \otimes_{R} N^{1/p} )$ (induced by the Frobenius map
$\varphi_{N}: N \rightarrow N^{1/p}$). This map is an isomorphism, since the induced map $M \otimes_{R}^{L} N \rightarrow M \otimes_{R}^{L} N^{1/p}$ induces an isomorphism of
perfections (Lemma \ref{SolRootPerf}).
\end{proof}

\subsection{Presentations of Holonomic Complexes}\label{subsection.presentations}

Let $R$ be a commutative $\F_p$-algebra and let $M$ be a Frobenius module over $R$. By definition, $M$ is holonomic if and only if there
exists an isomorphism $M \simeq M_0^{\perfection}$, where $M_0 \in \Mod_{R}^{\Frob}$ is finitely presented as an $R$-module.
Our goal in this section is to prove an analogous statement for objects of the derived category $D(R[F] )$:

\begin{theorem}\label{theorem.holochar}
Let $R$ be a commutative $\F_p$-algebra and let $M$ be an object of $D( R[F] )$. The following conditions are equivalent:
\begin{itemize}
\item[$(1)$] The complex $M$ belongs to the subcategory $D^{b}_{\hol}( R[F] ) \subseteq D(R[F] )$: that is,
it is cohomologically bounded with holonomic cohomologies.

\item[$(2)$] There exists an isomorphism $M \simeq M_0^{\perfection}$ in the category $D( R[F] )$, where
$M_0 \in D( R[F] )$ has the property that its image in $D(R)$ is compact.
\end{itemize}
\end{theorem}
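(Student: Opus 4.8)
The plan is to prove the two implications separately, with the hard direction being $(1) \Rightarrow (2)$.

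First I would dispose of $(2) \Rightarrow (1)$. Suppose $M \simeq M_0^{\perfection}$ with $M_0 \in D(R[F])$ whose image in $D(R)$ is compact, i.e.\ quasi-isomorphic to a bounded complex of finitely generated projective $R$-modules. Then $M_0$ is cohomologically bounded, and each $\mathrm{H}^i(M_0)$ is finitely presented as an $R$-module (since $R$-compactness forces the cohomology to be finitely presented; this can be checked by truncating and using that finitely presented modules are closed under kernels of maps between finitely generated projectives once one also knows coherence locally — more safely, one reduces to a bounded complex of finite projectives and notes that the cohomology of such a complex is finitely presented). Since the perfection functor is exact (Proposition~\ref{prebb}), $\mathrm{H}^i(M) \simeq \mathrm{H}^i(M_0)^{\perfection}$, which is holonomic by Definition~\ref{defhol2}; and $M$ is cohomologically bounded because $M_0$ is. Hence $M \in D^b_{\hol}(R[F])$.

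For $(1) \Rightarrow (2)$, the idea is to build $M_0$ by induction on the cohomological amplitude of $M$, using that holonomic modules themselves admit finitely-presented models and that these can be threaded together in the derived category. Working with a single holonomic cohomology group $\mathrm{H}^i(M)$, write it as $(N_i)^{\perfection}$ with $N_i \in \Mod_R^{\Frob}$ finitely presented over $R$. The key point is to promote these module-level presentations to a presentation of the whole complex. Here I would use the equivalence $\RSol\colon D^b_{\hol}(R[F]) \simeq D^b_c(\Spec(R),\F_p)$ of Corollary~\ref{corollary.RHDerived} together with a Noetherian approximation argument: choose a finitely generated subalgebra $R' \subseteq R$ and an object $\sheafF' \in D^b_c(\Spec(R'),\F_p)$ with $M \simeq f^{\diamond}(\RH(\sheafF'))$ for $f\colon R' \to R$, reducing to the case where $R$ is Noetherian (here one uses that $f^{\diamond}$ commutes with perfection, Proposition~\ref{mallow}, and preserves compactness of the underlying $R$-complex — if $M_0'$ models $\RH(\sheafF')$ over $R'$, then $R \otimes_{R'} M_0'$ models $M$ over $R$ and is $R$-compact). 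Over a Noetherian $R$ one can proceed by a standard dévissage: using that each holonomic module is a quotient of some $N^{\perfection}$ with $N$ finitely presented and that $\Mod_R^{\hol}$ is a Noetherian abelian category when $R$ is Noetherian (Proposition~\ref{propX29}), build a complex $M_0$ of finitely presented Frobenius modules together with a map $M_0^{\perfection} \to M$ inducing isomorphisms on cohomology, truncating from the top and lifting cocycles step by step.

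The main obstacle I anticipate is the lifting step: given a finitely presented model for the top cohomology group and an existing partial model $M_0$ for the lower truncation, one must realize the attaching map $\mathrm{H}^n(M) \to \mathrm{H}^{n+1}(\text{cone})$ at the level of finitely presented complexes. This requires knowing that the relevant $\Ext$-group (or $\Hom$ in the derived category) between a holonomic module and a shift of $M_0^{\perfection}$ is computed by finitely presented data and is compatible with the approximation $R' \subseteq R$ — precisely the content of Corollary~\ref{corX11} and Remark~\ref{subtil} (commutation of $\Ext_{R[F]}$ with filtered colimits in the perfect variable, for a holonomic first argument). Granting those, the induction closes: one obtains $M_0 \in D(R[F])$, bounded, with each term finitely presented over $R$, hence with $R$-compact image and $M_0^{\perfection} \simeq M$. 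I would present the Noetherian reduction first, then the dévissage, and handle the $\Ext$-compatibility as the crux, citing the holonomic $\Ext$ lemmas above.
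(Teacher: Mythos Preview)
There are two genuine gaps, one in each direction.

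For $(2) \Rightarrow (1)$: your claim that the cohomology groups of a perfect $R$-complex are finitely presented is false when $R$ is not coherent (the kernel of a map between finite free modules need not be finitely generated). The paper avoids this by invoking Lemma~\ref{lemma.charroot} to represent $M_0$ (in $D(R[F])$, not just $D(R)$) by a bounded complex of Frobenius modules each of which is finitely generated projective over $R$; then each term of $M_0^{\perfection}$ is holonomic by definition, and since $D^b_{\hol}(R[F])$ is a triangulated subcategory (Corollary~\ref{corX30}) one concludes. Your argument can be patched this way, but as written it is incorrect.

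For $(1) \Rightarrow (2)$: the more serious problem is the final ``hence''. Over a Noetherian $\F_p$-algebra $R$ that is not regular, a bounded complex of finitely presented Frobenius modules need not have compact image in $D(R)$. Concretely, take $R = \F_p[x]/(x^2)$ and $M = \F_p$ with $\varphi = \id$; your d\'evissage produces $M_0 = \F_p$, which has infinite projective dimension over $R$ and is not $R$-compact. (The theorem is still true here: the correct model is $M_0 = R$ with its standard Frobenius, since $R^{\perfection} = \F_p$.) So reducing merely to a Noetherian ring is not enough. The paper's strategy is structurally different: it shows directly that the class $\calC$ of objects of the form $M_0^{\perfection}$ with $M_0$ $R$-compact is a triangulated subcategory of $D(R[F])$. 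The nontrivial point is closure under cones: given $f\colon M_0^{\perfection} \to N_0^{\perfection}$, one uses that $M_0$ is compact in $D(R[F])$ (Remark~\ref{morecompact}) to factor $M_0 \to N_0^{\perfection}$ through some $N_0^{1/p^n}$, then replaces $M_0$ by $\varphi_R^{n\ast} M_0$ to get an honest map $M_0' \to N_0$ in $D(R[F])$ whose cone models $\mathrm{cone}(f)$. One then quotes Lemma~\ref{proposition.holonomicderived}, which says $D^b_{\hol}(R[F])$ is generated as a triangulated category by $N^{\perfection}$ with $N$ finite projective over $R$; that lemma does use a reduction, but to a \emph{polynomial ring} over $\F_p$ (regular, hence finite global dimension), not merely a Noetherian ring. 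Your Noetherian-approximation instinct is in the right spirit but lands on the wrong target ring.
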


The proof of Theorem \ref{theorem.holochar} will require some preliminaries. We first study condition $(2)$ of Theorem \ref{theorem.holochar}.
Note that an object $M \in D( R[F] )$ has compact image in $D(R)$ if and only if it is quasi-isomorphic to a bounded cochain complex $N^{\ast}$ of finitely generated projective $R$-modules.
We now show that, in this situation, we can arrange that $N^{\ast}$ is also a cochain complex of Frobenius modules:

\begin{lemma}\label{lemma.charroot}
Let $R$ be a commutative $\F_p$-algebra and let $M$ be an object of the derived category $D(R[F])$. The following conditions are equivalent:
\begin{itemize}
\item[$(1)$] The object $M$ is isomorphic (in $D(R[F])$) to a bounded cochain complex of Frobenius modules, each of which is projective of finite rank as an $R$-module.
\item[$(2)$] The image of $M$ in $D(R)$ is compact: that is, it is isomorphic to a bounded cochain complex of projective $R$-modules of finite rank.
\end{itemize}
\end{lemma}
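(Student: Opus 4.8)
The plan is to dispatch $(1)\Rightarrow(2)$ immediately and to spend all the effort on the converse. For the easy direction: the ring $R[F]$ is free as a left $R$-module (Notation \ref{not2}), so the forgetful functor $\Mod_{R}^{\Frob}\to\Mod_{R}$ sends a Frobenius module that is finite projective over $R$ to a finite projective $R$-module; hence a bounded complex of such Frobenius modules, restricted to $R$, is a bounded complex of finite projective $R$-modules and therefore compact in $D(R)$.

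For $(2)\Rightarrow(1)$, suppose $M\in D(R[F])$ has compact image $M|_{R}$ in $D(R)$. Compactness is exactly the statement that there is a bounded cochain complex $N^{\bullet}$ of finitely generated projective $R$-modules together with an isomorphism $q\colon N^{\bullet}\to M|_{R}$ in $D(R)$. The strategy is to transport the Frobenius structure of $M$ onto $N^{\bullet}$ along $q$ and then to recognize the result as a model for $M$ in $D(R[F])$. Concretely: the $R[F]$-module structure on $M$ is encoded by the morphism $\psi_{M}\colon\varphi_{R}^{\ast}M\to M$ adjoint to the Frobenius $\varphi_{M}\colon M\to M^{1/p}$ (as in Notation \ref{notation.psiM}), which I would regard as a morphism in $D(R)$ using the derived pullback $\varphi_{R}^{\ast}(-)=R\otimes_{R}^{L}(-)$. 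Since $N^{\bullet}$ is a bounded complex of flat $R$-modules, $\varphi_{R}^{\ast}N^{\bullet}$ is again a bounded complex of finite projective $R$-modules and $\varphi_{R}^{\ast}(q)$ is an isomorphism in $D(R)$, so one can form $\overline{\psi}=q^{-1}\circ\psi_{M}\circ\varphi_{R}^{\ast}(q)\colon\varphi_{R}^{\ast}N^{\bullet}\to N^{\bullet}$ in $D(R)$. As source and target are bounded complexes of projectives, $\overline{\psi}$ is represented by an honest chain map $\psi^{\bullet}$. Then $(N^{\bullet},\psi^{\bullet})$ is a bounded cochain complex of Frobenius modules, each term of which is finitely generated projective over $R$; write $\widehat{M}\in D(R[F])$ for its image. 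This will be the object witnessing condition $(1)$, once we know $\widehat{M}\simeq M$.

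The remaining task — showing $\widehat{M}\simeq M$ in $D(R[F])$ — is the crux, and I expect it to be the main obstacle, since $D(R[F])$ is genuinely richer than "$D(R)$ together with a Frobenius morphism" and one must rule out a coherence obstruction. I would reduce it to lifting the single morphism $q$ from $\Hom_{D(R)}(\widehat{M}|_R,M|_R)$ to $\Hom_{D(R[F])}(\widehat{M},M)$: this suffices because restriction of scalars along $R\to R[F]$ is exact, hence commutes with cohomology, so a morphism of $R[F]$-complexes that restricts to a quasi-isomorphism over $R$ is already an isomorphism in $D(R[F])$. To perform the lift I would invoke the long exact sequence of Remark \ref{remark.computeExt} (with $\widehat{M}$ in the first slot and $M$, viewed as a complex of Frobenius modules, in the second), which gives exactness of
\[ \Hom_{D(R[F])}(\widehat{M},M)\longrightarrow\Hom_{D(R)}(\widehat{M},M)\xrightarrow{\ \gamma\ }\Hom_{D(R)}(\widehat{M},M^{1/p}),\]
with $\gamma$ the map of Construction \ref{conX7}, namely "commutator with the Frobenius structures." By the very definition of $\psi^{\bullet}$, the morphism $q$ intertwines $\psi_{\widehat{M}}$ with $\psi_{M}$ in $D(R)$, hence (passing to adjoints) intertwines $\varphi_{\widehat{M}}$ with $\varphi_{M}$, so $\gamma(q)=0$ and $q$ lifts to some $\widetilde{q}\in\Hom_{D(R[F])}(\widehat{M},M)$ restricting to $q$. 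Since $q$ is an isomorphism in $D(R)$, $\widetilde{q}$ is an isomorphism in $D(R[F])$, finishing the proof. The one point requiring care in a full writeup is matching the precise form of $\gamma$ in the present (non‑perfect $M$) situation with the "intertwining" condition satisfied by $q$; this is a routine unwinding of Construction \ref{conX7} rather than a genuine difficulty.
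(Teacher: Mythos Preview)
Your proof is correct, and it takes a genuinely different route from the paper's. The paper argues by induction on the amplitude of the complex: for the top cohomology $\mathrm{H}^{0}(M)$ it uses Proposition~\ref{propX58} (algebraicity of $\mathrm{H}^{0}(M)^{\perfection}$) to produce monic elements $P_i\in R[F]$ annihilating a generating set, builds the finite free $R$-module $K=\bigoplus_i R[F]/R[F]P_i$ together with a map $K\to M$ in $D(R[F])$ hitting $\mathrm{H}^{0}(M)$, and then shows the cone has smaller amplitude and applies the inductive hypothesis. Your argument instead transports the Frobenius along an $R$-linear quasi-isomorphism and uses the obstruction sequence of Remark~\ref{remark.computeExt} in one shot. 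The step you flag as ``requiring care'' does go through: writing the map $\alpha$ of Construction~\ref{conX7} as $\alpha_1-\alpha_2$ with $\alpha_1=(\varphi_M)^{\dagger}$ and $\alpha_2=\varphi_{M^{\dagger}}$, naturality of the adjunction in the target handles $\alpha_1$, while naturality of the Frobenius transformation $\id\Rightarrow(-)^{1/p}$ on $D(R[F])$ (applied to the $R[F]$-morphism $q^{+}$) handles $\alpha_2$, yielding $\gamma(q)=\varphi_M\circ q-q^{1/p}\circ\varphi_{\widehat{M}}$ in $\Hom_{D(R)}(\widehat{M},M^{1/p})$; this vanishes by your construction of $\psi^{\bullet}$. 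Your approach is shorter and more conceptual; the paper's is more hands-on and stays closer to the algebraicity techniques used elsewhere in the text.
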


\begin{proof}
The implication $(1) \Rightarrow (2)$ is clear. Conversely, suppose that $(2)$ is satisfied; we will prove $(1)$. Assume that, as an object of $D(R)$,
the complex $M$ is quasi-isomorphic to a finite cochain complex of finitely generated projective $R$-modules concentrated in degrees $\{ a-n, a-n+1, \ldots, a \}$.
Replacing $M$ by a shift if necessary, we can assume $a=0$. We claim that, as an object of $D(R[F])$, the complex
$M$ is quasi-isomorphic to a finite cochain complex of Frobenius modules, which are finitely generated and projective over $R$, also concentrated
in degrees $\{ -n, -n+1, \ldots, 0 \}$.  We proceed by induction on $n$. If $n=0$, then the cohomology groups $\mathrm{H}^{i}(M)$ vanish for
$i \neq 0$ and $\mathrm{H}^{0}(M)$ is a projective $R$-module of finite rank. In this case, the desired result follows from the observation that $M$ is isomorphic to 
$\mathrm{H}^{0}(M)$ as an object of $D(R[F])$. Let us therefore suppose that $n > 0$, and set $N = \mathrm{H}^{0}(M)$. Then 
$N$ is finitely presented as an $R$-module, so that $N^{\perfection}$ is a holonomic Frobenius module over $R$. Choose elements
$x_1, x_2, \ldots, x_k$ which generate $N$ as an $R$-module. 
It follows from Proposition \ref{propX58} that the image of each $x_i$ in $N^{\perfection}$ is annihilated by some element
$P_i \in R[F]$ of the form $F^{m_i} + c_{1,i} F^{m_i-1} + \cdots + c_{m_i,i}$. Replacing $P_i$ by $F^{a} P_i$ for $a \gg 0$, we may assume
that $P_i(x_i) = 0$. Choose a cocycle $\overline{x}_i \in M^{0}$ representing $x_i$, so that we can write $P_i( \overline{x}_i ) = d y_i$ for
some elements $y_i \in M^{-1}$. The elements $\overline{x}_i$ and $y_i$ determine a map of cochain complexes $f: M' \rightarrow M$,
where $M'$ is the two-term complex
$$ \cdots \rightarrow 0 \rightarrow R[F]^{k} \xrightarrow{ (P_1, \ldots, P_k )} R[F]^{k} \rightarrow 0 \rightarrow \cdots.$$
Note that $M'$ is isomorphic, as an object of $D(R[F])$, to the Frobenius module
$K = \bigoplus_{i} R[F] / R[F] P_i$, which is projective of finite rank as an $R$-module. Extend $f$ to a distinguished triangle
$Q \xrightarrow{g} M' \xrightarrow{f} M \rightarrow Q[1]$
in $D(R[F])$. Then, as an object of $D(R)$, the complex $Q$ is quasi-isomorphic to a chain complex of finitely generated projective
$R$-modules concentrated in degrees $\{ 1-n, \ldots, 0 \}$. Applying our inductive hypothesis, we may assume that
each $Q^{i}$ is a projective $R$-module of finite rank and that $Q^i$ vanishes unless $-n < i \leq 0$. Then $g$ determines a map of Frobenius modules
$Q^{0} \rightarrow K$, and $M$ is quasi-isomorphic to the cochain complex of Frobenius modules
$$ \cdots \rightarrow 0 \rightarrow Q^{-n+1} \rightarrow Q^{-n+2} \rightarrow \cdots \rightarrow Q^{0} \rightarrow K \rightarrow 0 \rightarrow \cdots$$
\end{proof}

\begin{remark}\label{morecompact}
Let $R$ be a commutative $\F_p$-algebra and let $M \in D( R[F] )$ be an object whose image in $D(R)$ is compact.
Then $M$ is also compact as an object of $D(R[F])$: this follows immediately from Remark \ref{remark.computeExt}.
However, the converse is false: the Frobenius module $R[F]$ is compact as an object of $D( R[F] )$, but its image in $D(R)$ is not compact
unless $R \simeq 0$.
\end{remark}

We now apply Lemma \ref{lemma.charroot} to give a simple characterization of the holonomic derived category $D^{b}_{\hol}( R[F] )$.

\begin{lemma}\label{proposition.holonomicderived}
Let $R$ be a commutative $\F_p$-algebra. Then $D^{b}_{\hol}( R[F] )$ is the smallest triangulated subcategory of
$D(R[F])$ which contains every object of the form $M^{\perfection}$, where $M \in \Mod_{R}^{\Frob}$ is finitely
generated and projective as an $R$-module.
\end{lemma}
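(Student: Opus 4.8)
Let $\mathcal{C}$ denote the smallest triangulated subcategory of $D(R[F])$ containing all objects $M^{\perfection}$ with $M \in \Mod_{R}^{\Frob}$ finitely generated projective over $R$. The plan is to prove the two inclusions $\mathcal{C} \subseteq D^{b}_{\hol}(R[F])$ and $D^{b}_{\hol}(R[F]) \subseteq \mathcal{C}$ separately. The first is straightforward: $D^{b}_{\hol}(R[F])$ is a triangulated subcategory of $D(R[F])$ (Notation \ref{notation.derived}), and for $M \in \Mod_{R}^{\Frob}$ finitely generated projective over $R$ the perfection $M^{\perfection}$ is a holonomic Frobenius module by Definition \ref{defhol2}, hence lies in $D^{b}_{\hol}(R[F])$ when regarded as a complex concentrated in degree zero; since $\mathcal{C}$ is the smallest triangulated subcategory containing such objects, $\mathcal{C} \subseteq D^{b}_{\hol}(R[F])$.

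For the reverse inclusion, first I would reduce to the case of a single holonomic Frobenius module concentrated in one degree. Given $M \in D^{b}_{\hol}(R[F])$, cohomologically bounded with holonomic cohomology, an induction on the length of the cohomological amplitude using the standard truncation triangles $\tau^{\leq n} M \to M \to \tau^{> n} M \to (\tau^{\leq n}M)[1]$ shows that $M$ lies in any triangulated subcategory containing each $\mathrm{H}^{i}(M)[-i]$. Since $\mathcal{C}$ is triangulated (hence closed under shifts), it therefore suffices to show that every holonomic Frobenius module $N$, viewed as an object of $D(R[F])$ in degree zero, lies in $\mathcal{C}$.

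So let $N$ be holonomic; write $N \simeq N_0^{\perfection}$ for some $N_0 \in \Mod_{R}^{\Frob}$ finitely presented as an $R$-module. The key step is to build a (finite) resolution of $N_0$, or at least of $N$, by Frobenius modules that are finitely generated projective over $R$ — after which applying the exact perfection functor $(-)^{\perfection}$ and using that $\mathcal{C}$ is triangulated will finish the argument. The natural mechanism is Lemma \ref{lemma.charroot}: if I can exhibit $N$ as an object of $D(R[F])$ whose image in $D(R)$ is compact, then Lemma \ref{lemma.charroot} provides a quasi-isomorphism between $N$ and a bounded complex $P^{\bullet}$ of Frobenius modules each finitely generated projective over $R$, and then $N \simeq N^{\perfection} \simeq (P^{\bullet})^{\perfection}$ (using that perfection is exact and that $N$ is already perfect) exhibits $N$ as a finite iterated cone of objects $(P^{i})^{\perfection}$, all of which lie in $\mathcal{C}$ by definition; hence $N \in \mathcal{C}$.

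The main obstacle is therefore establishing that $N$ (equivalently $N_0^{\perfection}$) has compact image in $D(R)$ — i.e. that it is quasi-isomorphic, as a complex of $R$-modules, to a bounded complex of finitely generated projective $R$-modules. This is where finite presentation of $N_0$ over $R$ and the holonomicity/algebraicity machinery must be used, and it is not automatic: $N_0$ itself need not have finite projective dimension over $R$, and $N = N_0^{\perfection}$ is typically not Noetherian. The right approach is to work locally and approximate: by Proposition \ref{propX52} reduce to the case where $R$ is finitely generated over $\F_p$, hence Noetherian and of finite Krull dimension, so that $R^{\perfection}$ has finite global dimension (Remark following Proposition \ref{prop5}, citing \cite{BS}); then $N$, being a module over $R^{\perfection}$ (Proposition \ref{prop5}) which is coherent after restricting along $R \to R^{\perfection}$ on a finite affine cover, admits a finite resolution by finite projective $R^{\perfection}$-modules, and one transports this back along $R \to R^{\perfection}$. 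Alternatively — and perhaps more cleanly within this paper's framework — one can bypass the explicit homological-dimension estimate by invoking Theorem \ref{silphil} in the scheme-theoretic reformulation (or its underlying affine content) to produce a coherent presentation and then resolve; but since Theorem \ref{silphil} itself only yields a single-step presentation rather than a finite resolution, the finite-global-dimension input via \cite{BS} seems unavoidable for the non-Noetherian reduction to go through, and I expect that to be the technically delicate point.
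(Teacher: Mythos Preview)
Your reduction to a single holonomic module $N$ in degree zero is fine, and the instinct to invoke Lemma~\ref{lemma.charroot} is correct. The gap is in what you apply it to. You propose to show that $N = N_0^{\perfection}$ itself has compact image in $D(R)$; but this is false whenever $R$ is not perfect. Already for $N_0 = R$ one has $N = R^{\perfection}$, which is an infinite direct limit of copies of $R$ and not a perfect complex of $R$-modules. Your attempted repair via the finite global dimension of $R^{\perfection}$ does not close this: a finite resolution of $N$ by finitely generated projective $R^{\perfection}$-modules gives compactness in $D(R^{\perfection})$, not in $D(R)$, and there is no way to ``transport back'' since $R^{\perfection}$ is not compact over $R$.

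The paper's argument applies Lemma~\ref{lemma.charroot} to a \emph{pre-perfection} model, and over a \emph{regular} ring rather than over $R$. After the reduction via Proposition~\ref{propX52} to $N \simeq (R \otimes_{R_0} N')^{\perfection}$ with $R_0$ finitely generated, one chooses a surjection $A \twoheadrightarrow R_0$ from a polynomial ring $A$ over $\F_p$; then $N'$ is holonomic over $A$ (Remark~\ref{elos}), so $N' \simeq N_0'^{\perfection}$ with $N_0'$ finitely generated over $A$. Since $A$ is regular Noetherian, $N_0'$ has finite projective dimension over $A$, hence compact image in $D(A)$, and now Lemma~\ref{lemma.charroot} (applied over $A$) gives a finite resolution $P_\bullet \to N_0'$ in $\Mod_A^{\Frob}$ with each $P_k$ finite projective over $A$. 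Base-changing along $A \to R$ gives $R \otimes_A P_k$ finite projective over $R$, and perfecting yields a finite resolution $(R \otimes_A P_\bullet)^{\perfection}$ of $N$ by objects of $\mathcal{C}$. The passage through the polynomial ring $A$ is precisely what supplies the needed finite projective dimension without asking anything of $R$ itself.
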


\begin{proof}
Let $\calC$ be a triangulated subcategory of $D(R[F])$ which contains every object of the form $M^{\perfection}$,
where $M \in \Mod_{R}^{\Frob}$ is finitely generated and projective over $R$. We wish to show that $\calC$ contains every object of
$D^{b}_{\hol}( R[F] )$. Using our assumption that $\calC$ is a triangulated subcategory, we are reduced to showing that
$\calC$ contains every holonomic Frobenius module $N$ over $R$ (regarded as a chain complex concentrated in degree zero).
Using Proposition \ref{propX52}, we can assume that $N$ has the form $(R \otimes_{R_0} N' )^{\perfection}$, where $R_0$ is a finitely generated subring of $R$ and
$N'$ is a holonomic Frobenius module over $R_0$. Choose a surjection $A \rightarrow R_0$, where $A$ is a polynomial ring over $\F_p$.
Then $N'$ is also holonomic when regarded as a Frobenius module over $A$ (Remark \ref{elos}). Choose an isomorphism
$N' \simeq N'^{\perfection}_0$, where $N'_0$ is a Frobenius module over $A$ which is finitely generated as an $A$-module.
Since $A$ is a regular Noetherian ring, the $A$-module $N'_0$ admits a finite resolution by projective $A$-modules of finite rank.
It follows from Lemma \ref{lemma.charroot} that $N'_0$ admits a finite resolution
$$ \cdots \rightarrow P_3 \rightarrow P_2 \rightarrow P_1 \rightarrow P_0 \rightarrow N'_0 \rightarrow 0$$
in the category of Frobenius modules over $A$, where each $P_k$ is projective of finite rank as an $A$-module.
Then $(R \otimes_{A} P_{\bullet})^{\perfection}$ is a finite resolution of $N$ by objects of $\Mod_{R}^{\Frob}$ which belong to $\calC$.
Since $\calC$ is a triangulated subcategory of $D(R[F])$, we deduce that $N$ also belongs to $\calC$.
\end{proof}

\begin{proof}[Proof of Theorem \ref{theorem.holochar}]
Let $R$ be a commutative $\F_p$-algebra and let $\calC$ denote the full subcategory of $D( R[F] )$ spanned by those objects
which are isomorphic to $M_0^{\perfection}$, for some $M_0 \in D( R[F] )$ having compact image in $D(R)$.
We wish to show that $\calC = D^{b}_{\hol}( R[F] )$. We first show that $\calC$ is contained in $D^{b}_{\hol}(R[F] )$.
Let $M_0 \in D( R[F] )$ have compact image in $D(R)$; we wish to show that $M_0^{\perfection}$ belongs to $D^{b}_{\hol}( R[F] )$.
Using Lemma \ref{lemma.charroot}, we can assume that $M_0$ is a bounded cochain complex consisting of Frobenius modules which
are finitely generated and projective over $R$. Since $D^{b}_{\hol}( R[F] )$ is a triangulated subcategory of $D( R[F] )$, we can
reduce to the case where $M_0$ is a finitely generated projective $R$-module, concentrated in degree zero. In this case,
the inclusion is clear (since $M_0^{\perfection}$ is a holonomic Frobenius module over $R$).

We now show that $D^{b}_{\hol}( R[F] )$ is contained in $\calC$. By virtue of Lemma \ref{proposition.holonomicderived}, it will suffice to show that $\calC$ is a triangulated subcategory of
$D( R[F] )$. It is clear that $\calC$ contains zero objects of $D( R[F] )$ and is closed under shifts; it will therefore suffice to show that it contains the cone of any morphism
$f: M \rightarrow N$ where $M$ and $N$ belong to $\calC$. Write $M = M_0^{\perfection}$ and $N = N_0^{\perfection}$, where $M_0$ and $N_0$ are objects of
$D( R[F] )$ having compact image in $D(R)$. Using Lemma \ref{lemma.charroot}, we can further assume that $M_0$ is a cochain complex of Frobenius modules
which are finitely generated and projective over $R$. Note that $N$ can be identified with the homotopy colimit of the diagram
$$ N \xrightarrow{ \varphi_N} N^{1/p} \xrightarrow{ \varphi_N } N^{1/p^2} \rightarrow \cdots.$$
Since $M_0$ is a compact object of $D(R[F] )$ (Remark \ref{morecompact}), the composite map
$M_0 \rightarrow M \xrightarrow{f} N$ factors through some map $f': M_0 \rightarrow N_0^{1/p^n}$ for $n \gg 0$. Then $f'$ is adjoint to a map
$f'': \varphi_{R}^{n \ast} M_0 \rightarrow N_0$, where $\varphi_{R}^{n \ast} M_0$ is the cochain complex obtained from $M_0$ by applying the pullback functor $\varphi_{R}^{n \ast}$ degreewise.
Note that $\varphi_{R}^{n \ast} M_0$ is also a bounded cochain complex of finitely generated projective $R$-modules, and therefore has compact image in $D(R)$.
Let $C_0$ be a cone of $f''$. Using Remark \ref{remark.obvious} (and the exactness of the functor $K \mapsto K^{\perfection}$), we see that
the cone of $f$ can be identified with $C_0^{\perfection}$, and therefore belongs to $\calC$ as desired.
\end{proof}

\begin{remark}\label{remark.Verdier}
With a bit more effort, one can prove the following stronger version of Theorem \ref{theorem.holochar}: the construction $M \mapsto M^{\perfection}$
induces an equivalence of triangulated categories $\calC / \calC_0 \simeq D^{b}_{\hol}( R[F] )$, where
$\calC$ denotes the triangulated subcategory of $D( R[F] )$ spanned by those objects having compact image in $D(R)$,
$\calC_0 \subseteq \calC$ is the triangulated subcategory spanned by those objects $M \in \calC$ satisfying $M^{\perfection} \simeq 0$,
and $\calC / \calC_0$ denotes the Verdier quotient of $\calC$ by $\calC_0$. Since we will not need this fact, the proof is left to the reader.
\end{remark}

\subsection{The Duality Functor}\label{biduality}

We now return to the study of the duality construction $M \mapsto \mathbb{D}(M)$ of \S \ref{section.weakdual}.

\begin{proposition}\label{proposition.holdual}
Let $R$ be a commutative $\F_p$-algebra and let $M$ be an object of $D^{b}_{\hol}(R[F] )$.
Then $M$ is weakly dualizable (in the sense of Notation \ref{notation.weakdual}).
\end{proposition}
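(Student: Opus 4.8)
The strategy is to use the characterization of the holonomic derived category provided by Theorem \ref{theorem.holochar}, reducing the problem to the case of Frobenius modules that are projective of finite rank over $R$, where we already know the weak dual exists by Proposition \ref{proposition.makedual}. First I would observe that the class of weakly dualizable objects of $D(R[F])$ is a triangulated subcategory: it contains the zero object, is closed under shifts, and is closed under cones. The last point requires a small argument — given a distinguished triangle $M' \to M \to M'' \to M'[1]$ with $M'$ and $M''$ weakly dualizable, one forms the triangle on the (candidate) duals and uses the five lemma applied to the natural transformation between the two cohomological functors $\Hom_{D(R[F])}(-,N)$ and $\Hom_{D(R[F])}(R, - \otimes_{R}^{L} N)$ to conclude that the cone $C$ of the induced map $\mathbb{D}(M'') \to \mathbb{D}(M')$ is a weak dual of $M$ (with the evaluation map assembled from the octahedral axiom).

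Next I would combine this with Proposition \ref{dualperfection}: if $M_0$ is weakly dualizable, so is $M_0^{\perfection}$, with the same dual (composed with the canonical map $M_0 \to M_0^{\perfection}$). By Theorem \ref{theorem.holochar}, every object $M \in D^{b}_{\hol}(R[F])$ is isomorphic to $M_0^{\perfection}$ for some $M_0 \in D(R[F])$ whose image in $D(R)$ is compact. By Lemma \ref{lemma.charroot}, such an $M_0$ is quasi-isomorphic to a bounded cochain complex of Frobenius modules each of which is projective of finite rank over $R$. Since the weakly dualizable objects form a triangulated subcategory and each such term $\mathbb{D}(P)$ exists (Proposition \ref{proposition.makedual}, which also handles the case of a bounded complex via finite projective dimension over $R[F]$, cf. Proposition \ref{stopmakecor}), the complex $M_0$ is weakly dualizable; hence so is $M \simeq M_0^{\perfection}$.

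The main obstacle will be the closure-under-cones step: one must check carefully that the weak dual of a cone can be realized as a genuine object of $D(R[F])$ together with an evaluation morphism $R \to M \otimes_{R}^{L} \mathbb{D}(M)$ inducing the required natural isomorphism, rather than merely having the relevant $\Hom$-groups match degreewise. The clean way to do this is to note that the functor $N \mapsto \Hom_{D(R[F])}(R, M \otimes_{R}^{L} N)$ is corepresentable whenever it is corepresentable on a triangle's outer two vertices — one uses that $R$ is a compact object of $D(R[F])$ (Remark \ref{morecompact}, or directly from Remark \ref{remark.computeExt}), so that $M \otimes_{R}^{L} -$ commutes with the relevant colimits, and builds the corepresenting object as the cone in $D(R[F])$ of the map of duals, then produces the coevaluation by chasing the octahedron. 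I expect this to be straightforward but slightly fiddly; everything else is a formal consequence of results already established.
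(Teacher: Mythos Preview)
Your overall strategy matches the paper's exactly: use Theorem~\ref{theorem.holochar} and Lemma~\ref{lemma.charroot} to write $M \simeq M_0^{\perfection}$ with $M_0$ a bounded complex of Frobenius modules that are finitely generated projective over $R$, invoke Proposition~\ref{proposition.makedual} for the individual terms, and then apply Proposition~\ref{dualperfection} to pass to the perfection.

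The one substantive difference is in how you pass from ``each term of $M_0$ is weakly dualizable'' to ``$M_0$ is weakly dualizable''. You propose an abstract argument that weakly dualizable objects form a triangulated subcategory, building the dual of a cone as the cone of the duals and then producing the coevaluation by an octahedral chase. This is where the non-functoriality of cones bites: from the two long exact sequences and the natural isomorphisms at the outer terms one gets, for each $N$, an isomorphism of groups $\Hom(C,N) \simeq \Hom(R, M \otimes_R^L N)$, but extracting a \emph{single} morphism $c: R \to M \otimes_R^L C$ inducing all of these simultaneously is not a formal consequence of the axioms of a triangulated category. (Your aside about compactness of $R$ and $M \otimes_R^L -$ commuting with colimits does not address this point.) The argument can be salvaged---$D(R[F])$ has a dg-enhancement, so one can lift to the chain level---but once you do that you are essentially carrying out the paper's proof.

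The paper sidesteps this entirely: it constructs $\mathbb{D}(M_0)$ \emph{termwise} as an explicit cochain complex (applying Construction~\ref{construction.dualunit} in each degree), writes down the coevaluation $c: R \to M_0 \otimes_R M_0^{\vee} \to M_0 \otimes_R \mathbb{D}(M_0)$ as an explicit map of complexes, and \emph{then} verifies by induction on the length of $M_0$ that this given $c$ is a weak duality. Because the candidate dual and coevaluation are already on the table, the inductive step is a clean five-lemma argument with no cone-lifting issues. This is the cleaner execution, and it is worth adopting in place of the abstract closure-under-cones step.
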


\begin{proof}
Using Theorem \ref{theorem.holochar}, we can assume $M = M_0^{\perfection}$, where
$M_0 \in D( R[F] )$ has compact image in $D(R)$. By virtue of Lemma \ref{lemma.charroot}, we may assume
that $M_0$ is a bounded cochain complex of finitely generated projective $R$-modules. Let
$M_0^{\vee}$ denote its $R$-linear dual (which we also regard as a cochain complex of finitely generated projective $R$-modules)
and let $\mathbb{D}(M_0)$ denote the cochain complex of Frobenius modules obtained by applying Construction
\ref{unitalize} termwise. Let $c$ denote the composite map $$R \rightarrow M_0 \otimes_{R} M_0^{\vee} \rightarrow
M_0 \otimes_{R} \mathbb{D}(M_0).$$
A simple calculation shows that $c$ is a morphism of (cochain complexes of) Frobenius modules. Note that
the tensor product $M_0 \otimes_{R} \mathbb{D}(M_0)$ is equivalent to the derived tensor product
$M_0 \otimes_{R}^{L} \mathbb{D}(M_0)$ (since both $M_0$ and $\mathbb{D}(M_0)$ are bounded cochain complexes of flat $R$-modules).
We claim that $c$ exhibits $\mathbb{D}(M_0)$ as a weak dual of $M_0$ in the derived category $D(R[F] )$. In other words,
we claim that for every object $N \in D( R[F] )$, composition with $c$ induces a bijection
$$ \Hom_{ D(R[F]) }( \mathbb{D}(M_0), N) \rightarrow \Hom_{ D(R[F]) }( R, M_0 \otimes_{R}^{L} N).$$
To prove this, we can proceed by induction on the length of the cochain complex $M_0$ and thereby
reduce to the case where $M_0$ is concentrated in a single degree, which follows from Proposition \ref{proposition.makedual}.
Applying Proposition \ref{dualperfection}, we deduce that the composite map
$$ R \xrightarrow{c} M_0 \otimes_{R}^{L} \mathbb{D}(M_0) \rightarrow M_0^{\perfection} \otimes_{R}^{L} \mathbb{D}(M_0) \simeq
M \otimes_{R}^{L} \mathbb{D}(M_0)$$
exhibits $\mathbb{D}(M_0)$ as a weak dual of $M$, so that $M$ is weakly dualizable as desired.
\end{proof}

Recall that a morphism $c: R \rightarrow M \otimes_{R}^{L} M'$ which exhibits $M'$ as a weak dual of $M$ need not exhibit $M$ as a weak dual of $M'$. However,
holonomic Frobenius complexes do enjoy the following weak form of biduality.

\begin{notation}
Let $R$ be a commutative $\F_p$-algebra. We let $D_{\perf}( R[F] )$ denote the full subcategory of $D( R[F] )$ spanned by those cochain complexes $M$
whose cohomology groups $\mathrm{H}^{\ast}(M)$ are perfect Frobenius modules.
\end{notation}

\begin{proposition}\label{proposition.bidual}
Let $R$ be a commutative $\F_p$-algebra and let $M$ be an object of $D^{b}_{\hol}( R[F] )$ with weak dual $\mathbb{D}(M)$. Then, for every object $N \in D_{\perf}( R[F] )$, composition with 
the canonical map $c: R \rightarrow M \otimes^{L}_{R} \mathbb{D}(M)$ induces an isomorphism
$$ \Hom_{ D(R[F]) }( M, N ) \rightarrow \Hom_{ D(R[F]) }( R, N \otimes_{R}^{L} \mathbb{D}(M) ).$$
\end{proposition}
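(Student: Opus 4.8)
The plan is to deduce Proposition \ref{proposition.bidual} from Proposition \ref{proposition.harderdual} by a double d\'{e}vissage, first in the variable $M$ and then in the variable $N$. The first point to record is that the formation of weak duals organizes into a contravariant triangulated functor $\mathbb{D} \colon D^{b}_{\hol}( R[F] ) \to D(R[F])^{\op}$: weak duals exist on $D^{b}_{\hol}( R[F] )$ by Proposition \ref{proposition.holdual} and are unique up to unique isomorphism (Notation \ref{notation.weakdual}), so $\mathbb{D}$ is functorial by corepresentability, and it carries distinguished triangles to distinguished triangles because $\otimes_{R}^{L}$ is exact and $\Hom_{D(R[F])}( \mathbb{D}(-), X) \simeq \Hom_{D(R[F])}( R, (-) \otimes_{R}^{L} X)$. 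Consequently both $M \mapsto \Hom_{D(R[F])}( M, N)$ and $M \mapsto \Hom_{D(R[F])}( R, N \otimes_{R}^{L} \mathbb{D}(M) )$ are cohomological contravariant functors of $M$, and composition with the coevaluation map $c$ defines a morphism between them. Thus the class of $M \in D^{b}_{\hol}( R[F] )$ for which the map of the Proposition is an isomorphism in all cohomological degrees is a triangulated subcategory of $D(R[F])$; by Lemma \ref{proposition.holonomicderived} it suffices to treat the case $M = M_0^{\perfection}$ with $M_0 \in \Mod_{R}^{\Frob}$ finitely generated and projective as an $R$-module. In this case $\mathbb{D}(M)$ may be taken to be the Frobenius module $\mathbb{D}(M_0)$ of Construction \ref{construction.dualunit}, and $c$ is induced from the map $R \to M_0 \otimes_{R} \mathbb{D}(M_0)$ of Proposition \ref{proposition.easydual} via $M_0 \to M_0^{\perfection}$.

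Next I would replace $M_0^{\perfection}$ by $M_0$ on the left-hand side. The cone $C$ of $M_0 \to M_0^{\perfection}$ satisfies $C^{\perfection} \simeq 0$ (apply the exact perfection functor to the defining triangle). By Proposition \ref{sekos} the perfection functor $(-)^{\perfection}$ is left adjoint to the fully faithful embedding $D( \Mod_{R}^{\perf} ) \hookrightarrow D( R[F] )$, and $N \in D_{\perf}( R[F] )$ lies in the essential image of this embedding; hence $\Hom_{D(R[F])}( C[k], N ) \simeq \Hom_{D( \Mod_{R}^{\perf})}( C^{\perfection}[k], N ) = 0$ for all $k$, so the restriction map $\Hom_{D(R[F])}( M_0^{\perfection}, N ) \to \Hom_{D(R[F])}( M_0, N )$ is an isomorphism, compatibly with $c$. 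We are thereby reduced to showing that, for $M_0$ a Frobenius module finitely generated and projective over $R$ and for all $N \in D_{\perf}( R[F] )$, composition with $c$ induces an isomorphism $\Hom_{D(R[F])}( M_0, N ) \to \Hom_{D(R[F])}( R, N \otimes_{R}^{L} \mathbb{D}(M_0) )$.

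For this final step I would d\'{e}vissage in $N$. The Frobenius modules $M_0$ and $R$ are compact objects of $D( R[F] )$: for $R$ this is the resolution $0 \to R[F] \xrightarrow{F-1} R[F] \to R \to 0$, and tensoring it over $R$ with $M_0$ gives a length-one resolution of $M_0$ by finitely generated free $R[F]$-modules. Likewise $\mathbb{D}(M_0)$ is compact by Proposition \ref{stopmake} and flat over $R$ by Remark \ref{remark.flatness}, so $N \otimes_{R}^{L} \mathbb{D}(M_0) = N \otimes_{R} \mathbb{D}(M_0)$ and both sides of the comparison map are cohomological functors of $N$ that commute with filtered homotopy colimits; moreover, by the finite projective dimension of $R$ and $\mathbb{D}(M_0)$ over $R[F]$, the comparison in a fixed cohomological degree depends only on a bounded-above truncation of $N$. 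Since every object of $D_{\perf}( R[F] )$ is a filtered homotopy colimit of bounded objects of $D_{\perf}( R[F] )$, each of which is built from perfect Frobenius modules (in a single degree) by finitely many distinguished triangles and shifts, the five-lemma argument reduces us to the case in which $N$ is a perfect Frobenius module concentrated in degree zero. In that case $\Hom_{D(R[F])}( M_0, N[n] ) = \Ext^{n}_{R[F]}( M_0, N )$ and $\Hom_{D(R[F])}( R, (N \otimes_{R} \mathbb{D}(M_0))[n] ) = \Ext^{n}_{R[F]}( R, N \otimes_{R} \mathbb{D}(M_0) )$, and the desired isomorphism is exactly Proposition \ref{proposition.harderdual}.

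I expect the main obstacle to be bookkeeping rather than a conceptual difficulty. The genuinely load-bearing input is Proposition \ref{proposition.harderdual} (where the perfectness hypothesis on $N$ is what makes the weak dual $\mathbb{D}(M_0)$, whose Frobenius need not be invertible, behave dualizably). What requires care is: verifying that $\mathbb{D}$ is a triangulated functor on $D^{b}_{\hol}( R[F] )$ and that the coevaluation maps are compatible under the passages $M \leadsto M_0^{\perfection}$ and $M_0^{\perfection} \leadsto M_0$, so that the two d\'{e}vissages are legitimate; and, in the $N$-d\'{e}vissage, handling unbounded objects of $D_{\perf}( R[F] )$ via truncation and the compactness of $M_0$, $R$, and $\mathbb{D}(M_0)$ in $D(R[F])$.
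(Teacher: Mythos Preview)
Your proposal is correct and follows essentially the same route as the paper's proof: reduce in $M$ via Lemma \ref{proposition.holonomicderived} to $M = M_0^{\perfection}$ with $M_0$ finitely generated projective over $R$, identify $\mathbb{D}(M)$ with $\mathbb{D}(M_0)$ (the paper invokes Proposition \ref{dualperfection} for this, which is the same content as your cone argument via Proposition \ref{sekos}), then use finite projective dimension of $M_0$ and $R$ over $R[F]$ to reduce to $N$ concentrated in a single degree and apply Proposition \ref{proposition.harderdual}. The paper packages the passage from $M_0^{\perfection}$ to $M_0$ as a commutative square rather than a cone argument, and is terser about the reduction in $N$, but the substance is identical.
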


\begin{proof}
Let us say that an object $M \in D^{b}_{\hol}( R[F] )$ is {\it good} if, for every object $N \in D_{\perf}( R[F] )$, the canonical map
$\Hom_{ D(R[F]) }( M, N ) \rightarrow \Hom_{ D(R[F]) }( R, N \otimes_{R}^{L} \mathbb{D}(M) )$ is an isomorphism. We wish to show that
every object of $M \in D^{b}_{\hol}( R[F] )$ is good. It is easy to see that the good objects of $D^{b}_{\hol}(R[F] )$ span a triangulated
subcategory. By virtue of Lemma \ref{proposition.holonomicderived}, it will suffice to show that every object of the form
$M_0^{\perfection}$ is good, where $M_0 \in \Mod_{R}^{\Frob}$ is finitely generated and projective as an $R$-module. In this case, for each $N \in D_{\perf}( R[F] )$, we have a commutative diagram
$$ \xymatrix{ \Hom_{ D(R[F]) }( M, N ) \ar[r] \ar[d] & \Hom_{ D(R[F]) }( R, N \otimes_{R}^{L} \mathbb{D}(M) ) \ar[d] \\
\Hom_{ D(R[F]) }( M_0, N ) \ar[r]^-{\theta_N} & \Hom_{ D(R[F]) }( R, N \otimes_{R}^{L} \mathbb{D}(M_0) ); }$$
here the right vertical map is bijective by virtue of Proposition \ref{dualperfection}, and the left vertical map is bijective by virtue of our assumption that $N$ is perfect.
It will therefore suffice to show that the map $\theta_N$ is an isomorphism for every perfect object $N \in D_{\perf}( R[F] )$. Using the fact that
$M_0$ and $R$ admit finite resolutions by projective left $R[F]$-modules (Remark \ref{morecompact}), we can reduce to the situation where
$N$ is concentrated in a single degree. In this case, the desired result follows from Proposition \ref{proposition.harderdual}.
\end{proof}

We are now ready to prove the main result of this section:

\begin{theorem}\label{theorem.dual-image}
Let $R$ be a commutative $\F_p$-algebra. Then the construction $M \mapsto \mathbb{D}(M)$ induces
an equivalence of categories $D^{b}_{\hol}( R[F] ) \rightarrow D_{\fgu}^{b}( R[F] )^{\op}$.
\end{theorem}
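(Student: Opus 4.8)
The plan is to assemble the equivalence from the three ingredients that have already been developed: the well-definedness of the functor $\mathbb{D}$ on $D^{b}_{\hol}(R[F])$ (Proposition \ref{proposition.holdual}), the full-faithfulness coming from the biduality statement (Proposition \ref{proposition.bidual}), and a description of the essential image using the characterization of $D^{b}_{\fgu}(R[F])$. First I would verify that $M \mapsto \mathbb{D}(M)$ is genuinely a functor $D^{b}_{\hol}(R[F]) \to D(R[F])^{\op}$: by Proposition \ref{proposition.holdual} every $M \in D^{b}_{\hol}(R[F])$ is weakly dualizable, and by the uniqueness built into Notation \ref{notation.weakdual} the weak dual $\mathbb{D}(M)$ together with the evaluation map $c_M \colon R \to M \otimes_{R}^{L} \mathbb{D}(M)$ is determined up to unique isomorphism, so the assignment is functorial (a morphism $M \to N$ induces a map $\mathbb{D}(N) \to \mathbb{D}(M)$ by the universal property applied to $c_N$). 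Next, Proposition \ref{autofgu} shows that $\mathbb{D}(M)$ lands in $D^{b}_{\fgu}(R[F])$, so we get a functor $\mathbb{D}\colon D^{b}_{\hol}(R[F]) \to D^{b}_{\fgu}(R[F])^{\op}$.

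For full-faithfulness, I would compute $\Hom_{D(R[F])^{\op}}(\mathbb{D}(M),\mathbb{D}(N)) = \Hom_{D(R[F])}(\mathbb{D}(N),\mathbb{D}(M))$ for $M,N \in D^{b}_{\hol}(R[F])$. Using the defining universal property of $\mathbb{D}(N)$ as a weak dual of $N$ (Definition \ref{definition.weakdual}), this is $\Hom_{D(R[F])}(R, N \otimes_{R}^{L} \mathbb{D}(M))$. Now $N \in D^{b}_{\hol}(R[F]) \subseteq D_{\perf}(R[F])$, so Proposition \ref{proposition.bidual} identifies this with $\Hom_{D(R[F])}(M,N)$, and one checks the resulting bijection is exactly the map induced by $\mathbb{D}$. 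Thus $\mathbb{D}$ is fully faithful.

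It remains to identify the essential image with all of $D^{b}_{\fgu}(R[F])^{\op}$, i.e. to show every derived finitely generated unit complex is a weak dual of some holonomic complex. Here I would use Corollary \ref{corollary.stepmap}: given a nonzero $Q \in D^{b}_{\fgu}(R[F])$ with top nonvanishing cohomology in degree $n$, there is a Frobenius module $N_0$, finitely generated and projective over $R$, and a map $\mathbb{D}(N_0)[-n] \to Q$ surjective on $\mathrm{H}^n$. Since $\mathbb{D}(N_0) = \mathbb{D}(N_0^{\perfection})$ in the weak-dual sense and $N_0^{\perfection}$ is holonomic, $\mathbb{D}(N_0)[-n]$ lies in the essential image of $\mathbb{D}$ (it equals $\mathbb{D}(N_0^{\perfection}[n])$, using that shifts of holonomic complexes are holonomic and that $\mathbb{D}$ intertwines shifts up to sign); then I would form the cone of $\mathbb{D}(N_0)[-n] \to Q$, observe it still lies in $D^{b}_{\fgu}(R[F])$ (a triangulated subcategory) with strictly smaller cohomological amplitude on top, and induct on the amplitude, using that the essential image of a fully faithful triangulated functor from a triangulated category is a triangulated subcategory and the induction terminates because $Q$ is cohomologically bounded. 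The main obstacle I anticipate is the bookkeeping in this dévissage: one must check carefully that the cone genuinely has smaller "size" (e.g. either $\mathrm{H}^n$ of the cone vanishes, or one lowers the length), that $\mathbb{D}(N_0^{\perfection})$ really is the object produced by Corollary \ref{corollary.stepmap} (i.e. that the two notions of $\mathbb{D}$ — Construction \ref{construction.dualunit} and the weak dual — agree, which is Notation \ref{notation.weakdual}'s consistency claim via Proposition \ref{proposition.makedual}), and that the functor $\mathbb{D}$ is exact/triangulated so that "essential image is triangulated" applies; once these compatibilities are nailed down the argument closes.
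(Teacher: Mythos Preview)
Your plan for well-definedness and full faithfulness matches the paper's proof exactly. The gap is in your essential surjectivity argument: the induction on cohomological amplitude need not terminate. When $Q$ is concentrated in a single degree $n$ (amplitude zero), the cone of $\mathbb{D}(N_0)[-n] \to Q$ is $\ker(\mathbb{D}(N_0) \to \mathrm{H}^n(Q))$ placed in degree $n-1$, which is again concentrated in a single degree and typically nonzero. Iterating, the top degree drops by one at each step but so does the bottom degree, so there is no lower bound inherited from the original $Q$, and ``$Q$ is cohomologically bounded'' does not force the process to stop. More generally, if $Q$ has cohomology in $[a,n]$ then the cone has cohomology in $[\min(a,n-1),\,n-1]$; once you reach amplitude zero you stay there indefinitely.

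The paper circumvents this by not claiming termination. It runs essentially the same d\'evissage, but packages it dually as a tower $\cdots \to N(2) \to N(1) \to N(0) \to N$ where each $N(k)$ lies in the essential image $\calC$ and the cone of $N(k) \to N$ has vanishing cohomology above degree $k$. Then $N$ is the homotopy colimit of this tower, and since $N$ is \emph{compact} in $D(R[F])$ (part of the definition of $D^{b}_{\fgu}$), the identity $\id_N$ factors through some $N(k)$, exhibiting $N$ as a retract of $N(k) \in \calC$. The remaining step is that $\calC$ is idempotent complete, which follows from full faithfulness of $\mathbb{D}$ together with idempotent completeness of $D^{b}_{\hol}(R[F])$. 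Your d\'evissage and the paper's tower are the same construction viewed from opposite ends; what your argument is missing is precisely this compactness-plus-idempotent-completeness endgame in place of the (unjustified) termination claim.
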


\begin{proof} 
It follows from Propositions \ref{proposition.holdual} and \ref{autofgu} that the duality functor
$\mathbb{D}: D^{b}_{\hol}( R[F] ) \rightarrow D_{\fgu}^{b}( R[F] )^{\op}$ is well-defined.
We next claim that it is fully faithful.
Let $M$ and $N$ be objects of $D^{b}_{\hol}( R[F] )$; we wish to show that the canonical map
$$\theta: \Hom_{ D(R[F]) }( M, N ) \rightarrow \Hom_{ D(R[F]) }( \mathbb{D}(N), \mathbb{D}(M) ).$$
Using the definition of $\mathbb{D}(N)$, we can identify the codomain of $\theta$ with the set
$\Hom_{ D(R[F]) }( R, N \otimes_{R}^{L} \mathbb{D}(M) )$. Under this identification, $\theta$ corresponds
to the comparison map of Proposition \ref{proposition.bidual}, which is an isomorphism because $N$ is perfect.

Let $\calC$ denote the essential image of the weak duality functor $\mathbb{D}: D^b_{\hol}( R[F] ) \rightarrow D_{\fgu}^{b}( R[F] )^{\op}$,
so that $\calC$ is a triangulated subcategory of $D_{\fgu}^{b}( R[F] )$. We will complete the proof
by showing that every object $N \in D_{\fgu}^{b}( R[F] )$ belongs to $\calC$. We will deduce this from the following assertion:
\begin{itemize}
\item[$(\ast)$] There exists a diagram
$$ \cdots \rightarrow N(2) \rightarrow N(1) \rightarrow N(0) \rightarrow N(-1) \rightarrow N(-2) \rightarrow \cdots \rightarrow N$$
in the derived category $D(R[F])$, where each $N(k)$ belongs to $\calC$ and each of the maps
$\mathrm{H}^{n}( N(k) ) \rightarrow \mathrm{H}^{n}(N)$ is an isomorphism for $n > k$ and a surjection for $n = k$.
\end{itemize}
Assume $(\ast)$ for the moment. Then $N$ can be identified with the homotopy colimit of the diagram $\{ N(k) \}_{k \in \Z}$
in the triangulated category $D(R[F])$. Since $N$ is a compact object of $D(R[F])$, it follows
that the identity map $\id_{N}: N \rightarrow N$ factors through $N(k)$ for some integer $k$: that is, $N$ is a direct summand
of $N(k)$. Consequently, to prove that $N$ belongs to $\calC$, it will suffice to show that the category $\calC$ is idempotent complete.
Using the equivalence $\mathbb{D}: D^{b}_{\hol}( R[F] ) \rightarrow \calC^{\op}$, we are reduced to proving that
the category $D^{b}_{\hol}( R[F] )$ is idempotent complete, which is clear (since any direct summand of a holonomic Frobenius module over $R$ is itself holonomic).

It remains to prove $(\ast)$. We will construct the objects $N(k)$ by descending induction on $k$, taking $N(k) = 0$ for $k \gg 0$. To carry out the induction,
it will suffice to prove the following:
\begin{itemize}
\item[$(\ast')$] Let $f: N(k+1) \rightarrow N$ be a morphism in $D(R[F])$, where $N(k+1) \in \calC$ and the induced map
$\mathrm{H}^{n}( N(k+1) ) \rightarrow \mathrm{H}^{n}(N)$ is an isomorphism for $n > k+1$ and a surjection for $n = k+1$. Then the morphism
$f$ factors as a composition $N(k+1) \xrightarrow{f'} N(k) \xrightarrow{f''} N$, where $N(k) \in \calC$ and
the map $\mathrm{H}^{n}( N(k) ) \rightarrow \mathrm{H}^{n}(N)$ is an isomorphism for $n > k$ and a surjection for $n = k$.
\end{itemize}
To prove $(\ast')$, let $C$ denote the cone of $f$, so that $C$ belongs to $D_{\fgu}^{b}( R[F] )$ and
the cohomology groups $\mathrm{H}^{n}(C)$ vanish for $n > k$. Using Corollary \ref{corollary.stepmap},
we can choose an object $M \in \Mod_{R}^{\Frob}$ which is finitely generated and projective as an $R$-module
and a map $g: \mathbb{D}(M)[-n] \rightarrow C$ which induces a surjection $\mathbb{D}(M) \rightarrow \mathrm{H}^{n}(C)$.
Invoking the octahedral axiom, we conclude that $f$ factors as a composition $N(k+1) \xrightarrow{f'} N(k) \xrightarrow{f''} N$,
where the cone of $f'$ is isomorphic to $\mathbb{D}(M)[-k]$ (which guarantees that $N(k)$ belongs to $\calC$)
and the cone of $f''$ is isomorphic to the cone of $g$ (and therefore has vanishing cohomology in degrees $\geq k$).
\end{proof}

\subsection{Comparison of Solution Functors}\label{section.proofhappens}

We will deduce Theorem \ref{strongEK} from the following comparison result:

\begin{theorem}\label{theorem.compareduality}
Let $R$ be a commutative $\F_p$-algebra. Then the diagram of categories
$$ \xymatrix{ & D^{b}_{\hol}( R[F] ) \ar[dr]^{ \RSol} \ar[dl]_{ \mathbb{D} } & \\
D^{b}_{\fgu}(R[F])^{\op} \ar[rr]^{ \RSol_{\EK} } & & D_{\mathet}( \Spec(R), \F_p) }$$
commutes up canonical isomorphism. Here $\RSol$ denotes the derived solution
functor of \S \ref{section.derivedRH}, $\mathbb{D}$ is the duality functor of Theorem \ref{theorem.dual-image}, and
$\RSol_{\EK}$ is the derived Emerton-Kisin solution functor of Construction \ref{construction.SolEK}.
\end{theorem}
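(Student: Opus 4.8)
Looking at this theorem, I need to prove that the diagram relating the duality functor $\mathbb{D}$, the derived solution functor $\RSol$, and the Emerton-Kisin solution functor $\RSol_{\EK}$ commutes. Let me think about how to approach this.

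The plan is to establish a natural isomorphism $\RSol_{\EK} \circ \mathbb{D} \simeq \RSol$ of (triangulated) functors $D^{b}_{\hol}(R[F]) \to D_{\mathet}(\Spec(R),\F_p)$, which is exactly the asserted commutativity of the triangle; and then to note that Theorem \ref{strongEK} follows formally, since combining this isomorphism with the equivalences $\mathbb{D}\colon D^{b}_{\hol}(R[F]) \simeq D^{b}_{\fgu}(R[F])^{\op}$ of Theorem \ref{theorem.dual-image} and $\RSol\colon D^{b}_{\hol}(R[F]) \simeq D^{b}_{c}(\Spec(R),\F_p)$ of Corollary \ref{corollary.RHDerived} exhibits $\RSol_{\EK}|_{D^{b}_{\fgu}(R[F])} \simeq \RSol \circ \mathbb{D}^{-1}$ as a composite of equivalences with essential image $D^{b}_{c}(\Spec(R),\F_p)$.

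To construct the isomorphism, fix $M \in D^{b}_{\hol}(R[F])$. Using Theorem \ref{theorem.holochar} together with Lemma \ref{lemma.charroot}, I would choose a bounded cochain complex $M_{0}^{\bullet}$ of Frobenius modules, each finitely generated and projective over $R$, with an isomorphism $M \simeq (M_{0}^{\bullet})^{\perfection}$; the proof of Proposition \ref{proposition.holdual} then identifies $\mathbb{D}(M)$ with the complex $\mathbb{D}(M_{0}^{\bullet})$ obtained by applying Construction \ref{construction.dualunit} termwise, and provides the weak duality morphism $c\colon R \to M \otimes_{R}^{L} \mathbb{D}(M)$. Applying Proposition \ref{stopmake} termwise and passing to total complexes yields a bounded complex $P^{\bullet}$ of finitely generated projective left $R[F]$-modules quasi-isomorphic to $\mathbb{D}(M)$, each term a finite sum of modules of the form $R[F] \otimes_{R} (M_{0}^{k})^{\vee}$. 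Since $R[F]\otimes_R N$ is projective in $\Mod_R^{\Frob}$, and $\Sol_{\EK}(R[F]\otimes_R N)(A) = \Hom_{R}(N, A)$ is an honest quasi-coherent étale sheaf (acyclic for $\Shv_{\mathet}(\Spec(R),\F_p)\hookrightarrow D_{\mathet}(\Spec(R),\F_p)$) for any finitely generated projective $R$-module $N$, the object $\RSol_{\EK}(\mathbb{D}(M))$ is represented by the genuine complex of sheaves $\Sol_{\EK}(P^{\bullet})$, and its $A$-sections compute $\RHom_{R[F]}(\mathbb{D}(M), A)$. Unwinding Construction \ref{unitalize}, the $F$-direction differential dualizes to $\id - \widetilde{\varphi}_{M_{0}^{k}}$, so $\RSol_{\EK}(\mathbb{D}(M))$ is the total complex of the bicomplex $[\,\widetilde{M_{0}^{\bullet}} \xrightarrow{\ \id - \widetilde{\varphi}\ } \widetilde{M_{0}^{\bullet}}\,]$.

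On the other side I would compute $\RSol(M)$ using the two-term free resolution $0 \to R[F] \xrightarrow{F-1} R[F] \to R \to 0$ of the left $R[F]$-module $R$ from the beginning of \S\ref{sec3sub1} (equivalently Construction \ref{conX7}): for any perfect Frobenius module, and hence for any bounded complex of perfect Frobenius modules $N$, one gets $\RHom_{R[F]}(R, N) \simeq [\,N \xrightarrow{\id - \varphi_{N}} N\,]$. Evaluating on an étale $R$-algebra $A$ with $N = A \otimes_{R} M$ (using flatness of $A$, exactness of $\widetilde{(-)}$, and compatibility of perfection with $\widetilde{(-)}$) shows that the presheaf $A \mapsto \RHom_{R[F]}(R, A \otimes_{R}^{L} M)$ is the complex of sheaves $[\,\widetilde{M^{\bullet}} \xrightarrow{\id - \widetilde{\varphi}} \widetilde{M^{\bullet}}\,]$, which we may represent using $\widetilde{(M_{0}^{\bullet})}^{\perfection}$; by Proposition \ref{proposition.solperfect} this is quasi-isomorphic to the complex obtained in the previous paragraph. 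By Proposition \ref{proposition.corX77} and Corollary \ref{corollary.derived-vanishing} the higher solution sheaves $\Sol^{i}$ ($i\geq 1$) of the algebraic modules $\mathrm{H}^{*}(M)$ vanish, so by Remark \ref{remark.computeRSol} this complex of sheaves represents $\RSol(M)$ with no hypercohomology correction. The weak duality morphism $c$ now induces, for each étale $A$, the isomorphism
$\RSol_{\EK}(\mathbb{D}(M))(A) \simeq \RHom_{R[F]}(\mathbb{D}(M), A) \xrightarrow[\ \sim\ ]{\circ\, c} \RHom_{R[F]}(R, M \otimes_{R}^{L} A) \simeq \RSol(M)(A)$,
the middle map being an isomorphism by Definition \ref{definition.weakdual} (extended from $\Hom$ to $\RHom$ via shifts, and legitimate because $\mathbb{D}(M)$ is a perfect $R[F]$-complex); since both sides are honest complexes of étale sheaves this assembles into an isomorphism in $D_{\mathet}(\Spec(R),\F_p)$, natural in $A$ and — as $\mathbb{D}$ is functorial and $c$ is natural in $M$ — natural in $M$.

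The step I expect to be the main obstacle is the "no derived sheafification" bookkeeping: one must verify that $\RSol_{\EK}$ restricted to $D^{b}_{\fgu}(R[F])$ is genuinely computed by applying $\Sol_{\EK}$ to a bounded complex of finitely generated projective $R[F]$-modules (so that $A$-sections compute $\RHom_{R[F]}(\mathbb{D}(M), A)$ on the nose), and symmetrically that $\RSol(M)$ is represented by the honest two-term-in-the-$F$-direction complex $[\widetilde{M^{\bullet}} \xrightarrow{\id - \widetilde{\varphi}} \widetilde{M^{\bullet}}]$ — for which the vanishing $\Sol^{\geq 1} = 0$ on algebraic modules (Corollary \ref{corollary.derived-vanishing}) is essential. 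A secondary, purely computational nuisance is matching signs and the precise form of the differential produced by Construction \ref{unitalize} against $\id - \widetilde{\varphi}$, which must be done carefully enough to yield an actual isomorphism of complexes of sheaves rather than merely an abstract quasi-isomorphism.
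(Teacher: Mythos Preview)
Your approach is correct and essentially the same as the paper's. The paper packages the argument slightly differently: it introduces an auxiliary functor $\Sol'(M) = \cn(\id - \varphi_{\widetilde{M}})[-1]$ on all of $D(R[F])$, shows $\Sol'|_{D^{+}_{\perf}} \simeq \RSol$, and then builds the comparison map $\gamma_{c}\colon \RSol_{\EK}(\mathbb{D}(M)) \to \Sol'(M)$ directly from the weak-duality datum $c$ by the same identification you use, namely $\RSol_{\EK}(\mathbb{D}(M))(A) = \Hom_{R[F]}(\mathbb{D}(M),A)$ and $\Sol'(M)(A) = \Hom_{R[F]}(P, M \otimes_{R} A)$ for the two-term resolution $P$ of $R$; that $\gamma_{c}$ is a quasi-isomorphism of presheaves is then exactly the defining property of the weak dual. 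Your version unpacks this through the explicit presentation $M \simeq (M_{0}^{\bullet})^{\perfection}$ and the termwise resolution of $\mathbb{D}(M_{0}^{\bullet})$ from Proposition~\ref{stopmake}, arriving at the same two-term bicomplex; this is more concrete but duplicates work, since the explicit identification of both sides with $[\widetilde{M_{0}^{\bullet}} \xrightarrow{\id - \widetilde{\varphi}} \widetilde{M_{0}^{\bullet}}]$ and the weak-duality argument are really the same computation done twice. The paper's abstraction via $\Sol'$ has the modest advantage that naturality in $M$ is automatic (no choice of $M_{0}^{\bullet}$ is made), whereas in your setup one should check that the comparison map does not depend on the chosen presentation --- which you correctly attribute to the functoriality of $\mathbb{D}$ and $c$.
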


\begin{proof}[Proof of Theorem \ref{strongEK} from Theorem \ref{theorem.compareduality}]
Theorem \ref{theorem.dual-image} asserts that the functor $\mathbb{D}: D^{b}_{\hol}( R[F] ) \rightarrow D^{b}_{\fgu}(R[F] )^{\op}$ is an equivalence of
categories, and Corollary \ref{corollary.RHDerived} asserts that the functor $\RSol: D^{b}_{\hol}( R[F] ) \rightarrow D_{\mathet}( \Spec(R), \F_p)$ is a fully
faithful embedding whose essential image is the constructible derived category $D_{c}^{b}( \Spec(R), \F_p) \subseteq D_{\mathet}( \Spec(R), \F_p)$.
Using the commutative diagram of Theorem \ref{theorem.compareduality}, we deduce that
$\RSol_{\EK}: D^{b}_{\fgu}(R[F])^{\op} \rightarrow D_{\mathet}( \Spec(R), \F_p)$ is also a fully faithful embedding
with essential image $D_{c}^{b}( \Spec(R), \F_p)$.
\end{proof}

The proof of Theorem \ref{theorem.compareduality} will require some auxiliary constructions. We begin by introducing a slight modification of the derived solution functor
$\RSol$.

\begin{construction}
Let $R$ be a commutative $\F_p$-algebra and let $M = M^{\ast}$ be a cochain complex of Frobenius modules.
We let $\widetilde{M}$ denote the associated cochain complex of quasi-coherent sheaves on $\Spec(R)$ (Example \ref{exX70}),
so that the Frobenius morphism $\varphi_{M}$ determines an endomorphism of $\widetilde{M}$, which we will denote by
$\varphi_{ \widetilde{M} }$. We let $\Sol'( M )$ denote the cochain complex of {\etale} sheaves on $\Spec(R)$ given by
the shifted mapping cone $\cn( \id - \varphi_{ \widetilde{M} })[-1]$. It is clear that the construction $M \mapsto \Sol'(M)$ respects quasi-isomorphisms and therefore
determines a functor of derived categories $\Sol': D( R[F] ) \rightarrow D_{\mathet}( \Spec(R), \F_p)$. By construction, we have a distinguished triangle
$$ \Sol'(M) \rightarrow \widetilde{M} \xrightarrow{\id - \varphi_{\widetilde{M} } } \widetilde{M} \rightarrow \Sol'(M)[1],$$
depending functorially on $M$.
\end{construction}

\begin{remark}\label{remark.aux1}
In the special case where $M^{\ast}$ is a bounded below cochain complex of injective objects of $\Mod_{R}^{\perf}$, we can identify
$\RSol(M)$ with the kernel (formed in the category of chain complexes of {\etale} sheaves) of the map $\id - \varphi_{ \widetilde{M} }: \widetilde{M} \rightarrow \widetilde{M}$.
We therefore obtain a canonical map $\RSol(M) \rightarrow \Sol'(M)$, and Lemma \ref{lemma.propX74} guarantees that this map is a quasi-isomorphism 
(even at the level of presheaves). It follows that the functor $\RSol: D^{+}_{\perf}( R[F] ) \rightarrow D_{\mathet}( \Spec(R), \F_p)$ is canonically isomorphic
to the restriction $\Sol'|_{ D^{+}_{\perf}( R[F] )}$. 
\end{remark}

\begin{construction}\label{construction.aux2}
Let $R$ be a commutative $\F_p$-algebra and let $P$ denote the two-term cochain complex
$$ \cdots \rightarrow 0 \rightarrow R[F] \xrightarrow{ 1 - F} R[F] \rightarrow 0 \rightarrow \cdots,$$
which we regard as a projective representative for $R$ in the derived category $D( R[F] )$.
Let $M'$ be a bounded above cochain complex of projective left $R[F]$-modules, let
$M$ be an arbitrary cochain complex of left $R[F]$-modules, and suppose we are given a morphism of cochain complexes
$\overline{c}: P \rightarrow M \otimes_{R} M'$, which represents a morphism $c$ from $R$ to $M \otimes_{R}^{L} M'$ in the
derived category $D( R[F] )$. Note that we can identify $\RSol_{\EK}( M' )$ and $\Sol'(M)$ with the cochain complexes of {\etale} sheaves given concretely
by the formulae
$$ \RSol_{\EK}(M')( A) = \Hom_{R[F]}( M', A) \quad \quad \Sol'(M)(A) = \Hom_{R[F]}( P, M \otimes_{R} A).$$
It follows that $\overline{c}$ determines a map of cochain complexes
\begin{eqnarray*}
\RSol_{\EK}(M') & = & \Hom_{R[F]}( M', \bullet) \\
& \rightarrow & \Hom_{R[F]}( M \otimes_{R} M', M \otimes_{R} \bullet ) \\
& \xrightarrow{\circ \overline{c}} & \Hom_{R[F] }( P, M \otimes_{R} \bullet) \\
& = & \Sol'(M).
\end{eqnarray*}
Note that the chain homotopy class of this map depends only on the chain homotopy class of $\overline{c}$.
We therefore obtain a morphism $\gamma_{c}: \RSol_{\EK}( M' ) \rightarrow \Sol'(M)$ in the derived category
$D_{\mathet}( \Spec(R), \F_p )$ which depends only the map $c: R \rightarrow R \rightarrow M \otimes_{R}^{L} M'$ in
$D( R[F] )$.
\end{construction}

\begin{proof}[Proof of Theorem \ref{theorem.compareduality} ]
By virtue of Remark \ref{remark.aux1}, it will suffice to show that the functors
$$ \Sol', \RSol_{\EK} \circ \mathbb{D}: D^{b}_{\hol}( R[F] ) \rightarrow D_{\mathet}( \Spec(R), \F_p )$$
are naturally isomorphic. Fix an object $M \in D^{b}_{\hol}(R[F] )$ and let $c: R \rightarrow M \otimes_{R}^{L} \mathbb{D}(M)$
be a morphism in $D(R[F] )$ which exhibits $\mathbb{D}(M)$ as a weak dual of $M$. Applying Construction
\ref{construction.aux2}, we obtain a morphism
$\gamma_{c}: (\RSol_{\EK} \circ \mathbb{D} )(M) \rightarrow \Sol'(M)$ in the derived category $D_{\mathet}( \Spec(R), \F_p)$.
It is not difficult to see that this morphism depends functorially on $M$, and therefore determines a natural transformation of functors
$\gamma: \RSol_{\EK} \circ \mathbb{D} \rightarrow \Sol'$. To complete the proof, it will suffice to show that
this natural transformation is invertible: that is, $\gamma_c$ is a quasi-isomorphism for each $M \in D^{b}_{\hol}( R[F] )$.
To prove this, we may assume without loss of generality that $\mathbb{D}(M)$ is represented by a bounded above cochain complex of projective left $R[F]$-modules and that $c$ is represented
by a morphism of cochain complexes $\overline{c}: P \rightarrow M \otimes_{R} \mathbb{D}(M)$, so that $\gamma_c$
is represented by the map of cochain complexes of {\etale} sheaves
$$ \Hom_{R[F]}( \mathbb{D}(M), \bullet ) \rightarrow \Hom_{ R[F] }( P, M \otimes_{R} \bullet )$$
appearing in Construction \ref{construction.aux2}. We wish to show that this map is a quasi-isomorphism of {\etale} sheaves.
In fact, we claim that it is already a quasi-isomorphism of presheaves: that is, for every {\etale} $R$-algebra $A$, the map of
complexes $\Hom_{ R[F] }( \mathbb{D}(M), A) \rightarrow \Hom_{ R[F] }( P, M \otimes_{R} A)$ is a quasi-isomorphism.
This is a special case of our assumption that $c$ exhibits $\mathbb{D}(M)$ as a weak dual of $M$.
\end{proof}

\newpage

\end{document}